%

%
%
%
\documentclass[12pt]{UOthesis}

%
%
\usepackage{fancybox}
\usepackage{shadow}
\usepackage[all]{xy}
\usepackage{url}


\allowdisplaybreaks

%
\NoResume
\NoDedications
\NoListoftables
\NoListoffigures

\setUOname{ Dmitrij Zelo }   
\setUOcpryear{2008}               
\setUOtitle{Simultaneous Approximation to Real and $p$-adic Numbers}  

\phd                


\setUOabstract{
	In this thesis, we study the problem of simultaneous approximation
	to a fixed family of real and $p$-adic numbers by roots of integer polynomials
	of restricted type.
	The method that we use for this purpose was developed by
	H.~{\sc Davenport} and W.M.~{\sc Schmidt} in their study
	of approximation to real numbers by algebraic integers.
	This method based on Mahler's Duality requires to study the dual problem of 
	approximation to successive powers of these numbers by rational numbers with the same denominators.
	Dirichlet's Box Principle provides estimates for such approximations
	but one can do better.
	In this thesis we establish constraints on how much better one can do
	when dealing with the numbers and their squares.
	We also construct examples showing that at least in some instances 
	these constraints are optimal. 
	Going back to the original problem, we obtain estimates for simultaneous
	approximation to real and $p$-adic numbers by roots of integer polynomials
	of degree $3$ or $4$ with fixed coefficients in degree $\geq3$.
	In the case of a single real number (and no $p$-adic numbers), we
	extend work of D.~{\sc Roy} by showing that the
	square of the golden ratio is the optimal exponent of approximation by algebraic numbers of degree $4$ with bounded denominator and trace.
}


\setUOthanks{
 	I would like to express my gratitude to all professors of the University of Ottawa and Carleton University who taught me during my studies.
 	I am infinitely indebted to my supervisor, professor Damien Roy, for his support,
 	patience and direction of my research.
\\
\\
	I am very thankful to all members of the Ottawa-Carleton Institute of Mathematics and Statistics for recommending me and to the board of the Faculty of Graduate and Postdoctoral studies for admitting me to the PhD program, for giving me financial support, and the opportunity to study and work at the Department of Mathematics and Statistics.
	\\
	\\
	Ottawa, September 2008
	\\
	\\
	Dmitrij Zelo
}

\setUOdedicationsText{
  In memory of my famous bottle of port which was taken away from me
  just after reaching full maturity.
}

\newtheorem{theorem}{Theorem}[section]

\newtheorem{lemma}[theorem]{Lemma}

\newtheorem{proposition}[theorem]{Proposition}
\newtheorem{corollary}[theorem]{Corollary}

\newtheorem{definition}[theorem]{Definition}
\newtheorem{remark}[theorem]{Remark}

\theoremstyle{remark}


\newcommand{\vol}{\mathrm{vol}}
\newcommand{\den}{\mathrm{den}}
\newcommand{\Tr}{\mathrm{Tr}}
\newcommand{\dist}{\mathrm{dist}}
\newcommand{\Mat}{\mathrm{Mat}}
\newcommand{\GL}{\mathrm{GL}}
\newcommand{\Res}{\mathrm{Res}}
\newcommand{\nequiv}{ \equiv { \hspace{-10pt} / } \ }
\newcommand{\bigo}{\mathrm{O}}


\begin{document}
\chapter{Introduction}
         	\section{Approximation to real numbers}

	Firstly, we consider the problem of approximation to
	transcendental real numbers by elements of a given
	infinite set $\mathcal{A}$ of algebraic numbers.
	To each such set $\mathcal{A}$, we attach an exponent
	of approximation $\tau(\mathcal{A})$
	defined as the supremum of all numbers $\tau \in \mathbb{R}$,
	such that for any transcendental number $\xi \in \mathbb{R}$
	there exist infinitely many numbers $\alpha \in \mathcal{A}$,
	with $|\xi - \alpha|_{\infty} \leq H(\alpha)^{-\tau}$.
	Here $|*|_{\infty}$ denotes the absolute value on $\mathbb{R}$
	and $H(\alpha)$ denotes the \emph{height} of $\alpha$.
	It is defined as the height of the
	minimal polynomial of $\alpha$ over $\mathbb{Z}$, namely
	the largest absolute value of the coefficients
	of this polynomial.

	Let $\gamma = (1 + \sqrt{5})/{2}$ denote the golden ratio.
	Consider the case where $\mathcal{A}=\mathcal{A}_n$
	is the set of all algebraic integers of degree $\leq n$
	over $\mathbb{Q}$ and write $\tau_n = \tau(\mathcal{A}_n)$.
	In $1969$ H.~{\sc Davenport} and W.M.~{\sc Schmidt}
	showed that $\tau_2 = 2$, $\tau_3 \geq \gamma^2$,
	$\tau_4 \geq 3$ and $\tau_n \geq \lbrack (n+1)/2 \rbrack$ for each $n\geq5$
	(see in \cite{DS}).
	To prove this in the case $n=3$, H.~{\sc Davenport} and W.M.~{\sc Schmidt}
	consider the system of inequalities
\begin{equation}\label{DavenportSchmidt:1}
	\max_{0\leq l \leq 2} |x_l|_{\infty} \leq X
	\ \text{ and } \
	\max_{0\leq l \leq 2} |x_l - x_0 \xi^l|_{\infty}
		\leq c X^{-\lambda}.
\end{equation}
	  Choosing $\lambda = 1/\gamma$ and assuming that
	$\xi$ is a non-quadratic real number they show, in Theorem 1a of \cite{DS},
	that there exists a constant $c > 0$ such that the
	inequalities (\ref{DavenportSchmidt:1})
	have no non-zero solution
	${\bf x} = (x_{0}, x_{1}, x_{2}) \in \mathbb{Z}^{3}$
	for arbitrarily large values of $X$.
	  Combining the above result with Mahler's Duality they conclude,
	in Theorem 1 of \cite{DS}, that
	there are infinitely many algebraic integers $\alpha$ of degree $\leq 3$
	satisfying $|\xi - \alpha|_{\infty} \leq c' H(\alpha)^{-\gamma^2}$
	for some constant $c'>0$.
	This means that $\tau_3 \geq \gamma^2$.

	Around $2003$, using additional tools presented in \cite{ARNCAI.1}
	and \cite{ARNCAI.2}, D.~{\sc Roy} showed conversely that
	there exist a transcendental real number $\xi$
	and a constant $c > 0$, such that the
	inequalities (\ref{DavenportSchmidt:1}), with $\lambda = 1/\gamma$,
	have a non-zero solution ${\bf x} \in \mathbb{Z}^3$ for each
	real $X\geq1$. Such a number is called an {\it extremal real number}.
	In \cite{ARNCAI.2} D.~{\sc Roy}
	constructed a special class of extremal real numbers and
	showed that, for each number $\xi$ from this class, there exists
	a constant $c_1 > 0$, such that for any algebraic integer $\alpha$
	of degree at most $3$ over $\mathbb{Q}$, we have
	$|\xi - \alpha|_{\infty} \geq c_1 H(\alpha)^{-\gamma^2}$.
	This means that $\tau_3 \leq \gamma^2$.
	Together with the result of H.~{\sc Davenport}
	and W.M.~{\sc Schmidt}, it gives $\tau_3 = \gamma^2$.

	In Chapter $3$ of this thesis we work with the set
	all algebraic numbers which are roots
	of polynomials of the form $a_0 T^4 + a_1 T^3 + a_2 T^2 + a_3 T + a_4$,
	with $|a_0|+|a_1|\neq0$ bounded by some given number.
	We first show that if $\xi$ is a non-quadratic real number
	then, for any given polynomial $R \in \mathbb{Z}[T]$,
	there are infinitely many algebraic numbers $\alpha$ which are roots
	of polynomials $F \in \mathbb{Z}[T]$ satisfying
\[
	\deg(R - F) \leq 2
	\text{ \ and \ }
	|\xi - \alpha|_{\infty} \leq cH(\alpha)^{- \gamma^2},
\]
	for an appropriate constant $c > 0$ depending only on $\xi$ and $R$.
	Here $\deg(P)$ denotes the \emph{degree} of a polynomial
	$P \in \mathbb{R}[T]$.
	Upon taking $R(T) = T^3$, we recover the result of
	H.~{\sc Davenport} \& W.M.~{\sc Schmidt}
	concerning to approximation to $\xi$ by algebraic integers of degree $\leq3$.

	Our result below extends the main result of D.~{\sc Roy} in \cite{ARNCAI.2}
	to the case of approximation to real numbers by algebraic
	numbers of degree $\leq4$ with bounded denominator and trace.
\begin{theorem}\label{introduction:T:3}
	There exist a transcendental real number $\xi$ and a
	constant $c=c(\xi) > 0$ such that, for any
	algebraic number $\alpha$ of degree $3$ or $4$, we have
\[
	|\xi - \alpha| \geq c D^{-2\gamma^9} H(\alpha)^{- \gamma^2},
\]
	where
\[
	D =
	\left \{
	\begin{matrix}
	&|a_0| + |a_1| & \text{if} \quad \deg(\alpha) = 4,
	\\
	&|a_0| & \text{if} \quad \deg(\alpha) = 3,
	\end{matrix}
	\right .
\]
	and where $a_0, a_1$ denote the first and the second leading coefficients
	of the minimal polynomial of $\alpha$ over $\mathbb{Z}$.
\end{theorem}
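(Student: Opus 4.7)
The plan is to follow closely the strategy of D.~{\sc Roy} in \cite{ARNCAI.2}. We take $\xi$ to be a real number from the special class of extremal real numbers constructed there, and argue by contradiction: suppose that for arbitrarily small $c>0$ there exists an algebraic $\alpha$ of degree $n\in\{3,4\}$ with $|\xi-\alpha|< cD^{-2\gamma^9}H(\alpha)^{-\gamma^2}$.

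The first step converts this into a polynomial statement. Let $P(T)=a_0T^n+a_1T^{n-1}+\cdots+a_n$ be the minimal polynomial of $\alpha$ over $\mathbb{Z}$. Since $P(\alpha)=0$ and $|P'(\xi)|\ll H(\alpha)$ (with the implied constant depending only on $\xi$), a Taylor expansion at $\alpha$ yields
\[
|P(\xi)|\ll H(\alpha)\,|\xi-\alpha| \ll H(\alpha)^{1-\gamma^2}\,D^{-2\gamma^9}.
\]
We then split $P(T)=(a_0T+a_1)T^{n-1}+Q(T)$, where $Q\in\mathbb{Z}[T]$ has degree at most $n-2\leq 2$ and height at most $H(\alpha)$. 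The triangle inequality at $T=\xi$ gives an upper bound on $|Q(\xi)|$ in terms of $D$ and of the quantity already estimated.

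The heart of the argument is to confront these upper bounds with the rigid geometry of the sequence of minimal points attached to the extremal $\xi$. Roy's work supplies, for every $Q\in\mathbb{Z}[T]$ with $\deg(Q)\leq 2$, a lower bound on $|Q(\xi)|$ in terms of $H(Q)$ and of the height parameter of the minimal point closest to the given scale. Iterating through consecutive minimal points $\mathbf{x}_k\in\mathbb{Z}^3$, whose heights grow according to a Fibonacci-like recurrence with ratio tending to $\gamma$, and feeding these into the upper bounds, forces $(a_0,a_1)$ to satisfy a linear constraint incompatible with the fact that $P$ is the minimal polynomial of an algebraic number of degree $\geq 3$; this supplies the desired contradiction.

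The main obstacle lies in the precise tracking of the exponent $2\gamma^9$. This factor will emerge from iterating the recurrence for the minimal points a fixed number of times, each iteration multiplying the exponent of $D$ by a further power of $\gamma$; verifying that $\gamma^9$ suffices requires careful bookkeeping of all the height, proximity and resultant estimates, together with a uniform choice of the constant $c$ independent of $\alpha$. A secondary subtlety is the unified treatment of $\deg(\alpha)=3$ and $\deg(\alpha)=4$: in the former case $a_1$ does not enter the definition of $D$ and the decomposition of $P$ collapses, but the same scheme of upper and lower bounds on $|Q(\xi)|$, now with $\deg Q\leq 1$, should still close the argument with an exponent of $D$ no worse than the one claimed.
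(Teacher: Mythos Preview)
Your decomposition $P(T)=(a_0T+a_1)T^{n-1}+Q(T)$ with $\deg Q\le 2$ followed by an appeal to lower bounds on $|Q(\xi)|$ does not close the argument. From $|P(\xi)|$ small you only get $|Q(\xi)|\le |P(\xi)|+|(a_0\xi+a_1)\xi^{n-1}|\ll D$, which is a very weak upper bound; combined with the standard lower bound $|Q(\xi)|\gg H(Q)^{-\gamma^3}$ for quadratic polynomials this yields no contradiction and no ``linear constraint on $(a_0,a_1)$''. The difficulty is precisely the head $R(T)=a_0T^n+a_1T^{n-1}$, which involves $\xi^3$ and $\xi^4$: these are not controlled by the approximation properties of $(1,\xi,\xi^2)$ that define extremal numbers.

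The paper's proof goes in the opposite direction. Writing the minimal polynomial as $R+P$ with $R$ the degree-$3$ or degree-$4$ head and $P\in\mathbb{Z}[T]_{\le 2}$, one proves directly a lower bound
\[
|R(\xi)+P(\xi)|\ \ge\ c\,H(R)^{-2\gamma^9}H(P)^{-\gamma},
\]
from which the theorem follows with $H(R)\ll D$. The key new ingredient is a lower bound on the fractional parts $\{x_{k,0}R(\xi)\}$, obtained by (i) showing via the explicit recurrence identities of the class $\mathcal{E}_a$ that $\{x_{k,0}R(\xi)\}$ is nearly periodic in $k$ with period $6$ (period $3$ if $\deg R=3$), and (ii) constructing, from $R$ and the quadratic factors $Q_k$ attached to consecutive minimal points, an integer $N_k$ that is provably nonzero once $H(R)\le cX_k^{1/\gamma^3}$, which forces $\{x_{k,0}R(\xi)\}\gg X_k^{-2/\gamma^2}$. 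Feeding this into $c_1\{x_{l,0}R(\xi)\}\le X_l|R(\xi)+P(\xi)|+H(P)X_l^{-1}$ and optimizing $l$ modulo $6$ is what produces the exponent $2\gamma^9$: one needs $X_k\gg H(R)^{\gamma^4}$ to apply step (ii), then possibly shift by up to five indices to reach the right residue class, so $X_{k+5}\ll H(R)^{\gamma^9}$. None of this is captured by bounds on $|Q(\xi)|$ for $\deg Q\le 2$; the substance lies in controlling $x_{k,0}\xi^3$ and $x_{k,0}\xi^4$ modulo $\mathbb{Z}$, which requires the specific algebraic identities of the sequence $(\mathbf{x}_k)$ and is the part your outline omits.
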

	In view of the preceding discussion, this means that for any fixed
	choice of $a_0, a_1 \in \mathbb{Z}$ not
	both $0$, the optimal exponent of approximation to
	non-quadratic real numbers by roots of polynomials of
	the form $a_0 T^4 + a_1 T^3 + a_2 T^2 + a_3 T + a_4$
	is $\gamma^2$.
	In particular, if we fix a real number $B > 0$, then $\gamma^2$
	is the optimal exponent of approximation to
	non-quadratic real numbers by algebraic numbers of degree
	$3$ or $4$ with denominator and trace bounded
	above by $B$ in absolute value.
	The real number $\xi$ that we use in the proof
	of Theorem \ref{introduction:T:3}
	belongs to the specific family of extremal
	real numbers considered by D.~{\sc Roy}
	in Theorem 3.1 of \cite{ARNCAI.2},
	some of which are given explicitly,
	by  Proposition 3.2 of \cite{ARNCAI.2},
	in terms of their continued fraction expansion.

	D.~Roy showed in \cite{ARNCAI.1} that for any extremal real number $\xi$
	there exists an unbounded sequence of primitive points
	${\bf x}_{k} = (x_{k,0}, x_{k,1}, x_{k,2})
	\in \mathbb{Z}^3$ indexed by integers $k \geq 1$, such that
\begin{equation}\label{introduction:extr:1}
	\| {{\bf  x}_{k+1} } \|_{\infty}
	\sim \| {{\bf  x}_k } \|_{\infty}^{\gamma},
	\quad
	\max\{|x_{k,0} \xi - x_{k,1}|_{\infty},|x_{k,0} \xi^2 - x_{k,2}|_{\infty}\}
	\ll \| {{\bf  x}_{k} } \|_{\infty}^{-1},
\end{equation}
	where $\| {\bf  x}_k \|_{\infty}
	 = \max\{|x_{k,0}|_{\infty},|x_{k,1}|_{\infty},|x_{k,2}|_{\infty}\}$.
	For $X,Y \in \mathbb{R}$ the notation $Y \ll X$
	means that $Y \leq c X$ for some constant $c>0$ independent of $X$ and $Y$
	and the notation $X \sim Y$ means that $Y \ll X \ll Y$.
	D.~Roy showed also that there exists a unique
	non-symmetric matrix $M$ with $\det(M) \neq 0$, such that
	for sufficiently large $k \geq 1$ viewing the point ${\bf  x}_k$
	as a symmetric matrix
	$
	\left(
	\begin{matrix}
	x_{k,0}      & x_{k,1}
	\\
	x_{k,1}      &  x_{k,2}
	\end{matrix}
	\right )
	$,
	the point ${\bf x}_{k+1}$ is a rational multiple
	of ${\bf x}_{k} M_{k} {\bf x}_{k-1}$, where
\begin{equation}\label{introduction:extr:2}
	M_k =
	\left \{
	\begin{matrix}
		M \text{ if } k \text{ is even},
		\\
		{}^t M\text{ if } k \text{ is odd}.
	\end{matrix}
	\right .
\end{equation}

	In \S \ref{C1:S4:SS3} we show similarly that
	there exists a number $\lambda_0 \approx 0.611455261\ldots$, so that
	if $\xi$ is non-quadratic and if
	$\lambda \in (\lambda_0,1/\gamma]$ are such that the
	inequalities (\ref{DavenportSchmidt:1}) have a non-zero solution
	${\bf x} \in \mathbb{Z}^3$ for each $X \geq1$, then
	there exist an unbounded sequence $({\bf x}_{k})_{k\geq1}$
	of primitive points in $\mathbb{Z}^3$
	satisfying constraints similar to (\ref{introduction:extr:1})
	and a non-symmetric matrix $M \in \Mat_{2\times2}(\mathbb{Z})$
	with $\det(M) \neq 0$, such that
	for sufficiently large $k\geq3$,
	the point ${\bf x}_{k+1}$ is a rational multiple
	of ${\bf x}_{k} M_{k} {\bf x}_{k-1}$, where $M_k$
	is defined as in (\ref{introduction:extr:2}).
	\ \\

	\section{Approximation to p-adic numbers}

	Now we turn to the problem of approximation to
	p-adic numbers by algebraic integers.
	Let $p$ be a prime number
	and let $|*|_{p}$ denotes the usual absolute value on $\mathbb{Q}_p$
	with $|p|_p=p^{-1}$.
	For each $n\geq2$, we define the exponent
	of approximation $\tau_{n}'$ as the
	supremum of all numbers $\tau \in \mathbb{R}$
	such that, for any transcendental number $\xi_p \in \mathbb{Z}_p$,
	there exist infinitely many algebraic integers $\alpha$ of
	degree $\leq n$, with $|\xi_p - \alpha|_{p} \leq H(\alpha)^{-\tau}$.

		In $2002$, O.~{\sc Teuli\'{e}} transposed the method of
	H.~{\sc Davenport} and W.M.~{\sc Schmidt} to the realm of p-adic numbers
	and showed similarly that $\tau_2' \geq 2$, $\tau_3' \geq \gamma^2$,
	$\tau_4' \geq 3$ and $\tau_n' \geq \lbrack (n+1)/2 \rbrack$ for each $n\geq5$
	(see in \cite{Teu}).
	To prove this in the case $n=3$, O.~{\sc Teuli\'{e}}
	considers the system of inequalities
\begin{equation}\label{Teulie:1}
	\max_{0\leq l \leq 2} |x_l|_{\infty} \leq X
	\ \text{ and } \
	\max_{0\leq l \leq 2} |x_l - x_0 \xi_p^l|_{p}
		\leq c X^{-\lambda}.
\end{equation}
	Choosing $\lambda = \gamma$ and assuming that
	$\xi_p \in \mathbb{Z}_p$ is non-quadratic,
	he shows that there exists a constant $c > 0$ such that
	inequalities (\ref{Teulie:1})
	have no non-zero solution ${\bf x} \in \mathbb{Z}^3$
	for arbitrarily large values of $X$ (see Theorem 2 of \cite{Teu}).
		Combining the above result with Mahler's Duality,
	he deduces that
	there are infinitely many algebraic integers $\alpha$ of degree $\leq 3$
	with $|\xi_p - \alpha|_{p} \leq H(\alpha)^{-\gamma^2}$
	(see Theorem 3 of \cite{Teu}).
	This means that $\tau_3' \geq \gamma^2$.

		Conversely, we show in \S\ref{C1:S5:SS2:p-adic} of 
		this thesis that, for each $\lambda < \gamma$,
	there exist a constant $c > 0$ and a number
	$\xi_p \in \mathbb{Q}_p$ which is non-quadratic,
	such that the inequalities (\ref{Teulie:1})
	have a non-zero solution ${\bf x} \in \mathbb{Z}^3$ for each
	$X \geq 1$.
		Moreover, suppose that $\xi_p \in \mathbb{Q}_p$
	is a non-quadratic and that $\lambda$ is such that the
	system of inequalities (\ref{Teulie:1})
	has a non-zero solution
	${\bf x} \in \mathbb{Z}^3$ for each $X \geq1$.
	Similarly as in the real case,
	we show that there exists some real number
	$\lambda_{p,0}\approx 1.615358873\ldots$, such that
	for each exponent $\lambda \in (\lambda_{p,0},\gamma]$
	there exist an unbounded sequence $({\bf y}_{k})_{k\geq1}$
	of primitive  points in $\mathbb{Z}^3$
	satisfying constraints similar to (\ref{introduction:extr:1})
	and a non-symmetric matrix $M \in \Mat_{2\times2}(\mathbb{Z})$
	with $\det(M) \neq 0$, such that
	for sufficiently large $k\geq3$, each point
	${\bf y}_{k+1}$ is a non-zero rational multiple of
	${\bf y}_{k}M_{k}{\bf y}_{k-1}$, where $M_k$
	is defined as in (\ref{introduction:extr:2}).
	\ \\

	\section{Simultaneous approximation to real and p-adic numbers}

	Now we consider simultaneous approximation
	to real and p-adic numbers by algebraic numbers of bounded degree.
	Our goal is to unify and extend the results
	of H.~{\sc Davenport} and W.M.~{\sc Schmidt} in \cite{DS}
	and those of O.~{\sc Teuli\'{e}} in \cite{Teu}
	concerning the system (\ref{DavenportSchmidt:1})
	or (\ref{Teulie:1}) and the exponents $\tau_3$ or $\tau_3'$.

	For this purpose, we fix a finite set $\mathcal{S}$
	of prime numbers and points
\[
	\bar \xi =
	(\xi_{\infty},(\xi_{p})_{{p} \in \mathcal{S}})
	\in \mathbb{R} \times \prod_{{p} \in \mathcal{S}}\mathbb{Q}_{p}
	\ \text{ and } \
	\bar \lambda = (\lambda_{\infty},(\lambda_{p})_{{p} \in \mathcal{S}})
	\in \mathbb{R}^{|\mathcal{S}|+1}.
\]
	We say that $\bar\lambda$ is an exponent of approximation
	in degree $n\geq1$
	to $\bar\xi$ if there exists a constant
	$c > 0$ such that the inequalities
\begin{equation}\label{introduction:1}
 \begin{aligned}
	&\max_{0\leq l \leq n} |x_l|_{\infty} \leq X,
	\\
	&\max_{0\leq l \leq n} |x_l - x_0 \xi_{\infty}^l|_{\infty}
		\leq c X^{-\lambda_{\infty}},
	\\
	&\max_{0\leq l \leq n} |x_l - x_0 \xi_{p}^l|_{p}
		\leq c X^{-\lambda_{p}} \
		( \forall {p} \in \mathcal{S} ),
 \end{aligned}
\end{equation}
	have a non-zero solution
	${\bf x} = (x_{0}, x_{1},\ldots,x_{n}) \in \mathbb{Z}^{n+1}$
	for each real number $X \geq 1$.

	Based on Minkowski's convex body theorem we
	show in Chapter~1 that $\bar \lambda$
	is an exponent of approximation in degree $n\geq1$
	to $\bar \xi$
	if the following conditions are satisfied
\[
	\lambda_{\infty} \geq -1,
	\quad
	\lambda_{p} \geq 0 \ (p \in \mathcal{S})
	\ \text{ and } \
	\lambda_{\infty} + \sum_{p \in \mathcal{S}}\lambda_{p} \leq 1/n.
\]

	In Chapter~1, we also prove the following statement 
	which provides constraints that a point $\bar\lambda$ must 
	satisfy in order to be an exponent of approximation
	in degree $2$.
\begin{theorem}\label{introduction:T:1}
	Suppose that
\[
	\lambda_{\infty} + \sum_{p \in \mathcal{S}}\lambda_{p} \geq 1/\gamma
\]
	and suppose that one of the
	following conditions is satisfied:
\begin{itemize}
	\item[(i)]
		$[\mathbb{Q}(\xi_{\infty}) \colon \mathbb{Q}] >2$
		and
		$ \lambda_{\infty} > 1/\gamma^2$,
	\item[(ii)]
		$\mathcal{S} = \{p\}$ for some prime number $p$,
		$[\mathbb{Q}(\xi_{p}) \colon \mathbb{Q}] >2$,
	\[
		-1 \leq \lambda_{\infty} <0
				\ \text{ and } \
				\lambda_{p} > 2 - 1/\gamma.
	\]

\end{itemize}
	If $\lambda_{\infty} + \sum_{p \in \mathcal{S}}\lambda_{p} = 1/\gamma$,
	there exists a constant $c > 0$ such that
	for $n=2$ the system of inequalities (\ref{introduction:1})
	have no non-zero solution ${\bf x} \in \mathbb{Z}^{3}$
	for arbitrarily large values of $X$.
	If $\lambda_{\infty} + \sum_{p \in \mathcal{S}}\lambda_{p} > 1/\gamma$,
	then any constant $c>0$ has this property.

\end{theorem}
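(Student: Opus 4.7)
The plan is to argue by contradiction: suppose that for some $c > 0$, the system (\ref{introduction:1}) with $n = 2$ admits a non-zero integer solution for every $X \geq 1$. By a standard minimization procedure, extract a sequence of \emph{minimal solutions} $\mathbf{x}_k = (x_{k,0}, x_{k,1}, x_{k,2}) \in \mathbb{Z}^3$ ($k \geq 1$) with heights $X_k = \|\mathbf{x}_k\|_\infty$ strictly increasing to infinity and consecutive $\mathbf{x}_k, \mathbf{x}_{k+1}$ linearly independent. Identify $\mathbf{x}_k$ with the symmetric $2 \times 2$ integer matrix $\mathbf{X}_k$ having $x_{k,0}, x_{k,1}$ on its first row and $x_{k,1}, x_{k,2}$ on its second. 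Writing $x_{k,l} = x_{k,0}\xi_v^l + \varepsilon_{k,l,v}$ at each place $v \in \{\infty\} \cup \mathcal{S}$, with $|\varepsilon_{k,l,v}|_v \ll c\, X_k^{-\lambda_v}$, a direct expansion of the determinant yields
\[
|\det \mathbf{X}_k|_\infty \ll c\, X_k^{1 - \lambda_\infty}, \qquad |\det \mathbf{X}_k|_p \ll c\, X_k^{-\lambda_p} \quad (p \in \mathcal{S}),
\]
while $|\det \mathbf{X}_k|_p \leq 1$ for $p \notin \mathcal{S}$. When $\det \mathbf{X}_k \neq 0$, the product formula applied to the non-zero integer $\det \mathbf{X}_k$ produces a lower bound linking $X_k$ with $\sigma := \lambda_\infty + \sum_{p \in \mathcal{S}} \lambda_p$.

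The central ingredient is a symmetric-matrix product construction. For each $k$, form the symmetric integer matrix
\[
\mathbf{Y}_k := \frac{1}{2}\bigl(\mathbf{X}_k \mathbf{X}_{k+1} + \mathbf{X}_{k+1} \mathbf{X}_k\bigr),
\]
whose associated point in $\mathbb{Z}^3$ has Archimedean norm $\ll X_k X_{k+1}$. At each place $v$, $\mathbf{X}_k$ is close to the rank-one matrix $x_{k,0}\bigl(\begin{smallmatrix}1 & \xi_v\\ \xi_v & \xi_v^2\end{smallmatrix}\bigr)$, so $\mathbf{Y}_k$ is close to $x_{k,0}x_{k+1,0}(1 + \xi_v^2)$ times the same rank-one matrix. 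A direct estimate then shows that $\mathbf{Y}_k$ satisfies the inequalities (\ref{introduction:1}) at height $\ll X_k X_{k+1}$ with only a mild degradation of the constant. Minimality of $(\mathbf{x}_k)$ thus forces the recurrence $X_{k+2} \ll X_k X_{k+1}$. The non-quadraticity of $\xi_\infty$ in case (i) (resp.\ of $\xi_p$ in case (ii)) enters precisely here: were $\mathbf{Y}_k$ proportional to $\mathbf{X}_k$ or to $\mathbf{X}_{k+1}$, one would extract a non-zero quadratic polynomial over $\mathbb{Z}$ vanishing at the relevant $\xi_v$, which is excluded by hypothesis.

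Combining the recurrence $X_{k+2} \ll X_k X_{k+1}$ with the product-formula bound $X_k^{1-\sigma} \gg 1$ (whenever $\det \mathbf{X}_k \neq 0$; the degenerate case is handled by an auxiliary non-vanishing argument) leads, in the limit, to $\sigma \leq 1/\gamma$, using $\gamma^2 = \gamma + 1$. When $\sigma > 1/\gamma$ this contradicts the hypothesis regardless of $c$, giving the second assertion of the theorem; in the critical case $\sigma = 1/\gamma$, a careful tracking of the multiplicative constants shows that the argument fails as soon as $c$ falls below some absolute threshold, giving the first assertion. Case (ii) follows the same mechanism but with the roles of the Archimedean and $p$-adic places exchanged: the condition $-1 \leq \lambda_\infty < 0$ renders the Archimedean approximation inequality trivial, the bound on $|\det \mathbf{X}_k|_\infty$ must be replaced by the trivial $X_k^{1+|\lambda_\infty|} \leq X_k^2$, and the hypothesis $\lambda_p > 2 - 1/\gamma$ precisely compensates for this loss of a factor $X_k$. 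The principal obstacle will be the adelic book-keeping in the product-construction step, where the admissible constants must be controlled simultaneously at every place and the non-quadraticity hypothesis must be invoked cleanly at each iteration.
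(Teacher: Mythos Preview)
Your proposal has a genuine gap at the ``central ingredient.'' The symmetric product $\mathbf{Y}_k = \tfrac{1}{2}(\mathbf{X}_k\mathbf{X}_{k+1} + \mathbf{X}_{k+1}\mathbf{X}_k)$ does \emph{not} satisfy the approximation inequalities at height $\asymp X_k X_{k+1}$. Expanding $\mathbf{X}_j = x_{j,0}T_v + E_{j,v}$ as you suggest, the cross terms in $\mathbf{Y}_k - y_0 T_v$ contain $x_{k+1,0}\,E_{k,v}$, which at the Archimedean place has size $\asymp X_{k+1}\,L_\infty(\mathbf{x}_k)$. Even with the sharpest available bound $L_\infty(\mathbf{x}_k)\ll X_{k+1}^{-\lambda_\infty}$, this is $\asymp X_{k+1}^{1-\lambda_\infty}$, whereas for $\mathbf{Y}_k$ to be admissible at height $X_kX_{k+1}$ you would need $L_\infty(\mathbf{Y}_k)\ll (X_kX_{k+1})^{-\lambda_\infty}$. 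The ratio is $X_k^{\lambda_\infty}X_{k+1}\to\infty$, so $\mathbf{Y}_k$ is never a solution for $\lambda_\infty>0$. The same failure occurs at the $p$-adic places. (Incidentally, $\mathbf{Y}_k$ need not even have integer entries.) So the recurrence $X_{k+2}\ll X_kX_{k+1}$ is unproved.

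Even if you granted that recurrence, your combination step does not yield $\sigma\le 1/\gamma$. Your determinant bound $|\det\mathbf{X}_k|_\infty\ll X_k^{1-\lambda_\infty}$, $|\det\mathbf{X}_k|_p\ll X_k^{-\lambda_p}$ with the product formula gives only $X_k^{1-\sigma}\gg 1$, i.e.\ $\sigma\le 1$; a Fibonacci-type growth bound on $X_k$ adds nothing to this. The paper extracts $1/\gamma$ from \emph{two} coupled inequalities. First, using that the minimal point $\mathbf{x}_k$ persists as a solution up to $X_{k+1}$, one has $L_\nu(\mathbf{x}_k)\ll X_{k+1}^{-\lambda_\nu}$ at every $\nu$, whence $1\ll X_k X_{k+1}^{-\lambda}$. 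Second, the non-quadraticity hypothesis forces three consecutive minimal points to be linearly independent infinitely often (otherwise they would all lie in a fixed rational plane, and the product formula applied to a $2\times2$ minor of $(\mathbf{x}'_k,\mathbf{x}_{k+1})$ contradicts the exponent hypotheses); applying the product formula to the nonzero $3\times 3$ determinant $\det(\mathbf{x}'_{k-1},\mathbf{x}'_k,\mathbf{x}_{k+1})$ then gives $1\ll X_k^{-\lambda}X_{k+1}^{1-\lambda}$. Raising the first to the power $\lambda$ and multiplying by the second yields $X_{k+1}^{1-\lambda-\lambda^2}\gg 1$, which is exactly $\lambda\le 1/\gamma$. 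The $3\times3$ determinant estimate is the object you are missing; your $\mathbf{Y}_k$ construction does not substitute for it.
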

	Applying this result with $\mathcal{S}=\emptyset$
	and $\lambda_{\infty} = 1/{\gamma}$ we recover
	Theorem 1a of H.~{\sc Davenport} and W.M.~{\sc Schmidt} in \cite{DS}.
	Applying it with $\mathcal{S}=\{p\}$, $\lambda_{\infty} = -1$
	and $\lambda_{p} = {\gamma}$, it gives
	Theorem 2 of O.~{\sc Teuli\'{e}} in \cite{Teu}.
	Combining the above result with Mahler's Duality we obtain the
	following statement.
\begin{theorem}\label{introduction:T:2}
	Suppose that  $\bar \xi$ and $\bar \lambda$
	satisfy the hypothesis of Theorem \ref{introduction:T:1}
	and suppose that
	$\lambda_{\infty} + \sum_{p \in \mathcal{S}}\lambda_{p} = 1/\gamma$.
	Let $R(T)$ be a polynomial in $\mathbb{Z}[T]$.
	Suppose $R(\xi_p) \in \mathbb{Z}_{p}$ for each ${p} \in \mathcal{S}$.
	Then there exist infinitely many polynomials $F(T) \in \mathbb{Z}[T]$ with
	the following properties:
	\\
	$(i) $ \ $\deg(R - F) \leq 2$,
	\\
	$(ii)$  if $\lambda_{\infty}>-1$,
		there exists a real root $\alpha_{\infty}$ of $F$, such that
\[
	| \xi_{\infty} - \alpha_{\infty} |_{\infty}
		\ll H(F)^{-\gamma(\lambda_{\infty}+1)},
\]
	\\
	$(iii)$ for each ${p} \in \mathcal{S}$,
		there exists a root $\alpha_{p}$ of $F$
		in ${\mathbb{Q}}_p$, such that
\[
 	| \xi_p - \alpha_p |_p \ll H(F)^{-\gamma\lambda_{p}}.
\]
	Moreover,
	for each ${p} \in \mathcal{S}$
	such that $\xi_{p} \in \mathbb{Z}_{p}$,
	we can choose $\alpha_{p} \in \mathbb{Z}_{p}$.
\end{theorem}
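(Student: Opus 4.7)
The plan is to deduce Theorem~\ref{introduction:T:2} from Theorem~\ref{introduction:T:1} along the classical path of Davenport--Schmidt and Teuli\'e, using Mahler's duality in the $S$-arithmetic setting. Theorem~\ref{introduction:T:1} applied with $n=2$ supplies an unbounded sequence of real numbers $X$ at which the adelic convex body $\mathcal{C}(X) \subset \mathbb{R}^3 \times \prod_{p\in\mathcal{S}}\mathbb{Q}_p^3$ cut out by the inequalities (\ref{introduction:1}) contains no nonzero point of $\mathbb{Z}^3$; equivalently, the first successive minimum $\mu_1(\mathcal{C}(X))$ is bounded below by a positive constant at these values of $X$. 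Mahler's reciprocity between successive minima of polar convex bodies, combined with Minkowski's second theorem, then forces the polar body $\mathcal{C}(X)^{\ast}$ to contain a $\mathbb{Z}$-basis $P_1,P_2,P_3$ of integer polynomials of degree at most $2$, each obeying bounds of the shape $H(P_i) \ll X^{\lambda_\infty}$, $|P_i(\xi_\infty)|_\infty \ll X^{-1}$, and $|P_i(\xi_p)|_p \ll X^{-\lambda_p}$ for every $p\in\mathcal{S}$.

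I would then pass from this homogeneous statement to the inhomogeneous one required by the theorem. Because $1,\xi_\infty,\xi_\infty^2$ are linearly independent over $\mathbb{Q}$ and, at the finite places, $R(\xi_p)\in\mathbb{Z}_p$ lies in the $\mathbb{Z}_p$-span of $1,\xi_p,\xi_p^2$, a rounding argument using the basis $P_1,P_2,P_3$ produces integers $q_0,q_1,q_2$ such that the quadratic $Q(T):=q_0+q_1T+q_2T^2$ satisfies $Q(\xi_v)\approx R(\xi_v)$ at every place $v\in\{\infty\}\cup\mathcal{S}$ with the same exponents as the basis elements. Setting $F:=R-Q\in\mathbb{Z}[T]$ we get $\deg(R-F)\leq 2$, $H(F)\ll X^{\lambda_\infty}$, and
\[
|F(\xi_\infty)|_\infty \ll X^{-1-\lambda_\infty},\qquad |F(\xi_p)|_p \ll X^{-\lambda_p}\quad(p\in\mathcal{S}).
\]
Using $1+\lambda_\infty+\sum_p\lambda_p=1+1/\gamma=\gamma$, Newton's method at the real place and Hensel's lemma at each $p\in\mathcal{S}$ (once $|F'(\xi_v)|_v$ is shown to be not unusually small, via a non-degeneracy argument resting on the $\mathbb{Z}$-basis property of the $P_i$'s) deliver roots $\alpha_\infty\in\mathbb{R}$ and $\alpha_p\in\mathbb{Q}_p$ of $F$ with
\[
|\xi_\infty-\alpha_\infty|_\infty \ll H(F)^{-\gamma(\lambda_\infty+1)},\qquad |\xi_p-\alpha_p|_p \ll H(F)^{-\gamma\lambda_p}.
\]
The claim $\alpha_p\in\mathbb{Z}_p$ whenever $\xi_p\in\mathbb{Z}_p$ is then immediate from the ultrametric inequality $|\alpha_p|_p\leq\max(|\xi_p|_p,|\alpha_p-\xi_p|_p)\leq1$.

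The main obstacle, as is typical in this circle of ideas, lies in the second paragraph: converting a basis of short \emph{homogeneous} dual polynomials into an \emph{inhomogeneous} approximation to the prescribed $R$ without any loss in the exponents. This requires invoking Minkowski's second theorem on a translated lattice and using the hypothesis $R(\xi_p)\in\mathbb{Z}_p$ to guarantee solvability of the finite-place part over $\mathbb{Z}$. A secondary technical issue is the lower bound on $|F'(\xi_v)|_v$ needed to apply Newton/Hensel sharply; because $F=R-Q$ with $R$ fixed and $Q$ of controlled height, this should follow from a separation argument on the polar lattice analogous to those used in \cite{DS} and \cite{Teu}.
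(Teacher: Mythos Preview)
Your skeleton---Mahler duality to produce linearly independent small dual polynomials, an inhomogeneous step to hit the target values $R(\xi_\nu)$, then Newton/Hensel---matches the paper's strategy. But the intermediate bounds you quote are not right, and the part you flag as a ``secondary technical issue'' is in fact the heart of the construction.

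On the bounds: the inequalities $H(P_i)\ll X^{\lambda_\infty}$ and $|P_i(\xi_p)|_p\ll X^{-\lambda_p}$ cannot hold simultaneously for \emph{integer} polynomials. What Mahler duality actually gives (Proposition~\ref{RP3:P:1}), after clearing denominators via the dual lattice, is $H(P_i)\ll X^{\lambda}$, $|P_i(\xi_\infty)|_\infty\ll X^{\lambda-\lambda_\infty-1}$, $|P_i(\xi_p)|_p\ll X^{-\lambda_p}$, with $\lambda=\sum_\nu\lambda_\nu=1/\gamma$. With these the final exponents work out; with yours they do not (your own arithmetic from $H(F)\ll X^{\lambda_\infty}$ does not lead to $H(F)^{-\gamma(\lambda_\infty+1)}$).

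On the inhomogeneous step and the derivative: the paper does not do a ``rounding argument'' followed by a separate non-degeneracy argument. It \emph{prescribes the derivative from the start}. At each place $\nu$ one targets the linear polynomial $R_\nu(T)=-\eta_\nu+\epsilon_\nu+\rho_\nu(T-\xi_\nu)$, where $\eta_\nu=R(\xi_\nu)$, the $\epsilon_\nu$ are small perturbations chosen to force two-sided estimates on $|P(\xi_\nu)+\eta_\nu|_\nu$, and $\rho_\nu$ is chosen in advance (at $\infty$, of order $X^\lambda$; at $p$, with $0<|\rho_p|_p\le\|{\bf t}_p\|_p^{-1}$ and $|\rho_p|_p\ne|R'(\xi_p)|_p$). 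One writes $R_\nu=\sum_i\theta_{\nu,i}P_i$, uses an explicit Strong Approximation lemma (Lemma~\ref{RP3:L:1}) to find a single family $r_i\in\mathbb{Q}$ simultaneously close to the $\theta_{\nu,i}$ at all places, and sets $P=\sum_i r_iP_i$. The integrality of $P$ is then \emph{deduced}, not assumed: since $R_\nu$ has no coefficient in degree $\ge 2$, the higher coefficients of $P$ are $p$-adically small for $p\in\mathcal{S}$ and integral outside $\mathcal{S}$, hence integers; the two lowest follow from the controlled values $|P'(\xi_p)|_p=|\rho_p|_p$ and $|P(\xi_p)+\eta_p|_p\le 1$. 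This is Theorem~\ref{RP3:T:1}, and it hands you $|F'(\xi_\nu)|_\nu$ exactly, so no post-hoc ``separation argument on the polar lattice'' is needed; Newton at $\infty$ and Hensel at each $p$ (Theorem~\ref{RP3:C:1}) then finish. Your proposal leaves both the integrality of $Q$ and the lower bound on $|F'(\xi_\nu)|_\nu$ as promises; the paper's linear-target device is precisely what discharges both at once.
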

	Here $H(F)$ stands for the \emph{height}
	of a polynomial $F$, which
	is the maximum of the absolute values of its coefficients.

	Suppose that $\mathcal{S} = \emptyset$.
	Then Theorem \ref{introduction:T:1} implies Theorem 1a  in \cite{DS}
	while Theorem \ref{introduction:T:2} applied with $R(T) = T^3$
	implies Theorem 1 in \cite{DS}, due to H.~{\sc Davenport} and W.M.~{\sc Schmidt}.
	Let $p$ be a prime number and suppose that $\mathcal{S} = \{p\}$.
	If $\bar \lambda = (-1,\lambda_{p})$, then
	Theorem \ref{introduction:T:1} implies
	the case $n = 3$ of Theorem 2 in \cite{Teu}
	while Theorem \ref{introduction:T:2} applied with $R(T) = T^3$
	implies the case $n = 3$ of
	Theorem 3 in \cite{Teu}, due to O.~{\sc Teuli\'{e}}.
\\
\\
	Fix $n\in\mathbb{Z}_{\geq2}$, $R(T) \in \mathbb{Z}[T]$
	and a finite set $\mathcal{S}$ of prime numbers
	and define $\tau_{\mathcal{S},R,n}$ as the supremum of all sums
\[
	\sum_{\nu\in\{\infty\}\cup\mathcal{S}}\tau_{\nu}
\]
	taken over families
	$\tau_{\nu} \in \mathbb{R}$ $(\nu\in\{\infty\}\cup\mathcal{S})$
	such that, for any transcendental numbers
	$\xi_{\infty} \in \mathbb{R}$
	and
	$\xi_p \in \mathbb{Z}_p$ $\ ({p} \in \mathcal{S})$,
	there exist infinitely many polynomials $F(T)\in\mathbb{Z}[T]$
	with $\deg(R-F)\leq n$ having roots $\alpha_{\infty} \in \mathbb{R}$
	and $\alpha_p \in \mathbb{Z}_p$ $\ ({p} \in \mathcal{S})$ such that
\[
	|\xi_{\nu} - \alpha_{\nu}|_{\nu} \leq H(F)^{-\tau_{\nu}}
	\ (\nu\in\{\infty\}\cup\mathcal{S}).
\]
	In this context Theorem \ref{introduction:T:2}
	leads to the conclusion that \ $\tau_{\mathcal{S},R,2}\geq \gamma^2$.
\\

	In \S\ref{C1:S5:examples}, we construct examples showing that the condition 
	$\lambda_{\infty} +
	\sum_{p \in \mathcal{S}}\lambda_{p} \geq 1/\gamma$
	in Theorem \ref{introduction:T:1} cannot be improved.
        We obtain the following statement.

\begin{theorem}\label{introduction:T:4}
	For any $\bar \lambda \in \mathbb{R}_{>0}^{|\mathcal{S}|+1}$
	with
\[
	\sum_{\nu \in \mathcal{S}\cup\{\infty\}}\lambda_{\nu}
	< \frac{1}{\gamma}
\]
	there exist a non-zero point
	$\bar \xi =
	(\xi_{\infty},(\xi_{p})_{{p} \in \mathcal{S}})
	\in \mathbb{R} \times \prod_{{p} \in \mathcal{S}}\mathbb{Q}_{p}$ with
	$[\mathbb{Q}(\xi_{\infty}):\mathbb{Q}]>2$
	such that $\bar \lambda$ is an exponent of approximation
	in degree $2$ to $\bar \xi$.
\end{theorem}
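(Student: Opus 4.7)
The plan is to build $\bar\xi$ as the simultaneous limit, at each place $\nu\in\{\infty\}\cup\mathcal{S}$, of the ratios $x_{k,1}/x_{k,0}$ coming from a carefully chosen sequence of primitive points $(\mathbf{x}_k)_{k\geq 1}$ in $\mathbb{Z}^3$ that plays at every place the role of the sequence of minimal points attached to an extremal real number. More precisely, I will adapt the recursive construction of \cite{ARNCAI.2} to produce a non-quadratic $\xi_\infty\in\mathbb{R}$, numbers $\xi_p\in\mathbb{Q}_p$ for $p\in\mathcal{S}$, and a sequence $(\mathbf{x}_k)$ with $X_k:=\|\mathbf{x}_k\|_\infty$ satisfying $X_{k+1}\sim X_k^{\gamma}$ and an approximation bound $\max_{0\leq l\leq 2}|x_{k,l}-x_{k,0}\xi_\nu^l|_\nu\ll X_k^{-\eta_\nu}$ at every place, with exponents $\eta_\nu>0$ which at the finite places can be made as large as I want by a suitable choice of scaling.

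I begin by choosing $\delta>0$ small enough that $\gamma\sum_\nu(\lambda_\nu+\delta)<1$ and setting $\eta_\nu:=\gamma(\lambda_\nu+\delta)$. Granted the sequence above, the verification that $\bar\lambda$ is an exponent of approximation in degree $2$ to $\bar\xi$ is routine: for $X$ sufficiently large, pick $k$ with $X_k\leq X<X_{k+1}$ and take $\mathbf{x}=\mathbf{x}_k$. Then $\|\mathbf{x}\|_\infty\leq X$, and at each place
\[
X_k^{-\eta_\nu}\ll X_{k+1}^{-\eta_\nu/\gamma}\leq c'\,X^{-(\lambda_\nu+\delta)}\leq c\,X^{-\lambda_\nu},
\]
using $X\leq X_{k+1}\ll X_k^{\gamma}$; finitely many small values of $X$ are absorbed into~$c$.

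The bulk of the work lies in producing the sequence. Following \cite{ARNCAI.2}, I fix a non-symmetric matrix $M\in\Mat_{2\times 2}(\mathbb{Z})$ with $\det(M)\neq 0$ and initial data $\mathbf{x}_1,\mathbf{x}_2\in\mathbb{Z}^3$, viewed as $2\times 2$ symmetric matrices, and define $\mathbf{x}_{k+1}$ (up to sign) as $\mathbf{x}_k M_k\mathbf{x}_{k-1}$, with $M_k$ alternating between $M$ and ${}^tM$ as in (\ref{introduction:extr:2}). The growth $X_{k+1}\sim X_k^{\gamma}$, the real convergence of $x_{k,1}/x_{k,0}$ to a limit $\xi_\infty$ with $\max_l|x_{k,l}-x_{k,0}\xi_\infty^l|_\infty\ll X_k^{-1}$, and the non-quadraticity of $\xi_\infty$ for generic~$M$, all come from the spectral analysis of the associated transfer maps exactly as in the archimedean-only setting of \cite{ARNCAI.2}.

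The main obstacle, and the genuinely new ingredient, is to graft onto this construction simultaneous $p$-adic convergence at every $p\in\mathcal{S}$. From the identity
\[
\frac{x_{k+1,1}}{x_{k+1,0}}-\frac{x_{k,1}}{x_{k,0}}=\frac{x_{k+1,1}x_{k,0}-x_{k+1,0}x_{k,1}}{x_{k,0}\,x_{k+1,0}},
\]
the successive differences are bilinear forms in the coordinates of $\mathbf{x}_{k-1},\mathbf{x}_k$ whose coefficients are determined by $M$. By choosing $M$ and $\mathbf{x}_1,\mathbf{x}_2$ so that their entries are divisible by a sufficiently large power of each $p\in\mathcal{S}$ while remaining otherwise generic, the numerators acquire large $p$-adic valuation at every step of the recursion while the denominators remain essentially inert $p$-adically; this makes $(x_{k,1}/x_{k,0})$ Cauchy in each $\mathbb{Q}_p$ and yields a limit $\xi_p$ with $\max_l|x_{k,l}-x_{k,0}\xi_p^l|_p\ll X_k^{-\eta_p}$ for $\eta_p$ as large as desired. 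Since these $p$-adic scaling conditions are open and do not disturb the real spectral behaviour securing (a) and the non-quadraticity of $\xi_\infty$, the archimedean and non-archimedean requirements are compatible, completing the construction.
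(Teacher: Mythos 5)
Your reduction of the theorem to the existence of a sequence $(\mathbf{x}_k)$ with $X_{k+1}\sim X_k^{\gamma}$ and $L_\nu(\mathbf{x}_k)\ll X_k^{-\eta_\nu}$, where $\eta_\nu=\gamma(\lambda_\nu+\delta)$, is correct and matches the paper's strategy (Corollary \ref{RP2:C:1} plays exactly this role). The gap is in the construction of the sequence, specifically in the $p$-adic mechanism. First, it is internally inconsistent: if the entries of $M$, $\mathbf{x}_1$, $\mathbf{x}_2$ are divisible by $p^N$, the recursion $\mathbf{x}_{k+1}=\mathbf{x}_kM_k\mathbf{x}_{k-1}$ forces the $p$-adic valuation of \emph{all} entries of $\mathbf{x}_k$ --- in particular of the denominators $x_{k,0}$ --- to grow Fibonacci-fast, so the denominators are anything but ``inert''. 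What this really produces is non-primitive points $\mathbf{x}_k=p^{N_k}\mathbf{v}_k$, for which $L_p$ is automatically small for \emph{every} $\xi_p$ with $\|\mathbf{t}_p\|_p$ bounded; the entire cost is paid at the archimedean place, where both $\|\mathbf{x}_k\|_\infty$ and $L_\infty(\mathbf{x}_k)$ are inflated by $p^{N_k}$. Writing $p^{N_k}=\|\mathbf{v}_k\|_\infty^{\sigma}$ one finds $\eta_\infty=(1-\sigma)/(1+\sigma)$ and $\eta_p=\sigma/(1+\sigma)$, so $\eta_\infty+2\eta_p=1$: the scaling device only reaches exponents with $\lambda_\infty+2\sum_p\lambda_p<1/\gamma$ and cannot prove the theorem in the stated range. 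Second, the assertion that the $\eta_p$ can be made ``as large as desired'' while the real behaviour is undisturbed contradicts the product formula: by Lemma \ref{I:2:L:1}(i), whenever $\det(\mathbf{x}_k)\neq0$ one has $1\ll X_kL_\infty(\mathbf{x}_k)\prod_pL_p(\mathbf{x}_k)$, hence $\eta_\infty+\sum_p\eta_p\leq1$. Negotiating this single unit of budget among the places is the entire content of the theorem, and your proposal never confronts it.

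The mechanism that actually works (the paper's, following \cite{DROY.2}) keeps $\|\mathbf{x}_k\|_p=1$ and instead makes the \emph{determinants} highly $p$-divisible: for a multiplicative Fibonacci sequence $\mathbf{w}_{i+2}=\mathbf{w}_{i+1}\mathbf{w}_i$ one has $\det(\mathbf{w}_{i+2})=\det(\mathbf{w}_{i+1})\det(\mathbf{w}_i)$, and the identity $J\mathbf{y}_iJ\mathbf{y}_{i+1}=-\det(\mathbf{y}_i)N_i^{-1}\mathbf{y}_{i-1}$ shows that the wedge products $\mathbf{y}_i\wedge\mathbf{y}_{i+1}$ --- the numerators of your successive differences --- are $p$-adically small precisely to the order $|\det(\mathbf{w}_i)|_p$, with no loss of primitivity and hence no archimedean penalty. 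Taking $\det(\mathbf{w}_0)=\det(\mathbf{w}_1)=a$ with $a$ a product of large prime powers from $\mathcal{S}$, and $\|\mathbf{w}_0\|_\infty\approx ab$ with $b$ large, splits the budget as $|\det(\mathbf{w}_i)|_\infty\approx\|\mathbf{w}_i\|_\infty^{\beta}$ and $|\det(\mathbf{w}_i)|_p\approx\|\mathbf{w}_i\|_\infty^{-\alpha_p}$ with $\sum_p\alpha_p\leq\beta<1$, giving $\eta_\infty=(1-\beta)/\gamma$ and $\eta_p=\alpha_p/\gamma$, which covers the whole range $\sum_\nu\lambda_\nu<1/\gamma$. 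Your construction needs to be rebuilt around this determinant bookkeeping; as written, the final ``open conditions'' compatibility claim is where the proof actually lives, and it is false in the form stated.
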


	We also prove the following result which shows that, for any $\lambda_{p}$
	with $\lambda_{p} < \gamma$, the pair
	$\bar \lambda = (-1,\lambda_{p})$ is an exponent of approximation
	in degree $2$ to some point $\bar \xi = (\xi_{\infty},\xi_{p})$,
	with $[\mathbb{Q}(\xi_{p}):\mathbb{Q}]>2$.

\begin{theorem}\label{introduction:T:2_0}
	Let $p$ be some prime number.
	For any real $\lambda_{p} < \gamma$,
	there exists a number $\xi_{p} \in \mathbb{Q}_p$
	with $[\mathbb{Q}(\xi_{p}):\mathbb{Q}]>2$,
	such that the inequalities
\[
	\max_{0\leq l \leq 2} |x_l|_{\infty} \leq X
	\ \text{ and } \
	\max_{0\leq l \leq 2} |x_l - x_0 \xi_{p}^l|_{p}
		\leq c X^{-\lambda_{p}},
\]
	have a non zero solution ${\bf x} \in \mathbb{Z}^3$, for any $X \gg 1$.
\end{theorem}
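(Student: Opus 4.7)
The plan is to give a $p$-adic analogue of D.~Roy's construction of extremal real numbers from \cite{ARNCAI.1}, replacing archimedean convergence with $p$-adic convergence. I would fix a non-symmetric matrix $M \in \Mat_{2\times 2}(\mathbb{Z})$ whose determinant is a positive power of $p$, together with two initial primitive vectors ${\bf y}_1, {\bf y}_2 \in \mathbb{Z}^3$, identifying each ${\bf y}_k=(y_{k,0},y_{k,1},y_{k,2})$ with the symmetric matrix having rows $(y_{k,0},y_{k,1})$ and $(y_{k,1},y_{k,2})$. I would then define ${\bf y}_{k+1}$ recursively as the primitive integer representative of ${\bf y}_k M_k {\bf y}_{k-1}$, with $M_k$ alternating between $M$ and $M^t$ as in (\ref{introduction:extr:2}); the alternation is precisely what forces the triple product to be symmetric, so the recursion is well-defined.

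Next I would establish the two central quantitative estimates by induction on $k$. On the archimedean side, a mild nondegeneracy hypothesis on the initial data yields $\|{\bf y}_{k+1}\|_\infty \sim \|{\bf y}_k\|_\infty\,\|{\bf y}_{k-1}\|_\infty$ and hence the Fibonacci-type growth $\|{\bf y}_{k+1}\|_\infty \sim \|{\bf y}_k\|_\infty^{\gamma}$. On the $p$-adic side, the multiplicativity of the determinant injects an extra factor of $\det M$ into $\det({\bf y}_{k+1}) = y_{k+1,0}y_{k+1,2} - y_{k+1,1}^2$ at each step of the recursion, producing geometric $p$-adic decay. By calibrating $\det M$ together with the $p$-adic reductions of ${\bf y}_1$ and ${\bf y}_2$, I can arrange $|\det({\bf y}_k)|_p \ll \|{\bf y}_k\|_\infty^{-\mu}$ for any prescribed $\mu < \gamma^2$; the critical value $\mu = \gamma^2$ matches the role of $\mu = 1$ in Roy's real construction, rescaled by the dual exponent $\gamma$ of the $p$-adic problem.

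I would then set $\xi_p := \lim_{k\to\infty} y_{k,1}/y_{k,0}$ in $\mathbb{Q}_p$. The $p$-adic smallness of $\det({\bf y}_k)$ means each ${\bf y}_k$ is $p$-adically close to a rank one symmetric matrix of the shape $(1,\alpha_k,\alpha_k^2)$, which both proves convergence of the partial quotients and yields $|y_{k,0}\xi_p - y_{k,1}|_p$ and $|y_{k,0}\xi_p^2 - y_{k,2}|_p \ll \|{\bf y}_k\|_\infty^{-\mu}$ along the discrete sequence. Interpolating across the $\gamma$-gap between consecutive $\|{\bf y}_k\|_\infty$ converts this into an approximation exponent $\mu/\gamma$ valid for every $X \geq 1$, so taking $\mu$ sufficiently close to $\gamma^2$ attains any prescribed $\lambda_p < \gamma$.

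The chief obstacle is to verify that the resulting $\xi_p$ is \emph{not} quadratic over $\mathbb{Q}$. If $\xi_p$ were a root of some $Q \in \mathbb{Z}[T]$ of degree $\leq 2$, the coefficients of $Q$ would define a fixed hyperplane in $\mathbb{Z}^3$ to which the vectors ${\bf y}_k$ belong up to a bounded error; substituting this constraint into the recursion ${\bf y}_k M_k {\bf y}_{k-1}$ and chasing through the alternation would force the algebraic identity $M = M^t$, contradicting our non-symmetric choice of $M$. Turning this sketch into a rigorous contradiction, while simultaneously selecting ${\bf y}_1, {\bf y}_2$, $M$, and $\det M$ so that all of the preceding growth and decay estimates hold, is where the bulk of the work lies; taking $M$ with small integer entries and $\det M = \pm p$ should be a natural starting point.
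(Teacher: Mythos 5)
Your overall strategy — a Fibonacci-type recursion ${\bf y}_{k+1} \propto {\bf y}_k M_k {\bf y}_{k-1}$ on symmetric integer $2\times 2$ matrices with alternating $M,{}^tM$, producing a $p$-adic limit $\xi_p$ — is essentially the construction the paper uses (packaged there as an admissible Fibonacci sequence ${\bf w}_{i+2}={\bf w}_{i+1}{\bf w}_i$ with ${\bf y}_i={\bf w}_iN_i$, see Proposition~\ref{RP2:P:1}, Corollary~\ref{III:C:1} and Corollary~\ref{III:C:2}). Your non-quadraticity argument is also in the spirit of the paper's Proposition~\ref{RP2:P:1-1}. However, the quantitative heart of your proposal does not work as written.

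The claim that you can arrange $|\det({\bf y}_k)|_p \ll \|{\bf y}_k\|_\infty^{-\mu}$ for primitive ${\bf y}_k$ with $\mu$ as close to $\gamma^2\approx 2.618$ as you like is impossible. For a non-zero integer $\det({\bf y}_k)$ the product formula gives $|\det({\bf y}_k)|_\infty\,|\det({\bf y}_k)|_p\geq 1$, and trivially $|\det({\bf y}_k)|_\infty\leq 2\|{\bf y}_k\|_\infty^2$, so $|\det({\bf y}_k)|_p\geq\tfrac12\|{\bf y}_k\|_\infty^{-2}$. The same lower bound passes to $L_p({\bf y}_k)$ since $\|{\bf y}_k\|_p=1$ forces $|\det({\bf y}_k)|_p\ll L_p({\bf y}_k)$ (Lemma~\ref{I:2:L:1}(i)). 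Hence $\mu\leq 2<\gamma^2$, with equality the best case, consistent with the $Y_k^{-2}\ll L_p({\bf y}_k)$ estimate in Proposition~\ref{PIII:P:1}.

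With that ceiling the naive interpolation you describe — using the same primitive point ${\bf y}_k$ for all $X\in[Y_k,Y_{k+1}]$ with $Y_{k+1}\sim Y_k^\gamma$ — yields at best $\lambda_p=\mu/\gamma\leq 2/\gamma\approx 1.236$, well short of $\gamma\approx 1.618$. The missing idea is that for intermediate $X\in[Y_k,Y_{k+1}]$ one must take ${\bf x}=p^j{\bf y}_k$ with $j\approx\log_p(X/Y_k)$: the factor $p^j$ increases $\|{\bf x}\|_\infty$ to fill the archimedean window while simultaneously shrinking $L_p({\bf x})=p^{-j}L_p({\bf y}_k)$. Checking the worst case $j\approx(\gamma-1)\log_p Y_k$ with $L_p({\bf y}_k)\ll Y_k^{-2}$ gives precisely $\lambda_p\leq\gamma$, which is why $\gamma$ (and not $2/\gamma$ or $\gamma^2/\gamma$) is the threshold. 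The paper builds this rescaling into the criterion (Proposition~\ref{I:1:P:3}), which, in the single-prime case, is Remark~\ref{I:1:2:R:1}: one works with the product $\|{\bf x}\|_\infty L_p({\bf x})\leq cX^{1-\lambda_p}$ rather than $L_p({\bf x})\leq cX^{-\lambda_p}$ directly. Corollary~\ref{III:C:1} is exactly the bookkeeping needed to run this, with the ratio $\alpha_p=2m/(m+3)\to 2$ driven not by shrinking $\det M$ (which cannot be decoupled from $\|M\|_\infty$) but by loading the \emph{initial} matrix ${\bf w}_1=\left(\begin{smallmatrix}1&p^m\\-p^m&0\end{smallmatrix}\right)$ with a large power $p^m$; you would need an analogous parameter in your setup. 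So the skeleton is right, but without the $p^j$-rescaling and a correct choice of calibrating parameter your argument would at best establish $\lambda_p< 2/\gamma$, not $\lambda_p<\gamma$.
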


\chapter{Simultaneous Approximation to real and {\it p}-adic numbers}
\section{General Setting}
	Let $n \geq 1$ be an integer, let $\mathcal{S}$ be a finite set of
	prime numbers and let
	$\bar \xi =
	(\xi_{\infty},(\xi_{p})_{{p} \in \mathcal{S}})
	\in \mathbb{R} \times \prod_{{p} \in \mathcal{S}}\mathbb{Q}_{p}$.
	Let ${\nu} \in \mathcal{S}$ or ${\nu} = \infty$.
	For any point ${\bf  x} = (x_{0}, x_{1},\ldots,x_{n}) \in
	\mathbb{Q}_{\nu}^{n+1}$	we define the ${\nu}$-adic norm of ${\bf x}$ by
\begin{equation}\label{I:1:D:0:1}
 \| {\bf  x } \|_{\nu} := \max_{0 \leq i\leq n}\{|x_i|_{\nu}\},
\end{equation}
	and we put
\begin{equation}\label{I:1:D:0:2}
 	L_{\nu}({\bf x }) := \| {\bf  x }  - x_{0} {\bf t }_{\nu} \|_{\nu},
\end{equation}
	where ${\bf t}_{\nu} := (1,\xi_{\nu},\ldots,\xi_{\nu}^n)$.
	We denote by $|\mathcal{S}|$ the number of elements in $\mathcal{S}$.
\begin{definition} \label{I:1:D:1}
	Let $\bar \xi \in \mathbb{R} \times \prod_{{p} \in
	\mathcal{S}}\mathbb{Q}_{p}$
	and
	$\bar \lambda = (\lambda_{\infty},(\lambda_{p})_{{p} \in \mathcal{S}})
	\in \mathbb{R}^{|\mathcal{S}|+1}$.
	We say that $\bar\lambda$ is an exponent of approximation
	in degree $n$
	to $\bar\xi$ if there exists a constant
	$c > 0$ such that the inequalities
\begin{equation}\label{I:1:D:1:1}
 \begin{aligned}
	&\| {\bf x} \|_{\infty} \leq X,
	\\
	&L_{\infty}({\bf x }) \leq c X^{-\lambda_{\infty}},
	\\
	&L_{p}({\bf x }) \leq c X^{-\lambda_{p}} \
	\forall {p} \in \mathcal{S},
 \end{aligned}
\end{equation}
	have a non-zero solution ${\bf x} \in \mathbb{Z}^{n+1}$ for each real
	number $X \geq 1$.
\end{definition}
\subsection{Application of Minkowski's convex body theorem}\label{C1:S1:SS1}

	The following proposition based on Minkowski's convex body theorem
	provides a sufficient condition for
	$\bar \lambda \in \mathbb{R}^{|\mathcal{S}|+1}$
	to be an exponent of approximation in degree $n$.
\begin{proposition} \label{I:1:P:1}
	Suppose that
	$\bar \lambda = (\lambda_{\infty},(\lambda_{p})_{{p} \in \mathcal{S}})
	\in \mathbb{R} \times \mathbb{R}_{\geq0}^{|\mathcal{S}|}$
	satisfies the inequalities
\begin{equation} \label{I:1:P:1:00}
	\lambda_{\infty} \geq -1
	\ \text{ and } \
	\lambda_{\infty} + \sum_{p \in \mathcal{S}}\lambda_{p} \leq 1/n.
\end{equation}
	Then $\bar \lambda$ is an exponent of approximation in degree $n$
	to any $\bar \xi \in \mathbb{R} \times \prod_{{p} \in
	\mathcal{S}}\mathbb{Q}_{p}$.

\end{proposition}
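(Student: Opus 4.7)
The plan is to apply the classical Minkowski convex body theorem to a sublattice of $\mathbb{Z}^{n+1}$ obtained by converting each $p$-adic approximation condition into a congruence condition modulo a power of $p$. Since the $\|{\bf x}\|_\infty \leq X$ constraint is a direct bound on integer coordinates, the whole problem reduces to the familiar real-lattice framework.

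First I dispose of the finite-place inequalities. For each $p \in \mathcal{S}$ and each $l = 1, \ldots, n$, pick an integer $\eta_{p,l}$ with $|\eta_{p,l} - \xi_p^l|_p \leq p^{-k_p}$, where $k_p$ is the smallest non-negative integer satisfying $p^{-k_p} \leq c X^{-\lambda_p}$. (If $\xi_p \notin \mathbb{Z}_p$, one first multiplies $\xi_p$ by a fixed power of $p$; this only affects the implied constants.) Because $x_0 \in \mathbb{Z}$ forces $|x_0|_p \leq 1$, the ultrametric inequality shows that the congruence $x_l \equiv x_0 \eta_{p,l} \pmod{p^{k_p}}$ implies the desired bound $|x_l - x_0 \xi_p^l|_p \leq c X^{-\lambda_p}$. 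Imposing these congruences over all $p \in \mathcal{S}$ and $l = 1, \ldots, n$ cuts out a sublattice $\Lambda \subset \mathbb{Z}^{n+1}$ of index $\prod_{p \in \mathcal{S}} p^{n k_p}$, which is bounded by a constant times $X^{n \sum_{p \in \mathcal{S}} \lambda_p}$.

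At the archimedean place, the unipotent substitution $z_0 = x_0$, $z_l = x_l - x_0 \xi_\infty^l$ (determinant $1$) turns the region $C_X \subset \mathbb{R}^{n+1}$ defined by $|x_0|_\infty \leq X/(2\|{\bf t}_\infty\|_\infty)$ and $|x_l - x_0 \xi_\infty^l|_\infty \leq c X^{-\lambda_\infty}$ for $l \geq 1$ into a coordinate box, so $C_X$ is convex, symmetric, and has Lebesgue volume proportional to $c^n X^{1 - n\lambda_\infty}$ with implied constant depending only on $\bar\xi$ and $n$. Minkowski's convex body theorem applied to $C_X$ and $\Lambda$ then yields a nonzero ${\bf x} \in \Lambda \cap C_X$ as soon as
\[
	c^n X^{1 - n\lambda_\infty} \gg X^{n \sum_{p \in \mathcal{S}} \lambda_p},
\]
with the implied constant independent of $X$. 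Under the hypothesis $\lambda_\infty + \sum_{p \in \mathcal{S}} \lambda_p \leq 1/n$, the exponent of $X$ on the left is at least as large as on the right, so the inequality holds for every $X \geq 1$ provided $c$ is chosen sufficiently large.

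Finally, I verify $\|{\bf x}\|_\infty \leq X$. From $|x_0|_\infty \leq X/(2\|{\bf t}_\infty\|_\infty)$ I get $|x_0 \xi_\infty^l|_\infty \leq X/2$, whence $|x_l|_\infty \leq X/2 + cX^{-\lambda_\infty}$. This is where the hypothesis $\lambda_\infty \geq -1$ enters: it gives $cX^{-\lambda_\infty} \leq cX$, and after one further adjustment of $c$ (absorbing the finite range of small $X$ into the constant) one obtains $|x_l|_\infty \leq X$ for all $X \geq 1$. The main obstacle I expect is the bookkeeping needed to consolidate all the constants into a single $c$ that is uniform in $X$; the geometric content (Minkowski applied to the sublattice $\Lambda$) is entirely standard.
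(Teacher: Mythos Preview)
Your approach coincides with the paper's: encode the $p$-adic constraints as congruences cutting out a sublattice of $\mathbb{Z}^{n+1}$, then apply Minkowski's theorem to an archimedean box. The handling of $\xi_p \notin \mathbb{Z}_p$ is a little off---``multiplying $\xi_p$ by a power of $p$'' changes the target---but the paper's fix (impose $d_0\mid x_0$ for a fixed $d_0$ with $d_0{\bf t}_p\in\mathbb{Z}_p^{n+1}$ for all $p\in\mathcal{S}$) is the routine correction you are gesturing at.

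The genuine gap is in your last paragraph. From $|x_l|_\infty \leq X/2 + cX^{-\lambda_\infty}$ you want $|x_l|_\infty \leq X$. When $\lambda_\infty > -1$ this holds for all sufficiently large $X$ (since $X^{-\lambda_\infty}=o(X)$), and the bounded range of small $X$ is indeed absorbed by enlarging $c$. But at the boundary $\lambda_\infty = -1$ you get $|x_l|_\infty \leq (1/2+c)X$, and since Minkowski forces $c$ to be large this is \emph{never} $\leq X$: there is no ``finite range of small $X$'' to absorb. The inequality $cX^{-\lambda_\infty}\leq cX$ you invoke is true but does not help. The paper treats $\lambda_\infty=-1$ as a separate case: there the condition $L_\infty({\bf x})\leq cX$ is implied by $\|{\bf x}\|_\infty\leq X$ once $c$ is large, so one applies Minkowski directly to the cube $\{\|{\bf x}\|_\infty\leq X\}$ of volume $(2X)^{n+1}$, and the exponent comparison $n+1\geq n\sum_{p}\lambda_p$ follows from the hypothesis with $\lambda_\infty=-1$. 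Alternatively you could note that your point solves the system at $X'=(1/2+c)X$ and rescale, but that argument has to be made explicitly.
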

\begin{proof}
	Fix any $\bar \xi \in \mathbb{R} \times \prod_{{p} \in
	\mathcal{S}}\mathbb{Q}_{p}$.
	For each $c>0$ and $X\geq1$ we define the convex body
\[
	C_{c,X} = \{{\bf x} \in \mathbb{R}^{n+1} \ | \
		\| {\bf x} \|_{\infty} \leq X, \
		L_{\infty}({\bf x }) \leq  cX^{-\lambda_{\infty}}
		\}
\]
	 and the lattice
\[
	\Lambda_{c,X} = \{{\bf x} \in \mathbb{Z}^{n+1} \ | \
	L_{p}({\bf x }) \leq cX^{-\lambda_p}
	\ \text{ for each } {p} \in \mathcal{S} \}.
\]
	We claim that there exists a constant $c>0$ such that
	for each $X$ sufficiently large,
	we have $C_{c,X} \cap \Lambda_{c,X} \neq \emptyset$.
	This means that for such a constant $c>0$ the
	inequalities (\ref{I:1:D:1:1}) have a non-zero solution
	${\bf x} \in \mathbb{Z}^{n+1}$ for each $X$ sufficiently large.
	Upon replacing $c$ by a larger constant if necessary, we ensure that
 	the inequalities (\ref{I:1:D:1:1}) have a non-zero solution
	${\bf x} \in \mathbb{Z}^{n+1}$ for {\it each} real
	number $X \geq 1$, which means that $\bar\lambda$ is an exponent
	of approximation to $\bar\xi$.

	To prove the claim, we fix a constant $c>0$
	(all implied constants below depend on $c$).
	For each $X\geq1$, we define a new convex body
\begin{align*}
	C'_{c,X} &= \{{\bf x} \in \mathbb{R}^{n+1} \ | \
		| x_0 |_{\infty} \leq X/M, \
		L_{\infty}({\bf x }) \leq  cX^{-\lambda_{\infty}}
		\}
		\\
		&= \{{\bf x} \in \mathbb{R}^{n+1} \ | \
		\| A{\bf x } \|_{\infty} \leq 1 \},
\end{align*}
	where $M = 2\max\{1,|\xi_{\infty}^n|_{\infty}\}$ and
\[
	A =
	\left (
	\begin{matrix}
	MX^{-1} & 0 & 0 & \ldots &0
	\\
	-\xi_{\infty}c^{-1}X^{\lambda_{\infty}}
	& c^{-1}X^{\lambda_{\infty}} & 0 & \ldots &0
	\\
	\vdots& \vdots& \vdots & \ddots &\vdots
	\\
	-\xi_{\infty}^n c^{-1}X^{\lambda_{\infty}} & 0 & 0 &  \ldots &
	c^{-1}X^{\lambda_{\infty}}
	\end{matrix}
	\right ).
\]
	Case 1. If $\lambda_{\infty}>-1$,
	we have $C_{c,X}' \subseteq C_{c,X}$
	for each $X$ sufficiently large.
	Assuming $X\gg1$, we also construct a lattice $\Lambda_{c,X}'$ as follows.
	Fix any real $X$ sufficiently large.
	For each ${p} \in \mathcal{S}$, we choose
	$n_p \in \mathbb{Z}_{\geq0}$ such that
\begin{equation} \label{I:1:P:1:0}
	p^{-n_p} \leq cX^{-\lambda_p} < p^{-n_p+1}
\end{equation}
	and put $b = \prod_{{p} \in \mathcal{S}}{p}^{n_{p}}$.
	Let $d_0$ be the smallest positive
	integer such that $d_0{\bf t}_{p} \in \mathbb{Z}_p^{n+1}$
	for each $ p \in \mathcal{S}$.
	By the Strong Approximation Theorem (see \cite{CASS-FROH}),
	for each $l=1,\ldots,n$, there exists $d_l \in \mathbb{Z}_{>0}$
	satisfying
\begin{equation} \label{I:1:P:1:1}
	|d_l - d_0 \xi_p^l|_p \leq cX^{-\lambda_p}
	\ \text{ for each } \ p \in \mathcal{S}.
\end{equation}
	Let $({\bf e}_l)_{l=0,\ldots,n}$ denote the
	canonical basis of $\mathbb{Q}^{n+1}$.
	Define another basis $({\bf u}_l)_{l=0,\ldots,n}$ as follows
\begin{align*}
	{\bf u}_0 &= (d_0,d_1,\ldots,d_n),
	\\
	{\bf u}_l &= b{\bf e}_l
	\ \text{for each} \ l=1,\ldots,n,
\end{align*}
	and put
\[
	\Lambda_{c,X}'
	= \langle {\bf u}_0,{\bf u}_1,\ldots,{\bf u}_n \rangle_{\mathbb{Z}}.
\]
	By (\ref{I:1:P:1:0}) and (\ref{I:1:P:1:1}),
	for each $p \in \mathcal{S}$, we have
\begin{align*}
	L_{p}({\bf u}_0)
		& = \max_{l=1,\ldots,n} |d_l - d_0 \xi_p^l|_p
		\leq cX^{-\lambda_p},
	\\
	L_{p}({\bf u}_l) & = |b|_p = p^{-n_p} \leq cX^{-\lambda_p}
	\ \text{for each} \ l=1,\ldots,n.
\end{align*}
	So, we have $\Lambda_{c,X}' \subseteq \Lambda_{c,X}$
	for each $X\gg1$.
	Hence, for each $X\gg1$, we get
\[
	C_{c,X}' \cap \Lambda_{c,X}' \subseteq C_{c,X} \cap \Lambda_{c,X}.
\]
	We make the stronger claim that
	$C_{c,X}' \cap \Lambda_{c,X}' \neq \emptyset$
	for each $X$ sufficiently large.
	By Minkowski's convex body theorem \cite{CASS} (see p.~71),
	it suffices to show that the inequality
\begin{equation} \label{I:1:P:1:2}
	\vol(C_{c,X}') > 2^{n+1}\det(\Lambda_{c,X}')
\end{equation}
	holds for each $X$ sufficiently large.
	Let us find the value of $\vol(C_{c,X}')$
	and an upper bound for $\det(\Lambda_{c,X}')$ in terms of $X$.
	Using (\ref{I:1:P:1:0}), we find that
\[
	\vol(C_{c,X}')
	= \int_{A^{-1}\big([-1,1]^{n+1}\big)}d{\bf x}
	= \int_{[-1,1]^{n+1}}|\det(A)|^{-1}d{\bf y}
	= 2^{n+1} M^{-1} c^{n}X^{1-n\lambda_{\infty}},
\]
	and
 \begin{align*}
	|\det(\Lambda_{c,X}')|
	&= \det[{\bf u}_0,{\bf u}_1,\ldots,{\bf u}_n]
				= d_0 b^{n}
				= d_0 \prod_{{p} \in \mathcal{S}}{p}^{n n_{p}}
				= d_0 \Big(\prod_{{p} \in \mathcal{S}}p^n\Big)
				\prod_{{p} \in \mathcal{S}}{p}^{n (n_{p}-1)}
				\\
				&\leq
				 d_0  \Big ( \prod_{{p} \in \mathcal{S}}p^n \Big )
				 \prod_{{p} \in \mathcal{S}}(c^{-n}X^{n\lambda_p})
				 =
				 d_0 c^{-n} \Big( \prod_{{p} \in \mathcal{S}}p^n \Big )
				 X^{n\sum_{{p} \in \mathcal{S}}\lambda_p}.
 \end{align*}
 	To fulfill (\ref{I:1:P:1:2}), it suffices to require that
\[
	2^{n+1} M^{-1} c^{n}X^{1-n\lambda_{\infty}}
	> 2^{n+1} d_0 c^{-n} \Big( \prod_{{p} \in \mathcal{S}}p^n \Big )
				 X^{n\sum_{{p} \in \mathcal{S}}\lambda_p}.
\]
	This gives
\[
	X^{1-n(\lambda_{\infty} + \sum_{{p} \in \mathcal{S}}\lambda_p)}
	> M d_0 c^{-2n}\prod_{{p} \in \mathcal{S}}p^n.
\]
	Since $\lambda_{\infty} + \sum_{{p} \in \mathcal{S}}\lambda_p \leq 1/n$,
	the above inequality holds for each $X$ sufficiently large,
	provided that $c$ is chosen large enough to ensure that
	$M d_0 c^{-2n}\prod_{{p} \in \mathcal{S}}p^n < 1$.

	Case 2. If $\lambda_{\infty}=-1$, then
	$C_{c,X} = \{{\bf x} \in \mathbb{R}^{n+1} \ | \
		\| {\bf x} \|_{\infty} \leq X \}$
	provided that $c>0$ is large enough. Since $\vol(C_{c,X}) 
        = 2^{n+1} X^{1+n}$, we get $C_{c,X} \cap \Lambda_{c,X}' \neq 
        \emptyset$ if $c$
	is chosen so that
	$d_0 c^{-n}\prod_{{p} \in \mathcal{S}}p^n < 1$.
        Then the conclusion follows as in the previous case.
\end{proof}
\subsection{A covering of ${\mathbb{R}_{\geq1}}$}\label{C1:S1:SS2}

	Throughout this paragraph, we fix a point
$
	\bar \xi \in
	(\mathbb{R}\backslash \mathbb{Q}) \times \prod_{{p} \in
	\mathcal{S}}(\mathbb{Q}_{p}\backslash \mathbb{Q})
$
	and an exponent of approximation
$
	\bar \lambda = (\lambda_{\infty},(\lambda_{p})_{{p} \in \mathcal{S}})
	\in \mathbb{R} \times \mathbb{R}_{\geq0}^{|\mathcal{S}|}
$
	to $\bar \xi$ in degree $n$.
	We also fix a corresponding constant $c>0$ such that
	the inequalities (\ref{I:1:D:1:1})
	have a non-zero solution in $\mathbb{Z}^{n+1}$ for each $X \geq 1$.

	For each $X\geq1$, we denote by
	$\mathcal{C}_{c,X} = \mathcal{C}_{c,X}(\bar \xi,\bar \lambda)$
	the set of all non-zero integer solutions of the system of inequalities
	(\ref{I:1:D:1:1}). We also denote by $\mathbb{Z}_{\mathcal{S}}$ the
	set of all non-zero integers
	of the form $ \pm \prod_{p \in \mathcal{S}}p^{k_{p}} $,
	where $k_{p} \geq 0$ is an integer for each $p \in \mathcal{S}$.

	Let ${\bf v}$ be a primitive point in $\mathbb{Z}^{n+1}$.
	Looking at (\ref{I:1:D:1:1}) we note that if
	$l{\bf v} \in \mathcal{C}_{c,X}$
	for some integer $l \in \mathbb{Z}_{\neq 0}$, then the integer
	$m =\prod_{{p} \in \mathcal{S}}|l|_p^{-1} \in \mathbb{Z}_{\mathcal{S}}$
	also has the property that $m{\bf v} \in \mathcal{C}_{c,X}$.
	We can therefore define a set $I_{c}({\bf v})$ in the following two ways:
\begin{align*}
	I_{c}({\bf v}) 	&=
			\{X\in \mathbb{R}_{\geq1}
			\ | \ \exists l \in \mathbb{Z}
			\ s.t. \ l{\bf v} \in \mathcal{C}_{c,X}\}
			\\
			&=
			\{X\in \mathbb{R}_{\geq1}
			\ | \ \exists m \in \mathbb{Z}_{\mathcal{S}}
			\ s.t. \ m{\bf v} \in \mathcal{C}_{c,X}\}.
\end{align*}
	For any non-empty compact set
	$A \subset \mathbb{R}$ we denote by $\max A$ and $\min A$
	its maximal and minimal elements respectively.
	The next lemma shows that, if the sum of the components of $\bar \lambda$
	is positive, then the sets $I_{c}({\bf v})$ provide
	a covering of ${\mathbb{R}_{\geq1}}$ by compact sets.
\begin{lemma} \label{I:1:L:1}
	Suppose that
	$\lambda := \sum_{\nu \in \{\infty\} \cup \mathcal{S}}\lambda_{\nu} >0$.
	\\
	(i) For each primitive point ${\bf v} \in \mathbb{Z}^{n+1}$,
	the set $I_{c}({\bf v})$ is a compact subset of $\mathbb{R}_{\geq1}$.
	\\
	(ii) $\mathbb{R}_{\geq1}$ is covered by the sets $I_{c}({\bf v})$,
	where ${\bf v}$ runs through all primitive points of $\mathbb{Z}^{n+1}$.
	\\
	(iii) For any $X \geq 1$, there exists
	a primitive point ${\bf w} \in \mathbb{Z}^{n+1}$,
	such that
\[
	X \in I_{c}({\bf w}) \ \text{ and } \ X < \max I_{c}({\bf w}),
\]
	Moreover, if $X >1$,
	there also exists a primitive point ${\bf u} \in \mathbb{Z}^{n+1}$,
	such that
\[
	X \in I_{c}({\bf u}) \ \text{ and } \ X > \min I_{c}({\bf u}).
\]
	(iv) Define
	$\phi(X) := \min \{ \|{\bf w}\|_{\infty} \ | \ X \in I_{c}({\bf w})
	\text{ for some primitive point } {\bf w} \in \mathbb{Z}^{n+1} \}$
	for each $X\geq1$.
	Then $\phi(X) \rightarrow \infty$ as $X  \rightarrow \infty$.
\end{lemma}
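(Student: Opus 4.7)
The plan is to treat the four parts in order. Part (i) provides the key compactness tool, while parts (ii)--(iv) follow from (i) via pigeonhole-plus-closedness arguments.

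For (i), I would first show that $I_c({\bf v})$ is bounded above by a product argument. If $l{\bf v} \in \mathcal{C}_{c,X}$ with $l \in \mathbb{Z}_{\neq 0}$, then $L_\nu(l{\bf v}) = |l|_\nu L_\nu({\bf v})$ yields $|l|_\nu L_\nu({\bf v}) \leq c X^{-\lambda_\nu}$ at each place $\nu \in \{\infty\} \cup \mathcal{S}$. Multiplying these inequalities and using the product formula together with $|l|_p \leq 1$ for $p \notin \mathcal{S}$ (so that $|l|_\infty \prod_{p \in \mathcal{S}} |l|_p \geq 1$) gives
\[
L_\infty({\bf v}) \prod_{p \in \mathcal{S}} L_p({\bf v}) \leq c^{|\mathcal{S}|+1} X^{-\lambda}.
\]
The irrationality of $\xi_\infty$ and each $\xi_p$, combined with ${\bf v}$ being a nonzero integer vector, makes each $L_\nu({\bf v}) > 0$, so $\lambda > 0$ bounds $X$ from above. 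Closedness follows from the observation that on a convergent sequence $X_n \to X^*$ in $I_c({\bf v})$, the witnesses $l_n$ satisfy $|l_n|\|{\bf v}\|_\infty \leq X_n$ and thus are uniformly bounded nonzero integers; passing to a constant subsequence $l_n = l$ and using continuity in $X$ transfers $l{\bf v} \in \mathcal{C}_{c,X_n}$ to $l{\bf v} \in \mathcal{C}_{c,X^*}$.

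Part (ii) is immediate: the definition of exponent of approximation yields a nonzero ${\bf x} \in \mathcal{C}_{c,X}$, and writing ${\bf x} = m{\bf v}$ with ${\bf v}$ primitive and $m \neq 0$ shows $X \in I_c({\bf v})$. The main obstacle is (iii). For the first assertion, I would argue by contradiction: assume every primitive ${\bf w}$ with $X \in I_c({\bf w})$ satisfies $I_c({\bf w}) \subseteq [1,X]$. The set $\mathcal{P}$ of such ${\bf w}$ is finite since any ${\bf w} \in \mathcal{P}$ admits a multiple $l{\bf w} \in \mathcal{C}_{c,X}$ with $|l| \geq 1$, forcing $\|{\bf w}\|_\infty \leq X$, and primitive lattice points of bounded norm are finite in number. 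Choosing $X_n \downarrow X$, part (ii) yields primitive ${\bf v}_n$ with $X_n \in I_c({\bf v}_n)$, none of which lies in $\mathcal{P}$ by our assumption; but $\|{\bf v}_n\|_\infty \leq X_n \leq X+1$ uniformly, so pigeonhole produces a fixed ${\bf v}^*$ along a subsequence, and the closedness from (i) places $X \in I_c({\bf v}^*)$ and hence ${\bf v}^* \in \mathcal{P}$, a contradiction. The second assertion ($X > 1$) is the symmetric argument with $X_n \uparrow X$, the hypothesis $X > 1$ being needed to ensure $X_n \geq 1$. The delicate point throughout is securing the uniform norm bound that enables the pigeonhole.

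Finally, (iv) is immediate from (i): if $\phi(X_n) \leq M$ along some $X_n \to \infty$, the primitive ${\bf w}_n$ with $X_n \in I_c({\bf w}_n)$ and $\|{\bf w}_n\|_\infty \leq M$ draw from a finite set, so some ${\bf w}$ satisfies $X_n \in I_c({\bf w})$ for infinitely many $n$, contradicting the boundedness of $I_c({\bf w})$ established in (i).
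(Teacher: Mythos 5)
Your proof is correct and follows essentially the same route as the paper: the product-formula bound plus irrationality for compactness in (i), and finiteness of primitive points of bounded norm combined with closedness for (iii) and (iv). The only difference is cosmetic — in (iii) you run a sequential pigeonhole argument with $X_n \downarrow X$ where the paper phrases the same idea topologically, covering $[X,X+1]$ by finitely many compact sets $I_c({\bf w})$ and discarding those with $\max I_c({\bf w}) = X$.
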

\begin{proof}
	For the proof of (i) we suppose that $I_{c}({\bf v})\neq\emptyset$
	and choose any $X \in I_{c}({\bf v})$.
	Then there exists
	some $ m \in \mathbb{Z}_{\mathcal{S}}$, such that
	$m{\bf v} \in \mathcal{C}_{c,X}$, and using the product formula,
	we find
\begin{align*}
	\prod_{{\nu} \in \{\infty\}\cup\mathcal{S}}L_{\nu}({\bf v})
		& = \prod_{{\nu} \in \{\infty\}\cup\mathcal{S}}
			|m|_{\nu}L_{\nu}({\bf v})
		\\
		&= \prod_{{\nu} \in \{\infty\}\cup\mathcal{S}}L_{\nu}(m{\bf v})
		\\
		&\leq c^{|\mathcal{S}|+1}
		\prod_{{\nu} \in \{\infty\}\cup\mathcal{S}}
		X^{-\lambda_{\nu}}
  		\\
		&= c^{|\mathcal{S}|+1}
		X^{-\lambda}.
\end{align*}
	For each ${\nu} \in \{\infty\}\cup\mathcal{S}$, we have
	$\xi_{\nu} \notin \mathbb{Q}$ and so $L_{\nu}({\bf v}) \neq 0$.
	Hence, because of the hypothesis $\lambda>0$, we get
\[
	I_{c}({\bf v}) \subseteq [1,c^{(|\mathcal{S}|+1)/\lambda}
	\prod_{{\nu} \in \{\infty\}\cup\mathcal{S}}L_{\nu}({\bf v})^{-1/\lambda}].
\]
	This shows that $I_{c}({\bf v})$ is a bounded subset of $\mathbb{R}_{\geq1}$.
	Now, we suppose that $X$
	is an accumulation point of $I_{c}({\bf v})$. Then there exists an
	infinite sequence
	$(X_i)_{i\geq1}$ in $I_{c}({\bf v})$ and a sequence
	$(m_i)_{i\geq1}$ in $\mathbb{Z}_{\mathcal{S}}$, such that
	$\lim_{i\rightarrow \infty} X_i = X$ and
	$m_i{\bf v} \in \mathcal{C}_{c,X_i} $ for each $i\geq1$.
	Using the first inequality in (\ref{I:1:D:1:1}), we have
\[
	|m_i|_{\infty}\|{\bf v}\|_{\infty}\leq X_i.
\]
	Since $(X_i)_{i\geq1}$ is bounded, we deduce that the sequence
	$(m_i)_{i\geq1}$ contains only finitely many different elements.
	Hence, there exists an index $i_0 \geq 1$, such that
	$m_{i_0}{\bf v} \in \mathcal{C}_{c,X_i}$, for infinitely many values
	of $i$.
	By continuity we deduce that $m_{i_0}{\bf v} \in \mathcal{C}_{c,X}$,
	which means that $X \in I_{c}({\bf v})$.
	Therefore $I_{c}({\bf v})$ is closed and so it is a compact subset of
	$\mathbb{R}_{\geq1}$.

	For the proof of (ii) we use the assumption that
	for any $X \in \mathbb{R}_{\geq1}$
	the system (\ref{I:1:D:1:1}) has a non-zero solution
	${\bf x} \in \mathbb{Z}^{n+1}$. Writing ${\bf x} = l{\bf v}$
	for some $l\in\mathbb{Z}$ and some primitive point ${\bf v}$,
	we deduce that $X\in I_{c}({\bf v})$.
	This shows that
	$\mathbb{R}_{\geq1}$
	is covered by sets $I_{c}({\bf v})$ where ${\bf v}$ runs through
	all primitive points of $\mathbb{Z}^{n+1}$.

	To show the first part of (iii), we consider the interval $[X,X+1]$.
	Denote by $\mathcal{W}$ the set of primitive points ${\bf w}$ in
	$\mathbb{Z}^{n+1}$ such that
	$[X,X+1]\cap I_{c}({\bf w})\neq\emptyset$.
	By Part (ii), we have
\[
	[X,X+1] \subseteq \cup_{{\bf w} \in \mathcal{W}}I_{c}({\bf w}).
\]
	Moreover, the set $\mathcal{W}$ is finite since for each
	${\bf w} \in \mathcal{W}$, we have $\|{\bf w}\|_{\infty}\leq X+1$.
	Define also
\[
	\mathcal{W}_{X} =
	\{
		{\bf w} \in \mathcal{W}
		\ | \
		X = \max I_{c}({\bf w})
	\}
\]
	so that,
\[
	(X,X+1] \subseteq \cup_{{\bf w}
	\in \mathcal{W}\setminus\mathcal{W}_{X}}I_{c}({\bf w}).
\]
	Since $\mathcal{W}$ is finite, the set
	$\cup_{{\bf w} \in \mathcal{W}\setminus\mathcal{W}_{X}}I_{c}({\bf w})$
	is compact and thus, we have
\[
	[X,X+1] \subseteq \cup_{{\bf w}
	\in \mathcal{W}\setminus\mathcal{W}_{X}}I_{c}({\bf w}).
\]
	This means that there exists a primitive point
	${\bf w}$, such that $X \in I_{c}({\bf w})$
	and $X < \max I_{c}({\bf w})$.

	To show the second part of (iii),
	we consider instead the interval $[1,X]$.
	Denote by $\mathcal{U}$ the set of primitive points
	${\bf u} \in \mathbb{Z}^{n+1}$ such that
	$[1,X]\cap I_{c}({\bf u})\neq\emptyset$.
	By Part (ii), we have
\[
	[1,X] \subseteq \cup_{{\bf u} \in \mathcal{U}}I_{c}({\bf u}).
\]
	Moreover, the set $\mathcal{U}$ is finite since for each
	${\bf u} \in \mathcal{W}$, we have $\|{\bf u}\|_{\infty}\leq X$.
	Define also
\[
	\mathcal{U}_{X} =
	\{
		{\bf u} \in \mathcal{U}
		\ | \
		X = \min I_{c}({\bf u})
	\}
\]
	so that, 
\[
	[1,X) \subseteq \cup_{{\bf u}
	\in \mathcal{U}\setminus\mathcal{U}_{X}}I_{c}({\bf u}).
\]
	Since $\mathcal{U}$ is finite, the set
	$\cup_{{\bf u} \in \mathcal{U}\setminus\mathcal{U}_{X}}I_{c}({\bf u})$
	is compact and thus, we have
\[
	[1,X] \subseteq \cup_{{\bf u}
	\in \mathcal{U}\setminus\mathcal{U}_{X}}I_{c}({\bf u}).
\]
	This means that there exists a primitive point
	${\bf u}$, such that $X \in I_{c}({\bf u})$
	and $X > \min I_{c}({\bf u})$.

	For the proof of (iv), we suppose on the contrary that
	there exists some positive real number $B \geq1$
	and a sequence $(X_i)_{i\geq0}$ such that
	$\lim_{i \rightarrow \infty} X_i = \infty$
	and $\phi(X_i) \leq B$ for all $i\geq0$.
	Then there exists a sequence of primitive points
	$({\bf w}_i)_{i\geq0}$ in $\mathbb{Z}^3$ such that
	$X_i \in I_{c}({\bf w}_i)$  and $ \|{\bf w}_i\|_{\infty} \leq B$,
	for each $i\geq0$. Hence, we have only finitely many different elements
	in the sequence $({\bf w}_i)_{i\geq0}$.
	By Part (i), we have that each $I_{c}({\bf w}_i)$ is compact and thus
	the sequence $(X_i)_{i\geq0}$ is contained in a finite collection
	of compact sets. So, it is bounded,
	which contradicts the assumption that
	$\lim_{i \rightarrow \infty} X_i = \infty$.
\end{proof}
\begin{remark}\label{I:1:R:1}
	Lemma \ref{I:1:L:1}(iii) shows in particular that, for any
	primitive point ${\bf v} \in \mathbb{Z}^{n+1}$ with
	$I_{c}({\bf v}) \neq \emptyset$, there exists a primitive
	point ${\bf w} \in \mathbb{Z}^{n+1}$
	such that ${\bf w} \neq \pm {\bf v}$ and
	$I_{c}({\bf v}) \cap I_{c}({\bf w}) \neq \emptyset$.
\end{remark}
\subsection{A sequence of primitive points}\label{C1:S1:SS3}

	Let the notation and hypotheses be as in the
	previous paragraph. Here, we construct a sequence
	of primitive points $({\bf v}_k)_{k\geq0}$ in $\mathbb{Z}^{n+1}$
	which in our context will play the role of the sequence of
	minimal points of
	H.~{\sc Davenport} and W.M.~{\sc Schmidt} in \cite{DS}.
	Because of the extra complexity of working with several
	places at the same time, we will also need to introduce
	other sequences $({\bf x}_k)_{k\geq0}$ and $({\bf x'}_k)_{k\geq0}$
	in $\mathbb{Z}^{n+1}$ which will be derived from $({\bf v}_k)_{k\geq0}$.

	In order to fulfill the above task, we introduce
	one more piece of notation.
	For each real number $X\geq1$ and each primitive point
	${\bf v} \in \mathbb{Z}^{n+1}$, we define
\begin{align*}
	\mathcal{L}_{c}({\bf v},X) &=
	\{m{\bf v}
	\ | \
		m \in \mathbb{Z}_{\mathcal{S}}
		\text{ and }
		m{\bf v} \in \mathcal{C}_{c,X}
	 \}
	 \\
	 &=
	 \mathbb{Z}_{\mathcal{S}}{\bf v} \cap \mathcal{C}_{c,X}.
\end{align*}
	Note that if $X \in I_{c}({\bf v})$,
	then $\mathcal{L}_{c}({\bf v},X) \neq \emptyset$.
	We first prove a technical lemma.
\begin{lemma} \label{I:1:L:2-0}
	Let ${\bf v}$ be a primitive point in $\mathbb{Z}^{n+1}$ with
	$I_{c}({\bf v}) \neq \emptyset$. Let $X \in I_{c}({\bf v})$
	and let ${\bf x}$ be a point in $\mathcal{L}_{c}({\bf v},X)$ with
	minimal norm. Then for any $Y \in I_{c}({\bf v})$ with $Y \geq X$,
	we have
\begin{equation}\label{I:1:L:2-0:1}
 \begin{gathered}
	 \mathcal{L}_{c}({\bf v},Y) \subseteq \mathbb{Z}_{\mathcal{S}} \ {\bf x},
	 \\
	 L_{\infty}({\bf x}) \leq c Y^{-\lambda_{\infty}}.
 \end{gathered}
\end{equation}
\end{lemma}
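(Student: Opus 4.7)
The plan is to exploit the very restrictive shape of $\mathcal{L}_{c}({\bf v},X)$: by definition every element is a $\mathbb{Z}_{\mathcal{S}}$-multiple of ${\bf v}$, so both ${\bf x}$ and any ${\bf y}\in\mathcal{L}_{c}({\bf v},Y)$ can be written ${\bf x}=m_0{\bf v}$, ${\bf y}=m_1{\bf v}$ with $m_0,m_1\in\mathbb{Z}_{\mathcal{S}}$. Writing $m_0=\epsilon_0\prod_{p\in\mathcal{S}}p^{a_p}$ and $m_1=\epsilon_1\prod_{p\in\mathcal{S}}p^{b_p}$, the inclusion ${\bf y}\in\mathbb{Z}_{\mathcal{S}}\cdot{\bf x}$ amounts exactly to the inequalities $b_p\geq a_p$ for every $p\in\mathcal{S}$. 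Since $\|m{\bf v}\|_{\infty}=|m|\cdot\|{\bf v}\|_{\infty}$, the minimality assumption on ${\bf x}$ translates into: $|m_0|$ is minimal among $|m|$ with $m\in\mathbb{Z}_{\mathcal{S}}$ and $m{\bf v}\in\mathcal{C}_{c,X}$.

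I would then introduce the ``gcd-like'' element $m:=\prod_{p\in\mathcal{S}}p^{\min(a_p,b_p)}\in\mathbb{Z}_{\mathcal{S}}$ and verify that $m{\bf v}\in\mathcal{C}_{c,X}$. The two archimedean conditions are automatic because $|m|\leq|m_0|$, so $\|m{\bf v}\|_{\infty}\leq\|m_0{\bf v}\|_{\infty}\leq X$ and $L_{\infty}(m{\bf v})\leq L_{\infty}(m_0{\bf v})\leq cX^{-\lambda_{\infty}}$. For a prime $p\in\mathcal{S}$, one has $|m|_p=\max\{|m_0|_p,|m_1|_p\}$, and hence
\[
L_p(m{\bf v})=\max\{L_p(m_0{\bf v}),L_p(m_1{\bf v})\}\leq\max\{cX^{-\lambda_p},cY^{-\lambda_p}\}=cX^{-\lambda_p},
\]
where the last equality uses the standing hypothesis $\lambda_p\geq 0$ together with $Y\geq X$. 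This is the step that genuinely requires the monotonicity assumption $Y\geq X$, and it is the main (and really only) technical point of the argument.

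With $m{\bf v}\in\mathcal{L}_{c}({\bf v},X)$, the minimality of $|m_0|$ forces $|m|\geq|m_0|$, i.e.\ $\prod p^{\min(a_p,b_p)}\geq\prod p^{a_p}$, which is possible only if $\min(a_p,b_p)=a_p$, that is $b_p\geq a_p$, for every $p\in\mathcal{S}$. Thus $m_1/m_0\in\mathbb{Z}_{\mathcal{S}}$, giving ${\bf y}=(m_1/m_0){\bf x}\in\mathbb{Z}_{\mathcal{S}}\cdot{\bf x}$ and proving the first inclusion in \eqref{I:1:L:2-0:1}.

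For the second inequality, pick any ${\bf y}=m_1{\bf v}\in\mathcal{L}_{c}({\bf v},Y)$, which exists because $Y\in I_{c}({\bf v})$. The previous step gives $|m_1|\geq|m_0|$, and therefore
\[
L_{\infty}({\bf x})=|m_0|\,L_{\infty}({\bf v})\leq|m_1|\,L_{\infty}({\bf v})=L_{\infty}({\bf y})\leq cY^{-\lambda_{\infty}},
\]
which is the desired bound. No hypothesis on the sign of $\lambda_{\infty}$ is used here; the bound $L_{\infty}({\bf y})\leq cY^{-\lambda_{\infty}}$ comes directly from ${\bf y}\in\mathcal{C}_{c,Y}$.
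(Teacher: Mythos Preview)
Your proof is correct and follows essentially the same approach as the paper's: both arguments use the minimality of $\|{\bf x}\|_{\infty}$ by exhibiting an element of $\mathcal{L}_{c}({\bf v},X)$ with archimedean norm $\leq\|{\bf x}\|_{\infty}$, and both derive the second inequality from $|m_1|\geq|m_0|$. The only cosmetic difference is that the paper argues by contradiction one prime at a time (assuming $|m_0|_q<|m_1|_q$ for some $q$ and replacing $m_0$ by $m_0|m_0|_q|m_1|_q^{-1}$), whereas you construct the $\mathcal{S}$-gcd $\prod p^{\min(a_p,b_p)}$ directly; your version is slightly cleaner but equivalent.
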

\begin{proof}
	Fix $Y \in I_{c}({\bf v})$ with $Y \geq X$ and choose
	a point ${\bf y} \in \mathcal{L}_{c}({\bf v},Y)$.
	There exist $m,n \in \mathbb{Z}_{\mathcal{S}}$, such that
	${\bf x} = m{\bf v}$ and ${\bf y} = n{\bf v}$.
	In order to prove that
	$\mathcal{L}_{c}({\bf v},Y) \subseteq \mathbb{Z}_{\mathcal{S}} \ {\bf x}$,
	we need to show that $m\mid n$.
	Suppose on the contrary that $|m|_q < |n|_q$ for some $q \in \mathcal{S}$.
	Put $l = m|m|_q|n|_q^{-1}$.
	Then $l \in \mathbb{Z}_{\mathcal{S}}$ and it
	satisfies the following relations
\begin{align*}
	&|l|_{\infty} < |m|_{\infty},
	\\
	 &|l|_q  = |n|_q > |m|_q,
	 \\
	 &|l|_p = |m|_p
	  \ \text{ for each } p \in \mathcal{S}\setminus \{q\}.
\end{align*}
	So, using the fact that ${\bf x} \in \mathcal{C}_{c,X}$,
	${\bf y} \in \mathcal{C}_{c,Y}$ and the assumption $\lambda_q\geq0$,
	we have
\begin{align*}
	\|l{\bf v}\|_{\infty}
	&= |l|_{\infty}\|{\bf v}\|_{\infty}
	< |m|_{\infty}\|{\bf v}\|_{\infty}
	= \|{\bf x}\|_{\infty}  \leq X,
	\\
	L_{\infty}(l{\bf v})
	&= |l|_{\infty}L_{\infty}({\bf v})
	< |m|_{\infty}L_{\infty}({\bf v})
	= L_{\infty}({\bf x}) \leq c X^{-\lambda_{\infty}},
	\\
	L_{q}(l{\bf v})
	&= |l|_qL_{q}({\bf v})
	= |n|_qL_{q}({\bf v})
	= L_{q}({\bf y}) \leq c Y^{-\lambda_q} \leq c X^{-\lambda_q},
	\\
	L_{p}(l{\bf v})
	&= |l|_pL_{p}({\bf v})
	= |m|_pL_{p}({\bf v})
	= L_{p}({\bf x}) \leq c X^{-\lambda_p}
	\ \text{ for each } p \in \mathcal{S}\setminus \{q\}.
\end{align*}
	This means that $l{\bf v} \in \mathcal{L}_{c}({\bf v},X)$.
	Since $\|l{\bf v}\|_{\infty} < \|{\bf x}\|_{\infty}$,
	this contradicts the fact that ${\bf x}$ has minimal norm in
	$\mathcal{L}_{c}({\bf v},X)$.
	Hence, $m\mid n$ and so we find that
\[
	L_{\infty}({\bf x}) =
	|m|_{\infty}L_{\infty}({\bf v})
	\leq
	|n|_{\infty}L_{\infty}({\bf v})
	=
	L_{\infty}({\bf y}).
\]
	Finally, since ${\bf y} \in \mathcal{C}_{c,Y}$, we conclude that
\[
	L_{\infty}({\bf x}) \leq L_{\infty}({\bf y})
	\leq c Y^{-\lambda_{\infty}}.
\]
\end{proof}

	We can now state and prove the main result of this paragraph.
\begin{proposition} \label{I:1:L:2}
	Suppose that
	$\lambda := \sum_{\nu \in \{\infty\} \cup \mathcal{S}}\lambda_{\nu} >0$.
	Then, there exist a sequence of primitive points $({\bf v}_k)_{k\geq0}$
	in $\mathbb{Z}^{n+1}$ any two of which are linearly independent,
	two sequences $({\bf x}_k)_{k\geq0}$ and $({\bf x'}_k)_{k\geq0}$
	of non-zero integer points in $\mathbb{Z}^{n+1}$, and an unbounded
	increasing sequence of
	real numbers $(X_k)_{k\geq0}$, such that for each $k\geq0$, we have
\begin{equation}\label{I:1:L:2:1}
 \begin{gathered}
 	{\bf x'}_{k} \in \mathbb{Z}_{\mathcal{S}} \ {\bf x}_{k}
		\subseteq \mathbb{Z}_{\mathcal{S}} \ {\bf v}_{k},
	\\
	{\bf x'}_{k}, {\bf x}_{k+1} \in \mathcal{C}_{c,X_{k+1}},
	\\
	L_{\infty}({\bf x}_{k}) \leq c X_{k+1}^{-\lambda_{\infty}},
	\\
	X_{k} \notin I_{c}({\bf v}_{k+1}).
 \end{gathered}
\end{equation}
	In particular, any two points of
	$({\bf x}_k)_{k\geq0}$ or of $({\bf x'}_k)_{k\geq0}$
	with distinct indexes
	are linearly independent.
\end{proposition}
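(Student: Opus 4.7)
The plan is to build the three sequences inductively, using the principle that at each step one picks $\mathbf{v}_k$ to be a primitive point whose set $I_c(\mathbf{v}_k)$ reaches as far to the right as possible among those containing $X_k$. Once $\mathbf{v}_k$ is chosen, one sets $X_{k+1}:=\max I_c(\mathbf{v}_k)$, which is strictly greater than $X_k$ by the maximality built into the previous step together with Lemma~\ref{I:1:L:1}(iii). The integer points $\mathbf{x}_k$ and $\mathbf{x}'_k$ are then obtained by applying Lemma~\ref{I:1:L:2-0} to the line $\mathbb{Z}_{\mathcal{S}}\mathbf{v}_k$: $\mathbf{x}_k$ is a minimal-norm element of $\mathcal{L}_c(\mathbf{v}_k,X_k)$, and $\mathbf{x}'_k$ is any element of $\mathcal{L}_c(\mathbf{v}_k,X_{k+1})$, which is non-empty because $X_{k+1}\in I_c(\mathbf{v}_k)$.

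More concretely, I would start with $X_0=1$ and use Lemma~\ref{I:1:L:1}(iii) to pick a primitive $\mathbf{v}_0$ with $X_0\in I_c(\mathbf{v}_0)$ and $X_0<\max I_c(\mathbf{v}_0)$, selecting among such choices one that maximizes $\max I_c(\mathbf{v}_0)$---the maximum exists because any primitive $\mathbf{v}$ with $X_0\in I_c(\mathbf{v})$ satisfies $\|\mathbf{v}\|_{\infty}\leq X_0$, leaving only finitely many candidates. At the inductive step, given $\mathbf{v}_k$ and $X_{k+1}=\max I_c(\mathbf{v}_k)$, Lemma~\ref{I:1:L:1}(iii) applied at $X_{k+1}$ produces a primitive $\mathbf{w}$ with $X_{k+1}\in I_c(\mathbf{w})$ and $X_{k+1}<\max I_c(\mathbf{w})$; I then take $\mathbf{v}_{k+1}$ among such $\mathbf{w}$ maximizing $\max I_c(\mathbf{w})$. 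Lemma~\ref{I:1:L:2-0} applied with $X=X_k$ and $Y=X_{k+1}$ immediately yields $\mathbf{x}'_k\in\mathcal{L}_c(\mathbf{v}_k,X_{k+1})\subseteq\mathbb{Z}_{\mathcal{S}}\mathbf{x}_k\subseteq\mathbb{Z}_{\mathcal{S}}\mathbf{v}_k$ together with the bound $L_{\infty}(\mathbf{x}_k)\leq cX_{k+1}^{-\lambda_{\infty}}$, while $\mathbf{x}_{k+1},\mathbf{x}'_k\in\mathcal{C}_{c,X_{k+1}}$ holds by construction.

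I expect the main obstacle to be verifying the condition $X_k\notin I_c(\mathbf{v}_{k+1})$ together with the pairwise linear independence of the $\mathbf{v}_k$. Both flow from the maximality used in the construction: if $X_k\in I_c(\mathbf{v}_{k+1})$, then by the maximality of $\max I_c(\mathbf{v}_k)$ among primitive points whose $I_c$ contains $X_k$, we would have $\max I_c(\mathbf{v}_{k+1})\leq\max I_c(\mathbf{v}_k)=X_{k+1}$, contradicting the choice $X_{k+1}<\max I_c(\mathbf{v}_{k+1})$. The same idea handles $\mathbf{v}_{k+1}=\pm\mathbf{v}_j$: for $j<k$ the equality $I_c(\mathbf{v}_{k+1})=I_c(\mathbf{v}_j)$ together with $X_{k+1}\in I_c(\mathbf{v}_{k+1})$ forces $X_{k+1}\leq\max I_c(\mathbf{v}_j)=X_{j+1}\leq X_k$, while for $j=k$ the equality directly contradicts $X_k\notin I_c(\mathbf{v}_{k+1})$. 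Unboundedness of $(X_k)$ is then automatic: were the strictly increasing sequence bounded by some $M$, the inequality $\|\mathbf{v}_k\|_{\infty}\leq X_k\leq M$ would confine the pairwise linearly independent primitive points $\mathbf{v}_k$ to a finite set, a contradiction.
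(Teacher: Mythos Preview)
Your proposal is correct and follows essentially the same approach as the paper: construct $(\mathbf{v}_k)$ recursively by setting $X_{k+1}=\max I_c(\mathbf{v}_k)$ and choosing $\mathbf{v}_{k+1}$ among the primitive points $\mathbf{w}$ with $X_{k+1}\in I_c(\mathbf{w})$ and $X_{k+1}<\max I_c(\mathbf{w})$ so as to maximize $\max I_c(\mathbf{w})$, then obtain $\mathbf{x}_k,\mathbf{x}'_k$ from Lemma~\ref{I:1:L:2-0}. The only cosmetic differences are your explicit use of Lemma~\ref{I:1:L:1}(iii) at the base step $X_0=1$ (the paper instead picks $\mathbf{v}_0\in\mathcal{C}_{c,1}$ directly) and your slightly more detailed treatment of pairwise independence; note that in your argument for $X_k\notin I_c(\mathbf{v}_{k+1})$ the maximality you invoke is over primitive $\mathbf{w}$ with $X_k\in I_c(\mathbf{w})$ \emph{and} $X_k<\max I_c(\mathbf{w})$, so you should remark that $\mathbf{v}_{k+1}$ satisfies this second condition as well (which it does, since $X_k<X_{k+1}<\max I_c(\mathbf{v}_{k+1})$).
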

\begin{proof}
	Choose ${\bf v}_{0}$ to be an integer point in
	$\mathcal{C}_{c,1}$ with the largest $\max I_{c}({\bf v}_{0})$.
	Since $\|{\bf v}_{0}\|_{\infty} = 1$, this point ${\bf v}_{0}$ is primitive.
	Put $X_1 = \max I_{c}({\bf v}_{0})$ and
	consider the following finite set
\[
	\mathcal{V}_1 =
	\{
		{\bf v} \in \mathbb{Z}^{n+1} \ | \
		{\bf v} \text{ is primitive } \text{ with } X_1 \in I_{c}({\bf v})
		\text{ and } X_1 < \max I_{c}({\bf v})
	\}.
\]
	By Part (iii) of Lemma \ref{I:1:L:1}, we have that
	$\mathcal{V}_1 \neq \emptyset$ and
	that ${\bf v} \neq \pm {\bf v}_{0}$
	for each ${\bf v} \in \mathcal{V}_1$.
	Choose a point
	${\bf v}_1 \in \mathcal{V}_1$ such that $\max I_{c}({\bf v}_1)$ is the largest.
	Arguing in this way we construct an
	increasing sequence of real numbers $(X_k)_{k\geq1}$
	and a sequence of primitive points $({\bf v}_{k})_{k\geq0}$
	in $\mathbb{Z}^{n+1}$, any two of each are linearly independent,
	such that for each $k\geq0$, we have
\begin{equation}\label{I:1:L:2:2}
 \begin{gathered}
	X_{k} \in I_{c}({\bf v}_{k})
	\ \text{ and } \
	X_{k} < X_{k+1} = \max I_{c}({\bf v}_{k}),
	\\
	\max_{{\bf v} \in \mathcal{V}_k} \big\{ \max I_{c}({\bf v})  \big\}
	= \max I_{c}({\bf v}_{k}),
 \end{gathered}
\end{equation}
	where
\[
	\mathcal{V}_k =
	\{
		{\bf v} \in \mathbb{Z}^{n+1} \ | \
		{\bf v} \text{ is primitive } \text{ with } X_{k} \in I_{c}({\bf v})
		\text{ and } X_{k} < \max I_{c}({\bf v})
	\}.
\]
	Since, for each $k\geq1$, we have $X_{k} \in I_{c}({\bf v}_{k})$, then
	$\|{\bf v}_{k}\|_{\infty} \leq X_{k}$. Since the sequence
	$({\bf v}_{k})_{k\geq0}$ consists of infinitely many different elements,
	this shows that
	the sequence $(X_k)_{k\geq1}$ is unbounded.
	Also, we note that $X_{k} \notin I_{c}({\bf v}_{k+1})$.
	Indeed, suppose on the contrary that $X_{k} \in I_{c}({\bf v}_{k+1})$.
	Since $X_{k} < X_{k+1} < X_{k+2} =  \max I_{c}({\bf v}_{k+1})$,
	this means that ${\bf v}_{k+1} \in \mathcal{V}_k$.
	So, we have
\[
	X_{k+2} = \max I_{c}({\bf v}_{k+1})
	\leq
	\max_{{\bf v} \in \mathcal{V}_{k}} \big\{ \max I_{c}({\bf v})  \big\}
	= \max I_{c}({\bf v}_{k}) = X_{k+1},
\]
	but this contradicts the second relation in (\ref{I:1:L:2:2})
	with $k$ replaced by $k+1$.

	Now, for each $k\geq1$, we choose a point
	${\bf x}_{k} \in \mathcal{L}_{c}({\bf v}_{k},X_{k})$ with minimal norm
	and a point ${\bf x'}_{k} \in \mathcal{L}_{c}({\bf v}_{k},X_{k+1})$.
	Then, the sequences $({\bf x}_{k})_{k\geq0}$ and $({\bf x'}_{k})_{k\geq0}$
	satisfy the second relation in (\ref{I:1:L:2:1}).
	Moreover, since the points in the sequence $({\bf v}_k)_{k\geq0}$
	are primitive and since ${\bf v}_i \neq \pm{\bf v}_j$
	for $i,j \geq 0$ with $i\neq j$, then any two of them are linearly
	independent. Hence any two different points of $({\bf x}_k)_{k\geq0}$ and
	any two different points of $({\bf x'}_k)_{k\geq0}$ are linearly independent.
	Finally, the first and third relations follow from Lemma \ref{I:1:L:2-0}
	applied to ${\bf v} = {\bf v}_{k}$, ${\bf x} = {\bf x}_{k}$
	and $Y = X_{k+1}$.
\end{proof}
\subsection{A criterion in terms of primitive points}\label{C1:S1:SS4}

	The following proposition provides a criterion
	which interprets the notion of an exponent of approximation
	in degree $n$ in terms of the existence
	of primitive points with certain properties.
	In the case where the set $\mathcal{S}$ consists of just one
	prime number $p$ and where $\lambda_{\infty}\leq-1$,
	this is due to O.~{\sc Teuli\'{e}} \cite{Teu}.
\begin{proposition} \label{I:1:P:3}
	Let $\bar \xi \in
	(\mathbb{R}\backslash \mathbb{Q}) \times \prod_{{p} \in
	\mathcal{S}}(\mathbb{Q}_{p}\backslash \mathbb{Q})$.
	Then $\bar \lambda = (\lambda_{\infty},(\lambda_{p})_{{p} \in \mathcal{S}})
	\in \mathbb{R} \times \mathbb{R}_{\geq0}^{|\mathcal{S}|}$ is
	an exponent of approximation to $\bar \xi$ in degree $n$ iff
	there exists a constant
	$c_1 > 0$ such that the relation
\begin{equation}\label{I:1:P:3:1}
	1 \leq
	\min
	\Big \{
		\frac{X}{\|{\bf v }\|_{\infty}},
		\frac{c_1X^{-\lambda_{\infty}}}{L_{\infty}({\bf v })}
	\Big \}
	\prod_{{p} \in \mathcal{S}}
	\min
	\Big \{
		1,
		\frac{c_1X^{-\lambda_{p}}}{L_{p}({\bf v })}
	\Big \}
\end{equation}
	has a non-zero primitive solution
	${\bf v} \in \mathbb{Z}^{n+1}$ for each real number $X \geq 1$.
\end{proposition}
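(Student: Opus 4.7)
The plan is to prove the equivalence by separate analysis of the two directions. The bridge is the observation that every non-zero ${\bf x}\in\mathbb{Z}^{n+1}$ factors as $l\cdot{\bf v}$ with $l\in\mathbb{Z}_{\neq 0}$ and ${\bf v}$ primitive, and conversely any primitive point can be rescaled by an $\mathcal{S}$-integer $m\in\mathbb{Z}_{\mathcal{S}}$ in order to tune its $p$-adic norms at the primes of $\mathcal{S}$. The product formula, together with the elementary bound $|l|_{q}\leq 1$ for every prime $q$, is what supplies the constant ``$1$'' appearing inside each factor $\min\{1,\cdot\}$ in (\ref{I:1:P:3:1}).

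For the forward direction, assume $\bar{\lambda}$ is an exponent of approximation in degree $n$ with constant $c$. Fix $X\geq 1$ and write a non-zero integer solution of (\ref{I:1:D:1:1}) as ${\bf x}=l\,{\bf v}$ with ${\bf v}$ primitive. Dividing the three inequalities of (\ref{I:1:D:1:1}) respectively by $|l|_{\infty}$, $|l|_{\infty}$, and $|l|_{p}$, and combining the last with $|l|_{p}\leq 1$, one obtains
\[
|l|_{\infty} \leq \min\{X/\|{\bf v}\|_{\infty},\ c X^{-\lambda_{\infty}}/L_{\infty}({\bf v})\}
\quad\text{and}\quad
|l|_{p} \leq \min\{1,\ c X^{-\lambda_{p}}/L_{p}({\bf v})\}
\]
for every $p\in\mathcal{S}$. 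Multiplying these bounds together and applying the product formula in the form $|l|_{\infty}\prod_{p\in\mathcal{S}}|l|_{p} = \prod_{q\notin\mathcal{S}}|l|_{q}^{-1}\geq 1$ then yields (\ref{I:1:P:3:1}) with $c_1:=c$.

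For the converse, assume (\ref{I:1:P:3:1}) holds with some $c_1>0$. Given $X\geq 1$, choose a primitive ${\bf v}$ satisfying (\ref{I:1:P:3:1}) and set
\[
A := \min\{X/\|{\bf v}\|_{\infty},\ c_1 X^{-\lambda_{\infty}}/L_{\infty}({\bf v})\},
\qquad
B_{p} := \min\{1,\ c_1 X^{-\lambda_{p}}/L_{p}({\bf v})\}\leq 1.
\]
For each $p\in\mathcal{S}$, pick the smallest $k_p\in\mathbb{Z}_{\geq 0}$ with $p^{-k_p}\leq B_p$ (so that $p^{-k_p}>B_p/p$), and put $m:=\prod_{p\in\mathcal{S}} p^{k_p}\in\mathbb{Z}_{\mathcal{S}}$ and $K:=\prod_{p\in\mathcal{S}}p$. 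Then $|m|_{p}\leq B_p$ for each $p$, and the product formula $|m|_{\infty}=\prod_{p\in\mathcal{S}}|m|_{p}^{-1}$ gives $|m|_{\infty}<K\prod_{p\in\mathcal{S}} B_p^{-1}\leq K\,A$, the last inequality being exactly the hypothesis $A\prod_{p\in\mathcal{S}} B_p\geq 1$. A direct check now shows that ${\bf x}:=m\,{\bf v}$ satisfies $\|{\bf x}\|_{\infty}<K X$, $L_{\infty}({\bf x})<c_1 K X^{-\lambda_{\infty}}$ and $L_{p}({\bf x})\leq c_1 X^{-\lambda_{p}}$. Replacing $X$ by $X/K$, valid when $X\geq K$, turns these into (\ref{I:1:D:1:1}) with a new constant $c$ depending only on $c_1$, $K$, and $\bar{\lambda}$, while the finitely many values $X\in[1,K)$ are absorbed by taking a fixed non-zero integer vector such as $(1,0,\dots,0)$ and enlarging $c$ once more. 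The principal difficulty is the integrality rounding defining each $k_p$, which loses a factor of at most $p$ per prime; this loss is precisely what forces the constants $c$ and $c_1$ to differ by a multiplicative factor depending only on $\mathcal{S}$ and $\bar{\lambda}$.
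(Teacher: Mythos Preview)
Your proof is correct and follows essentially the same strategy as the paper: in both directions the bridge is the product formula applied to the scalar relating ${\bf x}$ to its primitive part, together with a rescaling by an element of $\mathbb{Z}_{\mathcal{S}}$ in the converse direction.

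One tactical difference is worth noting. In the converse, you choose $k_p$ so that $|m|_p\leq B_p$, which gives the $p$-adic bounds exactly but only $\|{\bf x}\|_\infty<KX$ and $L_\infty({\bf x})<Kc_1X^{-\lambda_\infty}$, forcing the substitution $X\mapsto X/K$ and a separate treatment of small $X$. The paper instead chooses $m$ maximal in $\mathbb{Z}_{\mathcal{S}}$ with $|m|_p\geq c_1X^{-\lambda_p}/L_p({\bf v})$; then the product formula combined with the hypothesis gives $|m|_\infty\leq\min\{X/\|{\bf v}\|_\infty,\,c_1X^{-\lambda_\infty}/L_\infty({\bf v})\}$ on the nose, while the maximality of $m$ yields $|m|_pL_p({\bf v})<p\,c_1X^{-\lambda_p}$. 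Thus the loss of a factor $p$ lands in the $p$-adic inequalities, where it is absorbed simply by taking $c=c_1\max_{p\in\mathcal{S}}p$, and no rescaling of $X$ is needed. Your rounding works just as well, it just pushes the bookkeeping elsewhere.

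A small wording slip: the range $X\in[1,K)$ is not ``finitely many values'' but a bounded interval; your argument there is nonetheless fine, since on a bounded interval the fixed vector $(1,0,\dots,0)$ satisfies (\ref{I:1:D:1:1}) once $c$ is taken large enough.
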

\begin{proof}
	\ \\
	($ \Rightarrow $ )
	If  $\bar \lambda$ is an exponent of approximation to $\bar \xi$,
	there exists a constant $c > 0$ such that the inequalities
	(\ref{I:1:D:1:1}) have a non-zero solution
	${\bf x} \in \mathbb{Z}^{n+1}$ for each real number $X \geq 1$.
	Fix a real $X \geq 1$.
	According to the comments made in \S \ref{C1:S1:SS2},
	we can choose a solution ${\bf x}$ of the system (\ref{I:1:D:1:1})
	in the form ${\bf x} = m {\bf v}$, where
	${\bf v} \in \mathbb{Z}^{n+1}$ is primitive
	and $m \in \mathbb{Z}_{\mathcal{S}}$.
	Then, we have
\begin{align*}
	&|m|_{\infty}\| {\bf v} \|_{\infty} \leq X,
	\\
	&|m|_{\infty}L_{\infty}({\bf v }) \leq c X^{-\lambda_{\infty}},
	\\
	&|m|_{p}L_{p}({\bf v }) \leq c X^{-\lambda_{p}} \
	\forall {p} \in \mathcal{S}.
\end{align*}
	This is equivalent to the system of inequalities
\begin{align*}
	&|m|_{\infty} \leq \min
	\Big \{
		\frac{X}{\|{\bf v }\|_{\infty}},
		\frac{cX^{-\lambda_{\infty}}}{L_{\infty}({\bf v })}
	\Big \},
	\\
	&
	|m|_{p} \leq \frac{cX^{-\lambda_{p}}}{L_{p}({\bf v })}
	\ \text{ and } \
	|m|_{p} \leq 1
	\quad \forall {p} \in \mathcal{S}.
\end{align*}
	Since $|m|_{\infty}\prod_{{p} \in \mathcal{S}}|m|_{p} = 1$, it follows
	that
\[
	1 = |m|_{\infty}\prod_{{p} \in \mathcal{S}}|m|_{p}
	\leq
	\min
	\Big \{
		\frac{X}{\|{\bf v }\|_{\infty}},
		\frac{cX^{-\lambda_{\infty}}}{L_{\infty}({\bf v })}
	\Big \}
	\prod_{{p} \in \mathcal{S}}
	\min
	\Big \{
		1,
		\frac{cX^{-\lambda_{p}}}{L_{p}({\bf v })}
	\Big \}.
\]
	Choosing $c_1 = c$, we get (\ref{I:1:P:3:1}).

	($ \Leftarrow $ )
	Fix a real $X \geq 1$ and assume that (\ref{I:1:P:3:1}) holds
	for some primitive point
	${\bf v} \in \mathbb{Z}^{n+1}$ and some constant $c_1 > 0$ independent of $X$.
	Let $m$ be the largest positive element in $\mathbb{Z}_{\mathcal{S}}$
	satisfying
\begin{equation}\label{I:1:P:3:2}
	|m|_p \geq \frac{c_1X^{-\lambda_{p}}}{L_{p}({\bf v })}
	\ \text{ for each } \
	 {p} \in \mathcal{S}.
\end{equation}
	By the choice of $m$, we also have
\begin{equation}\label{I:1:P:3:3}
	|m|_p < \frac{pc_1X^{-\lambda_{p}}}{L_{p}({\bf v })}
	\ \text{ for each } \
	 {p} \in \mathcal{S}.
\end{equation}
	By (\ref{I:1:P:3:1}) and (\ref{I:1:P:3:2}), we get
\begin{align*}
	1 & \leq
	\min
	\Big \{
		\frac{X}{\|{\bf v }\|_{\infty}},
		\frac{c_1X^{-\lambda_{\infty}}}{L_{\infty}({\bf v })}
	\Big \}
	\prod_{{p} \in \mathcal{S}}
	\min
	\Big \{
		1,
		\frac{c_1X^{-\lambda_{p}}}{L_{p}({\bf v })}
	\Big \}
	\\
	&\leq
		\min
	\Big \{
		\frac{X}{\|{\bf v }\|_{\infty}},
		\frac{c_1X^{-\lambda_{\infty}}}{L_{\infty}({\bf v })}
	\Big \}
	\prod_{{p} \in \mathcal{S}}
	\min
	\Big \{ 1,|m|_p \Big \}
	\\
	&\leq
		\min
	\Big \{
		\frac{X}{\|{\bf v }\|_{\infty}},
		\frac{c_1X^{-\lambda_{\infty}}}{L_{\infty}({\bf v })}
	\Big \}
	\prod_{{p} \in \mathcal{S}} |m|_p.
\end{align*}
	Since $|m|_{\infty}\prod_{{p} \in \mathcal{S}}|m|_{p} = 1$, it follows
	that
\[
	|m|_{\infty} \leq
		\min
	\Big \{
		\frac{X}{\|{\bf v }\|_{\infty}},
		\frac{c_1X^{-\lambda_{\infty}}}{L_{\infty}({\bf v })}
	\Big \}
\]
	and therefore, we have
\begin{align*}
	&|m|_{\infty}\|{\bf v }\|_{\infty} \leq X,
	\\
	&|m|_{\infty}L_{\infty}({\bf v }) \leq  c_1X^{-\lambda_{\infty}}.
\end{align*}
	From this and (\ref{I:1:P:3:3}) it follows that
	the point ${\bf x } = m {\bf v }$ is in $\mathcal{C}_{c,X}$,
	with $c = c_1 \max_{{p} \in \mathcal{S}} p$.
	Thus $\bar \lambda$ is an exponent of approximation to $\bar \xi$,
	if (\ref{I:1:P:3:1}) has a non-zero primitive solution
	${\bf v} \in \mathbb{Z}^{n+1}$ for each $X \geq 1$.
\end{proof}
\subsection{Another covering of ${\mathbb{R}_{\geq1}}$}\label{C1:S1:SS5}

	Let $\bar \xi$, $\bar \lambda$ and $c$
	be as in \S \ref{C1:S1:SS2}.
	For each $c_1 > 0$ and each primitive point
	${\bf v} \in \mathbb{Z}^{n+1}$, we define the set
\[
	J_{c_1}({\bf v})
			=
			\{ X\in \mathbb{R}_{\geq1} \ | \
			X \text{ satisfies } (\ref{I:1:P:3:1}) \}.
\]
	This set is a closed interval because it can be presented as the
	set of all solutions of a system of inequalities of the form
	$a_1 \leq X^{\alpha_1},\ldots,a_s \leq X^{\alpha_s}$,
	where $a_1,\ldots,a_s \in \mathbb{R}_{>0}$
	and $\alpha_1,\ldots,\alpha_s \in \mathbb{R}$.
	The proof of Proposition \ref{I:1:P:3}
	provides moreover the following connection between
	$J$-sets and $I$-sets.
\begin{lemma} \label{I:1:L:3}
 	For each primitive point ${\bf v} \in \mathbb{Z}^{n+1}$,
	the set $J_{c}({\bf v})$ is a closed interval and, we have
	\[
		I_{c}({\bf v})
		\subseteq
		J_{c}({\bf v})
		\subseteq
		I_{c'}({\bf v}),
	\]
	with $c' = c \max_{{p} \in \mathcal{S}} p$.
\end{lemma}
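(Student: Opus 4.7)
The plan is to derive Lemma \ref{I:1:L:3} directly from the proof of Proposition \ref{I:1:P:3} applied to a single primitive point ${\bf v}$, with no new ideas required. I would first establish the two inclusions $I_c({\bf v}) \subseteq J_c({\bf v})$ and $J_c({\bf v}) \subseteq I_{c'}({\bf v})$, and then separately justify the ``closed interval'' claim by unfolding the product of minima in (\ref{I:1:P:3:1}) into a finite conjunction of monomial inequalities in $X$.

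For the inclusion $I_c({\bf v}) \subseteq J_c({\bf v})$, I would fix $X \in I_c({\bf v})$ and pick $m \in \mathbb{Z}_{\mathcal{S}}$ with $m{\bf v} \in \mathcal{C}_{c,X}$. Rewriting the inequalities defining $\mathcal{C}_{c,X}$ as upper bounds on $|m|_\infty$ and on each $|m|_p$ (including the trivial bound $|m|_p \leq 1$ valid because $m \in \mathbb{Z}_{\mathcal{S}}$), then multiplying these bounds together and invoking the product formula $|m|_\infty \prod_{p \in \mathcal{S}} |m|_p = 1$, reproduces (\ref{I:1:P:3:1}) with $c_1 = c$. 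This is exactly the ``$\Rightarrow$'' direction of Proposition \ref{I:1:P:3}, read off for the single primitive point ${\bf v}$, so $X \in J_c({\bf v})$.

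For the inclusion $J_c({\bf v}) \subseteq I_{c'}({\bf v})$, I would mirror the ``$\Leftarrow$'' direction of that proof. Given $X \in J_c({\bf v})$, I pick the largest positive $m \in \mathbb{Z}_{\mathcal{S}}$ satisfying $|m|_p \geq c X^{-\lambda_p}/L_p({\bf v})$ for all $p \in \mathcal{S}$. Maximality forces $|m|_p < p \cdot c X^{-\lambda_p}/L_p({\bf v})$, so $L_p(m{\bf v}) \leq c' X^{-\lambda_p}$ with $c' = c \max_{p \in \mathcal{S}} p$. Combining the product formula with (\ref{I:1:P:3:1}) then yields $|m|_\infty \|{\bf v}\|_\infty \leq X$ and $|m|_\infty L_\infty({\bf v}) \leq c X^{-\lambda_\infty}$, giving $m{\bf v} \in \mathcal{C}_{c',X}$ and hence $X \in I_{c'}({\bf v})$.

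Finally, to show $J_c({\bf v})$ is a closed interval, I would invoke the identity $\prod_i \min\{u_i, v_i\} = \min_\sigma \prod_i \sigma_i$, valid for non-negative entries, where $\sigma$ ranges over the $2^{|\mathcal{S}|+1}$ selections $\sigma_i \in \{u_i, v_i\}$. Applied to (\ref{I:1:P:3:1}), this rewrites the constraint as a finite conjunction of inequalities of the form $C_\sigma X^{\beta_\sigma} \geq 1$; each such inequality defines a closed half-line in the variable $\log X$, and the intersection of finitely many closed half-lines intersected with $[1,\infty)$ is a closed interval. The main effort is bookkeeping across the archimedean and $p$-adic places; conceptually the lemma is just a repackaging of Proposition \ref{I:1:P:3} into a covering statement parallel to Lemma \ref{I:1:L:1}.
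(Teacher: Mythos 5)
Your proposal is correct and follows essentially the same route as the paper: both inclusions are obtained by reading the two directions of the proof of Proposition \ref{I:1:P:3} for the single primitive point ${\bf v}$ (with the same choice of the maximal $m\in\mathbb{Z}_{\mathcal{S}}$ and the same constant $c'=c\max_{p\in\mathcal{S}}p$), and the closed-interval claim is exactly the paper's observation that (\ref{I:1:P:3:1}) unfolds into a finite system of inequalities of the form $a_i\leq X^{\alpha_i}$.
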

\begin{proof}
	Fix a primitive point ${\bf v} \in \mathbb{Z}^{n+1}$ such that
	$I_{c}({\bf v}) \neq \emptyset$ and choose any $X \in I_{c}({\bf v})$.
	The first part of the proof of Proposition \ref{I:1:P:3}
	shows that $X \in J_{c}({\bf v})$.
	Now, suppose that $J_{c}({\bf v}) \neq \emptyset$
	and choose any $X \in J_{c}({\bf v})$.
	The second part of the proof of Proposition \ref{I:1:P:3}
	shows that $X \in I_{c'}({\bf v})$, with
	$c' = c \max_{{p} \in \mathcal{S}} p$.
\end{proof}

	By combining the above lemma with Lemma \ref{I:1:L:1},
	we obtain the following.
\begin{lemma}\label{I:1:L:3-1}
	Suppose that
	$\lambda := \sum_{\nu \in \{\infty\} \cup \mathcal{S}}\lambda_{\nu} >0$.
	\\
	(i) For each primitive point ${\bf v} \in \mathbb{Z}^{n+1}$,
	the set $J_{c}({\bf v})$ is a compact sub-interval of $\mathbb{R}_{\geq1}$.
	\\
	(ii) $\mathbb{R}_{\geq1}$ is covered by the sets $J_{c}({\bf v})$,
	where ${\bf v}$ runs through all primitive points of $\mathbb{Z}^{n+1}$.
	\\
	(iii) For any $X \geq 1$, there exists
	a primitive point ${\bf w} \in \mathbb{Z}^{n+1}$,
	such that
\[
	X \in J_{c}({\bf w}) \ \text{ and } \ X < \max J_{c}({\bf w}),
\]
	Moreover, if $X >1$,
	there also exists a primitive point ${\bf u} \in \mathbb{Z}^{n+1}$,
	such that
\[
	X \in J_{c}({\bf u}) \ \text{ and } \ X > \min J_{c}({\bf u}).
\]
\\
	(iv) Define
	$\psi(X) := \min \{ \|{\bf w}\|_{\infty} \ | \ X \in J_{c}({\bf w})
	\text{ for some primitive point } {\bf w} \in \mathbb{Z}^{n+1} \}$
	for each $X\geq1$.
	Then $ \psi(X) \rightarrow \infty$ as $X  \rightarrow \infty$.
\end{lemma}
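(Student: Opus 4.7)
The plan is to derive each of the four claims from the sandwich $I_{c}({\bf v}) \subseteq J_{c}({\bf v}) \subseteq I_{c'}({\bf v})$ supplied by Lemma \ref{I:1:L:3} (with $c' = c \max_{p \in \mathcal{S}} p$), combined with the matching properties of the $I$-sets established in Lemma \ref{I:1:L:1}. The essential observation is that Lemma \ref{I:1:L:1} remains valid verbatim if $c$ is replaced by any larger constant such as $c'$, since its standing hypothesis only demands that the system (\ref{I:1:D:1:1}) have a non-zero integer solution for every $X \geq 1$, and enlarging $c$ preserves this.

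For (i), I would note that the paragraph preceding Lemma \ref{I:1:L:3} already shows $J_c({\bf v})$ is a closed interval in $\mathbb{R}_{\geq 1}$; combining this with the inclusion $J_c({\bf v}) \subseteq I_{c'}({\bf v})$ and the compactness statement of Lemma \ref{I:1:L:1}(i) applied with $c'$ forces $J_c({\bf v})$ to be bounded, hence compact. For (ii), the inclusion $I_c({\bf v}) \subseteq J_c({\bf v})$ together with Lemma \ref{I:1:L:1}(ii) gives the covering immediately.

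For (iii), given $X \geq 1$, I would invoke Lemma \ref{I:1:L:1}(iii) to produce a primitive ${\bf w}$ with $X \in I_c({\bf w})$ and $X < \max I_c({\bf w})$. Since $I_c({\bf w}) \subseteq J_c({\bf w})$ one has $X \in J_c({\bf w})$ and $\max I_c({\bf w}) \leq \max J_c({\bf w})$, so $X < \max J_c({\bf w})$. Symmetrically, when $X > 1$, the second half of Lemma \ref{I:1:L:1}(iii) supplies ${\bf u}$ with $X > \min I_c({\bf u}) \geq \min J_c({\bf u})$. For (iv), the right inclusion $J_c({\bf v}) \subseteq I_{c'}({\bf v})$ yields
\[
\psi(X) \geq \min\{\|{\bf w}\|_{\infty} \ |\ X \in I_{c'}({\bf w}) \text{ for some primitive } {\bf w} \in \mathbb{Z}^{n+1}\},
\]
and applying Lemma \ref{I:1:L:1}(iv) with $c'$ in place of $c$ shows the right-hand side tends to infinity as $X \to \infty$, so $\psi(X) \to \infty$.

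I do not anticipate a real obstacle here; the only point to verify carefully is that each statement of Lemma \ref{I:1:L:1} is insensitive to replacing $c$ by a larger constant, which is immediate from the form of its hypothesis.
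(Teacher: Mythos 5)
Your proof is correct and is exactly the argument the paper intends: the paper gives no written proof for this lemma, stating only that it follows "by combining the above lemma [the sandwich $I_{c}({\bf v}) \subseteq J_{c}({\bf v}) \subseteq I_{c'}({\bf v})$] with Lemma \ref{I:1:L:1}," which is precisely your route, including the observation that Lemma \ref{I:1:L:1} applies with $c$ replaced by the larger constant $c'$.
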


	The following result is an analogue
	of Proposition \ref{I:1:L:2} in terms of $J$-sets.

\begin{proposition} \label{I:1:P:5}
	Suppose that
	$\lambda := \sum_{\nu \in \{\infty\} \cup \mathcal{S}}\lambda_{\nu} >0$.
	There exists a sequence of primitive points $({\bf v}_k)_{k\geq0}$
	in $\mathbb{Z}^{n+1}$, any two of which are linearly independent
	and satisfy the following relations
\begin{equation}\label{I:1:P:5:1}
	\max J_{c}({\bf v}_{k})
	<
	\min J_{c}({\bf v}_{k+2})
	\leq
	\max J_{c}({\bf v}_{k+1})
	\ \text{ for each } \  k\geq0,
\end{equation}
	and the sequence $(\max J_{c}({\bf v}_{k}))_{k\geq1}$ is unbounded.
	Moreover, there exist sequences
	$({\bf x}_k)_{k\geq0}$ and $({\bf x'}_k)_{k\geq0}$
	of non-zero integer points in $\mathbb{Z}^{n+1}$
	such that, for each $k\geq0$,
	upon putting $X_{k+1}= \max J_{c}({\bf v}_{k})$, we have
\begin{equation}\label{I:1:P:5:1:1}
 \begin{gathered}
 	{\bf x'}_{k} \in \mathbb{Z}_{\mathcal{S}} \ {\bf x}_{k}
		\subseteq \mathbb{Z}_{\mathcal{S}} \ {\bf v}_{k},
	\\
	{\bf x'}_{k}, {\bf x}_{k+1} \in \mathcal{C}_{c',X_{k+1}},
	\\
	L_{\infty}({\bf x}_{k}) \leq c' X_{k+1}^{-\lambda_{\infty}},
 \end{gathered}
\end{equation}
	where $c' = c \max_{{p} \in \mathcal{S}} p$.
	Finally, for each $k\geq0$,
	the points ${\bf x'}_k$ and ${\bf x}_{k+1}$
	are linearly independent.
\end{proposition}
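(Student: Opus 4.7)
My plan is to mimic the proof of Proposition \ref{I:1:L:2}, working systematically with the $J$-sets in place of the $I$-sets, and then to transfer the resulting information back to $\mathcal{C}_{c',X}$ via Lemma \ref{I:1:L:3}. The key observation is that Lemma \ref{I:1:L:3-1} endows the $J$-sets with covering and finiteness properties entirely analogous to those of the $I$-sets established in Lemma \ref{I:1:L:1}, which is all that the earlier argument relied on.

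First I note that whenever $X \in J_c({\bf v})$, the inequality (\ref{I:1:P:3:1}) forces $\|{\bf v}\|_\infty \leq X$, since every factor on its right-hand side is at most $1$ except possibly the first, which is bounded above by $X/\|{\bf v}\|_\infty$. Thus the set of primitive ${\bf v} \in \mathbb{Z}^{n+1}$ with $X \in J_c({\bf v})$ is finite for every fixed $X\geq 1$. I choose ${\bf v}_0$ primitive with $1\in J_c({\bf v}_0)$ and $\max J_c({\bf v}_0)$ maximal among such points, setting $X_1 := \max J_c({\bf v}_0) > 1$ (Lemma \ref{I:1:L:3-1}(iii) applied at $X=1$ supplies at least one candidate with $\max J_c > 1$). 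Recursively, I put
\[
	\mathcal{V}_{k+1} = \{ {\bf v}\in\mathbb{Z}^{n+1} \ |\ {\bf v}\text{ primitive},\ X_{k+1}\in J_c({\bf v}),\ X_{k+1}<\max J_c({\bf v})\},
\]
and choose ${\bf v}_{k+1} \in \mathcal{V}_{k+1}$ maximizing $X_{k+2} := \max J_c({\bf v}_{k+1})$. This is well-defined because, by the norm bound above, $\mathcal{V}_{k+1}$ is finite, and it is non-empty by Lemma \ref{I:1:L:3-1}(iii). By construction $X_{k+1}<X_{k+2}$, so $(\max J_c({\bf v}_k))_{k\geq 0}$ is strictly increasing; consequently the ${\bf v}_k$ are pairwise distinct up to sign, hence pairwise linearly independent by primitivity. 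If $(X_k)$ were bounded by some $B$ then all ${\bf v}_k$ would satisfy $\|{\bf v}_k\|_\infty \leq B$, but only finitely many distinct primitive points fit in such a box, so $(X_k)$ is unbounded. The middle inequality in (\ref{I:1:P:5:1}) is just $\min J_c({\bf v}_{k+2}) \leq X_{k+2} = \max J_c({\bf v}_{k+1})$, which follows from $X_{k+2}\in J_c({\bf v}_{k+2})$. The strict inequality $X_{k+1} < \min J_c({\bf v}_{k+2})$ I verify by contradiction: if $X_{k+1}\in J_c({\bf v}_{k+2})$, then ${\bf v}_{k+2}\in \mathcal{V}_{k+1}$ and the maximality of ${\bf v}_{k+1}$ in that set gives $X_{k+3} = \max J_c({\bf v}_{k+2}) \leq X_{k+2}$, contradicting $X_{k+2}<X_{k+3}$.

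For the integer points, I set $c' = c\max_{p\in\mathcal{S}}p$, so that by Lemma \ref{I:1:L:3} both $X_k$ and $X_{k+1}$ belong to $I_{c'}({\bf v}_k)$. I pick ${\bf x}_k\in \mathcal{L}_{c'}({\bf v}_k, X_k)$ of minimal sup-norm and any ${\bf x}'_k\in \mathcal{L}_{c'}({\bf v}_k, X_{k+1})$. The proof of Lemma \ref{I:1:L:2-0} goes through with $c$ replaced by $c'$ and, applied with $Y=X_{k+1}$, it yields both ${\bf x}'_k \in \mathbb{Z}_{\mathcal{S}}{\bf x}_k\subseteq \mathbb{Z}_{\mathcal{S}}{\bf v}_k$ and $L_\infty({\bf x}_k) \leq c'X_{k+1}^{-\lambda_\infty}$; the memberships ${\bf x}'_k,{\bf x}_{k+1}\in \mathcal{C}_{c',X_{k+1}}$ are immediate from the definition of $\mathcal{L}_{c'}$. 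Since ${\bf x}'_k$ and ${\bf x}_{k+1}$ are non-zero rational multiples of ${\bf v}_k$ and ${\bf v}_{k+1}$ respectively, and these are linearly independent by the previous step, the final conclusion follows. The main subtlety I foresee is ensuring that at each step the selection is over a finite non-empty set, which rests on the norm bound $\|{\bf v}\|_\infty \leq X$ derived from (\ref{I:1:P:3:1}) together with Lemma \ref{I:1:L:3-1}(iii); beyond that the argument is formally parallel to the proof of Proposition \ref{I:1:L:2}.
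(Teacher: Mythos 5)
Your proposal is correct and follows essentially the same route as the paper: the same greedy construction maximizing $\max J_c$ over the finite nonempty sets $\mathcal{V}_k$ supplied by Lemma \ref{I:1:L:3-1}(iii), the same contradiction argument for the strict inequality in (\ref{I:1:P:5:1}), and the same transfer to $I_{c'}$-sets via Lemma \ref{I:1:L:3} followed by Lemma \ref{I:1:L:2-0} with $c$ replaced by $c'$. Your explicit justification that $X\in J_c({\bf v})$ forces $\|{\bf v}\|_\infty\leq X$ (hence finiteness of each $\mathcal{V}_k$) is a detail the paper leaves implicit, but it is the same underlying fact.
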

\begin{proof}
	Choose ${\bf v}_{0}$ to be a primitive point in $\mathbb{Z}^{n+1}$
	satisfying the inequality (\ref{I:1:P:3:1}) with $X =1$ and $c_1 = c$,
	with the largest $\max J_{c}({\bf v}_{0})$.
	Consider the following finite set
\begin{align*}
	\mathcal{V}_1 =
	\{
		{\bf v} \in \mathbb{Z}^{n+1} \ | \
		{\bf v} \text{ is primitive }
		&\text{ with }
		\max J_{c}({\bf v}_{0}) \in J_{c}({\bf v})
		\\
		&\text{ and } \max J_{c}({\bf v}_{0}) < \max J_{c}({\bf v})
	\}.
\end{align*}
	By Part (iii) of Lemma \ref{I:1:L:3-1}, we have that
	$\mathcal{V}_1 \neq \emptyset$ and
	that ${\bf v} \neq \pm {\bf v}_{0}$
	for each ${\bf v} \in \mathcal{V}_1$.
	Choose a point
	${\bf v}_1 \in \mathcal{V}_1$ such that $\max J_{c}({\bf v}_1)$
	is largest.
	Arguing in this way we construct recursively
	a sequence of primitive points $({\bf v}_{k})_{k\geq0}$
	in $\mathbb{Z}^{n+1}$, any two of each are linearly independent,
	such that for each $k\geq0$, we have
\begin{equation}\label{I:1:P:5:2}
 \begin{gathered}
	\max J_{c}({\bf v}_{k}) \in J_{c}({\bf v}_{k+1}),
	\\
	\max J_{c}({\bf v}_{k}) < \max J_{c}({\bf v}_{k+1}),
	\\
	\max J_{c}({\bf v}_{k}) =
	\max_{{\bf v} \in \mathcal{V}_k} \big\{ \max J_{c}({\bf v})  \big\},
 \end{gathered}
\end{equation}
	where
\begin{align*}
	\mathcal{V}_k =
	\{
		{\bf v} \in \mathbb{Z}^{n+1} \ | \
		{\bf v} \text{ is primitive }
		&\text{ with }
		\max J_{c}({\bf v}_{k-1}) \in J_{c}({\bf v})
		\\
		&\text{ and }
		\max J_{c}({\bf v}_{k-1}) < \max J_{c}({\bf v})
	\}.
\end{align*}
	Since, for each $k\geq0$, we have
	$\max J_{c}({\bf v}_{k}) \in J_{c}({\bf v}_{k+1})$, then
	$\|{\bf v}_{k+1}\|_{\infty} \leq \max J_{c}({\bf v}_{k})$.
	Since the sequence
	$({\bf v}_{k})_{k\geq0}$ consists of infinitely many different elements,
	this shows that the sequence $(\max J_{c}({\bf v}_{k}))_{k\geq1}$ is unbounded.

	Using the first relation in (\ref{I:1:P:5:2}) with $k$ replaced by $k+1$,
	we deduce that $\min J_{c}({\bf v}_{k+2}) \leq \max J_{c}({\bf v}_{k+1})$
	for each $k\geq0$. We claim that
	$\max J_{c}({\bf v}_{k})<\min J_{c}({\bf v}_{k+2})$ for each
	$k\geq0$. Fix any $k\geq0$ and suppose on the contrary that
	$\max J_{c}({\bf v}_{k}) \geq \min J_{c}({\bf v}_{k+2})$.
	By the second relation in (\ref{I:1:P:5:2}) this means that
	we have $\max J_{c}({\bf v}_{k}) \in J_{c}({\bf v}_{k+2})$
	and
	$\max J_{c}({\bf v}_{k})
	< \max J_{c}({\bf v}_{k+2})$. So, it follows that
	${\bf v}_{k+2} \in \mathcal{V}_{k+1}$.
	Hence, we have
\[
	\max J_{c}({\bf v}_{k+2})
	\leq
	\max_{{\bf v} \in \mathcal{V}_{k+1}} \big\{ \max J_{c}({\bf v})  \big\}
	= \max J_{c}({\bf v}_{k+1}),
\]
	but this contradicts the second relation in (\ref{I:1:P:5:2})
	with $k$ replaced by $k+1$.

	Moreover, put $X_{k+1}= \max J_{c}({\bf v}_{k})$ for each $k\geq0$.
	By construction, we have
	$X_{k},X_{k+1} \in J_{c}({\bf v}_{k})$ for each $k\geq0$.
	Since Lemma \ref{I:1:L:3} gives
	$J_{c}({\bf v}_{k})\subseteq I_{c'}({\bf v}_{k})$,
	we have	$\mathcal{L}_{c'}({\bf v}_{k},X_{k}) \neq \emptyset$
	and $\mathcal{L}_{c'}({\bf v}_{k},X_{k+1}) \neq \emptyset$
	for each $k\geq0$.
	Now, choose a point
	${\bf x}_{k} \in \mathcal{L}_{c'}({\bf v}_{k},X_{k})$ with minimal norm
	and a point ${\bf x'}_{k} \in \mathcal{L}_{c'}({\bf v}_{k},X_{k+1})$.
	Then, the sequences $({\bf x}_{k})_{k\geq0}$ and $({\bf x'}_{k})_{k\geq0}$
	satisfy the second relation in (\ref{I:1:P:5:1:1}).
	Moreover, since the points in the sequence $({\bf v}_k)_{k\geq0}$
	are primitive and since ${\bf v}_i \neq \pm{\bf v}_j$
	for $i,j \geq 0$ with $i\neq j$, then any two of them are linearly
	independent. Hence any two different points of $({\bf x}_k)_{k\geq0}$ and
	any two different points of $({\bf x'}_k)_{k\geq0}$ are linearly independent.
	Finally, the first and third relations in (\ref{I:1:P:5:1:1})
	follow from Lemma \ref{I:1:L:2-0}
	applied to ${\bf v} = {\bf v}_{k}$, ${\bf x} = {\bf x}_{k}$
	and $Y = X_{k+1}$, with $c$ replaced by $c'$.
\end{proof}

	In the next two paragraphs, we show how the above proposition
	allows one to recover the construction of minimal points
	by  H.~{\sc Davenport} and W.M.~{\sc Schmidt} in \cite{DS}
	and by O.~{\sc Teuli\'{e}} in \cite{Teu}.

	\subsection{Approximation to real numbers}

	In the case where $\mathcal{S} = \emptyset$,
	the Definition \ref{I:1:D:1} takes the following form.
\begin{definition} \label{I:1:D:3}
	Let $\xi_{\infty} \in \mathbb{R}$
	and $\lambda_{\infty} \in \mathbb{R}$.
	We say that $\lambda_{\infty}$ is an exponent of approximation
	to $\xi_{\infty}$ in degree $n$ if there exists a
	constant $c > 0$ such that the inequalities
\begin{equation}\label{I:1:D:3:1}
 \begin{aligned}
	& \| {\bf x} \|_{\infty} \leq X,
	\\
	& L_{\infty}({\bf x }) \leq c X^{-\lambda_{\infty}},
 \end{aligned}
\end{equation}
	have a non-zero solution ${\bf x} \in \mathbb{Z}^{n+1}$,
	for any real number $X \geq 1$.
\end{definition}

	In this context, Proposition \ref{I:1:P:5}
	leads to the following statement.
\begin{lemma} \label{I:1:L:4}
	Let $\lambda_{\infty}\in \mathbb{R}_{>0}$ be an exponent of approximation
	in degree $n$ to $\xi_{\infty} \in \mathbb{R}\setminus \mathbb{Q}$.
	There exists a sequence of non-zero primitive points
	$({\bf v}_k)_{k \geq 0} \subseteq \mathbb{Z}^{n+1}$ such that
	for each $k \geq 0$, we have
\begin{equation}\label{I:1:L:4:1}
 \begin{gathered}
	\|{\bf v}_k\|_{\infty} < \|{\bf v}_{k+1}\|_{\infty},
	\\
	\|{\bf v}_{k+3}\|_{\infty}^{-\lambda_{\infty}}
	\ll
	L_{\infty}({\bf v}_{k+1})
	< L_{\infty}({\bf v}_k)
	\ll \|{\bf v}_{k+1}\|_{\infty}^{-\lambda_{\infty}}.
 \end{gathered}
\end{equation}
\end{lemma}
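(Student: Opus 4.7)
The plan is to specialize Proposition \ref{I:1:P:5} to the case $\mathcal{S}=\emptyset$ and to read off the required inequalities from the endpoints of the sets $J_c({\bf v}_k)$.

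First I would make the criterion (\ref{I:1:P:3:1}) explicit in this case: when $\mathcal{S}=\emptyset$, the product over primes is empty and (\ref{I:1:P:3:1}) reduces to the pair of conditions $\|{\bf v}\|_\infty \leq X$ and $L_\infty({\bf v})\, X^{\lambda_\infty} \leq c$. Thus whenever $J_c({\bf v})\neq\emptyset$, one has
\[
  J_c({\bf v}) \;=\; \bigl[\,\|{\bf v}\|_\infty,\ (c/L_\infty({\bf v}))^{1/\lambda_\infty}\,\bigr].
\]
Applying Proposition \ref{I:1:P:5} with $\lambda=\lambda_\infty>0$ then yields a sequence $({\bf v}_k)_{k\geq 0}$ of pairwise linearly independent primitive points in $\mathbb{Z}^{n+1}$ satisfying the chain (\ref{I:1:P:5:1}), with $(\max J_c({\bf v}_k))_k$ unbounded.

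Next I would translate $\max J_c({\bf v}_k) < \min J_c({\bf v}_{k+2}) \leq \max J_c({\bf v}_{k+1})$ into inequalities involving $\|{\bf v}_k\|_\infty$ and $L_\infty({\bf v}_k)$. The right inequality gives $\|{\bf v}_{k+2}\|_\infty \leq (c/L_\infty({\bf v}_{k+1}))^{1/\lambda_\infty}$, and shifting $k \mapsto k-1$ supplies $L_\infty({\bf v}_k)\leq c\,\|{\bf v}_{k+1}\|_\infty^{-\lambda_\infty}$, which is the right-hand $\ll$ of (\ref{I:1:L:4:1}). The strict left inequality gives $L_\infty({\bf v}_k) > c\,\|{\bf v}_{k+2}\|_\infty^{-\lambda_\infty}$, and shifting $k\mapsto k+1$ supplies the left-hand $\gg$. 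Chaining both endpoint relations yields $\max J_c({\bf v}_k) < \max J_c({\bf v}_{k+1})$, which upon inversion gives the strict middle inequality $L_\infty({\bf v}_{k+1}) < L_\infty({\bf v}_k)$.

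Finally, for the norm monotonicity I would write
\[
  \|{\bf v}_{k+1}\|_\infty \;=\; \min J_c({\bf v}_{k+1}) \;\leq\; \max J_c({\bf v}_k) \;<\; \min J_c({\bf v}_{k+2}) \;=\; \|{\bf v}_{k+2}\|_\infty,
\]
where the crucial middle step uses $\min J_c({\bf v}_{k+1})\leq \max J_c({\bf v}_k)$. This latter relation holds for $k\geq 1$ by shifting (\ref{I:1:P:5:1}), and for $k=0$ by construction since ${\bf v}_1 \in \mathcal{V}_1$ forces $\max J_c({\bf v}_0) \in J_c({\bf v}_1)$ in the proof of Proposition \ref{I:1:P:5}. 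This is the one delicate point of the argument: the stated conclusion (\ref{I:1:P:5:1}) does not by itself cover the norm comparison at the initial index, so one must either appeal to the construction inside the proof or simply discard ${\bf v}_0$ and reindex. After either fix, the resulting sequence satisfies all the inequalities of (\ref{I:1:L:4:1}) for every $k\geq 0$.
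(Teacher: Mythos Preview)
Your proof is correct and follows the same approach as the paper: specialize Proposition \ref{I:1:P:5} to $\mathcal{S}=\emptyset$, compute $J_c({\bf v}_k)=[\|{\bf v}_k\|_\infty,(c/L_\infty({\bf v}_k))^{1/\lambda_\infty}]$, and read the inequalities (\ref{I:1:L:4:1}) off from the chain (\ref{I:1:P:5:1}). The paper's proof is terser and simply asserts that the translated relations yield (\ref{I:1:L:4:1}); your observation about the initial index (where $\min J_c({\bf v}_1)\leq\max J_c({\bf v}_0)$ must be pulled from the construction, or ${\bf v}_0$ dropped) is a valid point of care that the paper leaves implicit.
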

\begin{proof}
	By Proposition \ref{I:1:P:5} there exists
	a sequence of primitive points $({\bf v}_k)_{k\geq0}$
	in $\mathbb{Z}^{n+1}$, any two of which are linearly independent,
	satisfying the relations (\ref{I:1:P:5:1})
	for some constant $c>0$.
	Since $\mathcal{S} = \emptyset$,
	then for each $k\geq0$, we have
\begin{align*}
	J_c({\bf v }_k)
	&=
		\Big \{
		X\in \mathbb{R}_{\geq1} \ | \
				X \text{ satisfies }
		1 \leq
		\min
		\big \{
			\frac{X}{\|{\bf v }_k\|_{\infty}},
			\frac{c X^{-\lambda_{\infty}}}{L_{\infty}({\bf v }_k)}
		\big \}
		\Big \}
	\\
	&=
	\Big [
		\|{\bf v }_k\|_{\infty},
		(c /L_{\infty}({\bf v }_k) )^{1/\lambda_{\infty}}
	\Big ]
\end{align*}
	and then the relations (\ref{I:1:P:5:1}) can be written in the form
\[
	(c /L_{\infty}({\bf v }_{k}) )^{1/\lambda_{\infty}}
	<
	\|{\bf v }_{k+2}\|_{\infty}
	\leq
	(c /L_{\infty}({\bf v }_{k+1}) )^{1/\lambda_{\infty}}
	\ \text{ for each } \  k\geq0.
\]
	So, the sequence $({\bf v}_k)_{k\geq0}$
	satisfies the inequalities (\ref{I:1:L:4:1}).
\end{proof}
\subsection{Approximation to p-adic numbers}

	Let $p$ be a prime number.
	In the case where $\mathcal{S} = \{p\}$ and
	$\lambda_{\infty}  = -1$, the condition
	that $(\lambda_{\infty},\lambda_p)$ is an exponent of approximation
	in degree $n$ to a point $(\xi_{\infty},\xi_p) \in \mathbb{R} \times \mathbb{Q}_p$
	is independent of the choice of $\xi_{\infty}$.
	This justifies the following definition.
\begin{definition} \label{I:1:D:4}
	Let $\xi_p \in \mathbb{Q}_p\setminus \mathbb{Q}$ and
	$\lambda_p \in \mathbb{R}$.
	We say that $\lambda_p$ is an exponent of approximation
	in degree $n$ to $\xi_p$ if there exists a constant
	$c > 0$ such that the inequalities
\begin{equation}\label{I:1:D:4:1}
	\| {\bf x} \|_{\infty} \leq X, \quad
	L_{p}({\bf x }) \leq c X^{-\lambda_p},
\end{equation}
	have a non-zero solution ${\bf x} \in \mathbb{Z}^{n+1}$ for any real
	number $X \geq 1$.
\end{definition}
\begin{remark} \label{I:1:2:R:1}
	The criterion presented in Proposition \ref{I:1:P:3} shows
	that $\lambda_p \in \mathbb{R}$
	is an exponent of approximation
	to  $\xi_p \in \mathbb{Q}_p\setminus \mathbb{Q}$
	in degree $n$ 
	if there exists a constant $c > 0$ such that the inequalities
\begin{equation}\label{I:1:2:R:1:1}
	\| {\bf x} \|_{\infty} \leq X, \quad
	\| {\bf x} \|_{\infty}L_{p}({\bf x }) \leq c X^{1-\lambda_p},
\end{equation}
	have a non-zero solution ${\bf x} \in \mathbb{Z}^{n+1}$
	for any real number $X \geq 1$.
\end{remark}

	In this context, Proposition \ref{I:1:P:5}
	leads to the following statement.
\begin{lemma} \label{I:1:L:6}
	Let $\lambda_{p}\in \mathbb{R}_{>0}$ be an exponent of approximation
	in degree $n$ to $\xi_p \in \mathbb{Q}_{p}\setminus \mathbb{Q}$
	and suppose that $\lambda_{p} > 1$.
	There exists a sequence of primitive points
	$({\bf v}_k)_{k \geq 0}$ in $\mathbb{Z}^{n+1}$ such that,
	for each $k\geq0$, we have
\begin{equation}\label{I:1:L:6:1}
 \begin{gathered}
	\|{\bf v}_k\|_{\infty} < \|{\bf v}_{k+1}\|_{\infty}
	\\
	\|{\bf v}_{k+3}\|_{\infty}^{1-\lambda_p}
	\ll
	\|{\bf v}_{k+1}\|_{\infty}L_{p}({\bf v}_{k+1})
	<
	\|{\bf v}_{k}\|_{\infty}L_{p}({\bf v}_{k})
	\ll
	\|{\bf v}_{k+1}\|_{\infty}^{1-\lambda_p}.
 \end{gathered}
\end{equation}
\end{lemma}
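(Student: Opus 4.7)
The plan is to recognize Lemma \ref{I:1:L:6} as the direct $p$-adic specialization of Proposition \ref{I:1:P:5}. Set $\mathcal{S}=\{p\}$, $\lambda_{\infty}=-1$, and fix any $\xi_{\infty}\in\mathbb{R}\setminus\mathbb{Q}$. A hypothesis check, using Remark \ref{I:1:2:R:1} and Definition \ref{I:1:D:4}, shows that $\lambda_p$ being an exponent of approximation in degree $n$ to $\xi_p$ in the sense of Definition \ref{I:1:D:4} is the same as $(-1,\lambda_p)$ being an exponent of approximation in degree $n$ to $(\xi_{\infty},\xi_p)$ in the sense of Definition \ref{I:1:D:1} (the inequality $L_{\infty}(\mathbf{x})\leq cX$ is trivial once $c$ is large, because $L_{\infty}(\mathbf{x})\leq\|\mathbf{x}\|_{\infty}(1+\max_{0\leq l\leq n}|\xi_{\infty}|^{l})$). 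The assumption $\lambda_p>1$ then gives $\lambda:=\lambda_{\infty}+\lambda_p=\lambda_p-1>0$, which is precisely the hypothesis required in Proposition \ref{I:1:P:5}. That proposition therefore produces a sequence $(\mathbf{v}_k)_{k\geq 0}$ of primitive, pairwise linearly independent points in $\mathbb{Z}^{n+1}$ satisfying $\max J_{c}(\mathbf{v}_k) < \min J_{c}(\mathbf{v}_{k+2})\leq \max J_{c}(\mathbf{v}_{k+1})$ for all $k\geq 0$.

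Next, I would compute $J_{c}(\mathbf{v})$ explicitly in this situation. In the defining inequality (\ref{I:1:P:3:1}), the factor $\min\{X/\|\mathbf{v}\|_{\infty},\,cX/L_{\infty}(\mathbf{v})\}$ collapses to $X/\|\mathbf{v}\|_{\infty}$ after enlarging $c$, using the bound on $L_{\infty}(\mathbf{v})$ above; the condition becomes $\|\mathbf{v}\|_{\infty}\leq X$ together with $X^{\lambda_p-1}\|\mathbf{v}\|_{\infty}L_{p}(\mathbf{v})\leq c$. Since $\lambda_p-1>0$, this yields
\[
J_{c}(\mathbf{v})=\Bigl[\,\|\mathbf{v}\|_{\infty},\ \bigl(c/(\|\mathbf{v}\|_{\infty}L_{p}(\mathbf{v}))\bigr)^{1/(\lambda_p-1)}\,\Bigr].
\]

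The final step is to translate the $J$-set chain into (\ref{I:1:L:6:1}). From $\max J_{c}(\mathbf{v}_k)<\min J_{c}(\mathbf{v}_{k+2})=\|\mathbf{v}_{k+2}\|_{\infty}$, raising both sides to the positive power $\lambda_p-1$ gives $c<\|\mathbf{v}_k\|_{\infty}L_{p}(\mathbf{v}_k)\,\|\mathbf{v}_{k+2}\|_{\infty}^{\lambda_p-1}$, i.e.\ $\|\mathbf{v}_{k+2}\|_{\infty}^{1-\lambda_p}\ll \|\mathbf{v}_k\|_{\infty}L_{p}(\mathbf{v}_k)$; shifting $k\mapsto k+1$ gives the leftmost bound in (\ref{I:1:L:6:1}). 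Similarly, $\|\mathbf{v}_{k+2}\|_{\infty}\leq \max J_{c}(\mathbf{v}_{k+1})$ produces $\|\mathbf{v}_{k+1}\|_{\infty}L_{p}(\mathbf{v}_{k+1})\leq c\,\|\mathbf{v}_{k+2}\|_{\infty}^{1-\lambda_p}$; shifting $k\mapsto k-1$ gives the rightmost bound. The strict middle inequality $\|\mathbf{v}_{k+1}\|_{\infty}L_{p}(\mathbf{v}_{k+1})<\|\mathbf{v}_k\|_{\infty}L_{p}(\mathbf{v}_k)$ comes from strict monotonicity $\max J_{c}(\mathbf{v}_k)<\max J_{c}(\mathbf{v}_{k+1})$, inverted using $\lambda_p-1>0$.

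The one genuinely delicate point is the strict increase $\|\mathbf{v}_k\|_{\infty}<\|\mathbf{v}_{k+1}\|_{\infty}$. From $\|\mathbf{v}_k\|_{\infty}\leq\max J_{c}(\mathbf{v}_k)<\min J_{c}(\mathbf{v}_{k+2})=\|\mathbf{v}_{k+2}\|_{\infty}$ we immediately obtain strict increase along every other index, but not necessarily between consecutive ones. To obtain it between consecutive indices one may thin out the sequence $(\mathbf{v}_k)_{k\geq 0}$ to a subsequence along which the norms are strictly increasing, verifying that the shift-pattern of inequalities (\ref{I:1:L:6:1}) is preserved after reindexing. This bookkeeping -- ensuring the subsequence respects the $k,\,k+1,\,k+3$ shift structure while preserving the three asymptotic inequalities -- is the main technical hurdle and the part that takes the most care.
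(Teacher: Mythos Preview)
Your approach is essentially identical to the paper's: set $\mathcal{S}=\{p\}$, $\lambda_{\infty}=-1$, invoke Proposition~\ref{I:1:P:5}, compute $J_c(\mathbf{v}_k)$ explicitly as the interval $\bigl[\|\mathbf{v}_k\|_{\infty},\,(c/(\|\mathbf{v}_k\|_{\infty}L_p(\mathbf{v}_k)))^{1/(\lambda_p-1)}\bigr]$, and read off (\ref{I:1:L:6:1}) from the chain (\ref{I:1:P:5:1}).

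Your worry about the strict increase $\|\mathbf{v}_k\|_{\infty}<\|\mathbf{v}_{k+1}\|_{\infty}$ is unnecessary, and no thinning is needed: since $\min J_c(\mathbf{v}_j)=\|\mathbf{v}_j\|_{\infty}$ here, the relation $\max J_c(\mathbf{v}_{k-1})\in J_c(\mathbf{v}_k)$ from the construction in Proposition~\ref{I:1:P:5} gives $\|\mathbf{v}_k\|_{\infty}\leq\max J_c(\mathbf{v}_{k-1})$, and then (\ref{I:1:P:5:1}) yields $\|\mathbf{v}_k\|_{\infty}\leq\max J_c(\mathbf{v}_{k-1})<\min J_c(\mathbf{v}_{k+1})=\|\mathbf{v}_{k+1}\|_{\infty}$ directly for all $k\geq 1$. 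The paper itself simply asserts that the sequence satisfies (\ref{I:1:L:6:1}) without isolating this step.
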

\begin{proof}
	As mentioned before Definition \ref{I:1:D:4}, we choose
	any number $\xi_{\infty} \in \mathbb{R} \setminus \mathbb{Q}$ and
	put $\lambda_{\infty} = -1$.
	Then $(\lambda_{\infty},\lambda_{p})$ is an exponent of approximation
	in degree $n$ to $(\xi_{\infty},\xi_{p})$ and by Proposition \ref{I:1:P:5}
	there exists
	a sequence of primitive points $({\bf v}_k)_{k\geq0}$
	in $\mathbb{Z}^{n+1}$, any two of which are linearly independent,
	satisfying the relations (\ref{I:1:P:5:1})
	for some constant $c>0$.
	Also, for each $k\geq0$, we have
\begin{align*}
	J_c({\bf v }_k)
	&=
		\Big \{
		X\in \mathbb{R}_{\geq1} \ | \
				X \text{ satisfies }
		1 \leq
		\min
		\big \{
			\frac{X}{\|{\bf v }_k\|_{\infty}},
			\frac{c X}{L_{\infty}({\bf v }_k)}
		\big \}
		\min
		\big \{
			1,
			\frac{c X^{-\lambda_{p}}}{L_{p}({\bf v }_k)}
		\big \}
		\Big \}.
\end{align*}
	Assuming that the constant $c>0$ is sufficiently large, so that
	the inequality $L_{\infty}({\bf v }_k) \leq c \|{\bf v }_k\|_{\infty}$
	holds for each $k\geq0$, we
	get $\min
		\big \{
			X/\|{\bf v }_k\|_{\infty},
			c X/L_{\infty}({\bf v }_k)
		\big \} = X/\|{\bf v }_k\|_{\infty}$.
	Then, since $\lambda_p>1$, we obtain
\begin{align*}
	J_c({\bf v }_k)
	&=
		\Big \{
		X\in \mathbb{R}_{\geq1} \ | \
				X \text{ satisfies }
		1 \leq
		\frac{X}{\|{\bf v }_k\|_{\infty}}
		\min
		\big \{
			1,
			\frac{c X^{-\lambda_{p}}}{L_{p}({\bf v }_k)}
		\big \}
		\Big \}
	\\
	&=
	\Big [
		\|{\bf v }_k\|_{\infty},
		\Big(\frac{c}{
		\|{\bf v }_k\|_{\infty}
		L_{p}({\bf v }_k)} \Big)^{1/(\lambda_{p}-1)}
	\Big ]
\end{align*}
	and the relations (\ref{I:1:P:5:1}) become
\[
	\Big(\frac{c}{
		\|{\bf v }_k\|_{\infty}
		L_{p}({\bf v }_k)} \Big)^{1/(\lambda_{p}-1)}
	<
	\|{\bf v }_{k+2}\|_{\infty}
	\leq
	\Big(\frac{c}{
		\|{\bf v }_{k+1}\|_{\infty}
		L_{p}({\bf v }_{k+1})} \Big)^{1/(\lambda_{p}-1)}
	\ \text{ for each } \  k\geq0.
\]
	So, the sequence $({\bf v}_k)_{k\geq0}$
	satisfies the inequalities (\ref{I:1:L:6:1}).
\end{proof}
	Note that if
	$\bar \lambda = (\lambda_{\infty},(\lambda_{p})_{{p} \in \mathcal{S}})
	\in \mathbb{R}^{|\mathcal{S}|+1}$ is an exponent of approximation
	in degree $n$
	to $\bar \xi \in
	\mathbb{R} \times \prod_{{p} \in
	\mathcal{S}} \mathbb{Q}_{p}$, then
	$\lambda_{\infty}$ is an exponent of approximation
	to $\xi_{\infty}$ and
	$\lambda_p$ is an exponent of approximation
	to $\xi_p$ for each ${p} \in \mathcal{S}$ in the same degree $n$.
\cleardoublepage
	\section{Inequalities (Case n=2)}
	
	From now on, we assume that $n = 2$. An exponent
	of approximation to a point
	$\bar \xi = (\xi_{\infty},(\xi_{p})_{{p} \in \mathcal{S}})
	\in \mathbb{R} \times \prod_{{p} \in \mathcal{S}}\mathbb{Q}_{p}$
	means simply an exponent of approximation in degree $2$
	to this point.

	Any triple ${\bf  x} = (x_{0}, x_{1}, x_{2}) \in \mathbb{Z}^3$
	can be identified with a symmetric matrix
	$
	\left (
	\begin{matrix}
	x_{0}      &  x_{1}
	\\
	x_{1}      &  x_{2}
	\end{matrix}
	\right )
	$ with determinant $\det({\bf  x}) := x_0x_2 - x_1^2$.
	Following \cite{ARNCAI.1}, for points
	${\bf x},{\bf y},{\bf z}\in \mathbb{Z}^3$
	viewed as symmetric matrices,
	we also define
\[
	[{\bf x},{\bf y},{\bf z}] := -{\bf x}J{\bf z}J{\bf y},
	\ \text{ where } \
 J =
		\left (
		\begin{matrix}
			0 &  1
			\\
			-1 &  0
		\end{matrix}
		\right ).
\]
	We recall from \cite{ARNCAI.1} that $[{\bf x},{\bf y},{\bf z}]$
	is also a symmetric matrix if ${\bf x},{\bf y}$ and ${\bf z}$
	are linearly independent over $\mathbb{Q}$.
	It then corresponds to a new point ${\bf w}\in \mathbb{Z}^3$.
	The next lemma provides most of the estimates
	used throughout the thesis.
\begin{lemma} \label{I:2:L:1}
	Let ${\bf  x, y, z} \in \mathbb{Z}^3$
	and $\nu \in \mathcal{S}\cup\{\infty\}$.
\begin{itemize} 
     \item[(i)] For the determinants $\det({\bf x})$ and
	$\det({\bf x}, {\bf y}, {\bf z})$ we have the following estimates
\begin{gather*}
	| \det({\bf x}) |_{\nu} \ll \| {\bf x} \|_{\nu}L_{\nu}({\bf x }),
	\\
	| \det({\bf x}, {\bf y}, {\bf z}) |_{\nu} \leq
			\| {\bf x} \|_{\nu} L_{\nu}({\bf y }) L_{\nu}({\bf z })+
			\| {\bf y} \|_{\nu} L_{\nu}({\bf x }) L_{\nu}({\bf z })+
			\| {\bf z} \|_{\nu} L_{\nu}({\bf x }) L_{\nu}({\bf y }).
\end{gather*}
    \item[(ii)] Let ${\bf w} = [{\bf x},{\bf x},{\bf y}]$, then
\begin{gather*}
	\| {\bf w} \|_{\nu} \ll
	\max\{ \| {\bf y} \|_{\nu}L_{\nu}({\bf x })^2,\| {\bf x} \|_{\nu}^2L_{\nu}({\bf y }) \},
	\\
	L_{\nu}({\bf w }) \ll
	L_{\nu}({\bf x })\max\{\| {\bf y} \|_{\nu}L_{\nu}({\bf x }),\| {\bf x} \|_{\nu}L_{\nu}({\bf y }) \}.
\end{gather*}
  \item[(iii)] Let $V$ be a subspace of $\mathbb{Q}^2$ and let ${\bf x},{\bf y} \in \mathbb{Z}^3$
 be a basis of $V$ over $\mathbb{Q}$.
 Then its height $H(V)$, satisfies
\[
	H(V) \ll
	\max\{\| {\bf x} \|_{\infty} L_{\infty}({\bf y}),
		\|{\bf y} \|_{\infty} L_{\infty}({\bf x}) \}
	\prod_{{q} \in \mathcal{S}}
	\max\{L_{q}({\bf x }),L_{q}({\bf y })\}.
\]
  \end{itemize} 
\end{lemma}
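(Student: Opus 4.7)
For part (i), I would exploit the shift $\tilde{\bf x} := {\bf x} - x_0 {\bf t}_\nu$, which has first coordinate $0$ and $\nu$-adic norm exactly $L_\nu({\bf x})$. Substituting $x_1 = x_0\xi_\nu + \tilde x_1$ and $x_2 = x_0\xi_\nu^2 + \tilde x_2$ into $\det({\bf x}) = x_0 x_2 - x_1^2$, the $x_0^2\xi_\nu^2$ contributions cancel, leaving
\[
  \det({\bf x}) = x_0(\tilde x_2 - 2\xi_\nu\tilde x_1) - \tilde x_1^2,
\]
from which $|\det({\bf x})|_\nu \ll \|{\bf x}\|_\nu L_\nu({\bf x})$ follows once one notes that $L_\nu({\bf x}) \ll \|{\bf x}\|_\nu$ absorbs the quadratic term. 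For the $3\times 3$ determinant I would expand $\det({\bf x},{\bf y},{\bf z})$ by multilinearity after writing each row as a scalar multiple of ${\bf t}_\nu$ plus a tilde part. Terms with two or three copies of ${\bf t}_\nu$ vanish by alternation, and the all-tilde term $\det(\tilde{\bf x}, \tilde{\bf y}, \tilde{\bf z})$ is zero because its first column is identically $0$. The three surviving terms correspond to placing the unique copy of ${\bf t}_\nu$ in the first, second, or third row; each reduces to a $2\times 2$ minor in the tilde entries, giving the stated bound.

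For part (ii), the key observation is that the symmetric matrix ${\bf t}_\nu$ has rank one. Writing ${\bf u} = (1,\xi_\nu)^t$ as a column vector one has ${\bf t}_\nu = {\bf u}\cdot{}^t{\bf u}$; since $J$ is skew-symmetric the scalar ${}^t{\bf u}\cdot J\cdot{\bf u}$ vanishes, whence ${\bf t}_\nu J {\bf t}_\nu = 0$. Decomposing ${\bf x} = x_0{\bf t}_\nu + \tilde{\bf x}$ and ${\bf y} = y_0{\bf t}_\nu + \tilde{\bf y}$, I would expand ${\bf w} = -{\bf x} J {\bf y} J {\bf x}$ into its eight multilinear pieces. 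Any term in which ${\bf t}_\nu$ occupies two adjacent matrix slots (first$+$middle, or middle$+$last) contains the factor ${\bf t}_\nu J {\bf t}_\nu$ and so vanishes, leaving only five terms. Direct estimation of each one yields $\|{\bf w}\|_\nu \ll \max\{\|{\bf y}\|_\nu L_\nu({\bf x})^2,\, \|{\bf x}\|_\nu^2 L_\nu({\bf y})\}$, using $L_\nu({\bf x}) \ll \|{\bf x}\|_\nu$ and $L_\nu({\bf y}) \ll \|{\bf y}\|_\nu$ to absorb the smaller mixed terms. For the sharper bound on $L_\nu({\bf w})$ I then use the rank-one decomposition once more to observe that the leading surviving term factors as
\[
  x_0^2 \, {\bf t}_\nu J \tilde{\bf y} J {\bf t}_\nu \;=\; x_0^2\,\bigl({}^t{\bf u}\cdot J\tilde{\bf y}J\cdot{\bf u}\bigr)\cdot{\bf t}_\nu,
\]
i.e., a scalar multiple of ${\bf t}_\nu$. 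Subtracting the matching multiple of ${\bf t}_\nu$ from ${\bf w}$ kills this term, and the four remaining terms have total $\nu$-adic norm bounded by $L_\nu({\bf x})\max\{\|{\bf y}\|_\nu L_\nu({\bf x}),\,\|{\bf x}\|_\nu L_\nu({\bf y})\}$, which in turn controls $L_\nu({\bf w})$.

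For part (iii), I would use the adelic formula $H(V) = \prod_\nu \|{\bf x}\wedge{\bf y}\|_\nu$, the product running over all places of $\mathbb{Q}$, with $\|{\bf x}\wedge{\bf y}\|_\nu$ the $\nu$-adic sup-norm of the Plücker coordinates $x_iy_j - x_jy_i$. For primes $q\notin\mathcal{S}\cup\{\infty\}$, integrality of ${\bf x}, {\bf y}$ forces $\|{\bf x}\wedge{\bf y}\|_q \leq 1$, so those factors contribute no more than $1$. At each $\nu \in \{\infty\}\cup\mathcal{S}$, the substitution of part (i) applied to each $2\times 2$ minor yields $|x_i y_j - x_j y_i|_\nu \ll \|{\bf x}\|_\nu L_\nu({\bf y}) + \|{\bf y}\|_\nu L_\nu({\bf x})$. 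For $p\in\mathcal{S}$, the integrality estimates $\|{\bf x}\|_p,\|{\bf y}\|_p \leq 1$ reduce this to $\max\{L_p({\bf x}), L_p({\bf y})\}$. Multiplying the bounds across $\nu\in\{\infty\}\cup\mathcal{S}$ gives the claimed inequality.

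The most delicate step I anticipate is the $L_\nu({\bf w})$ bound in (ii): enumerating the five surviving terms is routine combinatorics, but correctly recognizing the rank-one structure ${\bf t}_\nu = {\bf u}\cdot{}^t{\bf u}$ to identify the ${\bf t}_\nu$-component of the dominant term is what saves the extra factor of $L_\nu({\bf x})$ in the final estimate.
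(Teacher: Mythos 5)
Your proof is correct. Parts (i) and (iii) follow essentially the same route as the paper: both subtract multiples of ${\bf t}_\nu$ row- or column-wise and invoke multilinearity, and both reduce the height bound to the product of $\|{\bf x}\wedge{\bf y}\|_\nu$ over $\nu\in\{\infty\}\cup\mathcal{S}$ after discarding the remaining finite places by integrality.

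Part (ii) is where you genuinely diverge. The paper writes out ${\bf w}=-{\bf x}J{\bf y}J{\bf x}$ explicitly as a $2\times2$ matrix whose entries are nested determinants, applies the column-subtraction trick to each entry, and then computes $w_1-w_0\xi_\nu$ and $w_2-w_0\xi_\nu^2$ by hand to get the $L_\nu({\bf w})$ bound. You instead exploit the rank-one factorization ${\bf t}_\nu={\bf u}\,{}^t{\bf u}$ with ${}^t{\bf u}J{\bf u}=0$, so that ${\bf t}_\nu J{\bf t}_\nu=0$ kills three of the eight multilinear terms, and the dominant surviving term $-x_0^2\,{\bf t}_\nu J\tilde{\bf y}J{\bf t}_\nu$ is visibly a scalar multiple of ${\bf t}_\nu$; subtracting it identifies the ${\bf t}_\nu$-component of ${\bf w}$ and yields the sharper $L_\nu({\bf w})$ estimate without any entrywise computation (one still needs the small observation that if ${\bf w}=c\,{\bf t}_\nu+{\bf r}$ then $L_\nu({\bf w})=\|{\bf r}-r_0{\bf t}_\nu\|_\nu\ll\|{\bf r}\|_\nu$, which you implicitly use). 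Your version is coordinate-free and makes transparent \emph{why} the extra factor $L_\nu({\bf x})$ appears, at the cost of a short bookkeeping argument (the geometric-mean absorption of the mixed terms $\|{\bf x}\|_\nu L_\nu({\bf x})L_\nu({\bf y})$ into the stated maximum); the paper's version is longer but completely explicit, which it later reuses when it needs the actual entries of ${\bf w}$.
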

\begin{proof}
	(i): Since
\[
	|x_{2} - x_{1} \xi_{\nu}|_{\nu} \leq
	|x_{2} - x_{0}\xi_{\nu}^2|_{\nu} + 
	|x_{1} - x_{0}\xi_{\nu}|_{\nu} |\xi_{\nu}|_{\nu}
	\ll L_{\nu}({\bf x }),
\]
	we get by multilinearity of determinants,
\begin{align*}
	| \det({\bf x}) |_{\nu} &=
		\left \|
		\begin{matrix}
		x_{0}      &  x_{1} - x_{0} \xi_{\nu}
		\\
		x_{1}      &  x_{2} - x_{1} \xi_{\nu}
		\end{matrix}
		\right \|_{\nu} \leq
		|x_{0}|_{\nu} |x_{2} - x_{1} \xi_{\nu}|_{\nu}+
		|x_{1}|_{\nu} |x_{1} - x_{0} \xi_{\nu}|_{\nu}
		\ll  \| {\bf x} \|_{\nu} L_{\nu}({\bf x }).
\end{align*}
	Similarly we obtain following the estimates
\begin{align*}
	| \det({\bf x}, {\bf y}, {\bf z}) |_{\nu}
	&=
		\left \|
		\begin{matrix}
		x_{0}      &  x_{1} - x_{0} \xi_{\nu} &  x_{2} - x_{0} \xi_{\nu}^2
		\\
		y_{0}      &  y_{1} - y_{0} \xi_{\nu} &  y_{2} - y_{0} \xi_{\nu}^2
		\\
		z_{0}      &  z_{1} - z_{0} \xi_{\nu} &  z_{2} - z_{0} \xi_{\nu}^2
		\end{matrix}
		\right \|_{\nu}
		\\
		&\leq
			\| {\bf x} \|_{\nu} L_{\nu}({\bf y }) L_{\nu}({\bf z }) +
			\| {\bf y} \|_{\nu} L_{\nu}({\bf x }) L_{\nu}({\bf z }) +
			\| {\bf z} \|_{\nu} L_{\nu}({\bf x }) L_{\nu}({\bf y }).
\end{align*}
	(ii): By the computations in \cite{ARNCAI.1}, p.~45, we have
\[
	{\bf w} = [{\bf x},{\bf x},{\bf y}] = -{\bf x}J{\bf y}J{\bf x} =
		\left (
		\begin{matrix}
			\left |
			\begin{matrix}
				x_{0} &  x_{1}
				\\
				\left |
				\begin{matrix}
				x_{0} &  x_{1}
				\\
				y_{0} &  y_{1}
				\end{matrix}
				\right |
				&
				\left |
				\begin{matrix}
				x_{0} &  x_{1}
				\\
				y_{1} &  y_{2}
				\end{matrix}
				\right |
			\end{matrix}
			\right |
		&
			\left |
			\begin{matrix}
				x_{0} &  x_{1}
				\\
				\left |
				\begin{matrix}
				x_{1} &  x_{2}
				\\
				y_{0} &  y_{1}
				\end{matrix}
				\right |
				&
				\left |
				\begin{matrix}
				x_{1} &  x_{2}
				\\
				y_{1} &  y_{2}
				\end{matrix}
				\right |
			\end{matrix}
			\right |
		\\\\
			\left |
			\begin{matrix}
				x_{1} &  x_{2}
				\\
				\left |
				\begin{matrix}
				x_{0} &  x_{1}
				\\
				y_{0} &  y_{1}
				\end{matrix}
				\right |
				&
				\left |
				\begin{matrix}
				x_{0} &  x_{1}
				\\
				y_{1} &  y_{2}
				\end{matrix}
				\right |
			\end{matrix}
			\right |
		&
			\left |
			\begin{matrix}
				x_{1} &  x_{2}
				\\
				\left |
				\begin{matrix}
				x_{1} &  x_{2}
				\\
				y_{0} &  y_{1}
				\end{matrix}
				\right |
				&
				\left |
				\begin{matrix}
				x_{1} &  x_{2}
				\\
				y_{1} &  y_{2}
				\end{matrix}
				\right |
			\end{matrix}
			\right |
		\end{matrix}
		\right ).
\]
	By multilinearity of determinants, ${\bf w}$ can be presented in the following form
\[
		\left (
		\begin{matrix}
			\left |
			\begin{matrix}
				x_{0} &  x_{1} - x_{0}\xi_{\nu}
				\\
				\left |
				\begin{matrix}
				x_{0} &  x_{1} - x_{0}\xi_{\nu}
				\\
				y_{0} &  y_{1} - y_{0}\xi_{\nu}
				\end{matrix}
				\right |
				&
				\left |
				\begin{matrix}
				x_{0} &  x_{1}
				\\
				y_{1} - y_{0}\xi_{\nu} &  y_{2} - y_{1}\xi_{\nu}
				\end{matrix}
				\right |
			\end{matrix}
			\right |
		&
			\left |
			\begin{matrix}
				x_{0} &  x_{1} - x_{0}\xi_{\nu}
				\\
				\left |
				\begin{matrix}
				x_{1} &  x_{2} - x_{1}\xi_{\nu}
				\\
				y_{0} &  y_{1} - y_{0}\xi_{\nu}
				\end{matrix}
				\right |
				&
				\left |
				\begin{matrix}
				x_{1} &  x_{2}
				\\
				y_{1} - y_{0}\xi_{\nu} &  y_{2} - y_{1}\xi_{\nu}
				\end{matrix}
				\right |
			\end{matrix}
			\right |
		\\\\
			\left |
			\begin{matrix}
				x_{1} &  x_{2} - x_{1}\xi_{\nu}
				\\
				\left |
				\begin{matrix}
				x_{0} &  x_{1} - x_{0}\xi_{\nu}
				\\
				y_{0} &  y_{1} - y_{0}\xi_{\nu}
				\end{matrix}
				\right |
				&
				\left |
				\begin{matrix}
				x_{0} &  x_{1}
				\\
				y_{1} - y_{0}\xi_{\nu} &  y_{2} - y_{1}\xi_{\nu}
				\end{matrix}
				\right |
			\end{matrix}
			\right |
		&
			\left |
			\begin{matrix}
				x_{1} &  x_{2} - x_{1}\xi_{\nu}
				\\
				\left |
				\begin{matrix}
				x_{1} &  x_{2} - x_{1}\xi_{\nu}
				\\
				y_{0} &  y_{1} - y_{0}\xi_{\nu}
				\end{matrix}
				\right |
				&
				\left |
				\begin{matrix}
				x_{1} &  x_{2}
				\\
				y_{1} - y_{0}\xi_{\nu} &  y_{2} - y_{1}\xi_{\nu}
				\end{matrix}
				\right |
			\end{matrix}
			\right |
		\end{matrix}
		\right ).
\]
	Hence, since
	$|x_{l+1} - x_{l} \xi_{\nu}|_{\nu} \ll L_{\nu}({\bf x })$ and
	$|y_{l+1} - y_{l} \xi_{\nu}|_{\nu} \ll L_{\nu}({\bf y })$, for $l = 0,1$, we deduce that
\[
	\| {\bf w} \|_{\nu} \ll
	\max\{ \| {\bf y} \|_{\nu}L_{\nu}({\bf x })^2,\| {\bf x} \|_{\nu}^2L_{\nu}({\bf y }) \}.
\]
	We now find the upper bound for $|w_1 - w_0\xi_{\nu}|_{\nu}$.
	Using the above presentation of ${\bf w}$, we find that
	$w_1 - w_0\xi_{\nu}$ can be written in the form
\begin{align*}
		&\left |
			\begin{matrix}
				x_{0} &  x_{1} - x_{0}\xi_{\nu}
				\\
				\left |
				\begin{matrix}
				x_{1} &  x_{2} - x_{1}\xi_{\nu}
				\\
				y_{0} &  y_{1} - y_{0}\xi_{\nu}
				\end{matrix}
				\right |
				&
				\left |
				\begin{matrix}
				x_{1} &  x_{2}
				\\
				y_{1} - y_{0}\xi_{\nu} &  y_{2} - y_{1}\xi_{\nu}
				\end{matrix}
				\right |
			\end{matrix}
			\right |
		-
			\left |
			\begin{matrix}
				x_{0} &  x_{1} - x_{0}\xi_{\nu}
				\\
				\left |
				\begin{matrix}
				x_{0} &  x_{1} - x_{0}\xi_{\nu}
				\\
				y_{0} &  y_{1} - y_{0}\xi_{\nu}
				\end{matrix}
				\right |
				&
				\left |
				\begin{matrix}
				x_{0} &  x_{1}
				\\
				y_{1} - y_{0}\xi_{\nu} &  y_{2} - y_{1}\xi_{\nu}
				\end{matrix}
				\right |
			\end{matrix}
			\right | \xi_{\nu} \quad
		\\
			&
			= \left |
			\begin{matrix}
				x_{0} &  x_{1} - x_{0}\xi_{\nu}
				\\
				\left |
				\begin{matrix}
				x_{1} - x_{0}\xi_{\nu} &  (x_{2} - x_{1}\xi_{\nu}) - (x_{1} - x_{0}\xi_{\nu})\xi_{\nu}
				\\
				y_{0} &  y_{1} - y_{0}\xi_{\nu}
				\end{matrix}
				\right |
				&
				\left |
				\begin{matrix}
				x_{1} - x_{0}\xi_{\nu} &  x_{2} - x_{1}\xi_{\nu}
				\\
				y_{1} - y_{0}\xi_{\nu} &  y_{2} - y_{1}\xi_{\nu}
				\end{matrix}
				\right |
			\end{matrix}
			\right |,
\end{align*}
	and so
\[
	| w_1 - w_0\xi_{\nu} |_{\nu} \ll \max\{\| {\bf y} \|_{\nu}L_{\nu}({\bf x })^2,\| {\bf x} \|_{\nu}L_{\nu}({\bf x })L_{\nu}({\bf y }) \}.
\]
	Similarly, the same upper bound holds for $|w_2 - w_0\xi_{\nu}^2|_{\nu}$,
	and therefore
\[
	L_{\nu}({\bf w }) \ll L_{\nu}({\bf x })\max\{\| {\bf y} \|_{\nu}L_{\nu}({\bf x }),\| {\bf x} \|_{\nu}L_{\nu}({\bf y }) \}.
\]
	(iii): Recall that 
\[
	H(V) = \prod_{{\nu} } \|{\bf x }\wedge{\bf y }\|_{\nu},
\]
	where $\nu$ runs through all prime numbers and $\infty$.
        Thus 
\[
        H(V) \leq \| {\bf x} \wedge {\bf y} \|_{\infty} 
	          \prod_{{q} \in \mathcal{S}}\| {\bf x} \wedge {\bf y} \|_q,
\]
	and so we simply need upper bounds for $\| {\bf x} \wedge {\bf y} \|_{\infty}$
	and $\| {\bf x} \wedge {\bf y} \|_q \ ({q} \in \mathcal{S})$. Using the presentation
\begin{align*}
	{\bf x} \wedge {\bf y} &= \Big (
				\left |
				\begin{matrix}
				x_{1} &  x_{2}
				\\
				y_{1} &  y_{2}
				\end{matrix}
				\right |
				,
				-\left |
				\begin{matrix}
				x_{0} &  x_{2}
				\\
				y_{0} &  y_{2}
				\end{matrix}
				\right |
				,
				\left |
				\begin{matrix}
				x_{0} &  x_{1}
				\\
				y_{0} &  y_{1}
				\end{matrix}
				\right |
				\Big )
			\\
			&=
				\Big (
				\left |
				\begin{matrix}
				x_{1} &  x_{2} - x_{1}\xi_{\nu}
				\\
				y_{1} &  y_{2} - x_{1}\xi_{\nu}
				\end{matrix}
				\right |
				,
				-\left |
				\begin{matrix}
				x_{0} &  x_{2} - x_{0}\xi_{\nu}^2
				\\
				y_{0} &  y_{2} - y_{0}\xi_{\nu}^2
				\end{matrix}
				\right |
				,
				\left |
				\begin{matrix}
				x_{0} &  x_{1} - x_{0}\xi_{\nu}
				\\
				y_{0} &  y_{1} - y_{0}\xi_{\nu}
				\end{matrix}
				\right |
				\Big ),
\end{align*}
	we find the estimates
\begin{align*}
	& \| {\bf x} \wedge {\bf y} \|_{\infty} \ll
	\max\{\| {\bf x} \|_{\infty} L_{\infty}({\bf y}),
		\|{\bf y} \|_{\infty} L_{\infty}({\bf x}) \},
	\\
	& \| {\bf x} \wedge {\bf y} \|_q \ll
		\max\{ L_{q}({\bf x }),L_{q}({\bf y })\} \ ({q} \in \mathcal{S}),
\end{align*}
	
	and thus
\[
	H(V) \ll \max\{\| {\bf x} \|_{\infty} L_{\infty}({\bf y}),
		\|{\bf y} \|_{\infty} L_{\infty}({\bf x}) \}
	\prod_{{q} \in \mathcal{S}} \max\{L_{q}({\bf x }),L_{q}({\bf y })\}.
\]

\end{proof}
 
	%
	\newpage
	\section{Constraints on exponents of approximation}

	Let $\mathcal{S}$ be a finite set of prime numbers.
	Here we consider the problem of simultaneous approximation
	to real and p-adic numbers in degree $n=2$.
	We find constraints on
	$\bar \lambda \in \mathbb{R}^{|\mathcal{S}|+1}$
	and $\bar \xi \in \mathbb{R}\times \prod_{{p} \in
	\mathcal{S}}\mathbb{Q}_{p}$
	which ensure that, for some constant $c>0$, the inequalities
\[
	\| {\bf x} \|_{\infty} \leq X,
	\
	L_{\infty}({\bf x }) \leq c X^{-\lambda_{\infty}},
	\
	L_{p}({\bf x }) \leq c X^{-\lambda_{p}} \
	\forall {p} \in \mathcal{S},
\]
 	have no non-zero solution ${\bf x} \in \mathbb{Z}^3$
	for arbitrarily large values of $X$.
\subsection{Simultaneous case}

	Throughout this paragraph, we fix
	a finite set $\mathcal{S}$ of prime numbers,
	a point
\[
	\bar \xi = \big(\xi_{\infty},(\xi_{p})_{{p} \in \mathcal{S}}\big)
	\in (\mathbb{R}\setminus\mathbb{Q}) \times \prod_{{p} \in
	\mathcal{S}}(\mathbb{Q}_{p}\setminus\mathbb{Q}),
\]
	and a point
\[
	\bar \lambda  = \big(\lambda_{\infty},(\lambda_{p})_{{p} \in
	\mathcal{S}}\big) \in
	[-1,\infty) \times \mathbb{R}_{\geq0}^{|\mathcal{S}|}.
\]
	We define $\mathcal{S'}$ to be the set (possibly empty) of all
	$\nu \in \{\infty\} \cup \mathcal{S}$ such that
	$[\mathbb{Q}(\xi_{\nu})\colon\mathbb{Q}] >2$.
	We also define
\[
	\lambda := \sum_{\nu \in \{\infty\}\cup\mathcal{S}}\lambda_{\nu}.
\]
\begin{proposition} \label{I:3:T:1}
	Suppose that
\begin{equation}\label{I:3:T:1:1-1}
	\lambda > 0
	\ \text{ and } \
	\lambda
	+ \sum_{\nu \in \mathcal{S}'}\lambda_{\nu}  >
	\left \{
	\begin{matrix}
		0 \ \text{ if } \ \infty \in \mathcal{S}',
		\\
		1 \ \text{ if } \ \infty \notin \mathcal{S}'.
	\end{matrix}
	\right.
\end{equation}
	Suppose also that there exists a constant $c>0$ for which
	the inequalities
\begin{equation}\label{I:3:T:1:0}
 \begin{aligned}
	&\| {\bf x} \|_{\infty} \leq X,
	\\
	&L_{\infty}({\bf x }) \leq c X^{-\lambda_{\infty}},
	\\
	&L_{p}({\bf x }) \leq c X^{-\lambda_{p}} \
	\forall {p} \in \mathcal{S},
 \end{aligned}
\end{equation}
	have a non-zero solution ${\bf x} = (x_0,x_1,x_2) \in \mathbb{Z}^3$
	for each $X$ sufficiently large.
	Suppose finally that, for each $X$ sufficiently large,
	any such solution has
\begin{equation}\label{I:3:T:1:1}
	\det({\bf x}) = x_0x_2 - x_1^2 \neq  0.
\end{equation}
	Then, we have $\lambda \leq 1/{\gamma}$.
	Moreover, if $\lambda = 1/{\gamma}$, then
	$c$ is bounded from below by a positive constant
	depending only on ${\bar \xi }$.
\end{proposition}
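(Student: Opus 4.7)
The plan is to argue by contradiction. Suppose $\lambda > 1/\gamma$, or that $\lambda = 1/\gamma$ but $c$ is below any prescribed positive threshold depending on $\bar\xi$. After possibly enlarging $c$ so that (\ref{I:3:T:1:0}) has non-zero solutions for every $X \geq 1$ (not just for $X$ sufficiently large), I would apply Proposition \ref{I:1:L:2} --- whose hypothesis $\lambda > 0$ is built into (\ref{I:3:T:1:1-1}) --- to extract a sequence of primitive integer points $({\bf v}_k)_{k\geq 0}$ in $\mathbb{Z}^3$ any two of which are linearly independent, auxiliary sequences $({\bf x}_k)$ and $({\bf x'}_k)$ of integer multiples of the ${\bf v}_k$, and an unbounded increasing sequence $(X_k)$ of positive reals, satisfying (\ref{I:1:L:2:1}). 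The hypothesis (\ref{I:3:T:1:1}) guarantees that $\det({\bf x}_k) \neq 0$ for all sufficiently large $k$.

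Next I would exploit, for each large $k$, the integer $D_k = \det({\bf x'}_{k-1}, {\bf x}_k, {\bf x}_{k+1})$ together with the product formula. When $D_k \neq 0$, the bound $|D_k|_q \leq 1$ for primes $q \notin \mathcal{S}$ gives $\prod_{\nu \in \{\infty\}\cup\mathcal{S}} |D_k|_\nu \geq 1$; bounding each factor via Lemma \ref{I:2:L:1}(i) and substituting $\|{\bf x}_j\|_p \leq 1$ for $p \in \mathcal{S}$, $\|{\bf x}_j\|_\infty \leq X_j$, $L_\infty({\bf x}_j) \leq c X_{j+1}^{-\lambda_\infty}$, and $L_p({\bf x}_j) \leq c X_j^{-\lambda_p}$, one obtains a recursive inequality among $X_{k-1}, X_k, X_{k+1}, X_{k+2}$ whose iterated analysis forces $\lambda \leq 1/\gamma$ (the appearance of $\gamma$ being the standard golden-ratio solution of the resulting three-term recurrence); the critical case $\lambda = 1/\gamma$ yields the asserted positive lower bound on $c$ by tracking the constants. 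If instead $D_k = 0$ for all large $k$, then the three points ${\bf x'}_{k-1}, {\bf x}_k, {\bf x}_{k+1}$ span a rational plane $V_k$, whose height can be estimated via Lemma \ref{I:2:L:1}(iii); since $V_k$ stays very close to the Veronese point ${\bf t}_\nu = (1, \xi_\nu, \xi_\nu^2)$ at every place, one then extracts, for some $\nu \in \mathcal{S}'$, an integral quadratic relation satisfied by $\xi_\nu$, contradicting $[\mathbb{Q}(\xi_\nu):\mathbb{Q}] > 2$.

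The main obstacle is the linearly-dependent case $D_k = 0$: translating linear dependence of three integer triples into an algebraic obstruction at a non-quadratic place requires simultaneous bookkeeping of heights and $L$-values across all of $\{\infty\} \cup \mathcal{S}$. The asymmetry in hypothesis (\ref{I:3:T:1:1-1}) --- strict positivity when $\infty \in \mathcal{S}'$, strict exceeding of $1$ when $\infty \notin \mathcal{S}'$ --- is calibrated precisely to make this step succeed, reflecting the additional ``$X$-factor'' of room afforded by $\|{\bf x}\|_\infty \leq X$ at the archimedean place which has no analogue at finite places, where only $\|{\bf x}\|_p \leq 1$ is available. A secondary difficulty is the delicate constant-tracking needed in the critical case $\lambda = 1/\gamma$, since one must convert an asymptotic inequality into a universal lower bound on $c$ depending only on $\bar\xi$.
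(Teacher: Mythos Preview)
Your overall architecture matches the paper's, but there is a genuine gap in the non-degenerate case: the $3\times 3$ determinant $D_k$ alone cannot force $\lambda\le 1/\gamma$. The product-formula bound on $D_k$ yields only a \emph{lower} bound on the growth, roughly $X_{k+1}^{\,1-\lambda}X_k^{-\lambda}\gg 1$, i.e.\ $X_{k+1}\gg X_k^{\lambda/(1-\lambda)}$. No iteration of this one-sided inequality constrains $\lambda$; you need a matching \emph{upper} bound on $X_{k+1}$ in terms of $X_k$. That is exactly where hypothesis~(\ref{I:3:T:1:1}) enters quantitatively, not just as a remark: apply the product formula to the nonzero integer $\det({\bf x}_k)$ and Lemma~\ref{I:2:L:1}(i) to get
\[
1 \le \prod_{\nu\in\{\infty\}\cup\mathcal{S}} |\det({\bf x}_k)|_\nu
\ll X_k\,L_\infty({\bf x}_k)\prod_{p\in\mathcal{S}}L_p({\bf x}_k)
\ll c^{|\mathcal{S}|+1} X_k X_{k+1}^{-\lambda},
\]
hence $X_{k+1}\ll X_k^{1/\lambda}$. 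Combining this with the $3\times 3$ bound gives $\lambda/(1-\lambda)\le 1/\lambda$, i.e.\ $\lambda^2+\lambda-1\le 0$, which is $\lambda\le 1/\gamma$; tracking the powers of $c$ through both inequalities produces the lower bound on $c$ in the boundary case. Your proposal notes that $\det({\bf x}_k)\neq 0$ but never feeds it into an estimate --- that omission is fatal.

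For the degenerate case $D_k=0$ for all large $k$, your phrasing ``extracts \dots\ an integral quadratic relation satisfied by $\xi_\nu$'' is backwards. What actually happens is that the common plane $V=\{rx_0+sx_1+tx_2=0\}$ containing all ${\bf v}_k$ satisfies $r+s\xi_\nu+t\xi_\nu^2\neq 0$ for each $\nu\in\mathcal{S}'$ precisely \emph{because} $\xi_\nu$ is non-quadratic; this inequality is then used to show $\|{\bf v}_k\|_\nu\ll L_\nu({\bf v}_k)$ at those places. The contradiction with~(\ref{I:3:T:1:1-1}) comes from applying the product formula to a nonvanishing $2\times 2$ minor of the linearly independent pair ${\bf x'}_k,{\bf x}_{k+1}$, not from a height estimate via Lemma~\ref{I:2:L:1}(iii). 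The dichotomy in~(\ref{I:3:T:1:1-1}) is exactly what makes this $2\times 2$ argument succeed, with the extra ``$+1$'' when $\infty\notin\mathcal{S}'$ accounting for the archimedean norm $\|{\bf x}\|_\infty\le X_{k+1}$ that one cannot suppress.
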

\begin{proof}
	WLOG, we may assume that $0<c\leq1$.
	The hypotheses imply that $\bar\lambda$
	is an exponent of approximation to $\bar\xi$ in degree $2$,
	with corresponding constant $c$.
	Proposition \ref{I:1:P:5} applies to this situation
	with $n=2$, as the main condition $\lambda>0$ is fulfilled.

	Consider the sequences $({\bf v}_k)_{k\geq0}$, $({\bf x}_k)_{k\geq0}$,
	$({\bf x'}_k)_{k\geq0}$ and $(X_k)_{k\geq1}$
	given by Proposition \ref{I:1:P:5}.
	For all $k$ sufficiently large, the assumption
	(\ref{I:3:T:1:1}) implies that $\det({\bf x}_k) \neq  0$.
	Using Part (i) of Lemma \ref{I:2:L:1}
	and the first relation in (\ref{I:1:P:5:1:1}), we deduce that
\begin{align*}
 	1 &\leq |\det({\bf x}_k)|_{\infty}
		\prod_{p \in \mathcal{S}}|\det({\bf x}_k)|_{p}
		\\
		&\ll
		X_k L_{\infty}({\bf x}_k)
		\prod_{p \in \mathcal{S}}L_{p}({\bf x}_k)
		=
		X_k L_{\infty}({\bf x'}_k)
		\prod_{p \in \mathcal{S}}L_{p}({\bf x'}_k),
\end{align*}
	for all these values of $k$,
	with implied constants depending only on
	$\bar\xi$ and not on $c$ (same through all the proof).
	Using this and the second relations in (\ref{I:1:P:5:1:1}),
	we get
\begin{equation}\label{I:3:T:1:2}
 \begin{aligned}
 	1 &\ll	X_k L_{\infty}({\bf x'}_k)\prod_{p \in \mathcal{S}}L_{p}({\bf x'}_k)
		\\
		&\ll
		X_k c^{|\mathcal{S}|+1}
		X_{k+1}^{-\lambda_{\infty}}
		\prod_{p \in \mathcal{S}}X_{k+1}^{-\lambda_{p}}
		=
		c^{|\mathcal{S}|+1}X_kX_{k+1}^{-\lambda},
 \end{aligned}
\end{equation}
	for all $k$ sufficiently large.

	Now, we claim that for infinitely many $k\geq1$, the points
	${\bf v}_{k-1}, {\bf v}_{k}, {\bf v}_{k+1}$ are linearly independent
	over $\mathbb{Q}$.
	To prove this, we argue like in \cite{DS} and \cite{Teu}, assuming
	on the contrary that
	${\bf v}_{k-1}, {\bf v}_{k}, {\bf v}_{k+1}$ are linearly dependent
	for each $k$ sufficiently large.
	Since any two different points of $({\bf v}_k)_{k\geq0}$
	are linearly independent, it follows that
\[
	\langle{\bf v}_{k-1},{\bf v}_{k}\rangle_{\mathbb{Q}}
	=
	\langle{\bf v}_{k-1}, {\bf v}_{k}, {\bf v}_{k+1}\rangle_{\mathbb{Q}}
	=
	\langle{\bf v}_k,{\bf v}_{k+1}\rangle_{\mathbb{Q}},
\]
	for each $k$ sufficiently large.
	Hence, there is a two dimensional subspace
	$V$ in $\mathbb{Q}^3$ such that
	$V = \langle{\bf v}_k,{\bf v}_{k+1}\rangle_{\mathbb{Q}}$
	for each $k$ sufficiently large.
	There exist integers $r,s,t \in \mathbb{Z}$, not all zero,
	such that
\[
	V = 	\{
			{\bf x} = (x_0,x_1,x_2) \in \mathbb{Q}^3 \ | \
			r x_0 + s x_1 + t x_2 = 0
		\}.
\]
	Since ${\bf v}_k \in V$ for each $k$ sufficiently large, then
	for these values of $k$, we get
\begin{equation}\label{I:3:T:1:2-0}
	r v_{k,0} + s v_{k,1} + t v_{k,2} = 0.
\end{equation}
	Fix $\nu \in \mathcal{S}'$.
	Since $[\mathbb{Q}(\xi_{\nu})\colon\mathbb{Q}] >2$, we have
\begin{equation}\label{I:3:T:1:2-1}
	r  + s \xi_{\nu} + t \xi_{\nu}^2 \neq 0.
\end{equation}
	Using this and (\ref{I:3:T:1:2-0}),
	we find that, for those values of $k$,
\[
	|v_{k,0}|_{\nu} \ll | s(v_{k,1} - \xi_{\nu}{v_{k,0}}) +
	t(v_{k,2} - \xi_{\nu}^2{v_{k,0}}) |_{\nu}
	\ll L_{\nu}({\bf v}_k)
\]
	and therefore
\[
	\|{\bf v}_k\|_{\nu} \ll L_{\nu}({\bf v}_k).
\]
	Finally, since ${\bf x'}_k$ and ${\bf x}_{k+1}$
	are integer multiples of ${\bf v}_k$ and ${\bf v}_{k+1}$
	and since they both belong to $\mathcal{C}_{c',X_{k+1}}$,
	where $c'=c\max_{p \in \mathcal{S}}p\leq\max_{p \in \mathcal{S}}p$,
	we deduce that
\begin{equation}\label{I:3:T:1:2-2:0}
	\|{\bf x'}_k\|_{\nu} \ll L_{\nu}({\bf x'}_k)
		\ll X_{k+1}^{-\lambda_{\nu}}
	\quad 
	\text{and}
	\quad
	\|{\bf x}_{k+1}\|_{\nu} \ll L_{\nu}({\bf x}_{k+1})
		\ll X_{k+1}^{-\lambda_{\nu}},
\end{equation}
	for each $\nu \in \mathcal{S}'$.

	Fix any index $k\geq0$.
	By Proposition \ref{I:1:P:5}, the points ${\bf x'}_k$
	and ${\bf x}_{k+1}$ are linearly independent.
	This means that the matrix
\[
	\left (
	\begin{matrix}
		x_{k,0}'	&  x_{k,1}'	&  x_{k,2}'
		\\
		x_{k+1,0}	&  x_{k+1,1}	&  x_{k+1,2}
	\end{matrix}
	\right ) {,}
\]
	has rank 2. So, there exist $i,j \in \{0,1,2\}$
	with $i < j$, such that
\[
	\left |
	\begin{matrix}
		x_{k,i}' &  x_{k,j}'
		\\
		x_{k+1,i}	&  x_{k+1,j}
	\end{matrix}
	\right | \neq 0.
\]
	Using the product formula and the fact that
	${\bf x'}_k$ and ${\bf x}_{k+1}$ both belong to
	$\mathcal{C}_{c',X_{k+1}}$, we find that
\begin{equation}\label{I:3:T:1:2-3}
 \begin{aligned}
	1 & \leq
		\left \|
			\begin{matrix}
			x_{k,i}' &  x_{k,j}'
			\\
			x_{k+1,i}&  x_{k+1,j}
			\end{matrix}
		\right\|_{\infty}
		\prod_{p \in \mathcal{S}}
		\left \|
			\begin{matrix}
			x_{k,i}' &  x_{k,j}'
			\\
			x_{k+1,i}&  x_{k+1,j}
			\end{matrix}
		\right\|_{p}
	\\
	& \leq
		\Big(\|{\bf x'}_k\|_{\infty}L_{\infty}({\bf x}_{k+1}) +
		\|{\bf x}_{k+1}\|_{\infty}L_{\infty}({\bf x'}_k)\Big)
		\\
		& \quad \quad \prod_{p \in \mathcal{S}}
		\max\{\|{\bf x'}_k\|_{p}L_{p}({\bf x}_{k+1}),
		\|{\bf x}_{k+1}\|_{p}L_{p}({\bf x'}_k)\}
	\\
	& \ll
		X_{k+1}^{-\lambda_{\infty}}\big(\|{\bf x'}_k\|_{\infty} +
		\|{\bf x}_{k+1}\|_{\infty}\big)
		\prod_{p \in \mathcal{S}}
		X_{k+1}^{-\lambda_{p}}\max\{\|{\bf x'}_k\|_{p},
		\|{\bf x}_{k+1}\|_{p}\}
		\\
	& =
		X_{k+1}^{-\lambda}\big(\|{\bf x'}_k\|_{\infty} +
		\|{\bf x}_{k+1}\|_{\infty}\big)
		\prod_{p \in \mathcal{S}}\max\{\|{\bf x'}_k\|_{p},
		\|{\bf x}_{k+1}\|_{p}\}.
 \end{aligned}
\end{equation}

	If $\infty \in \mathcal{S}'$,
	this estimate combined with (\ref{I:3:T:1:2-2:0}) gives
 \[
	1 \ll
		X_{k+1}^{-\lambda}X_{k+1}^{-\lambda_{\infty}}
		\prod_{p \in \mathcal{S}'\setminus\{\infty\}}
		X_{k+1}^{-\lambda_{p}}
	 	=
		X_{k+1}^{-\lambda
		- \sum_{\nu \in \mathcal{S}'}\lambda_{\nu}}.
 \]
	Since $\bar \lambda$ satisfies inequalities (\ref{I:3:T:1:1-1}),
	this is impossible for $k$ large enough.

	If $\infty \notin \mathcal{S}'$, we find instead that
 \[
	1 \ll
		X_{k+1}^{-\lambda}X_{k+1}
		\prod_{p \in \mathcal{S}'}
		X_{k+1}^{-\lambda_{p}}
	 =
		X_{k+1}^{-\lambda + 1
		- \sum_{p \in \mathcal{S}'}\lambda_{p}},
 \]
 	using the fact that
	$\|{\bf x'}_{k}\|_{\infty},\|{\bf x}_{k+1}\|_{\infty} \leq X_{k+1}$.
	By (\ref{I:3:T:1:1-1}), this leads again to a contradiction
	for $k$ large enough.
	So, we proved the claim.

	Thus, for infinitely
	many values of $k$, we have
	$\det({\bf x'}_{k-1},{\bf x'}_{k},{\bf x}_{k+1}) \neq 0$.
	Combining the product formula with Lemma \ref{I:2:L:1}(i), we get
	for those values of $k$ the following
 \begin{align*}
	1
	&\leq | \det({\bf x'}_{k-1},{\bf x'}_{k},{\bf x}_{k+1}) |_{\infty}
	\prod_{p \in \mathcal{S}}|
	\det({\bf x'}_{k-1},{\bf x'}_{k},{\bf x}_{k+1}) |_p
	\\
	&\ll
	\big(	\|{\bf x'}_{k-1}\|_{\infty}
		L_{\infty}({\bf x'}_{k})
		L_{\infty}({\bf x}_{k+1})
		+
		\|{\bf x'}_{k}\|_{\infty}
		L_{\infty}({\bf x'}_{k-1})
		L_{\infty}({\bf x}_{k+1})
		+
		\|{\bf x}_{k+1}\|_{\infty}
		L_{\infty}({\bf x'}_{k-1})
		L_{\infty}({\bf x'}_{k})
	\big)
	\\
	& \quad \quad
	\prod_{p \in \mathcal{S}}
	\max
	\{
		L_{p}({\bf x'}_{k}) L_{p}({\bf x}_{k+1}),
		L_{p}({\bf x'}_{k-1}) L_{p}({\bf x}_{k+1}),
		L_{p}({\bf x'}_{k-1}) L_{p}({\bf x'}_{k})
	\}.
 \end{align*}
	Since ${\bf x'}_{k-1} \in \mathcal{C}_{c',X_{k}}$
	while  ${\bf x'}_{k}, {\bf x}_{k+1} \in \mathcal{C}_{c',X_{k+1}}$,
	and since $c'=c\max_{p \in \mathcal{S}}p$, this gives
\begin{equation}\label{I:3:T:1:3-0}
 \begin{aligned}
	1
	&\ll
	c^{2(|\mathcal{S}|+1)}
	(
		X_{k}X_{k+1}^{-\lambda_{\infty}} X_{k+1}^{-\lambda_{\infty}} +
		X_{k+1}X_{k}^{-\lambda_{\infty}} X_{k+1}^{-\lambda_{\infty}} +
		X_{k+1}X_{k}^{-\lambda_{\infty}} X_{k+1}^{-\lambda_{\infty}}
	)
	\\
	& \quad \quad
	\prod_{p \in \mathcal{S}}
	\max
	\{
		X_{k+1}^{-\lambda_{p}} X_{k+1}^{-\lambda_{p}},
		X_{k}^{-\lambda_{p}} X_{k+1}^{-\lambda_{p}},
		X_{k}^{-\lambda_{p}} X_{k+1}^{-\lambda_{p}}
	\}.
 \end{aligned}
\end{equation}
	Note that, since $\lambda_{\infty}\geq-1$, we have
	$X_{k}^{1+\lambda_{\infty}} \leq X_{k+1}^{1+\lambda_{\infty}}$
	and so
	$X_{k}X_{k+1}^{-\lambda_{\infty}} X_{k+1}^{-\lambda_{\infty}}
	\leq
	X_{k+1}X_{k}^{-\lambda_{\infty}} X_{k+1}^{-\lambda_{\infty}}$
	for each $k\geq0$.
	Combining this with (\ref{I:3:T:1:3-0})
	and recalling that $\lambda_p\geq0$
	for each $p \in \mathcal{S}$, we find that
\begin{equation}\label{I:3:T:1:3}
	1 \leq
	c^{2(|\mathcal{S}|+1)}
	X_{k}^{-\lambda_{\infty}} X_{k+1}^{1-\lambda_{\infty}}
	\prod_{p \in \mathcal{S}}X_{k}^{-\lambda_{p}} X_{k+1}^{-\lambda_{p}}
	=
	c^{2(|\mathcal{S}|+1)}
	X_{k}^{-\lambda} X_{k+1}^{1-\lambda}.
\end{equation}
	Since $\lambda>0$, multiplying
	(\ref{I:3:T:1:3}) by (\ref{I:3:T:1:2}) raised to the
	power $\lambda$, we get
\[
	c^{(2 + \lambda)(|\mathcal{S}|+1)}
	X_{k+1}^{-\lambda^2 + 1 - \lambda} \gg 1.
\]
	So, we conclude that $-\lambda^2 + 1 - \lambda \geq 0$,
	which means that $\lambda \leq 1/{\gamma}$.
	Moreover, if $-\lambda^2 + 1 - \lambda = 0$, which means that
	$\lambda = 1/{\gamma}$, this gives $c\gg1$.
\end{proof}
\begin{remark}\label{I:3:R:1}
	The above proof shows that,
	under hypotheses of Proposition \ref{I:3:T:1},
	the points ${\bf v}_{k-1}, {\bf v}_{k}, {\bf v}_{k+1}$,
	of the sequence $({\bf v}_k)_{k\geq0}$ given by Proposition \ref{I:1:P:5},
	are linearly independent over $\mathbb{Q}$
	for infinitely many $k\geq1$.
\end{remark}
	In the case where $\mathcal{S} = \emptyset$,
	the following statement implies Lemma 2 of
	{\sc H.~Davenport \& W.M.~Schmidt} in \cite{DS}.
\begin{proposition} \label{I:3:P:1}
	Assume that
\[
	\lambda_{\infty} \geq 0,
	\
	\lambda> 0
	\ \text{ and }
	\lambda + \lambda_{\infty} > 1.
\]
	Suppose also that, for some $c>0$, the system
\begin{equation}\label{I:3:P:1:1}
 \begin{aligned}
	&\| {\bf x} \|_{\infty} \leq X,
	\\
	&L_{\infty}({\bf x }) \leq c X^{-\lambda_{\infty}},
	\\
	&L_{p}({\bf x }) \leq c X^{-\lambda_{p}} \
	\forall {p} \in \mathcal{S},
 \end{aligned}
\end{equation}
	has a non-zero solution ${\bf x} \in \mathbb{Z}^3$,
	for each $X\geq1$.
	Then, for each $X$ sufficiently large,
	any such solution ${\bf x}$
	satisfies $\det({\bf x}) \neq 0$.
\end{proposition}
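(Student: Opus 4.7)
The plan is to proceed by contradiction: suppose there exist arbitrarily large real numbers $X$ admitting a non-zero solution ${\bf x} \in \mathbb{Z}^3$ of (\ref{I:3:P:1:1}) with $\det({\bf x}) = 0$. The associated symmetric matrix then has rank at most $1$, so up to sign we may write ${\bf x} = d(a^2,ab,b^2)$ with $(a,b) \in \mathbb{Z}^2$ primitive and $d \in \mathbb{Z}_{>0}$. This yields a sequence $X_k \to \infty$ together with primitive pairs $(a_k,b_k)$ and positive integers $d_k$. The identity $x_{k,1}-x_{k,0}\xi_\nu = d_k a_k(b_k-a_k\xi_\nu)$, combined with (\ref{I:3:P:1:1}), gives the key estimates
\[
 d_k|a_k|\,|b_k-a_k\xi_\infty|_\infty \leq cX_k^{-\lambda_\infty}, \qquad |d_k|_p|a_k|_p|b_k-a_k\xi_p|_p \leq cX_k^{-\lambda_p} \ (p\in\mathcal{S}),
\]
together with $d_k \max(|a_k|,|b_k|)^2 \leq X_k$. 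The boundary case $a_k b_k=0$ is dispatched directly (then $\|{\bf x}_k\|_\infty$ equals $L_\infty({\bf x}_k)$ up to a constant, or a $p$-adic analogue, quickly contradicting $\lambda+\lambda_\infty>1$ for large $X_k$), so henceforth $|a_k|,|b_k| \geq 1$.

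\textbf{Finitely many distinct directions.} Some pair $(a,b)$ then recurs for infinitely many $k$. Since each $\xi_\nu \notin \mathbb{Q}$, the quantity $b - a\xi_\nu$ is non-zero at every place. If $\lambda_\infty > 0$, the $\infty$-inequality bounds $d_k$ by a constant times $X_k^{-\lambda_\infty} \to 0$, contradicting $d_k \geq 1$. If $\lambda_\infty = 0$, the hypothesis $\lambda+\lambda_\infty > 1$ forces some $\lambda_p > 0$, so the corresponding $p$-adic inequality makes $v_p(d_k) \to \infty$ and hence $d_k \to \infty$, while the $\infty$-inequality with $\lambda_\infty=0$ still keeps $d_k$ bounded --- a contradiction.

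\textbf{Infinitely many distinct directions.} Pick indices $i<j$ with $(a_i,b_i)\neq\pm(a_j,b_j)$; then $N := a_i b_j - a_j b_i$ is a non-zero integer, so by the product formula $\prod_{\nu \in \{\infty\}\cup\mathcal{S}}|N|_\nu \geq 1$. The place-independent identity $N = a_i(b_j - a_j\xi_\nu) - a_j(b_i - a_i\xi_\nu)$ combined with the key bounds yields at every place an upper bound involving $X_i^{-\lambda_\nu}$, $X_j^{-\lambda_\nu}$ and the ratios $|a_i|_\nu/|a_j|_\nu$. Using $|a_k| \leq \max(|a_k|,|b_k|) \leq \sqrt{X_k/d_k}$ together with the product formula applied to the non-zero rational $a_i/a_j$ (which tames the ratios across all places) produces, after multiplication over $\nu \in \{\infty\}\cup\mathcal{S}$, an upper bound on $\prod_\nu|N|_\nu$ of the order $X^{1-\lambda-\lambda_\infty}$, which tends to $0$ thanks to $\lambda+\lambda_\infty>1$.

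The main obstacle --- and the step requiring careful bookkeeping --- is to select $i$ and $j$ so that $X_j/X_i$ is bounded, ensuring that the above upper bound actually falls below $1$. I expect to accomplish this by working along the sequence of primitive points provided by Proposition~\ref{I:1:P:5}: if infinitely many of the ${\bf v}_k$ there are themselves degenerate, then two consecutive such indices yield ${\bf x}'_k$ and ${\bf x}_{k+1}$ both lying in $\mathcal{C}_{c',X_{k+1}}$ at the \emph{same} parameter, which sidesteps the ratio issue entirely. In the opposite case only finitely many ${\bf v}_k$ are degenerate, so the infinitely many remaining degenerate directions correspond to primitive points outside the ${\bf v}_k$-sequence; a careful comparison with the maximality of $\max J_c({\bf v}_k)$ via Lemma~\ref{I:1:L:3-1} then forces each such direction to lie in a controlled window of $X$-values, again yielding the required contradiction.
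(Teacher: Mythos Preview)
Your Case 1 (finitely many directions) is fine. The real problem is Case 2. You need two degenerate solutions with \emph{linearly independent} directions $(a_i,b_i)$, $(a_j,b_j)$ at \emph{comparable} parameters $X_i, X_j$; the negation of the statement only hands you one degenerate solution per value of $X$, at a sequence $X_k\to\infty$ that could be as sparse as you like. Your plan to extract such a pair from Proposition~\ref{I:1:P:5} does not work: the ${\bf v}_k$ there are a \emph{specific} sequence chosen by a maximality property, and there is no reason any of them should coincide with your degenerate primitive points ${\bf w}=(a^2,ab,b^2)$. Saying ``infinitely many of the ${\bf v}_k$ are themselves degenerate'' is an extra hypothesis you cannot justify, and in the complementary case your appeal to the maximality of $\max J_c({\bf v}_k)$ is too vague to produce a second degenerate point in a bounded window. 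Even granting comparable parameters, your product-formula estimate with two multipliers $d_i,d_j$ and ratios $|a_i|_\nu/|a_j|_\nu$ is not worked out; controlling all of these simultaneously across places is substantially harder than you indicate.

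The paper sidesteps this entirely by comparing \emph{one} degenerate primitive point ${\bf v}=\pm(a^2,ab,b^2)$ against an \emph{arbitrary} (not necessarily degenerate) second point ${\bf y}$ at the same parameter $Y=\min I_c({\bf v})$. Such a ${\bf y}$, linearly independent from ${\bf v}$, always exists by Lemma~\ref{I:1:L:1}(iii). One then estimates the nonzero integer $\left|\begin{smallmatrix} y_i & y_{i+1}\\ a & b\end{smallmatrix}\right|$ via the product formula; since only the single multiplier $m$ (with ${\bf x}'=m{\bf v}$) enters, the identity $|m|_\infty\prod_p|m|_p\ge 1$ cleanly closes the estimate and yields $1\ll Y^{1/2-\lambda_\infty-\frac12\sum_p\lambda_p}=Y^{(1-\lambda-\lambda_\infty)/2}$, forcing $Y\ll 1$. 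The key idea you are missing is that the second point need not be degenerate.
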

	As we indicated above, if $\mathcal{S} = \emptyset$,
	the condition $\lambda + \lambda_{\infty} > 1 $
	becomes $\lambda_{\infty} > 1/2 $
	and we recover Lemma 2 of \cite{DS}.
\\
\begin{proof}
	Suppose that ${\bf x}\in \mathbb{Z}^3$ is a non-zero solution
	of (\ref{I:3:P:1:1}) for some large real number $X$.
	Let ${\bf v}$ be a primitive point of $\mathbb{Z}^3$
	of which ${\bf x}$ is a multiple.
	By  of Lemma \ref{I:1:L:1}(iv), we have that
	$\|{\bf v}\|_{\infty}$ tends to infinity with $X$.
	In particular, we have $\|{\bf v}\|_{\infty} > 1$
	if $X$ is sufficiently large.
	Assuming, as we may, that this is the case,
	put $Y = \min I({\bf v})$ and choose a point
	${\bf x'} \in \mathcal{L}_c({\bf v},Y)$.
	This choice means in particularly that
	${\bf x'}$ is a non-zero solution of the system (\ref{I:3:P:1:1})
	with $X$ replaced by $Y$. Since $\lambda_{\infty}\geq0$,
	the point ${\bf x'}$ is also a solution of
	the system (\ref{I:3:P:1:1}) with $X$ replaced by $\|{\bf x'}\|_{\infty}$,
	and then $\|{\bf x'}\|_{\infty} \in I_c({\bf v})$.
	Since $\|{\bf x'}\|_{\infty}\leq Y$ and $Y = \min I_c({\bf v})$,
	we conclude that $\|{\bf x'}\|_{\infty}=Y$.
	Since $Y \geq \|{\bf v}\|_{\infty} > 1$,
	Lemma \ref{I:1:L:1}(iii) ensures
	the existence of a primitive point ${\bf u} \in \mathbb{Z}^3$
	such that
	$Y\in I_c({\bf u})$ and $\min I_c({\bf u})<Y$.
	Choose a point ${\bf y} \in \mathcal{L}_c({\bf u},Y)$.
	Since $\min I({\bf u})<Y= \min I({\bf v})$, we have
	${\bf u} \neq \pm {\bf v}$, so ${\bf u}$ and ${\bf v}$
	are linearly independent over $\mathbb{Q}$ and thus
	${\bf x'}$ and ${\bf y}$ are linearly independent
	points of $\mathcal{C}_{c,Y}$.
	
	Now, suppose that $\det({\bf x}) = 0$.
	Then we get $\det({\bf v}) = 0$, and so
\[
	{\bf v} = \pm(a^2,ab,b^2),
\]
	for some non-zero $(a,b) \in \mathbb{Z}^2$.
	Since ${\bf v}$ is primitive,
	the point ${\bf s} = (a,b)$ is also primitive,
	and therefore $\|{\bf s}\|_{p} = 1$
	for each ${p} \in \mathcal{S}$.
	Writing ${\bf x'} = m{\bf v}$ with $m\in\mathbb{Z}$,
	we find that
\begin{equation}\label{I:3:P:1:1:1}
 \begin{aligned}
	L_{\infty}({\bf x'}) &= |m|_{\infty}L_{\infty}({\bf v})
		\\
		&\sim |m|_{\infty}
			\max\{|ab-a^2\xi_{\infty}|_{\infty},
			|b^2-ab\xi_{\infty}|_{\infty}\}
		\\
		&= |m|_{\infty}|b-a\xi_{\infty}|_{\infty} \|{\bf s}\|_{\infty},
	\\
	L_{p}({\bf x'}) &= |m|_{p}L_{p}({\bf v})
		\\
		&\sim |m|_{p}
			\max\{|ab-a^2\xi_{p}|_{p},
			|b^2-ab\xi_{p}|_{p}\}
		\\
		&= |m|_{p}|b-a\xi_{p}|_{p} \|{\bf s}\|_{p}
		\\
		&= |m|_{p}|b-a\xi_{p}|_{p},
		\quad {p} \in \mathcal{S},
	\\
	|b-a\xi_p|_{p} &\ll \|{\bf s}\|_{p} = 1
	\quad \text{ for } p \in \mathcal{S}.
 \end{aligned}
\end{equation}
	Since $\|{\bf v}\|_{\infty} = \|{\bf s}\|_{\infty}^2$ and
	$\|{\bf x'}\|_{\infty} = |m|_{\infty}\|{\bf v}\|_{\infty}$,
	we find that
\begin{equation}\label{I:3:P:1:1:2}
	|a|_{\infty} \leq \|{\bf s}\|_{\infty}
	= \|{\bf x'}\|_{\infty}^{1/2}|m|_{\infty}^{-1/2}
	= Y^{1/2}|m|_{\infty}^{-1/2}.
\end{equation}

	Recall that the points ${\bf x'}$ and ${\bf y}$ are linearly independent.
	Therefore the matrix
\[
	\left (
	\begin{matrix}
		y_{0}	&  y_{1}	&  y_{2}
		\\
		x'_{0}	&  x'_{1}	&  x'_{2}
	\end{matrix}
	\right ) {,}
\]
	has rank 2.
	We claim that $x_1'\neq0$.
	Otherwise, since ${\bf x'}$ is a multiple of ${\bf v}$,
	we would have $ab=0$, then ${\bf s} =\pm(1,0)$
	or ${\bf s} =\pm(0,1)$ and so, $\|{\bf v}\|_{\infty} = 1$
	against our assumption that $\|{\bf v}\|_{\infty} > 1$.
	Since $x'_{1} \neq0$, at least one of the
	determinants
\[
	\left |
	\begin{matrix}
		y_{0} &  y_{1}
		\\
		x'_{0}   &  x'_{1}
	\end{matrix}
	\right | {, \ }
	\left |
	\begin{matrix}
		y_{1} &  y_{2}
		\\
		x'_{1} &  x'_{2}
	\end{matrix}
	\right |
\]
	is not zero. So, there exists $i \in \{0,1\}$ such that
\[
	\left |
	\begin{matrix}
		y_{i} &  y_{i+1}
		\\
		a   &  b
	\end{matrix}
	\right | \neq 0.
\]
	For such a choice of $i$, we find,
	using the estimates (\ref{I:3:P:1:1:1})
	together with those of Lemma \ref{I:2:L:1}, that
 \begin{align*}
	1 & \leq
		\left \|
			\begin{matrix}
			y_i & y_{i+1}
			\\
			a & b
			\end{matrix}
		\right\|_{\infty}
		\prod_{p \in \mathcal{S}}
		\left \|
			\begin{matrix}
			y_i & y_{i+1}
			\\
			a & b
			\end{matrix}
		\right\|_{p}
	\\
	& \leq
		\big(\|{\bf y}\|_{\infty}|b-a\xi_{\infty}|_{\infty} +
		|a|_{\infty}L_{\infty}({\bf y})\big)
		\prod_{p \in \mathcal{S}}\max\{|b-a\xi_p|_{p},L_{p}({\bf y})\}
	\\
	& \ll
		\big( \|{\bf y}\|_{\infty} L_{\infty}({\bf x'})
		|m|_{\infty}^{-1}\|{\bf s}\|_{\infty}^{-1} +
			|a|_{\infty}L_{\infty}({\bf y}) \big)
		\prod_{p \in \mathcal{S}}\max\{\min\{L_{p}({\bf x'})
		|m|_{p}^{-1},1\},L_{p}({\bf y})\}.
 \end{align*}
	Using (\ref{I:3:P:1:1:2}) and the fact that
	${\bf x'}$ and ${\bf y}$
	belong to $\mathcal{C}_{c,Y}$, we deduce that
\begin{align*}
	1 & \ll
		\big( Y Y^{-\lambda_{\infty}}|m|_{\infty}^{-1}
		Y^{-1/2}|m|_{\infty}^{1/2} +
		Y^{1/2} |m|_{\infty}^{-1/2}
		Y^{-\lambda_{\infty}}\big)
		\\
		&
		\quad \quad
		\prod_{p \in \mathcal{S}}
		\max\{\min\{Y^{-\lambda_{p}}|m|_{p}^{-1},1\},
		Y^{-\lambda_{p}}\}
	\\
	  & \ll Y^{1/2-\lambda_{\infty}-
	  \sum_{p \in \mathcal{S}}\lambda_{p}}
	  	|m|_{\infty}^{-1/2}
		\prod_{p \in \mathcal{S}}
		\max\{
		\min\{|m|_{p}^{-1},Y^{\lambda_{p}}\},1\}.
\end{align*}
	Since $|m|_{\infty}\prod_{p \in \mathcal{S}}|m|_{p} \geq 1$,
	this can be rewritten as
\[
	1 \ll
		Y^{1/2-\lambda_{\infty}-
	  	\sum_{p \in \mathcal{S}}\lambda_{p}}
		\prod_{p \in \mathcal{S}}|m|_{p}^{1/2}\min\{|m|_{p}^{-1},
		Y^{\lambda_{p}}\}.
\]
	Fix $p \in \mathcal{S}$.
	If we suppose that $|m|_{p}^{-1} \leq Y^{\lambda_{p}}$,
	we get
\[
	|m|_{p}^{1/2}\min\{|m|_{p}^{-1},Y^{\lambda_{p}}\}
		=
		|m|_{p}^{-1/2}
		\\
		\leq
		Y^{\lambda_{p}/2}.
\]
	On the other hand, if we suppose that
	$|m|_{p}^{-1} \geq Y^{\lambda_{p}}$,
	we get
\[
	|m|_{p}^{1/2}\min\{|m|_{p}^{-1},Y^{\lambda_{p}}\}
		=
		|m|_{p}^{1/2}Y^{\lambda_{p}}
		\\
		\leq
		Y^{\lambda_{p}/2}.
\]
	So, in both cases, we obtain the same upper bound.
	Combining these relations for all $p \in \mathcal{S}$, we find that
\begin{equation}\label{I:3:P:1:1:4}
	\prod_{p \in \mathcal{S}}|m|_{p}^{1/2}\min\{|m|_{p}^{-1},
		Y^{\lambda_{p}}\}
		\ll
		Y^{(\sum_{p \in \mathcal{S}}\lambda_{p})/2}.
\end{equation}
	So, we obtain
\begin{align*}
	1 \ll
	Y^{1/2-
	\lambda_{\infty}- (\sum_{p \in \mathcal{S}}\lambda_{p})/2}.
\end{align*}
	Since $\lambda + \lambda_{\infty} > 1 $, we have
	$ \lambda_{\infty} + (\sum_{p \in \mathcal{S}}\lambda_{p})/2 > 1/2$
	and this means that $Y \ll 1$, thus $\|{\bf v}\|_{\infty}\ll 1$.
	Therefore, if $X$ is large enough, we have $\det({\bf x})\neq0$.
\end{proof}

	The previous proposition assumed that $\lambda_{\infty}\geq0$.
	The next proposition deals with the case when $\lambda_{\infty} < 0$
	and the set $\mathcal{S}$ consists of just one prime number $p$.
	So that, in this case, we have
	$\bar \lambda  = (\lambda_{\infty},\lambda_p)$
	and  $\bar \xi  = (\xi_{\infty},\xi_p)$.
\begin{proposition} \label{I:3:P:1-1}
	Assume that
\begin{equation}\label{I:3:P:1-1:0}
	-1\leq \lambda_{\infty} <0
	\ \text{ and } \
 	\lambda_{\infty}/2 + \lambda_{p} > 1.
\end{equation}
	Suppose also that, for some $c>0$, the system
\begin{equation}\label{I:3:P:1-1:1}
 \begin{aligned}
	&\| {\bf x} \|_{\infty} \leq X,
	\\
	&L_{\infty}({\bf x }) \leq c X^{-\lambda_{\infty}},
	\\
	&L_{p}({\bf x }) \leq c X^{-\lambda_{p}},
 \end{aligned}
\end{equation}
	has a non-zero solution ${\bf x} \in \mathbb{Z}^3$,
	for each $X\geq1$.
	Then, for each $X$ sufficiently large,
	any such solution ${\bf x}$
	satisfies $\det({\bf x}) \neq 0$.
\end{proposition}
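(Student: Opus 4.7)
The plan is to parallel Proposition \ref{I:3:P:1}, adapting the argument to handle $\lambda_{\infty}<0$. I would argue by contradiction: assume that for arbitrarily large $X$ there is a non-zero solution ${\bf x}\in\mathbb{Z}^3$ of (\ref{I:3:P:1-1:1}) with $\det({\bf x})=0$. Writing ${\bf x}=m{\bf v}$ with ${\bf v}\in\mathbb{Z}^3$ primitive, the vanishing of $\det({\bf v})$ forces ${\bf v}=\pm(a^2,ab,b^2)$ for a primitive pair ${\bf s}=(a,b)\in\mathbb{Z}^2$, and Lemma \ref{I:1:L:1}(iv) guarantees that $\|{\bf v}\|_{\infty}\to\infty$ as $X\to\infty$. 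The three constraints of (\ref{I:3:P:1-1:1}), together with $m\in\mathbb{Z}_{\{p\}}$ (so $|m|_p=|m|_{\infty}^{-1}$), will translate to the three key bounds $|m|_{\infty}\|{\bf s}\|_{\infty}^2\leq X$, $|b-a\xi_{\infty}|_{\infty}\ll X^{-\lambda_{\infty}}/(|m|_{\infty}\|{\bf s}\|_{\infty})$, and $|b-a\xi_p|_p\ll |m|_{\infty}X^{-\lambda_p}$.

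Next, following Proposition \ref{I:3:P:1}, I would set $Y=\min I_c({\bf v})$, pick ${\bf x'}\in\mathcal{L}_c({\bf v},Y)$, and use Lemma \ref{I:1:L:1}(iii) to produce a primitive ${\bf u}\in\mathbb{Z}^3$ with $Y\in I_c({\bf u})$ and $\min I_c({\bf u})<Y$; the latter inequality forces ${\bf u}\neq\pm{\bf v}$, so ${\bf v}$ and ${\bf u}$ are linearly independent, and any ${\bf y}\in\mathcal{L}_c({\bf u},Y)$ is linearly independent from ${\bf x'}$. For some $i\in\{0,1\}$ the integer $y_ib-y_{i+1}a$ will be non-zero, and combining the identities $y_ib-y_{i+1}a=y_i(b-a\xi_{\nu})-a(y_{i+1}-y_i\xi_{\nu})$ for $\nu\in\{\infty,p\}$ with the product formula will yield
\[
1\ll\bigl(\|{\bf y}\|_{\infty}\,|b-a\xi_{\infty}|_{\infty}+|a|_{\infty}L_{\infty}({\bf y})\bigr)\max\bigl\{|b-a\xi_p|_p,L_p({\bf y})\bigr\}.
\]
Substituting the bounds above (with $X$ replaced by $Y$), together with $|a|_{\infty}\leq\|{\bf s}\|_{\infty}$ and ${\bf y}\in\mathcal{C}_{c,Y}$, the aim is to extract an inequality $1\ll Y^{\delta}$ with $\delta<0$ under the hypothesis $\lambda_{\infty}/2+\lambda_p>1$, which contradicts $Y\to\infty$.

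The main obstacle, relative to Proposition \ref{I:3:P:1}, is that $\lambda_{\infty}<0$ rules out the simplification $\|{\bf x'}\|_{\infty}=Y$ (and hence $\|{\bf s}\|_{\infty}=Y^{1/2}|m|_{\infty}^{-1/2}$) that was used there; the estimate must therefore proceed by a careful case analysis in $|m|_{\infty}$ and $\|{\bf s}\|_{\infty}$. A first useful observation is that the hypothesis $\lambda_{\infty}/2+\lambda_p>1$ together with $\lambda_{\infty}\in[-1,0)$ forces $\lambda_p>1$, so the inequality $|m|_{\infty}\|{\bf s}\|_{\infty}^2\leq Y$ combined with $\|{\bf s}\|_{\infty}\geq 1$ excludes the regime $|m|_{\infty}>Y^{\lambda_p}$; consequently the $p$-adic minor bound is always of the sharper form $\ll|m|_{\infty}Y^{-\lambda_p}$. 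The delicate step will be to balance the two $\infty$-adic terms $\|{\bf y}\|_{\infty}|b-a\xi_{\infty}|_{\infty}$ and $|a|_{\infty}L_{\infty}({\bf y})$ against this $p$-adic bound—controlling $|m|_{\infty}\|{\bf s}\|_{\infty}$ via $|m|_{\infty}\|{\bf s}\|_{\infty}^2\leq Y$—to produce a negative exponent $\delta$ matching the hypothesis $\lambda_{\infty}/2+\lambda_p>1$.
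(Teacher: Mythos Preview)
Your outline is correct and matches the paper's overall strategy; the divergence is precisely at the step you flag as delicate. Rather than keeping $Y=\min I_c({\bf v})$ and attempting a case analysis in $|m|_{\infty}$ and $\|{\bf s}\|_{\infty}$, the paper sets
\[
Y=\max\bigl\{\|{\bf v}\|_{\infty},\,c^{1/\lambda_{\infty}}L_{\infty}({\bf v})^{-1/\lambda_{\infty}}\bigr\}
\]
directly, and separately uses ${\bf x'}=p^{l}{\bf v}\in\mathcal{L}_c({\bf v},X')$ at $X'=\min I_c({\bf v})$ to extract, via the identity $|p^{l}|_{\infty}|p^{l}|_{p}=1$, the auxiliary estimates $L_{\infty}({\bf v})^{\lambda_p}L_p({\bf v})^{-\lambda_{\infty}}\ll 1$ and $L_p({\bf v})\ll\|{\bf v}\|_{\infty}^{-\lambda_p}$. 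The case split then becomes transparent: if $Y=\|{\bf v}\|_{\infty}$, every term in the minor estimate is a power of $\|{\bf v}\|_{\infty}$ and one lands on $1\ll\|{\bf v}\|_{\infty}^{1/2-\lambda_{\infty}-\lambda_p}$; if $Y\sim L_{\infty}({\bf v})^{-1/\lambda_{\infty}}>\|{\bf v}\|_{\infty}$, the first auxiliary bound lets one rewrite everything as a power of $L_{\infty}({\bf v})$, and the hypothesis $\lambda_{\infty}/2+\lambda_p>1$ again gives a contradiction. It is worth noting that the paper's $Y$ actually coincides with your $\min I_c({\bf v})$ (one checks that ${\bf v}$ itself lies in $\mathcal{C}_{c,Y}$, so the multiplier $m=1$ already works; the paper only records $Y\le X'$), so your choice of $Y$ is not wrong. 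But without the explicit description you would have to rediscover this two-case structure and, crucially, the bound $L_p({\bf v})\ll L_{\infty}({\bf v})^{\lambda_p/\lambda_{\infty}}$ needed in the second case---your proposal does not anticipate this ingredient, and the ``balancing'' you sketch in $|m|_{\infty}$ and $\|{\bf s}\|_{\infty}$ alone does not obviously produce it.
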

	In particular, if $\lambda_{\infty} = -1$,
	the condition $\lambda_{\infty}/2 + \lambda_{p} > 1$
	becomes $\lambda_{p} > 3/2 $
	and we recover the p-adic analog of Lemma 2 of
	{\sc H.~Davenport \& W.M.~Schmidt} in \cite{DS},
	given by {\sc O.~Teuli\'{e}} in \cite{Teu}.
\\
\begin{proof}
		Let ${\bf x}$ be a non-zero solution of the system
	of inequalities (\ref{I:3:P:1-1:1}) for some large $X\geq1$.
	Write ${\bf x} = m{\bf v}$ for a
	non-zero integer $m$ and a primitive point
	${\bf v} \in \mathbb{Z}^3$.
		We note that $\lambda_{\infty} + \lambda_{p} > 0$,
	so the requirements of Lemma \ref{I:1:L:1} are satisfied.
	Put
\[
	X' = \min I_c({\bf v})
	\ \text{and } \
	Y = \max \{
		\|{\bf v}\|_{\infty},
		c^{1/\lambda_{\infty}}L_{\infty}({\bf v })^{-1/\lambda_{\infty}}
		\}.
\]
	By Lemma \ref{I:1:L:1}(iv), we have that
	$\|{\bf v}\|_{\infty}$ tends to infinity with $X$,
	and so $Y$ tends to infinity with $X$.
	So, assuming that $X$ is sufficiently large,
	we have $Y \geq \|{\bf v}\|_{\infty} > 1$.
	Then, by Lemma \ref{I:1:L:1}(iii),
	there exists a primitive point
	${\bf u} \in \mathbb{Z}^3$ such that $Y \in I_c({\bf u})$
	and $\min I_c({\bf u}) < Y$.
		Since $X' = \min I_c({\bf v})$, we have
	$ \| {\bf v} \|_{\infty} \leq X' $
	and
	$ L_{\infty}({\bf v }) \leq c (X')^{-\lambda_{\infty}} $
	and so, $Y \leq X'$.
	This means that $\min I_c({\bf u }) < Y \leq X' = \min I_c({\bf v})$
	and therefore the points ${\bf u}$ and ${\bf v}$ are linearly independent.
	Choose any points ${\bf y} \in \mathcal{L}_c({\bf u},Y)$
	and ${\bf x'} \in \mathcal{L}_c({\bf v},X')$.

	Now, suppose that $\det({\bf x}) = 0$.
	So, we get $\det({\bf v}) = 0$.
	Arguing as in \cite{DS}, we find that
\[
	{\bf v} = \pm (a^2,ab,b^2),
\]
	for some non-zero point ${\bf s} = (a,b) \in \mathbb{Z}^2$.
	Since ${\bf v}$ is primitive,
	the point ${\bf s}$ is primitive, and therefore $\|{\bf s}\|_{p} = 1$.
	Since $\|{\bf v}\|_{\infty} = \|{\bf s}\|_{\infty}^2$,
	we find that
\begin{equation}\label{I:3:P:1-1:1:2}
	|a|_{\infty} \leq \|{\bf s}\|_{\infty}
	= \|{\bf v}\|_{\infty}^{1/2}.
\end{equation}
	Also, since ${\bf x'} = p^l {\bf v} \in \mathcal{L}_c({\bf v},X')$
	for some integer $l\geq0$, since
	$\lambda = \lambda_{\infty} + \lambda_p > 0$ and since
	$\lambda_p  > 1 - \lambda_{\infty}/2 > 1$, we have
\begin{equation}\label{I:3:P:1-1:1:3}
 \begin{aligned}
	L_{\infty}({\bf v })^{\lambda_{p}}L_{p}({\bf v })^{-\lambda_{\infty}}
	&\leq
	p^{l(\lambda_{p}+\lambda_{\infty})}L_{\infty}({\bf v })^{\lambda_{p}}L_{p}({\bf v })^{-\lambda_{\infty}}
	\\
	&=
	L_{\infty}({\bf x' })^{\lambda_{p}}L_{p}({\bf x' })^{-\lambda_{\infty}}
	\ll 1,
	\\
	\|{\bf v}\|_{\infty}L_{p}({\bf v })
	&\ll (X')^{1-\lambda_{p}}
	\ll \|{\bf v}\|_{\infty}^{1-\lambda_{p}},
 \end{aligned}
\end{equation}
	Since the points ${\bf x'}$ and ${\bf y}$ are linearly independent,
	the matrix
\[
	\left (
	\begin{matrix}
		y_{0}	&  y_{1}	&  y_{2}
		\\
		x'_{0}	&  x'_{1}	&  x'_{2}
	\end{matrix}
	\right ) {,}
\]
	has rank 2.
	As in the proof of the previous Proposition \ref{I:3:P:1},
	we have $x'_{1}\neq0$ and so, at least one of the determinants
\[
	\left |
	\begin{matrix}
		y_{0} &  y_{1}
		\\
		x'_{0}   &  x'_{1}
	\end{matrix}
	\right | {, \ }
	\left |
	\begin{matrix}
		y_{1} &  y_{2}
		\\
		x'_{1} &  x'_{2}
	\end{matrix}
	\right |
\]
	is not zero. So, there exists $i \in \{0,1\}$, such that
\[
	\left |
	\begin{matrix}
		y_{i} &  y_{i+1}
		\\
		a   &  b
	\end{matrix}
	\right | \neq 0.
\]
	For such a choice of $i$, we have
 \begin{align*}
	1 & \leq
		\left \|
			\begin{matrix}
			y_i & y_{i+1}
			\\
			a & b
			\end{matrix}
		\right\|_{\infty}
		\left \|
			\begin{matrix}
			y_i & y_{i+1}
			\\
			a & b
			\end{matrix}
		\right\|_{p}
	\\
	& \leq
		\big(\|{\bf y}\|_{\infty}|b-a\xi_{\infty}|_{\infty} +
		|a|_{\infty}L_{\infty}({\bf y})\big)
		\max\{|b-a\xi_p|_{p},L_{p}({\bf y})\}
	\\
	& \ll
		\big( \|{\bf y}\|_{\infty}
		\|{\bf v}\|_{\infty}^{-1/2}L_{\infty}({\bf v})
			+ \|{\bf v}\|_{\infty}^{1/2}L_{\infty}({\bf y}) \big)
		\max\{L_{p}({\bf v}),L_{p}({\bf y})\}.
\end{align*}

	Firstly, if
	$\|{\bf v}\|_{\infty}
	\geq
	c^{1/\lambda_{\infty}}L_{\infty}({\bf v })^{-1/\lambda_{\infty}}$,
	we have $Y = \|{\bf v}\|_{\infty}$.
	By this assumption and by the last relation in (\ref{I:3:P:1-1:1:3}),
	we have that
	$L_{\infty}({\bf v}) \ll \|{\bf v}\|_{\infty}^{-\lambda_{\infty}}$
	and
	$L_{p}({\bf v}) \ll \|{\bf v}\|_{\infty}^{-\lambda_{p}}$.
	Then, using the fact that ${\bf y} \in \mathcal{C}_{c,Y}$, we get
\begin{align*}
	1 & \ll
		\big( \|{\bf v}\|_{\infty}
		\|{\bf v}\|_{\infty}^{-1/2}
		\|{\bf v}\|_{\infty}^{-\lambda_{\infty}} +
		\|{\bf v}\|_{\infty}^{1/2}
		\|{\bf v}\|_{\infty}^{-\lambda_{\infty}}\big)
		\max\{\|{\bf v}\|_{\infty}^{-\lambda_{p}},
		\|{\bf v}\|_{\infty}^{-\lambda_{p}}\}
	\\
	  & \ll \|{\bf v}\|_{\infty}^{1/2-\lambda_{\infty} - \lambda_{p}}.
\end{align*}
	Since  $\lambda_{\infty}/2 + \lambda_{p} > 1$
	and $\lambda_{\infty} \geq -1$, we find that
	$ \lambda_{\infty} + \lambda_{p} =
	\lambda_{\infty}/2 + \lambda_{p} + \lambda_{\infty}/2 >
	1 - 1/2 = 1/2$.
	So, we have $\|{\bf v}\|_{\infty} \ll 1$.
	This is impossible if $X$ is sufficiently large.

	Secondly, if $c^{1/\lambda_{\infty}}
	L_{\infty}({\bf v })^{-1/\lambda_{\infty}}
	> \|{\bf v}\|_{\infty}$,
	we have
	$Y = c^{1/\lambda_{\infty}} L_{\infty}({\bf v })^{-1/\lambda_{\infty}}$
	and
	${\bf y} \in \mathcal{C}_{c,Y}$.
		By the first relation in (\ref{I:3:P:1-1:1:3}),
	we obtain $L_{p}({\bf v }) \ll \|{\bf v}\|_{\infty}^{-\lambda_p}$.
	Using this, the assumption
	$L_{\infty}({\bf v })^{-1/\lambda_{\infty}} \gg \|{\bf v}\|_{\infty}$
	and the trivial estimate
	$L_{\infty}({\bf v }) \ll \|{\bf v}\|_{\infty}$, we get
\begin{align*}
	1 & \ll
		\big(L_{\infty}({\bf v })^{-1/\lambda_{\infty}}
		\|{\bf v}\|_{\infty}^{-1/2}
		L_{\infty}({\bf v }) +
		\|{\bf v}\|_{\infty}^{1/2}
		L_{\infty}({\bf v })\big)
		\max\{L_{\infty}({\bf v })^{\lambda_{p}/\lambda_{\infty}},
		L_{\infty}({\bf v })^{\lambda_{p}/\lambda_{\infty}}\}
	\\
	  & \ll
		\big(L_{\infty}({\bf v })^{-1/\lambda_{\infty} + 1/2} +
		L_{\infty}({\bf v })^{1-1/(2\lambda_{\infty})}\big)
		L_{\infty}({\bf v })^{\lambda_{p}/\lambda_{\infty}}
\end{align*}
	Since $c^{1/\lambda_{\infty}}
	L_{\infty}({\bf v })^{-1/\lambda_{\infty}}
	>
	\|{\bf v}\|_{\infty}$ and since $\|{\bf v}\|_{\infty}$ is large if
	$X$ is large, we deduce that $L_{\infty}({\bf v })$ is large
	for large $X$.
		Hence, by this and since $-1 \leq \lambda_{\infty} < 0$,
		we find that
	$L_{\infty}({\bf v })^{-1/\lambda_{\infty} + 1/2} \geq
		L_{\infty}({\bf v })^{1-1/(2\lambda_{\infty})}$.
	Using this, we have
\[
	1 \ll
		L_{\infty}({\bf v })^{-1/\lambda_{\infty} +
		1/2 +\lambda_{p}/\lambda_{\infty}}.
\]
	Since $ \lambda_{\infty}/2 + \lambda_{p} > 1$
	and since  $ -1 \leq \lambda_{\infty} < 0$, we get
	$L_{\infty}({\bf v }) \ll 1$. Thus $Y\ll 1$, which
	is impossible if $X$ is sufficiently large.
\end{proof}

	The following is the main result of this paragraph.
	It is essential for the next chapter where we apply
	Mahler's Duality Theorem to find a measure of simultaneous
	approximation to real and p-adic numbers.
\begin{proposition} \label{I:3:P:2}
	Suppose that
\begin{equation}\label{I:3:P:2:0}
	\lambda >0
	\ \text{ and } \
	\lambda
	+ \sum_{\nu \in \mathcal{S}'}\lambda_{\nu}  >
	\left \{
	\begin{matrix}
		0 \ \text{ if } \ \infty \in \mathcal{S}',
		\\
		1 \ \text{ if } \ \infty \notin \mathcal{S}'.
	\end{matrix}
	\right.
\end{equation}
	Suppose also that one of the
	following conditions is satisfied:
\begin{itemize}
	\item[(i)]
		$
			\lambda_{\infty} \geq0
			\ \text{ and } \
			\lambda + \lambda_{\infty} >1,
		$
	\item[(ii)]
		$\mathcal{S} = \{p\}$ for some prime number $p$,
		$
			-1 \leq \lambda_{\infty} <0
			\ \text{ and } \
			\lambda + \lambda_{p} > 2.
		$
\end{itemize}
	If $\lambda \geq 1/{\gamma}$,
	then there exists a constant $c > 0$ such that
	the inequalities
\begin{equation}\label{I:3:P:2:1}
 \begin{aligned}
	&\| {\bf x} \|_{\infty} \leq X,
	\\
	&L_{\infty}({\bf x }) \leq c X^{-\lambda_{\infty}},
	\\
	&L_{p}({\bf x }) \leq c X^{-\lambda_{p}} \
	\forall {p} \in \mathcal{S},
 \end{aligned}
\end{equation}
	have no non-zero solution ${\bf x} \in \mathbb{Z}^3$ for
	arbitrarily large values of $X$.
	Moreover, if $\lambda > 1/{\gamma}$, then any constant $c>0$
	has this property.
\end{proposition}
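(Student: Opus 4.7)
The plan is to combine the three preceding propositions via a contradiction argument. Suppose the conclusion of the present proposition fails; then for every $c > 0$ the system (\ref{I:3:P:2:1}) admits a non-zero integer solution for all sufficiently large $X$. Fix any such $c$ and enlarge it to some $c' \geq c$ so that the system with constant $c'$ has a non-zero solution for every $X \geq 1$ (for $X$ in the residual bounded range, a fixed non-zero integer point such as $(1,0,0)$ will satisfy all the inequalities provided $c'$ is chosen large enough). Then $\bar\lambda$ is an exponent of approximation to $\bar\xi$ in degree $2$ with corresponding constant $c'$.

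The next step is to supply the non-vanishing determinant hypothesis needed for Proposition \ref{I:3:T:1}. Under condition (i), the inequalities $\lambda_\infty \geq 0$, $\lambda > 0$, $\lambda + \lambda_\infty > 1$ are exactly the hypothesis of Proposition \ref{I:3:P:1}, applied with the constant $c'$. Under condition (ii), where $\mathcal{S} = \{p\}$ and $\lambda = \lambda_\infty + \lambda_p$, the assumption $\lambda + \lambda_p > 2$ rewrites as $\lambda_\infty/2 + \lambda_p > 1$, which together with $-1 \leq \lambda_\infty < 0$ is precisely the hypothesis of Proposition \ref{I:3:P:1-1}. In either case, for all $X$ sufficiently large every non-zero solution of the system with constant $c'$ has $\det({\bf x}) \neq 0$; since any solution with the smaller $c$ is also a solution with $c'$, the same non-vanishing holds for solutions with the original $c$.

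Proposition \ref{I:3:T:1} now applies to $c$ itself (its main hypothesis (\ref{I:3:T:1:1-1}) is identical to (\ref{I:3:P:2:0})) and yields $\lambda \leq 1/\gamma$, with the sharpening that $c \geq c_0$ for a positive constant $c_0 = c_0(\bar\xi)$ whenever $\lambda = 1/\gamma$. If $\lambda > 1/\gamma$, this already contradicts the existence of such a $c$ for any choice of $c > 0$, giving the \emph{moreover} clause. If $\lambda = 1/\gamma$, the lower bound $c \geq c_0$ would have to hold for every $c > 0$, which fails as soon as $c \in (0, c_0)$; this yields the first assertion. I do not expect a substantial obstacle: the proof is essentially a bookkeeping assembly of the three earlier results, the only minor technical points being the initial enlargement of $c$ in order to extend the solvability range down to $X = 1$ (so that Propositions \ref{I:3:P:1} and \ref{I:3:P:1-1} can be invoked), and the elementary equivalence $\lambda + \lambda_p > 2 \iff \lambda_\infty/2 + \lambda_p > 1$ needed in case (ii).
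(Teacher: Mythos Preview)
Your proposal is correct and follows essentially the same route as the paper: assume the conclusion fails, invoke Proposition~\ref{I:3:P:1} or Proposition~\ref{I:3:P:1-1} (according to (i) or (ii)) to secure $\det({\bf x})\neq 0$, and then apply Proposition~\ref{I:3:T:1} to force $\lambda\le 1/\gamma$ together with the lower bound on $c$ when $\lambda=1/\gamma$. You are in fact more explicit than the paper on two technical points it glosses over: the enlargement of $c$ to $c'$ so that solvability extends down to $X=1$ (as formally required by Propositions~\ref{I:3:P:1} and~\ref{I:3:P:1-1}), and the rewriting $\lambda+\lambda_p>2\iff \lambda_\infty/2+\lambda_p>1$ in case~(ii).
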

\begin{proof}
	Suppose on the contrary that, for each constant $c>0$
	and each $X$ sufficiently large (with a lower bound depending on $c$),
	the inequalities (\ref{I:3:P:2:1}) have a non-zero solution
	in $\mathbb{Z}^3$.
	Since one of the conditions (i)-(ii) is satisfied,
	by Proposition \ref{I:3:P:1} or by Proposition \ref{I:3:P:1-1},
	we have that the requirement (\ref{I:3:T:1:1}) of Proposition \ref{I:3:T:1}
	is fulfilled. Together with the condition (\ref{I:3:P:2:0})
	this fulfills all the requirements of Proposition \ref{I:3:T:1}
	and so $\lambda < 1/\gamma$, which is a contradiction.
\end{proof}
\subsection{Special cases}\label{C1:S3:SS2:special}

	Applying Proposition \ref{I:3:P:2} with $\mathcal{S} = \emptyset$
	or with $\mathcal{S} = \{p\}$ and $\lambda_{\infty} = -1$, we obtain
	the following results of {\sc H.~Davenport \& W.M.~Schmidt} in \cite{DS}
	or of {\sc O.~Teuli\'{e}} in \cite{Teu}, respectively.
\begin{corollary} \label{I:3:L:9}
( \ {\sc H.~Davenport \& W.M.~Schmidt}, \cite{DS}\ )
	Let $\xi_{\infty} \in \mathbb{R}\setminus\mathbb{Q}$.
	Assume that $[\mathbb{Q}(\xi_{\infty}):\mathbb{Q}]>2$.
	Then there exists a constant $c > 0$ such that
	the inequalities
 \[
	\| {\bf x} \|_{\infty} \leq X,
	\quad
	L_{\infty}({\bf x }) \leq c X^{-1/\gamma},
 \]
	have no non-zero solution ${\bf x} \in \mathbb{Z}^3$ for
	arbitrarily large values of $X$.
\end{corollary}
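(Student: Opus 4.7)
The plan is to deduce this corollary as a direct specialization of Proposition \ref{I:3:P:2} to the case $\mathcal{S} = \emptyset$ with $\lambda_{\infty} = 1/\gamma$. So the whole proof reduces to checking that each hypothesis of that proposition is satisfied in this setting, and the main obstacle is really just bookkeeping; there is no additional argument needed.

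First I would set $\bar\lambda = (\lambda_\infty) = (1/\gamma)$ so that $\lambda = \lambda_\infty = 1/\gamma$. Since $[\mathbb{Q}(\xi_\infty):\mathbb{Q}] > 2$ by assumption, the set $\mathcal{S}'$ defined in the previous paragraph equals $\{\infty\}$, so in particular $\infty \in \mathcal{S}'$. Then the condition (\ref{I:3:P:2:0}) amounts to $\lambda > 0$ and $\lambda + \lambda_\infty > 0$, both of which hold since $1/\gamma > 0$.

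Next I would check condition (i) of Proposition \ref{I:3:P:2}: we need $\lambda_\infty \geq 0$ and $\lambda + \lambda_\infty > 1$. The first holds trivially since $1/\gamma > 0$. For the second, note that $\lambda + \lambda_\infty = 2/\gamma$, and since $\gamma = (1+\sqrt 5)/2 < 2$, we have $2/\gamma > 1$. Finally, the growth hypothesis $\lambda \geq 1/\gamma$ is an equality, so it is satisfied.

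With all hypotheses verified, Proposition \ref{I:3:P:2} produces a constant $c > 0$ such that the system
\[
\|\mathbf{x}\|_\infty \leq X, \qquad L_\infty(\mathbf{x}) \leq c X^{-1/\gamma}
\]
has no non-zero solution $\mathbf{x} \in \mathbb{Z}^3$ for arbitrarily large $X$, which is exactly the statement of the corollary. The only subtlety worth mentioning is that Proposition \ref{I:3:P:2} is phrased in terms of the full tuple $\bar\xi$, but when $\mathcal{S} = \emptyset$ this reduces to a single real number $\xi_\infty$ with the single non-quadraticity condition $[\mathbb{Q}(\xi_\infty):\mathbb{Q}] > 2$, which is precisely the hypothesis of the corollary.
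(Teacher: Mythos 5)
Your proof is correct and is exactly the paper's intended argument: the paper obtains this corollary by applying Proposition \ref{I:3:P:2} with $\mathcal{S}=\emptyset$ and $\lambda_{\infty}=1/\gamma$, and your verification of the hypotheses ($\lambda=1/\gamma>0$, $\mathcal{S}'=\{\infty\}$, and $\lambda+\lambda_{\infty}=2/\gamma>1$) is precisely the bookkeeping the paper leaves implicit.
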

\begin{corollary} \label{I:3:L:8} ( \ {\sc O.~Teuli\'{e}}, \cite{Teu}\ )
	Let $p$ be a prime number and
	let $\xi_p \in \mathbb{Q}_p\setminus\mathbb{Q}$.
	Assume that $[\mathbb{Q}(\xi_{p}):\mathbb{Q}]>2$.
	Then there exists a constant $c > 0$ such that
	the inequalities
\[
	\| {\bf x} \|_{\infty} \leq X,
	\quad
	L_{p}({\bf x }) \leq c X^{-\gamma},
\]
	have no non-zero solution ${\bf x} \in \mathbb{Z}^3$ for
	arbitrarily large values of $X$.
\end{corollary}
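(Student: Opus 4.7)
The plan is to derive this corollary as a direct application of Proposition~\ref{I:3:P:2}, specialized to $\mathcal{S}=\{p\}$, $\lambda_\infty = -1$, and $\lambda_p = \gamma$. First I would pick any irrational real number $\xi_\infty \in \mathbb{R}\setminus\mathbb{Q}$, for instance a quadratic irrational such as $\sqrt{2}$, and form the pair $\bar\xi = (\xi_\infty, \xi_p) \in (\mathbb{R}\setminus\mathbb{Q})\times(\mathbb{Q}_p\setminus\mathbb{Q})$, which is an admissible input for Proposition~\ref{I:3:P:2}. Note that the assumption $[\mathbb{Q}(\xi_p):\mathbb{Q}]>2$ places $p$ in the set $\mathcal{S}'$ from that proposition.

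Next I would verify all the numerical hypotheses, which reduce to manipulations of the golden-ratio identity $\gamma^2 = \gamma + 1$, equivalently $\gamma - 1 = 1/\gamma$. The total $\lambda = \lambda_\infty + \lambda_p = -1 + \gamma$ equals $1/\gamma$, so $\lambda > 0$ and meets the threshold $\lambda \geq 1/\gamma$. Since $1/\gamma + \gamma = 2\gamma - 1 \approx 2.236$, the inequality $\lambda + \sum_{\nu\in\mathcal{S}'}\lambda_\nu > 1$ in (\ref{I:3:P:2:0}) holds in the case $\infty \notin \mathcal{S}'$, and the weaker form $>0$ holds a fortiori if $\infty \in \mathcal{S}'$. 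For alternative~(ii) I need $-1 \leq \lambda_\infty < 0$ (satisfied at $\lambda_\infty = -1$) and $\lambda + \lambda_p = 2\gamma - 1 > 2$, which rearranges to $\gamma > 3/2$ and holds since $\gamma \approx 1.618$.

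Proposition~\ref{I:3:P:2} then supplies a constant $c>0$ such that the enlarged system
\[
\|{\bf x}\|_\infty \leq X,\quad L_\infty({\bf x}) \leq cX,\quad L_p({\bf x}) \leq cX^{-\gamma}
\]
admits no non-zero integer solution for arbitrarily large $X$. The final step is to discard the middle inequality and thereby recover the system in the statement. Here I rely on the trivial bound $L_\infty({\bf x}) \leq (1+|\xi_\infty|+\xi_\infty^2)\|{\bf x}\|_\infty$ which is satisfied by every ${\bf x} \in \mathbb{Z}^3$ with $\|{\bf x}\|_\infty \leq X$, so once $c$ dominates the constant $1 + |\xi_\infty| + \xi_\infty^2$ the $L_\infty$-constraint is vacuous and the two systems have the same non-zero integer solution set; one can arrange this by choosing $\xi_\infty$ small (so the automatic bound is weaker than the constant produced by the proposition). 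The main routine obstacle is this constant bookkeeping comparing the auxiliary $L_\infty$-bound with the $c$ furnished by Proposition~\ref{I:3:P:2}; the genuine mathematical content is absorbed entirely into that proposition.
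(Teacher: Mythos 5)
Your overall route is exactly the paper's: Corollary \ref{I:3:L:8} is obtained by specializing Proposition \ref{I:3:P:2} to $\mathcal{S}=\{p\}$, $\lambda_{\infty}=-1$, $\lambda_{p}=\gamma$ (the paper states no more than this), and your verification of the numerical hypotheses ($\lambda=\gamma-1=1/\gamma>0$, $\lambda+\lambda_{p}=2\gamma-1>2$, $\mathcal{S}'=\{p\}$ when $\xi_{\infty}$ is quadratic, so $\lambda+\sum_{\nu\in\mathcal{S}'}\lambda_{\nu}>1$) is correct.

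The gap is in the final step, where you discard the inequality $L_{\infty}({\bf x})\leq cX$. You want it to be implied by $\|{\bf x}\|_{\infty}\leq X$, which requires $c\geq C$, where $C$ is a constant with $L_{\infty}({\bf x})\leq C\|{\bf x}\|_{\infty}$ for all ${\bf x}\in\mathbb{Z}^3$. Two problems. First, $C\geq 1$ no matter how small $\xi_{\infty}$ is: for ${\bf x}=(0,x_1,x_2)$ one has $L_{\infty}({\bf x})=\|{\bf x}\|_{\infty}$. Second, and more seriously, the constant $c$ furnished by Proposition \ref{I:3:P:2} cannot be enlarged at will, since a larger $c$ weakens the non-existence conclusion; in the borderline case $\lambda=1/\gamma$ the proof of that proposition only guarantees that all sufficiently \emph{small} $c$ work. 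So you cannot arrange $c\geq C$, and the two systems need not have the same solution set. The repair is a rescaling of $X$ rather than a comparison of constants: let $c_0$ be the constant from Proposition \ref{I:3:P:2}, put $\alpha=\max\{1,C/c_0\}$ and $c'=c_0\alpha^{-\gamma}$. If ${\bf x}\neq 0$ satisfies $\|{\bf x}\|_{\infty}\leq X$ and $L_{p}({\bf x})\leq c'X^{-\gamma}$, then with $Y=\alpha X$ one gets $\|{\bf x}\|_{\infty}\leq Y$, $L_{\infty}({\bf x})\leq CX\leq c_0Y$ and $L_{p}({\bf x})\leq c'\alpha^{\gamma}Y^{-\gamma}=c_0Y^{-\gamma}$, i.e.\ ${\bf x}$ solves the three-inequality system at $Y$ with constant $c_0$. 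Hence the arbitrarily large $Y$ at which that system has no non-zero solution yield arbitrarily large $X=Y/\alpha$ at which the two-inequality system with constant $c'$ has none, which is the corollary. With this adjustment your argument is complete.
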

%
	%
	\newpage
	\section{Special approximation sequences}

	Let $\mathcal{S}$ be a finite set of prime numbers
	and let $\bar \lambda =
	(\lambda_{\infty},(\lambda_{p})_{{p} \in \mathcal{S}}) \in
	\mathbb{R}^{|\mathcal{S}|+1}$ be an exponent of approximation
	in degree $2$ to $\bar \xi = (\xi_{\infty},(\xi_{p})_{{p} \in \mathcal{S}})
	\in (\mathbb{R}\setminus\mathbb{Q})
	\times \prod_{{p} \in \mathcal{S}}(\mathbb{Q}_{p}\setminus\mathbb{Q})$.
	Note that $\lambda_{\nu}$ is an exponent of approximation in degree $2$
	to $\xi_{\nu}$ for each ${\nu} \in \mathcal{S}\cup\{\infty\}$
	in the sense of Definition \ref{I:1:D:3}
	or Definition \ref{I:1:D:4}. Under this hypothesis we study the cases 
	where $\lambda_{\infty}$ or $\lambda_{p}$ for some $p \in \mathcal{S}$
	belongs to a given interval.
\subsection{Growth conditions for an approximation sequence (real case)}\label{C1:S4:SS1-Real}

	Recall that $\gamma = (1+\sqrt{5})/2$ is the golden ratio.
	Assuming that $\lambda_{\infty} \in \mathbb{R}_{>0}$
	is an exponent of
	approximation in degree $2$ to a non-quadratic real number $\xi_{\infty}$,
	we show the existence of a sequence of primitive 
	points in $\mathbb{Z}^3$ satisfying a certain growth condition.
\begin{proposition} \label{RP5:P:1}
	Let $\xi_{\infty} \in \mathbb{R}$ be such that
	$[\mathbb{Q}(\xi_{\infty}):\mathbb{Q}]>2$ and
	let $\lambda_{\infty} \in \mathbb{R}_{>0}$ be an exponent of
	approximation to $\xi_{\infty}$ in degree $2$.
	Assume that $1/2 < \lambda_{\infty} \leq 1/\gamma$ and define
\[
	\theta =  \lambda_{\infty}/(1-\lambda_{\infty}).
\]
	Then there exists a sequence $({\bf y}_k)_{k \geq 1}$ of primitive points
	in $\mathbb{Z}^3$ such that upon putting
	$Y_k = \|{\bf y}_k\|_{\infty}$, for each $k \geq1$, we have
\begin{equation}\label{RP5:P:1:1}
 \begin{aligned}
	&Y_k^{\theta}
	\ll
	Y_{k+1}
	\ll Y_k^{1/(\theta - 1)},
	\\
	&
	Y_k^{-1}
	\ll L_{\infty}({\bf y}_k)
	\ll Y_k^{-\theta^2/(\theta+1)},
	\\
	&1 \leq \det({\bf y}_k)\leq Y_k^{1-\theta^2/(\theta+1)},
	\\
	&1 \leq \det({\bf y}_{k},{\bf y}_{k+1},{\bf y}_{k+2})\leq
	Y_{k}^{1/(\theta-1)^2 -\theta^2}.
 \end{aligned}
\end{equation}
\end{proposition}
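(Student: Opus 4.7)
The plan is to build the sequence $({\bf y}_k)$ from the Davenport--Schmidt--style sequence $({\bf v}_k)$ given by Lemma \ref{I:1:L:4}, which already provides primitive points with $Y_k := \|{\bf v}_k\|_\infty$ strictly increasing and a two-sided approximation bound $Y_{k+2}^{-\lambda_\infty} \ll L_\infty({\bf v}_k) \ll Y_{k+1}^{-\lambda_\infty}$. Since $\lambda_\infty > 1/2$, Proposition \ref{I:3:P:1} applied with $\mathcal{S} = \emptyset$ gives $\det({\bf v}_k) \neq 0$ for large $k$, so $|\det({\bf v}_k)| \geq 1$; combined with Lemma \ref{I:2:L:1}(i) this yields at once the easy lower estimate $Y_k^{-1} \ll L_\infty({\bf v}_k)$. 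Chaining it with the upper bound on $L_\infty({\bf v}_k)$ also delivers $Y_{k+1} \ll Y_k^{1/\lambda_\infty}$, and since $\lambda_\infty = \theta/(1+\theta)$ the hypothesis $\theta \leq \gamma$ is equivalent to $1/\lambda_\infty \leq 1/(\theta-1)$, so this already supplies the required upper bound $Y_{k+1} \ll Y_k^{1/(\theta-1)}$.

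The core new estimate is the companion lower bound $Y_{k+1} \gg Y_k^\theta$, which requires linear independence of three consecutive terms. Under $[\mathbb{Q}(\xi_\infty):\mathbb{Q}] > 2$, Remark \ref{I:3:R:1} (a byproduct of the proof of Proposition \ref{I:3:T:1}) ensures infinitely many linearly independent consecutive triples; moreover, two consecutive ``bad'' indices would trap all subsequent ${\bf v}_j$ in a fixed $2$-dimensional subspace, contradicting the argument behind Proposition \ref{I:3:T:1}. I will use this to select $({\bf y}_k)$ as a suitable subsequence of $({\bf v}_k)$ whose consecutive triples are all linearly independent. For each such triple, applying Lemma \ref{I:2:L:1}(i) to $1 \leq |\det({\bf y}_k, {\bf y}_{k+1}, {\bf y}_{k+2})|$ and retaining the dominant term $Y_{k+2} L_\infty({\bf y}_k) L_\infty({\bf y}_{k+1}) \ll Y_{k+2}^{1-\lambda_\infty} Y_{k+1}^{-\lambda_\infty}$ yields $Y_{k+2} \gg Y_{k+1}^{\lambda_\infty/(1-\lambda_\infty)} = Y_{k+1}^\theta$, giving after reindexing the desired $Y_{k+1} \gg Y_k^\theta$.

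The remaining four inequalities then follow by routine substitution: the upper bound on $L_\infty({\bf y}_k)$ comes from $L_\infty({\bf y}_k) \ll Y_{k+1}^{-\lambda_\infty} \ll Y_k^{-\theta\lambda_\infty} = Y_k^{-\theta^2/(\theta+1)}$; the upper bound on $\det({\bf y}_k)$ then follows from Lemma \ref{I:2:L:1}(i); and the upper bound on the triple determinant is obtained by iterating the one-step upper bound to get $Y_{k+2} \ll Y_k^{1/(\theta-1)^2}$ and substituting this together with the $L_\infty$ estimates into the dominant term of Lemma \ref{I:2:L:1}(i). The lower bounds $\geq 1$ on both determinants are automatic after absorbing signs into the choice of ${\bf y}_k$. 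I expect the main obstacle to be precisely the subsequence extraction: if $\theta$ is close to $\gamma$, skipping even two consecutive original indices would inflate the iterated one-step growth bound beyond $Y_k^{1/(\theta-1)}$, so one must show that bad indices cannot accumulate in long runs -- the delicate step is exactly this analysis of how quickly $({\bf v}_k)$ escapes any $2$-dimensional subspace, which is what allows both the sharp upper bound and the sharp lower bound to coexist on the chosen $({\bf y}_k)$.
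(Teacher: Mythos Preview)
Your overall architecture matches the paper's: start from the minimal-point sequence $({\bf v}_i)$ of Lemma~\ref{I:1:L:4}, use Proposition~\ref{I:3:P:1} (with $\mathcal S=\emptyset$) to get $\det({\bf v}_i)\neq 0$ for large $i$, pass to the subsequence indexed by $I=\{i:{\bf v}_{i-1},{\bf v}_i,{\bf v}_{i+1}\text{ independent}\}$, and read off the four estimates. The one-step bounds $X_{i+1}\ll X_i^{1/\lambda_\infty}$ (from $1\le|\det({\bf v}_i)|\ll X_iL_\infty({\bf v}_i)$) and, for $i\in I$, $X_{i+1}\gg X_i^{\theta}$ (from $1\le|\det({\bf v}_{i-1},{\bf v}_i,{\bf v}_{i+1})|\ll X_i^{-\lambda_\infty}X_{i+1}^{1-\lambda_\infty}$) are exactly what the paper uses, and the lower bound transfers to the subsequence by monotonicity: $Y_{k+1}=X_{i_{k+1}}\ge X_{i_k+1}\gg X_{i_k}^{\theta}=Y_k^{\theta}$.

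The genuine gap is precisely where you locate it: the upper bound $Y_{k+1}\ll Y_k^{1/(\theta-1)}$ for the \emph{subsequence}. Your proposed fix, controlling the gap length $i_{k+1}-i_k$, cannot deliver this exponent. Your remark that ``two consecutive bad indices would trap all subsequent ${\bf v}_j$'' is not right: a run of bad indices only forces ${\bf v}_i,\dots,{\bf v}_j$ into a common plane until the next good index $j$, and nothing in the argument behind Proposition~\ref{I:3:T:1} bounds $j-i$. Iterating the one-step estimate over a gap of length $m$ gives only $Y_{k+1}\ll Y_k^{(1/\lambda_\infty)^m}$; already for $m=2$ this exponent exceeds $1/(\theta-1)$ once, say, $\theta\ge 3/2$, and for $\theta$ near $\gamma$ any gap $m\ge 2$ is fatal.

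The paper sidesteps the gap-length issue entirely by applying Schmidt's height inequality to the planes $V_i=\langle{\bf v}_i,{\bf v}_{i+1}\rangle_{\mathbb Q}$ and $V_j=\langle{\bf v}_j,{\bf v}_{j+1}\rangle_{\mathbb Q}$ for consecutive $i,j\in I$. Since ${\bf v}_i,\dots,{\bf v}_j$ all lie in $V_i$, one has $V_i\cap V_j=\langle{\bf v}_j\rangle_{\mathbb Q}$ and $V_i+V_j=\mathbb Q^3$. Lemma~\ref{I:2:L:1}(iii) gives $H(V_i)\ll X_{i+1}^{1-\lambda_\infty}$, and then $H(V_i\cap V_j)H(V_i+V_j)\ll H(V_i)H(V_j)$ yields
\[
X_j \;\ll\; X_{i+1}^{\,1-\lambda_\infty}\,X_{j+1}^{\,1-\lambda_\infty}
\;\ll\; X_i^{(1-\lambda_\infty)/\lambda_\infty}\,X_j^{(1-\lambda_\infty)/\lambda_\infty},
\]
hence $X_j\ll X_i^{(1-\lambda_\infty)/(2\lambda_\infty-1)}=X_i^{1/(\theta-1)}$, valid regardless of $j-i$. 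With this single ingredient added, the remainder of your outline goes through essentially as written.
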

\begin{proof}
	We consider the sequence $({\bf v}_i)_{i \geq 0}$ of primitive points 
	of $\mathbb{Z}^3$ constructed in Lemma \ref{I:1:L:4}
	and put $X_i = \|{\bf v}_i\|_{\infty}$ for each $i\geq0$.
	By Proposition \ref{I:3:P:1} (with $\mathcal{S}\neq\emptyset$),
	there exists an index $i_0\geq2$ such that $\det({\bf v}_i)\neq0$
	for each $i\geq i_0$.
	Define $I$ to be the set of indexes $i\geq i_0$ for which
	${\bf v}_{i-1}, {\bf v}_{i}, {\bf v}_{i+1}$ are linearly 
	independent over $\mathbb{Q}$.
	According to Remark \ref{I:3:R:1}, the set $I$ is infinite 
	since $\lambda_{\infty}>1/2$.
	
	Using (\ref{I:1:L:4:1}) and the estimates of 
	of Lemma \ref{I:2:L:1}(i), we find that, for each $i\in I$,
\begin{equation}\label{RP5:P:1:2}
	1 \leq |\det({\bf v}_i)|_{\infty}
		=
		\left \|
		\begin{matrix}
		v_{i,0} 	& v_{i,1} -  v_{i,0}\xi_{\infty}
		\\
		v_{i,1}		& v_{i,2}  -  v_{i,1}\xi_{\infty}
		\end{matrix}
		\right \|_{\infty}
		\ll
		X_i L_{\infty}({\bf v}_i)
		\ll
		X_i X_{i+1}^{-\lambda_{\infty}}
\end{equation}
	and
\begin{equation}\label{RP5:P:1:2:1}
 \begin{aligned}
	1 &\leq | \det({\bf v}_{i-1},{\bf v}_{i},{\bf v}_{i+1}) |_{\infty}
	\\
	&=
	\left \|
	\begin{matrix}
	v_{i-1,0} & v_{i-1,1} -  v_{i,0}\xi_{\infty} & v_{i-1,2} -  v_{i-1,0}\xi_{\infty}^2
	\\
	v_{i,0}	  & v_{i,1}  -  v_{i,0}\xi_{\infty}  & v_{i,2}  -  v_{i,0}\xi_{\infty}^2
	\\
	v_{i+1,0} & v_{i+1,1} - v_{i+1,0}\xi_{\infty}  & v_{i+1,2}  -  v_{i+1,0}\xi_{\infty}^2
	\end{matrix}
	\right \|_{\infty}
	\\
	&\ll
	X_{i}^{-\lambda_{\infty}}X_{i+1}^{1-\lambda_{\infty}}.
 \end{aligned}
\end{equation}
	Combining these estimates upon noting that 
	$\lambda_{\infty} \leq 1/\gamma < 1$, we deduce that
\begin{equation}\label{RP5:P:1:3}
 \begin{gathered}
	X_{i}^{\lambda_{\infty}/(1-\lambda_{\infty})}
	\ll X_{i+1}
	\ll X_i^{1/\lambda_{\infty}},
	\\
	X_i^{-1} \ll L_{\infty}({\bf v}_i) \ll X_{i+1}^{-\lambda_{\infty}}
	\ll X_{i}^{-\lambda_{\infty}^2/(1-\lambda_{\infty})}.
 \end{gathered}
\end{equation}
	Now, fix $i\in I$ and let $j$ to the largest integer such that
	${\bf v}_i$, ${\bf v}_{i+1}$, \ldots,
	${\bf v}_{j}$ $\in \langle{\bf v}_i,{\bf v}_{i+1}\rangle_{\mathbb{Q}}$.
	Since any two consecutive points of the sequence
	$({\bf v}_{i})_{i\geq1}$ are linearly
	independent over $\mathbb{Q}$, we have
	$\langle{\bf v}_i,{\bf v}_{i+1}\rangle_{\mathbb{Q}} =
	\langle{\bf v}_{j-1},{\bf v}_{j}\rangle_{\mathbb{Q}}$.
	Since ${\bf v}_{j+1} \notin \langle{\bf v}_i,
	{\bf v}_{i+1}\rangle_{\mathbb{Q}}$,
	the points ${\bf v}_{j-1}, {\bf v}_{j}, {\bf v}_{j+1}$ are linearly
	independent
	over $\mathbb{Q}$, and we deduce that $j \in I$. Since
	${\bf v}_i$, ${\bf v}_{i+1}$, \ldots, ${\bf v}_{j}$
	$\in \langle{\bf v}_i,{\bf v}_{i+1}\rangle_{\mathbb{Q}}$,
	then any three of these points are linearly dependent, and therefore $j$
	is the smallest element of $I$ with $j \geq i + 1$.
	Put
\[
	V_i := \langle{\bf v}_i,{\bf v}_{i+1}\rangle_{\mathbb{Q}} \ \text{and} \
	V_j := \langle{\bf v}_{j},{\bf v}_{j+1}\rangle_{\mathbb{Q}}.
\]
	To proceed further, we need estimates for the
	heights of the subspaces $V_i$, $V_j$, $V_i \cap V_j$ and $V_i + V_j$.
	Since $V_i \cap V_j = \langle{\bf v}_{j}\rangle_{\mathbb{Q}}$
	and $V_i + V_j = \mathbb{Q}^3$, we have (see \cite{WS}, p.~10)
 \begin{align*}
	&	H(V_i \cap V_j)  = H(\langle{\bf v}_{j}\rangle_{\mathbb{Q}}) \sim X_{j},
	\\
	&	H(V_i + V_j)  = H(\mathbb{Q}^3) = 1.
 \end{align*}
	By Lemma \ref{I:2:L:1}(iii) and (\ref{I:1:L:4:1}), we have
\[
	H(V_i) \leq \|{\bf v}_i \wedge {\bf v}_{i+1} \|_{\infty}
	\ll X_{i+1} L_{\infty}({\bf v}_i)
	\ll X_{i+1}^{1-\lambda_{\infty}}.
\]
	Similarly, we have
\[
	H(V_j) \ll X_{j+1}^{1-\lambda_{\infty}}.
\]
	Applying W.M.~Schmidt's inequality (see \cite{WS}, Lemma 8A, p.~28)
\[
	H(V_i \cap V_j)H(V_i + V_j) \ll H(V_i)H(V_j),
\]
	we conclude that
\[
	X_{j} \ll X_{i+1}^{1-\lambda_{\infty}} X_{j+1}^{1-\lambda_{\infty}}.
\]
	Since $i,j \in I$, then by (\ref{RP5:P:1:3}), we have
	$X_{i+1} \ll X_i^{1/\lambda_{\infty}}$ and
	$X_{j+1} \ll X_j^{1/\lambda_{\infty}}$.
	So, it follows that
\[
	X_{j}^{\lambda_{\infty}}
	\ll
	X_{i}^{1-\lambda_{\infty}} X_{j}^{1-\lambda_{\infty}},
\]
	and therefore, we get
\[
	X_{j} \ll X_{i}^{(1-\lambda_{\infty})/(2\lambda_{\infty}-1)}.
\]
	To establish a lower bound for $X_j$,
	recall that $i+1 \leq j$. So, by (\ref{RP5:P:1:3}),
	we find that
\[
	X_{i}^{\lambda_{\infty}/(1-\lambda_{\infty})}
	\ll X_{i+1} \leq X_j.
\]
	Combining the above two estimates, we obtain
\begin{equation}\label{RP5:P:1:4}
	X_{i}^{\lambda_{\infty}/(1-\lambda_{\infty})}
	\ll
	X_{j}
	\ll X_{i}^{(1-\lambda_{\infty})/(2\lambda_{\infty}-1)}.
\end{equation}
	Now, if we write all the elements of $I$ in increasing order,
	we obtain a sequence $\{i_1, i_2,\ldots,i_k,\ldots\}$.
	Then $i_k = i$ for some index $k\geq1$, and by the minimality of $j$
	we deduce that $i_{k+1} = j$.
	Let us define ${\bf y}_{k} := {\bf v}_{i_k}$ and
	$Y_{k} := {X}_{i_k} = \|{\bf y}_{k}\|_{\infty}$
	for each $k\geq1$.
	Then by (\ref{RP5:P:1:3}) and (\ref{RP5:P:1:4}), we have
\begin{align*}
	&Y_k^{\lambda_{\infty}/(1-\lambda_{\infty})}
	\ll
	Y_{k+1}
	\ll Y_k^{(1-\lambda_{\infty})/(2\lambda_{\infty}-1)},
	\\
	&Y_k^{-1}
	\ll L_{\infty}({\bf y}_k)
	\ll Y_k^{-\lambda_{\infty}^2/(1-\lambda_{\infty})}.
\end{align*}
	These are the first two estimates in (\ref{RP5:P:1:1}).
	Furthermore, since any three of the points
	${\bf v}_i$, ${\bf v}_{i+1}$, \ldots, ${\bf v}_{j}$
	are linearly dependent over ${\mathbb{Q}}$, then
	${\bf v}_{j-1} \in
	\langle{\bf v}_i,{\bf v}_{j}\rangle_{\mathbb{Q}} =
	\langle{\bf y}_k,{\bf y}_{k+1}\rangle_{\mathbb{Q}}$.
	Going one step further, we obtain the point
	${\bf y}_{k+2} = {\bf v}_{h}$,
	for some $h \geq j+1$, such that any three of the points
	${\bf v}_j$, ${\bf v}_{j+1}$, \ldots, ${\bf v}_{h}$ are linearly
	dependent over ${\mathbb{Q}}$,
	and therefore ${\bf v}_{j+1} \in
	\langle{\bf v}_j,{\bf v}_{h}\rangle_{\mathbb{Q}} =
	\langle{\bf y}_{k+1},{\bf y}_{k+2}\rangle_{\mathbb{Q}}$.
	It follows that $\langle{\bf y}_{k},{\bf y}_{k+1},
	{\bf y}_{k+2}\rangle_{\mathbb{Q}}$ contains the
	linearly independent points ${\bf v}_{j-1},{\bf v}_{j},{\bf v}_{j+1}$,
	and therefore
	${\bf y}_{k},{\bf y}_{k+1},{\bf y}_{k+2}$ are also linearly independent.

	To prove the third estimate in (\ref{RP5:P:1:1}),
	we proceed as in (\ref{RP5:P:1:2}). For each $k\geq1$, we find that
\[
	1 \leq |\det({\bf y}_{k})|_{\infty}
		\ll
		Y_k L_{\infty}({\bf y}_{k}).
\]
	Using the second estimates in (\ref{RP5:P:1:1}),
	this gives
\[
	1 \leq |\det({\bf y}_{k})|_{\infty}
		\ll Y_k^{1-\theta^2/(\theta+1)}.
\]
	To prove the last estimate in (\ref{RP5:P:1:1}), we use
	the fact that ${\bf y}_{k},{\bf y}_{k+1},{\bf y}_{k+2}$
	are linearly independent for each $k\geq1$.
	Proceeding as in (\ref{RP5:P:1:2:1}), we find that
 \[
	1 \leq | \det({\bf y}_{k},{\bf y}_{k+1},{\bf y}_{k+2}) |_{\infty}
	\ll
	Y_{k+2}L_{\infty}({\bf y}_{k})L_{\infty}({\bf y}_{k+1}).
 \]
 	Using the first two estimates in
	(\ref{RP5:P:1:1}), this leads to
\begin{align*}
	1 &\leq
	| \det({\bf y}_{k},{\bf y}_{k+1},{\bf y}_{k+2}) |_{\infty}
	\ll
	Y_{k+2}Y_{k}^{-\theta^2/(\theta+1)}Y_{k+1}^{-\theta^2/(\theta+1)}
	\\
	&\ll
	Y_{k}^{1/(\theta-1)^2 -\theta^2/(\theta+1) -\theta^3/(\theta+1)}
	=
	Y_{k}^{1/(\theta-1)^2 -\theta^2}.
\end{align*}
\end{proof}

	In \cite{ARNCAI.1} D.~Roy showed that there exist
	a transcendental real number $\xi$, such that
	for an appropriate constant $c = c(\xi) > 0$,
	the inequalities
\[
	\max_{0\leq l \leq 2}|x_l|_{\infty} \leq X,
	\quad
	\max_{0\leq l \leq 2} |x_l - x_0 \xi^l|_{\infty} \leq c X^{- 1/{\gamma}},
\]
	have a nonzero solution ${\bf x} \in \mathbb{Z}^3$ for any real
	number $X \geq 1$. Such real numbers are called \emph{extremal}.

	Suppose that $\xi$ is extremal, then
	applying Proposition \ref{RP5:P:1} with
	$\xi_{\infty} = \xi$ and
	$\lambda_{\infty} = 1/\gamma$,
	we obtain an unbounded sequence of positive integers
	$(Y_k)_{k \geq 1}$ and a sequence of
	points $( {\bf y}_{k})_{k \geq 1}$ in $\mathbb{Z}^3$ with
\[
\begin{matrix}
	Y_{k+1} \sim Y_{k}^{\gamma},
	\quad \| {{\bf  y}_k }\|_{\infty} \sim Y_{k}, \quad
	L_{\infty}({\bf y}_k) \ll Y_{k}^{-1},
	\\
	|\det({{\bf  y}_k })|_{\infty} \sim 1
	\quad \text{and} \quad
	|\det({{\bf  y}_k },{{\bf  y}_{k+1} },{{\bf  y}_{k+2} })|_{\infty} \sim 1,
\end{matrix}
\]
	This is the second part of the statement of Theorem 5.1 in \cite{ARNCAI.1},
	which is a criterion characterizing an extremal real number.
\subsection{Approximation by quadratic algebraic numbers}\label{C1:S4:SS2}

	Let $n\geq1$ be an integer and $\xi$ be a real number.
	Recall that the classical exponent of approximation $w_n(\xi)$,
	introduced by Mahler in \cite{Mahl}, is defined as the supremum of the real
	numbers $w$ for which the inequality
\[
	0<|P(\xi)|_{\infty}\leq H(P)^{-w}
\]
	holds for infinitely many polynomials $P(T) \in \mathbb{Z}[T]$
	of degree at most $n$.

	The main result of this paragraph
	is that for any extremal real number $\xi$,
	we have $w_2(\xi) = \gamma^3$.
	Y.~{\sc Bugeaud} and M.~{\sc Laurent}
	computed in \cite{BuLa.3} the exponent of approximation $w_2(\xi)$
	for any Sturmian continued fraction $\xi$
	and our result agrees with their formula
	in the case where $\xi$ is a Fibonacci continued fraction.
	However, our result below applies to all extremal real numbers
	instead of just the Fibonacci continued fractions,
	and it is more precise.
\begin{theorem} \label{at1}
	Let $\xi$ be an extremal real number.
	There exist constants $c_1,c_2>0$ with the following properties:
\begin{itemize}
	\item[(i)]
		there exists infinitely many polynomials
		$P(T) \in \mathbb{Z}[T]_{\leq 2}$, such that
\begin{equation}\label{a0}
	|P(\xi)|_{\infty} \leq c_1H(P)^{-\gamma^3},
\end{equation}
	\item[(ii)]
		for any polynomial $P(T) \in \mathbb{Z}[T]_{\leq 2}$, we have
\begin{equation}\label{a1}
	|P(\xi)|_{\infty} > c_2H(P)^{-\gamma^3}.
\end{equation}
\end{itemize}
\end{theorem}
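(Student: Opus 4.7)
The plan is to use the sequence $({\bf y}_k)_{k\geq 1}$ of primitive points in $\mathbb{Z}^3$ attached to the extremal real number $\xi$, as recalled immediately before the theorem. Writing $Y_k=\|{\bf y}_k\|_{\infty}$, one has $Y_{k+1}\sim Y_k^\gamma$, $L_\infty({\bf y}_k)\ll Y_k^{-1}$ and $|\det({\bf y}_k,{\bf y}_{k+1},{\bf y}_{k+2})|_\infty\sim 1$, so any three consecutive ${\bf y}_k$ are linearly independent over $\mathbb{Q}$; moreover $\xi$ is transcendental.

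For part (i), I will define
\[
P_k(T)=\det\bigl({\bf y}_k,\ {\bf y}_{k+1},\ (1,T,T^2)\bigr)\in\mathbb{Z}[T]_{\leq 2},
\]
which is nonzero because ${\bf y}_k$ and ${\bf y}_{k+1}$ are linearly independent. Setting $\eta_{k,l}=y_{k,l}-y_{k,0}\xi^l$ and subtracting $\xi^l$ times the first column from the $(l+1)$-th column of the above determinant, $P_k(\xi)$ collapses to the $2\times 2$ minor in $(\eta_{k,l},\eta_{k+1,l})_{l=1,2}$, so $|P_k(\xi)|\ll L_\infty({\bf y}_k)L_\infty({\bf y}_{k+1})\ll Y_k^{-(1+\gamma)}$. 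The coefficients of $P_k$ are the $2\times 2$ minors of the matrix with rows ${\bf y}_k,{\bf y}_{k+1}$, and a similar reduction (cf.\ Lemma \ref{I:2:L:1}(iii)) gives $H(P_k)\ll\max\{Y_k L_\infty({\bf y}_{k+1}),\ Y_{k+1}L_\infty({\bf y}_k)\}\ll Y_k^{1/\gamma}$. Since $\gamma^2=\gamma+1$, one has $\gamma^3/\gamma=1+\gamma$, whence $|P_k(\xi)|\ll H(P_k)^{-\gamma^3}$. If $H(P_k)$ stayed bounded, infinitely many $P_k$ would equal a single nonzero $Q\in\mathbb{Z}[T]_{\leq 2}$, forcing $Q(\xi)=0$ and contradicting transcendence of $\xi$; hence $H(P_k)\to\infty$ and (\ref{a0}) holds for infinitely many distinct $P_k$.

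For part (ii), I fix a nonzero $P\in\mathbb{Z}[T]_{\leq 2}$ of height $H=H(P)$ with coefficient vector ${\bf p}=(p_0,p_1,p_2)$ and use the expansion
\[
L_{\bf p}({\bf y}_j):=p_0 y_{j,0}+p_1 y_{j,1}+p_2 y_{j,2}=y_{j,0}P(\xi)+p_1\eta_{j,1}+p_2\eta_{j,2}
\]
to obtain $|L_{\bf p}({\bf y}_j)|\leq Y_j|P(\xi)|+CHY_j^{-1}$ for some constant $C>0$ depending only on $\xi$. I then fix $C'\geq 2C$ and take $k$ to be the smallest index with $Y_k\geq C'H$; since $Y_{k-1}<C'H$, one gets $Y_k\ll H^\gamma$ and hence $Y_{k+2}\sim Y_k^{\gamma^2}\ll H^{\gamma^3}$. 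For each $j\in\{k,k+1,k+2\}$ one has $CHY_j^{-1}\leq 1/2$; and since ${\bf y}_k,{\bf y}_{k+1},{\bf y}_{k+2}$ are linearly independent over $\mathbb{Q}$ while ${\bf p}\neq 0$, at least one integer $L_{\bf p}({\bf y}_j)$ is nonzero, hence $\geq 1$ in absolute value. This forces $Y_j|P(\xi)|\geq 1/2$, and therefore $|P(\xi)|\geq 1/(2Y_{k+2})\gg H^{-\gamma^3}$; the finitely many $P$ of small height, for which this choice of $k$ is unavailable, are absorbed into the constant $c_2$ because $P(\xi)\neq 0$ whenever $P\neq 0$.

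The main obstacle is part (ii): the two competing terms in $|L_{\bf p}({\bf y}_j)|\leq Y_j|P(\xi)|+CHY_j^{-1}$ can be balanced only when $Y_j$ sits just above $H$, and the precise exponent $\gamma^3$ materialises only after iterating $Y_{j+1}\sim Y_j^\gamma$ twice, so as to provide three consecutive indices on which the linear independence of the ${\bf y}_j$ can be invoked. The identity $\gamma^2=\gamma+1$, equivalently $\gamma^3=\gamma\cdot(1+\gamma)$, drives both parts.
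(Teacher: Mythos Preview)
Your argument is correct, and for part~(ii) it is genuinely different from the paper's.

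For part~(i), both proofs are morally the same: the polynomials $P_k(T)=\det({\bf y}_k,{\bf y}_{k+1},(1,T,T^2))$ you build are (up to sign and normalisation) the polynomials $Q_k$ that the paper imports from \cite{DASD}, Theorem~7.2. The paper simply cites that result, which already packages the estimates $|Q_k(\xi)|\sim H(Q_k)^{-\gamma^3}$ and $H(Q_{k+1})\ll H(Q_k)^\gamma$; you derive the upper bounds directly from the properties of the sequence $({\bf y}_k)$.

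For part~(ii), the paper takes a different route: it uses the resultant inequality
\[
1\leq |\Res(P,Q_k)| \leq 6H(P)^2H(Q_k)^2\Bigl(\tfrac{|P(\xi)|}{H(P)}+\tfrac{|Q_k(\xi)|}{H(Q_k)}\Bigr)
\]
from \cite{ARNCAI.3}, chooses $k$ so that $\epsilon H(Q_k)\leq H(P)<\epsilon H(Q_{k+1})$ for a suitable small $\epsilon$, and exploits the identity $\gamma^3+1=2\gamma^2$ to solve for $|P(\xi)|$. Your argument bypasses resultants entirely: you pick three consecutive ${\bf y}_j$ just above the threshold $Y_j\gg H(P)$, use their linear independence to force the integer $\langle{\bf p},{\bf y}_j\rangle$ to be $\geq 1$ for some $j\in\{k,k+1,k+2\}$, and read off $|P(\xi)|\geq 1/(2Y_{k+2})\gg H(P)^{-\gamma^3}$. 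This is more elementary and entirely self-contained within the paper's setup, at the cost of not yielding any information about the constant; the paper's resultant approach, by contrast, ties the lower-bound constant explicitly to the constant governing the sequence $(Q_k)$ and extends more readily to comparisons with algebraic numbers.
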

\begin{proof}
	We know from \cite{DASD}, Theorem 7.2, p.282 that
	there exist a sequence of irreducible polynomials $(Q_k)_{k \geq 1}$ 
	of degree $2$ in $\mathbb{Z}[T]$ and a constant $c \geq 1$, such that
	for each $k \geq 1$, we have
\begin{equation}\label{a4}
\left\{
	\begin{aligned}
		&c^{-1}H(Q_k)^{-\gamma^3} \leq |Q_k(\xi)|_{\infty}
		\leq {c}H(Q_k)^{-\gamma^3},
		\\
		&H(Q_{k+1})  \leq {c}H(Q_k)^{\gamma},
		\\
		&1 \leq H(Q_1) < H(Q_2) < \ldots < H(Q_k) < \ldots.
	\end{aligned}
	\right.
\end{equation}
	The first relation in (\ref{a4}) with $c_1 = c$ proves
	the part $(i)$ of the theorem.

	For the proof of part $(ii)$, it suffices to consider a polynomial
	$P(T) \in \mathbb{Z}[T]_{\leq 2}$ with $\gcd(P,Q_k) = 1$
	for all $k \geq 1$.
	To the polynomials $P(T)$ and $Q_k(T)$ we apply the following
	inequality for the resultant (see \cite{ARNCAI.3}, Lemma 2, p.98.)
\[
 |\Res(P,Q_k)|_{\infty}
 	\leq 6H(P)^2H(Q_k)^2
	\left( \cfrac{|P(\xi)|_{\infty}}{H(P)}
	+ \cfrac{|Q_k(\xi)|_{\infty}}{H(Q_k)} \right).
\]
	Since $\gcd(P,Q_k) = 1$, we have $|\Res(P,Q_k)| \geq 1$ and
	the above inequality implies
\begin{equation}\label{a3}
	1 \leq 6H(P)^{2}H(Q_k)^{2}\left(\cfrac{|P(\xi)|_{\infty}}{H(P)} +
	\frac{|Q_k(\xi)|_{\infty}}{H(Q_k)}\right),  \quad \forall k \geq 1.
\end{equation}
	Choose a real number $\epsilon$ with
\begin{equation}\label{a5}
	0 < \epsilon \leq (2 \ c)^{-{3}/{2}}
	\text{ and } \epsilon \leq H(Q_{1})^{-1}.
\end{equation}
	Then there exists an index $k = k(\epsilon,P) \geq 1$ such that
\begin{equation}\label{a6}
	\epsilon H(Q_k) \leq H(P) < \epsilon H(Q_{k+1}). 
\end{equation}
	By (\ref{a4}) and (\ref{a6}), we have
\[
	H(P)^{-2} > \epsilon^{-2} H(Q_{k+1})^{-2}
	\geq \epsilon^{-2} c^{-2} H(Q_{k})^{-2\gamma}
\]
	and so (\ref{a3}) leads to
\[
	\frac{1}{6} \epsilon^{-2} c^{-2} H(Q_{k})^{-2\gamma^2}
		< \frac{|P(\xi)|_{\infty}}{H(P)} + \frac{|Q_k(\xi)|_{\infty}}{H(Q_k)}
		\leq \frac{|P(\xi)|_{\infty}}{H(P)} + c H(Q_k)^{-\gamma^3-1}.
\]
	Since $\gamma^3 + 1 = 2\gamma^2$, we deduce that
\[
	\Big(\frac{1}{6} \epsilon^{-2} c^{-2} - c\Big)H(Q_{k})^{-2\gamma^2}
		< \cfrac{|P(\xi)|_{\infty}}{H(P)} \ .
\]
	By (\ref{a5}) and (\ref{a6}) this gives
\[
	\frac{1}{3}c \epsilon^{2\gamma^2} H(P)^{-2\gamma^2+1} < |P(\xi)|_{\infty}.
\]
	This shows that for each $P \in \mathbb{Z}[T]_{\leq 2}$
	such that $\gcd(P,Q_k) = 1$ $( k \geq 1 )$, we have
\begin{equation}\label{a10}
	c_1 H(P)^{-\gamma^3} < |P(\xi)|_{\infty},
	\text{ where } c_1 = \frac{1}{3}c \epsilon^{2\gamma^2}.
\end{equation}
	Since $c_1 < c^{-1}$, we deduce from (\ref{a4})
	that the estimate (\ref{a10}) holds for all
	$P \in \mathbb{Z}[T]_{\leq 2}$.
\end{proof}

	We recall also that the classical
	exponent of approximation $w^*_n(\xi)$ introduced
	by Koksma in \cite{Koks} is the supremum of the real
	numbers $w$ for which the inequality
\[
	|\xi - \alpha|_{\infty} \leq H(\alpha)^{-w-1}
\]
	holds for infinitely many algebraic numbers $\alpha$
	of degree at most $n$.

	Part $(i)$ of the previous proposition follows from
	the first part of Theorem 1.4 of
	\cite{ARNCAI.1} which states the existence of infinitely
	many algebraic numbers $\alpha$ of degree at most
	$2$ over $\mathbb{Q}$, such that
  \begin{equation}\label{a11:0}
	|\xi - \alpha|_{\infty} \leq c_3 H(\alpha)^{-2\gamma^2},
  \end{equation}
  	for some constant $c_3>0$.
	Similarly, Part $(ii)$ of the previous proposition implies the
	following result, which is the second part of Theorem 1.4 of \cite{ARNCAI.1}.
\begin{corollary}
	Let $\xi$ be an extremal real number.
	There exists a constant $c_4>0$, such that
	for any algebraic number $\alpha$ of degree at most $2$
	over $\mathbb{Q}$, we have
  \begin{equation}\label{a11}
	|\xi - \alpha|_{\infty} \geq c_4 H(\alpha)^{-2\gamma^2}.
  \end{equation}
\end{corollary}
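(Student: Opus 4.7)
The plan is to deduce this from Theorem~\ref{at1}(ii) by the classical transference between lower bounds on $|P(\xi)|_\infty$ and lower bounds on $|\xi-\alpha|_\infty$ for roots $\alpha$ of $P$. Given an algebraic number $\alpha$ of degree at most $2$, I would let $P(T)\in\mathbb{Z}[T]$ be its minimal polynomial, so that $\deg(P)\leq 2$ and $H(P)=H(\alpha)$. Theorem~\ref{at1}(ii) then yields immediately
\[
  |P(\xi)|_\infty \geq c_2\, H(\alpha)^{-\gamma^3}.
\]

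To convert this into a lower bound for $|\xi-\alpha|_\infty$, I would factor $P(T)=a_0\prod_i(T-\alpha_i)$ over $\mathbb{C}$ and first treat the case where $\alpha$ is the root of $P$ closest to $\xi$. In the quadratic case, the other root $\alpha'$ satisfies the standard estimate $|\alpha'|_\infty\leq 1+H(P)/|a_0|_\infty$, which (together with the fact that $|\xi|_\infty$ is a fixed constant) gives $|a_0|_\infty\cdot|\xi-\alpha'|_\infty\ll H(P)$. In the linear case the product has no second factor and $|a_0|_\infty\leq H(P)$ trivially. Hence in both cases
\[
  |P(\xi)|_\infty = |a_0|_\infty\prod_i|\xi-\alpha_i|_\infty \ll H(\alpha)\,|\xi-\alpha|_\infty,
\]
with implied constant depending only on $\xi$. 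Combining this with the lower bound from Theorem~\ref{at1}(ii) and the identity $\gamma^3+1=2\gamma^2$ (which follows from $\gamma^2=\gamma+1$) gives the desired inequality $|\xi-\alpha|_\infty\gg H(\alpha)^{-2\gamma^2}$.

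If $\alpha$ is not the root of $P$ closest to $\xi$, then picking the closest root $\alpha^{*}$ one has $H(\alpha^{*})=H(\alpha)$, so the previous case applied to $\alpha^{*}$ yields $|\xi-\alpha^{*}|_\infty\gg H(\alpha)^{-2\gamma^2}$, and the trivial bound $|\xi-\alpha|_\infty\geq|\xi-\alpha^{*}|_\infty$ finishes the argument. There is no real obstacle once Theorem~\ref{at1}(ii) is available: the only mild points requiring attention are handling the two subcases $\deg(P)\in\{1,2\}$ uniformly and checking the exponent identity $\gamma^3+1=2\gamma^2$, so the whole argument is essentially a one-step transfer via the factorization of $P(\xi)$.
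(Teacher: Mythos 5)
Your argument is correct and is essentially the paper's proof: apply Theorem \ref{at1}(ii) to the minimal polynomial $P$ of $\alpha$, combine it with the factorization bound $|P(\xi)|_{\infty}\ll H(P)\,|\xi-\alpha|_{\infty}$ (which the paper simply quotes as $|P(\xi)|_{\infty}\leq 2(|\xi|_{\infty}+1)H(P)|\xi-\alpha|_{\infty}$), and use $\gamma^{3}+1=2\gamma^{2}$. Your closest-root case distinction is harmless but unnecessary, since the Cauchy bound $|\alpha'|_{\infty}\leq 1+H(P)/|a_0|_{\infty}$ gives $|a_0|_{\infty}|\xi-\alpha'|_{\infty}\ll H(P)$ for the other root regardless of which root is nearer to $\xi$.
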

\begin{proof}
	Suppose that $\alpha$ is an algebraic number of degree at most
	$2$ over $\mathbb{Q}$ and that
	$P(T) \in \mathbb{Z}[T]_{\leq 2}$ is its minimal polynomial.
	Since $H(\alpha) = H(P)$, the result follows from
	Theorem \ref{at1} applied to $P$, combined with the inequality
\[
	|P(\xi)|_{\infty}
	\leq 2(|\xi|_{\infty}+1)H(P)|\xi - \alpha|_{\infty}.
\]
\end{proof}
	In particular, this means that we have
	$w^*_2(\xi) = \gamma^3$ for any extremal real number $\xi$.

\subsection{Constraints on the exponents
	of approximation and a recurrence relation among points of 
	an approximation sequence (real case)}
		\label{C1:S4:SS3}

	Let $\xi_{\infty} \in \mathbb{R}$ with
	$[\mathbb{Q}(\xi_{\infty}):\mathbb{Q}] > 2$.
	Suppose that $\lambda_{\infty} \in (1/2,1/\gamma]$ is an exponent of
	approximation in degree $2$ to
	$\xi_{\infty}$ in the sense of Definition \ref{I:1:D:3}.
	Let $({\bf y}_{k})_{k \geq 1}$ and $(Y_{k})_{k \geq 1}$ be
	the sequences corresponding to $\xi_{\infty}$, constructed
	in Proposition \ref{RP5:P:1}.
	Define monotone increasing functions on the interval $(0,1)$
	by the following formulas
\begin{equation}\label{RP6:2}
\begin{aligned}
	\theta(\lambda) &= \frac{\lambda}{1-\lambda},
	\\
	\delta(\lambda) &= \frac{\theta^2}{\theta+1}
	= \frac{\lambda^2}{1-\lambda} = \lambda\theta,
	\\
	\phi(\lambda) &=
	\frac{\theta^2 -1}{\theta^2+1},
	\\
	\psi(\lambda) &= \frac{\theta -1}{\theta+1} = 2\lambda-1,
\end{aligned}
\end{equation}
	and monotone decreasing functions on the interval $(1/2,1)$
	by the formulas
\begin{equation}\label{RP6:3}
\begin{aligned}
	f(\lambda) &=
	\frac{1}{\lambda(\theta-1)} - \theta -1,
	\\
	g(\lambda) &= 1-\delta\theta(\theta-1).
\end{aligned}
\end{equation}
	We note that $\psi(1/2) = \phi(1/2) = 0$ and
	$g(1/\gamma) = f(1/\gamma) = 0$, and that
\begin{equation}\label{RP6:3:1}
\begin{aligned}
	0 &< g(\lambda) \leq f(\lambda) \quad \forall \lambda \in (1/2,1/\gamma],
	\\
	0 &< \psi(\lambda) < \phi(\lambda) \quad \forall \lambda \in (1/2,1).
\end{aligned}
\end{equation}
	We also note that the functions $f$ and $\phi$
	map the interval $\big(1/2,1/\gamma\big)$ respectively
	onto the intervals $(0,\infty)$ and $\big(0,\gamma/(2+\gamma)\big)$.
	Since the function $f-\phi$ is continuous and changes its sign
	on the interval $\big(1/2,1/\gamma\big)$, there exists a number
	$\lambda_{\infty,0} \in (1/2,1/\gamma)$
	such that $\phi(\lambda_{\infty,0})=f(\lambda_{\infty,0})$ and
\begin{equation}\label{RP6:4}
	0 < g(\lambda) \leq f(\lambda)< \phi(\lambda)
	\quad \forall \lambda \in (\lambda_{\infty,0},1/\gamma].
\end{equation}
	Its numerical value is $\lambda_{\infty,0}\approx 0.60842266\ldots$.
	Furthermore, the function $\psi$ maps the interval $\big(1/2,1/\gamma\big)$
	onto the interval $(0,1/\gamma^3)$.
	By the second relation in (\ref{RP6:3:1}) and the fact that the function
	$f-\psi$ is continuous and changes its sign on the interval
	$\big(1/2,1/\gamma\big)$, there exists
	a number $\lambda_{\infty,1} \in (\lambda_{\infty,0},1/\gamma)$
	such that $\psi(\lambda_{\infty,1})=f(\lambda_{\infty,1})$ and
\begin{equation}\label{RP6:5}
	0 < g(\lambda) \leq f(\lambda)< \psi(\lambda)< \phi(\lambda)
	 \quad \forall \lambda \in (\lambda_{\infty,1},1/\gamma].
\end{equation}
	Its numerical value is $\lambda_{\infty,1}\approx 0.61263521\ldots$.
\begin{proposition} \label{RP6:P:1}
	Take $\epsilon \in \big(0,\gamma/(2+\gamma)\big)$.
	\\
	(i) Suppose that $\phi(\lambda_{\infty}) > \epsilon$. Then for $k\gg1$,
	we have
\begin{equation}\label{RP6:P:1:1}
	Y_k^{1+\epsilon} < Y_{k+2}^{1-\epsilon}.
\end{equation}
	(ii) Furthermore, suppose that $f(\lambda_{\infty}) < \epsilon$.
	Then for each $k\gg1$ and each real number $X\geq1$ with
\begin{equation}\label{RP6:P:1:2}
	X \in [Y_k^{1+\epsilon},Y_{k+2}^{1-\epsilon}],
\end{equation}
	and any non-zero integer point ${\bf x} \in \mathbb{Z}^3$ with
\begin{equation}\label{RP6:P:1:2:1}
	\| {\bf x} \|_{\infty} \leq X,
	\quad
	L_{\infty}({\bf x }) \leq c X^{-\lambda_{\infty}},
\end{equation}
	we have
\begin{equation}\label{RP6:P:1:3}
	{\bf x} \in
	\langle{\bf y}_{k},{\bf y}_{k+1}\rangle_{\mathbb{Q}}.
\end{equation}
\end{proposition}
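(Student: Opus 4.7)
For Part (i), the iterated lower bound $Y_{k+1} \gg Y_k^{\theta}$ from Proposition \ref{RP5:P:1} gives $Y_{k+2} \gg Y_k^{\theta^2}$ for all $k \geq 1$, so the inequality $Y_k^{1+\epsilon} < Y_{k+2}^{1-\epsilon}$ holds for $k$ sufficiently large once we check that $1 + \epsilon < \theta^2(1-\epsilon)$. Rearranging, this is precisely $\epsilon < (\theta^2-1)/(\theta^2+1) = \phi(\lambda_\infty)$, which is our hypothesis.

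For Part (ii), I argue by contradiction. Suppose ${\bf x} \in \mathbb{Z}^3 \setminus \{0\}$ satisfies (\ref{RP6:P:1:2:1}) for some $X$ in the interval (\ref{RP6:P:1:2}) but ${\bf x} \notin \langle {\bf y}_k, {\bf y}_{k+1} \rangle_{\mathbb{Q}}$. Since ${\bf y}_k, {\bf y}_{k+1}, {\bf y}_{k+2}$ are linearly independent (the fourth estimate of Proposition \ref{RP5:P:1} forces the determinant to be at least $1$), so in particular are ${\bf y}_k$ and ${\bf y}_{k+1}$, and hence ${\bf x}, {\bf y}_k, {\bf y}_{k+1}$ form a basis of $\mathbb{Q}^3$; thus $|\det({\bf x}, {\bf y}_k, {\bf y}_{k+1})| \geq 1$. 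Lemma \ref{I:2:L:1}(i) at the place $\infty$ then yields
\[
1 \leq \|{\bf x}\|_\infty L_\infty({\bf y}_k) L_\infty({\bf y}_{k+1}) + \|{\bf y}_k\|_\infty L_\infty({\bf x}) L_\infty({\bf y}_{k+1}) + \|{\bf y}_{k+1}\|_\infty L_\infty({\bf x}) L_\infty({\bf y}_k).
\]
The plan is to show that each of the three terms is a power of $Y_k$ with strictly negative exponent, forcing the right-hand side below $1$ for $k \gg 1$, a contradiction that forces ${\bf x} \in \langle {\bf y}_k, {\bf y}_{k+1} \rangle_{\mathbb{Q}}$.

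Inserting $\|{\bf x}\|_\infty \leq X$, $L_\infty({\bf x}) \leq cX^{-\lambda_\infty}$, $\|{\bf y}_j\|_\infty = Y_j$, and $L_\infty({\bf y}_j) \ll Y_j^{-\delta}$ into the three terms, and using the algebraic identities $\lambda_\infty + \delta = \theta$ and $\delta(1+\theta) = \theta^2$: the third term, after applying $X \geq Y_k^{1+\epsilon}$ and $Y_{k+1} \ll Y_k^{1/(\theta-1)}$, is $\ll Y_k^{\lambda_\infty(f(\lambda_\infty) - \epsilon)}$, negative by the hypothesis $f(\lambda_\infty) < \epsilon$; the second term, after applying $X \geq Y_k^{1+\epsilon}$ and $Y_{k+1} \gg Y_k^{\theta}$, is $\ll Y_k^{(1-\theta^3)/(\theta+1) - \lambda_\infty \epsilon}$, negative since $\theta > 1$.

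The first term is the delicate one and accounts for the main obstacle. Using $X \leq Y_{k+2}^{1-\epsilon}$ and $Y_{k+2} \ll Y_{k+1}^{1/(\theta-1)}$, it is $\ll Y_k^{-\delta} Y_{k+1}^{(1-\epsilon)/(\theta-1) - \delta}$, and the bound to use on $Y_{k+1}$ depends on the sign of $(1-\epsilon)/(\theta-1) - \delta$. If this is non-negative, substituting $Y_{k+1} \ll Y_k^{1/(\theta-1)}$ gives a $Y_k$-exponent that is negative exactly when $\epsilon > g(\lambda_\infty)$, which follows from $\epsilon > f(\lambda_\infty) \geq g(\lambda_\infty)$ by (\ref{RP6:3:1}). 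Otherwise, the case hypothesis itself reads $\epsilon > 1 - \lambda_\infty \theta(\theta-1)$, and substituting $Y_{k+1} \gg Y_k^{\theta}$ gives a $Y_k$-exponent that is negative exactly when $\epsilon > 1 - \theta(\theta-1)$; this last inequality is implied by the case hypothesis since $\lambda_\infty < 1$. Once this case split is in hand, all three terms tend to $0$ as $k \to \infty$ and the contradiction follows.
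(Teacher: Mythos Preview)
Your proof is correct and follows the same strategy as the paper: bound $|\det({\bf y}_k,{\bf y}_{k+1},{\bf x})|$ via Lemma~\ref{I:2:L:1}(i), show it is $o(1)$, and conclude it vanishes as an integer. Two simplifications in the paper's execution are worth noting. First, since $Y_k < Y_{k+1}$ and $L_\infty({\bf y}_{k+1}) < L_\infty({\bf y}_k)$ (the ${\bf y}_k$ being a subsequence of the ${\bf v}_i$ from Lemma~\ref{I:1:L:4}), your second term is dominated by your third, so only two terms need separate treatment. Second, for your first term $X\,Y_k^{-\delta}Y_{k+1}^{-\delta}$, the paper bounds everything in terms of $Y_{k+2}$ rather than $Y_k$: from $Y_{j+1} \ll Y_j^{1/(\theta-1)}$ one gets $Y_{k+1}^{-\delta} \ll Y_{k+2}^{-\delta(\theta-1)}$ and $Y_k^{-\delta} \ll Y_{k+2}^{-\delta(\theta-1)^2}$, which together with $X \le Y_{k+2}^{1-\epsilon}$ give directly
\[
X\,Y_k^{-\delta}Y_{k+1}^{-\delta} \ll Y_{k+2}^{\,1-\epsilon-\delta(\theta-1)-\delta(\theta-1)^2} = Y_{k+2}^{\,g(\lambda_\infty)-\epsilon},
\]
and $g(\lambda_\infty)\le f(\lambda_\infty)<\epsilon$ finishes without any case split.
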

\begin{proof}
	For the proof of (i) we rewrite the condition
	$\phi(\lambda_{\infty}) > \epsilon$ in the form
\[
	1+\epsilon < (1-\epsilon)\theta^2,
\]
	where $\theta=\theta(\lambda_{\infty})$ is defined in (\ref{RP6:2}).
	Also, by the estimates (\ref{RP5:P:1:1}),
	we have
\[
	Y_k^{\theta^2} \ll Y_{k+2},
\]
	for each $k\geq1$.
	Combining these inequalities, we find that (\ref{RP6:P:1:1}) holds
	for $k\gg1$.

	For the proof of (ii), we use part (i). Fix
	a real number $X$ satisfying (\ref{RP6:P:1:2}).
	Choose a non-zero integer solution ${\bf x}$ of (\ref{RP6:P:1:2:1})
	corresponding to this $X$.
	In order to prove (\ref{RP6:P:1:3}), it suffices to show that
	the determinant $\det({\bf y}_{k},{\bf y}_{k+1},{\bf x})$
	is zero when $k\gg1$. Using  Lemma \ref{I:2:L:1}(i)
	and the fact that $L_{\infty}({\bf y}_{k+1})<L_{\infty}({\bf y}_{k})$,
	we have
\begin{align*}
	| \det({\bf y}_{k},{\bf y}_{k+1},{\bf x}) |_{\infty}
	&\ll
	Y_{k}L_{\infty}({\bf y}_{k+1})L_{\infty}({\bf x})+
	Y_{k+1}L_{\infty}({\bf y}_{k})L_{\infty}({\bf x})+
	X L_{\infty}({\bf y}_{k+1})L_{\infty}({\bf y}_{k})
	\\
	&\ll
	Y_{k+1}L_{\infty}({\bf y}_{k})L_{\infty}({\bf x})+
	X L_{\infty}({\bf y}_{k+1})L_{\infty}({\bf y}_{k})
\end{align*}
	for each $k\geq1$.
	By (\ref{RP5:P:1:1}) and (\ref{RP6:P:1:2}), we find that
\begin{equation}\label{RP6:P:1:4}
\begin{aligned}
	| \det({\bf y}_{k},{\bf y}_{k+1},{\bf x}) |_{\infty}
	&\ll
	Y_{k+1}Y_{k}^{-\delta}X^{-\lambda_{\infty}}+
	X Y_{k+1}^{-\delta}Y_{k}^{-\delta}
	\\
	&\ll
	Y_{k+1}Y_{k}^{-\delta}Y_{k}^{-\lambda_{\infty}(1+\epsilon)} +
	Y_{k+2}^{1-\epsilon} Y_{k+1}^{-\delta}Y_{k}^{-\delta}
	\\
	&\ll
	Y_{k}^{1/(\theta-1) - \delta -\lambda_{\infty}(1+\epsilon)}
	+
	Y_{k+2}^{1-\epsilon-\delta(\theta-1)-\delta(\theta-1)^2}
	\\
	&=
	Y_{k}^{\lambda_{\infty}(f(\lambda_{\infty}) -\epsilon)}
	+
	Y_{k+2}^{g(\lambda_{\infty})-\epsilon},
\end{aligned}
\end{equation}
	where $\delta = \delta(\lambda_{\infty})$ and
	$f(\lambda_{\infty})$, $g(\lambda_{\infty})$ are as defined in (\ref{RP6:2}) and (\ref{RP6:3}).
	Since $\lambda_{\infty} \in (1/2,1/\gamma]$ and
	$f(\lambda_{\infty}) < \epsilon$, it follows from (\ref{RP6:3:1}) that
	$g(\lambda_{\infty}) < \epsilon$, and so
\[
	| \det({\bf y}_{k},{\bf y}_{k+1},{\bf x}) |_{\infty} = o(1).
\]
	Since the determinant is an integer, we conclude that it is zero,
	and therefore the points ${\bf y}_{k}, {\bf y}_{k+1}$ and ${\bf x}$
	are linearly dependent. Hence (\ref{RP6:P:1:3}) holds.
\end{proof}
\begin{proposition} \label{RP6:P:2}
	Take $\epsilon \in (0,1/\gamma^3)$.
	\\
	(i) Suppose that $\psi(\lambda_{\infty}) > \epsilon$. Then for $k\gg1$,
	we have
\begin{equation}\label{RP6:P:2:1}
	Y_k^{1+\epsilon} < Y_{k+1}^{1-\epsilon}.
\end{equation}
	(ii) Furthermore, suppose that $f(\lambda_{\infty}) < \epsilon$.
	Then for each $k\gg1$ and each real number $X\geq1$ with
\begin{equation}\label{RP6:P:2:2}
	X \in [Y_k^{1+\epsilon},Y_{k+1}^{1-\epsilon}],
\end{equation}
	any non-zero integer point ${\bf x} \in \mathbb{Z}^3$ with
\begin{equation}\label{RP6:P:2:2-0}
 \begin{aligned}
	& \| {\bf x} \|_{\infty} \leq X,
	\\
	& L_{\infty}({\bf x }) \leq c X^{-\lambda_{\infty}},
 \end{aligned}
\end{equation}
	is a rational multiple of ${\bf y}_{k}$.
\end{proposition}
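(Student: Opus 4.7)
The plan is to combine two determinant vanishings to force $\mathbf{x}$ into a one-dimensional intersection of two-dimensional subspaces. Part (i) is immediate: rewriting $\psi(\lambda_\infty) > \epsilon$ as $1+\epsilon < \theta(1-\epsilon)$, where $\theta = \theta(\lambda_\infty)$, and invoking the lower bound $Y_k^\theta \ll Y_{k+1}$ from (\ref{RP5:P:1:1}) yields $Y_k^{1+\epsilon} < Y_{k+1}^{1-\epsilon}$ for $k \gg 1$, exactly as in Proposition \ref{RP6:P:1}(i).

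For part (ii), fix $X$ in the stated interval and a non-zero ${\bf x} \in \mathbb{Z}^3$ satisfying the bounds. First, since $\psi(\lambda_\infty) \leq \phi(\lambda_\infty)$ and $[Y_k^{1+\epsilon}, Y_{k+1}^{1-\epsilon}] \subseteq [Y_k^{1+\epsilon}, Y_{k+2}^{1-\epsilon}]$, all hypotheses of Proposition \ref{RP6:P:1}(ii) hold, so $\det({\bf y}_k, {\bf y}_{k+1}, {\bf x}) = 0$ and ${\bf x} \in V := \langle {\bf y}_k, {\bf y}_{k+1}\rangle_{\mathbb{Q}}$. The next step is to establish the analogous vanishing $\det({\bf y}_{k-1}, {\bf y}_k, {\bf x}) = 0$, placing ${\bf x}$ also in $V' := \langle {\bf y}_{k-1}, {\bf y}_k\rangle_{\mathbb{Q}}$. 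The last estimate of (\ref{RP5:P:1:1}), applied to the index $k-1$, gives $\det({\bf y}_{k-1}, {\bf y}_k, {\bf y}_{k+1}) \geq 1$, so the two-dimensional subspaces $V$ and $V'$ are distinct; as both contain $\mathbb{Q}{\bf y}_k$, we would conclude $V \cap V' = \mathbb{Q}{\bf y}_k$ and hence ${\bf x} \in \mathbb{Q}{\bf y}_k$.

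The core of the proof is thus bounding $|\det({\bf y}_{k-1}, {\bf y}_k, {\bf x})|$ via Lemma \ref{I:2:L:1}(i), which produces the three terms $Y_{k-1} L_\infty({\bf y}_k) L_\infty({\bf x})$, $Y_k L_\infty({\bf y}_{k-1}) L_\infty({\bf x})$, and $X L_\infty({\bf y}_{k-1}) L_\infty({\bf y}_k)$. Using the shift-by-one form of (\ref{RP5:P:1:1}), namely $Y_k^{\theta-1} \ll Y_{k-1} \ll Y_k^{1/\theta}$ and $L_\infty({\bf y}_j) \ll Y_j^{-\delta}$, together with $Y_{k+1} \ll Y_k^{1/(\theta-1)}$ and the bounds $X \in [Y_k^{1+\epsilon}, Y_{k+1}^{1-\epsilon}]$, $L_\infty({\bf x}) \leq cX^{-\lambda_\infty}$, each term becomes $Y_k^{\alpha}$ for some negative $\alpha$. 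The identity $\delta + \lambda_\infty = \theta$ reduces the first exponent to $-(\theta^2-1)/\theta - \lambda_\infty \epsilon$, plainly negative since $\theta > 1$; the third exponent equals $(g(\lambda_\infty) - \epsilon)/(\theta-1)$, negative by the hypothesis $\epsilon > f(\lambda_\infty) \geq g(\lambda_\infty)$.

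The main obstacle will be the second-term exponent, $1 - \lambda_\infty(\theta^2 - \theta + 1) - \lambda_\infty\epsilon$: its negativity is precisely what requires the threshold $\lambda_\infty > \lambda_{\infty,1}$. At $\lambda = \lambda_{\infty,1}$ the defining equation $\psi(\lambda) = f(\lambda)$ rearranges to $\lambda(\theta-1)(2\lambda+\theta) = 1$, and the identity $\lambda(\theta^2-\theta+1) - \lambda(\theta-1)(2\lambda+\theta) = \lambda(1 - 2\lambda(\theta-1))$, positive at $\lambda_{\infty,1}$, gives $\lambda_{\infty,1}(\theta^2-\theta+1) > 1$; monotonicity in $\lambda_\infty$ extends this to all of $(\lambda_{\infty,1}, 1/\gamma]$. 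Once this is in hand the integer $\det({\bf y}_{k-1}, {\bf y}_k, {\bf x})$ vanishes for $k \gg 1$ and the argument closes as described.
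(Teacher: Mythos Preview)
Your proof is correct, but you are doing substantially more work than necessary for the second containment. The paper simply observes that
\[
[Y_k^{1+\epsilon},Y_{k+1}^{1-\epsilon}] \;=\; [Y_{k-1}^{1+\epsilon},Y_{k+1}^{1-\epsilon}] \cap [Y_{k}^{1+\epsilon},Y_{k+2}^{1-\epsilon}]
\]
(using part (i) for the nonemptiness and $Y_{k-1}<Y_k$ for the lower endpoint), and then applies Proposition~\ref{RP6:P:1}(ii) \emph{twice}---once at index $k$ and once at index $k-1$---to get ${\bf x}\in\langle{\bf y}_{k},{\bf y}_{k+1}\rangle_{\mathbb{Q}}\cap\langle{\bf y}_{k-1},{\bf y}_{k}\rangle_{\mathbb{Q}}=\langle{\bf y}_{k}\rangle_{\mathbb{Q}}$. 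No new determinant estimate is needed; the exponents $\lambda_\infty(f(\lambda_\infty)-\epsilon)$ and $g(\lambda_\infty)-\epsilon$ that already appear in (\ref{RP6:P:1:4}) do all the work.

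Your route instead re-derives the bound for $|\det({\bf y}_{k-1},{\bf y}_k,{\bf x})|$ from scratch, expressing everything in powers of $Y_k$ rather than $Y_{k-1}$. This forces you to convert $Y_{k-1}^{-\delta}$ via the crude bound $Y_{k-1}\gg Y_k^{\theta-1}$, which is why your ``second-term'' exponent $1-\lambda_\infty(\theta^2-\theta+1)-\lambda_\infty\epsilon$ is not manifestly negative from the hypothesis $f(\lambda_\infty)<\epsilon$ alone, and why you then need the separate threshold argument at $\lambda_{\infty,1}$. That argument is valid (the hypotheses $f(\lambda_\infty)<\epsilon<\psi(\lambda_\infty)$ do force $\lambda_\infty>\lambda_{\infty,1}$, since $f-\psi$ is strictly decreasing on $(1/2,1/\gamma]$), but it is entirely avoidable: had you used the weaker bound $X\ge Y_{k-1}^{1+\epsilon}$ and kept everything in terms of $Y_{k-1}$, the second-term exponent would have been exactly $\lambda_\infty(f(\lambda_\infty)-\epsilon)$, negative by hypothesis.
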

\begin{proof}
	For the proof of (i) we write the condition
	$\psi(\lambda_{\infty}) > \epsilon$ in the form
\[
	1+\epsilon < (1-\epsilon)\theta(\lambda_{\infty}).
\]
	Also, by the estimates (\ref{RP5:P:1:1}), we have
\[
	Y_k^{\theta(\lambda_{\infty})} \ll Y_{k+1},
\]
	for each $k\geq1$.
	Combining these relations, we find that (\ref{RP6:P:2:1}) holds for $k\gg1$.
	For the proof of (ii) we use part (i) and Proposition \ref{RP6:P:1}.
	By part (i), we have that
\[
	[Y_{k-1}^{1+\epsilon},Y_{k+1}^{1-\epsilon}]
	\cap
	[Y_{k}^{1+\epsilon},Y_{k+2}^{1-\epsilon}]
	=
	[Y_{k}^{1+\epsilon},Y_{k+1}^{1-\epsilon}] \neq \emptyset,
\]
	for each $k\gg1$.
	Also, by (\ref{RP6:3:1}),  we have
	$\phi(\lambda_{\infty}) > \psi(\lambda_{\infty}) > \epsilon$.
	Hence, by Proposition \ref{RP6:P:1},
	for each $k\gg1$ and each real number
$
	X \in [Y_{k}^{1+\epsilon},Y_{k+1}^{1-\epsilon}],
$
	any non-zero integer solution ${\bf x}$ of (\ref{RP6:P:2:2-0})
	satisfies
\[
	{\bf x} \in
	\langle{\bf y}_{k-1},{\bf y}_{k}\rangle_{\mathbb{Q}}
	\cap
	\langle{\bf y}_{k},{\bf y}_{k+1}\rangle_{\mathbb{Q}}
	=
	\langle{\bf y}_{k}\rangle_{\mathbb{Q}}.
\]
\end{proof}
\begin{corollary} \label{RP6:C:1}
	Let $\bar \lambda =
	(\lambda_{\infty},(\lambda_{p})_{{p} \in \mathcal{S}}) \in
	\mathbb{R}_{>0}^{|\mathcal{S}|+1}$ be an exponent of approximation
	in degree $2$ to $\bar \xi = (\xi_{\infty},(\xi_{p})_{{p} \in \mathcal{S}})
	\in \mathbb{R}\setminus\mathbb{Q}
	\times \prod_{{p} \in \mathcal{S}}(\mathbb{Q}_{p}\setminus\mathbb{Q})$,
	with $[\mathbb{Q}(\xi_{\infty}):\mathbb{Q}] > 2$
	and $\lambda_{\infty} \in (1/2,1/\gamma]$.
	Suppose that $f(\lambda_{\infty}) < \psi(\lambda_{\infty})$.
	Then, we have
\[
	\sum_{{p} \in \mathcal{S}}\lambda_{p}
	\leq
	1 - \frac{\delta(\lambda_{\infty})}{1+f(\lambda_{\infty})}.
\]
\end{corollary}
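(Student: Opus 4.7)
The plan is to transfer the $p$-adic content of the assumption that $\bar\lambda$ is an exponent of approximation into information about the sequence $({\bf y}_k)$ produced by Proposition \ref{RP5:P:1} applied to $\xi_\infty$ and $\lambda_\infty$, and then extract the desired bound from the product formula for the non-zero integer $\det({\bf y}_k)$.

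Since $\lambda_\infty \in (1/2, 1/\gamma]$ and $f(\lambda_\infty) < \psi(\lambda_\infty)$ by hypothesis, I will fix a real number $\epsilon$ with $f(\lambda_\infty) < \epsilon < \psi(\lambda_\infty)$. Let $c > 0$ be a constant witnessing that $\bar\lambda$ is an exponent of approximation, so that for each $X \geq 1$ the system (\ref{I:1:D:1:1}) admits a non-zero integer solution. For each sufficiently large $k$, set $X_k := Y_k^{1+\epsilon}$ and pick such a solution ${\bf x}_k \in \mathbb{Z}^3$ at $X = X_k$. Proposition \ref{RP6:P:2}(ii), applied to the real sub-system only, forces ${\bf x}_k$ to be a rational multiple, and hence an integer multiple, of the primitive point ${\bf y}_k$. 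Write ${\bf x}_k = m_k {\bf y}_k$ with $m_k \in \mathbb{Z} \setminus \{0\}$.

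The bound $\|{\bf x}_k\|_\infty \leq X_k$ gives $|m_k|_\infty \leq X_k/Y_k = Y_k^\epsilon$, while the $p$-adic inequalities in (\ref{I:1:D:1:1}) give $|m_k|_p L_p({\bf y}_k) \leq c X_k^{-\lambda_p}$ for each $p \in \mathcal{S}$. Taking the product over $p \in \mathcal{S}$ and using the consequence $\prod_{p \in \mathcal{S}} |m_k|_p \geq |m_k|_\infty^{-1}$ of the product formula for the non-zero integer $m_k$, I obtain
\[
\prod_{p \in \mathcal{S}} L_p({\bf y}_k) \ll X_k^{-\sum_{p \in \mathcal{S}} \lambda_p}\, Y_k^{\epsilon}.
\]
In the opposite direction, $\det({\bf y}_k)$ is a non-zero integer by (\ref{RP5:P:1:1}), so the product formula together with $|\det({\bf y}_k)|_p \leq L_p({\bf y}_k)$, which follows from Lemma \ref{I:2:L:1}(i) and $\|{\bf y}_k\|_p \leq 1$, yields
\[
\prod_{p \in \mathcal{S}} L_p({\bf y}_k) \geq |\det({\bf y}_k)|_\infty^{-1} \geq Y_k^{\delta(\lambda_\infty) - 1}.
\]

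Comparing these two bounds at $X_k = Y_k^{1+\epsilon}$ and letting $k \to \infty$, and using that $Y_k$ is unbounded, forces
\[
(1+\epsilon) \sum_{p \in \mathcal{S}} \lambda_p \leq 1 - \delta(\lambda_\infty) + \epsilon,
\]
equivalently $\sum_p \lambda_p \leq 1 - \delta(\lambda_\infty)/(1+\epsilon)$. Letting $\epsilon$ decrease to $f(\lambda_\infty)^+$ yields the desired inequality. The main technical point is the coordinated use of two product-formula identities, one for $m_k$ to absorb the ambiguity in the integer multiple, and one for $\det({\bf y}_k)$ to lower-bound the product of $p$-adic approximation errors; in particular one must verify that the loss $|m_k|_\infty \leq Y_k^\epsilon$ is compatible with the freedom to push $\epsilon$ down to $f(\lambda_\infty)$.
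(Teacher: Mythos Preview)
Your proof is correct and follows essentially the same approach as the paper: choose $\epsilon\in(f(\lambda_\infty),\psi(\lambda_\infty))$, apply Proposition~\ref{RP6:P:2} at $X=Y_k^{1+\epsilon}$ to force the solution of (\ref{I:1:D:1:1}) to be an integer multiple $m_k{\bf y}_k$, combine the product formula for $m_k$ with the product formula for the non-zero integer $\det({\bf y}_k)$, and use $|\det({\bf y}_k)|_\infty \ll Y_k^{1-\delta(\lambda_\infty)}$ (equivalently $L_\infty({\bf y}_k)\ll Y_k^{-\delta(\lambda_\infty)}$) before letting $\epsilon\to f(\lambda_\infty)$. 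The only organizational difference is that you isolate $\prod_{p\in\mathcal{S}} L_p({\bf y}_k)$ and bound it from above and below separately, whereas the paper runs a single chain of inequalities starting from $1\le |\det({\bf y}_k)|_\infty\prod_p|\det({\bf y}_k)|_p$; the ingredients and the resulting exponent inequality are identical.
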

\begin{proof}
	Choose $\epsilon \in \mathbb{R}$ such that
	$f(\lambda_{\infty}) <\epsilon< \psi(\lambda_{\infty})$.
	Since $\lambda_{\infty} \in (1/2,1/\gamma]$, we have
	$f(\lambda_{\infty}) \geq0$ and $\psi(\lambda_{\infty})\leq1/\gamma^3$,
	so that $\epsilon \in (0,1/\gamma^3)$
	and we can apply Proposition \ref{RP6:P:2}.
		To do this, we note that for each $X\geq1$ a
		solution of (\ref{I:1:D:1:1})
	is also a solution of (\ref{I:1:D:3:1}). Hence, by Proposition \ref{RP6:P:2},
	for each $k\gg1$ and each real number $X\geq1$ with
	$X \in [Y_k^{1+\epsilon},Y_{k+1}^{1-\epsilon}]$,
	any non-zero integer solution ${\bf x}$ of (\ref{I:1:D:1:1}) is
	of the form $m {\bf y}_{k}$, for some non-zero integer $m$,
	where $({\bf y}_{k})_{k \geq 1}$ and $(Y_{k})_{k \geq 1}$ are
	the sequences corresponding to $\xi_{\infty}$, as
	in Proposition \ref{RP5:P:1}.

	Choosing $X = Y_{k}^{1+\epsilon}$, we obtain
\begin{align*}
	|m|_{\infty}Y_{k} =|m|_{\infty} \| {\bf y}_{k} \|_{\infty}
	&\leq X = Y_{k}^{1+\epsilon},
	\\
	|m|_{\infty} L_{\infty}({\bf y}_{k})
	&\ll X^{-\lambda_{\infty}}
	= Y_{k}^{-\lambda_{\infty}(1+\epsilon)},
	\\
	|m|_{p}L_{p}({\bf y}_{k})
	&\ll X^{-\lambda_{p}}
	= Y_{k}^{-\lambda_{p}(1+\epsilon)}\
	\forall {p} \in \mathcal{S}.
\end{align*}
 	By the third relation in (\ref{RP5:P:1:1})
	the determinant $\det({\bf y}_{k})$
	is a non-zero integer for each $k\geq1$. So, we find that
\begin{align*}
	1 & \leq |\det({\bf y}_{k})|_{\infty}
	\prod_{{p} \in \mathcal{S}}|\det({\bf y}_{k})|_{p}
	\\
	&\ll
	Y_{k}L_{\infty}({\bf y}_{k})
	\prod_{{p} \in \mathcal{S}}L_{p}({\bf y}_{k})
	\\
	&\leq
	|m|_{\infty}Y_{k}L_{\infty}({\bf y}_{k})
	\prod_{{p} \in \mathcal{S}}|m|_{p}L_{p}({\bf y}_{k})
	\\
	&\ll
	Y_{k}^{1+\epsilon}L_{\infty}({\bf y}_{k})
	\prod_{{p} \in \mathcal{S}}Y_{k}^{-\lambda_{p}(1+\epsilon)}.
\end{align*}
	By the second relation in (\ref{RP5:P:1:1}), we have
	$L_{\infty}({\bf y}_{k}) \ll Y_{k}^{-\delta}$, and hence
\[
	1 \ll
	Y_{k}^{1+\epsilon}Y_{k}^{-\delta}
	Y_{k}^{-(1+\epsilon)\sum_{{p} \in \mathcal{S}}\lambda_{p}}
	=
	Y_{k}^{1+\epsilon -\delta -(1+\epsilon)\sum_{{p} \in \mathcal{S}}\lambda_{p}}.
\]
	It follows that
\[
	1+\epsilon -\delta -(1+\epsilon)\sum_{{p} \in \mathcal{S}}\lambda_{p} \geq 0,
\]
	whence we get
\[
	\sum_{{p} \in \mathcal{S}}\lambda_{p}
	\leq
	1 - \frac{\delta(\lambda_{\infty})}{1+\epsilon}.
\]
	The conclusion follows by letting $\epsilon \rightarrow f(\lambda_{\infty})$.
\end{proof}
\begin{corollary} \label{RP6:C:2}
	There exists a number $\lambda_{\infty,2} = 0.61455261\ldots$,
	such that if $\lambda_{\infty} \in (\lambda_{\infty,2},1/\gamma]$, then
	for each sufficiently large $k\geq3$, the point ${\bf y}_{k+1}$ is
	a non-zero rational multiple of $[{\bf y}_{k},{\bf y}_{k},{\bf y}_{k-2}]$.

	Moreover, there exists a non-symmetric matrix $M$, such that
	for each sufficiently large $k\geq3$, the point ${\bf y}_{k+1}$ is
	a non-zero rational multiple of ${\bf y}_{k} M_{k} {\bf y}_{k-1}$,
	where
\[
	M_k =
	\left \{
	\begin{matrix}
		M \text{ if } k \text{ is even},
		\\
		{}^t M\text{ if } k \text{ is odd}.
	\end{matrix}
	\right .
\]
\end{corollary}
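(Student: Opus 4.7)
The plan is to set
\[
{\bf w}_k := [{\bf y}_k,{\bf y}_k,{\bf y}_{k-2}] = -{\bf y}_k J {\bf y}_{k-2} J {\bf y}_k
\]
and to show that ${\bf w}_k$ is a non-zero rational multiple of ${\bf y}_{k+1}$ for every sufficiently large $k$. I would first check that ${\bf w}_k$ is a non-zero symmetric integer point of $\mathbb{Z}^3$: the symmetry follows from the symmetry of each ${\bf y}_j$ together with ${}^t J = -J$, and the non-vanishing from $\det({\bf w}_k) = \det({\bf y}_k)^2 \det({\bf y}_{k-2}) \neq 0$, which uses the third inequality of (\ref{RP5:P:1:1}).

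Next I would apply Lemma \ref{I:2:L:1}(ii) with ${\bf x} = {\bf y}_k$ and ${\bf y} = {\bf y}_{k-2}$ to produce upper bounds for $\|{\bf w}_k\|_\infty$ and $L_\infty({\bf w}_k)$, and then rewrite them in powers of $Y_{k+1}$ by inserting the growth estimates (\ref{RP5:P:1:1}). This yields bounds of the form $\|{\bf w}_k\|_\infty \ll Y_{k+1}^{A}$ and $L_\infty({\bf w}_k) \ll Y_{k+1}^{-B}$ for explicit $A,B$ depending only on $\theta = \theta(\lambda_\infty)$.

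The decisive step is then to apply Proposition \ref{RP6:P:2} with its index $k$ replaced by $k+1$: for a suitable $\epsilon$ with $f(\lambda_\infty)<\epsilon<\psi(\lambda_\infty)$, the conclusion that ${\bf w}_k$ is a rational multiple of ${\bf y}_{k+1}$ will follow once there exists $X$ in the interval $[Y_{k+1}^{1+\epsilon},Y_{k+2}^{1-\epsilon}]$ that simultaneously satisfies $\|{\bf w}_k\|_\infty \leq X$ and $L_\infty({\bf w}_k) \leq cX^{-\lambda_\infty}$. Translating these conditions via the bounds of the previous step and the two-sided growth estimates of (\ref{RP5:P:1:1}) reduces the problem to a system of algebraic inequalities in $\theta$. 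The solution set turns out to be an interval $(\lambda_{\infty,2},1/\gamma]$ where the threshold $\lambda_{\infty,2}\approx 0.61455261$ arises as the root of the limiting equality, obtained by one numerical computation. This is the main obstacle: the bounds from Lemma \ref{I:2:L:1}(ii) are one-sided and the growth estimates (\ref{RP5:P:1:1}) are two-sided and somewhat loose, so one must argue with worst-case values of the exponents and verify that the various inequalities are simultaneously compatible.

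For the matrix form, once ${\bf y}_{k+1} = r_k\,{\bf y}_k(-J\,{\bf y}_{k-2}\,J){\bf y}_k$ is established for every large $k$ with $r_k\in\mathbb{Q}^\times$, I would substitute the analogous identity one step earlier for ${\bf y}_{k-1}$ and use the symmetry of the ${\bf y}_j$ to factor the right-hand side as ${\bf y}_{k+1} = {\bf y}_k N_k {\bf y}_{k-1}$ for suitable $2\times 2$ rational matrices $N_k$. Mimicking the proof of Theorem~5.1 in \cite{ARNCAI.1}, I would then show that the $N_k$ are ultimately constant up to transposition: taking transposes of the defining identity exchanges $N_k$ with ${}^t N_k$, which forces $N_k = M$ for $k$ even and $N_k = {}^t M$ for $k$ odd, once the matrices have been cleared of denominators to produce the integer matrix $M \in \Mat_{2\times 2}(\mathbb{Z})$ of the statement.
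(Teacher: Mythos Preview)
Your direct approach of setting ${\bf w}_k=[{\bf y}_k,{\bf y}_k,{\bf y}_{k-2}]$ and trying to place it in the window of ${\bf y}_{k+1}$ via Proposition~\ref{RP6:P:2} fails, already at the extremal value $\lambda_\infty=1/\gamma$. The dominant term in the bound of Lemma~\ref{I:2:L:1}(ii) for $L_\infty({\bf w}_k)$ is $\|{\bf y}_k\|_\infty L_\infty({\bf y}_k)L_\infty({\bf y}_{k-2})$, and $L_\infty({\bf y}_{k-2})$ is only $\ll Y_{k-2}^{-\delta}\sim Y_k^{-\delta(\theta-1)^2}$; at $\theta=\gamma$ this yields merely $L_\infty({\bf w}_k)\ll Y_k^{-1/\gamma^2}$. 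The window for ${\bf y}_{k+1}$ begins at $Y_{k+1}^{1+\epsilon}\gg Y_k^{\gamma}$, so the condition $L_\infty({\bf w}_k)\le cX^{-\lambda_\infty}$ would force $X\ll Y_k^{1/\gamma}$, far below $Y_{k+1}^{1+\epsilon}$. No choice of $\epsilon$ rescues this: the two intervals are disjoint.

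The paper avoids this by the standard involution trick from \cite{ARNCAI.1}: it defines instead ${\bf w}=[{\bf y}_k,{\bf y}_k,{\bf y}_{k+1}]$ and shows that ${\bf w}$ is proportional to ${\bf y}_{k-2}$, using Proposition~\ref{RP6:P:2} with index $k-2$. Here all three inputs are strong approximants (indeed ${\bf y}_{k+1}$ is better than ${\bf y}_k$), so $L_\infty({\bf w})$ is small enough, while the target window $[Y_{k-2}^{1+\epsilon},Y_{k-1}^{1-\epsilon}]$ lies at much smaller $X$ and hence imposes a milder constraint. Once ${\bf w}\propto{\bf y}_{k-2}$ is established, the identity $[{\bf y}_k,{\bf y}_k,{\bf w}]=\det({\bf y}_k)^2{\bf y}_{k+1}$ gives the claim for free. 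For the second part, your reduction of the $N_k$ to a single $M$ via ``taking transposes'' is too vague: the transpose of ${\bf y}_{k+1}={\bf y}_kN_k{\bf y}_{k-1}$ gives ${\bf y}_{k+1}={\bf y}_{k-1}\,{}^tN_k\,{\bf y}_k$, which does not by itself pin down $N_k$. The paper instead fixes $M=J{\bf y}_{k_0-2}J{\bf y}_{k_0}{\bf y}_{k_0-1}^{-1}$ for one even $k_0$ and proves ${\bf y}_{k+1}\propto{\bf y}_kM_k{\bf y}_{k-1}$ by induction, then argues non-symmetry of $M$ by contradiction through the trace identity $\Tr(J{\bf x}J{\bf y}J{\bf z})=\det({\bf x},{\bf y},{\bf z})$ and the last estimate of~(\ref{RP5:P:1:1}).
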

\begin{proof}
		Here we follow the proof of Corollary 5.2 on p.~50 of \cite{ARNCAI.1}.
	Let $k\geq0$ be an integer and put
	${\bf w} := [{\bf y}_{k},{\bf y}_{k},{\bf y}_{k+1}]$.
	By Lemma 2.1(i) of \cite{ARNCAI.1}, we have
\[
	\det({\bf w}) = \det({\bf y}_{k})^2 \det({\bf y}_{k+1}).
\]
	By the third relation in (\ref{RP5:P:1:1}), we have
	$\det({\bf w}) \neq 0$ and then $0 \neq {\bf w} \in \mathbb{Z}^3$.
	We claim that ${\bf w}$ is a rational multiple of ${\bf y}_{k-2}$.
	If we take this claim for granted and use
	the identity
\[
	[{\bf y}_{k},{\bf y}_{k},{\bf w}] = \det({\bf y}_{k})^2 {\bf y}_{k+1}
\]
	given by Lemma 2.1(ii) of \cite{ARNCAI.1}, we
	deduce that ${\bf y}_{k+1}$ is a non-zero rational multiple of
	$[{\bf y}_{k},{\bf y}_{k},{\bf y}_{k-2}]$.

	Fix any $\lambda_{\infty} \in (\lambda_{\infty,2},1/\gamma]$.
	Using the first estimate in Lemma 3.1(iii) of \cite{ARNCAI.1},
	we find that
\begin{equation}\label{RP6:C:2:1}
 \begin{aligned}
	\| {\bf w} \|_{\infty}
	&\ll
	Y_k^2 L_{\infty}({\bf y}_{k+1}) + Y_{k+1} L_{\infty}({\bf y}_{k})^2
	\\
 	L_{\infty}({\bf w })
		&\ll
		\big(Y_k L_{\infty}({\bf y}_{k+1})+
		Y_{k+1} L_{\infty}({\bf y}_{k}) \big) L_{\infty}({\bf y}_{k}).
 \end{aligned}
\end{equation}
	Using the estimates (\ref{RP5:P:1:1}), we also obtain
\begin{align*}
 	& Y_k^2 L_{\infty}({\bf y}_{k+1})
		\ll Y_k^{a(\lambda_{\infty})},
	\\
	& Y_{k+1} L_{\infty}({\bf y}_{k})^2
		\ll
		Y_k^{b(\lambda_{\infty})},
	\\
	&Y_k L_{\infty}({\bf y}_{k+1})L_{\infty}({\bf y}_{k})
		\ll
		Y_k^{c(\lambda_{\infty})},
\end{align*}
	where
\[
	a(\lambda_{\infty}) = 2-\frac{\theta^3}{\theta+1},
	\quad
	b(\lambda_{\infty}) = \frac{1}{\theta-1}-\frac{2\theta^2}{\theta+1},
	\quad
	c(\lambda_{\infty}) = 1-\frac{\theta^2}{\theta+1}
				-\frac{\theta^3}{\theta+1}.
\]
	Note that since $\lambda_{\infty} \in (\lambda_{\infty,2},1/\gamma]$,
	we have
\[
	c(\lambda_{\infty}) < b(\lambda_{\infty})
	< 0 < a(\lambda_{\infty}).
\]
	So, from (\ref{RP6:C:2:1}) it follows that
\begin{equation}\label{RP6:C:2:5}
 \begin{aligned}
	& \| {\bf w} \|_{\infty}
		\ll Y_k^{a(\lambda_{\infty})}
		\ll Y_{k-1}^{a(\lambda_{\infty})/(\theta-1)},
	\\
	& L_{\infty}({\bf w })
		\ll
		Y_k^{b(\lambda_{\infty})}
		\ll Y_{k-2}^{\theta^2 b(\lambda_{\infty})}.
 \end{aligned}
\end{equation}
	Since $\lambda_{\infty,2} \geq \lambda_{\infty,1} = 0.61263521\ldots$,
	then by (\ref{RP6:5}), we have
	$f(\lambda_{\infty}) < \psi(\lambda_{\infty})\leq 1/{\gamma^3}$.
 	So, by Proposition \ref{RP6:P:2},
	applied with the index $k$ replaced by $k-2$, it suffices to show that
	${\bf w }$ satisfies the inequalities
\begin{equation}\label{RP6:C:2:3}
 \begin{aligned}
	& \| {\bf w} \|_{\infty} \leq X,
	\\
	& L_{\infty}({\bf w }) \leq c X^{-\lambda_{\infty}},
 \end{aligned}
\end{equation}
 	for some $X \in
	[Y_{k-2}^{1+\epsilon},Y_{k-1}^{1-\epsilon}]$,
	and some
	$\epsilon \in \big(f(\lambda_{\infty}),\psi(\lambda_{\infty})\big)$.
	Since the conditions (\ref{RP6:C:2:3}) are equivalent to
\[
	X \in \big[\| {\bf w} \|_{\infty},
	c^{1/\lambda_{\infty}}L_{\infty}({\bf w })^{-1/\lambda_{\infty}}\big],
\]
	then for such $X$ to exist it suffices that
\[
	[Y_{k-2}^{1+\epsilon},Y_{k-1}^{1-\epsilon}]
	\cap
	\big[\| {\bf w} \|_{\infty},
	c^{1/\lambda_{\infty}}L_{\infty}({\bf w })^{-1/\lambda_{\infty}}\big]
	\neq \emptyset.
\]
	This is possible if the inequalities
\begin{align*}
	& \| {\bf w} \|_{\infty} \leq Y_{k-1}^{1-\epsilon},
	\\
	& L_{\infty}({\bf w }) \leq c Y_{k-2}^{-(1+\epsilon)\lambda_{\infty}}
\end{align*}
	hold and, by (\ref{RP6:C:2:5}), this is the case if
\[
	\frac{a(\lambda_{\infty})}{\theta-1} < 1- \epsilon
	\quad \text{ and } \quad
	\theta^2 b(\lambda_{\infty})  < -(1 + \epsilon)\lambda_{\infty}.
\]
	So, the constraints on $\epsilon$ become simply
\[
	f(\lambda_{\infty})< \epsilon <
	\min\{
		\psi(\lambda_{\infty}),
		1- \frac{a(\lambda_{\infty})}{\theta-1},
		- 1 -\frac{\theta^2 b(\lambda_{\infty})}{\lambda_{\infty}}
	\}
\]
	and this interval is not empty for each
	$\lambda_{\infty} \in (\lambda_{\infty,2},1/\gamma]$.
	So, by Proposition \ref{RP6:P:2}, we conclude that
	${\bf w}$ is proportional to ${\bf y}_{k-2}$
	for each $\lambda_{\infty} \in (\lambda_{\infty,2},1/\gamma]$.

	From now on, we use the notation $a \propto b$ to express that
	$a$ is a rational multiple of $b$.
	To show the last part of the corollary, we first
	choose $k_0$ to be the smallest even index such that
	${\bf y}_{k+1} \propto [{\bf y}_{k},{\bf y}_{k},{\bf y}_{k-2}]$
	holds for each $k\geq k_0$.
	Recall that
\begin{equation}\label{RP6:C:2:4}
	[{\bf y}_{k},{\bf y}_{k},{\bf y}_{k-2}]
	= -{\bf y}_{k}J{\bf y}_{k-2}J{\bf y}_{k},
	\text{ where }
	 J =
	\left(
	\begin{matrix}
	0      &  1
	\\
	-1      &  0
	\end{matrix}
	\right )
\end{equation}
	and that
\begin{equation}\label{RP6:C:2:6}
	{\bf w}J{\bf w}J = J{\bf w}J{\bf w} = -\det({\bf w})I,
	\text{ where }
	 I =
	\left(
	\begin{matrix}
	1      &  0
	\\
	0      &  1
	\end{matrix}
	\right ),
\end{equation}
	for any $2\times2$ symmetric matrix ${\bf w}$ (see pp.45,46 of \cite{ARNCAI.1}).
	Put
\[
	M = J{\bf y}_{k_0-2}J{\bf y}_{k_0}{\bf y}_{k_0-1}^{-1}.
\]
	We define
\[
	M_k =
	\left \{
	\begin{matrix}
		M \text{ if } k \text{ is even},
		\\
		{}^t M\text{ if } k \text{ is odd},
	\end{matrix}
	\right .
\]
	and show by induction that
	${\bf y}_{k+1} \propto {\bf y}_{k} M_{k} {\bf y}_{k-1}$ for each $k\geq k_0$.
	Clearly this holds for $k = k_0$.
	Assume that ${\bf y}_{k+1} \propto {\bf y}_{k} M_{k} {\bf y}_{k-1}$
	holds for each index $k$ with $k_0\leq k<m$.
	We need to show that this holds for $k=m$.
	Since ${\bf y}_{m}$
	is a symmetric matrix and that $M_{m} = {}^t M_{m-1}$,
	the induction hypothesis gives 
	${\bf y}_{m} \propto {\bf y}_{m-2} M_{m} {\bf y}_{m-1}$.
	By the first part of the corollary, together with 
	the identities (\ref{RP6:C:2:4}) and (\ref{RP6:C:2:6}),
	we deduce that that
\begin{align*}
	{\bf y}_{m+1} &\propto {\bf y}_{m}J{\bf y}_{m-2}J{\bf y}_{m}
		\propto {\bf y}_{m}J{\bf y}_{m-2}J
		{\bf y}_{m-2} M_{m} {\bf y}_{m-1}
		\\
		& = - {\bf y}_{m}\det({\bf y}_{m-2}) M_{m} {\bf y}_{m-1}
		\propto {\bf y}_{m}M_{m} {\bf y}_{m-1}.
\end{align*}
	It remains only to show that the matrix $M$ is non-symmetric. 
	Suppose on the contrary that $M$ is symmetric.
	Then $M_k$ is symmetric for each $k\geq k_0$.
	It is shown in \cite{ARNCAI.1} that
	for any $2\times2$ symmetric matrices ${\bf x},{\bf y},{\bf z}$, we have
\[
	\Tr(J{\bf x}J{\bf y}J{\bf z}) = \det({\bf x},{\bf y},{\bf z}),
\]
	For each $k\geq k_0$, we have
	${\bf y}_{k+1} \propto {\bf y}_{k} M_{k} {\bf y}_{k-1}$
	and
	$J{\bf y}_{k}J{\bf y}_{k} = - \det({\bf y}_{k})I$, thus
\[
	J{\bf y}_{k-1}J{\bf y}_{k}J{\bf y}_{k+1}
	\\
	\propto
	J{\bf y}_{k-1}J{\bf y}_{k}J{\bf y}_{k} M_{k} {\bf y}_{k-1}
	\propto
	J{\bf y}_{k-1}M_{k} {\bf y}_{k-1}.
\]
	Since $M_k$ is symmetric, then
	${\bf y}_{k-1}M_{k} {\bf y}_{k-1}$ is also symmetric,
	and so $\Tr(J{\bf y}_{k-1}M_{k} {\bf y}_{k-1}) = 0$.
	Hence, we have $\det({\bf y}_{k-1},{\bf y}_{k},{\bf y}_{k+1}) = 0$
	for each $k\geq k_0$, which contradicts the last relation in (\ref{RP5:P:1:1})
	of Proposition \ref{RP5:P:1}.
\end{proof}
	%

\subsection{Growth conditions for an aproximation sequence ($p$-adic case)}

	We now turn to a $p$-adic analog of the study done in \S\ref{C1:S4:SS1-Real}.
\begin{proposition} \label{PIII:P:1}
	Let $p$ be a prime number and
	let $\xi_p \in \mathbb{Q}_p$ be with
	$[\mathbb{Q}(\xi_{p}):\mathbb{Q}]>2$.
	Let $\lambda_p \in \mathbb{R}_{>0}$ be an
	exponent of approximation to $\xi_p$ in degree $2$.
	Assume that $3/2 <\lambda_p \leq \gamma$ and define
\[
	\theta = (\lambda_p-1)/(2-\lambda_p).
\]
	Then there exists a sequence $({\bf y}_k)_{k \geq 1}$ of primitive points
	in $\mathbb{Z}^3$ such that upon putting
	$Y_k = \|{\bf y}_k\|_{\infty}$ for each $k \geq1$, we have
\begin{equation}\label{PIII:P:1:1}
 \begin{aligned}
	& Y_{k}^{\theta} \ll Y_{k+1} \ll Y_{k}^{1/(\theta-1)},
	\\
	&
	Y_{k}^{-2} \ll L_{p}({\bf y}_k) \ll Y_{k}^{-(1+\theta^2/(\theta+1))} ,
	\\
	& 1 \leq |\det({\bf y}_k)|_{\infty}
	|\det({\bf y}_k)|_{p} \ll Y_k^{1-\theta^2/(\theta+1)},
	\\
	&
	1 \leq  | \det({\bf y}_{k},{\bf y}_{k+1},{\bf y}_{k+2}) |_{\infty}
	| \det({\bf y}_{k},{\bf y}_{k+1},{\bf y}_{k+2}) |_p
	\ll
	Y_{k}^{1/(\theta-1)^2-\theta^2}.
 \end{aligned}
\end{equation}
\end{proposition}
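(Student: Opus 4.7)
The plan is to mimic closely the proof of Proposition \ref{RP5:P:1}, adapted to the $p$-adic setting. First, I would choose any $\xi_\infty \in \mathbb{R}\setminus\mathbb{Q}$ with $[\mathbb{Q}(\xi_\infty):\mathbb{Q}]>2$, set $\lambda_\infty = -1$, and apply Lemma \ref{I:1:L:6} to produce a sequence $({\bf v}_i)_{i\geq 0}$ of primitive points of $\mathbb{Z}^3$ satisfying the growth relations stated there with $X_i := \|{\bf v}_i\|_\infty$. Since $\lambda_p > 3/2$, the hypothesis $\lambda_\infty/2 + \lambda_p > 1$ of Proposition \ref{I:3:P:1-1} is satisfied, so $\det({\bf v}_i) \neq 0$ for all $i$ sufficiently large. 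Moreover the hypotheses of Proposition \ref{I:3:T:1} hold in our setting (with $\infty\in\mathcal{S}'$ by the choice of $\xi_\infty$, and $\lambda = \lambda_p - 1 > 0$), so by Remark \ref{I:3:R:1} the set $I$ of indices $i$ for which ${\bf v}_{i-1},{\bf v}_i,{\bf v}_{i+1}$ are linearly independent over $\mathbb{Q}$ is infinite.

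For $i \in I$, I would combine the product formula $|n|_\infty |n|_p \geq 1$ (valid for any non-zero integer $n$) with Lemma \ref{I:2:L:1}(i) at both places. Applied to the integer $\det({\bf v}_i)$, the trivial archimedean bound $L_\infty({\bf v}_i) \ll X_i$ together with $\|{\bf v}_i\|_p \leq 1$ gives $L_p({\bf v}_i) \gg X_i^{-2}$. Applied to the integer $\det({\bf v}_{i-1}, {\bf v}_i, {\bf v}_{i+1})$, and combined with the upper bound $L_p({\bf v}_j) \ll X_j^{-1} X_{j+1}^{1-\lambda_p}$ extracted from Lemma \ref{I:1:L:6} (with the ultrametric max replacing the archimedean sum in the $p$-adic estimate), a short calculation should yield $1 \ll X_i^{1-\lambda_p} X_{i+1}^{2-\lambda_p}$, which rearranges into the lower bound $X_{i+1} \gg X_i^{\theta}$.

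To obtain the matching upper bound $X_j \ll X_i^{1/(\theta-1)}$, where $j$ is the successor of $i$ in $I$, I would run the subspace-height argument from the real case. Let $V_i = \langle {\bf v}_i, {\bf v}_{i+1}\rangle_{\mathbb{Q}}$ and $V_j = \langle {\bf v}_j, {\bf v}_{j+1}\rangle_{\mathbb{Q}}$; minimality of $j$ forces every ${\bf v}_\ell$ with $i \leq \ell \leq j$ to lie in $V_i$, so $V_i \cap V_j = \langle {\bf v}_j \rangle_{\mathbb{Q}}$ and $V_i + V_j = \mathbb{Q}^3$. Applying Lemma \ref{I:2:L:1}(iii) with the trivial estimate $L_\infty({\bf v}) \ll \|{\bf v}\|_\infty$ (which is all that is available here, replacing the $X^{1-\lambda_\infty}$ factor of the real case) and substituting $X_i L_p({\bf v}_i) \ll X_{i+1}^{1-\lambda_p}$ should give $H(V_i) \ll X_{i+1}^{2-\lambda_p}$. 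Plugging this, the analogous bound for $H(V_j)$, and the crude a priori bound $X_{i+1} \ll X_i^{1/(\lambda_p-1)}$ (itself a consequence of $L_p({\bf v}_i) \gg X_i^{-2}$ combined with $L_p({\bf v}_i) \ll X_i^{-1} X_{i+1}^{1-\lambda_p}$) into Schmidt's inequality $H(V_i \cap V_j) H(V_i + V_j) \ll H(V_i) H(V_j)$ produces $X_j \ll X_i^{1/(\theta-1)}$ after elementary exponent algebra.

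Finally, I would relabel $({\bf y}_k) := ({\bf v}_{i_k})_{k\geq 1}$ where $i_1 < i_2 < \cdots$ enumerate $I$, and verify exactly as in the real case that any three consecutive points ${\bf y}_k, {\bf y}_{k+1}, {\bf y}_{k+2}$ are linearly independent because their span contains three linearly independent ${\bf v}_\ell$. The four estimates in (\ref{PIII:P:1:1}) then follow: the first pair is immediate from what has been proved for $i\in I$; the upper bound $L_p({\bf y}_k) \ll Y_k^{-(1+\theta^2/(\theta+1))}$ results from $L_p({\bf y}_k) \ll Y_k^{-1} Y_{k+1}^{1-\lambda_p}$ combined with $Y_{k+1} \gg Y_k^{\theta}$ and the identity $\theta^2/(\theta+1) = \theta(\lambda_p-1)$; and the determinant estimates repeat the two-place product-formula argument applied to $\det({\bf y}_k)$ and $\det({\bf y}_k, {\bf y}_{k+1}, {\bf y}_{k+2})$ with the newly available tight bounds. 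The main obstacle is the subspace-height step: since $\lambda_\infty = -1$ renders the archimedean factor of Lemma \ref{I:2:L:1}(iii) useless beyond the trivial $L_\infty \ll \|\cdot\|_\infty$, all the cancellation must come from the $p$-adic factor, and the two-place estimates must be balanced precisely in order to recover the sharp exponent $1/(\theta-1)$ rather than the weaker $1/(\lambda_p-1)$.
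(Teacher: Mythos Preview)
Your proposal is correct and mirrors the paper's proof essentially step for step. One small correction: the intermediate bound $L_p({\bf y}_k)\ll Y_k^{-1}Y_{k+1}^{1-\lambda_p}$ does not follow as written (since $Y_{k+1}=X_{i_{k+1}}\ge X_{i_k+1}$ and $1-\lambda_p<0$, the inequality goes the wrong way), but the target estimate $L_p({\bf y}_k)\ll Y_k^{-(1+\theta^2/(\theta+1))}$ follows at once from $L_p({\bf v}_i)\ll X_i^{-1}X_{i+1}^{1-\lambda_p}$ together with $X_{i+1}\gg X_i^{\theta}$ for $i\in I$, both of which you have already established.
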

\begin{proof}
	We consider the sequence $({\bf v}_i)_{i \geq 0}$ of primitive points 
	of $\mathbb{Z}^3$ constructed in Lemma \ref{I:1:L:6}
	and put $X_i = \|{\bf v}_i\|_{\infty}$ for each $i\geq0$.
	By Proposition \ref{I:3:P:1-1} 
	(with $\mathcal{S} = \{p\}$ and $\lambda_{\infty}=-1$),
	there exists an index $i_0\geq2$ such that $\det({\bf v}_i)\neq0$
	for each $i\geq i_0$.
	Define $I$ to be the set of indexes $i\geq i_0$ for which
	${\bf v}_{i-1}, {\bf v}_{i}, {\bf v}_{i+1}$ are linearly 
	independent over $\mathbb{Q}$.
	According to Remark \ref{I:3:R:1}, the set $I$ is infinite 
	since $\lambda_{p}>3/2$.
	Using (\ref{I:1:L:6:1}) and the estimates of 
	of Lemma \ref{I:2:L:1}(i), we find that, for each $i\in I$,
\[
	1 	\leq |\det({\bf v}_i)|_{\infty}
			|\det({\bf v}_i)|_{p}
		\ll \|{\bf v}_{i}\|_{\infty}^2 L_{p}({\bf v}_i)
		\ll X_i X_{i+1}^{1-\lambda_p}.
\]
	From this, we get
\begin{equation}\label{PIII:P:1:4}
	X_{i+1} \ll X_{i}^{1/(\lambda_p-1)}
	\ \text{and} \ X_i^{-2} \ll  L_{p}({\bf v}_i).
\end{equation}
	Note that, by (\ref{I:1:L:6:1}), we have
\[
	L_p({\bf v}_{i+1})
	< \frac{\|{\bf v}_{i}\|_{\infty}}{\|{\bf v}_{i+1}\|_{\infty}}
	L_p({\bf v}_{i})
	<
	L_p({\bf v}_{i}),
\]
	for each $i\geq0$.
	So, applying Lemma \ref{I:2:L:1}(i) to the non-zero integer 
	$\det({\bf v}_{i-1},{\bf v}_{i},{\bf v}_{i+1})$
	and using (\ref{I:1:L:6:1}), we find,
	for each $i\in I$,
\begin{align*}
	1 \leq & | \det({\bf v}_{i-1},{\bf v}_{i},{\bf v}_{i+1}) |_{\infty}
	| \det({\bf v}_{i-1},{\bf v}_{i},{\bf v}_{i+1}) |_p
	\\
	&\ll
	\|{\bf v}_{i-1}\|_{\infty}
	\|{\bf v}_{i}\|_{\infty}
	\|{\bf v}_{i+1}\|_{\infty}
	L_p({\bf v}_{i-1})L_p({\bf v}_{i})
	\\
	&\ll
	X_{i}^{1-\lambda_p} X_{i+1}^{2-\lambda_p}.
\end{align*}
	From this, we get
\begin{equation}\label{PIII:P:1:5}
	X_{i}^{(\lambda_p-1)/(2-\lambda_p)} \ll X_{i + 1}.
\end{equation}
	Combining (\ref{PIII:P:1:4}), (\ref{PIII:P:1:5})
	and (\ref{I:1:L:6:1})
	upon noting that $3/2<\lambda_p\leq\gamma$, we have,
	for each $i \in I$,
\begin{equation}\label{PIII:P:1:6}
 \begin{gathered}
	X_{i}^{(\lambda_p-1)/(2-\lambda_p)}
	\ll X_{i + 1}
	\ll X_{i}^{1/(\lambda_p-1)},
	\\
	X_i^{-2}
	\ll L_{p}({\bf v}_i)
	\ll X_{i}^{-1} X_{i+1}^{1-\lambda_p}
	\ll  X_{i}^{-1} X_{i}^{-(\lambda_p-1)^2/(2-\lambda_p)}
	\\
	\quad \quad \quad \quad \quad \quad \quad
	\quad \quad \quad \quad \quad
	= X_{i}^{-1-(\lambda_p-1)^2/(2-\lambda_p)}.
 \end{gathered}
\end{equation}
	Now, fix $i \in I$ and let $j$ be the largest integer such that
	${\bf v}_i$, ${\bf v}_{i+1}$, \ldots,
	${\bf v}_{j}$ $\in \langle{\bf v}_i,{\bf v}_{i+1}\rangle_{\mathbb{Q}}$.
	Since any two consecutive points of the sequence $({\bf v}_{i})_{i\geq1}$ are linearly
	independent over $\mathbb{Q}$, we have
	$\langle{\bf v}_i,{\bf v}_{i+1}\rangle_{\mathbb{Q}} = \langle{\bf v}_{j-1},{\bf v}_{j}\rangle_{\mathbb{Q}}$.
	Since ${\bf v}_{j+1} \notin \langle{\bf v}_i,{\bf v}_{i+1}\rangle_{\mathbb{Q}}$,
	the points ${\bf v}_{j-1}, {\bf v}_{j}, {\bf v}_{j+1}$ are linearly independent
	over $\mathbb{Q}$, and we deduce that $j \in I$. Since
	${\bf v}_i$, ${\bf v}_{i+1}$, \ldots, ${\bf v}_{j}$
	$\in \langle{\bf v}_i,{\bf v}_{i+1}\rangle_{\mathbb{Q}}$,
	then any three of these points are linearly dependent, and therefore
	$j$ is the smallest element of $I$
	with $j \geq i + 1$. 
	Put
\[
	V_i := \langle{\bf v}_i,{\bf v}_{i+1}\rangle_{\mathbb{Q}} \ \text{and} \
	V_j := \langle{\bf v}_{j},{\bf v}_{j+1}\rangle_{\mathbb{Q}}.
\]
	To proceed further, we need estimates for the
	heights of the subspaces $V_i$, $V_j$, $V_i \cap V_j$ and $V_i + V_j$.
	Since $V_i \cap V_j = \langle{\bf v}_{j}\rangle_{\mathbb{Q}}$
	and $V_i + V_j = \mathbb{Q}^3$, we have (see \cite{WS}, p.~10)
\[
 \begin{aligned}
	&	H(V_i \cap V_j)  = H(\langle{\bf v}_{j}\rangle_{\mathbb{Q}}) \sim X_{j},
	\\
	&	H(V_i + V_j)  = H(\mathbb{Q}^3) = 1.
 \end{aligned}
\]
	By Lemma \ref{I:2:L:1}(iii),
	the estimates (\ref{I:1:L:6:1}) and (\ref{PIII:P:1:6}), we have
\[
 \begin{aligned}
	H(V_i) &\ll 	\|{\bf v}_i\|_{\infty}
			\|{\bf v}_{i+1}\|_{\infty}
			\max\{ L_{p}({\bf v}_i),L_{p}({\bf v}_{i+1}) \}
		\\
		&\ll 	\|{\bf v}_i\|_{\infty}
			\|{\bf v}_{i+1}\|_{\infty}
			L_{p}({\bf v}_i)
		\ll
		X_{i+1}^{2-\lambda_p}
		\ll
		X_i^{(2-\lambda_p)/(\lambda_p-1)}.
 \end{aligned}
\]
 	Similarly, we get
\[
	H(V_j) \ll X_j^{(2-\lambda_p)/(\lambda_p-1)},
\]
	Applying W.M.~Schmidt's inequality (see \cite{WS}, Lemma 8A, p.~28)
\[
	H(V_i \cap V_j)H(V_i + V_j) \ll H(V_i)H(V_j),
\]
	we conclude that
\[
	X_{j} 
	\ll 
	X_i^{(2-\lambda_p)/(\lambda_p-1)} X_j^{(2-\lambda_p)/(\lambda_p-1)},
\]
	and hence, we have
\[
	X_{j} \ll X_i^{(2-\lambda_p)/(2\lambda_p-3)}.
\]
	For the reverse estimate recall that $i+1 \leq j$, and then
\begin{equation}\label{PIII:P:1:8:0}
	X_i^{(\lambda_p-1)/(2-\lambda_p)} \ll X_{i+1} \leq X_j \ll X_i^{(2-\lambda_p)/(2\lambda_p-3)}.
\end{equation}
	Now, if we write all the elements of $I$ in increasing order,
	we obtain a sequence $\{i_1, i_2,\ldots,i_k,\ldots\}$.
	Then $i_k = i$ for some index $k\geq1$, and by the minimality of $j$ we deduce that $i_{k+1} = j$.
	Let us denote ${\bf y}_{k} := {\bf v}_{i_k}$ and
	${Y}_{k} := {X}_{i_k} = \|{\bf y}_{k}\|_{\infty}$
	for each $k\geq1$.
	Then by (\ref{PIII:P:1:6}) and (\ref{PIII:P:1:8:0}),
	we have
\begin{equation}\label{PIII:P:1:8}
 \begin{aligned}
	& Y_{k}^{\theta} \ll Y_{k+1} \ll Y_{k}^{1/(\theta-1)},
	\\
	&
	Y_{k}^{-2} \ll L_{p}({\bf y}_k) \ll Y_{k}^{-(1+\theta^2/(\theta+1))}.
 \end{aligned}
\end{equation}
	These are the first two estimates in (\ref{PIII:P:1:1}).
	Furthermore, since any three of the points ${\bf v}_i$, ${\bf v}_{i+1}$, \ldots, ${\bf v}_{j}$
	are linearly dependent over ${\mathbb{Q}}$, then
	${\bf v}_{j-1} \in
	\langle{\bf v}_i,{\bf v}_{j}\rangle_{\mathbb{Q}} = \langle{\bf y}_k,{\bf y}_{k+1}\rangle_{\mathbb{Q}}$.
	Going one step further, we obtain the point ${\bf y}_{k+2}={\bf v}_{h}$
	for some $h \geq j+1$, such that any three of the points
	${\bf v}_j$, ${\bf v}_{j+1}$, \ldots, ${\bf v}_{h}$ are
	linearly dependent over ${\mathbb{Q}}$,
	and therefore ${\bf v}_{j+1} \in
	\langle{\bf v}_j,{\bf v}_{h}\rangle_{\mathbb{Q}} = \langle{\bf y}_{k+1},{\bf y}_{k+2}\rangle_{\mathbb{Q}}$.
	It follows that
	$\langle{\bf y}_{k},{\bf y}_{k+1},{\bf y}_{k+2}\rangle_{\mathbb{Q}}$
	contains the
	linearly independent points ${\bf v}_{j-1},{\bf v}_{j},{\bf v}_{j+1}$,
	and therefore
	${\bf y}_{k},{\bf y}_{k+1},{\bf y}_{k+2}$ are also linearly independent.

	To prove the third estimate in (\ref{PIII:P:1:1}),
	we use the fact that $\det({\bf y}_{k})$ is a non-zero integer,
	for each $k\geq1$. So, using the second estimates in (\ref{PIII:P:1:1}),
	for each $k\geq1$, we find that
\[
	1 \leq |\det({\bf y}_{k})|_{\infty}|\det({\bf y}_{k})|_{p}
		\ll Y_k^2 L_{p}({\bf y}_k)
		\ll Y_k^{1-\theta^2/(\theta+1)}.
\]
	To prove the last estimate in (\ref{PIII:P:1:1}), we use
	the fact that ${\bf y}_{k},{\bf y}_{k+1},{\bf y}_{k+2}$
	are linearly independent for each $k\geq1$.
	We also note that the sequence $(L_{p}({\bf y}_{k}))_{k\geq0}$
	is decreasing since $(L_{p}({\bf v}_{i}))_{i\geq0}$
	is decreasing (as we observed above).
	So, using these facts, Lemma \ref{I:2:L:1}(i)
	and the first two estimates in (\ref{PIII:P:1:1}),
	for each $k\geq1$, we find that
\begin{align*}
	1 &\leq  | \det({\bf y}_{k},{\bf y}_{k+1},{\bf y}_{k+2}) |_{\infty}
	| \det({\bf y}_{k},{\bf y}_{k+1},{\bf y}_{k+2}) |_p
	\\
	&\leq
	\|{\bf y}_{k}\|_{\infty}\|{\bf y}_{k+1}\|_{\infty}\|{\bf y}_{k+2}\|_{\infty}
	L_{p}({\bf y}_{k})L_{p}({\bf y}_{k+1})
	\\
	&\ll
	Y_{k}Y_{k+1}Y_{k+2}
	Y_{k}^{-(1+\theta^2/(\theta+1))}Y_{k+1}^{-(1+\theta^2/(\theta+1))}
	\\
	&=
	Y_{k+2}
	Y_{k}^{-\theta^2/(\theta+1)}Y_{k+1}^{-\theta^2/(\theta+1)}
	\\	
	&\ll
	Y_{k}^{1/(\theta-1)^2-\theta^2}.
\end{align*}
\end{proof}
\subsection{Extremal $p$-adic numbers.}

	Here we introduce a notion of extremal p-adic numbers
	and present a criterion which is a p-adic analog
	of Theorem 5.1 of \cite{ARNCAI.1}.
	
\begin{definition} \label{I:5:D:3}
	A number $\xi_p \in \mathbb{Q}_p$ is called extremal if it is not rational
	nor quadratic irrational and if $\gamma$ is an exponent of approximation
	to $\xi_p$ in degree $2$.
\end{definition}
\begin{remark} \label{I:5:R:1}
	By the Remark \ref{I:1:2:R:1}, we can say that $\xi_p \in \mathbb{Q}_p$
	is extremal if it is not rational
	nor quadratic irrational and
	if there exists a constant $c > 0$ such that the inequalities
\begin{equation}\label{I:5:R:1:1}
	\| {\bf x} \|_{\infty} \leq X, \quad
	\| {\bf x} \|_{\infty}L_{p}({\bf x }) \leq c X^{-1/\gamma},
\end{equation}
	have a non-zero solution ${\bf x} \in \mathbb{Z}^3$
	for any real number $X \geq 1$.
\end{remark}
\begin{theorem} \label{I:5:T:1}
	A number $\xi_p \in \mathbb{Q}_p$ is extremal if and only if there exist an increasing sequence
	of positive integers $(X_k)_{k \geq 1}$ and a sequence of
	primitive points $({\bf x}_k)_{k \geq 1}$
	in $\mathbb{Z}^3$ such that, for all $k \geq 1$, we have
\begin{equation}\label{I:5:T:1:1}
 \begin{aligned}
	& X_{k+1} \sim X_k^{\gamma}, \quad
	\| {\bf x}_k \|_{\infty} \sim  X_k, \quad
	L_{p}({\bf x}_k) \sim X_k^{-2}  \sim | \det({\bf x}_k) |_p,
	\\
	& | \det({\bf x}_k) |_{\infty} | \det({\bf x}_k) |_p \sim 1,
	\\
	&| \det({\bf x}_{k},{\bf x}_{k+1},{\bf x}_{k+2}) |_{\infty} | \det({\bf x}_{k},{\bf x}_{k+1},{\bf x}_{k+2}) |_p \sim 1.
 \end{aligned}
\end{equation}
\end{theorem}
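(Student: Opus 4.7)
The plan is to derive the forward direction ($\Rightarrow$) as an application of Proposition \ref{PIII:P:1} specialized to $\lambda_p = \gamma$, in which case all the inequalities in (\ref{PIII:P:1:1}) collapse to asymptotic equalities; the reverse direction ($\Leftarrow$) will follow by using the sequence $({\bf x}_k)$ as a direct source of approximations while ruling out the rational or quadratic case via a linear-independence argument based on the product formula.

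For the forward implication, assume $\xi_p$ is extremal, so $\gamma$ is an exponent of approximation of $\xi_p$ in degree $2$. Since $3/2 < \gamma$, Proposition \ref{PIII:P:1} applies. The key identity is $\theta := (\gamma-1)/(2-\gamma) = \gamma$, which follows at once from $\gamma - 1 = 1/\gamma$ and $2 - \gamma = 1/\gamma^2$. Substituting $\theta = \gamma$ into (\ref{PIII:P:1:1}) one checks that $1/(\theta-1) = \gamma$, $\theta^2/(\theta+1) = 1$, $1 - \theta^2/(\theta+1) = 0$, and $1/(\theta-1)^2 - \theta^2 = 0$, so the upper and lower bounds coincide, yielding
\[
Y_{k+1} \sim Y_k^{\gamma}, \quad L_p({\bf y}_k) \sim Y_k^{-2}, \quad |\det({\bf y}_k)|_{\infty}|\det({\bf y}_k)|_p \sim 1,
\]
together with the analogous asymptotic for $\det({\bf y}_k, {\bf y}_{k+1}, {\bf y}_{k+2})$. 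Setting $X_k := Y_k$ and ${\bf x}_k := {\bf y}_k$ (extracting a tail so that $(X_k)$ is strictly increasing) furnishes all the conditions in (\ref{I:5:T:1:1}) except the relation $L_p({\bf x}_k) \sim |\det({\bf x}_k)|_p$. For this, Lemma \ref{I:2:L:1}(i) combined with $\|{\bf x}_k\|_p \leq 1$ gives $|\det({\bf x}_k)|_p \ll L_p({\bf x}_k) \sim X_k^{-2}$; conversely the trivial bound $|\det({\bf x}_k)|_{\infty} \leq 2\|{\bf x}_k\|_{\infty}^2 \ll X_k^2$ combined with $|\det({\bf x}_k)|_{\infty}|\det({\bf x}_k)|_p \sim 1$ forces $|\det({\bf x}_k)|_p \gg X_k^{-2} \sim L_p({\bf x}_k)$, completing the two-sided estimate.

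For the reverse implication, suppose sequences $(X_k)$ and $({\bf x}_k)$ satisfying (\ref{I:5:T:1:1}) are given. Fix $X \geq X_1$ and pick $k$ with $X_k \leq X < X_{k+1}$; then $\|{\bf x}_k\|_{\infty} \ll X$ and $\|{\bf x}_k\|_{\infty} L_p({\bf x}_k) \sim X_k^{-1} \ll X^{-1/\gamma}$, where the last estimate uses $X < X_{k+1} \sim X_k^{\gamma}$. Enlarging the constant to cover the finitely many $X \in [1, X_1)$ and invoking Remark \ref{I:1:2:R:1}, this shows $\gamma$ is an exponent of approximation to $\xi_p$ in degree $2$. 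To rule out $\xi_p \in \mathbb{Q}$ or $[\mathbb{Q}(\xi_p):\mathbb{Q}] = 2$, suppose a relation $r_0 + r_1\xi_p + r_2\xi_p^2 = 0$ holds with $(r_0, r_1, r_2) \in \mathbb{Z}^3 \setminus \{0\}$. Then for each $k$ the integer
\[
n_k := r_0 x_{k,0} + r_1 x_{k,1} + r_2 x_{k,2} = r_1 (x_{k,1} - x_{k,0}\xi_p) + r_2 (x_{k,2} - x_{k,0}\xi_p^2)
\]
satisfies $|n_k|_{\infty} \ll X_k$ and $|n_k|_p \ll L_p({\bf x}_k) \sim X_k^{-2}$, so $|n_k|_{\infty}|n_k|_p \ll X_k^{-1} \to 0$. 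By the product formula any non-zero integer $n$ satisfies $|n|_{\infty}|n|_p \geq 1$, hence $n_k = 0$ for all $k$ large. This places ${\bf x}_k, {\bf x}_{k+1}, {\bf x}_{k+2}$ inside a common $2$-dimensional plane of $\mathbb{Q}^3$, hence makes them linearly dependent over $\mathbb{Q}$, contradicting the last condition of (\ref{I:5:T:1:1}).

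The main technical point is the collapse of the upper and lower bounds of Proposition \ref{PIII:P:1} into asymptotic equalities in the $\Rightarrow$ direction; this rests entirely on the identity $\theta(\gamma) = \gamma$ and on the elementary two-sided estimate for $|\det({\bf x}_k)|_p$ extracted from the constraints on $L_p({\bf x}_k)$, $\|{\bf x}_k\|_{\infty}$, and the product of determinants. The reverse direction is routine modulo the product-formula argument used to exclude algebraic $\xi_p$ of degree at most $2$.
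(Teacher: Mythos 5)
Your proof is correct and follows essentially the same route as the paper: the forward direction is exactly the paper's (an application of Proposition \ref{PIII:P:1} with $\lambda_p=\gamma$, where $\theta(\gamma)=\gamma$ makes all the exponents in (\ref{PIII:P:1:1}) collapse — you usefully spell out this computation and the two-sided estimate for $|\det({\bf x}_k)|_p$, which the paper leaves implicit), and the reverse direction is the paper's product-formula argument verbatim.
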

\begin{proof}
	$(\Leftarrow)$
	Suppose that for a given number $\xi_p \in \mathbb{Q}_p$ there exist sequences
	$(X_k)_{k \geq 1}$ and $({\bf x}_k)_{k \geq 1}$ satisfying (\ref{I:5:T:1:1}).
	If $\xi_p$ is rational or quadratic irrational, then
	there exists $r,t,s \in \mathbb{Z}$
	not all zero such that $r + t \xi_p + s \xi_p^2 = 0$
	and hence, for all $k\geq1$, we have
\begin{align*}
	| r x_{k,0} + t x_{k,1} + s x_{k,2} |_{\infty}
		&| r x_{k,0} + t x_{k,1} + s x_{k,2} |_p
		\\
		& \ll
		X_k | t (x_{k,1} - x_{k,0}\xi_p) + s (x_{k,2} - x_{k,0}\xi_p^2) |_p
		\\
		& \ll
		X_k L_{p}({\bf x}_k) \ll X_k X_k^{-2} \ll X_k^{-1}.
\end{align*}
	Since $r x_{k,0} + t x_{k,1} + s x_{k,2}$
	is an integer, this implies that $r x_{k,0} + t x_{k,1} + s x_{k,2} = 0$ for
	all $k$ sufficiently large. So, for each
	$k$ sufficiently large, the point ${\bf x}_k$
	belongs to some fixed 2-dimensional subspace of $\mathbb{Q}^3$,
	but this contradicts the fact that the determinant of
	three consecutive points is non-zero.
	Therefore $\xi_p$ is not rational nor quadratic irrational.
	Moreover, for any sufficiently large real number $X$ there exists an
	index $k \geq 1$
	such that $\| {\bf x}_k \|_{\infty} \leq X < \| {\bf x}_{k+1} \|_{\infty}$
	and hence the point ${\bf x}_k$ satisfies
\[
	\| {\bf x}_k \|_{\infty} \leq  X, \quad
	\| {\bf x}_k \|_{\infty} L_{p}({\bf x}_k)
	\ll X_k^{-1}
	\ll X_{k+1}^{-1/\gamma}
	\ll \| {\bf x}_{k+1} \|_{\infty}^{-1/\gamma}
	\leq X^{-1/\gamma}.
\]
	So, $\xi_p$ is extremal.
	\\
	$(\Rightarrow)$
		This follows from Proposition \ref{PIII:P:1},
		with $\lambda_p = \gamma$.
\end{proof}

%
	%
\subsection{Constraints on the exponents
	of approximation and a recurrence relation among points of 
	an approximation sequence ($p$-adic case)}

	Let $p$ be some prime number in $\mathcal{S}$ and let
	$\xi_{p} \in \mathbb{Q}_p$
	with $[\mathbb{Q}(\xi_{p}):\mathbb{Q}] > 2$.
	Suppose that $\lambda_{p} \in (3/2,\gamma]$ is an exponent of
	approximation to
	$\xi_{p}$ in degree $2$ in the sense of Definition \ref{I:1:D:4}.
	Let $({\bf y}_{k})_{k \geq 1}$ and $(Y_{k})_{k \geq 1}$ be
	the sequences corresponding to $\xi_{p}$, constructed
	in Proposition \ref{PIII:P:1}.
	Define monotone increasing functions on the interval $(1,2)$,
	by the following formulas
\begin{equation}\label{RP7:2}
\begin{aligned}
	\theta(\lambda) &= \frac{\lambda-1}{2-\lambda},
	\\
	\delta(\lambda) &= \frac{\theta^2}{\theta+1},
	\\
	\phi(\lambda) &=
	\frac{\theta^2 -1}{\theta^2+1},
	\\
	\psi(\lambda) &= \frac{\theta -1}{\theta+1} = 2\lambda-3,
\end{aligned}
\end{equation}
	and monotone decreasing functions on the interval $(3/2,2)$,
	by the formulas
\begin{equation}\label{RP7:3}
\begin{aligned}
	f(\lambda) &=
	\frac{1}{(\theta-1)(\lambda-1)} - \frac{\delta}{\lambda-1}-1
	=
	\frac{1}{(\theta-1)(\lambda-1)} - \theta-1,
	\\
	g(\lambda) &= 1-\delta\theta(\theta-1).
\end{aligned}
\end{equation}
	We note that $\psi(3/2) = \phi(3/2) = 0$ and
	$g(\gamma) = f(\gamma) = 0$, and that
\begin{equation}\label{RP7:3:1}
\begin{aligned}
	0 &< g(\lambda) < f(\lambda) \quad \forall \lambda \in (3/2,\gamma),
	\\
	0 &< \psi(\lambda) < \phi(\lambda) \quad \forall \lambda \in (3/2,2).
\end{aligned}
\end{equation}
	Also, we note that functions $f$ and $\phi$
	map the interval $\big(3/2,\gamma\big)$ respectively
	onto the intervals $(0,\infty)$ and $\big(0,\gamma/(2+\gamma)\big)$.
	Since the function $f-\phi$ is continuous and changes its sign
	on the interval $\big(3/2,\gamma\big)$, there exists a number
	$\lambda_{p,0} \in (3/2,\gamma)$,
	such that $\phi(\lambda_{p,0})=f(\lambda_{p,0})$ and
\begin{equation}\label{RP7:4}
	0 < g(\lambda) \leq f(\lambda)< \phi(\lambda)
	\quad \forall \lambda \in (\lambda_{p,0},\gamma],
\end{equation}
	and $\lambda_{p,0}\approx 1.60842266\ldots$.
	Furthermore, the function $\psi$ maps the interval $(3/2,\gamma)$
	onto the interval $(0,1/\gamma^3)$.
	By the second relation in (\ref{RP7:3:1}) and since the function
	$f-\psi$ is continuous and changes its sign on the interval
	$\big(3/2,\gamma\big)$, there exists
	a number $\lambda_{p,1} \in (\lambda_{p,0},\gamma)$,
	such that $\psi(\lambda_{p,1})=f(\lambda_{p,1})$ and
\begin{equation}\label{RP7:5}
	0 < g(\lambda) \leq f(\lambda)< \psi(\lambda)< \phi(\lambda)
	 \quad \forall \lambda \in (\lambda_{p,1},\gamma],
\end{equation}
	and $\lambda_{p,1}\approx 1.61263521\ldots$.
\begin{proposition} \label{RP7:P:1}
	Take $\epsilon \in \big(0,\gamma/(2+\gamma)\big)$.
	\\
	(i) Suppose that $\phi(\lambda_{p}) > \epsilon$.
	Then for $k\gg1$, we have
\begin{equation}\label{RP7:P:1:1}
	Y_k^{1+\epsilon} < Y_{k+2}^{1-\epsilon}.
\end{equation}
	(ii) Furthermore, suppose that $f(\lambda_{p}) < \epsilon$.
	Then for each $k\gg1$ and each real number $X\geq1$ with
\begin{equation}\label{RP7:P:1:2}
	X \in [Y_k^{1+\epsilon},Y_{k+2}^{1-\epsilon}],
\end{equation}
	any non-zero integer solution ${\bf x}$ of (\ref{I:1:D:4:1}) satisfies
\begin{equation}\label{RP7:P:1:3}
	{\bf x} \in \langle{\bf y}_{k},{\bf y}_{k+1}\rangle_{\mathbb{Q}}.
\end{equation}
\end{proposition}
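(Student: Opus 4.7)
The proof will mirror that of Proposition \ref{RP6:P:1}, adapted to the $p$-adic setting through the product formula. For part (i), the inequality $\phi(\lambda_p) > \epsilon$ rewrites as $1+\epsilon < (1-\epsilon)\theta^2$, while iterating the lower bound $Y_{k+1} \gg Y_k^\theta$ from (\ref{PIII:P:1:1}) gives $Y_{k+2} \gg Y_k^{\theta^2}$. These combine immediately to yield (\ref{RP7:P:1:1}) for all sufficiently large $k$.

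For part (ii), I would show that the integer determinant $\det({\bf y}_k, {\bf y}_{k+1}, {\bf x})$ vanishes once $k$ is large, which forces ${\bf x} \in \langle {\bf y}_k, {\bf y}_{k+1}\rangle_\mathbb{Q}$ since the points ${\bf y}_k, {\bf y}_{k+1}$ are linearly independent. Assuming on the contrary that this integer is non-zero, the product formula gives $1 \leq |\det|_\infty \cdot |\det|_p$, and Lemma \ref{I:2:L:1}(i) supplies bounds at both places. At infinity the trivial estimate $|\det|_\infty \ll Y_k Y_{k+1} X$ suffices. At $p$, since all points involved lie in $\mathbb{Z}^3$ so that $\|\cdot\|_p \leq 1$, Lemma \ref{I:2:L:1}(i) combined with the fact that $L_p({\bf y}_j)$ is decreasing in $j$ (established inside the proof of Proposition \ref{PIII:P:1}) produces $|\det|_p \ll L_p({\bf y}_k)\bigl(L_p({\bf x}) + L_p({\bf y}_{k+1})\bigr)$.

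Multiplying, the bound splits into two pieces. For the piece containing $L_p({\bf x}) \leq cX^{-\lambda_p}$, I would substitute $X \geq Y_k^{1+\epsilon}$ together with $Y_{k+1} \ll Y_k^{1/(\theta-1)}$ and $L_p({\bf y}_k) \ll Y_k^{-(1+\delta)}$; after applying the identity $\delta = \theta(\lambda_p - 1)$ the exponent of $Y_k$ simplifies to $(\lambda_p - 1)\bigl(f(\lambda_p) - \epsilon\bigr)$. For the piece containing $L_p({\bf y}_{k+1})$, I would substitute $X \leq Y_{k+2}^{1-\epsilon}$ and express everything in terms of $Y_{k+2}$ via the lower bounds $Y_{k+1} \gg Y_{k+2}^{\theta-1}$ and $Y_k \gg Y_{k+2}^{(\theta-1)^2}$; the exponent of $Y_{k+2}$ collapses to $g(\lambda_p) - \epsilon$. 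The hypothesis $f(\lambda_p) < \epsilon$, combined with $g \leq f$ from (\ref{RP7:3:1}) and (\ref{RP7:4}), makes both exponents strictly negative, so the product tends to $0$ as $k \to \infty$, contradicting $|\det|_\infty |\det|_p \geq 1$.

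The main obstacle will be the precise bookkeeping of exponents, namely verifying that the natural bounds from Lemma \ref{I:2:L:1}(i) and (\ref{PIII:P:1:1}) align exactly with the functions $f$ and $g$ from (\ref{RP7:3}). The identity $\delta = \theta(\lambda_p - 1)$ encodes the equivalence of the two forms of $f$ in (\ref{RP7:3}), and the emergence of $g(\lambda_p) = 1 - \delta\theta(\theta-1)$ comes from the algebraic collapse $\delta(\theta-1) + \delta(\theta-1)^2 = \delta\theta(\theta-1)$ in the second term; everything hinges on these cancellations producing exactly the quantities whose sign is controlled by the hypothesis on $\epsilon$.
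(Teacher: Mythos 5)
Your proposal is correct and follows essentially the same route as the paper's proof: part (i) via $Y_{k+2} \gg Y_k^{\theta^2}$, and part (ii) by bounding $|\det({\bf y}_k,{\bf y}_{k+1},{\bf x})|_{\infty}$ by $Y_kY_{k+1}X$ and $|\det|_p$ by $L_p({\bf y}_k)\max\{L_p({\bf x}),L_p({\bf y}_{k+1})\}$, then showing the product is $\max\{Y_k^{(\lambda_p-1)(f-\epsilon)},Y_{k+2}^{g-\epsilon}\} = o(1)$. The exponent bookkeeping you describe, including $\delta=\theta(\lambda_p-1)$ and the collapse $\delta(\theta-1)+\delta(\theta-1)^2=\delta\theta(\theta-1)$, is exactly what the paper does.
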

\begin{proof}
	For the proof of (i) we rewrite the condition
	$\phi(\lambda_{p}) > \epsilon$ in the form
\[
	1+\epsilon < (1-\epsilon)\theta^2,
\]
	where $\theta=\theta(\lambda_{p})$ is defined in (\ref{RP7:3}).
	Also, by the estimates Proposition \ref{PIII:P:1},
	we have
\[
	Y_k^{\theta^2} \ll Y_{k+2},
\]
	for each $k\geq1$.
	Combining these inequalities, we find that (\ref{RP7:P:1:1}) holds,
	for $k\gg1$.
	For the proof of (ii) we use Part (i). Fix
	a real number $X$ satisfying (\ref{RP7:P:1:2}).
	Choose a non-zero integer solution ${\bf x}$ of (\ref{I:1:D:4:1}),
	corresponding to this $X$.
	In order to prove (\ref{RP7:P:1:3}), it suffices to show that
	the determinant $\det({\bf y}_{k},{\bf y}_{k+1},{\bf x})$
	is zero. Using Lemma \ref{I:2:L:1}(i)
	and the fact that $L_p({\bf y}_{k+1})<L_p({\bf y}_{k})$ for each $k\geq0$,
	which follows from Lemma \ref{I:1:L:6}, we have
\begin{align*}
	| \det({\bf y}_{k},{\bf y}_{k+1},{\bf x}) |_{\infty}
	&\ll
	Y_{k}Y_{k+1}X,
	\\
	| \det({\bf y}_{k},{\bf y}_{k+1},{\bf x}) |_{p} \
	&\ll
	\max\{
		L_{p}({\bf y}_{k+1})L_{p}({\bf x}),
		L_{p}({\bf y}_{k})L_{p}({\bf x}),
		L_{p}({\bf y}_{k+1})L_{p}({\bf y}_{k})\}
		\\
	&=
	\max\{
		L_{p}({\bf y}_{k})L_{p}({\bf x}),
		L_{p}({\bf y}_{k+1})L_{p}({\bf y}_{k})\}.
\end{align*}
	By Proposition \ref{PIII:P:1}, we find that
\begin{align*}
	| \det({\bf y}_{k},{\bf y}_{k+1},{\bf x}) |_{p}
	&\ll
	\max\{
		Y_{k}^{-1-\delta}X^{-\lambda_{p}},
		Y_{k+1}^{-1-\delta}Y_{k}^{-1-\delta}\}
	\\
	&\ll
	Y_{k}^{-1-\delta}
	\max\{X^{-\lambda_{p}},Y_{k+1}^{-1-\delta}\},
\end{align*}
	where $\delta = \delta(\lambda_{p})$ is as defined in (\ref{RP7:2}).
	To find an upper bound for the product of these two norms we use
	Proposition \ref{PIII:P:1}, the hypothesis (\ref{RP7:P:1:2})
	and the fact that $\lambda_{p} > 1$. So, we have
\begin{align*}
	| \det({\bf y}_{k},{\bf y}_{k+1},{\bf x}) |_{\infty}
	| \det({\bf y}_{k},{\bf y}_{k+1},{\bf x}) |_{p}
	&\ll
	Y_{k+1}XY_{k}^{-\delta}
	\max\{X^{-\lambda_{p}},Y_{k+1}^{-1-\delta}\}
	\\
	&\ll
	\max\{
	Y_{k+1}Y_{k}^{-\delta}X^{-(\lambda_{p}-1)},
	XY_{k+1}^{-\delta}Y_{k}^{-\delta}
	\}
	\\
	&\ll
	\max\{
	Y_{k}^{1/(\theta-1)-\delta-(\lambda_{p}-1)(1+\epsilon)},
	Y_{k+2}^{1-\epsilon-\delta(\theta-1)-\delta(\theta-1)^2}
	\}
	\\
	&=
	\max\{
	Y_{k}^{(\lambda_{p}-1)(f(\lambda_{p})-\epsilon)},
	Y_{k+2}^{g(\lambda_{p})-\epsilon}
	\},
\end{align*}
	where $\theta = \theta(\lambda_{p})$ and
	$f(\lambda_{p})$, $g(\lambda_{p})$ are defined in (\ref{RP7:3}).
	Since $\lambda_{p} \in (3/2,\gamma]$ and
	$f(\lambda_{p}) < \epsilon$, by (\ref{RP7:3:1}), we also get
	$g(\lambda_{p}) < \epsilon$, and so
\[
	| \det({\bf y}_{k},{\bf y}_{k+1},{\bf x}) |_{\infty}
	| \det({\bf y}_{k},{\bf y}_{k+1},{\bf x}) |_{p} = o(1).
\]
	Since the determinant is an integer, we conclude that it is zero,
	and therefore the points ${\bf y}_{k}, {\bf y}_{k+1}$ and ${\bf x}$
	are linearly dependent. Hence, (\ref{RP7:P:1:3}) holds.

\end{proof}
\begin{proposition} \label{RP7:P:2}
	Take $\epsilon \in (0,1/\gamma^3)$.
	\\
	(i) Suppose that $\psi(\lambda_{p}) > \epsilon$. Then for $k\gg1$,
	we have
\begin{equation}\label{RP7:P:2:1}
	Y_k^{1+\epsilon} < Y_{k+1}^{1-\epsilon}.
\end{equation}
	(ii) Furthermore, suppose that $f(\lambda_{p}) < \epsilon$.
	Then for each $k\gg1$ and each real number $X\geq1$ with
\begin{equation}\label{RP7:P:2:2}
	X \in [Y_k^{1+\epsilon},Y_{k+1}^{1-\epsilon}],
\end{equation}
	any non-zero integer solution ${\bf x}$ of (\ref{I:1:D:4:1}) is
	a rational multiple of ${\bf y}_{k}$.
\end{proposition}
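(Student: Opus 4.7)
The strategy is to mimic the proof of Proposition \ref{RP6:P:2} from the real case, which is already in the paper. All the ingredients have been set up in parallel: Proposition \ref{PIII:P:1} plays the role of Proposition \ref{RP5:P:1}, and Proposition \ref{RP7:P:1} plays the role of Proposition \ref{RP6:P:1}; the functions $\theta,\delta,\phi,\psi,f,g$ defined in (\ref{RP7:2})--(\ref{RP7:3}) are analogous to those in (\ref{RP6:2})--(\ref{RP6:3}), and they satisfy the same relation (\ref{RP7:3:1}). So the proof should be essentially a transcription.

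For part (i), I would rewrite the assumption $\psi(\lambda_p) > \epsilon$, namely $(\theta-1)/(\theta+1) > \epsilon$ with $\theta = \theta(\lambda_p) = (\lambda_p - 1)/(2-\lambda_p)$, in the equivalent form
\[
1+\epsilon < (1-\epsilon)\,\theta.
\]
By the first line of (\ref{PIII:P:1:1}) in Proposition \ref{PIII:P:1}, there is an implicit constant so that $Y_k^\theta \ll Y_{k+1}$ for every $k\geq 1$. Combining these two facts, for $k \gg 1$ (large enough that the implicit constant is absorbed), I obtain $Y_k^{1+\epsilon} < Y_k^{(1-\epsilon)\theta} \ll Y_{k+1}^{1-\epsilon}$, which is exactly (\ref{RP7:P:2:1}).

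For part (ii), the key observation is that by part (i), the intervals from Proposition \ref{RP7:P:1} applied at consecutive indices overlap:
\[
[Y_{k-1}^{1+\epsilon},Y_{k+1}^{1-\epsilon}] \cap [Y_{k}^{1+\epsilon},Y_{k+2}^{1-\epsilon}] \supseteq [Y_{k}^{1+\epsilon},Y_{k+1}^{1-\epsilon}] \neq \emptyset
\]
for each $k\gg1$. Moreover, by (\ref{RP7:3:1}) we have $\phi(\lambda_p) > \psi(\lambda_p) > \epsilon$, so the hypothesis $\phi(\lambda_p) > \epsilon$ of Proposition \ref{RP7:P:1}(i) is satisfied; together with the assumption $f(\lambda_p) < \epsilon$, all hypotheses of Proposition \ref{RP7:P:1}(ii) hold, both at index $k-1$ and at index $k$. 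Thus any non-zero integer solution ${\bf x}$ of (\ref{I:1:D:4:1}) with $X \in [Y_k^{1+\epsilon},Y_{k+1}^{1-\epsilon}]$ lies simultaneously in $\langle {\bf y}_{k-1}, {\bf y}_k \rangle_{\mathbb{Q}}$ and in $\langle {\bf y}_k, {\bf y}_{k+1} \rangle_{\mathbb{Q}}$. Since consecutive points of the sequence $({\bf y}_k)$ are linearly independent (as noted at the end of the proof of Proposition \ref{PIII:P:1}), these two 2-dimensional subspaces intersect exactly in $\langle {\bf y}_k \rangle_{\mathbb{Q}}$, and the conclusion follows.

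I do not expect any serious obstacle: the only substantive content is the elementary inequality in part (i), and the rest is a clean intersection argument that parallels the real case line by line. The main thing to verify carefully is that the linear independence of $({\bf y}_{k-1}, {\bf y}_k)$ and of $({\bf y}_k, {\bf y}_{k+1})$ genuinely holds in the $p$-adic setting; this is automatic from the construction in Proposition \ref{PIII:P:1}, where the ${\bf y}_k$ are chosen so that any two consecutive ones are linearly independent.
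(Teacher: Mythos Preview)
Your proof is correct and follows essentially the same approach as the paper: part (i) via the rewriting $1+\epsilon < (1-\epsilon)\theta$ combined with $Y_k^\theta \ll Y_{k+1}$ from Proposition \ref{PIII:P:1}, and part (ii) by intersecting the conclusions of Proposition \ref{RP7:P:1} at indices $k-1$ and $k$. One small sharpening: the intersection $\langle {\bf y}_{k-1},{\bf y}_k\rangle_{\mathbb{Q}}\cap\langle {\bf y}_k,{\bf y}_{k+1}\rangle_{\mathbb{Q}}=\langle {\bf y}_k\rangle_{\mathbb{Q}}$ requires that the \emph{three} points ${\bf y}_{k-1},{\bf y}_k,{\bf y}_{k+1}$ are linearly independent (not just consecutive pairs), which is indeed established at the end of the proof of Proposition \ref{PIII:P:1}.
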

\begin{proof}
	For the proof of (i) we write the condition
	$\psi(\lambda_{p}) > \epsilon$ in the form
\[
	1+\epsilon < (1-\epsilon)\theta(\lambda_{p}).
\]
	Also, by Proposition \ref{PIII:P:1}, we have
\[
	Y_k^{\theta(\lambda_{p})} \ll Y_{k+1},
\]
	for each $k\geq1$.
	Combining these relations, we find that (\ref{RP7:P:2:1}) holds for each $k\gg1$.
	For the proof of (ii) we use Part (i) and Proposition \ref{RP7:P:1}.
	By Part (i), we have that
\[
	[Y_{k-1}^{1+\epsilon},Y_{k+1}^{1-\epsilon}]
	\cap
	[Y_{k}^{1+\epsilon},Y_{k+2}^{1-\epsilon}]
	=
	[Y_{k}^{1+\epsilon},Y_{k+1}^{1-\epsilon}] \neq \emptyset,
\]
	for each $k\gg1$.
	Also, by (\ref{RP7:3:1}),  we have
	$\phi(\lambda_{p}) > \psi(\lambda_{p}) >\epsilon$.
	Hence, by Proposition \ref{RP7:P:1},
	for each $k\gg1$ and each real number
$
	X \in 
	[Y_{k}^{1+\epsilon},Y_{k+1}^{1-\epsilon}],
$
	any non-zero integer solution ${\bf x}$ of (\ref{I:1:D:4:1})
	satisfies
\[
	{\bf x} \in
	\langle{\bf y}_{k-1},{\bf y}_{k}\rangle_{\mathbb{Q}}
	\cap
	\langle{\bf y}_{k},{\bf y}_{k+1}\rangle_{\mathbb{Q}}
	=
	\langle{\bf y}_{k}\rangle_{\mathbb{Q}}.
\]
\end{proof}
	Let $p \in \mathcal{S}$ and
	let
	$\bar \lambda =
	(\lambda_{\infty},\lambda_{p},
	(\lambda_{\nu})_{{\nu} \in \mathcal{S}\setminus \{p\}}) \in
	\mathbb{R}_{>0}^{|\mathcal{S}|+1}$ be an exponent of approximation
	to $\bar \xi = (\xi_{\infty},\xi_{p},
	(\xi_{\nu})_{{\nu} \in \mathcal{S}\setminus \{p\}})
	\in \mathbb{R} \times
	\mathbb{Q}_{p} \times
	\prod_{{\nu} \in \mathcal{S}\setminus \{p\}}\mathbb{Q}_{\nu}$
	in degree $2$,
	with $[\mathbb{Q}(\xi_{p}):\mathbb{Q}] > 2$ and
	$\lambda_{p} \in (3/2,\gamma]$.
	Let $({\bf y}_{k})_{k \geq 1}$ and $(Y_{k})_{k \geq 1}$ be
	the sequences corresponding to $\xi_{p}$, as
	in Proposition \ref{PIII:P:1}.
\begin{corollary} \label{RP7:C:1}
	Suppose that $
	f(\lambda_{p}) < \psi(\lambda_{p})$.
	Then, we have
\[
	\lambda_{\infty}+
	\sum_{{\nu} \in \mathcal{S}\setminus \{p\}}\lambda_{\nu}
	\leq
	\frac{-\delta(\lambda_{p})}{1+f(\lambda_{p})}.
\]
\end{corollary}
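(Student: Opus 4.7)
The plan is to transcribe the proof of Corollary~\ref{RP6:C:1} to the $p$-adic setting, substituting Propositions~\ref{PIII:P:1} and~\ref{RP7:P:2} for the real analogues~\ref{RP5:P:1} and~\ref{RP6:P:2}. The hypothesis $f(\lambda_{p})<\psi(\lambda_{p})$ combined with~(\ref{RP7:3:1}) and $\lambda_{p}\in(3/2,\gamma]$ allows me to fix some $\epsilon$ with $f(\lambda_{p})<\epsilon<\psi(\lambda_{p})\leq 1/\gamma^{3}$, so the $\epsilon$-dependent statements of Proposition~\ref{RP7:P:2} are available. Since $\bar\lambda$ is an exponent of approximation in degree~$2$ to~$\bar\xi$, in particular $\lambda_{p}$ is an exponent of approximation in degree~$2$ to $\xi_{p}$ (Definition~\ref{I:1:D:4}), and Proposition~\ref{RP7:P:2} then guarantees, for every $k\gg 1$ and every $X\in[Y_{k}^{1+\epsilon},Y_{k+1}^{1-\epsilon}]$, that any non-zero integer solution of~(\ref{I:1:D:4:1}) is a rational multiple of~${\bf y}_{k}$; a fortiori the same holds for any solution of the full system~(\ref{I:1:D:1:1}).

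Specialising to $X=Y_{k}^{1+\epsilon}$, pick a non-zero integer solution ${\bf x}=m\,{\bf y}_{k}$ of~(\ref{I:1:D:1:1}) with $m\in\mathbb{Z}_{\mathcal{S}}$. The defining inequalities give
\[
|m|_{\infty}Y_{k} \leq Y_{k}^{1+\epsilon},\qquad |m|_{\infty} L_{\infty}({\bf y}_{k}) \ll Y_{k}^{-\lambda_{\infty}(1+\epsilon)},\qquad |m|_{\nu} L_{\nu}({\bf y}_{k}) \ll Y_{k}^{-\lambda_{\nu}(1+\epsilon)} \ (\nu\in\mathcal{S}).
\]
The integer $\det({\bf y}_{k})$ is non-zero by the third line of~(\ref{PIII:P:1:1}), so Lemma~\ref{I:2:L:1}(i) and the product formula for an integer yield
\[
1 \leq |\det({\bf y}_{k})|_{\infty} \prod_{\nu\in\mathcal{S}} |\det({\bf y}_{k})|_{\nu} \ll Y_{k} L_{\infty}({\bf y}_{k}) \prod_{\nu\in\mathcal{S}} L_{\nu}({\bf y}_{k}),
\]
and multiplying by $1 = |m|_{\infty}\prod_{\nu\in\mathcal{S}}|m|_{\nu}$ converts the right-hand side into
\[
|m|_{\infty} Y_{k} L_{\infty}({\bf y}_{k}) \cdot |m|_{p} L_{p}({\bf y}_{k}) \cdot \prod_{\nu\in\mathcal{S}\setminus\{p\}} |m|_{\nu} L_{\nu}({\bf y}_{k}).
\]

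The key step, mirroring the use of the intrinsic bound $L_{\infty}({\bf y}_{k})\ll Y_{k}^{-\delta(\lambda_{\infty})}$ in Corollary~\ref{RP6:C:1}, is to estimate the $p$-adic factor by the second line of~(\ref{PIII:P:1:1}): $|m|_{p} L_{p}({\bf y}_{k}) \leq L_{p}({\bf y}_{k}) \ll Y_{k}^{-(1+\delta(\lambda_{p}))}$, rather than by the weaker bound $Y_{k}^{-\lambda_{p}(1+\epsilon)}$ coming from the approximation system. Combined with $|m|_{\infty} Y_{k} L_{\infty}({\bf y}_{k}) \ll Y_{k}^{1-\lambda_{\infty}(1+\epsilon)}$ and the approximation bounds at the remaining places, this produces
\[
1 \ll Y_{k}^{\,-\delta(\lambda_{p}) - (1+\epsilon)\left(\lambda_{\infty}+\sum_{\nu\in\mathcal{S}\setminus\{p\}}\lambda_{\nu}\right)}.
\]
Since $Y_{k}\to\infty$, the exponent must be non-negative, giving $\lambda_{\infty}+\sum_{\nu\neq p}\lambda_{\nu}\leq -\delta(\lambda_{p})/(1+\epsilon)$; letting $\epsilon\to f(\lambda_{p})^{+}$ yields the claimed bound. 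The combinatorial structure is identical to that of Corollary~\ref{RP6:C:1}, so no serious new difficulty arises; the only point requiring attention is recognising that in this setting it is the $p$-adic place, rather than the infinite one, whose factor must be controlled by the sharper intrinsic estimate from~(\ref{PIII:P:1:1}).
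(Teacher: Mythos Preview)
Your proof is correct and follows essentially the same approach as the paper's own proof: choose $\epsilon$ between $f(\lambda_p)$ and $\psi(\lambda_p)$, invoke Proposition~\ref{RP7:P:2} at $X=Y_k^{1+\epsilon}$ to force any solution of~(\ref{I:1:D:1:1}) to be a multiple of ${\bf y}_k$, apply the product formula to $\det({\bf y}_k)\neq 0$, and bound the $p$-adic factor by the intrinsic estimate $L_p({\bf y}_k)\ll Y_k^{-(1+\delta(\lambda_p))}$ from~(\ref{PIII:P:1:1}) rather than by the approximation bound. The only cosmetic difference is that you take $m\in\mathbb{Z}_{\mathcal{S}}$ and use $|m|_\infty\prod_{\nu\in\mathcal{S}}|m|_\nu=1$, whereas the paper takes an arbitrary non-zero integer $m$ and uses $|m|_\infty\prod_{\nu\in\mathcal{S}}|m|_\nu\geq 1$ together with $|m|_p\leq 1$; both routes lead to the same exponent inequality.
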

\begin{proof}
	Similarly as in the proof of Corollary \ref{RP6:C:1},
	we choose $\epsilon \in \mathbb{R}$ such that
	$f(\lambda_{p}) <\epsilon< \psi(\lambda_{p})$.
	Since $\lambda_{p} \in (3/2,\gamma]$, we have
	$f(\lambda_{p}) \geq0$ and $\psi(\lambda_{p})\leq1/\gamma^3$,
	so that $\epsilon \in (0,1/\gamma^3)$
	and we can apply Proposition \ref{RP7:P:2}.
	We note that for each $X\geq1$ a solution of (\ref{I:1:D:1:1})
	is a solution of (\ref{I:1:D:4:1}). Hence, by Proposition \ref{RP7:P:2},
	for each $k\gg1$ and each real number $X\geq1$ with
	$X \in [Y_k^{1+\epsilon},Y_{k+1}^{1-\epsilon}]$,
	any non-zero integer solution ${\bf x}$ of (\ref{I:1:D:1:1}) is
	of the form $m {\bf y}_{k}$, for some non-zero integer $m$.
	Putting $X = Y_{k}^{1+\epsilon}$, we have
\begin{align*}
	|m|_{\infty}Y_{k} =|m|_{\infty} \| {\bf y}_{k} \|_{\infty}
	&\leq X = Y_{k}^{1+\epsilon},
	\\
	|m|_{p}L_{p}({\bf y}_{k})
	&\ll X^{-\lambda_{p}}
	= Y_{k}^{-\lambda_{p}(1+\epsilon)},
	\\
	|m|_{\infty} L_{\infty}({\bf y}_{k})
	&\ll X^{-\lambda_{\infty}}
	= Y_{k}^{-\lambda_{\infty}(1+\epsilon)},
	\\
	|m|_{\nu}L_{\nu}({\bf y}_{k})
	&\ll X^{-\lambda_{\nu}}
	= Y_{k}^{-\lambda_{\nu}(1+\epsilon)} \
	\forall {\nu} \in \mathcal{S}\setminus \{p\}.
\end{align*}
 	By the third relation in (\ref{PIII:P:1:1}), we have that
	the determinant $\det({\bf y}_{k})$ is non-zero,
	for each $k\geq1$. So, we find that
\begin{align*}
	1 & \leq |\det({\bf y}_{k})|_{\infty}
		|\det({\bf y}_{k})|_{p}
		\prod_{{\nu} \in \mathcal{S}\setminus \{p\}}
		|\det({\bf y}_{k})|_{\nu}
	\\
	&\ll
	Y_{k}|m|_{\infty}L_{\infty}({\bf y}_{k})
	L_{p}({\bf y}_{k})
	\prod_{{\nu} \in \mathcal{S}\setminus \{p\}}
		|m|_{\nu}L_{\nu}({\bf y}_{k})
	\\
	&\ll
	Y_{k}^{1-\big(\lambda_{\infty}+
	\sum_{{\nu} \in \mathcal{S}\setminus \{p\}}\lambda_{\nu}\big)(1+\epsilon)}
	L_{p}({\bf y}_{k}),
\end{align*}
	for each $k\gg1$.
	By Proposition \ref{PIII:P:1},
	for each $k\geq1$, we have
	$L_{p}({\bf y}_{k}) \ll Y_{k}^{-1-\delta(\lambda_{p})}$, and then the
	relation
\begin{align*}
	1 & \ll
	Y_{k}^{1-\big(\lambda_{\infty}+
	\sum_{{\nu} \in \mathcal{S}\setminus \{p\}}\lambda_{\nu}\big)(1+\epsilon)}
	Y_{k}^{-1-\delta(\lambda_{p})}
	\\
	& =
	Y_{k}^{-(1+\epsilon)\big(\lambda_{\infty}+
	\sum_{{\nu} \in \mathcal{S}\setminus \{p\}}
	\lambda_{\nu}\big)-\delta(\lambda_{p})},
\end{align*}
	holds for each $k\gg1$.
	So, it follows that
\[
	(1+\epsilon)\big(\lambda_{\infty}+
	\sum_{{\nu} \in \mathcal{S}\setminus \{p\}}\lambda_{\nu}\big)
	+\delta(\lambda_{p}) \leq 0,
\]
	whence we get
\[
	\lambda_{\infty}+
	\sum_{{\nu} \in \mathcal{S}\setminus \{p\}}\lambda_{\nu}
	\leq
	\frac{-\delta(\lambda_{p})}{1+\epsilon}.
\]
	Finally, the conclusion follows by letting
	$\epsilon \rightarrow f(\lambda_{p})$.
\end{proof}
\begin{proposition} \label{PII:C:1}
	Let $p$ be a prime number and
	let $\xi_p \in \mathbb{Q}_p$ be with
	$[\mathbb{Q}(\xi_{p}):\mathbb{Q}]>2$.
	Let $\lambda_p \in \mathbb{R}_{>0}$ be an exponent of approximation to $\xi_p$
	in degree $2$.
	Let $({\bf y}_k)_{k \geq 1}$ and $(Y_k)_{k \geq 1}$ be as in the
	statement of Proposition \ref{PIII:P:1}.
	There exists a number $\lambda_{p,0} \approx 1.615358873\ldots$, such that
	if $\lambda_{p} \in (\lambda_{p,0},\gamma]$, then for each $k\geq3$
	sufficiently large, the point ${\bf y}_{k+1}$ is
	a non-zero rational multiple of $[{\bf y}_{k},{\bf y}_{k},{\bf y}_{k-2}]$.

	Moreover, there exists a non-symmetric matrix $M$, such that
	for each sufficiently large $k\geq3$, the point ${\bf y}_{k+1}$ is
	a non-zero rational multiple of ${\bf y}_{k} M_{k} {\bf y}_{k-1}$,
	where
\[
	M_k =
	\left \{
	\begin{matrix}
		M \text{ if } k \text{ is even},
		\\
		{}^t M\text{ if } k \text{ is odd}.
	\end{matrix}
	\right .
\]
\end{proposition}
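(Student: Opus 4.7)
The plan is to transpose the proof of Corollary \ref{RP6:C:2} to the $p$-adic setting, with one essential modification to handle the fact that we no longer have tight bounds on $L_\infty(\mathbf{y}_j)$.  Let $k\geq 2$ and set
\[
\mathbf{w} := [\mathbf{y}_k,\mathbf{y}_k,\mathbf{y}_{k+1}] = -\mathbf{y}_k J \mathbf{y}_{k+1} J \mathbf{y}_k.
\]
By Lemma 2.1(i) of \cite{ARNCAI.1}, $\det(\mathbf{w}) = \det(\mathbf{y}_k)^2 \det(\mathbf{y}_{k+1})$, which is non-zero by the third estimate of (\ref{PIII:P:1:1}).  Hence $0\neq\mathbf{w}\in\mathbb{Z}^3$.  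The central claim is that $\mathbf{w}$ is a non-zero rational multiple of $\mathbf{y}_{k-2}$; once this is proved, the identity $[\mathbf{y}_k,\mathbf{y}_k,\mathbf{w}] = \det(\mathbf{y}_k)^2\,\mathbf{y}_{k+1}$ from Lemma 2.1(ii) of \cite{ARNCAI.1} immediately gives $\mathbf{y}_{k+1}\propto [\mathbf{y}_k,\mathbf{y}_k,\mathbf{y}_{k-2}]$.

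To establish the claim I would apply Proposition \ref{RP7:P:2} with $k$ replaced by $k-2$.  The obstacle is that the trivial bound $\|\mathbf{w}\|_\infty\ll Y_k^2 Y_{k+1}$ arising from Lemma \ref{I:2:L:1}(ii) at $\nu=\infty$ is far too large to fit into $[Y_{k-2}^{1+\epsilon},Y_{k-1}^{1-\epsilon}]$.  The way around this is to pass to the primitive integer vector $\tilde{\mathbf{w}}$ with $\mathbf{w}=d\tilde{\mathbf{w}}$ and exploit that $\mathbf{w}$ automatically has large $p$-adic content: Lemma \ref{I:2:L:1}(ii) at $\nu=p$, combined with $\|\mathbf{y}_j\|_p=1$ and the upper bound $L_p(\mathbf{y}_j)\ll Y_j^{-(1+\delta)}$ of Proposition \ref{PIII:P:1}, gives
\[
\|\mathbf{w}\|_p \ll \max\{L_p(\mathbf{y}_k)^2, L_p(\mathbf{y}_{k+1})\} \ll L_p(\mathbf{y}_{k+1}) \ll Y_{k+1}^{-(1+\delta)},
\]
where $\delta=\theta^2/(\theta+1)$ and $\theta=\theta(\lambda_p)$.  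Since $d\geq 1/\|\mathbf{w}\|_p$ and $d\,|d|_p\geq 1$, this translates into
\[
\|\tilde{\mathbf{w}}\|_\infty \leq \|\mathbf{w}\|_\infty\|\mathbf{w}\|_p \ll Y_k^{2-\theta\delta}, \qquad L_p(\tilde{\mathbf{w}})\|\tilde{\mathbf{w}}\|_\infty \leq L_p(\mathbf{w})\|\mathbf{w}\|_\infty \ll Y_k^{\theta-2\delta}.
\]

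Next I would verify that $\tilde{\mathbf{w}}$ satisfies the inequalities (\ref{I:1:D:4:1}) for some $X\in[Y_{k-2}^{1+\epsilon},Y_{k-1}^{1-\epsilon}]$; equivalently, that $\|\tilde{\mathbf{w}}\|_\infty\leq Y_{k-1}^{1-\epsilon}$ and $L_p(\tilde{\mathbf{w}})\|\tilde{\mathbf{w}}\|_\infty \leq c\,Y_{k-2}^{-(1+\epsilon)(\lambda_p-1)}$.  Using $Y_k\ll Y_{k-1}^{1/(\theta-1)}$ and $Y_k\gg Y_{k-2}^{\theta^2}$, both constraints reduce to explicit polynomial inequalities in $\theta$ and $\epsilon$.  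Comparing them with the range $\epsilon\in(f(\lambda_p),\psi(\lambda_p))$ required by Proposition \ref{RP7:P:2}, one checks (exactly as for the constants $a,b,c$ in the proof of Corollary \ref{RP6:C:2}) that a valid $\epsilon$ exists precisely when $\lambda_p$ exceeds the numerical threshold $\lambda_{p,0}\approx 1.615358873$, which is defined by equating the two binding exponents.  Proposition \ref{RP7:P:2} then forces $\tilde{\mathbf{w}}\propto \mathbf{y}_{k-2}$, whence $\mathbf{w}\propto\mathbf{y}_{k-2}$, completing the first assertion.  The main obstacle is precisely this step: it is essential to recognize that the gcd $d$ is of order at least $Y_{k+1}^{1+\delta}$ (a consequence of $\mathbf{w}$ being $p$-adically close to a rank-one matrix), which is what tames the otherwise hopeless $\|\mathbf{w}\|_\infty$.

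For the second part, the argument is formally identical to the end of Corollary \ref{RP6:C:2}.  Choose $k_0$ to be the smallest even index past which the recurrence $\mathbf{y}_{k+1}\propto[\mathbf{y}_k,\mathbf{y}_k,\mathbf{y}_{k-2}]$ holds, and put $M:= J\mathbf{y}_{k_0-2}J\mathbf{y}_{k_0}\mathbf{y}_{k_0-1}^{-1}$, with $M_k=M$ or ${}^tM$ according to the parity of $k$.  A straightforward induction using the symmetry of each $\mathbf{y}_k$ and the identity $J\mathbf{w}J\mathbf{w}=-\det(\mathbf{w})I$ shows $\mathbf{y}_{k+1}\propto \mathbf{y}_k M_k \mathbf{y}_{k-1}$ for all $k\geq k_0$.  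Finally, if $M$ were symmetric, each $M_k$ would be symmetric; then $\mathbf{y}_{k-1}M_k\mathbf{y}_{k-1}$ would be symmetric and the identity $\mathrm{Tr}(J\mathbf{x}J\mathbf{y}J\mathbf{z})=\det(\mathbf{x},\mathbf{y},\mathbf{z})$ of \cite{ARNCAI.1} would force $\det(\mathbf{y}_{k-1},\mathbf{y}_k,\mathbf{y}_{k+1})=0$ for all large $k$, contradicting the last estimate of (\ref{PIII:P:1:1}).  Hence $M$ is non-symmetric, as required.
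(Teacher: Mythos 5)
Your proposal takes a genuinely different route from the paper. The paper does \emph{not} invoke Proposition~\ref{RP7:P:2} for this result: it sets ${\bf w}=[{\bf y}_{k},{\bf y}_{k},{\bf y}_{k+1}]$ and directly shows that the integer determinants $\det({\bf w},{\bf y}_{k-3},{\bf y}_{k-2})$ and $\det({\bf w},{\bf y}_{k-2},{\bf y}_{k-1})$ have archimedean-times-$p$-adic products $\ll Y_k^{f(\theta)}$ and $\ll Y_k^{g(\theta)}$ respectively, hence vanish once $f(\theta)<0$ and $g(\theta)<0$, and this numerical condition is what defines $\lambda_{p,0}\approx 1.615358873$. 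Since ${\bf y}_{k-3},{\bf y}_{k-2},{\bf y}_{k-1}$ are linearly independent, vanishing of both determinants forces ${\bf w}\propto{\bf y}_{k-2}$. Your idea of taming $\|{\bf w}\|_\infty$ by multiplying by the $p$-adic content and passing to the primitive vector $\tilde{\bf w}$ is a clean and natural observation, and it does sidestep the apparent loss of control at $\nu=\infty$; but as written it has several concrete gaps.

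First, the exponent $L_p({\bf w})\|{\bf w}\|_\infty\ll Y_k^{\theta-2\delta}$ is incorrect: from $L_p({\bf w})\ll L_p({\bf y}_k)^2\ll Y_k^{-2(1+\delta)}$ and $\|{\bf w}\|_\infty\ll Y_k^2Y_{k+1}\ll Y_k^{2+1/(\theta-1)}$, one obtains $L_p({\bf w})\|{\bf w}\|_\infty\ll Y_k^{1/(\theta-1)-2\delta}$, and $1/(\theta-1)>\theta$ for $\theta<\gamma$, so your stated bound is sharper than what the estimates actually give. Second, the claimed ``equivalence'' between ``$\tilde{\bf w}$ satisfies (\ref{I:1:D:4:1}) for some $X\in[Y_{k-2}^{1+\epsilon},Y_{k-1}^{1-\epsilon}]$'' and the pair $\|\tilde{\bf w}\|_\infty\leq Y_{k-1}^{1-\epsilon}$, $L_p(\tilde{\bf w})\|\tilde{\bf w}\|_\infty\leq c\,Y_{k-2}^{-(1+\epsilon)(\lambda_p-1)}$ is false. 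The system (\ref{I:1:D:4:1}) holds for some $X$ in the interval if and only if $[\|\tilde{\bf w}\|_\infty,(c/L_p(\tilde{\bf w}))^{1/\lambda_p}]$ is nonempty and meets the interval; your two conditions ensure the endpoints interleave but do not yield the nonemptiness condition $\|\tilde{\bf w}\|_\infty^{\lambda_p}L_p(\tilde{\bf w})\leq c$. From your two conditions alone one only gets $\|\tilde{\bf w}\|_\infty^{\lambda_p}L_p(\tilde{\bf w})\ll (Y_{k-1}^{1-\epsilon}/Y_{k-2}^{1+\epsilon})^{\lambda_p-1}$, which diverges precisely because $Y_{k-2}^{1+\epsilon}<Y_{k-1}^{1-\epsilon}$. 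Establishing a standalone upper bound on $L_p(\tilde{\bf w})=L_p({\bf w})/\|{\bf w}\|_p$ requires a \emph{lower} bound on $\|{\bf w}\|_p$, which you have not produced. Third, and most importantly, the final sentence of the main argument asserts that the resulting inequalities in $\theta,\epsilon$ ``give the numerical threshold $\lambda_{p,0}\approx 1.615358873$, defined by equating the two binding exponents''; this is precisely where the mathematical content lies, and it is left unverified. The paper's threshold comes from a distinct pair of conditions ($f(\theta)<0$, $g(\theta)<0$), and there is no structural reason your conditions would reproduce exactly this number — indeed, the real-case threshold $\lambda_{\infty,2}=0.61455\ldots$ of Corollary~\ref{RP6:C:2}, obtained by the analogous method, corresponds under the natural translation $\lambda_p=\lambda_\infty+1$ to $1.61455\ldots$, which is \emph{not} equal to $1.615358\ldots$. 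Until your four constraints on $\epsilon$ (those from Proposition~\ref{RP7:P:2} plus the two arising from fitting $\tilde{\bf w}$) are written out explicitly and their simultaneous satisfiability is shown to hold precisely when $\lambda_p>\lambda_{p,0}$, this is a sketch of an alternative proof, not a proof.

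The second half of your argument (the recurrence $\mathbf{y}_{k+1}\propto\mathbf{y}_kM_k\mathbf{y}_{k-1}$ and the non-symmetry of $M$) is exactly the paper's argument, inherited from Corollary~\ref{RP6:C:2}, and is fine.
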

\begin{proof}
	For the proof we follow the arguments of
	\cite{ARNCAI.1} (see proof of Corollary 5.1, p.~50),
	using estimates with the {\it p}-adic norm.
	Let $k\geq4$ be an integer and put
	${\bf w} := [{\bf y}_{k},{\bf y}_{k},{\bf y}_{k+1}]$.
	By Lemma 2.1(i) of \cite{ARNCAI.1}, we have
\[
	\det({\bf w}) = \det({\bf y}_{k})^2 \det({\bf y}_{k+1}).
\]
	Since $\det({\bf y}_k) \neq 0$ for each $k\gg1$, then
	$\det({\bf w}) \neq 0$ and so ${\bf w} \neq 0$.
	By Lemma \ref{I:2:L:1}(ii) and Lemma \ref{I:1:L:6}, we get
\begin{align*}
	\| {\bf w} \|_{\infty} &\ll Y_{k}^2 Y_{k+1},
	\\
	\| {\bf w} \|_{p} &\ll
		\max\{ L_{p}({\bf y}_{k})^2,L_{p}({\bf y}_{k+1}) \},
	\\
	L_{p}({\bf w }) &\ll
		L_{p}({\bf y}_{k})\max\{ L_{p}({\bf y}_{k}),L_{p}({\bf y}_{k+1}) \}.
\end{align*}
	Using these estimates, Lemma \ref{I:2:L:1}(i)
	and Proposition \ref{PIII:P:1}, we find that
\begin{align*}
	| \det({\bf w},{\bf y}_{k-3},{\bf y}_{k-2}) |_{\infty} &\ll
	\| {\bf w} \|_{\infty} \| {\bf y}_{k-3} \|_{\infty} \| {\bf y}_{k-2} \|_{\infty}
	\\
	&\ll
	Y_{k}^2 Y_{k+1} Y_{k-3} Y_{k-2}
	\\
	&\ll
	Y_{k}^{2 + 1/(\theta-1) + 1/\theta^2 + 1/\theta^3 },
	\\
	| \det({\bf w},{\bf y}_{k-3},{\bf y}_{k-2}) |_{p}
		&\ll
		\max\{
		\| {\bf w} \|_{p} L_{p}({\bf y}_{k-3}) L_{p}({\bf y}_{k-2}),
		L_{p}({\bf w }) L_{p}({\bf y}_{k-2}),
		L_{p}({\bf w }) L_{p}({\bf y}_{k-3}) \}
	\\
		&\ll
		L_{p}({\bf y}_{k-3}) \max\{
		\| {\bf w} \|_{p} L_{p}({\bf y}_{k-2}),
		L_{p}({\bf w })\}
	\\
		&\ll
		L_{p}({\bf y}_{k-3}) \max\{
		L_{p}({\bf y}_{k})^2L_{p}({\bf y}_{k-2}),
		L_{p}({\bf y}_{k+1})L_{p}({\bf y}_{k-2}),
	\\
		&
		\quad
		\quad\quad
		\quad\quad
		\quad\quad
		\quad\quad
		\quad\quad
		\quad\quad
		\quad\quad
		L_{p}({\bf y}_{k})^2,
		L_{p}({\bf y}_{k})L_{p}({\bf y}_{k+1})\}
	\\
		&\ll
		L_{p}({\bf y}_{k-3})
		\max\{
		L_{p}({\bf y}_{k})^2,
		L_{p}({\bf y}_{k+1})L_{p}({\bf y}_{k-2})
		\}.
\end{align*}
	Since $3/2 < \lambda_p \leq \gamma$, we have $1 < \theta \leq \gamma$.
	So, $\theta+(\theta-1)^2 \leq 2$ and we find that
\begin{align*}
	| \det({\bf w},{\bf y}_{k-3},{\bf y}_{k-2}) |_{p}
		&\ll
		Y_{k-3}^{-(1+\theta^2/(\theta+1))}
		\max\{ Y_{k}^{-2(1+\theta^2/(\theta+1))},
		Y_{k}^{-\theta(1+\theta^2/(\theta+1))}
			Y_{k-2}^{-(1+\theta^2/(\theta+1))}\}
	\\
		&\ll
		Y_{k}^{-(\theta-1)^3(1+\theta^2/(\theta+1))}
		\max\{Y_{k}^{-2},
		Y_{k}^{-\theta-(\theta-1)^2}\}^{(1+\theta^2/(\theta+1))}.
	\\
		&\ll
		Y_{k}^{-( \theta + (\theta-1)^2 + (\theta-1)^3 )
		(1+\theta^2/(\theta+1))}.
\end{align*}
	Combining these estimates, we get
\begin{equation} \label{PII:C:1:1}
 \begin{aligned}
	| \det({\bf w},{\bf y}_{k-3},{\bf y}_{k-2}) |_{\infty}
	| \det({\bf w},{\bf y}_{k-3},{\bf y}_{k-2}) |_{p}
		&\ll
		Y_{k}^{f(\theta)},
 \end{aligned}
\end{equation}
	where
\[
	f(\theta) = 2 + (\theta-1)^{-1} + \theta^{-2} + \theta^{-3}
		-\big( \theta + (\theta-1)^2 + (\theta-1)^3 \big)
		\big(1+\theta^2/(\theta+1)\big).
\]
	Similarly, we find that
\begin{align*}
	| \det({\bf w},{\bf y}_{k-2},{\bf y}_{k-1}) |_{\infty} &\ll
	\| {\bf w} \|_{\infty} \| {\bf y}_{k-2} \|_{\infty} \| {\bf y}_{k-1} \|_{\infty}
	\\
	&\ll
	Y_{k}^2 Y_{k+1} Y_{k-2} Y_{k-1}
	\\
	&\ll
	Y_{k}^{2 + 1/(\theta-1) + 1/\theta + 1/\theta^2 },
	\\
	| \det({\bf w},{\bf y}_{k-2},{\bf y}_{k-1}) |_{p}
		&\ll
		\max\{
		\| {\bf w} \|_{p} L_{p}({\bf y}_{k-2}) L_{p}({\bf y}_{k-1}),
		L_{p}({\bf w }) L_{p}({\bf y}_{k-1}),
		L_{p}({\bf w }) L_{p}({\bf y}_{k-2}) \}
		\\
		&\ll
		L_{p}({\bf y}_{k-2}) \max\{
		\| {\bf w} \|_{p} L_{p}({\bf y}_{k-1}),
		L_{p}({\bf w })\}
			\\
		&\ll
		L_{p}({\bf y}_{k-2}) \max\{
		L_{p}({\bf y}_{k})^2L_{p}({\bf y}_{k-1}),
		L_{p}({\bf y}_{k+1})L_{p}({\bf y}_{k-1}),
	\\
		&
		\quad
		\quad\quad
		\quad\quad
		\quad\quad
		\quad\quad
		\quad\quad
		\quad\quad
		\quad\quad
		L_{p}({\bf y}_{k})^2,
		L_{p}({\bf y}_{k})L_{p}({\bf y}_{k+1})\}
	\\
		&\ll
		L_{p}({\bf y}_{k-2})
		\max\{
		L_{p}({\bf y}_{k})^2,
		L_{p}({\bf y}_{k+1})L_{p}({\bf y}_{k-1})
		\\
		&\ll
		\Big(
		Y_{k-2}^{-1}
		\max\{
			Y_{k}^{-2},
			Y_{k+1}^{-1}
			Y_{k-1}^{-1}
			 \}
		\Big)^{(1+\theta^2/(\theta+1))}
		\\
		&\ll
		\Big(
		Y_{k}^{-(\theta-1)^2}
		\max\{Y_{k}^{-2},Y_{k}^{-2\theta+1} \}
		\Big)^{(1+\theta^2/(\theta+1))}.
\end{align*}
	Since $\lambda_p \in [8/5,\gamma]$, then $\theta(\lambda_p) \in [3/2,\gamma]$ 
	and so, $-2\theta+1 \leq -2$. So, we have
\[
	| \det({\bf w},{\bf y}_{k-2},{\bf y}_{k-1}) |_{p}
	\ll
	Y_{k}^{-( 2 + (\theta-1)^2 )(1+\theta^2/(\theta+1))}.
\]
	Combning these estimates, we get
\begin{equation} \label{PII:C:1:2}
\begin{aligned}
	| \det({\bf w},{\bf y}_{k-2},{\bf y}_{k-1}) |_{\infty}
	| \det({\bf w},{\bf y}_{k-2},{\bf y}_{k-1}) |_{p}
		&\ll
		Y_{k}^{g(\theta)},
\end{aligned}
\end{equation}
	where
\[
	g(\theta) =
	2 + (\theta-1)^{-1} + \theta^{-1} + \theta^{-2}
	-( 2 + (\theta-1)^2 )(1+\theta^2(\theta+1)^{-1}).
\]
	Using MAPLE we find that  $\lambda_{p,0} \approx 1.615358873\ldots$ is the smallest
	number from the interval $[8/5,\gamma]$, with the property that
 	$g(\theta(\lambda))<0$ and $f(\theta(\lambda))<0$ for each $\lambda \in (\lambda_{p,0},\gamma]$.
 	Since $\lambda_p \in (\lambda_{p,0},\gamma]$, it follows that
\begin{gather*}
	| \det({\bf w},{\bf y}_{k-3},{\bf y}_{k-2}) |_{\infty}
	| \det({\bf w},{\bf y}_{k-3},{\bf y}_{k-2}) |_{p} = o(1),
	\\
	| \det({\bf w},{\bf y}_{k-2},{\bf y}_{k-1}) |_{\infty}
	| \det({\bf w},{\bf y}_{k-2},{\bf y}_{k-1}) |_{p} = o(1).
\end{gather*}
	Since
	$\det({\bf w},{\bf y}_{k-3},{\bf y}_{k-2})$ and
	$\det({\bf w},{\bf y}_{k-2},{\bf y}_{k-1})$ are integers,
	then by the above estimates, they are zero for each $k$ sufficiently large.
	Since three consecutive points ${\bf y}_{k-3}, {\bf y}_{k-2}, {\bf y}_{k-1}$
	are linearly independent over $\mathbb{Q}$, this implies that
	${\bf w}$ is a rational multiple of ${\bf y}_{k-2}$.
	By Lemma 2.1 (iii) of \cite{ARNCAI.1}, we have
\[
	[{\bf y}_{k},{\bf y}_{k},{\bf w}] = \det({\bf y}_{k})^2 {\bf y}_{k+1},
\]
	and deduce that ${\bf y}_{k+1}$ is a non-zero rational multiple of
	$[{\bf y}_{k},{\bf y}_{k},{\bf y}_{k-2}]$.

	The proof of the second part of the corollary repeats exactly 
	the proof of the second part of Corollary \ref{RP6:C:2}.
\end{proof}
	\newpage
	\section{Examples}\label{C1:S5:examples}
	
	First we recall some notations introduced in \cite{DROY.2}.
	Put $\mathcal{M} = \Mat_{2\times2}(\mathbb{Z}) \cap \GL_2(\mathbb{C})$.
	A sequence $({\bf w}_i)_{i\geq0}$ in $\mathcal{M}$ is a
	Fibonacci sequence if ${\bf w}_{i+2} = {\bf w}_{i+1}{\bf w}_i$
	for each $i\geq0$.
 	We call a Fibonacci sequence $({\bf w}_i)_{i\geq0}$ in
	$\mathcal{M}$ {\it{admissible}}
	if there exists a matrix $N \in \mathcal{M}$ such that the sequence
	$({\bf y}_i)_{i\geq0}$, given by ${\bf y}_i = {\bf w}_i N_i$, where
\[
	N_i =
	\left \{
	\begin{aligned}
		N        & \quad \text{ if } i \text{ is even},
		\\
		{}^t N   & \quad \text{ if } i \text{ is odd},
	\end{aligned}
	\right.
\]
	consists of symmetric matrices. This new sequence satisfies the following
	recurrence relation
\[
	{\bf y}_{i} = {\bf w}_{i}N_{i} = {\bf w}_{i-1}N_{i-1}N_{i-1}^{-1}{\bf w}_{i-2}N_{i-2} =
	{\bf y}_{i-1}N_{i-1}^{-1}{\bf y}_{i-2}.
\]
	Let $\mathcal{S}$ be a finite set of prime numbers.
	Define $\mathfrak{M}_{\mathbb{Q}}$ to be the set of all prime numbers of
	$\mathbb{Q}$ together with the infinite prime $\infty$.
\subsection{Simultaneous case.}


	In this paragraph we will show
	that for any $\bar \lambda \in \mathbb{R}_{>0}^{|\mathcal{S}|+1}$,
	with the sum of its components $<\frac{1}{\gamma}$,
	there exists a point $\bar \xi \in
	\mathbb{R} \times \prod_{p \in \mathcal{S}}\mathbb{Q}_{p}$
	such that
	$\bar \lambda$ is an exponent of approximation to $\bar \xi$
	in degree $2$ and $[\mathbb{Q}(\xi_{\infty}):\mathbb{Q}]>2$.
	First, we prove several auxiliary results.
\begin{lemma} \label{RP2:L:2}
	Let $({\bf w}_i)_{i\geq0}$ be an unbounded admissible Fibonacci sequence
	in $\mathcal{M}$ and let $({\bf y}_i)_{i\geq0}$ be a corresponding sequence
	of symmetric matrices. Then for any $\nu \in \mathfrak{M}_{\mathbb{Q}}$,
	we have
\begin{equation}\label{RP2:L:2:1}
 \begin{gathered}
	|\det({\bf w}_{i+1})|_{\nu} \sim |\det({\bf w}_{i})|_{\nu}^{\gamma},
	\\
	\|{\bf y}_i\|_{\nu} \sim \|{\bf w}_i\|_{\nu}, \quad |\det({\bf y}_i)|_{\nu} \sim |\det({\bf w}_i)|_{\nu},
 \end{gathered}
\end{equation}
\end{lemma}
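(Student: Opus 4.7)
The plan is to reduce all three assertions to easy consequences of the Fibonacci recursion together with the fact that ${\bf y}_i={\bf w}_iN_i$ with $N_i$ taken from a fixed two-element set. Taking determinants in ${\bf w}_{i+2}={\bf w}_{i+1}{\bf w}_i$ and applying the multiplicativity of $|\cdot|_\nu$, I obtain
\[
e_{i+2}=e_{i+1}\,e_i,\qquad\text{where } e_i:=|\det({\bf w}_i)|_\nu>0,
\]
the positivity coming from ${\bf w}_i\in\mathcal M\subseteq\mathrm{GL}_2(\mathbb C)$. Setting $d_i:=\log e_i$ gives the scalar Fibonacci recurrence $d_{i+2}=d_{i+1}+d_i$.

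The first assertion then follows from Binet's formula. Writing $d_i=A\gamma^i+B(-\gamma)^{-i}$ with $A,B\in\mathbb R$ determined by $d_0,d_1$, a direct computation gives
\[
d_{i+1}-\gamma\,d_i=-B(\gamma+\gamma^{-1})(-\gamma)^{-i},
\]
which is bounded as $i\to\infty$ (in fact it tends to $0$). Exponentiating shows that $e_{i+1}/e_i^{\gamma}$ stays between two positive constants (depending only on the sequence and the place $\nu$), i.e.\ $|\det({\bf w}_{i+1})|_\nu\sim|\det({\bf w}_i)|_\nu^{\gamma}$. Note that this argument is uniform in the place $\nu$: no distinction between archimedean and non-archimedean is needed, because everything is reduced to real scalars after taking logarithms, and the reasoning does not require the sequence $(e_i)_{i\geq 0}$ itself to be unbounded.

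For the remaining two assertions I use the definition ${\bf y}_i={\bf w}_iN_i$ with $N_i\in\{N,{}^tN\}$. Since there are only two possible values of $N_i$, the quantities $\|N_i\|_\nu$ and $\|N_i^{-1}\|_\nu$ are bounded uniformly in $i$ (they take at most two values). Using submultiplicativity of the natural matrix norm on $\mathrm{Mat}_{2\times 2}(\mathbb Q_\nu)$, which is equivalent up to dimension constants to the max-of-entries norm used in (\ref{I:1:D:0:1}), I get $\|{\bf y}_i\|_\nu\leq\|{\bf w}_i\|_\nu\|N_i\|_\nu\ll\|{\bf w}_i\|_\nu$ and, applying the same bound to ${\bf w}_i={\bf y}_iN_i^{-1}$, the reverse inequality $\|{\bf w}_i\|_\nu\ll\|{\bf y}_i\|_\nu$. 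Finally, since $\det({}^tN)=\det(N)$, one has $\det({\bf y}_i)=\det(N)\det({\bf w}_i)$ for every $i$, so $|\det({\bf y}_i)|_\nu=|\det(N)|_\nu|\det({\bf w}_i)|_\nu$, which gives the proportionality at once.

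There is no serious obstacle here; the only mild care needed is to keep track of the fact that the implied constants in $\sim$ depend on the initial data $d_0,d_1,N$ and on $\nu$ but not on $i$, and to verify that the Binet argument for $d_{i+1}-\gamma d_i$ works regardless of the sign and size of $A$ and $B$ (so that it covers uniformly all places, including the trivial case where $\det({\bf w}_i)$ has constant $\nu$-absolute value).
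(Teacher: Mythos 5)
Your proof is correct and follows essentially the same route as the paper: the paper also derives the first relation from the multiplicative recursion $\det({\bf w}_{i+2})=\det({\bf w}_{i+1})\det({\bf w}_i)$ and obtains the other two directly from ${\bf y}_i={\bf w}_iN_i$. The only difference is that where the paper cites Lemma~5.2 of \cite{DROY.2} for the growth estimate $|\det({\bf w}_{i+1})|_{\nu}\sim|\det({\bf w}_{i})|_{\nu}^{\gamma}$, you supply a self-contained derivation via the logarithmic Fibonacci recurrence and Binet's formula, which is a valid (and correctly handled, including the degenerate constant case) substitute.
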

\begin{proof}
	Since $\det({\bf w}_{i+1}) = \det({\bf w}_{i})\det({\bf w}_{i-1})$
	for each $i\geq1$,
	Lemma 5.2 of \cite{DROY.2} provides the required estimates
	for the determinants.
	All other relations follow from the relation ${\bf y}_i = {\bf w}_i N_i$,
	since $N_i$ is $N$ or $^tN$.
\end{proof}
	For any $\nu \in \mathfrak{M}_{\mathbb{Q}}$ and any
	${\bf u} \in \mathbb{Q}_{\nu}^3$,
	we denote by $[{\bf u}]$ the point
	of $\mathbb{P}^2(\mathbb{Q}_{\nu})$ having ${\bf u}$ as a set of homogeneous
	coordinates. For any pair of non-zero points ${\bf u}, {\bf v} \in \mathbb{Q}_{\nu}^3$,
	we define the projective distance in $\mathbb{P}^2(\mathbb{Q}_{\nu})$, between the
	corresponding points
	$[{\bf u}]$ and $[{\bf v}]$ of $\mathbb{P}^2(\mathbb{Q}_{\nu})$, by
\[
	\dist_{\nu}([{\bf u}],[{\bf v}]) = \dist_{\nu}({\bf u},{\bf v}) =
	\frac{\|{\bf u} \wedge {\bf v} \|_{\nu}}{\|{\bf u}\|_{\nu}\|{\bf v}\|_{\nu}}.
\]
	The next lemma shows that $\dist_{p}$ is a metric on
	$\mathbb{P}^2(\mathbb{Q}_{p})$ for each prime number $p$.
\begin{lemma} \label{RP2:L:1}
	Let $p$ be a prime number and let
	${\bf u}, {\bf v}, {\bf w} \in \mathbb{Q}_{p}^3$.
	We have
\begin{gather}
	\| \langle {\bf u},{\bf w} \rangle {\bf v} -
	\langle {\bf u},{\bf v} \rangle {\bf w} \|_{p} \leq
	\|{\bf u}\|_{p} \|{\bf v} \wedge {\bf w}\|_{p}, \label{RP2:L:1:1}
	\\
	\|{\bf v}\|_{p} \|{\bf u}\wedge{\bf w}\|_{p} \leq
	\max\{\|{\bf w}\|_{p}
	\|{\bf u}\wedge{\bf v}\|_{p},
	\|{\bf u}\|_{p}\|{\bf v}\wedge{\bf w}\|_{p}\}. \label{RP2:L:1:2}
\end{gather}
	Moreover, if ${\bf u}, {\bf v}, {\bf w}$ are non-zero then
\begin{equation}\label{RP2:L:1:3}
	\dist_{p}([{\bf u}],[{\bf w}]) \leq
	\max\{\dist_{p}([{\bf u}],[{\bf v}]),\dist_{p}([{\bf v}],[{\bf w}])\}.
\end{equation}
\end{lemma}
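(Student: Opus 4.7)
The plan is to treat the three inequalities in the order stated, noting that (3) is really just a cosmetic restatement of (2) and follows immediately once (2) is in hand.

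For (1), I would invoke the classical vector triple product identity
\[
	\langle {\bf u}, {\bf w}\rangle {\bf v} - \langle {\bf u}, {\bf v}\rangle {\bf w}
	= {\bf u} \wedge ({\bf v} \wedge {\bf w}),
\]
which is a universal polynomial identity in the six coordinates and therefore valid over $\mathbb{Q}_p$. Each coordinate of the right-hand side is a $\mathbb{Z}$-linear combination of at most two monomials of the form $\pm u_i ({\bf v}\wedge {\bf w})_j$, so the strong triangle inequality for $|\,\cdot\,|_p$ bounds every such combination by $\|{\bf u}\|_p \|{\bf v}\wedge {\bf w}\|_p$. Taking the maximum over coordinates yields (1).

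For (2), the main step will be to establish the coordinate-wise polynomial identity
\[
	v_i\,({\bf u}\wedge {\bf w})_j
	= u_i\,({\bf v}\wedge {\bf w})_j + w_i\,({\bf u}\wedge {\bf v})_j
	- \delta_{ij}\det({\bf u},{\bf v},{\bf w})
\]
for all indices $i,j \in \{0,1,2\}$, where $\delta_{ij}$ is the Kronecker delta. I would verify this by direct expansion: the off-diagonal cases $i \neq j$ hold without any correction, while the diagonal cases pick up the determinant, as one sees by collecting monomials $u_i v_j w_k$. Applying the ultrametric inequality term by term then gives
\[
	|v_i\,({\bf u}\wedge {\bf w})_j|_p
	\leq \max\bigl\{\|{\bf u}\|_p\|{\bf v}\wedge {\bf w}\|_p,\
		\|{\bf w}\|_p\|{\bf u}\wedge {\bf v}\|_p,\
		|\det({\bf u},{\bf v},{\bf w})|_p\bigr\}.
\]
The determinant term causes no trouble, since $\det({\bf u},{\bf v},{\bf w}) = \langle {\bf u}, {\bf v}\wedge {\bf w}\rangle = \langle {\bf w}, {\bf u}\wedge {\bf v}\rangle$, and the ultrametric inequality applied to a dot product bounds each expression by one of the two preceding quantities in the max. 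Taking the maximum over $(i,j)$ then produces (2).

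Inequality (3) is immediate from (2): dividing both sides by the positive number $\|{\bf u}\|_p \|{\bf v}\|_p \|{\bf w}\|_p$, the three ratios that appear are exactly $\dist_p([{\bf u}],[{\bf w}])$, $\dist_p([{\bf u}],[{\bf v}])$ and $\dist_p([{\bf v}],[{\bf w}])$. I expect the only non-mechanical step to be pinpointing the correct coordinate identity for (2), since the asymmetry between the diagonal and off-diagonal entries (the appearance of a determinant correction exactly when $i=j$) is easy to overlook; once written down, however, it is purely a monomial-matching exercise, and the ultrametric bookkeeping handles everything else uniformly.
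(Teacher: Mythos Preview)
Your proof is correct; all three inequalities are established by valid arguments. The overall strategy---write down a polynomial identity among the coordinates and apply the ultrametric inequality term by term---is the same as the paper's, but the specific identities you choose differ in interesting ways.

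For (1), you invoke the triple product formula ${\bf u}\wedge({\bf v}\wedge{\bf w}) = \langle {\bf u},{\bf w}\rangle{\bf v} - \langle {\bf u},{\bf v}\rangle{\bf w}$, while the paper instead first records the elementary fact $\|v_i{\bf w} - w_i{\bf v}\|_p \le \|{\bf v}\wedge{\bf w}\|_p$ (each coordinate of $v_i{\bf w}-w_i{\bf v}$ is, up to sign, a coordinate of ${\bf v}\wedge{\bf w}$) and then writes $\langle{\bf u},{\bf w}\rangle v_i - \langle{\bf u},{\bf v}\rangle w_i = \langle{\bf u},\,v_i{\bf w}-w_i{\bf v}\rangle$. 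For (2), the paper's route is somewhat cleaner than yours: it uses the two-term identity
\[
	v_i({\bf u}\wedge{\bf w}) = w_i({\bf u}\wedge{\bf v}) + {\bf u}\wedge(v_i{\bf w} - w_i{\bf v}),
\]
which follows immediately from bilinearity of the wedge, and then bounds the second term via the same preliminary fact. This avoids entirely the determinant correction $\delta_{ij}\det({\bf u},{\bf v},{\bf w})$ that appears in your coordinatewise identity and that you then have to reabsorb. Your argument is equally valid, just slightly heavier; the paper's identity for (2) is worth knowing because it reuses the same auxiliary inequality already set up for (1). Part (3) is handled identically in both.
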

\begin{proof}
	Upon writing ${\bf v} = (v_0,v_1,v_2)$, ${\bf w} = (w_0,w_1,w_2)$,
	we find that for $i = 0,1,2$,
\begin{equation}\label{RP2:L:1:4}
	\| v_i{\bf w} - w_i{\bf v} \|_{p} \leq \|{\bf v}\wedge{\bf w}\|_{p},
\end{equation}
	and so
\begin{gather*}
	| \langle {\bf u},{\bf w} \rangle v_i -
	\langle {\bf u},{\bf v} \rangle w_i |_{p} =
	| \langle {\bf u},v_i{\bf w} - w_i{\bf v} \rangle|_{p} \leq
	\|{\bf u}\|_{p} \|{\bf v} \wedge {\bf w}\|_{p},
\end{gather*}
	which implies (\ref{RP2:L:1:1}).
	Combining (\ref{RP2:L:1:4}) with the identity
\[
	v_i({\bf u}\wedge{\bf w}) = w_i({\bf u}\wedge{\bf v})
	+ {\bf u}\wedge( v_i {\bf w} - w_i {\bf v}),
\]
	we also get
\[
	|v_i|_{p} \|{\bf u}\wedge{\bf w}\|_{p} \leq
	\max\{|w_i|_{p}
	\|{\bf u}\wedge{\bf v}\|_{p},\|{\bf u}\|_{p}\|{\bf v}\wedge{\bf w}\|_{p}\}
\]
	for $i=0,1,2$, and this gives (\ref{RP2:L:1:2}).
	Dividing both sides of (\ref{RP2:L:1:2}) by
	$\|{\bf u}\|_{p}\|{\bf v}\|_{p}\|{\bf w}\|_{p}$,
	we obtain (\ref{RP2:L:1:3}).
\end{proof}

	The next proposition extends the result obtained by
	D.~{\sc Roy} in \cite{DROY.2} to the p-adic case.
	Here we use the fact that $\mathbb{P}^2(\mathbb{Q}_{\nu})$
	is complete with respect to $\dist_{\nu}$ for each
	$\nu \in \mathfrak{M}_{\mathbb{Q}}$.
\begin{proposition} \label{RP2:P:1}
	Let $\nu \in \mathfrak{M}_{\mathbb{Q}}$,
	let $({\bf w}_i)_{i\geq0}$ be an unbounded admissible Fibonacci sequence in
	$\mathcal{M}$ such that
\begin{equation}\label{RP2:P:1:0}
	\|{\bf w}_{i+1}\|_{\nu} \sim \|{\bf w}_{i}\|_{\nu}^{\gamma},
\end{equation}
	let $({\bf y}_i)_{i\geq0}$ be a corresponding sequence of symmetric matrices
	viewed as points in $\mathbb{Z}^3$
	and let $(\delta_{\nu,i})_{i\geq1}$ be the sequence defined by
\begin{equation}\label{RP2:P:1:1}
	\delta_{\nu,i} := \frac{|\det({\bf w}_i)|_{\nu}}{\|{\bf w}_{i}\|_{\nu}}.
\end{equation}
	Assume that $\det({\bf y}_0,{\bf y}_1,{\bf y}_2) \neq 0$
	and $\delta_{\nu,i} = o(\|{\bf w}_{i}\|_{\nu})$.
	Then there exists a non-zero point
	${\bf y}_{\nu} = (y_{\nu,0},y_{\nu,1},y_{\nu,2}) \in \mathbb{Q}_{\nu}^3$
	with $\det({\bf y}_{\nu})=0$ such that
\begin{equation}\label{RP2:P:1:2}
 \begin{gathered}
 	\|{\bf y}_i \wedge {\bf y}_{i+1} \|_{\nu} \sim \delta_{\nu,i} \|{\bf w}_{i+1}\|_{\nu},
	\\
	\|{\bf y}_i \wedge {\bf y}_{\nu} \|_{\nu} \sim \delta_{\nu,i},
	\\
	| \langle {\bf y}_i \wedge {\bf y}_{i+1},{\bf y}_{\nu} \rangle |_{\nu} \sim
	\delta_{\nu,i+2}.
 \end{gathered}
\end{equation}
\end{proposition}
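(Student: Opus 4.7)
The strategy is to realize $[{\bf y}_\nu]$ as a projective limit of the points $[{\bf y}_i]$ in $\mathbb{P}^2(\mathbb{Q}_\nu)$, which is complete under $\dist_\nu$, then to extract the four assertions of (\ref{RP2:P:1:2}) from the rate of convergence of this limit.

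First, I would establish the first estimate directly from the matrix relation ${\bf y}_j = {\bf w}_j N_j$ together with the Fibonacci recurrence ${\bf w}_{i+1} = {\bf w}_i {\bf w}_{i-1}$. Algebraic manipulation of the $2\times 2$ minors composing ${\bf y}_i\wedge {\bf y}_{i+1}$, as in \cite{DROY.2}, expresses this wedge, up to factors depending only on $N$, as $\det({\bf w}_i)$ times a column of a matrix built from ${\bf w}_{i-1}$. Lemma \ref{RP2:L:2} then yields $\|{\bf y}_i \wedge {\bf y}_{i+1}\|_\nu \sim |\det({\bf w}_i)|_\nu\,\|{\bf w}_{i-1}\|_\nu$, and since (\ref{RP2:P:1:0}) combined with $\gamma^2=\gamma+1$ gives $\|{\bf w}_{i-1}\|_\nu \sim \|{\bf w}_{i+1}\|_\nu/\|{\bf w}_i\|_\nu$, this equals $\delta_{\nu,i}\|{\bf w}_{i+1}\|_\nu$.

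Dividing by $\|{\bf y}_i\|_\nu \|{\bf y}_{i+1}\|_\nu$ produces $\dist_\nu([{\bf y}_i],[{\bf y}_{i+1}]) \sim \delta_{\nu,i}/\|{\bf w}_i\|_\nu$, which tends to $0$ by hypothesis. For $\nu = p$ the ultrametric inequality (\ref{RP2:L:1:3}) makes $([{\bf y}_i])$ Cauchy at once; for $\nu=\infty$ I would verify, using Lemma \ref{RP2:L:2} and (\ref{RP2:P:1:0}), that $\delta_{\infty,i}/\|{\bf w}_i\|_\infty$ decays geometrically, producing a summable series of distances. Taking $[{\bf y}_\nu]$ to be the projective limit and ${\bf y}_\nu$ a representative with $\|{\bf y}_\nu\|_\nu \sim 1$, the second estimate $\|{\bf y}_i \wedge {\bf y}_\nu\|_\nu \sim \delta_{\nu,i}$ follows by bounding the tail of $\dist_\nu$ from $i$ onward by the first term, thanks to the geometric decay. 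To obtain $\det({\bf y}_\nu) = 0$, observe that $|\det({\bf y}_i)|_\nu/\|{\bf y}_i\|_\nu^2 \sim \delta_{\nu,i}/\|{\bf w}_i\|_\nu \to 0$ by hypothesis; since $\det$ is continuous and homogeneous of degree two, passing to the limit forces the vanishing.

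The hard part is the third estimate. I would decompose
\[
  \langle {\bf y}_i \wedge {\bf y}_{i+1}, {\bf y}_\nu \rangle = \det({\bf y}_i,{\bf y}_{i+1},{\bf y}_{i+2}) + \langle {\bf y}_i \wedge {\bf y}_{i+1},\, {\bf y}_\nu - c_i {\bf y}_{i+2}\rangle,
\]
for a scalar $c_i$ chosen to align ${\bf y}_{i+2}$ with the normalized ${\bf y}_\nu$. Using ${\bf y}_{i+2} = {\bf y}_{i+1}N_{i+1}^{-1}{\bf y}_i$ the main term can be computed explicitly by matrix algebra, and after telescoping and Lemma \ref{RP2:L:2} its $\nu$-norm turns out to be $\sim |\det({\bf w}_i)\det({\bf w}_{i+1})|_\nu/\|{\bf w}_{i+2}\|_\nu = \delta_{\nu,i+2}$. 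The error term is controlled by inequality (\ref{RP2:L:1:1}) together with the first and second estimates, yielding $\ll \|{\bf y}_i\wedge {\bf y}_{i+1}\|_\nu\cdot \dist_\nu([{\bf y}_{i+2}],[{\bf y}_\nu])$, which by the geometric decay of the distances is of strictly smaller order than $\delta_{\nu,i+2}$. The technical heart is this separation of scales: one must verify that the leading contributions do not cancel and that the error is genuinely negligible against $\delta_{\nu,i+2}$, not only against a larger quantity. This is where most of the work lies.
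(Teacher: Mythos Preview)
Your overall strategy matches the paper's, but the logical order is off in a way that leaves a real gap. The matrix identity $J{\bf y}_iJ{\bf y}_{i+1}=-\det({\bf y}_i)N_i^{-1}{\bf y}_{i-1}$ yields only $\|{\bf y}_i\wedge{\bf y}_{i+1}\|_\nu \ll \delta_{\nu,i}\|{\bf w}_{i+1}\|_\nu$: the off-diagonal entries of $J{\bf y}_iJ{\bf y}_{i+1}$ are (up to sign) components of the wedge, but its diagonal entries are not, so there is no reason why $\|J{\bf y}_iJ{\bf y}_{i+1}\|_\nu$ should be controlled by $\|{\bf y}_i\wedge{\bf y}_{i+1}\|_\nu$. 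Likewise, bounding the tail of the projective distances from index $i$ gives only $\|{\bf y}_i\wedge{\bf y}_\nu\|_\nu \ll \delta_{\nu,i}$; rapid convergence never, by itself, yields a lower bound on the distance to the limit. So as written you establish only the upper halves of the first two estimates in (\ref{RP2:P:1:2}). (Incidentally, in your displayed decomposition for the third estimate the main term should carry the factor $c_i\sim\|{\bf w}_{i+2}\|_\nu^{-1}$; without it the determinant has size $|\det({\bf w}_{i+2})|_\nu$, not $\delta_{\nu,i+2}$.)

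The paper closes this gap by a bootstrap you omit. The upper bounds alone suffice to prove the third estimate exactly, via (\ref{RP2:L:1:1}) applied to ${\bf u}={\bf y}_i\wedge{\bf y}_{i+1}$, ${\bf v}={\bf y}_\nu$, ${\bf w}={\bf y}_{i+2}$, together with $|\det({\bf y}_i,{\bf y}_{i+1},{\bf y}_{i+2})|_\nu\sim|\det({\bf w}_{i+2})|_\nu$ from Proposition~4.1(d) of \cite{DROY.2}. Once $|\langle{\bf y}_i\wedge{\bf y}_{i+1},{\bf y}_\nu\rangle|_\nu\sim\delta_{\nu,i+2}$ is known, the paper applies (\ref{RP2:L:1:1}) a second time with the indices shifted (${\bf u}={\bf y}_{i+1}\wedge{\bf y}_{i+2}$, ${\bf v}={\bf y}_\nu$, ${\bf w}={\bf y}_i$): now the term $\langle{\bf y}_{i+1}\wedge{\bf y}_{i+2},{\bf y}_\nu\rangle{\bf y}_i$ has known size $\sim\delta_{\nu,i+3}\|{\bf w}_i\|_\nu=o(|\det({\bf w}_{i+2})|_\nu)$ while $\langle{\bf y}_{i+1}\wedge{\bf y}_{i+2},{\bf y}_i\rangle{\bf y}_\nu$ has size $\sim|\det({\bf w}_{i+2})|_\nu$, so the right-hand side $\|{\bf y}_{i+1}\wedge{\bf y}_{i+2}\|_\nu\,\|{\bf y}_i\wedge{\bf y}_\nu\|_\nu$ is $\gg|\det({\bf w}_{i+2})|_\nu$. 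Combined with the upper bounds already obtained, this pins each factor at the claimed size and completes the first two estimates. Your plan needs this final step.
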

\begin{proof}
	The proof in the case $\nu = \infty$ is given in \cite{DROY.2}.
	Assume that $\nu \in \mathfrak{M}_{\mathbb{Q}}\setminus\{\infty\}$.
	We note that the identity
\begin{equation}\label{RP2:P:1:5}
	J {\bf w} J {}^t {\bf w} = -\det({\bf w})I
\end{equation}
	holds for any matrix ${\bf w} \in \Mat_{2\times2}(\mathbb{Q}_{\nu})$.
	Using this, the recurrence relation
	${\bf y}_{i+1} = {\bf y}_{i}N_i^{-1}{\bf y}_{i-1}$ and the
	fact that the matrix ${\bf y}_i$ is symmetric, we get
\begin{equation}\label{RP2:P:1:5:1}
	J {\bf y}_i J {\bf y}_{i+1} =
	J {\bf y}_i J {\bf y}_{i} N_i^{-1}{\bf y}_{i-1} =
	-\det({\bf y}_{i})N_i^{-1}{\bf y}_{i-1}.
\end{equation}
	Since
\begin{gather*}
	{\bf y}_i \wedge {\bf y}_{i+1} =
				\Big (
				\left |
				\begin{matrix}
				y_{i,1} &  y_{i,2}
				\\
				y_{i+1,1} &  y_{i+1,2}
				\end{matrix}
				\right |
				,
				-\left |
				\begin{matrix}
				y_{i,0} &  y_{i,2}
				\\
				y_{i+1,0} &  y_{i+1,2}
				\end{matrix}
				\right |
				,
				\left |
				\begin{matrix}
				y_{i,0} &  y_{i,1}
				\\
				y_{i+1,0} &  y_{i+1,1}
				\end{matrix}
				\right |
				\Big ),
	\\
			J {\bf y}_i J {\bf y}_{i+1}
			= \left (
			\begin{matrix}
				y_{i,1}y_{i+1,1}-y_{i,2}y_{i+1,0} &
					\left |
					\begin{matrix}
					y_{i,1} &  y_{i,2}
					\\
					y_{i+1,1} &  y_{i+1,2}
					\end{matrix}
					\right |
				\\
				-\left |
				\begin{matrix}
				y_{i,0} &  y_{i,1}
				\\
				y_{i+1,0} &  y_{i+1,1}
				\end{matrix}
				\right |
				&
				y_{i,1}y_{i+1,1}-y_{i,0}y_{i+1,2}
			\end{matrix}
			\right ),
\end{gather*}
	and since
\[
	\left |
	\begin{matrix}
	y_{i,0} &  y_{i,2}
	\\
	y_{i+1,0} &  y_{i+1,2}
	\end{matrix}
	\right | = (y_{i,1}y_{i+1,1}-y_{i,2}y_{i+1,0}) - (y_{i,1}y_{i+1,1}-y_{i,0}y_{i+1,2}),
\]
	is the difference of the elements of the diagonal of
	$J {\bf y}_i J {\bf y}_{i+1}$, we find that
\[
	\|{\bf y}_i \wedge {\bf y}_{i+1} \|_{\nu} \leq \|J{\bf y}_iJ{\bf y}_{i+1} \|_{\nu}.
\]
	Combining this with (\ref{RP2:P:1:5:1}), we obtain
\[
	\|{\bf y}_i \wedge {\bf y}_{i+1} \|_{\nu} \leq
	|\det({\bf y}_i)|_{\nu} \|N_{i}^{-1}{\bf y}_{i-1} \|_{\nu} \sim
	|\det({\bf y}_i)|_{\nu} \|{\bf y}_{i-1} \|_{\nu}.
\]
	By Lemma \ref{RP2:L:2} and the hypothesis (\ref{RP2:P:1:0}),
	we conclude that
\begin{equation}\label{RP2:P:1:6:0}
	\|{\bf y}_i \wedge {\bf y}_{i+1} \|_{\nu} \ll
	|\det({\bf w}_i)|_{\nu} \|{\bf w}_{i-1} \|_{\nu} \sim
	\delta_{\nu,i} \|{\bf w}_{i+1}\|_{\nu},
\end{equation}
	 and so
\[
	\dist_{\nu}([{\bf y}_i],[{\bf y}_{i+1}]) =
	\frac{\|{\bf y}_i \wedge {\bf y}_{i+1} \|_{\nu}}{\|{\bf y}_i\|_{\nu}\|{\bf y}_{i+1}\|_{\nu}} \ll
	\frac{\delta_{\nu,i}}{\|{\bf w}_i\|_{\nu}}.
\]
	By Lemma \ref{RP2:L:2} and (\ref{RP2:P:1:0}),
	we also note that $\delta_{\nu,i+1} \sim \delta_{\nu,i}^{\gamma}$.
	So, $\delta_{\nu,i}/\|{\bf w}_i\|_{\nu}$
	is decreasing for all $i$ sufficiently large, and by Lemma \ref{RP2:L:1},
	we deduce that,
	for any $i$ and $j$ with $1 \ll i < j$, we have
\begin{equation}\label{RP2:P:1:6}
 \begin{aligned}
	\dist_{\nu}([{\bf y}_i],[{\bf y}_{j}]) &\leq
	\max_{k=0,\ldots,j-i-1}\{\dist_{\nu}([{\bf y}_{i+k}],[{\bf y}_{i+k+1}])\}
	\\
	&\ll
	\max_{k=0,\ldots,j-i-1} \frac{\delta_{\nu,i+k}}{\|{\bf w}_{i+k}\|_{\nu}}
	\leq \frac{\delta_{\nu,i}}{\|{\bf w}_i\|_{\nu}},
 \end{aligned}
\end{equation}
	Therefore $([{\bf y}_i])_{i\geq0}$ is a Cauchy sequence in
	$\mathbb{P}^2(\mathbb{Q}_{\nu})$ and so it converges to a point
	$[{\bf y}_{\nu}]$ for some non-zero ${\bf y}_{\nu} \in \mathbb{Q}_{\nu}^3$.
	By Lemma \ref{RP2:L:2}, we have
\[
	\frac{|\det({\bf y}_i)|_{\nu}}{\|{\bf y}_{i}\|_{\nu}^2} \sim
	\frac{\delta_{\nu,i}}{\|{\bf w}_i\|_{\nu}}.
\]
	Since the left hand side of this inequality depends only on the class
	$[{\bf y}_i]$ of ${\bf y}_i$ in $\mathbb{P}^2(\mathbb{Q}_{\nu})$ and since
	$\delta_{\nu,i} = o(\|{\bf w}_i\|_{\nu})$, we get by continuity that
	$|\det({\bf y}_{\nu})|_{\nu}/\|{\bf y}_{\nu}\|_{\nu}^2 = 0$, and hence
	$\det({\bf y}_{\nu}) = 0$. Moreover, by continuity, it follows
	from (\ref{RP2:P:1:6}) that
	$\dist_{\nu}([{\bf y}_i],[{\bf y}_{\nu}]) \ll \delta_{\nu,i}/\|{\bf w}_i\|_{\nu}$, 
	which implies that
\begin{equation}\label{RP2:P:1:7}
	\|{\bf y}_i\wedge{\bf y}_{\nu}\|_{\nu} \ll \delta_{\nu,i}.
\end{equation}
	Combining (\ref{RP2:L:1:1}), (\ref{RP2:P:1:6:0}) and (\ref{RP2:P:1:7}),
	we obtain
\[
 \begin{aligned}
	\| \langle {\bf y}_i \wedge {\bf y}_{i+1},{\bf y}_{\nu} \rangle {\bf y}_{i+2} -
	\langle {\bf y}_i \wedge {\bf y}_{i+1},{\bf y}_{i+2} \rangle{\bf y}_{\nu} \|_{\nu}
	& \leq  \|{\bf y}_i \wedge {\bf y}_{i+1}\|_{\nu} \|{\bf y}_{i+2} \wedge {\bf y}_{\nu}\|_{\nu}
	\\
	& \ll \delta_{\nu,i} \|{\bf w}_{i+1}\|_{\nu}\delta_{\nu,i+2}
	\\
	& \sim \frac{\delta_{\nu,i}}{\|{\bf w}_i\|_{\nu}} |\det({\bf w}_{i+2})|_{\nu}.
 \end{aligned}
\]
	Since
	$
	\langle {\bf y}_i \wedge {\bf y}_{i+1},{\bf y}_{i+2} \rangle =
	\det({\bf y}_{i},{\bf y}_{i+1},{\bf y}_{i+2})
	$, we find by Proposition 4.1(d) of \cite{DROY.2}, that
\begin{equation}\label{RP2:P:1:8}
 \begin{aligned}
	\| \langle {\bf y}_i \wedge {\bf y}_{i+1},{\bf y}_{i+2} \rangle {\bf y}_{\nu} \|_{\nu}
	&=
	\frac{|\det({\bf y}_{0},{\bf y}_{1},{\bf y}_{2})|_{\nu}}{|\det({\bf w}_2)|_{\nu}} \| {\bf y}_{\nu} \|_{\nu}| \det({\bf w}_{i+2}) |_{\nu}
	\\
	&\sim
	| \det({\bf w}_{i+2}) |_{\nu}.
 \end{aligned}
\end{equation}
	Since $\delta_{\nu,i}= o(\|{\bf w}_i\|_{\nu})$,
	the above two estimates imply that
\[
	\| \langle {\bf y}_i \wedge {\bf y}_{i+1},{\bf y}_{\nu} \rangle {\bf y}_{i+2} \|_{\nu}
	\sim | \det({\bf w}_{i+2}) |_{\nu}
\]
	which leads to the last estimate in (\ref{RP2:P:1:2})
\[
	| \langle {\bf y}_i \wedge {\bf y}_{i+1},{\bf y}_{\nu} \rangle |_{\nu} \sim
	\frac{| \det({\bf w}_{i+2}) |_{\nu}}{\|{\bf w}_{i+2}\|_{\nu}}
	\sim \delta_{\nu,i+2}.
\]
	In turn, this implies
\begin{align*}
	\| \langle {\bf y}_{i+1} \wedge {\bf y}_{i+2},{\bf y}_{\nu}
	\rangle {\bf y}_{i} \|_{\nu}
	\sim \delta_{\nu,i+3} \|{\bf w}_{i}\|_{\nu}
	&= \frac{| \det({\bf w}_{i+2}) |_{\nu}
	| \det({\bf w}_{i+1}) |_{\nu}\|{\bf w}_{i}\|_{\nu}}{\|{\bf w}_{i+3}\|_{\nu}}
	\\
	& \sim \frac{\delta_{\nu,i+1}}{\|{\bf w}_{i+1}\|_{\nu}}
	|\det({\bf w}_{i+2})|_{\nu}.
\end{align*}
	Since
	$\langle {\bf y}_{i+1} \wedge {\bf y}_{i+2},{\bf y}_i \rangle =
	\det({\bf y}_{i},{\bf y}_{i+1},{\bf y}_{i+2}) =
	\langle {\bf y}_{i} \wedge {\bf y}_{i+1},{\bf y}_{i+2} \rangle$, then
	the estimate (\ref{RP2:P:1:8}) can be rewritten in the form
\[
	\| \langle {\bf y}_{i+1} \wedge {\bf y}_{i+2},{\bf y}_{i} \rangle {\bf y}_{\nu} \|_{\nu} \sim
	| \det({\bf w}_{i+2}) |_{\nu}.
\]
	Now, applying (\ref{RP2:L:1:1}) and using the preceding two estimates,
	we find that
\[
	\|{\bf y}_{i+1} \wedge {\bf y}_{i+2}\|_{\nu} \|{\bf y}_i \wedge {\bf y}_{\nu}\|_{\nu} \geq
	\| \langle {\bf y}_{i+1} \wedge {\bf y}_{i+2},{\bf y}_{\nu} \rangle {\bf y}_i -
	\langle {\bf y}_{i+1} \wedge {\bf y}_{i+2},{\bf y}_i \rangle {\bf y}_{\nu} \|_{\nu} \gg
	| \det({\bf w}_{i+2}) |_{\nu}.
\]
	This together with (\ref{RP2:P:1:6:0}) and (\ref{RP2:P:1:7})
	implies that
\[
	| \det({\bf w}_{i+2}) |_{\nu}
	\ll
	\|{\bf y}_{i+1} \wedge {\bf y}_{i+2}\|_{\nu}
	\|{\bf y}_{i} \wedge {\bf y}_{\nu}\|_{\nu}
	\ll
	\delta_{\nu,i+1}\|{\bf w}_{i+1}\|_{\nu}\delta_{\nu,i}
	\ll
	| \det({\bf w}_{i+2}) |_{\nu}.
\]
	Thus, we have
\[
	\|{\bf y}_{i+1} \wedge {\bf y}_{i+2}\|_{\nu}
	\sim
	\delta_{\nu,i+1}\|{\bf w}_{i+1}\|_{\nu}
	\ \text{ and } \
	\|{\bf y}_{i} \wedge {\bf y}_{\nu}\|_{\nu}
	\sim
	\delta_{\nu,i},
\]
	which prove the first two estimates in (\ref{RP2:P:1:2}).
\end{proof}
\begin{proposition} \label{RP2:P:1-1}
	Let $({\bf w}_i)_{i\geq0}$ be an unbounded admissible
	Fibonacci sequence in $\mathcal{M}$ satisfying
	(\ref{RP2:P:1:0}), with a corresponding sequence of symmetric
	matrices $({\bf y}_i)_{i\geq0}$ satisfying
	$\det({\bf y}_0,{\bf y}_1,{\bf y}_2)\neq0$,
	and let  $\mathcal{S}'$ be a finite subset
	of $\mathfrak{M}_{\mathbb{Q}}$.
	Suppose that, for each $\nu \in \mathcal{S}'$,
	the numbers $\delta_{i,\nu}$ defined by (\ref{RP2:P:1:1})
	satisfy $\delta_{\nu,i} = o(\|{\bf w}_{i}\|_{\nu})$
	as $i\rightarrow\infty$.
	Suppose also that
\begin{equation}\label{RP2:P:1-1:1}
	\prod_{p \in \mathcal{S}'\setminus\{\infty\}}\delta_{p,i}
	=
	\left \{
		\begin{matrix}
			o(\delta_{\infty,i}^{-1}) & \text{ if }
			\infty \in \mathcal{S}',
			\\
			o(\|{\bf w}_i\|_{\infty}^{-1}) & \text{ if }
			\infty \notin \mathcal{S}'.
		\end{matrix}
	\right .
\end{equation}
	Finally, for each $\nu \in \mathcal{S}'$, let ${\bf y}_{\nu}$
	be a non-zero point of $\mathbb{Q}_{\nu}^3$ with $\det({\bf y}_{\nu})=0$
	satisfying (\ref{RP2:P:1:2}),
	as given by Proposition \ref{RP2:P:1}.
	Then the points
	${\bf t'}_{l} = (y_{\nu,l})_{\nu \in \mathcal{S}'} \in
	\prod_{\nu \in \mathcal{S}'}\mathbb{Q}_{\nu}$ ($l=0,1,2$)
	are linearly independent over $\mathbb{Q}$.
\end{proposition}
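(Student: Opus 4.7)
The plan is to argue by contradiction via the product formula, showing that any nontrivial $\mathbb{Q}$-linear relation among ${\bf t'}_0,{\bf t'}_1,{\bf t'}_2$ would force $\langle {\bf a},{\bf y}_i\rangle=0$ for all large $i$, which contradicts the linear independence of three consecutive terms of $({\bf y}_i)_{i\ge 0}$.

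Suppose ${\bf t'}_0,{\bf t'}_1,{\bf t'}_2$ are linearly dependent over $\mathbb{Q}$. After clearing denominators, we obtain a nonzero ${\bf a}=(a_0,a_1,a_2)\in\mathbb{Z}^3$ such that $\langle {\bf a},{\bf y}_\nu\rangle=0$ for every $\nu\in\mathcal{S}'$. The first step is to bound $|\langle {\bf a},{\bf y}_i\rangle|_\nu$ for $\nu\in\mathcal{S}'$: applying inequality (\ref{RP2:L:1:1}) of Lemma \ref{RP2:L:1} (which holds for $\nu=\infty$ as well by the same proof, or can be replaced by the analogous Cauchy--Schwarz style estimate at the archimedean place) with the vector ${\bf y}_\nu$ in place of ${\bf w}$, and using $\langle {\bf a},{\bf y}_\nu\rangle=0$ together with $\|{\bf y}_i\wedge{\bf y}_\nu\|_\nu\sim \delta_{\nu,i}$ from Proposition \ref{RP2:P:1}, one obtains
\[
    |\langle {\bf a},{\bf y}_i\rangle|_\nu \;\ll\; \delta_{\nu,i}\qquad(\nu\in\mathcal{S}').
\]
For every prime $p\notin\mathcal{S}'$, the integrality of $\langle {\bf a},{\bf y}_i\rangle$ gives $|\langle {\bf a},{\bf y}_i\rangle|_p\le 1$, and if $\infty\notin\mathcal{S}'$ the trivial bound gives $|\langle {\bf a},{\bf y}_i\rangle|_\infty\ll \|{\bf w}_i\|_\infty$.

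The second step is to feed these bounds into the product formula. Assuming $\langle {\bf a},{\bf y}_i\rangle\ne 0$, we have $\prod_\nu|\langle {\bf a},{\bf y}_i\rangle|_\nu=1$, while our bounds give
\[
    \prod_\nu|\langle {\bf a},{\bf y}_i\rangle|_\nu \;\ll\;
    \begin{cases}
        \delta_{\infty,i}\displaystyle\prod_{p\in\mathcal{S}'\setminus\{\infty\}}\delta_{p,i} &\text{if }\infty\in\mathcal{S}',\\[4pt]
        \|{\bf w}_i\|_\infty\displaystyle\prod_{p\in\mathcal{S}'}\delta_{p,i} &\text{if }\infty\notin\mathcal{S}'.
    \end{cases}
\]
Hypothesis (\ref{RP2:P:1-1:1}) is precisely what makes each of these right-hand sides $o(1)$, so for $i$ large enough the product is strictly less than $1$; this forces $\langle {\bf a},{\bf y}_i\rangle=0$ for all such $i$.

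Finally, by Proposition 4.1(d) of \cite{DROY.2} (already invoked in the proof of Proposition \ref{RP2:P:1}), $\det({\bf y}_i,{\bf y}_{i+1},{\bf y}_{i+2})$ is a fixed nonzero rational multiple of $\det({\bf w}_{i+2})\ne 0$ for every $i\ge 0$, so any three consecutive ${\bf y}_i$ form a $\mathbb{Q}$-basis of $\mathbb{Q}^3$. Combined with $\langle {\bf a},{\bf y}_i\rangle=0$ for $i\gg 1$, this forces ${\bf a}=0$, a contradiction. The only step requiring care is the archimedean version of (\ref{RP2:L:1:1})—Lemma \ref{RP2:L:1} is stated for finite $p$, but the same three-term identity used there yields an analogous estimate at $\infty$ (with a harmless constant), so no real obstacle arises; the argument is otherwise a direct application of already established tools.
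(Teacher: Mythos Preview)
Your proof is correct and follows essentially the same route as the paper's: contradiction via a nonzero integer vector ${\bf a}$ orthogonal to each ${\bf y}_\nu$, the estimate $|\langle {\bf a},{\bf y}_i\rangle|_\nu \ll \delta_{\nu,i}$ from (\ref{RP2:L:1:1}), the product formula combined with hypothesis (\ref{RP2:P:1-1:1}) to force $\langle {\bf a},{\bf y}_i\rangle=0$ for large $i$, and finally Proposition~4.1(d) of \cite{DROY.2}. The only cosmetic difference is that the paper handles the archimedean case by citing Lemma~2.2 of \cite{DROY.2} rather than remarking that the proof of (\ref{RP2:L:1:1}) goes through at $\infty$.
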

\begin{proof}
		Suppose on the contrary that the points
		${\bf t'}_{0},{\bf t'}_{1},{\bf t'}_{2}$ are linearly dependent
		over $\mathbb{Q}$.
		This means that there exists a non-zero point
		${\bf u} \in \mathbb{Z}^3$, such that
	$\langle {\bf u},{\bf y}_{\nu} \rangle = 0$
	for each $\nu \in \mathcal{S}'$.
	So, by Lemma \ref{RP2:L:1} in the case where $\nu\neq\infty$
	or by Lemma 2.2 of \cite{DROY.2} otherwise,
	we have for each $\nu \in \mathcal{S}'$ and
	for each $i\geq0$,
\begin{equation}\label{RP2:P:1-1:2}
	|\langle {\bf u},{\bf y}_i \rangle |_{\nu} \|{\bf y}_{\nu} \|_{\nu} =
	\| \langle {\bf u},{\bf y}_{\nu} \rangle {\bf y}_i -
	\langle {\bf u},{\bf y}_i \rangle {\bf y}_{\nu} \|_{\nu} \ll
	\|{\bf u}\|_{\nu} \|{\bf y}_i \wedge {\bf y}_{\nu}\|_{\nu}.
\end{equation}
	Assume that $\langle {\bf u},{\bf y}_i \rangle \neq 0$
	for some $i\geq1$.
	Since $\langle {\bf u},{\bf y}_i \rangle$ is an integer, then
\[
	|\langle {\bf u},{\bf y}_i \rangle|_{\infty}
	\prod_{p \in \mathcal{S}'\setminus\{\infty\}}
	|\langle {\bf u},{\bf y}_i \rangle|_{p} \geq1.
\]
	Using the inequality (\ref{RP2:P:1-1:2})
	and the estimate
	$|\langle {\bf u},{\bf y}_i \rangle|_{\infty}
	\ll \|{\bf y}_i\|_{\infty} \sim \|{\bf w}_i\|_{\infty}$,
	this becomes
\[
	1 \ll
	\Big(
	\prod_{p \in \mathcal{S}'\setminus\{\infty\}}
		\|{\bf y}_i \wedge {\bf y}_{\infty}\|_{p}
	\Big)
	\left \{
		\begin{matrix}
			\|{\bf y}_i \wedge {\bf y}_{\infty}\|_{\infty} & \text{ if }
			\infty \in \mathcal{S}',
			\\
			\|{\bf y}_i\|_{\infty} & \text{ if }
			\infty \notin \mathcal{S}'.
		\end{matrix}
	\right .
\]
	By the second relation in (\ref{RP2:P:1:2}), it follows that
\[
	1 \ll
	\Big(
	\prod_{p \in \mathcal{S}'\setminus\{\infty\}}
				\delta_{p,i}
	\Big)
	\left \{
		\begin{matrix}
			\delta_{\infty,i} & \text{ if }
			\infty \in \mathcal{S}',
			\\
			\|{\bf w}_i\|_{\infty} & \text{ if }
			\infty \notin \mathcal{S}',
		\end{matrix}
	\right .
\]
	which contradicts the hypothesis (\ref{RP2:P:1-1:1})
	if $i$ is large.
	So, we conclude that $\langle {\bf u},{\bf y}_i \rangle = 0$
	for each $i$ sufficiently large.
	By Proposition 4.1(d) of \cite{DROY.2} and the assumption
	that $\det({\bf y}_0,{\bf y}_1,{\bf y}_2) \neq 0$, we know that
	any three consecutive points
	of the sequence $({\bf y}_i)_{i\geq0}$ are linearly independent
	over $\mathbb{Q}$. This is a contradiction.
\end{proof}
\begin{remark} \label{RP2:R:1}
	In particular, if $\mathcal{S}' = \{\nu\}$ for some
	$\nu \in \mathfrak{M}_{\mathbb{Q}}$,
	then all components of the point ${\bf y}_{\nu}$ are non-zero and
	after dividing
	${\bf y}_{\nu}$ by its first coordinate,
	we deduce from the condition $\det({\bf y}_{\nu}) = 0$,
	that ${\bf y}_{\nu}$ can be written in the form
	${\bf y}_{\nu} = (1,\xi_{\nu},\xi_{\nu}^2)$,
	for some $\xi_{\nu} \in \mathbb{Q}_{\nu}$
	with $[\mathbb{Q}(\xi_{\nu}):\mathbb{Q}]>2$.
	So, in this case, we have
	$L_{\nu}({\bf x}) \sim \|{\bf x} \wedge {\bf y}_{\nu}\|_{\nu}$.
\end{remark}
\begin{corollary} \label{RP2:C:1}
	Let $\mathcal{S}$ be a finite set of prime numbers.
	Let $(\alpha_{p})_{p \in \mathcal{S}}$ be a sequence of positive
	real numbers indexed by $\mathcal{S}$ and $\beta$ be a real number such that
\begin{equation}\label{RP2:C:1:0}
	\sum_{p \in \mathcal{S}}\alpha_{p}
	\leq \beta < 2.
\end{equation}
	Let $({\bf w}_i)_{i\geq0}$ be an unbounded admissible Fibonacci sequence in
	$\mathcal{M}$ such that
\begin{equation} \label{RP2:C:1:1}
 \begin{gathered}
	\|{\bf w}_{i+1}\|_{\infty} \sim \|{\bf w}_i\|_{\infty}^{\gamma},
	\quad
	\|{\bf w}_i\|_{p} \sim 1
	\ (\forall p \in \mathcal{S}),
	\\
	|\det({\bf w}_i)|_{\infty} \ll
	\|{\bf w}_i\|_{\infty}^{\beta},
	\quad
	|\det({\bf w}_i)|_{p} \ll \|{\bf w}_i\|_{\infty}^{-\alpha_{p}}
	\ (\forall p \in \mathcal{S}).
 \end{gathered}
\end{equation}
 	Let $({\bf y}_i)_{i\geq0}$ be a corresponding sequence
	of symmetric matrices.
	Assume that $\det({\bf y}_0,{\bf y}_1,{\bf y}_2) \neq 0$ and define
\[
	\mu =
	\left \{
	\begin{aligned}
	&(1-\beta)/\gamma &\text{ if }& \ \beta < 1,
	\\
	&1-\beta &\text{ if }& \ 1\leq \beta < 2.
	\end{aligned}
	\right .
\]
	There exist non-zero points
	${\bf y}_{\infty}
	= (y_{\infty,0},y_{\infty,1},y_{\infty,2})\in \mathbb{R}^3$
	and
	${\bf y}_{p}
	= (y_{p,0},y_{p,1},y_{p,2}) \in \mathbb{Q}_{p}^3$ ($p \in \mathcal{S}$),
	with $\det({\bf y}_{\infty}) = 0$
	and $\det({\bf y}_{p}) = 0$ ($p \in \mathcal{S}$),
	such that  the inequalities
\begin{equation}\label{RP2:C:1:2}
	\|{\bf x}\|_{\infty} \leq X,
	\quad
	\|{\bf x} \wedge {\bf y}_{\infty}\|_{\infty} \ll X^{-\mu},
	\quad
	\|{\bf x} \wedge {\bf y}_{p}\|_{p} \ll X^{-\alpha_{p}/\gamma}
	\text{ for } p \in \mathcal{S},
\end{equation}
	have a non-zero solution ${\bf x}$ in $\mathbb{Z}^3$, for every $X\gg1$.

	Moreover,
\begin{itemize}
	\item[(i)] if $\beta <1$, then the point ${\bf y}_{\infty}$
		can be written in the form
		${\bf y}_{\infty} = (1,\xi_{\infty},\xi_{\infty}^2)$,
		for some $\xi_{\infty} \in \mathbb{R}$
		with $[\mathbb{Q}(\xi_{\infty}):\mathbb{Q}]>2$.
		So, in (\ref{RP2:C:1:2}), we have
		$L_{\infty}({\bf x}) \sim \|{\bf x} \wedge {\bf y}_{\infty}\|_{\infty}$.
	\\
	\item[(ii)] if $\sum_{p \in \mathcal{S}}\alpha_{p} > 1$,
		then the points
		${\bf t'}_{l} = (y_{p,l})_{p \in \mathcal{S}} \in
		\prod_{p \in \mathcal{S}}\mathbb{Q}_{p}$
		($l=0,1,2$) are linearly independent
		over $\mathbb{Q}$.
		\\
	\item[(iii)] if $\beta < 1+\sum_{p \in \mathcal{S}}\alpha_{p}$,
		then the points
		${\bf t'}_{l} =
		\big( y_{\infty,l},(y_{\nu,l})_{p \in \mathcal{S}}\big)
		\in \mathbb{R}\times\prod_{p \in \mathcal{S}}\mathbb{Q}_{p}$
		($l=0,1,2$) are linearly independent
		over $\mathbb{Q}$.
\end{itemize}
\end{corollary}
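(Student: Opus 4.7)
The plan is to construct the limit points ${\bf y}_\nu$ via Proposition~\ref{RP2:P:1} applied at each place, take ${\bf x} := {\bf y}_i$ for an appropriately chosen index $i$ depending on $X$, and then derive the linear independence claims from Proposition~\ref{RP2:P:1-1}. First I would verify, for each $\nu \in \{\infty\} \cup \mathcal{S}$, the hypotheses of Proposition~\ref{RP2:P:1}. At $\nu = \infty$, the hypothesis $\|{\bf w}_{i+1}\|_\infty \sim \|{\bf w}_i\|_\infty^\gamma$ is given directly in (\ref{RP2:C:1:1}), while $\delta_{\infty,i} \ll \|{\bf w}_i\|_\infty^{\beta-1} = o(\|{\bf w}_i\|_\infty)$ follows from $\beta < 2$. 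At $p \in \mathcal{S}$, the relation $\|{\bf w}_i\|_p \sim 1$ makes $\|{\bf w}_{i+1}\|_p \sim \|{\bf w}_i\|_p^\gamma$ automatic, and $\delta_{p,i} \sim |\det({\bf w}_i)|_p \ll \|{\bf w}_i\|_\infty^{-\alpha_p} \to 0 = o(\|{\bf w}_i\|_p)$. Proposition~\ref{RP2:P:1} thus produces non-zero ${\bf y}_\nu \in \mathbb{Q}_\nu^3$ with $\det({\bf y}_\nu) = 0$, together with the estimate $\|{\bf y}_i \wedge {\bf y}_\nu\|_\nu \sim \delta_{\nu,i}$ at each place.

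Given $X \gg 1$, I would choose the largest index $i$ with $\|{\bf y}_i\|_\infty \leq X$ and set ${\bf x} := {\bf y}_i$. Using $\|{\bf y}_i\|_\infty \sim \|{\bf w}_i\|_\infty$ (Lemma~\ref{RP2:L:2}) and the growth $\|{\bf w}_{i+1}\|_\infty \sim \|{\bf w}_i\|_\infty^\gamma$, maximality gives $\|{\bf w}_i\|_\infty \gg X^{1/\gamma}$, while $\|{\bf w}_i\|_\infty \ll X$. At the infinite place, $\|{\bf x} \wedge {\bf y}_\infty\|_\infty \ll \|{\bf w}_i\|_\infty^{\beta-1}$; when $\beta \geq 1$ the upper bound $\|{\bf w}_i\|_\infty \ll X$ yields $X^{\beta-1} = X^{-\mu}$, and when $\beta < 1$ the lower bound $\|{\bf w}_i\|_\infty \gg X^{1/\gamma}$ yields $X^{-(1-\beta)/\gamma} = X^{-\mu}$. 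At each $p \in \mathcal{S}$, $\|{\bf x} \wedge {\bf y}_p\|_p \ll \|{\bf w}_i\|_\infty^{-\alpha_p} \ll X^{-\alpha_p/\gamma}$ by the same lower bound. This establishes (\ref{RP2:C:1:2}).

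The three moreover statements reduce to applying Proposition~\ref{RP2:P:1-1} with different choices of $\mathcal{S}'$. For (i), take $\mathcal{S}' = \{\infty\}$: the hypothesis (\ref{RP2:P:1-1:1}) collapses to $1 = o(\delta_{\infty,i}^{-1})$, which is precisely $\delta_{\infty,i} \to 0$, secured by $\beta < 1$. The three real coordinates of ${\bf y}_\infty$ are then $\mathbb{Q}$-linearly independent, hence all non-zero, and dividing by $y_{\infty,0}$ together with $\det({\bf y}_\infty) = 0$ gives ${\bf y}_\infty = (1, \xi_\infty, \xi_\infty^2)$ with $[\mathbb{Q}(\xi_\infty):\mathbb{Q}] > 2$; the identification $L_\infty({\bf x}) \sim \|{\bf x} \wedge {\bf y}_\infty\|_\infty$ is then Remark~\ref{RP2:R:1}. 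For (ii), take $\mathcal{S}' = \mathcal{S}$: since $\infty \notin \mathcal{S}'$, the hypothesis asks for $\prod_{p \in \mathcal{S}} \delta_{p,i} = o(\|{\bf w}_i\|_\infty^{-1})$, which translates to $\|{\bf w}_i\|_\infty^{1 - \sum_p \alpha_p} \to 0$, equivalent to $\sum_p \alpha_p > 1$. For (iii), take $\mathcal{S}' = \{\infty\} \cup \mathcal{S}$: the condition $\prod_{p \in \mathcal{S}} \delta_{p,i} = o(\delta_{\infty,i}^{-1})$ becomes $\|{\bf w}_i\|_\infty^{\beta - 1 - \sum_p \alpha_p} \to 0$, i.e., $\beta < 1 + \sum_p \alpha_p$. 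The main obstacle is purely bookkeeping: matching the powers of $\|{\bf w}_i\|_\infty$ coming from the various $\delta_{\nu,i}$ to the exponent $-\mu$ on both sides of the threshold $\beta = 1$, and confirming that the three applications of Proposition~\ref{RP2:P:1-1} each use the correct form of hypothesis~(\ref{RP2:P:1-1:1}).
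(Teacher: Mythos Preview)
Your proposal is correct and follows essentially the same approach as the paper: apply Proposition~\ref{RP2:P:1} at each place to produce the limit points ${\bf y}_\nu$, take ${\bf x}={\bf y}_i$ for the appropriate index $i$ determined by $X$, and then invoke Proposition~\ref{RP2:P:1-1} with $\mathcal{S}'=\{\infty\}$, $\mathcal{S}$, and $\{\infty\}\cup\mathcal{S}$ respectively for parts (i)--(iii). The only cosmetic difference is that the paper chooses $i$ via $\|{\bf w}_i\|_\infty \leq X < \|{\bf w}_{i+1}\|_\infty$ rather than via $\|{\bf y}_i\|_\infty \leq X$, which is equivalent up to constants since $\|{\bf y}_i\|_\infty \sim \|{\bf w}_i\|_\infty$.
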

\begin{proof}
	For each $i\geq0$ we have $\det({\bf w}_i) \neq 0$ and so,
	by (\ref{RP2:C:1:1}),
\[
	1 \leq \prod_{\nu\in \{\infty\}\cup\mathcal{S}}
	|\det({\bf w}_i)|_{\nu}
	\ll
	\|{\bf w}_i\|_{\infty}^{\beta - \sum_{p \in \mathcal{S}}\alpha_{p}}.
\]
	So, we necessarily have
	$\sum_{p \in \mathcal{S}}\alpha_{p}
	\leq \beta$.

	To show the first statement of the corollary we
	apply Proposition \ref{RP2:P:1}.
	To this end, we need only to check that
	$\delta_{\nu,i} = o(\|{\bf w}_i\|_{\nu})$
	for each $\nu \in \{\infty\}\cup\mathcal{S}$.
	By (\ref{RP2:C:1:1}), we find that
\begin{gather*}
	\frac{\delta_{\infty,i}}{\|{\bf w}_i\|_{\infty}} \sim
	\frac{|\det({\bf w}_i)|_{\infty}}{\|{\bf w}_i\|_{\infty}^2} \ll
	\|{\bf w}_i\|_{\infty}^{\beta-2} = o(1),
\\
	\frac{\delta_{p,i}}{\|{\bf w}_i\|_{p}} \sim
	|\det({\bf w}_i)|_{p} \ll
	\|{\bf w}_i\|_{\infty}^{-\alpha_{p}} = o(1)
	\ \text{ for } p \in \mathcal{S}.
\end{gather*}
	So, Proposition \ref{RP2:P:1} provides non-zero points
	${\bf y}_{\infty} \in \mathbb{R}^3$ and
	${\bf y}_{p} \in \mathbb{Q}_{p}^3$ ($p \in \mathcal{S}$)
	with $\det({\bf y}_{\nu}) = 0$
	for each $\nu \in \mathcal{S}\cup\{\infty\}$,
	satisfying the estimates (\ref{RP2:P:1:2}).

	Fix a real number $X$. If $X$ is sufficiently large,
	there exists an index $i\geq0$ such that
\begin{equation}\label{RP2:C:1:4}
	\|{\bf w}_{i}\|_{\infty}
	\leq X < \|{\bf w}_{i+1}\|_{\infty}
	\sim \|{\bf w}_{i}\|_{\infty}^{\gamma}.
\end{equation}
	By (\ref{RP2:P:1:2}) and (\ref{RP2:C:1:1}),
	we have
\begin{gather*}
 	\|{\bf y}_i \wedge {\bf y}_{\infty}\|_{\infty} \sim
	\frac{|\det({\bf w}_i)|_{\infty}}{\|{\bf w}_i\|_{\infty}} \ll
	\|{\bf w}_i\|_{\infty}^{-1+\beta},
	\\
	\|{\bf y}_i \wedge {\bf y}_{p}\|_{p} \sim
	\frac{|\det({\bf w}_i)|_{p}}{\|{\bf w}_i\|_{p}} \sim
	|\det({\bf w}_i)|_{p} \ll
	\|{\bf w}_i\|_{\infty}^{-\alpha_{p}}
	\ \text{ for } p \in \mathcal{S}.
\end{gather*}
	Combining this with (\ref{RP2:C:1:4}) we obtain
\[
	\|{\bf y}_i \wedge {\bf y}_{\infty}\|_{\infty} \ll X^{-\mu}
	\ \text{ and } \
	\|{\bf y}_i \wedge {\bf y}_{p}\|_{p} \ll
	X^{-\alpha_{p}/\gamma}
	\text{ for } p \in \mathcal{S}.
\]
	Thus the point ${\bf x} = {\bf y}_i$ satisfies (\ref{RP2:C:1:2}).

	To prove Part (i) in the statement of the corollary,
	we apply Proposition \ref{RP2:P:1-1}
	with $\mathcal{S}' = \{\infty\}$.
	Here we assume that $\beta<1$ and
	we have only to check that the condition (\ref{RP2:P:1-1:1}) holds.
	Indeed, we find
\[
	\delta_{\infty,i}
	=
	\frac{|\det({\bf w}_i)|_{\infty}}{\|{\bf w}_i\|_{\infty}} \ll
	\|{\bf w}_i\|_{\infty}^{\beta-1} = o(1).
\]
	Applying Remark \ref{RP2:R:1} completes the proof of Part (i).

	Similarly, to show Part (ii),
	we apply Proposition \ref{RP2:P:1-1}
	with $\mathcal{S}' = \mathcal{S}$.
	Since $\sum_{p \in \mathcal{S}}\alpha_{p} > 1$,
	we find by (\ref{RP2:C:1:1}) that
\begin{align*}
	\|{\bf w}_i\|_{\infty}
	\prod_{p\in \mathcal{S}}\delta_{p,i}
	&\sim
	\|{\bf w}_i\|_{\infty}
	\prod_{p\in \mathcal{S}}|\det({\bf w}_i)|_{p}
	\ll
	\|{\bf w}_i\|_{\infty}^{1 - \sum_{p \in \mathcal{S}}\alpha_{p}} = o(1),
\end{align*}
	which shows that the condition (\ref{RP2:P:1-1:1}) holds.

	Finally, to show Part (iii), we use Proposition \ref{RP2:P:1-1}
	with $\mathcal{S}' = \{\infty\}\cup\mathcal{S}$.
	Again, we need only to check that the condition (\ref{RP2:P:1-1:1}) holds.
	Since $\beta < 1 + \sum_{p \in \mathcal{S}}\alpha_{p}$,
	we find by (\ref{RP2:C:1:1}) that
\[
	\prod_{\nu\in \{\infty\}\cup\mathcal{S}}\delta_{\nu,i}
	\sim
	\prod_{\nu\in \{\infty\}\cup\mathcal{S}}
	\frac{|\det({\bf w}_i)|_{\nu}}{\|{\bf w}_i\|_{\nu}}
	\ll
	\|{\bf w}_i\|_{\infty}^{\beta - 1 - \sum_{p \in \mathcal{S}}\alpha_{p}}
\]
	and thus (\ref{RP2:P:1-1:1}) holds.
\end{proof}
\begin{theorem} \label{RP2:C:2}
	Let $\mathcal{S}$ be a finite set of prime numbers.
	For any
	$\bar \lambda = \big( \lambda_{\infty},
	(\lambda_{p})_{p \in \mathcal{S}} \big ) \in \mathbb{R}_{>0}^{|\mathcal{S}|+1}$
	with
\begin{equation}\label{RP2:C:2:0}
	\sum_{\nu \in \mathcal{S}\cup\{\infty\}}\lambda_{\nu}
	< \frac{1}{\gamma}
\end{equation}
	there exists a non-zero point
	$\bar \xi = \big( \xi_{\infty},
	(\xi_{p})_{p \in \mathcal{S}} \big ) \in
	\mathbb{R} \times \prod_{p \in \mathcal{S}}\mathbb{Q}_{p}$
	with $[\mathbb{Q}(\xi_{\infty}):\mathbb{Q}]>2$,
	such that
	$\bar \lambda$ is an exponent of approximation in degree $2$
	to $\bar \xi$.
\end{theorem}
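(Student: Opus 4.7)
The plan is to apply Corollary \ref{RP2:C:1} with parameters chosen so that its conclusion matches the required inequalities. First, set
\[
\beta = 1 - \gamma \lambda_{\infty}, \qquad \alpha_p = \gamma \lambda_p \quad (p \in \mathcal{S}).
\]
Since $\lambda_{\infty} > 0$, we have $\beta < 1$; since $\lambda_{\infty} < 1/\gamma$, we have $\beta > 0$; and the strict hypothesis $\sum_{\nu} \lambda_{\nu} < 1/\gamma$ rearranges to $\sum_p \alpha_p < \beta$. With this choice, the exponent $\mu = (1-\beta)/\gamma$ from case (i) of Corollary \ref{RP2:C:1} equals $\lambda_{\infty}$, and each $\alpha_p/\gamma$ equals $\lambda_p$, so an application of the corollary yields solutions whose approximation exponents are exactly those prescribed by $\bar{\lambda}$.

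Second, I would construct an unbounded admissible Fibonacci sequence $({\bf w}_i)_{i \geq 0}$ in $\mathcal{M}$ meeting the hypotheses (\ref{RP2:C:1:1}). Choose initial data ${\bf w}_0, {\bf w}_1 \in \mathcal{M}$ and $N \in \mathcal{M}$ along the lines of the examples of extremal real numbers in \cite{DROY.2}, so that $\|{\bf w}_{i+1}\|_{\infty} \sim \|{\bf w}_i\|_{\infty}^{\gamma}$ and ${\bf y}_i := {\bf w}_i N_i$ is symmetric with $\det({\bf y}_0, {\bf y}_1, {\bf y}_2) \neq 0$. The determinants satisfy the Fibonacci recursion $\det({\bf w}_{i+2}) = \det({\bf w}_{i+1}) \det({\bf w}_i)$, so by taking $\det({\bf w}_0)$ and $\det({\bf w}_1)$ to be products of the form $\prod_{p \in \mathcal{S}} p^{k_p}$ with exponents $k_p$ tuned to the target rates, together with a scaling of ${\bf w}_0, {\bf w}_1$ to fix the archimedean size, one obtains $|\det({\bf w}_i)|_{\infty} \ll \|{\bf w}_i\|_{\infty}^{\beta}$ and $|\det({\bf w}_i)|_p \ll \|{\bf w}_i\|_{\infty}^{-\alpha_p}$ simultaneously; the strict inequality $\sum_p \alpha_p < \beta$ gives room for the product formula to hold. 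The condition $\|{\bf w}_i\|_p \sim 1$ is ensured by taking the entries of ${\bf w}_0, {\bf w}_1, N$ coprime to each $p \in \mathcal{S}$ outside the diagonal slot where the $p$-divisibility of $\det$ is concentrated.

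Third, invoking Corollary \ref{RP2:C:1}(i) gives a non-zero ${\bf y}_{\infty} = (1, \xi_{\infty}, \xi_{\infty}^2)$ with $[\mathbb{Q}(\xi_{\infty}):\mathbb{Q}] > 2$ and non-zero points ${\bf y}_p \in \mathbb{Q}_p^3$ with $\det({\bf y}_p) = 0$ such that for each $X \gg 1$ there is a non-zero ${\bf x} \in \mathbb{Z}^3$ satisfying (\ref{RP2:C:1:2}). The relation $\|{\bf x} \wedge {\bf y}_{\infty}\|_{\infty} \sim L_{\infty}({\bf x})$ is automatic, yielding $L_{\infty}({\bf x}) \ll X^{-\lambda_{\infty}}$. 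For each $p \in \mathcal{S}$, provided the first coordinate $y_{p,0}$ is non-zero, normalise ${\bf y}_p = y_{p,0}(1, \xi_p, \xi_p^2)$ with $\xi_p \in \mathbb{Q}_p$; then $\|{\bf x} \wedge {\bf y}_p\|_p \sim L_p({\bf x})$ gives $L_p({\bf x}) \ll X^{-\lambda_p}$. The resulting $\bar{\xi} = (\xi_{\infty}, (\xi_p)_{p \in \mathcal{S}})$ satisfies the definition of $\bar{\lambda}$ being an exponent of approximation to $\bar{\xi}$ in degree $2$.

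The main obstacle is two-fold. The first, and principal, difficulty is arranging the admissible Fibonacci sequence to realise \emph{all} of the rate conditions in (\ref{RP2:C:1:1}) simultaneously: the archimedean growth $\|{\bf w}_{i+1}\|_{\infty} \sim \|{\bf w}_i\|_{\infty}^{\gamma}$, the local-triviality $\|{\bf w}_i\|_p \sim 1$, and the prescribed decay of $|\det({\bf w}_i)|_p$ at each $p \in \mathcal{S}$ must be jointly realisable, which reduces to an explicit tuning of $\det({\bf w}_0)$ and $\det({\bf w}_1)$ compatible with integrality and with the symmetry requirement fixing $N$. The second, smaller issue is ensuring $y_{p,0} \neq 0$ so that $\xi_p \in \mathbb{Q}_p$ is well-defined; this is a non-degeneracy condition on the initial matrices, and in the worst case one applies a $\mathrm{GL}_3(\mathbb{Z})$ change of coordinates simultaneously to all places (permuting the roles of $1, T, T^2$) to move the nontrivial coordinate into first position before extracting $\xi_p$.
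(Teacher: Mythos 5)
Your overall strategy coincides with the paper's: apply Corollary \ref{RP2:C:1} to an admissible Fibonacci sequence whose determinants are $\mathcal{S}$-units tuned to the target exponents, then read off $\xi_{\infty}$ and the $\xi_{p}$ from the limit points ${\bf y}_{\nu}$. There are, however, two concrete gaps.

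First, you target the exact values $\beta = 1-\gamma\lambda_{\infty}$ and $\alpha_{p} = \gamma\lambda_{p}$ and claim the corollary then yields exponents ``exactly those prescribed by $\bar\lambda$.'' In any construction of this type the realizable $\beta$ and $\alpha_{p}$ are ratios of logarithms of integers (in the paper, $\beta = \log a/\log(a(b+1))$ and $\alpha_{p} = \log(|a|_{p}^{-1})/\log(2a(c+1))$ with $|a|_{p}^{-1}$ a power of $p$), so they range over a countable set and cannot equal arbitrarily prescribed reals; moreover the product formula forces $\sum_{p}\alpha_{p}\leq\beta$, so there is no room to simply overshoot all parameters at once. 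The strict hypothesis $\sum_{\nu}\lambda_{\nu}<1/\gamma$ is there precisely to absorb this: the paper chooses auxiliary $\lambda'_{\nu}>\lambda_{\nu}$ with $\sum_{\nu}\lambda'_{\nu}=1/\gamma$, builds a one-parameter family of sequences (parameter $N$) whose $\beta$ and $\alpha_{p}$ converge to $1-\gamma\lambda'_{\infty}$ and $\gamma\lambda'_{p}$, and takes $N$ large enough that $(1-\beta)/\gamma>\lambda_{\infty}$ and $\alpha_{p}/\gamma>\lambda_{p}$. Your argument needs this (or an equivalent) limiting step.

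Second, the construction of the admissible sequence, which you rightly call the principal difficulty, is asserted rather than carried out, and the sketch is not quite right: ``scaling ${\bf w}_0,{\bf w}_1$ to fix the archimedean size'' would also rescale the determinants, and admissibility (existence of $N$ with ${\bf y}_i={\bf w}_iN_i$ symmetric) together with $\det({\bf y}_0,{\bf y}_1,{\bf y}_2)\neq 0$ is a nontrivial constraint. The paper resolves everything with the explicit matrices of Example 3.3 of \cite{DROY.2}: $\det({\bf w}_0)=\det({\bf w}_1)=a$ is the $\mathcal{S}$-unit, the entries $b,c\approx\exp(\gamma\lambda'_{\infty}N)$ control $\|{\bf w}_i\|_{\infty}$ independently of the determinant, and the bounds in (\ref{RP2:C:1:1}) propagate by induction via $\det({\bf w}_{i+2})=\det({\bf w}_{i+1})\det({\bf w}_i)$ and $\|{\bf w}_{i+2}\|_{\infty}\leq 2\|{\bf w}_{i+1}\|_{\infty}\|{\bf w}_{i}\|_{\infty}$. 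The same choice gives ${\bf y}_i\equiv(-1,0,0)\bmod a$, which forces the first coordinate of each limit point ${\bf y}_{p}$ to be a $p$-adic unit, hence non-zero; no $\GL_3(\mathbb{Z})$ change of coordinates is needed, and the one you propose (permuting $1,T,T^2$) would have to be applied compatibly at every place at once, which you do not justify.
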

\begin{proof}
	For the proof we use the construction of Example 3.3 in \cite{DROY.2}.
	Fix integers $a, b, c$ with $a \geq 2$ and $c > b \geq 1$.
	We consider the Fibonacci sequence $({\bf w}_i)_{i\geq0}$ in $\mathcal{M}$
	with initial matrices ${\bf w}_0$, ${\bf w}_1$ given by
\[
	{\bf w}_0 = \left (
	\begin{matrix}
	1 &  b
	\\
	a &  a(b+1)
	\end{matrix}
	\right ),
	\
	{\bf w}_1 = \left (
	\begin{matrix}
	1 &  c
	\\
	a &  a(c+1)
	\end{matrix}
	\right ).
\]
	Put
\[
	N = \left (
	\begin{matrix}
	-1 + a(b+1)(c+1)&  -a(b+1)
	\\
	-a(c+1) &  a
	\end{matrix}
	\right ).
\]
	Since the matrices
\begin{align*}
	{\bf y}_0 &= {\bf w}_0 N =
	\left (
	\begin{matrix}
	-1 + a(c+1)&  -a
	\\
	-a &  0
	\end{matrix}
	\right ),
	\\
	{\bf y}_1 &= {\bf w}_1 {}^tN =
	\left (
	\begin{matrix}
	-1 + a(b+1)&  -a
	\\
	-a &  0
	\end{matrix}
	\right ),
	\\
	{\bf y}_2 &= {\bf w}_1{\bf w}_0 N =
	\left (
	\begin{matrix}
	-1 + a&  -a
	\\
	-a &  -a^2
	\end{matrix}
	\right )
\end{align*}
	are symmetric, it follows from Proposition 3.1 of \cite{DROY.2}
	that the Fibonacci sequence $({\bf w}_i)_{i\geq0}$ is admissible
	with $\det({\bf y}_0,{\bf y}_1,{\bf y}_2) = a^4(c-b)$
	and $\det({\bf w}_0) = \det({\bf w}_1) = -\det(N) = a$.
	By Lemma 5.1 of \cite{DROY.2}, we have
\[
	\|{\bf w}_{i}\|_{\infty}\|{\bf w}_{i+1}\|_{\infty}
	<
	\|{\bf w}_{i+2}\|_{\infty}
	\leq
	2\|{\bf w}_{i}\|_{\infty}\|{\bf w}_{i+1}\|_{\infty}
\]
	for each $i\geq0$.
	By Lemma 5.2 of \cite{DROY.2}, this implies that
	the sequence $({\bf w}_i)_{i\geq0}$ is unbounded with
\begin{equation}\label{RP2:C:2:2:0}
	\|{\bf w}_{i+1}\|_{\infty} \sim \|{\bf w}_i\|_{\infty}^{\gamma}.
\end{equation}
	Since
\[
	{\bf w}_0, {\bf w}_1 \equiv
	\left (
	\begin{matrix}
	1 &  *
	\\
	0 &  0
	\end{matrix}
	\right )
	\mod a
	\ \text{ and } \
	N \equiv
	\left (
	\begin{matrix}
	-1 &  0
	\\
	0 &  0
	\end{matrix}
	\right )
	\mod a,
\]
	we deduce that
\[
	{\bf w}_i \equiv
	\left (
	\begin{matrix}
	1 &  *
	\\
	0 &  0
	\end{matrix}
	\right )
	\mod a
      \ \text{ and } \
	{\bf y}_i \equiv
	\left (
	\begin{matrix}
	- 1 &  0
	\\
	0 &  0
	\end{matrix}
	\right )
	\mod a
	\ \text{ for any } i\geq0.
\]
	Using the hypothesis (\ref{RP2:C:2:0}), we can choose numbers
	$(\lambda_{\nu}')_{\nu \in \mathcal{S}\cup\{\infty\}}$,
	such that
\begin{equation}\label{RP2:C:2:2:1}
	\lambda_{\nu}' > \lambda_{\nu}
	\ ( \nu \in  \{\infty\}\cup\mathcal{S} )
	\quad \text{ and }
	\sum_{\nu \in \{\infty\}\cup\mathcal{S}}\lambda_{\nu}' = \frac{1}{\gamma}.
\end{equation}
	Fix an arbitrarily large integer $N>0$
	and define integers $a, b, c$ by
\begin{equation}\label{RP2:C:2:2:2}
	a = \prod_{p \in \mathcal{S}}
		p^
		{ \big [
			\gamma\lambda_{p}'N / \log p
		\big ] },
	\quad
	b = 	\exp \big [ \gamma\lambda_{\infty}'N \big ]
	\quad \text{ and }
	c = b + 1.
\end{equation}
	This choice of $a$
	implies that, for each $i\geq0$, we have
\begin{equation}\label{RP2:C:2:2:0:1}
	\|{\bf w}_i\|_{p} = \|{\bf y}_i\|_{p} = 1
	\quad \text{ for each } p \in \mathcal{S}.
\end{equation}
	So, together with (\ref{RP2:C:2:2:0}) this
	means that the first two conditions in (\ref{RP2:C:1:1})
	of Corollary \ref{RP2:C:1} are satisfied.

	Define
\begin{equation}\label{RP2:C:2:2:3}
	\beta = \frac{\log(a)}{\log(a(b+1))}
	\ \text{ and } \
	\alpha_{p} = \frac{\log(|a|_{p}^{-1})}{\log(2a(c+1))}
	\ \text{ for each } \ p \in \mathcal{S}.
\end{equation}
	We have that $0< \beta < 1$ and
	$0<\alpha_{p}<1$ for each $\ p \in \mathcal{S}$.
	So, the inequalities
\begin{equation}\label{RP2:C:2:2:4}
	|\det({\bf w}_i)|_{\infty} \leq \|{\bf w}_i\|_{\infty}^{\beta}
	\ \text{ and } \
	|\det({\bf w}_i)|_{p} \leq (2\|{\bf w}_i\|_{\infty})^{-\alpha_{p}}
	\ (p \in \mathcal{S})
\end{equation}
	hold for $i = 0,1$.
	We claim that these inequalities (\ref{RP2:C:2:2:4}) hold for any $i\geq0$.
	To prove this we proceed by induction on $i$.
	Assuming that $i\geq2$ and that the inequalities hold for $i<2$,
	we get
\begin{align*}
	|\det({\bf w}_i)|_{\infty}
		&=
		|\det({\bf w}_{i-1})|_{\infty}
		|\det({\bf w}_{i-2})|_{\infty}
		\\
		&\leq
		\|{\bf w}_{i-1}\|_{\infty}^{\beta}
		\|{\bf w}_{i-2}\|_{\infty}^{\beta}
		\leq
		\|{\bf w}_i\|_{\infty}^{\beta},
	\\
	|\det({\bf w}_i)|_{p}
		&=
		|\det({\bf w}_{i-1})|_{p}
		|\det({\bf w}_{i-2})|_{p}
		\\
		&\leq
		(4\|{\bf w}_{i-1}\|_{\infty}\|{\bf w}_{i-2}\|_{\infty})^{-\alpha_{p}}
		\leq
		(2\|{\bf w}_i\|_{\infty})^{-\alpha_{p}}
		\quad (p \in \mathcal{S}),
\end{align*}
	which completes the induction step.
	Together with (\ref{RP2:C:2:2:0}), (\ref{RP2:C:2:2:0:1}) and
	these estimates, we have that conditions (\ref{RP2:C:1:1})
 	of Corollary \ref{RP2:C:1} are satisfied.
	We also note that the numbers $(\alpha_{p})_{p \in \mathcal{S}}$
	and $\beta$ satisfy
\[
	\sum_{p \in \mathcal{S}}\alpha_{p} =
	\frac{\log(\prod_{p \in \mathcal{S}}|a|_{p}^{-1})}{\log(2a(c+1))} =
	\frac{\log(a)}{\log(2a(c+1))} <
	\frac{\log(a)}{\log(a(b+1))} = \beta < 1.
\]
	So, by Corollary \ref{RP2:C:1} and moreover, by its Part (i),
	there exist a non-zero point
	$\xi_{\infty} \in \mathbb{R}$ with
	$[\mathbb{Q}(\xi_{\infty}):\mathbb{Q}]>2$
	and non-zero points
	${\bf y}_{p} \in \mathbb{Q}_{p}^3$
	with $\det({\bf y}_{p}) = 0$
	for each $p \in \mathcal{S}$,
	such that inequalities
\begin{equation}\label{RP2:C:2:2:5}
	\|{\bf x}\|_{\infty} \leq X,
	\quad
	L_{\infty}({\bf x}) \ll X^{-(1-\beta)/{\gamma}},
	\quad
	\|{\bf x} \wedge {\bf y}_{p}\|_{p} \ll X^{-\alpha_{p}/\gamma}
	\text{ for each } p \in \mathcal{S},
\end{equation}
	have a non-zero solution ${\bf x} \in \mathbb{Z}^3$, for each $X \geq 1$.
	Moreover, since ${\bf y}_{k} \equiv (-1,0,0) \mod a$
	for each $k\geq0$, the first component of ${\bf y}_{p}$ is non-zero
	for each $p \in \mathcal{S}$.
	So, for each $p \in \mathcal{S}$,
	we deduce from the relation $\det({\bf y}_{p}) = 0$,
	that ${\bf y}_{p}$ is a rational multiple of $(1,\xi_p,\xi_p^2)$
	for some $\xi_p \in \mathbb{Q}_p$. 
	Then, we have
	$L_{p}({\bf x}) \sim \|{\bf x} \wedge {\bf y}_{p}\|_{p}$, and
	(\ref{RP2:C:2:2:5}) can be rewritten as
\begin{equation}\label{RP2:C:2:2:6}
	\|{\bf x}\|_{\infty} \leq X,
	\quad
	L_{\infty}({\bf x}) \ll X^{-(1-\beta)/{\gamma}},
	\quad
	L_{p}({\bf x}) \ll X^{-\alpha_{p}/\gamma}
	\text{ for } p \in \mathcal{S}.
\end{equation}
	By (\ref{RP2:C:2:2:1}), (\ref{RP2:C:2:2:2}) and (\ref{RP2:C:2:2:3}),
	we find that $\beta$ and $\alpha_p \ (p\in\mathcal{S})$
	converge respectively to
\[
	\frac{\sum_{q \in \mathcal{S}}\lambda_{q}'}
		{\sum_{q \in \mathcal{S}}\lambda_{q}'
		+ \lambda_{\infty}'}
	=
	\gamma\sum_{q \in \mathcal{S}}\lambda_{q}'
	=
	1-\gamma\lambda_{\infty}'
	\quad \text{and} \quad
	\frac{\lambda_{p}'}
	{\sum_{q \in \mathcal{S}}\lambda_{q}' + \lambda_{\infty}'}
	=
	\gamma\lambda_{p}' \ (p \in \mathcal{S}).
\]
	Thanks to (\ref{RP2:C:2:2:1}), this means that
	we can choose $N$ large enough so that
\[
	\frac{1-\beta}{\gamma}
	> \lambda_{\infty}
	\ \text{ and } \
	\frac{\alpha_{p}}{\gamma} > \lambda_{p}
	\ (p \in \mathcal{S}).
\]
	Then, according to (\ref{RP2:C:2:2:6}), the inequalities
\[
	\|{\bf x}\|_{\infty} \leq X,
	\quad
	L_{\infty}({\bf x}) \ll X^{-\lambda_{\infty}},
	\quad
	L_{p}({\bf x}) \ll X^{-\lambda_{p}}
	\ (p \in \mathcal{S}),
\]
	have a non-zero solution ${\bf x} \in \mathbb{Z}^3$ for each $X \geq 1$.
	This completes the proof.

\end{proof}

	%
\subsection{$P$-adic case.}\label{C1:S5:SS2:p-adic}

	Here we present a p-adic version of the results of the
	previous paragraph.
\begin{corollary}\label{III:C:1}
	Let $p$ be a prime number and
	let $({\bf w}_i)_{i\geq0}$ be an unbounded admissible Fibonacci sequence in
	$\mathcal{M}$ such that
\begin{equation}\label{III:C:1:1}
	\|{\bf w}_i\|_{p} \sim 1, \quad
	|\det({\bf w}_i)|_{\infty} |\det({\bf w}_i)|_{p} \sim 1
	\ \text{ and } \
	\|{\bf w}_i\|_{\infty}^{\alpha_{p}} \ll |\det({\bf w}_i)|_{\infty},
\end{equation}
	for some real $\alpha_{p}>1$.
	Suppose that the corresponding sequence $({\bf y}_i)_{i\geq0}$
	satisfies $\det({\bf y}_0,{\bf y}_1,{\bf y}_2) \neq 0$.
	Then there exists a non-zero number $\xi_{p} \in \mathbb{Q}_p$
	with $[\mathbb{Q}(\xi_{p}):\mathbb{Q}]>2$ and a constant $c_1>0$
	such that inequalities
\begin{equation}\label{III:C:1:2}
	\|{\bf x}\|_{\infty} \leq X, \quad
	\|{\bf x}\|_{\infty} L_{p}({\bf x}) \leq c_1 X^{-(\alpha_{p}-1)/\gamma},
\end{equation}
	have a non-zero solution ${\bf x}$ in $\mathbb{Z}^3$ for every $X\gg1$.
\end{corollary}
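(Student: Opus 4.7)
The plan is to specialize the machinery of Propositions~\ref{RP2:P:1} and~\ref{RP2:P:1-1} to the single place $\nu = p$, producing the desired $\xi_p \in \mathbb{Q}_p$ as the projective limit of the sequence $([{\bf y}_i])_{i\geq0}$ in $\mathbb{P}^2(\mathbb{Q}_p)$, and then to take the approximating triples directly from $({\bf y}_i)$ itself.

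First I would verify that Proposition~\ref{RP2:P:1} applies with $\nu = p$. The hypothesis $\|{\bf w}_{i+1}\|_p \sim \|{\bf w}_i\|_p^\gamma$ is vacuous because $\|{\bf w}_i\|_p \sim 1$. The sequence $\delta_{p,i} = |\det({\bf w}_i)|_p/\|{\bf w}_i\|_p$ is comparable to $|\det({\bf w}_i)|_p$, and combining the three hypotheses in (\ref{III:C:1:1}) gives
\[
\delta_{p,i} \sim |\det({\bf w}_i)|_p \sim |\det({\bf w}_i)|_\infty^{-1} \ll \|{\bf w}_i\|_\infty^{-\alpha_p},
\]
which tends to $0$, so a fortiori $\delta_{p,i} = o(\|{\bf w}_i\|_p)$. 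Proposition~\ref{RP2:P:1} thus yields a non-zero ${\bf y}_p \in \mathbb{Q}_p^3$ with $\det({\bf y}_p) = 0$ and with $\|{\bf y}_i \wedge {\bf y}_p\|_p \sim \delta_{p,i}$. To rule out a vanishing coordinate of ${\bf y}_p$, I would apply Proposition~\ref{RP2:P:1-1} with $\mathcal{S}' = \{p\}$; since $\infty \notin \mathcal{S}'$, the condition to check is $\delta_{p,i} = o(\|{\bf w}_i\|_\infty^{-1})$, which follows from $\delta_{p,i} \ll \|{\bf w}_i\|_\infty^{-\alpha_p}$ and $\alpha_p > 1$. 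Remark~\ref{RP2:R:1} then lets me rescale so that ${\bf y}_p = (1, \xi_p, \xi_p^2)$ for some $\xi_p \in \mathbb{Q}_p$ with $[\mathbb{Q}(\xi_p) : \mathbb{Q}] > 2$, and gives the comparison $L_p({\bf x}) \sim \|{\bf x} \wedge {\bf y}_p\|_p$ for all non-zero ${\bf x}$.

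For each $X \gg 1$ I would use the archimedean growth rate $\|{\bf w}_{i+1}\|_\infty \sim \|{\bf w}_i\|_\infty^\gamma$ to choose the index $i$ with $\|{\bf w}_i\|_\infty \leq X < \|{\bf w}_{i+1}\|_\infty$ and set ${\bf x} = {\bf y}_i$. Lemma~\ref{RP2:L:2} gives $\|{\bf x}\|_\infty \sim \|{\bf w}_i\|_\infty \leq X$ up to a uniform multiplicative constant that can be absorbed into $c_1$. Chaining the estimates
\[
L_p({\bf x}) \sim \|{\bf y}_i \wedge {\bf y}_p\|_p \sim \delta_{p,i} \ll \|{\bf w}_i\|_\infty^{-\alpha_p}
\]
yields $\|{\bf x}\|_\infty L_p({\bf x}) \ll \|{\bf w}_i\|_\infty^{1-\alpha_p}$. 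Since $X < \|{\bf w}_{i+1}\|_\infty \sim \|{\bf w}_i\|_\infty^\gamma$, i.e.\ $\|{\bf w}_i\|_\infty \gg X^{1/\gamma}$, and since $1-\alpha_p < 0$, raising to the negative power reverses the inequality to give $\|{\bf w}_i\|_\infty^{1-\alpha_p} \ll X^{(1-\alpha_p)/\gamma} = X^{-(\alpha_p - 1)/\gamma}$, which is exactly (\ref{III:C:1:2}).

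The technical point I expect to require the most care is justifying $\|{\bf w}_{i+1}\|_\infty \sim \|{\bf w}_i\|_\infty^\gamma$, which is not one of the stated hypotheses. This should follow from unboundedness and admissibility together with the recurrence ${\bf w}_{i+2} = {\bf w}_{i+1}{\bf w}_i$, exactly as in the proof of Theorem~\ref{RP2:C:2} where the same rate is extracted from Lemmas~5.1 and~5.2 of \cite{DROY.2}. The upper half $\|{\bf w}_{i+2}\|_\infty \ll \|{\bf w}_{i+1}\|_\infty \|{\bf w}_i\|_\infty$ is immediate from the matrix product, and the matching lower bound can be extracted from $|\det({\bf w}_{i+1})|_\infty \sim |\det({\bf w}_i)|_\infty^\gamma$ (Lemma~\ref{RP2:L:2}) combined with the determinantal constraints in (\ref{III:C:1:1}). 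Once this growth rate is in hand, the remainder of the argument is a routine manipulation of the estimates supplied by Proposition~\ref{RP2:P:1}.
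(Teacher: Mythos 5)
Your construction of $\xi_p$ via Propositions \ref{RP2:P:1} and \ref{RP2:P:1-1} with $\mathcal{S}'=\{p\}$ and Remark \ref{RP2:R:1} is exactly what the paper does, and the verification of $\delta_{p,i}=o(\|{\bf w}_i\|_p)$ and $\|{\bf w}_i\|_\infty\delta_{p,i}=o(1)$ is fine. The gap is in the step you yourself flagged: the growth law $\|{\bf w}_{i+1}\|_\infty \sim \|{\bf w}_i\|_\infty^{\gamma}$ does \emph{not} follow from the hypotheses (\ref{III:C:1:1}). Lemma 5.1 of \cite{DROY.2} (used in Theorem \ref{RP2:C:2}) gives the lower bound $\|{\bf w}_{i+2}\|_\infty \gg \|{\bf w}_{i+1}\|_\infty\|{\bf w}_i\|_\infty$ only for a specific family of initial matrices, not for a general admissible Fibonacci sequence. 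Your proposed fallback --- extracting it from $|\det({\bf w}_{i+1})|_\infty\sim|\det({\bf w}_i)|_\infty^{\gamma}$ --- only yields $\|{\bf w}_{i+2}\|_\infty \gg |\det({\bf w}_{i+2})|_\infty^{1/2} \gg (\|{\bf w}_{i+1}\|_\infty\|{\bf w}_i\|_\infty)^{\alpha_p/2}$, which is the desired lower bound only when $\alpha_p\geq 2$. In the relevant range $1<\alpha_p<2$ (e.g.\ $\alpha_p=2m/(m+3)$ in Corollary \ref{III:C:2}), the hypotheses pin $\|{\bf w}_i\|_\infty$ only inside the window $[c\,|\det({\bf w}_i)|_\infty^{1/2},\, C\,|\det({\bf w}_i)|_\infty^{1/\alpha_p}]$, which is too wide to force golden-ratio growth of the norms; choosing $i$ by $\|{\bf w}_i\|_\infty\leq X<\|{\bf w}_{i+1}\|_\infty$ then only gives $\|{\bf w}_i\|_\infty\gg X^{\alpha_p/(2\gamma)}$ and hence the weaker exponent $-\alpha_p(\alpha_p-1)/(2\gamma)$.

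The paper's proof avoids this entirely by parametrizing with the determinants rather than the norms: it picks $i$ so that $|\det({\bf w}_i)|_\infty\leq Y<|\det({\bf w}_{i+1})|_\infty$, where the growth $|\det({\bf w}_{i+1})|_\infty\sim|\det({\bf w}_i)|_\infty^{\gamma}$ is unconditional (it comes from $\det({\bf w}_{i+2})=\det({\bf w}_{i+1})\det({\bf w}_i)$ via Lemma \ref{RP2:L:2}), bounds $\|{\bf y}_i\|_\infty L_p({\bf y}_i)\ll|\det({\bf w}_i)|_\infty^{(1-\alpha_p)/\alpha_p}\ll Y^{-(\alpha_p-1)/(\alpha_p\gamma)}$ using $\|{\bf w}_i\|_\infty\ll|\det({\bf w}_i)|_\infty^{1/\alpha_p}$, and only at the end substitutes $X=c^2Y^{1/\alpha_p}$; this substitution is precisely what converts the exponent into $-(\alpha_p-1)/\gamma$ in terms of $X$. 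You should restructure your final paragraph along these lines rather than trying to prove the norm growth law.
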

\begin{proof}
	Proposition \ref{RP2:P:1-1} applies to the present situation with
	$\mathcal{S}' = \{p\}$ since by (\ref{III:C:1:1}), we have
\[
	\frac{\delta_{p,i}}{\|{\bf w}_i\|_{p}}
	=
	\frac{|\det({\bf w}_i)|_{p}}{\|{\bf w}_i\|_{p}^2}
	\sim
	|\det({\bf w}_i)|_{p}
	\ll
	\|{\bf w}_i\|_{\infty}^{-\alpha_{p}} = o(1)
\]
	and
\[
	\|{\bf w}_i\|_{\infty}\delta_{p,i}
	=
	\frac{\|{\bf w}_i\|_{\infty}|\det({\bf w}_i)|_{p}}{\|{\bf w}_i\|_{p}}
	\sim
	\frac{\|{\bf w}_i\|_{\infty}}{|\det({\bf w}_i)|_{\infty}}
	\ll
	\|{\bf w}_i\|_{\infty}^{1-\alpha_p} = o(1).
\]
	Hence, by Remark \ref{RP2:R:1}, there exists
	$\xi_{p} \in \mathbb{Q}_p$ with $[\mathbb{Q}(\xi_{p}):\mathbb{Q}]>2$,
	such that the point
	${\bf y}_{p} = (1,\xi_{p},\xi_{p}^2) \in \mathbb{Q}_{p}^3$
	satisfies the estimates (\ref{RP2:P:1:2}).
	Using (\ref{III:C:1:1}), this means that
\begin{equation}\label{RP2:C:1:5_1}
	L_{p}({\bf y}_i)
	= \|{\bf y}_i \wedge {\bf y}_{p}\|_{p} \sim
	\frac{|\det({\bf w}_i)|_{p}}{\|{\bf w}_i\|_{p}} \sim
	|\det({\bf w}_i)|_{p} \sim
	|\det({\bf w}_i)|_{\infty}^{-1}.
\end{equation}
	Now, we fix a real number $Y\geq1$.
	If $Y$ is sufficiently large,
	there exists an index $i\geq0$ such that
\[
	|\det({\bf w}_{i})|_{\infty}
	\leq Y
	< |\det({\bf w}_{i+1})|_{\infty}
	\sim
	|\det({\bf w}_{i})|_{\infty}^{\gamma}.
\]
	By the hypothesis, there also exists a constant $c>0$
	independent of $i$ such that
\begin{equation}\label{RP2:C:1:5_2}
	c^{-1} \|{\bf y}_i\|_{\infty} \leq
	\|{\bf w}_i\|_{\infty} \leq
	c|\det({\bf w}_{i})|_{\infty}^{1/\alpha_{p}}.
\end{equation}
	Combining (\ref{RP2:C:1:5_1}) with the previous two inequalities, we get
\begin{align*}
	\|{\bf y}_i\|_{\infty} L_{p}({\bf y}_i)
	&\ll
	|\det({\bf w}_i)|_{\infty}^{1/\alpha_{p}} |\det({\bf w}_i)|_{\infty}^{-1}
	=
	|\det({\bf w}_i)|_{\infty}^{(1-\alpha_{p})/\alpha_{p}}
	\\
	&\sim
	|\det({\bf w}_{i+1})|_{\infty}^{-(\alpha_{p}-1)/(\alpha_{p}\gamma)}
	\ll
	Y^{-(\alpha_{p}-1)/(\alpha_{p}\gamma)}.
\end{align*}
	So, putting $X = c^2Y^{1/\alpha_{p}}$, 
	we find that the point ${\bf y}_{i}$ satisfies
\[
	\|{\bf y}_i\|_{\infty} \leq X, \quad
	\|{\bf y}_i\|_{\infty} L_{p}({\bf y}_i) \ll X^{-(\alpha_{p}-1)/\gamma}.
\]
	Since $X$ is a continuous increasing function of $Y$,
	the conclusion follows.
\end{proof}
\begin{corollary} \label{III:C:2}
	Let $p$ be a prime number and let $\epsilon>0$.
 	There exist a $p$-adic number $\xi_{p} \in \mathbb{Q}_p$
	with $[\mathbb{Q}(\xi_{p}):\mathbb{Q}]>2$
	and a constant $c_1>0$, such that inequalities
\begin{equation}\label{III:C:2:1}
	\| {\bf x} \|_{\infty} \leq X, \quad
	\| {\bf x} \|_{\infty} L_{p}({\bf x }) \leq c_1X^{-1/\gamma+\epsilon},
\end{equation}
	have a non-zero solution ${\bf x} \in \mathbb{Z}^3$ for any $X \geq 1$.
\end{corollary}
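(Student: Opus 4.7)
The plan is to apply Corollary~\ref{III:C:1} with an exponent $\alpha_p$ arbitrarily close to~$2$. Indeed, setting $\alpha_p = 2 - \gamma\epsilon$ yields $(\alpha_p - 1)/\gamma = 1/\gamma - \epsilon$, which matches the desired exponent in the conclusion. It therefore suffices to produce an unbounded admissible Fibonacci sequence $({\bf w}_i)_{i\geq 0}$ in $\mathcal{M}$ satisfying $\|{\bf w}_i\|_p \sim 1$, $|\det({\bf w}_i)|_\infty|\det({\bf w}_i)|_p \sim 1$, and $\|{\bf w}_i\|_\infty^{\alpha_p} \ll |\det({\bf w}_i)|_\infty$, whose associated symmetric matrices satisfy $\det({\bf y}_0,{\bf y}_1,{\bf y}_2)\neq 0$.

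For odd $p$, a concrete candidate for the initial data is, for a large even integer $N$,
\[
 {\bf w}_0 = \left(\begin{matrix} 1 & p^{N/2}+1 \\ 1-p^{N/2} & 1 \end{matrix}\right),
\]
which has $\det({\bf w}_0) = p^N$, $\|{\bf w}_0\|_\infty = p^{N/2}+1$, and all four entries coprime to~$p$; an analogous ${\bf w}_1$ is chosen with a different parameter $M$. For $p = 2$ one uses an analogous pair with all entries odd, obtained by solving $ad - bc = 2^N$ in odd integers of size $O(2^{N/2})$. As $N, M \to \infty$, the ratio $\log|\det({\bf w}_i)|_\infty/\log\|{\bf w}_i\|_\infty$ tends to~$2$, so the condition $\|{\bf w}_i\|_\infty^{\alpha_p} \ll |\det({\bf w}_i)|_\infty$ of Corollary~\ref{III:C:1} holds for $\alpha_p = 2-\gamma\epsilon$ when $N, M$ are sufficiently large.

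The remaining hypotheses follow from routine verifications. Admissibility in the sense of Proposition~3.1 of \cite{DROY.2} reduces to finding $N_0 \in \mathcal{M}$ making ${\bf w}_0 N_0$, ${\bf w}_1\,{}^t N_0$, ${\bf w}_1 {\bf w}_0 N_0$ symmetric; this is a linear system of three equations in the four entries of $N_0$ which admits a non-trivial solution, and by perturbing $M$ one can ensure $\det(N_0)\neq 0$ and $\det({\bf y}_0,{\bf y}_1,{\bf y}_2)\neq 0$. The identity $|\det({\bf w}_i)|_\infty|\det({\bf w}_i)|_p \sim 1$ is automatic since each $\det({\bf w}_i)$ is a signed power of $p$ by multiplicativity. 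Setting $R_i = \|{\bf w}_i\|_\infty^{\alpha_p}/|\det({\bf w}_i)|_\infty$, the submultiplicativity $\|{\bf w}_{i+2}\|_\infty \leq 2\|{\bf w}_{i+1}\|_\infty\|{\bf w}_i\|_\infty$ combined with the exact multiplicativity of $|\det|_\infty$ gives $R_{i+2}\leq 2^{\alpha_p} R_{i+1} R_i$, so $2^{\alpha_p} R_0, 2^{\alpha_p} R_1 \leq 1$ propagates to $2^{\alpha_p} R_i \leq 1$ for all $i$. Finally, $\|{\bf w}_i\|_p \sim 1$ means $\bar{{\bf w}}_i\neq 0$ in $\Mat_{2\times 2}(\mathbb{F}_p)$; since $\det \bar{{\bf w}}_i = 0$, each $\bar{{\bf w}}_i = u_iv_i^{\,T}$ has rank one, and the recursion gives $\bar{{\bf w}}_{i+2} = (v_{i+1}\cdot u_i)\, u_{i+1} v_i^{\,T}$, so nonvanishing reduces to $v_{j+1}\cdot u_j \neq 0$ in $\mathbb{F}_p$ for $j = 0, 1$, readily verified for the initial pair.

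The main obstacle is simultaneously securing all these open conditions—admissibility with $\det(N_0)\neq 0$, non-vanishing of $\det({\bf y}_0,{\bf y}_1,{\bf y}_2)$ and of the rank-one reductions $\bar{{\bf w}}_i$—for a concrete explicit initial pair, while handling odd $p$ and $p = 2$ separately. Once this is done, Corollary~\ref{III:C:1} applied to the resulting sequence immediately yields the required $\xi_p \in \mathbb{Q}_p$ and constant $c_1$.
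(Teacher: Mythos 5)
Your overall strategy is exactly the paper's: reduce to Corollary~\ref{III:C:1} by constructing an unbounded admissible Fibonacci sequence whose determinants are powers of $p$ with $\log|\det({\bf w}_i)|_{\infty}/\log\|{\bf w}_i\|_{\infty}$ close to $2$, then take $\alpha_p$ close enough to $2$ that $(\alpha_p-1)/\gamma \geq 1/\gamma-\epsilon$. The normalization differs slightly (you fix $\alpha_p=2-\gamma\epsilon$ and vary the initial matrices; the paper fixes the matrices depending on a parameter $m$ and gets $\alpha_p=2m/(m+3)$), and your propagation of $\|{\bf w}_i\|_\infty^{\alpha_p}\ll|\det({\bf w}_i)|_\infty$ via $R_{i+2}\leq 2^{\alpha_p}R_{i+1}R_i$ is the same induction the paper runs through the Fibonacci recursion.

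The gap is in the construction of the initial pair. Admissibility is not a soft condition: you need an \emph{invertible} integer matrix $N_0$ making ${\bf w}_0N_0$, ${\bf w}_1{}^tN_0$ and ${\bf w}_1{\bf w}_0N_0$ simultaneously symmetric, and then $\det({\bf y}_0,{\bf y}_1,{\bf y}_2)\neq 0$ on top of that. Your three-equations-in-four-unknowns count only guarantees a nonzero solution $N_0$; it does not rule out that, for your family, every solution is singular, nor does "perturbing $M$" obviously preserve the other constraints ($\det{\bf w}_1$ a power of $p$, all entries coprime to $p$, the size condition $\|{\bf w}_1\|_\infty\sim p^{M/2}$). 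This is precisely the delicate point, and the paper settles it by brute force: it takes
\[
	{\bf w}_0 = \left(\begin{matrix} 1 & p \\ p & 0\end{matrix}\right),
	\qquad
	{\bf w}_1 = \left(\begin{matrix} 1 & p^m \\ -p^m & 0\end{matrix}\right),
\]
writes down the matrix $N$ explicitly, and computes $\det({\bf y}_0,{\bf y}_1,{\bf y}_2)$ as an explicit positive polynomial in $p$ and $p^m$; this choice also works uniformly for $p=2$, so no case split is needed. Until you either exhibit such an explicit $N_0$ for your matrices or give a genuine argument (not a perturbation heuristic) that an invertible one exists with the non-degeneracy conditions, the proof is incomplete at its only non-routine step. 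Two smaller points: your rank-one analysis of $\bar{\bf w}_i$ over $\mathbb{F}_p$ needs the three pairings $v_1\cdot u_0$, $v_0\cdot u_1$ and $v_1\cdot u_1$ to be nonzero (the parity-$i$ case $v_{i+1}\cdot u_i$ with $i$ even, $i\geq 2$, gives $v_1\cdot u_1$, which is not covered by $j=0,1$); and $\alpha_p=2-\gamma\epsilon>1$ forces $\epsilon<1/\gamma$, so you should note that larger $\epsilon$ follows trivially from smaller ones.
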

\begin{proof}
	Let $m\geq1$ be an integer to be determined later.
	Fix a real $\epsilon>0$ and consider the Fibonacci sequence
	$({\bf w}_i)_{i\geq0}$ of $\mathcal{M}$ generated by the matrices
\[
	{\bf w}_0 = \left (
		\begin{matrix}
			1 &  p
			\\
			p &  0
		\end{matrix}
		\right ), \quad
	{\bf w}_1 = \left (
		\begin{matrix}
			1 &  p^m
			\\
			-p^m &  0
		\end{matrix}
		\right ).
\]
	Since
\[
	\| {\bf w}_{i+1} \|_{\infty} \leq
	2\| {\bf w}_{i} \|_{\infty}\| {\bf w}_{i-1} \|_{\infty},
\]
	we claim that the following estimates are satisfied for each $i\geq0$
\begin{equation}\label{III:C:2:2}
 \begin{aligned}
 	&\| {\bf w}_{i} \|_{\infty} \leq
	2^{f_{i+1} - 1} \|{\bf w}_0\|_{\infty}^{f_{i-1}}
	\| {\bf w}_1 \|_{\infty}^{f_i},
	\\
	&\det({\bf w}_{i}) = \det({\bf w}_0)^{f_{i-1}} \det({\bf w}_1)^{f_i},
 \end{aligned}
\end{equation}
	where $(f_i)_{i\geq-1}$ is the Fibonacci sequence defined by
	the conditions $f_{-1} = 1$ and $f_{0} = 0$,
	and the recurrence formula $f_{i+1} = f_{i} + f_{i-1}$
	for each $i\geq0$.
	Clearly the relations (\ref{III:C:2:2}) hold for $i=0$.
	Suppose that they hold for some index $i\geq0$. We find
\begin{align*}
	\| {\bf w}_{i+1} \|_{\infty} & \leq
	2\| {\bf w}_{i} \|_{\infty}\| {\bf w}_{i-1} \|_{\infty}
	\\
	&\leq
	2
	\big (
	2^{f_{i+1} - 1} \|{\bf w}_0\|_{\infty}^{f_{i-1}}
	\| {\bf w}_1 \|_{\infty}^{f_i}
	\big )
	\big (
	2^{f_{i} - 1} \|{\bf w}_0\|_{\infty}^{f_{i-2}}
	\| {\bf w}_1 \|_{\infty}^{f_{i-1}}
	\big )
	\\
	&=
	2^{f_{i+2}-1} \|{\bf w}_0\|_{\infty}^{f_{i}}
	\| {\bf w}_1 \|_{\infty}^{f_{i+1}}
\end{align*}
	and
\begin{align*}
	\det({\bf w}_{i+1}) & = \det({\bf w}_{i})\det({\bf w}_{i-1})
	\\
	&=
	\big (\det({\bf w}_0)^{f_{i-1}} \det({\bf w}_1)^{f_i}\big )
	\big (\det({\bf w}_0)^{f_{i-2}} \det({\bf w}_1)^{f_{i-1}}\big )
	\\
	&=
	\det({\bf w}_0)^{f_{i}} \det({\bf w}_1)^{f_{i+1}}.
\end{align*}
	Thus (\ref{III:C:2:2}) holds for each $i\geq0$.
	Since $\| {\bf w}_0 \|_{\infty} = p$
	and $\| {\bf w}_1 \|_{\infty} = p^m$, this means that
\[
	\| {\bf w}_{i} \|_{\infty} \leq
	2^{f_{i+1}-1} p^{m f_i + f_{i-1}} \leq
	2^{f_{i+1}} p^{m f_i + f_{i-1}}.
\]
	Since $2 \leq p$ and $f_{i+1} + f_{i-1} \leq 3f_{i}$
	for each $i\geq1$, this gives
\begin{equation}\label{III:C:2:3}
	\| {\bf w}_{i} \|_{\infty} \leq
	p^{m f_i + f_{i-1} + f_{i+1}} \leq
	p^{f_i(m+3)}
	\ \text{ if } i\geq1.
\end{equation}
	Put $\alpha_{p} = 2m/(m+3)$. Since
	$|\det({\bf w}_0)|_{\infty} = p^2$ and
	$|\det({\bf w}_1)|_{\infty} = p^{2m}$,
	the estimates (\ref{III:C:2:2}) also give
	$|\det({\bf w}_{i})|_{\infty} = p^{2(mf_i+f_{i-1})}$ and,
	from (\ref{III:C:2:3}), it follows that
\[
	\| {\bf w}_{i} \|_{\infty}^{\alpha_{p}} \leq
	p^{2mf_i} \leq
	p^{2(mf_i+f_{i-1})} = |\det({\bf w}_{i})|_{\infty}
	\ \text{ if } i\geq1.
\]
	We claim that the Fibonacci sequence
	$({\bf w}_{i})_{i\geq0}$ satisfies all the requirements
	of Corollary \ref{III:C:1}.
	Indeed, we find that it is admissible with the corresponding matrix
\[
    N = \left (
	\begin{matrix}
		p(p^{m + 1} + p^{2 m}) & -p (p + p^m - 2 p^{2 m + 1})
			\\
		-p^{m + 1} - 2 p^{2 m + 2} - p^{2 m} &  p + p^{m + 2} + p^m  - p^{2 m + 1}
	\end{matrix}
	\right )
\]
	and that the determinant of the first three consecutive points
	of the corresponding sequence $({\bf y}_i)_{i\geq0}$ is
\[
	\det({\bf y}_0,{\bf y}_1,{\bf y}_2) =
	p^{8 m + 4}(16 p^4 + 8 p^2 + 1) - p^{6 m + 6}2(4 p^2 + 1) + p^{4 m + 8} > 0.
\]
	The sequence $({\bf w}_i)_{i\geq0}$ is unbounded
	as $|\det({\bf w}_i)|_{\infty}$ tends to infinity with $i$.
	 We also have
\[
	|\det({\bf w}_{i})|_{\infty}|\det({\bf w}_{i})|_p = 1
\]
	and
\[
	{\bf w}_{i} \equiv
	\left (
	\begin{matrix}
		1 & 0
		\\
		0 &  0
	\end{matrix}
	\right )
	\mod p,
\]
	for each $i\geq0$.
	The latter relation implies that $\|{\bf w}_{i}\|_p = 1$.
	So, all the requirements of Corollary \ref{III:C:1} are satisfied
	with our choice of $\alpha_p$.
	Choose $m \geq 3(2-\epsilon\gamma)/(\epsilon\gamma)$,
	so that $(\alpha_{p}-1)/\gamma \geq 1/\gamma - \epsilon$.
	Then the number $\xi_{p} \in \mathbb{Q}_{p}$ provided by
	Corollary \ref{III:C:1} has all the required properties.
\end{proof}
 
	%
\cleardoublepage
\chapter{Duality}
	Let $n \geq 1$ be an integer and
	let $\mathcal{S}$ be a finite set of prime numbers.
	Fix $\bar \xi = (\xi_{\infty},(\xi_{p})_{{p} \in \mathcal{S}})
	\in \mathbb{R} \times \prod_{{p} \in \mathcal{S}}\mathbb{Q}_{p}$.
	Recall that for any point ${\bf  x} = (x_{0}, x_{1},\ldots,x_{n}) \in
	\mathbb{Q}_{\nu}^{n+1}$	we define the ${\nu}$-adic norm of ${\bf x}$ by
\[
 	\| {\bf  x } \|_{\nu} := \max_{0 \leq i\leq n}\{|x_i|_{\nu}\},
\]
	and we put
\[
 	L_{\nu}({\bf x }) := \| {\bf  x }  - x_{0} {\bf t }_{\nu} \|_{\nu},
\]
	where ${\bf t}_{\nu} := (1,\xi_{\nu},\ldots,\xi_{\nu}^n)$.
	We denote by $|\mathcal{S}|$ the number of elements in $\mathcal{S}$.

	In this chapter we extend the method of
	H.~{\sc Davenport} and W.M.~{\sc Schmidt} in \cite{DS}
	to the study of simultaneous approximation to the 
	real and p-adic components of $\bar\xi$ by algebraic numbers of
	a restricted type.
	To do this, we assume that for a given
	$\bar \lambda = (\lambda_{\infty},(\lambda_{p})_{{p} \in \mathcal{S}})
	\in \mathbb{R}^{|\mathcal{S}|+1}$
	and some constant $c>0$ the inequalities
 \begin{align*}
	&\| {\bf  x } \|_{\infty} \leq X,
	\\
	&L_{\infty}({\bf x })
		\leq c X^{-\lambda_{\infty}},
	\\
	&L_{p}({\bf x })
		\leq c X^{-\lambda_{p}} \
		( \forall {p} \in \mathcal{S} ),
 \end{align*}
	have no non-zero solution ${\bf x} \in \mathbb{Z}^{n+1}$
	for arbitrarily large values of $X$.
	We can reformulate this by saying that the convex body
\begin{equation}\label{dual:1}
	C = \{{\bf x} \in \mathbb{R}^{n+1} \ | \
		\| {\bf x} \|_{\infty} \leq X, \
		L_{\infty}({\bf x }) \leq  c X^{-\lambda_{\infty}}
		\}
\end{equation}
	contains no non-zero points of the lattice
\begin{equation}\label{dual:2}
	\Lambda = \{{\bf x} \in \mathbb{Z}^{n+1} |
	L_{p}({\bf x }) \leq  c X^{-\lambda_{p}}
	\ \forall {p} \in \mathcal{S} \},
\end{equation}
	for arbitrarily large values of $X$.
	Therefore, for these values of $X$
	the first minimum $\tau_1$ of $\Lambda$
	with respect to $C$ is $> 1$.
	By Mahler's Duality, we have that $ \tau_1 \tau_{n+1}^* \leq (n+1)!$~,
	where $\tau_{n+1}^*$ is the last minimum of the dual lattice $\Lambda^*$
	with respect to the dual convex body $C^*$.
	So, for these values of $X$,
	we get $\tau_{n+1}^*  \leq (n+1)!$ and thus there exist
	$n+1$ linearly independent points in $\Lambda^* \cap (n+1)!~C^*$.
	This translates into the existence of $n+1$ linearly independent
	polynomials of $\mathbb{Z}[T]$ of degree $\leq n$
	taking simultaneously small values at the points $\xi_{\nu}$
	with $\nu \in \mathcal{S}\cup\{\infty\}$.
	Using this, we show that
	for any polynomial $R(T) \in \mathbb{Z}[T]$ satisfying
	mild assumptions, there exist infinitely many polynomials
	$F(T) \in \mathbb{Z}[T]$ with
	the following properties:
	\\
	$(i)$ \ $\deg(R - F) \leq n$,
	\\
	$(ii)$ there exists a real root $\alpha_{\infty}$ of $F$ such that
\[
	| \xi_{\infty} - \alpha_{\infty} |_{\infty}
		\ll H(F)^{-(\lambda_{\infty}+1)/\lambda},
\]
	\\
	$(iii)$ for each ${p} \in \mathcal{S}$,
		there exists a root $\alpha_{p}$ of $F$
		in ${\mathbb{Q}}_p$ such that
\[
 	| \xi_p - \alpha_p |_p \ll H(F)^{-\lambda_{p}/\lambda},
\]
	where $\lambda= \lambda_{\infty} + \sum_{{p} \in \mathcal{S}}\lambda_{p}$.

\section{Simultaneous case.}

	We start by proving an explicit version of the
	Strong Approximation Theorem over $\mathbb{Q}$ (see \cite{CASS-FROH}).
\begin{lemma} \label{RP3:L:1}
	For any
	$\bar \epsilon = (\epsilon_{\infty},(\epsilon_{p})_{{p} \in \mathcal{S}})
	\in \mathbb{R}_{>0}^{|\mathcal{S}|+1}$ satisfying the inequality
\begin{equation}\label{RP3:L:1:1}
	\epsilon_{\infty} \geq
	\frac{1}{2} \prod_{{p} \in \mathcal{S}} \epsilon_{p}^{-1} p,
\end{equation}
	there exists a rational number
	$r \in \mathbb{Q}$ such that
\begin{equation}\label{RP3:L:1:2}
 \begin{aligned}
	&| r - \xi_{\infty} |_{\infty} \leq \epsilon_{\infty},
	\\
	&| r - \xi_{p} |_{p} \leq \epsilon_{p}
	\quad \forall {p} \in \mathcal{S},
	\\
	&| r |_{q} \leq 1
	\quad \forall {q} \notin \mathcal{S}.
 \end{aligned}
\end{equation}
\end{lemma}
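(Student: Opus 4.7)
The plan is to exhibit $r$ in the form $r = A/b$, where $A \in \mathbb{Z}$ is to be chosen and $b = \prod_{p \in \mathcal{S}} p^{n_p}$ is a positive integer whose prime factors lie in $\mathcal{S}$. Since $|b|_q = 1$ for every $q \notin \mathcal{S}$, this already forces $|r|_q = |A|_q \leq 1$ at those places, so the third condition in (\ref{RP3:L:1:2}) will hold automatically for any integer $A$. I intend to arrange the $p$-adic approximations at the places $p \in \mathcal{S}$ by imposing a congruence on $A$, and then to achieve the real approximation by translating $A$ within its congruence class.

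In detail, for each $p \in \mathcal{S}$ I take $n_p$ to be any non-negative integer large enough that $b\xi_p \in \mathbb{Z}_p$ (so that the upcoming congruence is well-posed), and I set $k_p = \lceil -\log_p \epsilon_p \rceil$, which gives $p^{-k_p} \leq \epsilon_p < p^{1-k_p}$. Writing $K_p = n_p + k_p$ and $M = \prod_{p \in \mathcal{S}} p^{K_p} = b \prod_{p \in \mathcal{S}} p^{k_p}$, the Chinese Remainder Theorem produces an integer $A_0$ with $A_0 \equiv b\xi_p \pmod{p^{K_p}}$ for every $p \in \mathcal{S}$, and the full integer solution set is the arithmetic progression $A_0 + M\mathbb{Z}$. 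I then pick $A$ in this progression closest to $b\xi_{\infty}$, which gives $|A - b\xi_{\infty}|_{\infty} \leq M/2$.

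The final verification is routine. At each $p \in \mathcal{S}$ one has $|r - \xi_p|_p = |A - b\xi_p|_p / |b|_p \leq p^{-K_p} \cdot p^{n_p} = p^{-k_p} \leq \epsilon_p$, while at infinity $|r - \xi_{\infty}|_{\infty} \leq M/(2b) = \frac{1}{2}\prod_p p^{k_p} \leq \frac{1}{2}\prod_p p\,\epsilon_p^{-1}$, where the last inequality uses $p^{k_p} \leq p\,\epsilon_p^{-1}$ built into the definition of $k_p$. The hypothesis $\epsilon_{\infty} \geq \frac{1}{2}\prod_{p \in \mathcal{S}} \epsilon_p^{-1}\,p$ is exactly what forces this last bound to be at most $\epsilon_{\infty}$, and is the only place where the quantitative assumption enters the argument. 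The only minor subtlety is the requirement $b\xi_p \in \mathbb{Z}_p$; inflating $n_p$ to enforce it multiplies $b$ and $M$ by the same factor, so the crucial ratio $M/b = \prod_p p^{k_p}$, and hence the constraint on $\epsilon_{\infty}$, is unaffected.
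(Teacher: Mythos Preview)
Your proof is correct and follows essentially the same approach as the paper: solve the $p$-adic constraints first, leaving a free arithmetic progression of period $\prod_{p}p^{k_p}$, then pick the element of that progression closest to $\xi_\infty$; your $k_p$ coincides with the paper's $n_p+1$, and your $M/b$ is exactly the paper's $M$. The only cosmetic difference is that the paper invokes the Strong Approximation Theorem as a black box to obtain the initial $\hat r$, whereas you carry out that step explicitly via the Chinese Remainder Theorem.
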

\begin{proof}
	For each ${p} \in \mathcal{S}$ there exists $n_{p} \in \mathbb{Z}$,
	such that
\begin{equation}\label{RP3:L:1:3}
	p^{-n_{p}-1} \leq \epsilon_{p} < p^{-n_{p}},
\end{equation}
	Define $M = \prod_{{p} \in \mathcal{S}}p^{n_p+1}$.
	By (\ref{RP3:L:1:3}) we find that
\begin{gather*}
	|M|_{p} = p^{-n_p-1} \leq \epsilon_{p}
	\quad \forall {p} \in \mathcal{S},
	\\
	|M|_{q} = 1
	\quad \forall {q} \notin \mathcal{S}.
\end{gather*}
	By the Strong Approximation Theorem (see \cite{CASS-FROH}),
	there exists $\hat r \in \mathbb{Q}$ such that
\begin{equation}\label{RP3:L:1:4}
 \begin{aligned}
	&| \hat r - \xi_{p} |_{p} \leq \epsilon_{p}
	\quad \forall {p} \in \mathcal{S},
	\\
	&| \hat r |_{q} \leq 1
	\quad \forall {q} \notin \mathcal{S}.
 \end{aligned}
\end{equation}
	Note that $\hat r + k M$ satisfies the inequalities (\ref{RP3:L:1:4}),
	for any $k\in \mathbb{Z}$.
	We can choose $k\in \mathbb{Z}$ such that $r = \hat r + k M$ satisfies
	furthermore
\[
	| r - \xi_{\infty} |_{\infty} \leq \frac{1}{2}M.
\]
	By (\ref{RP3:L:1:3}), we find that
\[
	M \leq \prod_{{p} \in \mathcal{S}}\epsilon_{p}^{-1}p,
\]
	and from the assumption (\ref{RP3:L:1:1}), it follows
	that $r$ satisfies (\ref{RP3:L:1:2}).
\end{proof}
	The next lemma studies the dual lattice $\Lambda^*$
	attached to a lattice $\Lambda$ of the form (\ref{dual:2}).
\begin{lemma} \label{RP3:L:2}
	Fix $\bar \delta =
	(\delta_{p})_{{p} \in \mathcal{S}} \in \mathbb{R}_{>0}^{|\mathcal{S}|}$,
	with $\delta_{p} \leq 1$ for every ${p} \in \mathcal{S}$.
	Let $\Lambda$ be the lattice of $\mathbb{R}^{n+1}$ defined as follows
\begin{equation}\label{RP3:L:2:1}
	\Lambda = \{{\bf x} \in \mathbb{Z}^{n+1} |
	L_{p}({\bf x }) \leq {\delta}_{p}
	\ \forall {p} \in \mathcal{S} \},
\end{equation}
	with its dual lattice $\Lambda^*$ defined by
\[
	\Lambda^* = \{ {\bf y} \in \mathbb{Q}^{n+1} |
	\langle{\bf y},{\bf x}\rangle \in \mathbb{Z} \
	\forall {\bf x} \in \Lambda \}.
\]
	Then, there exists an integer $a = a(\bar \xi,\mathcal{S}) > 0$ such that
\begin{equation}\label{RP3:L:2:2}
	\Lambda^* \subseteq
	\{ {\bf y} \in \mathbb{Q}^{n+1} | \quad
	| \langle {\bf y},{\bf t}_{p} \rangle |_{p}  \leq |a|_{p}^{-1}
	\quad \forall {p} \in \mathcal{S} \}.
\end{equation}
	Moreover,
\begin{equation}\label{RP3:L:2:4}
	ab \Lambda^*  \subseteq \mathbb{Z}^{n+1},
\end{equation}
	for some integer $b > 0$ with
\begin{equation}\label{RP3:L:2:3}
	|b|_{\infty} \leq \prod_{{p} \in \mathcal{S}}p\delta_{p}^{-1},
	\quad |b|_{p} < \delta_{p}
	\quad \forall {p} \in \mathcal{S}.
\end{equation}
\end{lemma}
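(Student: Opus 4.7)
The plan is to construct two integers $a$ and $b$ explicitly, together with a small set of points in $\Lambda$, from which both conclusions can be read off by evaluating the pairing $\langle {\bf y},\cdot\rangle$ on these points. First, I would define $a=a(\bar\xi,\mathcal{S})$ to be the smallest positive integer such that $a{\bf t}_{p}\in \mathbb{Z}_p^{n+1}$ for every $p\in\mathcal{S}$, exactly as the quantity $d_0$ appearing in the proof of Proposition \ref{I:1:P:1}. This depends only on $\bar\xi$ and $\mathcal{S}$. Then, for each $p\in\mathcal{S}$, I would let $n_p$ be the largest integer with $p^{-n_p}\leq \delta_p$, which exists since $\delta_p\leq 1$, and set $b=\prod_{p\in\mathcal{S}}p^{n_p}$. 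The inequalities $p^{-n_p}\leq \delta_p<p^{-n_p+1}$ immediately give the size estimates $|b|_{p}=p^{-n_p}\leq \delta_p$ and $p^{n_p}\leq p\,\delta_p^{-1}$, hence $|b|_\infty\leq\prod_{p\in\mathcal{S}}p\,\delta_p^{-1}$, matching (\ref{RP3:L:2:3}).

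Next, I would construct a small generating set inside $\Lambda$. Since $a{\bf t}_p\in\mathbb{Z}_p^{n+1}$, the Strong Approximation Theorem (used exactly as in the proof of Proposition \ref{I:1:P:1}) lets me choose, for each $l=1,\ldots,n$, an integer $a_l\in\mathbb{Z}$ with
\[
 |a_l-a\xi_p^l|_{p}\leq |b|_p \quad\text{for every } p\in\mathcal{S},
\]
which in particular implies $|a_l-a\xi_p^l|_p\leq \delta_p$. Setting ${\bf u}_0=(a,a_1,\ldots,a_n)$, one has $L_p({\bf u}_0)\leq |b|_p\leq\delta_p$, so ${\bf u}_0\in \Lambda$. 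Also $L_p(b{\bf e}_l)=|b|_p\leq\delta_p$ for each $l=1,\ldots,n$, so $b{\bf e}_l\in\Lambda$ as well. Thus the $n+1$ vectors ${\bf u}_0, b{\bf e}_1,\ldots,b{\bf e}_n$ form a sublattice of $\Lambda$, which is all I need to extract information about $\Lambda^*$.

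The second half of the lemma, $ab\Lambda^*\subseteq\mathbb{Z}^{n+1}$, then falls out cleanly. For any ${\bf y}\in\Lambda^*$ the pairings $\langle {\bf y},b{\bf e}_l\rangle=by_l\in\mathbb{Z}$ give $aby_l\in a\mathbb{Z}\subseteq\mathbb{Z}$ for $l=1,\ldots,n$, and the pairing $\langle {\bf y},{\bf u}_0\rangle=ay_0+\sum_l a_l y_l\in\mathbb{Z}$ combined with $a_l y_l\in\frac{1}{b}\mathbb{Z}$ yields $aby_0\in\mathbb{Z}$. For the first conclusion, I would decompose
\[
 \langle {\bf y},a{\bf t}_p\rangle=\langle {\bf y},{\bf u}_0\rangle+\sum_{l=1}^n y_l(a\xi_p^l-a_l).
\]
The first term is a rational integer, hence has $p$-adic absolute value $\leq 1$. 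For the sum, $|y_l|_p\leq |b|_p^{-1}$ by the just-proved inclusion $by_l\in\mathbb{Z}$, while the sharpened approximation gives $|a\xi_p^l-a_l|_p\leq |b|_p$. Multiplying, each summand has $p$-adic absolute value at most $1$, so by the ultrametric inequality $|\langle {\bf y},a{\bf t}_p\rangle|_p\leq 1$, which is exactly (\ref{RP3:L:2:2}).

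The main subtlety, and the step I expect to require the most care to write correctly, is the choice of precision for the approximants $a_l$: they must simultaneously satisfy the \emph{weaker} bound $\leq \delta_p$ ensuring ${\bf u}_0\in\Lambda$ and the \emph{sharper} bound $\leq |b|_p$ needed to absorb the factor $|b|_p^{-1}$ coming from $by_l\in\mathbb{Z}$ in the final step. Since the Strong Approximation Theorem yields arbitrarily good $p$-adic precision (the point $a{\bf t}_p$ already lies in $\mathbb{Z}_p^{n+1}$ by our choice of $a$), demanding the sharper bound is legitimate; but this interplay between the two parts of the lemma, and the need to pick $n_p$ so that $|b|_p\leq\delta_p$ yet $|b|_\infty$ does not exceed $\prod p\delta_p^{-1}$, is where the proof really hinges.
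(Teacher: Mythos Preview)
Your argument is correct in substance and close in spirit to the paper's, but there are two small slips and one genuine tactical difference worth noting.

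First, the phrase ``largest integer with $p^{-n_p}\leq \delta_p$'' is garbled: as $n_p$ grows, $p^{-n_p}$ decreases, so there is no largest such $n_p$. From your next sentence it is clear you mean the unique $n_p\ge 0$ with $p^{-n_p}\leq \delta_p < p^{-n_p+1}$. With that choice, however, you only obtain $|b|_p = p^{-n_p}\leq \delta_p$, whereas the lemma asks for the strict inequality $|b|_p<\delta_p$. The paper takes instead $k_p$ with $p^{k_p}\leq \delta_p^{-1}<p^{k_p+1}$ and sets $b=\prod_p p^{k_p+1}$, giving $|b|_p=p^{-k_p-1}<\delta_p$ while still satisfying $|b|_\infty\leq \prod_p p\,\delta_p^{-1}$. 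Shifting your exponent by one fixes the discrepancy without affecting anything else in your argument.

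On the main point, the paper proceeds differently for the inclusion (\ref{RP3:L:2:2}): rather than fixing one approximant ${\bf u}_0$ at precision $|b|_p$ and then invoking $|y_l|_p\leq |b|_p^{-1}$ (which you extract from $by_l\in\mathbb{Z}$), the paper fixes ${\bf y}\in\Lambda^*$ first, chooses an $\epsilon>0$ depending on $\|{\bf y}\|_p$, and only then builds the approximant ${\bf r}$ with $\|{\bf r}-{\bf t}_p\|_p<\epsilon$. This makes the two conclusions logically independent, at the cost of a ${\bf y}$-dependent construction. Your route is a bit more economical: a single approximant ${\bf u}_0$ serves for all ${\bf y}$, but you must establish $by_l\in\mathbb{Z}$ (part of (\ref{RP3:L:2:4})) before you can finish (\ref{RP3:L:2:2}). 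Both arguments are valid; yours reuses the sublattice from Proposition~\ref{I:1:P:1} and exploits the interplay between the two conclusions, which is a nice observation.
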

\begin{proof}
	Choose $a \in \mathbb{Z}_{>0}$ such that
	$a{\bf t}_{p} \in \mathbb{Z}_p^{n+1}$ for each $ p \in \mathcal{S}$.
	Also, for each $ p \in \mathcal{S}$ we choose $k_p \in \mathbb{Z}_{\geq0}$
	such that
\begin{equation}\label{RP3:L:2:9}
	{p}^{k_{p}} \leq \delta_{p}^{-1} < {p}^{k_{p}+1}
\end{equation}
	and put $b = \prod_{{p} \in \mathcal{S}}{p}^{k_{p}+1}$.
	We claim that the numbers $a$ and $b$ have the required properties.
	First of all, from (\ref{RP3:L:2:9}) and the construction of the number $b$,
	we find that
\begin{gather*}
	|b|_{p} = {p}^{-k_{p}-1} < \delta_{p}
	\quad \forall {p} \in \mathcal{S},
	\\
	|b|_{\infty} =
	\prod_{{p} \in \mathcal{S}}{p}^{k_{p}+1} \leq
	\prod_{{p} \in \mathcal{S}}{p}\delta_{p}^{-1},
\end{gather*}
	which gives (\ref{RP3:L:2:3}).

	Now, we fix ${\bf y} = (y_{0}, y_{1},\ldots,y_{n}) \in \Lambda^*$.
	It remains to show that
	$| \langle {\bf y},{\bf t}_{p} \rangle |_{p}  \leq |a|_{p}^{-1}$
	and that $ab {\bf y}  \in \mathbb{Z}^{n+1}$ for each ${p} \in \mathcal{S}$.
	Choose $\epsilon>0$ such that
\begin{align*}
	\epsilon &< \min_{{p}\in\mathcal{S}}\{\|{\bf t}_{p}\|_p\},
	\\
	\epsilon &< \min_{{p}\in\mathcal{S}}
		\{ |a|_{p}^{-1}\| {\bf y} \|_{p}^{-1} \},
	\\
	\epsilon &< \min_{{p}\in\mathcal{S}}
		\{ |a|_{p}^{-1}\delta_{p} \}.
\end{align*}
	By the Strong Approximation Theorem (see \cite{CASS-FROH})
	or by Lemma \ref{RP3:L:1} above,
	for each integer $l$ with $1 \leq l \leq n$,
	there exists $r_{l} \in \mathbb{Q}$ such that
 \begin{gather*}
	|r_l - \xi_{p}^l|_{p}<\epsilon
	\quad \forall {p} \in \mathcal{S},
	\\
	|r_l|_{q} \leq 1
	\quad \forall {q} \notin \mathcal{S},
 \end{gather*}
	Putting ${\bf r} = (1,r_{1},\ldots,r_{n})$,
	this becomes
\begin{equation}\label{RP3:L:2:7}
 \begin{gathered}
	\|{\bf r} - {\bf t}_{p}\|_{p}<\epsilon
	\quad \forall {p} \in \mathcal{S},
	\\
	\|{\bf r}\|_{q} \leq 1
	\quad \forall {q} \notin \mathcal{S}.
 \end{gathered}
\end{equation}
	Since $\epsilon < \min_{{p} \in \mathcal{S}}\{\|{\bf t}_{p}\|_{p}\}$,
	this gives $\|{\bf r}\|_{p} = \|{\bf t}_{p}\|_{p} $
	for each ${p} \in \mathcal{S}$. Then, since
	$a{\bf t}_{p} \in \mathbb{Z}_p^{n+1}$ for each ${p} \in \mathcal{S}$,
	we deduce that $a{\bf r} \in \mathbb{Z}_p^{n+1}$ for each ${p} \in \mathcal{S}$.
	Combining this with (\ref{RP3:L:2:7}) it follows that $a{\bf r} \in \mathbb{Z}^{n+1}$.
	Since
\[
	L_{p}(a {\bf r}) =
	\| a {\bf r} - a {\bf t}_{p} \|_{p}
	< |a|_{p} \epsilon
	\leq \delta_{p}
	\quad \forall {p} \in \mathcal{S},
\]
	we conclude that $a {\bf r} \in \Lambda$. So, we have
	$\langle {\bf y},a {\bf r} \rangle \in \mathbb{Z}$
	and thus $|\langle {\bf y},a {\bf r} \rangle|_p \leq 1$
	for each ${p} \in \mathcal{S}$.
	By virtue of the choice of $a$, $\epsilon$ and ${\bf r}$,
	this leads to
\begin{equation}\label{RP3:L:2:8}
 \begin{aligned}
	| \langle {\bf y},{\bf t}_{p} \rangle |_{p}
	&=
	|a|_{p}^{-1}|\langle {\bf y},a{\bf t}_{p} - a {\bf r} \rangle + \langle {\bf y},a {\bf r} \rangle |_{p}
	\\
	&\leq
	|a|_{p}^{-1}\max\{ \| {\bf y} \|_{p} \| a {\bf r} - a{\bf t}_{p} \|_{p},|\langle {\bf y},
	a {\bf r} \rangle |_{p} \}
	\\
	&\leq
	|a|_{p}^{-1}\max\{ \| {\bf y} \|_{p} L_{p}(a {\bf r}),1 \}
	\\
	&\leq
	|a|_{p}^{-1}\max\{ \| {\bf y} \|_{p} |a|_{p}\epsilon,1 \}
	\\
	&\leq |a|_{p}^{-1},
 \end{aligned}
\end{equation}
	which proves the first part of the claim.
	To show that $ab{\bf y}\in \mathbb{Z}^{n+1}$, denote by
	$\{ {\bf e}_0,\ldots,{\bf e}_{n} \}$ the canonical basis
	of $\mathbb{Q}^{n+1}$ and let $j$ be any integer with $1\leq j\leq n$.
	Since $L_p(b {\bf e}_j) = |b|_p < \delta_p$ for each ${p} \in \mathcal{S}$,
	we have $b {\bf e}_j \in \Lambda $ and therefore
\[
	b y_j =
	\langle{\bf y},b {\bf e}_j \rangle \in \mathbb{Z}.
\]
	Finally, $\langle {\bf y},a{\bf r} \rangle \in \mathbb{Z}$,
	we deduce that $a b y_0 \in \mathbb{Z}$.
	By this we conclude that
	$a b {\bf y} \in \mathbb{Z}^{n+1}$.
\end{proof}
	We now study the dual convex body $C^*$
	attached to a convex body $C$ of the form (\ref{dual:1}).
\begin{lemma} \label{RP3:L:4}
	Let $\delta_{\infty}$ and $X$ be any numbers with
	$0 < \delta_{\infty} \leq X$.
	Define a convex body
\[
	C = \{{\bf x} \in \mathbb{R}^{n+1} \ | \
		\| {\bf x} \|_{\infty} \leq X, \
		L_{\infty}({\bf x }) \leq  \delta_{\infty}
		\}.
\]
	Then the dual convex body
\[
	C^* = \{ {\bf y} \in \mathbb{R}^{n+1} \ | \
			\langle{\bf y},{\bf x}\rangle \leq 1
			\ \forall {\bf x} \in C
			 \}
\]
	satisfies the following inclusion
\begin{equation}\label{RP3:L:4:1}
	C^* \subseteq
	\{
		{\bf y} \in \mathbb{R}^{n+1} \ | \
			\|{\bf y}\|_{\infty} \
			\leq c_1\delta_{\infty}^{-1},
			|\langle{\bf y},{\bf t}_{\infty}\rangle|_{\infty}
			\leq c_1 X^{-1}
	\},
\end{equation}
	where $c_1$ depends only on $n$ and $\xi_{\infty}$.
\end{lemma}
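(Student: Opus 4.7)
The plan is to exploit the symmetry of $C$ together with the fact that $C^*$ is defined by the linear inequalities $\langle {\bf y},{\bf x}\rangle\le 1$ for ${\bf x}\in C$. Since $C$ is symmetric about the origin, for any ${\bf y}\in C^*$ and ${\bf x}\in C$ one gets $|\langle {\bf y},{\bf x}\rangle|\le 1$, and the whole proof consists in testing ${\bf y}$ against a well chosen family of points in $C$.

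First I would derive the bound on $|\langle {\bf y},{\bf t}_{\infty}\rangle|_{\infty}$ by testing against a rational multiple of ${\bf t}_{\infty}$. Setting $a=X/\|{\bf t}_{\infty}\|_{\infty}$, the point ${\bf x}=a\,{\bf t}_{\infty}$ has $\|{\bf x}\|_{\infty}=X$ and $L_{\infty}({\bf x})=0\le\delta_{\infty}$, hence ${\bf x}\in C$. The duality inequality gives $a\,|\langle{\bf y},{\bf t}_{\infty}\rangle|_{\infty}\le 1$, i.e.\ $|\langle{\bf y},{\bf t}_{\infty}\rangle|_{\infty}\le\|{\bf t}_{\infty}\|_{\infty}\,X^{-1}$. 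This is the second estimate in (\ref{RP3:L:4:1}), with a constant depending only on $n$ and $\xi_{\infty}$.

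Next I would bound the coordinates $y_1,\dots,y_n$. For each $j\in\{1,\dots,n\}$, the point $\delta_{\infty}\,{\bf e}_j$ has zero first coordinate, so $L_{\infty}(\delta_{\infty}{\bf e}_j)=\|\delta_{\infty}{\bf e}_j\|_{\infty}=\delta_{\infty}\le X$ (using the hypothesis $\delta_{\infty}\le X$); therefore $\delta_{\infty}{\bf e}_j\in C$ and the duality inequality yields $|y_j|_{\infty}\le\delta_{\infty}^{-1}$. To handle $y_0$, I would use the bound already obtained on $\langle{\bf y},{\bf t}_{\infty}\rangle$: writing
\[
 y_0 \;=\; \langle{\bf y},{\bf t}_{\infty}\rangle - \sum_{j=1}^{n} y_j\,\xi_{\infty}^j,
\]
the triangle inequality gives
\[
 |y_0|_{\infty} \;\le\; \|{\bf t}_{\infty}\|_{\infty}\,X^{-1} \;+\; \delta_{\infty}^{-1}\sum_{j=1}^{n}|\xi_{\infty}|_{\infty}^{j},
\]
and since $X^{-1}\le\delta_{\infty}^{-1}$, every term is $\ll\delta_{\infty}^{-1}$ with an implied constant depending only on $n$ and $\xi_{\infty}$. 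Combining the bounds on $y_0,y_1,\dots,y_n$ yields $\|{\bf y}\|_{\infty}\le c_1\delta_{\infty}^{-1}$ for a suitable $c_1=c_1(n,\xi_{\infty})$, which is the first estimate in (\ref{RP3:L:4:1}).

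There is no real obstacle here; the main point is simply to select two families of test points inside $C$ (one arising from the relation ${\bf t}_{\infty}$ controlling $L_{\infty}$, the other coming from the standard basis vectors controlling the sup norm) and to track the constants so that a single $c_1$ depending only on $n$ and $\xi_{\infty}$ works for both inequalities.
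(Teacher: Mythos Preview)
Your proof is correct and takes a genuinely different, more elementary route than the paper. The paper proceeds via linear algebra: it replaces $C$ by the parallelepiped $\widetilde{C}=\{{\bf x}:\|A{\bf x}\|_\infty\le 1\}$ for an explicit matrix $A$, shows $C\subseteq\widetilde{C}\subseteq c_2C$, takes duals, and computes $(\widetilde{C})^*=\{{\bf y}:\|A^*{\bf y}\|_1\le 1\}$ with $A^*={}^tA^{-1}$, then reads off the desired inequalities from the entries of $A^*$. Your argument bypasses all of this by simply testing ${\bf y}\in C^*$ against well-chosen points of $C$: the scaled vector $(X/\|{\bf t}_\infty\|_\infty)\,{\bf t}_\infty$ to control $\langle{\bf y},{\bf t}_\infty\rangle$, the vectors $\delta_\infty{\bf e}_j$ $(1\le j\le n)$ to control $y_1,\dots,y_n$, and then the triangle inequality to recover $y_0$. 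This is cleaner and avoids the auxiliary body $\widetilde{C}$ and the matrix inversion; the paper's approach, on the other hand, makes the structure of the dual body more explicit and would generalize more mechanically to other shapes of $C$.
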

\begin{proof}
	We have the following inclusion
\begin{align*}
	C \subseteq
	\widetilde{C} & =
		\{{\bf x} \in \mathbb{R}^{n+1} \ | \
		| x_0 |_{\infty} \leq X, \
		L_{\infty}({\bf x }) \leq  \delta_{\infty}
		\}
		\\
		& =
		\{{\bf x} \in \mathbb{R}^{n+1} \ | \
		\| A{\bf x} \|_{\infty} \leq 1\},
\end{align*}
	where
\[
	A =
	\left (
	\begin{matrix}
	X^{-1} & 0 & 0 & \ldots &0
	\\
	-\xi_{\infty}\delta_{\infty}^{-1} & \delta_{\infty}^{-1} & 0 & \ldots &0
	\\
	\vdots& \vdots& \vdots & \ddots &\vdots
	\\
	-\xi_{\infty}^n \delta_{\infty}^{-1} & 0 & 0 &  \ldots & \delta_{\infty}^{-1}
	\end{matrix}
	\right ).
\]
	Take a point ${\bf x } \in \widetilde{C}$.
	Since $| x_0 |_{\infty} \leq X$
	and $L_{\infty}({\bf x }) \leq  \delta_{\infty} \leq X$,
	we have $\| {\bf x} \|_{\infty} \leq c_2X$, for some $c_2(n,\xi_{\infty})>0$.
	So, we get
\[
	\widetilde{C}  \subseteq c_2 C.
\]
	Taking the dual of both sides, we obtain
\[
	c_2^{-1} C^*  \subseteq ( \widetilde{C} )^*
		=
				\{ {\bf y} \in \mathbb{R}^{n+1} \ | \
				\langle{\bf y},{\bf x}\rangle \leq 1
				\quad
				\forall {\bf x} \in \widetilde{C}
				\}.
\]
	Fix any ${\bf y} \in \mathbb{R}^{n+1}$.
	Define $A^* = {^tA}^{-1}$.
	Since $ \langle{\bf y},{\bf x}\rangle
	= \langle A^*{\bf y},A{\bf x}\rangle $
	for any ${\bf x} \in \mathbb{R}^{n+1}$, we have
\begin{align*}
	{\bf y} \in ( \widetilde{C} )^*
	&\iff \langle A^*{\bf y},A{\bf x}\rangle \leq 1
	\text{ for each } {\bf x} \in \mathbb{R}^{n+1}
	\text{ such that }
	\|A{\bf x}\|_{\infty}\leq1
	\\
	&\iff \langle A^*{\bf y},z\rangle \leq 1
	\text{ for each } {\bf z} \in \mathbb{R}^{n+1}
	\text{ such that }
	\|z\|_{\infty}\leq1
	\\
	&\iff \| A^*{\bf y}\|_{1} \leq 1.
\end{align*}
	This shows that
\[
	( \widetilde{C} )^*= \{
		{\bf y} \in \mathbb{R}^{n+1} \ | \
		\| A^*{\bf y} \|_{1} \leq 1
		\},
\]
	where
\[
	A^{*} = {^tA}^{-1} =
	\left (
	\begin{matrix}
	X & \xi_{\infty}X & \xi_{\infty}^2 X & \ldots & \xi_{\infty}^n X
	\\
	0 & \delta_{\infty}& 0 & \ldots  & 0
	\\
	\vdots& \vdots& \vdots & \ddots &\vdots
	\\
	0 & 0 & 0 & \ldots & \delta_{\infty}
	\end{matrix}
	\right ).
\]
	Since $\| * \|_{\infty} \leq \| * \|_{1}$, we also get
\begin{equation}\label{RP3:L:4:2}
	(\widetilde{C})^* \subseteq D
		= \{{\bf y} \in \mathbb{R}^{n+1}
		\ | \ \| A^*{\bf y} \|_{\infty} \leq 1\}.
\end{equation}
	Also, there exists a constant $c_3 = c_3(n,\xi_{\infty})$ such that
\[
	D \subseteq \widetilde{D}
	= \{{\bf y} \in \mathbb{R}^{n+1}
	\ | \
	| \langle{\bf y},{\bf t}_{\infty}\rangle|_{\infty} \leq X^{-1}, \
	\|{\bf y}\|_{\infty} \leq c_3 \delta_{\infty}^{-1}\}.
\]
	Putting all together, we conclude that
\[
	c_2^{-1} C^* \subseteq \widetilde{D}
\]
	and therefore (\ref{RP3:L:4:1}) holds with
	$c_1 = c_2\max\{1,c_3\}$.
\end{proof}

	From now on, we fix
	$\bar \lambda = (\lambda_{\infty},(\lambda_{p})_{{p} \in \mathcal{S}})
	\in [-1,\infty) \times \mathbb{R}_{>0}^{|\mathcal{S}|}$
	and put
\begin{equation}\label{RP3:lambda}
	\lambda= \lambda_{\infty} + \sum_{{p} \in \mathcal{S}}\lambda_{p}.
\end{equation}
	We also assume that $\lambda>0$.
	In the following proposition we apply Mahler's Duality Theorem
	to complete the first step of the programme outlined in the
	introduction to this chapter.
\begin{proposition} \label{RP3:P:1}
	Let $c$, $X$  be positive real numbers.
	Suppose that the inequalities
\begin{equation}\label{RP3:P:1:1}
 \begin{aligned}
	&\| {\bf x} \|_{\infty} \leq X,
	\\
	&L_{\infty}({\bf x }) \leq c X^{-\lambda_{\infty}},
	\\
	&L_{p}({\bf x }) \leq c X^{-\lambda_{p}} \
	\forall {p} \in \mathcal{S},
 \end{aligned}
\end{equation}
	have no non-zero solution ${\bf x} \in \mathbb{Z}^{n+1}$.
	Then there exist $n+1$ linearly independent points
	${\bf x}_1,\ldots,{\bf x}_{n+1} \in \mathbb{Z}^{n+1}$ satisfying
\begin{equation}\label{RP3:P:1:2}
 \begin{aligned}
	&\| {\bf x}_i \|_{\infty}
	\ll c^{-|\mathcal{S}|-1}
	X^{\lambda}
	\\
	&| \langle {\bf x}_i,{\bf t}_{\infty} \rangle |_{\infty}
	\ll c^{-|\mathcal{S}|}
	X^{\lambda - \lambda_{\infty} - 1},
	\\
	&| \langle {\bf x}_i,{\bf t}_{p} \rangle |_{p}  \leq c X^{-\lambda_{p}}
	\quad \forall {p} \in \mathcal{S},
 \end{aligned}
\end{equation}
	for $i = 1,\ldots,n+1$,
	and the implied constants depend only on $\bar \xi$, $n$ and
	$\mathcal{S}$ (in particular, they do not depend on $c$).
\end{proposition}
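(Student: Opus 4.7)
The plan is to carry out the program outlined at the opening of this chapter: translate the hypothesis into a statement about successive minima of a convex body with respect to a lattice, apply Mahler's duality to transfer the information to the dual lattice relative to the dual body, and then use Lemmas \ref{RP3:L:2} and \ref{RP3:L:4} to read off the required estimates on integer points.

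First I would form the symmetric convex body
\[
	C = \{{\bf x} \in \mathbb{R}^{n+1} \mid
		\|{\bf x}\|_{\infty} \leq X,\
		L_{\infty}({\bf x}) \leq  cX^{-\lambda_{\infty}}\}
\]
and the lattice
\[
	\Lambda = \{{\bf x} \in \mathbb{Z}^{n+1} \mid
		L_{p}({\bf x}) \leq  cX^{-\lambda_{p}}\ \forall p\in\mathcal{S}\}.
\]
The hypothesis (\ref{RP3:P:1:1}) says exactly that $C\cap\Lambda=\{0\}$, i.e.\ the first successive minimum $\tau_1$ of $\Lambda$ with respect to $C$ satisfies $\tau_1>1$. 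Mahler's duality theorem then gives $\tau_1\tau_{n+1}^*\leq (n+1)!$, where $\tau_{n+1}^*$ denotes the last successive minimum of the dual lattice $\Lambda^*$ with respect to the dual body $C^*$. Hence $\tau_{n+1}^*\leq(n+1)!$, and there exist $\mathbb{Q}$-linearly independent points ${\bf y}_1,\ldots,{\bf y}_{n+1}\in\Lambda^*\cap(n+1)!\,C^*$.

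Next I would feed these points through the two preparatory lemmas. Setting $\delta_{\infty}=cX^{-\lambda_{\infty}}$ and $\delta_{p}=cX^{-\lambda_{p}}$, Lemma \ref{RP3:L:4} yields
\[
	\|{\bf y}_i\|_{\infty}\ll c^{-1}X^{\lambda_{\infty}},
	\qquad
	|\langle {\bf y}_i,{\bf t}_{\infty}\rangle|_{\infty}\ll X^{-1},
\]
with implied constants depending only on $n$ and $\xi_{\infty}$. Lemma \ref{RP3:L:2} supplies an integer $a=a(\bar\xi,\mathcal{S})>0$ with $|\langle {\bf y}_i,{\bf t}_p\rangle|_p\leq|a|_p^{-1}$ for every $p\in\mathcal{S}$, together with an integer $b>0$ satisfying $ab\,\Lambda^*\subseteq\mathbb{Z}^{n+1}$, $|b|_{\infty}\leq\prod_{p\in\mathcal{S}}p\,\delta_p^{-1}=c^{-|\mathcal{S}|}\prod_{p\in\mathcal{S}}p\,X^{\lambda_{p}}$ and $|b|_p<\delta_p=cX^{-\lambda_{p}}$ for each $p\in\mathcal{S}$.

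The candidates for the conclusion are then ${\bf x}_i:=ab\,{\bf y}_i\in\mathbb{Z}^{n+1}$, which remain linearly independent because $ab\neq 0$. The three inequalities of (\ref{RP3:P:1:2}) follow by multiplying the previous estimates: the bound on $\|{\bf x}_i\|_{\infty}$ is obtained by combining the estimate on $\|{\bf y}_i\|_{\infty}$ with $|b|_{\infty}\ll c^{-|\mathcal{S}|}X^{\sum_p\lambda_p}$; the bound on $|\langle {\bf x}_i,{\bf t}_{\infty}\rangle|_{\infty}$ is obtained similarly from $|\langle {\bf y}_i,{\bf t}_{\infty}\rangle|_{\infty}\ll X^{-1}$; and the $p$-adic bound follows from $|\langle{\bf x}_i,{\bf t}_p\rangle|_p=|ab|_p|\langle{\bf y}_i,{\bf t}_p\rangle|_p\leq|b|_p<cX^{-\lambda_p}$. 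Since $a$ is fixed by $(\bar\xi,\mathcal{S})$ and the bounds for $|b|$ are given in closed form in $c$ and $X$, the implied constants in (\ref{RP3:P:1:2}) are independent of $c$, as required.

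The main subtlety will be bookkeeping the dependence on $c$ throughout: one must make sure that the absorption of $a$, $b$ and the $(n+1)!$ factor into the implied constants leaves the explicit $c^{-|\mathcal{S}|-1}$ (respectively $c^{-|\mathcal{S}|}$) factors intact. A secondary technical point is that Lemma \ref{RP3:L:4} was stated under the hypothesis $\delta_{\infty}\leq X$; in the regime $cX^{-\lambda_{\infty}}>X$ the second defining inequality of $C$ is redundant and one instead applies the analogous (easier) bound obtained by discarding the $L_{\infty}$ constraint, which leads to a convex body enclosed in $\{|x_0|\leq X\}\times\{\|\cdot\|\leq cX^{-\lambda_{\infty}}\}^n$; the same computation with $\delta_{\infty}$ replaced by $X$ then gives a bound of the correct shape.
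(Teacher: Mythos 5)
Your proposal is correct and follows essentially the same route as the paper's own proof: the same convex body and lattice, Mahler's duality to get $n+1$ independent points of $\Lambda^*\cap(n+1)!\,C^*$, then Lemmas \ref{RP3:L:4} and \ref{RP3:L:2} to bound those points and rescale by $ab$. The subtlety you flag about the hypothesis $\delta_\infty\leq X$ in Lemma \ref{RP3:L:4} is handled in the paper by the normalization $0<c<1$ (recalling $\lambda_\infty\geq-1$), which is the same fix in different clothing.
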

\begin{proof}
	Define a lattice $\Lambda$ of
	$\mathbb{R}^{n+1}$ by
\[
	\Lambda = \{{\bf x} \in \mathbb{Z}^{n+1} \ | \
	L_{p}({\bf x }) \leq c X^{-\lambda_{p}}
	\ \forall {p} \in \mathcal{S} \}
\]
	and consider the convex body
\[
	C = \{{\bf x} \in \mathbb{R}^{n+1} \ | \ \| {\bf x} \|_{\infty} \leq X, \
		L_{\infty}({\bf x }) \leq c X^{-\lambda_{\infty}}\}.
\]
	The hypothesis is that $C$ contains no non-zero points of
	the lattice $\Lambda$. Therefore the first minimum
	$\tau_1 := \tau_1(C,\Lambda)$ of $\Lambda$,
	with respect to $C$ is $> 1$.

	By Theorem VI, p.219 in \cite{CASS} (Mahler's Duality Theorem),
	we have that
\begin{equation}\label{RP3:P:1:5}
	\tau_1 \tau_{n+1}^* \leq (n+1)!,
\end{equation}
	where $\tau_{n+1}^* := \tau_{n+1}(C^*,\Lambda^*)$
	is the last minimum of the dual lattice $\Lambda^*$,
	with respect to the dual convex body $C^*$.
	So, by (\ref{RP3:P:1:5}),
	we have that $\tau_{n+1}^* \leq (n+1)!$ and thus there exist
	$n+1$ linearly independent points
	${\bf y}_1,\ldots,{\bf y}_{n+1}$ with
\begin{equation}\label{RP3:P:1:5-1}
	{\bf y}_1,\ldots,{\bf y}_{n+1} \in \Lambda^* \cap (n+1)!~C^*.
\end{equation}
	WLOG we may assume that $0<c<1$.
	Applying Lemma \ref{RP3:L:4} with
	$\delta_{\infty} = c X^{-\lambda_{\infty}}$,
	we find that there exist a constant $c_1(n,\xi_{\infty})>0$ such that
\begin{equation}\label{RP3:P:1:6}
	C^*
		\subseteq
		\{
		{\bf y} \in \mathbb{R}^{n+1} \ |  \quad
			\|{\bf y}\|_{\infty}
			\leq c_1 c^{-1} X^{\lambda_{\infty}},
			|\langle{\bf y},{\bf t}_{\infty}\rangle|_{\infty}
			\leq c_1 X^{-1}
		\}
\end{equation}
	By Lemma \ref{RP3:L:2} with
	$\delta_{p} = cX^{-\lambda_{p}}$ for each ${p} \in \mathcal{S}$,
	there exist integers $a, b >0$ with
\begin{equation}\label{RP3:P:1:7}
 \begin{aligned}
	|b|_{\infty}
		&\leq \prod_{{p} \in \mathcal{S}}pc^{-1}X^{\lambda_{p}}
		\ll c^{-|\mathcal{S}|} X^{\sum_{{p} \in \mathcal{S}}\lambda_{p}}
		=  c^{-|\mathcal{S}|} X^{\lambda-\lambda_{\infty}},
	\\
	\quad |b|_{p} &< cX^{-\lambda_{p}} \quad \forall {p} \in \mathcal{S},
 \end{aligned}
\end{equation}
	and $a$ depending only on $\bar \xi$ and $\mathcal{S}$,
	such that
\begin{equation}\label{RP3:P:1:8}
	\Lambda^* \subseteq
	\{ {\bf y} \in \mathbb{Q}^{n+1}  \ | \quad
	| \langle {\bf y},{\bf t}_{p} \rangle |_{p}  \leq |a|_{p}^{-1}
	\quad \forall {p} \in \mathcal{S} \}
	\ \text{ and } \
	ab \Lambda^*  \subseteq \mathbb{Z}^{n+1}.
\end{equation}
	Put ${\bf x}_i = a b {\bf y}_i$ for $i = 1,\ldots,n+1$.
	Since $\tau_{n+1}^* \leq(n+1)!$~, the relations
	(\ref{RP3:P:1:5-1})-(\ref{RP3:P:1:8}),
	imply that, for each $i = 1,\ldots,n+1$, we have
	${\bf x}_i \in \mathbb{Z}^{n+1}$ and
\begin{align*}
	&\| {\bf x}_i \|_{\infty} =
	|a|_{\infty}|b|_{\infty}\| {\bf y}_i \|_{\infty}
	\ll c^{-|\mathcal{S}|-1}
	X^{\lambda},
	\\
	&| \langle {\bf x}_i,{\bf t}_{\infty} \rangle |_{\infty} =
	|a|_{\infty}|b|_{\infty}| \langle {\bf y}_i,{\bf t}_{\infty} \rangle |_{\infty}
	\ll c^{-|\mathcal{S}|} X^{\lambda - \lambda_{\infty}-1},
	\\
	&| \langle {\bf x}_i,{\bf t}_{p} \rangle |_{p}  =
	|a|_{p}|b|_{p}| \langle {\bf y}_i,{\bf t}_{p} \rangle |_{p}
	\leq c X^{-\lambda_{p}}
	\quad \forall {p} \in \mathcal{S}.
\end{align*}
\end{proof}
	The following is the main result of this chapter.
	It is a result of simultaneous approximation
	to real and p-adic numbers $\eta_{\nu}~(\nu \in \mathcal{S}\cup\{\infty\})$
	by values of a single polynomial of $\mathbb{Z}[T]$
	evaluated at the points $\xi_{\nu}~(\nu \in \mathcal{S}\cup\{\infty\})$.
	We will obtain the result stated in the introduction
	as a corollary of it.

	Recall that we fixed $\bar \lambda = (\lambda_{\infty},(\lambda_{p})_{{p}
	\in \mathcal{S}})
	\in [-1,\infty) \times \mathbb{R}_{>0}^{|\mathcal{S}|}$
	with the property that the sum $\lambda$ of its coordinates is positive
	( see (\ref{RP3:lambda}) ).
\begin{theorem} \label{RP3:T:1}
	Let $c>0$  be a real number, let
	$(\eta_{\infty},(\eta_{p})_{{p} \in \mathcal{S}})
	\in \mathbb{R} \times \prod_{{p} \in \mathcal{S}} \mathbb{Z}_{p}$ and,
	for each ${p} \in \mathcal{S}$,
	let $\rho_p \in  \mathbb{Z}_{p}$ with
	$0<|\rho_p|_p \leq \|{\bf t}_{p}\|_{p}^{-1}$.
	Suppose that the inequalities
\begin{equation}\label{RP3:T:1:1}
 \begin{aligned}
	&\| {\bf x} \|_{\infty} \leq X,
	\\
	&L_{\infty}({\bf x }) \leq c X^{-\lambda_{\infty}},
	\\
	&L_{p}({\bf x }) \leq c X^{-\lambda_{p}} \
	\forall {p} \in \mathcal{S},
 \end{aligned}
\end{equation}
	have no non-zero solution ${\bf x} \in \mathbb{Z}^{n+1}$
	for arbitrarily large values of $X$.
	Then there exist infinitely many non-zero polynomials
	$P(T) \in \mathbb{Z}[T]_{\leq n}$ satisfying
\begin{equation}\label{RP3:T:1:2}
 \begin{aligned}
	&|P(\xi_{\infty}) + \eta_{\infty}|_{\infty} \sim
	H(P)^{(\lambda-\lambda_{\infty}-1)/\lambda}
	&\text{and} &
	\quad
	|P'(\xi_{\infty})|_{\infty} \sim H(P),
	\\
	&|P(\xi_{p}) + \eta_{p}|_{p} \sim
	H(P)^{-\lambda_{p}/\lambda}
	&\text{and} &
	\quad
	|P'(\xi_{p})|_{p}  = |\rho_p|_p \quad
	\forall {p} \in \mathcal{S},
 \end{aligned}
\end{equation}
	where the implied constants depend only on $\bar \xi$, $\bar \eta$,
	$c$, $n$ and $\mathcal{S}$.
\end{theorem}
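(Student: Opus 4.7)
The plan is to combine the output of Proposition~\ref{RP3:P:1} with a Strong-Approximation / Minkowski-type construction. First, for each arbitrarily large $X$ at which (\ref{RP3:T:1:1}) has no non-zero integer solution, Proposition~\ref{RP3:P:1} supplies $n+1$ linearly independent integer points $\mathbf{x}_1, \ldots, \mathbf{x}_{n+1}$ satisfying (\ref{RP3:P:1:2}). Identifying each $\mathbf{x}_i = (x_{i,0}, \ldots, x_{i,n})$ with the polynomial $Q_i(T) = \sum_\ell x_{i,\ell} T^\ell \in \mathbb{Z}[T]_{\le n}$, we obtain $n+1$ linearly independent polynomials of degree $\le n$ with
\[
H(Q_i) \ll X^\lambda, \quad |Q_i(\xi_\infty)|_\infty \ll X^{\lambda-\lambda_\infty-1}, \quad |Q_i(\xi_p)|_p \leq cX^{-\lambda_p} \ (p \in \mathcal{S}).
\]
These will serve as a ``dual basis'' in the style of Davenport--Schmidt, since they span $\mathbb{Q}[T]_{\le n}$ as a $\mathbb{Q}$-vector space and take uniformly small values at every $\xi_\nu$.

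Next I search for $P$ in the form $P = P_0 + \sum_{i=1}^{n+1} a_i Q_i$ with $a_i \in \mathbb{Z}$, where $P_0 \in \mathbb{Z}[T]_{\le n}$ is a fixed auxiliary polynomial. The role of $P_0$ is to enforce the two derivative conditions. Specifically, one chooses $P_0$ so that $|P_0'(\xi_p)|_p = |\rho_p|_p$ for every $p \in \mathcal{S}$ and $|P_0'(\xi_\infty)|_\infty$ is of the right order of magnitude. Feasibility of the $p$-adic constraints rests precisely on the hypothesis $0 < |\rho_p|_p \le \|\mathbf{t}_p\|_p^{-1}$ and is achieved by an application of the Strong Approximation Theorem in the spirit of Lemma~\ref{RP3:L:1}. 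Moreover, adding an integer combination $\sum a_i Q_i$ to $P_0$ perturbs $P'(\xi_p)$ by a $p$-adic quantity of strictly smaller absolute value than $|\rho_p|_p$ as long as the $a_i$ stay in the range $|a_i|_\infty \ll X^{\text{small}}$, so the valuation $|P'(\xi_p)|_p = |\rho_p|_p$ is preserved throughout the subsequent adjustment. The real derivative condition $|P'(\xi_\infty)|_\infty \sim H(P)$ is then automatic because $P_0$ contributes a dominant term.

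Having fixed $P_0$, I use the $n+1$ free integer parameters to match the approximation conditions. The map $\mathbf{a} \mapsto \bigl(\sum a_i Q_i(\xi_\nu)\bigr)_\nu$ sends $\mathbb{Z}^{n+1}$ to a full-rank discrete subgroup of $\mathbb{R} \times \prod_{p \in \mathcal{S}} \mathbb{Q}_p$; its covolume is controlled via (\ref{RP3:P:1:2}). Applying Minkowski's convex body theorem in the adelic formulation (as in the proof of Proposition~\ref{I:1:P:1}) to this lattice, with a box tailored to the target point $\bigl(-\eta_\infty - P_0(\xi_\infty),\, (-\eta_p - P_0(\xi_p))_p\bigr)$, I find integer $\mathbf{a}$ such that
\[
|P(\xi_\infty) + \eta_\infty|_\infty \ll X^{\lambda-\lambda_\infty-1}, \qquad |P(\xi_p) + \eta_p|_p \ll X^{-\lambda_p}.
\]
The matching lower bounds (the ``$\sim$'' rather than mere ``$\ll$'') come from the product formula applied to the non-zero integers $\det(\mathbf{x}_i, \mathbf{p}_\nu)$-type expressions, which are at least $1$ in absolute value and thus force the factors at each place to be comparable to the claimed order. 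Letting $X$ range over arbitrarily large values gives $H(P) \asymp X^\lambda \to \infty$, hence infinitely many distinct polynomials $P$.

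The main obstacle is the synchronization between Steps~2 and~3: one must prescribe exact $p$-adic valuations of $P'(\xi_p)$ while at the same time approximating $-\eta_\nu$ at every place to the full stated precision. The construction works only because $P_0$ is chosen beforehand using Strong Approximation so that subsequent Minkowski adjustments leave the valuation of $P'(\xi_p)$ untouched; if one instead attempted to enforce the derivative condition after the approximation, the correction would typically violate the approximation bounds. The condition $|\rho_p|_p \leq \|\mathbf{t}_p\|_p^{-1}$ is precisely what makes such a $P_0$ constructible within $\mathbb{Z}[T]_{\le n}$, and is the pivot on which the whole argument turns.
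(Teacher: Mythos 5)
Your plan departs from the paper's, and the departure introduces a genuine gap at precisely the point you yourself flag as the pivot of the argument. You claim that adding $\sum_i a_i Q_i$ to $P_0$ perturbs $P'(\xi_p)$ ``by a $p$-adic quantity of strictly smaller absolute value than $|\rho_p|_p$'' provided the integers $a_i$ stay archimedeanly small. This is false: for integers, $|a_i|_\infty$ and $|a_i|_p$ are independent, and in fact most small integers satisfy $|a_i|_p=1$. So the $p$-adic perturbation $\bigl|\sum_i a_i Q_i'(\xi_p)\bigr|_p$ is generically of size $\max_i |Q_i'(\xi_p)|_p$, which can be as large as $\|\mathbf{t}_p\|_p$ and therefore need not be below $|\rho_p|_p$. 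The derivative condition would not survive the Minkowski-type adjustment. The paper sidesteps this by never adjusting via integer multiples of the $P_i$: instead it writes the $\nu$-adic target $R_\nu(T)=-\eta_\nu+\epsilon_\nu+\rho_\nu(T-\xi_\nu)$ as $R_\nu=\sum_i\theta_{\nu,i}P_i$ with $\theta_{\nu,i}\in\mathbb{Q}_\nu$, then uses Strong Approximation (Lemma~\ref{RP3:L:1}) to find a single rational $r_i$ simultaneously close to every $\theta_{\nu,i}$ at every place. The $p$-adic closeness $|r_i-\theta_{p,i}|_p\le\epsilon$ is what makes $|P'(\xi_p)-\rho_p|_p\le\epsilon\|\mathbf{t}_p\|_p<|\rho_p|_p$; your scheme has no mechanism producing that $p$-adic closeness.

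Two further difficulties, less decisive but real: the matching lower bounds in (\ref{RP3:T:1:2}) do not follow from ``product formula applied to $\det$-type expressions''; the paper manufactures them by \emph{choosing} the free parameter $\epsilon_\nu$ in $R_\nu$ to be non-zero and of prescribed size ($\epsilon_\infty\sim X^{\lambda-\lambda_\infty-1}$, $\epsilon_p$ an exact power of $p$ of the right size), so that $P(\xi_\nu)+\eta_\nu$ is forced to equal that size up to a factor. And the step where you ``apply Minkowski's convex body theorem \ldots{} with a box tailored to the target point'' is actually a covering argument (find a lattice vector near an arbitrary point), not a Minkowski argument (find a nonzero lattice vector in a symmetric body); the two require different justifications, and the required covering radius estimate is not supplied. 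The integrality of $P$, which you get for free by working with $\mathbb{Z}$-combinations, is the one respect in which your approach would simplify matters, but it does not compensate for the broken $p$-adic control on $P'(\xi_p)$.
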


	Note, if $\eta_p \neq 0$ for ${p} \in \mathcal{S}$,
	then the third relation in (\ref{RP3:T:1:2})
	implies that $\big| P(\xi_p)\big|_p = |\eta_p|_p$
	for each $P$ with $H(P)$ sufficiently large.
	This requires that $|\eta_p|_p \leq \|{\bf t}_{p}\|_{p} $.
	Here we ask the stronger condition
	that $|\eta_p|_p \leq 1$ for each ${p} \in \mathcal{S}$.
\ \\ \\
\begin{proof}
	Fix a choice of $X$ for which (\ref{RP3:T:1:1})
	has no non-zero solution in $\mathbb{Z}^{n+1}$.
	By Proposition \ref{RP3:P:1} there exist $n+1$ linearly independent points
	${\bf x}_1,\ldots,{\bf x}_{n+1} \in \mathbb{Z}^{n+1}$
	and a constant $c_1(n,c,\bar {\bf \xi}) > 0$ such that upon writing
	${\bf x}_i = ( x_0^{(i)}, \ldots,x_{n}^{(i)})$, we have
\begin{equation}\label{RP3:T:1:3}
 \begin{aligned}
	&\| {\bf x}_i \|_{\infty} \leq c_1
	X^{\lambda},
	\\
	&| x_{n}^{(i)} \xi_{\infty}^{n} + \ldots  + x_0^{(i)} |_{\infty} \leq c_1
	X^{\lambda-\lambda_{\infty}-1},
	\\
	&| x_{n}^{(i)} \xi_{p}^{n} + \ldots  + x_0^{(i)} |_{p}
	\leq c_1 X^{-\lambda_{p}}
	\quad \forall {p} \in \mathcal{S},
 \end{aligned}
\end{equation}
	for $i = 1,\ldots,n+1$.
	These points determine
	linearly independent polynomials
	$P_1,\ldots,P_{n+1}$ of $\mathbb{Z}[T]_{\leq n}$ given by
\begin{equation} \label{RP3:T:1:4}
	P_i(T) : = \sum_{m = 0}^{n}x_{m}^{(i)} T^m \quad (1 \leq i \leq n+1).
\end{equation}
	With this notation, the condition
	(\ref{RP3:T:1:3}) can be rewritten in the form
\begin{equation}\label{RP3:T:1:5}
 \begin{aligned}
	&H(P_i) \leq c_1
	X^{\lambda},
	\\
	&| P_i(\xi_{\infty}) |_{\infty} \leq c_1
	X^{\lambda-\lambda_{\infty} -1},
	\\
	&| P_i(\xi_{p}) |_{p} \leq c_1 X^{-\lambda_{p}}
	\quad \forall {p} \in \mathcal{S},
 \end{aligned}
\end{equation}
	for $i = 1,\ldots,n+1$.
	We introduce auxiliary polynomials
\begin{equation}\label{RP3:T:1:6}
	R_{\nu}(T) := - \eta_{\nu} + \epsilon_{\nu} + \rho_{\nu}(T-\xi_{\nu})
	\quad \text{for each} \quad {\nu} \in \{\infty\}\cup\mathcal{S},
\end{equation}
	where $\epsilon_{p} \in \mathbb{Q}_{p}$
	for each ${p} \in \mathcal{S}$ and
	$\epsilon_{\infty}, \rho_{\infty} \in \mathbb{R}$
	will be determined later.
	Since $P_1,\ldots,P_{n+1}$ are linearly independent over $\mathbb{Q}$,
	there exist numbers $\theta_{{p},1},\ldots,\theta_{{p},n+1} \in
	\mathbb{Q}_{p}$ for ${p} \in \mathcal{S}$
	and $\theta_{{\infty},1},\ldots,\theta_{{\infty},n+1} \in
	\mathbb{R}$ such that
\begin{equation}\label{RP3:T:1:7}
	R_{\nu}(T) = \sum_{i = 1}^{n+1} \theta_{{\nu},i} P_i(T)
	\ \text{ for } \ {\nu} \in \{\infty\}\cup\mathcal{S}.
\end{equation}
	It follows from (\ref{RP3:T:1:6}) and (\ref{RP3:T:1:7}) that
	for ${\nu} \in \{\infty\}\cup\mathcal{S}$, we have
\begin{equation}\label{RP3:T:1:8}
	\sum_{i = 1}^{n+1} \theta_{{\nu},i} x_m^{(i)} = 0  \quad (2 \leq m \leq n).
\end{equation}
	Choose a real number $\epsilon>0$ with
	$\epsilon< \min_{p \in \mathcal{S}} \{ \|{\bf t}_p\|_p^{-1} |\rho_p|_p\}$.
	By Lemma \ref{RP3:L:1},  for each $i = 1,\ldots,n+1$, there exist numbers
	$r_{i} \in \mathbb{Q}$ such that
\begin{equation}\label{RP3:T:1:8:1}
 \begin{aligned}
	&| r_{i} - \theta_{\infty,i} |_{\infty} \leq
	\epsilon^{-|\mathcal{S}|}\prod_{{p} \in \mathcal{S}}p,
	\\
	&| r_{i} - \theta_{{p},i} |_{p} \leq \epsilon
	\quad \forall {p} \in \mathcal{S},
	\\
	&| r_{i} |_{q} \leq 1
	\quad \forall {q} \notin \mathcal{S}.
 \end{aligned}
\end{equation}
	Put $N = \epsilon^{-|\mathcal{S}|}\prod_{{p} \in \mathcal{S}}p$.
	Now, we define a polynomial
	$P(T) \in \mathbb{Q}[T]_{\leq n}$ by
\begin{equation}\label{RP3:T:1:9}
	P(T) : = \sum_{i = 1}^{n+1} r_{i} P_i(T) =
	\sum_{m = 0}^{n} x_{m} T^m,
\end{equation}
	where
\[
	x_{m} = \sum_{i = 1}^{n+1} r_{i} x_m^{(i)}  \quad (0 \leq m \leq n).
\]
	Using (\ref{RP3:T:1:5}) - (\ref{RP3:T:1:7})
	and (\ref{RP3:T:1:9}), we find that
\begin{align*}
	| P(\xi_{\infty}) + \eta_{\infty} - \epsilon_{\infty} |_{\infty}
	&= | P(\xi_{\infty}) - R_{\infty}(\xi_{\infty}) |_{\infty}
	\\
	&\leq (n+1)N\max_{1 \leq i \leq n+1} \{ | P_i(\xi_{\infty}) |_{\infty} \}
	\\
	&\leq (n+1) Nc_1 X^{\lambda-\lambda_{\infty}-1},
	\\
	| P'(\xi_{\infty}) - \rho_{\infty} |_{\infty}  &=
		| P'(\xi_{\infty}) - R_{\infty}'(\xi_{\infty}) |_{\infty}
		\\
		&<
		(n+1)N\max_{1 \leq i \leq n+1} \{ | P_i'(\xi_{\infty}) |_{\infty} \}
		\\
		&\leq
		(n+1)Nn\|{\bf t}_{\infty}\|_{\infty}
		\max_{1 \leq i \leq n+1} H(P_i)
		\\
		&\leq
		(n+1)Nc_2X^{\lambda},
\end{align*}
	where $c_2 = c_1n\|{\bf t}_{\infty}\|_{\infty}$.
	Upon choosing
\[
	\epsilon_{\infty} = 2 (n+1) Nc_1 X^{\lambda-\lambda_{\infty}-1}
	\ \text{ and } \
	\rho_{\infty} = 2 (n+1)Nc_2X^{\lambda},
\]
	it follows that
\begin{equation}\label{RP3:T:1:9:0}
 \begin{gathered}
	(n+1) Nc_1 X^{\lambda-\lambda_{\infty}-1} \leq
	| P(\xi_{\infty}) + \eta_{\infty} |_{\infty}
	\leq  3(n+1) Nc_1 X^{\lambda-\lambda_{\infty}-1},
	\\
	(n+1)Nc_2X^{\lambda} \leq
	| P'(\xi_{\infty})|_{\infty}  \leq
	3(n+1)Nc_2X^{\lambda}.
 \end{gathered}
\end{equation}
	Fix ${p} \in \mathcal{S}$.
	Since $\|{\bf t}_{p}\|_{p} \geq1$, we have
	$\epsilon < \|{\bf t}_{p}\|_{p}^{-1}|\rho_p|_p
	\leq \|{\bf t}_{p}\|_{p}^{-2}\leq 1$.
	Using (\ref{RP3:T:1:5}) - (\ref{RP3:T:1:7})
	and (\ref{RP3:T:1:9}) again,
	we get
\begin{align*}
	| P(\xi_{p}) + \eta_{p} - \epsilon_{p} |_{p}  &=
		| P(\xi_{p}) - R_{p}(\xi) |_{p}
		\\
		&\leq
		\epsilon\max_{1 \leq i \leq n+1} \{ | P_i(\xi_{p}) |_{p} \}
		< c_1 X^{-\lambda_{p}},
	\\
	| P'(\xi_{p}) - \rho_{p} |_{p}  &=
		| P'(\xi_{p}) - R_{p}'(\xi_{p}) |_{p}
		\\
		&\leq
		\epsilon \max_{1 \leq i \leq n+1} \{ | P_i'(\xi_{p}) |_{p} \}
		\leq
		\epsilon \|{\bf t}_{p}\|_{p}.
\end{align*}
	Let $k_{p}$ be the integer for which
\[
	{p}^{k_{p}} \leq c_1^{-1} X^{\lambda_{p}} < {p}^{k_{p}+1}.
\]
	Choose
	$\epsilon_{p} = {p}^{k_{p}}$, so
	that $c_1X^{-\lambda_{p}} \leq |\epsilon_p|_p$.
	Since $\epsilon
	< \min_{p \in \mathcal{S}} \{ \|{\bf t}_p\|_p^{-1} |\rho_p|_p\}$,
	we note that
	$\epsilon\|{\bf t}_{p}\|_{p} < |\rho_{p}|_p$.
	So, we obtain
\begin{equation}\label{RP3:T:1:11}
\begin{gathered}
	c_1 X^{-\lambda_{p}} \leq |\epsilon_p|_p
	=
	| P(\xi_{p}) + \eta_{p} |_{p} < {p} c_1 X^{-\lambda_{p}},
	\\
 	|P'(\xi_{p})|_{p} = |\rho_{p}|_p.
\end{gathered}
\end{equation}
	To prove that $P(T) \in \mathbb{Z}[T]_{\leq n}$, we need to show
	that all the coefficients $x_{m}$ in (\ref{RP3:T:1:9})
	are integers for $m = 0,\ldots,n$.
	By the construction of $P(T)$ and by the last relation in
	(\ref{RP3:T:1:8:1}), for $m = 0,\ldots,n$, we have that
	$|x_{m}|_{q} \leq 1$ for each $q \notin \mathcal{S}$.
	By (\ref{RP3:T:1:8}), we find that
\[
	| x_{m} |_{p} =
	\Big| \sum_{i = 1}^{n+1} (r_{i} - \theta_{{p},i}) x_{m}^{(i)}
	\Big|_{p} \leq
	\epsilon < 1
	\ \ \text{ for each } \ {p} \in \mathcal{S}  \quad (2 \leq m \leq n).
\]
	This implies that $x_{m} \in \mathbb{Z}$ for $m = 2,\ldots,n$.
	Since
\[
	|x_1 - P'(\xi_{p})|_p = \Big|\sum_{m=2}^{n}mx_m\xi_{p}^{m-1}\Big|_p
	\leq \|{\bf t}_{p}\|_{p} \max_{2\leq m \leq n}|x_m|_p
	\leq  \epsilon \|{\bf t}_{p}\|_{p} < |\rho_{p}|_p
	\leq \|{\bf t}_{p}\|_{p}^{-1} \leq 1,
\]
	the second inequality in (\ref{RP3:T:1:11}) gives
\[
	|x_1|_p = |\rho_{p}|_p \leq \|{\bf t}_{p}\|_{p}^{-1} \leq 1,
\]
	and so $x_{1} \in \mathbb{Z}$.
	Similarly, we find that
\begin{align*}
	|x_0 - P(\xi_{p})|_p & = \Big |\sum_{m=1}^{n}x_m\xi_{p}^{m} \Big |_p
	\leq \|{\bf t}_{p}\|_{p} \max_{1\leq m \leq n}|x_m|_p
	=  \|{\bf t}_{p}\|_{p}
	\max\{|x_1|_p,
	\max_{2\leq m \leq n}|x_m|_p\}
	\\
	&\leq
	\|{\bf t}_{p}\|_{p} \max\{|\rho_{p}|_p,\epsilon\}
	= \|{\bf t}_{p}\|_{p}|\rho_{p}|_p \leq  1.
\end{align*}
	Since $\eta_p \in \mathbb{Z}_p$,
	the first inequality in (\ref{RP3:T:1:11}) gives
	$|P(\xi_{p})|_p \leq |\eta_p|_p \leq 1$ assuming, as we may,
	that $X$ is sufficiently large, and then
\[
	|x_0|_p \leq \max\{ 1,|P(\xi_{p})|_p \} \leq 1.
\]
	So, we get $x_{0} \in \mathbb{Z}$
	and conclude that $P(T) \in \mathbb{Z}[T]_{\leq n}$.
	Moreover, since
\begin{gather*}
	H(R_{\infty}) \ll X^{\lambda},
	\\
	H(P) \leq H(R_{\infty}) + H(P - R_{\infty}),
	\\
	H(P - R_{\infty}) \leq
	(n+1) N \max\{ H(P_i)\}  \ll
	X^{\lambda},
\end{gather*}
	we get $H(P) \ll X^{\lambda}$.
	Since $| P'(\xi_{\infty}) |_{\infty} \ll H(P)$, the second
	estimate in (\ref{RP3:T:1:9:0}) gives $H(P) \gg X^{\lambda}$ and so
\begin{equation}\label{RP3:T:1:9:1}
	H(P) \sim X^{\lambda}.
\end{equation}
	Finally, from (\ref{RP3:T:1:9:0}) - (\ref{RP3:T:1:9:1}),
	we get (\ref{RP3:T:1:2}).
\end{proof}

	We are now ready to prove the result
	on simultaneous approximation stated in the introduction.
\begin{theorem} \label{RP3:C:1}
	Let $c>0$  be a real number and let
	$R(T)$ be a polynomial in $\mathbb{Z}[T]$.
	Suppose that $R(\xi_p) \in \mathbb{Z}_{p}$ for each ${p} \in \mathcal{S}$.
	Suppose also that the inequalities
\begin{equation}\label{RP3:C:1:0}
 \begin{aligned}
	&\| {\bf x} \|_{\infty} \leq X,
	\\
	&L_{\infty}({\bf x }) \leq c X^{-\lambda_{\infty}},
	\\
	&L_{p}({\bf x }) \leq c X^{-\lambda_{p}} \
	\forall {p} \in \mathcal{S},
 \end{aligned}
\end{equation}
	have no non-zero solution ${\bf x} \in \mathbb{Z}^{n+1}$
	for arbitrarily large values of $X$.
	Then there exist infinitely many polynomials $F(T) \in \mathbb{Z}[T]$ with
	the following properties:
	\\
	$(i)$ \ $\deg(R - F) \leq n$,
	\\
	$(ii)$ if $\lambda_{\infty}>-1$,
		there exists a root $\alpha_{\infty}$ of $F$
	in $\mathbb{R}$ such that
\begin{equation}\label{RP3:C:1:1-0}
	| \xi_{\infty} - \alpha_{\infty} |_{\infty}
		\ll H(F)^{-(\lambda_{\infty}+1)/\lambda},
\end{equation}
	$(iii)$ for each ${p} \in \mathcal{S}$,
		there exists a root $\alpha_{p}$ of $F$
		in ${\mathbb{Q}}_p$ such that
\begin{equation}\label{RP3:C:1:1}
 	| \xi_p - \alpha_p |_p \ll H(F)^{-\lambda_{p}/\lambda}.
\end{equation}
	Moreover,
	for each ${p} \in \mathcal{S}$
	such that $\xi_{p} \in \mathbb{Z}_{p}$,
	we can choose $\alpha_{p} \in \mathbb{Z}_{p}$.
\\
	All the implied constants depend
	only on $\bar \xi$, $R$, $n$ and $c$.
\end{theorem}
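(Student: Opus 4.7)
The plan is to reduce Theorem \ref{RP3:C:1} to Theorem \ref{RP3:T:1} by taking $\eta_\nu = R(\xi_\nu)$ for each place $\nu$, feeding these into Theorem \ref{RP3:T:1} together with a judicious choice of the auxiliary parameters $\rho_p$, and then setting $F := P + R$, where $P$ is the polynomial produced. Since $\deg(P) \le n$, one immediately gets $\deg(R - F) = \deg(-P) \le n$. The roots of $F$ near $\xi_\nu$ will then be extracted from the smallness of $F(\xi_\nu)$ via Newton's method in the real case and Hensel's lemma in the $p$-adic case.

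First I would verify the hypotheses of Theorem \ref{RP3:T:1}: $\eta_\infty = R(\xi_\infty) \in \mathbb{R}$, and $\eta_p = R(\xi_p) \in \mathbb{Z}_p$ for each $p \in \mathcal{S}$ by the assumption on $R$. For each $p \in \mathcal{S}$, I would fix once and for all a $\rho_p \in \mathbb{Z}_p$ with $0 < |\rho_p|_p \le \|{\bf t}_p\|_p^{-1}$ and $|\rho_p|_p \ne |R'(\xi_p)|_p$ when $R'(\xi_p) \ne 0$ (any such $\rho_p$ with the norm bound will do otherwise); by the ultrametric inequality this forces $|\rho_p + R'(\xi_p)|_p$ to equal a fixed positive constant $\delta_p$ depending only on $\bar\xi$ and $R$.

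Next I would apply Theorem \ref{RP3:T:1} to produce infinitely many non-zero $P \in \mathbb{Z}[T]_{\le n}$, and for each such $P$ set $F = P + R \in \mathbb{Z}[T]$. Because $R$ is fixed while $H(P) \to \infty$ along the family, one has $H(F) \sim H(P)$, and distinct $P$'s produce distinct $F$'s, so the family of $F$ is infinite. The estimates of Theorem \ref{RP3:T:1} then translate into
\[
|F(\xi_\infty)|_\infty \sim H(F)^{(\lambda-\lambda_\infty-1)/\lambda}, \quad |F'(\xi_\infty)|_\infty \sim H(F),
\]
\[
|F(\xi_p)|_p \sim H(F)^{-\lambda_p/\lambda}, \quad |F'(\xi_p)|_p = \delta_p \quad (p \in \mathcal{S}),
\]
valid for $H(F)$ sufficiently large; the real derivative estimate uses that $|R'(\xi_\infty)|_\infty$ is a fixed constant negligible against $|P'(\xi_\infty)|_\infty \sim H(P)$, while the $p$-adic derivative estimate rests on the ultrametric choice of $\rho_p$ made above.

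Finally, when $\lambda_\infty > -1$, the ratio $|F(\xi_\infty)|_\infty/|F'(\xi_\infty)|_\infty$ tends to zero like $H(F)^{-(\lambda_\infty+1)/\lambda}$, so a standard Newton (or mean value) argument on a small interval around $\xi_\infty$ will yield a real root $\alpha_\infty$ of $F$ with $|\xi_\infty - \alpha_\infty|_\infty \ll H(F)^{-(\lambda_\infty+1)/\lambda}$. In the $p$-adic case, since $|F(\xi_p)|_p \to 0$ while $|F'(\xi_p)|_p = \delta_p$ is fixed and positive, the inequality $|F(\xi_p)|_p < |F'(\xi_p)|_p^2$ holds for $H(F)$ large, and Hensel's lemma then produces a root $\alpha_p \in \mathbb{Q}_p$ with $|\xi_p - \alpha_p|_p \le \delta_p^{-1}|F(\xi_p)|_p \ll H(F)^{-\lambda_p/\lambda}$; when $\xi_p \in \mathbb{Z}_p$, the bound $|\xi_p - \alpha_p|_p \le 1$ (valid once $H(F)$ is large) automatically forces $\alpha_p \in \mathbb{Z}_p$. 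The main obstacle I anticipate is the ultrametric choice of $\rho_p$ that secures a uniform positive lower bound on $|F'(\xi_p)|_p$ across the infinite family: without such a bound, the Hensel hypothesis could fail and the approximation exponent $-\lambda_p/\lambda$ would deteriorate by a factor involving $|F'(\xi_p)|_p^{-1}$.
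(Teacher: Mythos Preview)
Your proposal is correct and follows essentially the same route as the paper: set $\eta_\nu=R(\xi_\nu)$, choose $\rho_p$ with $|\rho_p|_p\neq|R'(\xi_p)|_p$, apply Theorem~\ref{RP3:T:1}, put $F=P+R$, and extract the roots via a Newton/sign-change argument at $\infty$ and Hensel's lemma at each $p$. The only detail you gloss over that the paper makes explicit is that when $\xi_p\notin\mathbb{Z}_p$ one must first rescale (take $d$ with $d\xi_p\in\mathbb{Z}_p$ and pass to $F^*(T)=d^{\deg F}F(T/d)$) before invoking Hensel, which then yields $\alpha_p\in\mathbb{Q}_p$ rather than directly in $\mathbb{Z}_p$.
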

\begin{proof}
	Put $\eta_{\nu} = R(\xi_{\nu})$ for each
	$\nu \in \{\infty\} \cup \mathcal{S}$.
	For each ${p} \in \mathcal{S}$,
	we have $\eta_{p} = R(\xi_{p}) \in \mathbb{Z}_p$.
	Choose $\rho_p \in \mathbb{Z}_{p}$
	satisfying the inequalities
	$0 < |\rho_p|_p \leq \|{\bf t}_{p}\|_{p}^{-1}$
	and $|\rho_p|_p \neq |R'(\xi_{p})|_p $.
	Then, by Theorem \ref{RP3:T:1}, there exist infinitely many non-zero
	polynomials $P(T) \in \mathbb{Z}[T]_{\leq n}$ satisfying
\begin{equation}\label{RP3:C:1:2}
 \begin{aligned}
	&|P(\xi_{\infty}) + R(\xi_{\infty})|_{\infty} \sim
	H(P)^{(\lambda-\lambda_{\infty} -1)/\lambda}
	&\text{and} &\quad
	|P'(\xi_{\infty})|_{\infty} \sim H(P),
	\\
	&|P(\xi_{p}) + R(\xi_{p})|_{p} \sim
	H(P)^{-\lambda_{p}/\lambda}
	& \text{and} &\quad
	|P'(\xi_{p})|_{p} = |\rho_p|_p \quad
	\forall {p} \in \mathcal{S},
 \end{aligned}
\end{equation}
	where the implied constants depend only on $\bar \xi$, $R$,
	$c$, $n$ and $\mathcal{S}$.
	For each of these polynomials $P$, we show that
	$F = P + R$ satisfies all the required properties.
	This is clear for Part $(i)$ of the theorem.
	To prove Part $(iii)$,
	we fix ${p} \in \mathcal{S}$ and use
	Corollary 1, p.~51 in \cite{CASS-LOC}, which says that for any
	$\xi^{*} \in {\mathbb{Z}_p}$ and $F^{*}(T) \in \mathbb{Z}_p[T]$
	with
\begin{equation}\label{RP3:C:1:4-00}
	|F^{*}(\xi^{*})|_p < |(F^{*})'(\xi^{*})|_p^2,
\end{equation}
	there is a root $\alpha^{*}$ of $F^{*}$  in $\mathbb{Z}_{p}$
	satisfying the inequality
\begin{equation}\label{RP3:C:1:4-0}
	|\xi^{*} - \alpha^{*}|_{p}
	\leq \frac{|F^{*}(\xi^{*})|_{p}}{|(F^{*})'(\xi^{*})|_{p}}.
\end{equation}
	To use this fact, we choose a positive integer $d$,
	such that $d \xi_p \in {\mathbb{Z}_p}$, and define
	the number $\xi^{*} = d \xi_p$ in ${\mathbb{Z}_p}$
	and the polynomial
	$F^{*}(T) = d^{m}F(d^{-1}T) \in \mathbb{Z}[T]$, where $m = \deg(F)$.
	Let us check that the condition (\ref{RP3:C:1:4-00}) holds.
	For this purpose, we first note that
\begin{equation}\label{RP3:C:1:3-1}
 \begin{gathered}
	F^{*}(\xi^{*}) =d^{m} F(d^{-1}\xi^{*}) = d^{m}F(\xi_p),
	\\
	(F^{*})'(\xi^{*}) = d^{m-1}F'(d^{-1}\xi^{*}) = d^{m-1}F'(\xi_p).
 \end{gathered}
\end{equation}
	By the last relation in (\ref{RP3:C:1:2}) and by
	the fact that $|\rho_p|_p \neq |R'(\xi_{p})|_p $, we have that
	$|P'(\xi_{p})|_{p} \neq |R'(\xi_{p})|_p $ and therefore
\begin{equation}\label{RP3:C:1:3}
 \begin{aligned}
 	|F'(\xi_p)|_p = |P'(\xi_p) + R'(\xi_p)|_p
	& = \max
	\{
		|P'(\xi_{p})|_{p},|R'(\xi_{p})|_p
	\}
	\\
	&= \max
	\{
		|\rho_p|_p,|R'(\xi_{p})|_p
	\}.
 \end{aligned}
\end{equation}
	Note that by the third relation in
	(\ref{RP3:C:1:2}) and by (\ref{RP3:C:1:3}), the inequality
\[
	|F(\xi_p)|_p < |d|_p^{m-2}|F'(\xi_p)|_p^2,
\]
	holds assuming, as we may, that $H(P)$ is sufficiently large.
	Combining this with (\ref{RP3:C:1:3-1}), we find that
\[
	|F^{*}(\xi^{*})|_p = |d|_p^{m}|F(\xi_p)|_p
	<
	|d|_p^{2(m-1)}|F'(\xi_p)|_p^2
	=
	| d^{m-1}F'(\xi_p) |_{p}^2
	=
	|(F^{*})'(\xi^{*})|_p^2,
\]
	which means that the condition (\ref{RP3:C:1:4-00})
	is satisfied.
	So, there exists a root $\alpha^*$ of $F^*$ in $\mathbb{Z}_{p}$ with
\[
	|d  \xi_{p} - \alpha^{*}|_{p}
	=
	|\xi^{*} - \alpha^{*}|_{p}
	\leq \frac{|F^{*}(\xi^{*})|_{p}}{|(F^{*})'(\xi^{*})|_{p}}
	=
	\frac{|d|_p |F(\xi_{p})|_{p}}{|F'(\xi_{p})|_{p}}.
\]
	Then $\alpha_{p} = d^{-1}\alpha^{*}$ is a root of $F$
	in $\mathbb{Q}_{p}$ satisfying the inequality
\[
	|\xi_{p} - \alpha_{p}|_{p}
	\leq
	\frac{|F(\xi_{p})|_{p}}{|F'(\xi_{p})|_{p}}.
\]
	By (\ref{RP3:C:1:2}) and (\ref{RP3:C:1:3}), we conclude that
\[
	|\xi_p - \alpha_p|_p
	\ll H(P)^{-\lambda_{p}/\lambda}
	\sim H(F)^{-\lambda_{p}/\lambda}
	\quad \forall {p} \in \mathcal{S}.
\]
	In particular, if $\xi_{p} \in \mathbb{Z}_{p}$
	and if $H(F)$ is sufficiently large,
	the root $\alpha_{p}$ is in $\mathbb{Z}_{p}$.

	To prove Part $(ii)$, we first note that using (\ref{RP3:C:1:2}) and
	the fact that $R(T)$ is fixed, for these polynomials $F$, we get
\begin{equation}\label{RP3:C:1:5:0}
 \begin{aligned}
	&|F(\xi_{\infty})|_{\infty}
	\sim
	H(P)^{(\lambda-\lambda_{\infty} -1)/\lambda}
	\sim H(F)^{1-(\lambda_{\infty} + 1)/\lambda},
	\\
	&|F'(\xi_{\infty})|_{\infty} \sim
	|P'(\xi_{\infty})|_{\infty} \sim H(P) \sim H(F).
 \end{aligned}
\end{equation}
	Fix any such polynomial $F$. For any real $\alpha$
	with $|\xi_{\infty} - \alpha|_{\infty}\leq 1$, we have
\begin{equation}\label{RP3:C:1:5:2}
 	F(\alpha) = F(\xi_{\infty})
		+ (\alpha - \xi_{\infty})F'(\xi_{\infty})
		+ (\alpha - \xi_{\infty})^2 M,
\end{equation}
	where $M$ is a real number with
\[
	|M|_{\infty} \leq c_1 H(F),
\]
	for some constant $c_1>0$ depending only on $\xi_{\infty}$
	and the degree of $F$.
	Choosing
\[
	\alpha = \xi_{\infty} - 2\frac{F(\xi_{\infty})}{F'(\xi_{\infty})},
\]
	and using (\ref{RP3:C:1:5:0}), we find that
\begin{equation}\label{RP3:C:1:5:3}
	|\alpha - \xi_{\infty}|_{\infty}
	\ll H(F)^{-(\lambda_{\infty} + 1)/\lambda}.
\end{equation}
	Because of our choice of $\alpha$, we find using
	(\ref{RP3:C:1:5:2}) and (\ref{RP3:C:1:5:3}) that
\[
	|F(\alpha)+F(\xi_{\infty})|_{\infty}
		=
		\big|(\alpha - \xi_{\infty})^2M\big|_{\infty}
		\ll
		H(F)^{1-2(\lambda_{\infty} + 1)/\lambda}.
\]
	Since $\lambda_{\infty}>-1$, by the first
	relation in (\ref{RP3:C:1:5:0}), we get
\[
	|F(\alpha)+F(\xi_{\infty})|_{\infty}
	<
	|F(\xi_{\infty})|_{\infty},
\]
	if $H(F)$ is sufficiently large.
	This implies that $F(\xi_{\infty})$ and $F(\alpha)$ have opposite signs,
	which means that $F$ has a real root $\alpha_{\infty}$ located
	between $\xi_{\infty}$ and $\alpha$.
	This real root $\alpha_{\infty}$ satisfies the following inequality
\[
	| \xi_{\infty} - \alpha_{\infty} |_{\infty}
		\leq
		| \xi_{\infty} - \alpha |_{\infty}
		\ll H(F)^{-(\lambda_{\infty}+1)/\lambda}.
\]
\end{proof}

\section{Real case}

	Applying Theorem \ref{RP3:T:1}
	with $S = \emptyset$, we obtain the following statement.

\begin{theorem} \label{PT1}
	Let $n \geq 1$ be an integer and let $c$, $\lambda_{\infty}$,
	$\xi_{\infty}$ and $\eta_{\infty}$ be real numbers with $c > 0$ and
	$\lambda_{\infty} > 0$.
	Suppose that the inequalities
\begin{equation}\label{PT_1}
	\max_{0\leq l \leq n} |x_l|_{\infty} \leq X
	\ \text{ and } \
	\max_{0\leq l \leq n} |x_l - x_0 \xi_{\infty}^l|_{\infty}
		\leq c X^{-\lambda_{\infty}},
\end{equation}
	have no non-zero solution ${\bf x} \in \mathbb{Z}^{n+1}$
	for arbitrarily large values of $X$.
	Then, there exist infinitely
	many non-zero polynomials $P(T) \in \mathbb{Z}[T]_{\leq n}$ such that
\begin{equation}\label{PT_2}
	|P(\xi_{\infty}) + \eta_{\infty}| \sim H(P)^{- 1/\lambda_{\infty}} \quad \text{and} \quad |P'(\xi_{\infty})| \sim H(P),
\end{equation}
	where the implied constants depend only on $c$, $\xi_{\infty}$,
	$\eta_{\infty}$ and $n$.
\end{theorem}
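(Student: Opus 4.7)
The plan is to derive Theorem \ref{PT1} as a direct specialization of Theorem \ref{RP3:T:1}. Since the preceding text already announces that this statement follows by taking $\mathcal{S}=\emptyset$, the main work is to verify that the hypotheses of Theorem \ref{RP3:T:1} are satisfied in this restricted setting and then to unwind its conclusion.

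First I would set $\mathcal{S}=\emptyset$ in the framework of Chapter 2, so that $\bar{\xi}=\xi_{\infty}$, $\bar{\lambda}=\lambda_{\infty}$, and the quantity $\lambda$ defined in \eqref{RP3:lambda} collapses to $\lambda=\lambda_{\infty}$. The hypothesis $\lambda_{\infty}>0$ of Theorem \ref{PT1} then gives $\lambda>0$, and the inclusion $\lambda_{\infty}\in[-1,\infty)$ required by Theorem \ref{RP3:T:1} is automatic. With $\mathcal{S}=\emptyset$ the $p$-adic requirements on $\eta_p$ and $\rho_p$ become vacuous, and the system \eqref{RP3:T:1:1} reduces exactly to \eqref{PT_1}, so the non-solvability assumption in Theorem \ref{PT1} is precisely the non-solvability assumption needed to apply Theorem \ref{RP3:T:1}.

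Next I would invoke Theorem \ref{RP3:T:1} to produce infinitely many non-zero polynomials $P(T)\in\mathbb{Z}[T]_{\leq n}$ satisfying the conclusions \eqref{RP3:T:1:2}. Substituting $\lambda=\lambda_{\infty}$ into the exponent $(\lambda-\lambda_{\infty}-1)/\lambda$ appearing in the first estimate yields
\[
\frac{\lambda-\lambda_{\infty}-1}{\lambda}=-\frac{1}{\lambda_{\infty}},
\]
so the first estimate becomes $|P(\xi_{\infty})+\eta_{\infty}|\sim H(P)^{-1/\lambda_{\infty}}$, while the second estimate $|P'(\xi_{\infty})|\sim H(P)$ carries over unchanged. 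The $p$-adic estimates in \eqref{RP3:T:1:2} are absent, so the two remaining conclusions are exactly those of \eqref{PT_2}. The implied constants depend only on $\bar{\xi}=\xi_{\infty}$, $\bar{\eta}=\eta_{\infty}$, $c$, $n$ and $\mathcal{S}=\emptyset$, which reduces to the dependence on $c$, $\xi_{\infty}$, $\eta_{\infty}$ and $n$ as claimed.

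There is no genuine obstacle here since Theorem \ref{RP3:T:1} has already been proven in full generality; the only care needed is to check that the restricted hypotheses line up and that the exponent simplifies correctly when $\mathcal{S}=\emptyset$. The infinitude of the family of polynomials follows from the infinitude assertion in Theorem \ref{RP3:T:1}, which in turn rests on the fact that arbitrarily large values of $X$ can be chosen for which \eqref{PT_1} fails to have non-zero integer solutions.
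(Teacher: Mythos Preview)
Your proposal is correct and matches the paper's approach exactly: the paper simply states that Theorem~\ref{PT1} follows by applying Theorem~\ref{RP3:T:1} with $\mathcal{S}=\emptyset$, and you have carried out precisely this specialization, including the verification that the exponent $(\lambda-\lambda_{\infty}-1)/\lambda$ reduces to $-1/\lambda_{\infty}$.
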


	Applying similarly Theorem \ref{RP3:C:1}
	with $S = \emptyset$, we obtain the following statement,
	which contains the result shown
	by H.~{\sc Davenport} and W.M.~{\sc Schmidt}
	in Lemma 1 of \cite{DS}.

\begin{theorem} \label{PT2}
	Let $n$, $c$, $\lambda_{\infty}$ and
	$\xi_{\infty}$ be as in Theorem \ref{PT1}.
	Let $R(T)$ be a polynomial in $\mathbb{Z}[T]$.
	Suppose again that the inequalities (\ref{PT_1})
	have no non-zero solution ${\bf x} \in \mathbb{Z}^{n+1}$
	for arbitrarily large values of $X$.
	Then there exist infinitely many polynomials $F(T) \in \mathbb{Z}[T]$ with
	the following properties:
	\\
	$(i)$ \ $\deg(R - F) \leq n$,
	\\
	$(ii)$ there exists a real root $\alpha_{\infty}$ of $F$, such that
\begin{equation}\label{PT2:2}
	| \xi_{\infty} - \alpha_{\infty} |_{\infty}
		\ll H(\alpha_{\infty})^{-1- 1/\lambda_{\infty}},
\end{equation}
	where the implied constants depend only on $c$, $\xi_{\infty}$,
	$R$ and $n$.
\end{theorem}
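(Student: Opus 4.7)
The plan is to derive Theorem \ref{PT2} directly from Theorem \ref{RP3:C:1} by specializing to the empty set of primes $\mathcal{S} = \emptyset$. First I would observe that with $\mathcal{S} = \emptyset$ the sum $\lambda = \lambda_{\infty} + \sum_{p \in \mathcal{S}}\lambda_p$ defined in (\ref{RP3:lambda}) reduces to $\lambda = \lambda_{\infty}$, and the system (\ref{RP3:C:1:0}) becomes precisely (\ref{PT_1}). The hypothesis $\lambda_{\infty} > 0$ guarantees both that $\lambda > 0$ and that $\lambda_{\infty} > -1$, so every requirement of Theorem \ref{RP3:C:1} is met (the $p$-adic conditions becoming vacuous). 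Its conclusion then produces infinitely many $F \in \mathbb{Z}[T]$ with $\deg(R - F) \leq n$ and a real root $\alpha_{\infty}$ of $F$ satisfying
\[
	| \xi_{\infty} - \alpha_{\infty} |_{\infty}
	\ll H(F)^{-(\lambda_{\infty}+1)/\lambda}
	= H(F)^{-1 - 1/\lambda_{\infty}}.
\]

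The only nontrivial step that remains is the passage from $H(F)$ to $H(\alpha_{\infty})$, since the statement of Theorem \ref{PT2} is expressed in terms of the height of the root rather than the height of the polynomial producing it. For this, I would note that $\deg F \leq \max\{\deg R,\, n\}$ is bounded uniformly for all the polynomials $F$ produced by Theorem \ref{RP3:C:1}. Hence Mignotte's bound on factors of integer polynomials (or Gelfond's inequality) applies uniformly: if $Q \in \mathbb{Z}[T]$ is the minimal polynomial of $\alpha_{\infty}$, then $Q \mid F$ in $\mathbb{Z}[T]$ and
\[
	H(\alpha_{\infty}) = H(Q) \leq c_0 H(F),
\]
for a constant $c_0$ depending only on $\max\{\deg R,n\}$. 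Substituting this into the bound above yields the required estimate
\[
	| \xi_{\infty} - \alpha_{\infty} |_{\infty}
	\ll H(\alpha_{\infty})^{-1-1/\lambda_{\infty}},
\]
with an implied constant depending only on $c$, $\xi_{\infty}$, $R$, and $n$.

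To conclude with infinitely many polynomials $F$, I would note that Theorem \ref{RP3:C:1} already produces infinitely many such $F$, and this is preserved under our bookkeeping; there is no need to track distinct roots, since the statement of Theorem \ref{PT2} quantifies over $F$. The main obstacle is conceptually minor but worth doing carefully: ensuring that the degrees of the polynomials $F$ are uniformly bounded so that Mignotte's factor inequality can be invoked with a single constant $c_0$ for all the $F$ in the sequence. This uniformity is automatic once one observes that $\deg F \leq \max\{\deg R,n\}$, which follows from $\deg(R-F) \leq n$ and the fixed nature of $R$.
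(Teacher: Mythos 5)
Your proposal is correct and follows exactly the route the paper intends: the paper derives Theorem \ref{PT2} by the one-line remark ``Applying similarly Theorem \ref{RP3:C:1} with $\mathcal{S}=\emptyset$,'' which is precisely your specialization, with $\lambda=\lambda_{\infty}>0>-1$ making all hypotheses of Theorem \ref{RP3:C:1} hold and the exponent $-(\lambda_{\infty}+1)/\lambda$ reducing to $-1-1/\lambda_{\infty}$. Your additional step converting $H(F)$ to $H(\alpha_{\infty})$ via Gelfond's factor inequality (valid since $\deg F\leq\max\{\deg R,n\}$ is uniformly bounded) is a legitimate and welcome piece of care that the paper leaves implicit.
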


	For any real number $\beta$,
	denote by $\lceil \beta  \rceil$
	the smallest integer $\geq \beta$.
	For each index $n \geq 1$, we define
\begin{equation}\label{PT2:2:0}
	\mu_n : =
	\left \{
	\begin{array}{llll}
		\gamma             &  if \quad n = 2,
		\\{}
		\lceil n/2 \rceil &  if \quad n \neq 2.
	\end{array}
	\right.
\end{equation}
	Refining work of H.~{\sc Davenport} and W.M.~{\sc Schmidt}
	(see Theorem 1 of \cite{DS}), M.~{\sc Laurent} showed in \cite{La.1}
	that for $n\geq2$ and any $\xi \in \mathbb{R}$
	which is not algebraic
	of degree $\leq \lceil (n-1)/2 \rceil$ there exist
	infinitely many algebraic integers $\alpha$ of degree $n$, such that
\[
	0<| \xi - \alpha |_{\infty} \ll H(\alpha)^{-\lceil (n+1)/2 \rceil}.
\]
	The following corollary contains this result
	for the choice $R(T) = T^{n+1}$.
\begin{corollary} \label{PC1}
	Let $n \geq 1$ be an integer and
	$\xi_{\infty}$ be a real number which is not algebraic
	of degree $\leq \lceil \mu_n \rceil$ over $\mathbb{Q}$.
	Let $R(T)$ be an arbitrary polynomial in $\mathbb{Z}[T]$.

	There exist infinitely many real algebraic
	numbers $\alpha_{\infty}$, which are roots of polynomials
	$F(T) \in \mathbb{Z}[T]$ with $\deg(R - F) \leq n$, satisfying
\begin{equation}\label{PC_2}
	| \xi_{\infty} - \alpha_{\infty} |_{\infty} \ll H(\alpha)^{-\mu_n-1},
\end{equation}
	with implied constants depending only on $\xi_{\infty}$, $R$ and $n$.
\end{corollary}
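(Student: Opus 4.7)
The plan is to apply Theorem \ref{PT2} with the specific choice $\lambda_{\infty} = 1/\mu_n$. Once the hypothesis of Theorem \ref{PT2} is verified, this immediately produces infinitely many polynomials $F \in \mathbb{Z}[T]$ with $\deg(R - F) \leq n$ and a real root $\alpha_{\infty}$ satisfying
\[
|\xi_{\infty} - \alpha_{\infty}|_{\infty} \ll H(F)^{-1 - 1/\lambda_{\infty}} = H(F)^{-\mu_n - 1}.
\]

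The crucial step is to verify the hypothesis of Theorem \ref{PT2}: for some fixed constant $c > 0$, the inequalities (\ref{PT_1}) have no non-zero integer solution for arbitrarily large values of $X$. For $n = 2$, this is exactly Corollary \ref{I:3:L:9} (Davenport-Schmidt's Theorem 1a), whose hypothesis $[\mathbb{Q}(\xi_{\infty}):\mathbb{Q}] > 2 = \lceil \mu_2 \rceil$ matches our assumption on $\xi_\infty$. For $n \geq 3$, the analog is established by Davenport and Schmidt in \cite{DS} via a similar argument involving minimal points, requiring only that $\xi_{\infty}$ not be algebraic of degree $\leq \lceil n/2 \rceil = \lceil \mu_n \rceil$. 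For $n = 1$ the statement reduces essentially to Dirichlet's theorem on rational approximation, combined with a perturbation argument to realize the approximants as roots of polynomials of the prescribed form.

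Finally, to convert the bound from $H(F)$ to $H(\alpha_{\infty})$, we use the fact that the minimal polynomial of $\alpha_{\infty}$ divides $F$ in $\mathbb{Z}[T]$. By Mahler's classical estimate for heights of polynomial factors, $H(\alpha_{\infty}) \leq c_0 H(F)$ with $c_0$ depending only on $\deg(F) \leq \max\{\deg(R), n\}$, which is bounded in terms of $R$ and $n$. This yields $H(F)^{-\mu_n - 1} \leq c_0^{\mu_n + 1} H(\alpha_{\infty})^{-\mu_n - 1}$ and completes the argument. The main obstacle is the verification of the Davenport-Schmidt-type obstruction for $n \geq 3$, which is not reproved in this paper but borrowed from the classical arguments of \cite{DS} involving sequences of minimal points and a bootstrapping analysis of consecutive triples.
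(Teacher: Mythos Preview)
Your approach is the same as the paper's: apply Theorem~\ref{PT2} with $\lambda_\infty = 1/\mu_n$ and verify its hypothesis by quoting the appropriate simultaneous-approximation obstruction. Two remarks are in order.

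First, and this is the only substantive point, for $n\neq 2$ the paper invokes the main result of Laurent~\cite{La.1} rather than \cite{DS}. This is not merely cosmetic: the original Davenport--Schmidt theorems (Theorems~2a--4a of \cite{DS}) assume that $1,\xi_\infty,\ldots,\xi_\infty^{\,n}$ are linearly independent over $\mathbb{Q}$, i.e.\ that $\xi_\infty$ is not algebraic of degree $\leq n$. The corollary only assumes $\xi_\infty$ is not algebraic of degree $\leq \lceil n/2\rceil$, which is strictly weaker for $n\geq 3$. It is precisely Laurent's refinement that establishes the obstruction (no nonzero solution of (\ref{PT_1}) with $\lambda_\infty=1/\lceil n/2\rceil$ for arbitrarily large $X$) under this weaker hypothesis. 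So your sentence ``established by Davenport and Schmidt in \cite{DS} \ldots\ requiring only that $\xi_\infty$ not be algebraic of degree $\leq \lceil n/2\rceil$'' misattributes the needed result.

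Second, the final paragraph converting $H(F)$ to $H(\alpha_\infty)$ via Mahler's factor bound is unnecessary here: Theorem~\ref{PT2} as stated already records the conclusion with $H(\alpha_\infty)$ on the right-hand side, so the corollary follows immediately once the hypothesis is checked.
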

\begin{proof}
	Since $\xi_{\infty}$ is not algebraic of degree $\leq \lceil \mu_n \rceil$,
	the hypotheses of Theorem \ref{PT2} are
	fulfilled with $\lambda_{\infty} = 1/\mu_n$.
	For $n = 2$, this follows from Theorem 1a
	of \cite{DS}.
	For $n \neq 2$, this follows  from the main
	result of \cite{La.1} (which refines Theorem 2a of \cite{DS}).
\end{proof}

\section{$P$-adic case.}

	Let $n \geq 1$ be an integer, $p$ be a prime number
	and let $\xi_p \in \mathbb{Q}_p$.
	Applying Theorem \ref{RP3:T:1} with $\mathcal{S} = \{ p \}$
	and $\lambda_{\infty} = -1$, we have the following statement.

\begin{theorem} \label{IV:T:1}
	Let $c$, $\lambda_p$ be positive real numbers and
	let $\eta_p \in \mathbb{Z}_p$. Let $\rho_p \in  \mathbb{Z}_{p}$ be with
	$0<|\rho_p|_p \leq \|{\bf t}_{p}\|_{p}^{-1}$.
	Suppose that the inequalities
\begin{equation}\label{IV:T:1:1}
	\max_{0\leq l \leq n} |x_l|_{\infty} \leq X
	\ \text{ and } \
	\max_{0\leq l \leq n} |x_l - x_0 \xi_{p}^l|_{p}
		\leq c X^{-\lambda_{p}},
\end{equation}
	have no non-zero solution ${\bf x} \in \mathbb{Z}^{n+1}$
	for arbitrarily large values of $X$.
	Then there exist infinitely many non-zero polynomials
	$P(T) \in \mathbb{Z}[T]_{\leq n}$ satisfying
\begin{equation}\label{IV:T:1:2}
	|P(\xi_p) + \eta_p|_p \ll H(P)^{-\lambda_p/(\lambda_p-1)}
	\quad \text{and} \quad |P'(\xi_p)|_p = |\rho_p|_p,
\end{equation}
	where the implied constants depend only on $\xi_p$, $\eta_p$, $c$, $n$ and $p$.
\end{theorem}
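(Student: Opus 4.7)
The plan is to deduce this statement directly from Theorem \ref{RP3:T:1} by specializing to $\mathcal{S} = \{p\}$ and $\lambda_\infty = -1$, with $\xi_\infty \in \mathbb{R}$ and $\eta_\infty \in \mathbb{R}$ chosen arbitrarily (for concreteness, $\xi_\infty = 0$ and $\eta_\infty = 0$). Under this specialization the quantity $\lambda = \lambda_\infty + \lambda_p$ of the chapter reduces to $\lambda_p - 1$, so the $p$-adic exponent $\lambda_p/\lambda$ appearing in (\ref{RP3:T:1:2}) becomes exactly $\lambda_p/(\lambda_p - 1)$, which matches the target bound (\ref{IV:T:1:2}). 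The theorem is substantive only in the regime $\lambda_p > 1$, which corresponds to the running assumption $\lambda > 0$ of the chapter.

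Transferring the hypothesis is straightforward: any non-zero ${\bf x} \in \mathbb{Z}^{n+1}$ satisfying the system (\ref{RP3:T:1:1}) at these specialized parameters (and with the same constant $c$) also satisfies (\ref{IV:T:1:1}), since the latter has strictly fewer constraints (the archimedean linear condition $L_\infty({\bf x}) \leq cX$ has been dropped). Consequently, the hypothesis that (\ref{IV:T:1:1}) admits no non-zero integer solution for arbitrarily large values of $X$ immediately forces the analogous conclusion for the system of Theorem \ref{RP3:T:1} at these parameters, with the same constant $c$.

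Applying Theorem \ref{RP3:T:1} then produces infinitely many non-zero polynomials $P(T) \in \mathbb{Z}[T]_{\leq n}$ for which the two $p$-adic estimates of (\ref{RP3:T:1:2}) read
\[
|P(\xi_p) + \eta_p|_p \sim H(P)^{-\lambda_p/(\lambda_p-1)} \quad \text{and} \quad |P'(\xi_p)|_p = |\rho_p|_p,
\]
which is exactly the sought conclusion (\ref{IV:T:1:2}). The archimedean estimates of (\ref{RP3:T:1:2}) specialize, with $\lambda_\infty = -1$, to $|P(\xi_\infty)|_\infty \sim H(P)$ and $|P'(\xi_\infty)|_\infty \sim H(P)$, since the corresponding exponent $(\lambda - \lambda_\infty - 1)/\lambda$ evaluates to $1$; these carry no information useful for the present statement and may be discarded. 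There is no serious obstacle in this deduction: all the work has already been packaged into Theorem \ref{RP3:T:1} and its proof via Mahler duality, and the present statement amounts to a routine specialization of that result.
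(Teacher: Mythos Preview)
Your proposal is correct and follows exactly the paper's approach: the paper simply states that Theorem~\ref{IV:T:1} follows by applying Theorem~\ref{RP3:T:1} with $\mathcal{S}=\{p\}$ and $\lambda_\infty=-1$, and you have supplied the routine verification that the hypotheses transfer and the exponents specialize as claimed.
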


	Applying Theorem \ref{RP3:C:1} with $\mathcal{S} = \{ p \}$
	and $\lambda_{\infty} = -1$, we obtain the following statement.

\begin{theorem} \label{IV:C:1}
	Let $c>0$  be a real number and let
	$R(T)$ be a polynomial in $\mathbb{Z}[T]$.
	Suppose $R(\xi_p) \in \mathbb{Z}_{p}$.
	Suppose also that the inequalities
\begin{equation}\label{IV:C:1:0}
	\max_{0\leq l \leq n} |x_l|_{\infty} \leq X
	\ \text{ and } \
	\max_{0\leq l \leq n} |x_l - x_0 \xi_{p}^l|_{p}
		\leq c X^{-\lambda_{p}},
\end{equation}
	have no non-zero solution ${\bf x} \in \mathbb{Z}^{n+1}$
	for arbitrarily large values of $X$.
	There exist infinitely many algebraic numbers $\alpha_p$
	in ${{\mathbb{Q}}_p}$, which are roots of polynomials
	$F(T) \in \mathbb{Z}[T]$ with $\deg(R - F) \leq n$, satisfying
\begin{equation}\label{IV:C:1:1}
	| \xi_p - \alpha_p |_p \ll H(\alpha_p)^{-\lambda_p/(\lambda_p-1)}.
\end{equation}
	Moreover, if $\xi_{p} \in \mathbb{Z}_{p}$, then
	these algebraic numbers $\alpha_p$ can be taken in ${\mathbb{Z}}_p$.
	All the implied constants depend only on $\xi_p$, $R$ and $n$.
\end{theorem}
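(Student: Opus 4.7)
The plan is to deduce Theorem \ref{IV:C:1} directly from Theorem \ref{RP3:C:1} by specializing to $\mathcal{S}=\{p\}$ and $\lambda_\infty=-1$. I would fix an arbitrary auxiliary real number $\xi_\infty$ (its value is immaterial since the real-side condition will be unconstrained) and set $\bar\xi=(\xi_\infty,\xi_p)$, $\bar\lambda=(-1,\lambda_p)$, so that $\lambda=\lambda_\infty+\lambda_p=\lambda_p-1$. Note that $\lambda>0$ is forced by the hypothesis: Proposition \ref{I:1:P:1} guarantees that if $\lambda_p\leq 1+1/n$, then $\bar\lambda$ is an exponent of approximation to $\bar\xi$ and (\ref{IV:C:1:0}) admits nonzero integer solutions for every $X\geq 1$ with a suitable $c$, contradicting our hypothesis. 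The system (\ref{RP3:C:1:0}) in this setting reads
\[
\|{\bf x}\|_\infty\leq X,\qquad L_\infty({\bf x})\leq cX,\qquad L_p({\bf x})\leq cX^{-\lambda_p},
\]
which is strictly more restrictive than (\ref{IV:C:1:0}); hence any nonzero integer solution of the former is also a solution of the latter, and the nonexistence of solutions to (\ref{IV:C:1:0}) for arbitrarily large $X$ transfers verbatim to (\ref{RP3:C:1:0}) with the same $c$. The side assumption $R(\xi_p)\in\mathbb{Z}_p$ is precisely the one required in \ref{RP3:C:1}.

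Applying Theorem \ref{RP3:C:1} then produces infinitely many polynomials $F\in\mathbb{Z}[T]$ with $\deg(R-F)\leq n$, for each of which Part (iii) supplies a root $\alpha_p\in\mathbb{Q}_p$ (chosen in $\mathbb{Z}_p$ when $\xi_p\in\mathbb{Z}_p$) satisfying
\[
|\xi_p-\alpha_p|_p\;\ll\;H(F)^{-\lambda_p/\lambda}\;=\;H(F)^{-\lambda_p/(\lambda_p-1)};
\]
Part (ii) of that theorem is vacuous under $\lambda_\infty=-1$. To replace $H(F)$ by $H(\alpha_p)$ in the bound, I would invoke Gelfand's inequality: since $\deg F\leq\max(n,\deg R)$ is uniformly bounded, the minimal polynomial of $\alpha_p$ over $\mathbb{Z}$ has height $\ll H(F)$, whence $H(\alpha_p)\ll H(F)$ and the displayed estimate becomes $|\xi_p-\alpha_p|_p\ll H(\alpha_p)^{-\lambda_p/(\lambda_p-1)}$.

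Finally, one must verify that the construction indeed produces infinitely many \emph{distinct} algebraic numbers $\alpha_p$ rather than a repeating finite list. From Theorem \ref{RP3:T:1} applied with $\eta_p=R(\xi_p)$, one has $|F(\xi_p)|_p=|P(\xi_p)+\eta_p|_p\sim H(P)^{-\lambda_p/\lambda}>0$, so $\xi_p$ is not a root of any $F$ in the family, and in particular $\alpha_p\neq\xi_p$ for every such $F$. Combined with $|\xi_p-\alpha_p|_p\to 0$ as $H(F)\to\infty$, this forces each fixed value of $\alpha_p$ to occur for only finitely many $F$, so infinitely many distinct $\alpha_p$ are produced. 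No step constitutes a genuine obstacle, as the substantive analytic content is encapsulated in Theorem \ref{RP3:C:1}; the present corollary is essentially a translation of notation together with the observation that $\lambda_\infty=-1$ eliminates the real-side constraint entirely.
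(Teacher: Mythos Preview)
Your approach is exactly the one the paper takes: the paper deduces this statement in one line by specializing Theorem~\ref{RP3:C:1} to $\mathcal{S}=\{p\}$ and $\lambda_\infty=-1$, and you have correctly fleshed out the details (the transfer of the nonexistence hypothesis, the passage from $H(F)$ to $H(\alpha_p)$ via Gelfond's inequality, and the distinctness of the $\alpha_p$). One small slip: your argument that $\lambda_p>1$ is \emph{forced} by the hypothesis does not work as written, since Proposition~\ref{I:1:P:1} only furnishes solutions for \emph{some} constant $c'$, not the given $c$; however this is harmless, because for $0<\lambda_p<1$ the exponent $-\lambda_p/(\lambda_p-1)$ is positive and the conclusion is vacuous, so one may simply assume $\lambda_p>1$ without loss.
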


	The following corollary follows from Proposition \ref{I:3:L:8} 
	and Theorem \ref{IV:C:1} with $n=2$ and $\lambda_{p} = \gamma$.
	In the case where $R(T) = T^{3}$, it contains the result 
	of approximation to $p$-adic numbers by cubic algebraic integers
	established by O~{\sc Teuli\'{e}} in Theorem 1 of \cite{Teu}. 
\begin{corollary} \label{IV:C:2}
	Let $\xi_p \in \mathbb{Z}_p$ and
	let $R(T)$ be a polynomial in $\mathbb{Z}[T]$.
	Suppose that $[\mathbb{Q}(\xi_p):\mathbb{Q}]> 2$.
	Then there exist infinitely many algebraic numbers $\alpha_p$
	in $\mathbb{Z}_p$,
	which are roots of polynomials $F(T) \in \mathbb{Z}[T]$
	with $\deg(R - F) \leq 2$, satisfying
\begin{equation}\label{IV:C:2:1}
	| \xi_p - \alpha_p |_p \ll H(\alpha_p)^{-\gamma^2},
\end{equation}
	with implied constants depending only on $\xi_p$ and $R$.
\end{corollary}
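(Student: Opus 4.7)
The plan is to combine the Teuli\'e-type constraint from Corollary~\ref{I:3:L:8} with the duality result of Theorem~\ref{IV:C:1}. First I would observe that, since $[\mathbb{Q}(\xi_p):\mathbb{Q}]>2$ and $\xi_p \in \mathbb{Z}_p \subset \mathbb{Q}_p \setminus \mathbb{Q}$, the hypothesis of Corollary~\ref{I:3:L:8} is satisfied. That corollary then provides a constant $c>0$ such that the system
\[
  \max_{0\leq l \leq 2}|x_l|_\infty \leq X,
  \qquad
  \max_{0\leq l \leq 2}|x_l - x_0\xi_p^l|_p \leq c X^{-\gamma}
\]
has no non-zero integer solution for arbitrarily large $X$. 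This is exactly the hypothesis (\ref{IV:C:1:0}) of Theorem~\ref{IV:C:1} in the case $n=2$, $\lambda_p = \gamma$.

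Next I would apply Theorem~\ref{IV:C:1} to the polynomial $R(T)$ given in the statement; the assumption $R(\xi_p) \in \mathbb{Z}_p$ needed by that theorem is automatic here because $\xi_p \in \mathbb{Z}_p$ and $R \in \mathbb{Z}[T]$. This produces infinitely many polynomials $F \in \mathbb{Z}[T]$ with $\deg(R - F)\leq 2$ and corresponding roots $\alpha_p$ in $\mathbb{Q}_p$ satisfying
\[
  |\xi_p - \alpha_p|_p \ll H(\alpha_p)^{-\gamma/(\gamma-1)}.
\]
Moreover, the final sentence of Theorem~\ref{IV:C:1} asserts that, since $\xi_p \in \mathbb{Z}_p$, these roots $\alpha_p$ can be chosen in $\mathbb{Z}_p$, which yields the integrality claim of the corollary.

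The only remaining step is the identification of the exponent. Using the defining identity $\gamma^2 = \gamma + 1$ for the golden ratio, we obtain $\gamma(\gamma-1) = \gamma^2 - \gamma = 1$, i.e.\ $\gamma - 1 = 1/\gamma$, and therefore
\[
  \frac{\gamma}{\gamma-1} \;=\; \gamma \cdot \gamma \;=\; \gamma^2.
\]
Substituting this into the bound above gives $|\xi_p - \alpha_p|_p \ll H(\alpha_p)^{-\gamma^2}$, which is the desired estimate (\ref{IV:C:2:1}). Since the construction of the polynomials $F$ in Theorem~\ref{IV:C:1} is controlled entirely by $\xi_p$, $R$, $n=2$ and the fixed constant $c$ produced from $\xi_p$ via Corollary~\ref{I:3:L:8}, the implied constant depends only on $\xi_p$ and $R$, as required. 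There is no real obstacle here beyond correctly matching the parameters of the two cited results; the main verification is the elementary algebraic identity for $\gamma$ used to rewrite the exponent.
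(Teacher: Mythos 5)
Your proposal is correct and follows exactly the route the paper takes: the paper derives this corollary by combining Corollary \ref{I:3:L:8} with Theorem \ref{IV:C:1} for $n=2$ and $\lambda_p=\gamma$, and your identification of the exponent via $\gamma-1=1/\gamma$ is the same elementary computation. Nothing is missing.
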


\cleardoublepage
\chapter{Extremal Real Numbers}

	Let $\xi$ be a non-quadratic real number, and let
	$\gamma = (1 + \sqrt{5})/{2}$ denote the golden ratio.
	Applying Corollary \ref{PC1} with $n=2$, shows
	that for any given polynomial $R \in \mathbb{Z}[T]$
	there are infinitely many algebraic numbers $\alpha$ which are roots
	of polynomials $F \in \mathbb{Z}[T]$ satisfying
\begin{equation}\label{intr1}
	\deg(R - F) \leq 2
	\ \text{ and } \
	|\xi - \alpha| \leq cH(\alpha)^{- \gamma^2},
\end{equation}
	for an appropriate constant $c > 0$ depending only on $\xi$ and $R$.
	Recall that $H(\alpha)$ stands for the \emph{height} of $\alpha$. It is
	defined as the height of its
	minimal polynomial over $\mathbb{Z}$, where the height $H(P)$
	of a polynomial $P \in \mathbb{R}[T]$
	is the maximum of the absolute values of its coefficients.
	The goal of this chapter is to provide a partial converse
	to this statement for a certain class of real numbers
	defined below.
	In particular, we will show that the exponent $\gamma^2$
	in (\ref{intr1}) cannot be improved when $R$ has degree $3$ or $4$.
\section{Preliminaries}\label{C3:S1}

	Recall from the introduction that a real number $\xi$
	is called \emph{extremal}, if $[\mathbb{Q}(\xi):\mathbb{Q}]>2$ and,
	for an appropriate constant $c = c(\xi) > 0$, the inequalities
\begin{equation}\label{int1}
	|x_0| \leq X, \quad |x_0 \xi - x_1| \leq c X^{ - 1/{\gamma}},
		\quad |x_0 \xi^2 - x_2| \leq c X^{ - 1/{\gamma}},
\end{equation}
	have a non-zero solution $(x_0,x_1,x_2) \in \mathbb{Z}^3$ for each real
	number $X \geq 1$.
	The existence of such numbers is proved in \cite{ARNCAI.1} and
	Theorem 5.1 of \cite{ARNCAI.1} provides the following criterion.
	A real number $\xi$ is extremal
	if and only if there exist an unbounded sequence of positive integers
	$(X_k)_{k \geq 1}$ and a sequence of
	points $( {\bf x}_{k})_{k \geq 1}$ in $\mathbb{Z}^3$ with
\begin{equation}\label{int2}
\begin{gathered}
	X_{k+1} \sim X_{k}^{\gamma}, \quad \parallel {{\bf  x}_k } \parallel \sim X_{k}, \quad
	\max\{|x_{k,0} \xi - x_{k,1}|,|x_{k,0} \xi^2 - x_{k,2}|\} \ll X_{k}^{-1},
	\\
	|\det({{\bf  x}_k })| \sim 1 \quad \text{and} \quad |\det({{\bf  x}_k },{{\bf  x}_{k+1} },{{\bf  x}_{k+2} })| \sim 1,
\end{gathered}
\end{equation}
	where for a point ${\bf  x} = (x_{0}, x_{1}, x_{2})$ we write $\det({\bf  x}) = x_0x_2 - x_1^2$, $\parallel {\bf  x } \parallel = \max\{|x_0|,|x_1|,|x_2|\}$
	and where $X \sim Y$ means $Y \ll X \ll Y$.
	Note that this also follows from Proposition \ref{RP5:P:1}
	by taking $\lambda_{\infty} = 1/{\gamma}$.
	
	Let
	$M =
	\left(
	\begin{matrix}
	a      &  b
	\\
	c      &  d
	\end{matrix}
	\right ) {\in \GL_2(\mathbb{Z})}$.
	As in \cite{ARNCAI.2}, we denote by $\mathcal{E}(M)$ the set
	of \emph{extremal} real numbers $\xi$ whose
	corresponding sequence of integer points $( {\bf x}_{k})_{k \geq 1}$,
	viewed as symmetric matrices
\[
	{ {\bf x}_k = }
	\left (
	\begin{matrix}
	x_{k,0}      &  x_{k,1}
	\\
	x_{k,1}      &  x_{k,2}
	\end{matrix}
	\right ),
\]
	belongs to ${\GL_2(\mathbb{Z})}$ and satisfies the recurrence formula
\begin{equation}\label{AQ1}
 	{\bf x}_{k+1} =  {\bf x}_{k} S_{k} {\bf x}_{k-1},
	\ \text{ where } \
	{S_k = }
	\left \{
	\begin{matrix}
	M      & \text{\small{if \quad k \quad is even}},
	\\
	{}^t M      & \text{\small{if \quad k \quad is odd}}.
	\end{matrix}
	\right.
\end{equation}
	Taking the transpose of this identity, using the fact that
	${\bf x}_{j}$ is symmetric matrix for each $j\geq1$, we get
\begin{equation}\label{AQ1:1}
 	{\bf x}_{k+1} =  {\bf x}_{k-1} S_{k-1} {\bf x}_{k}.
\end{equation}
	On the basis of Cayley-Hamilton's theorem and (\ref{AQ1}) the following identities are
	established in \cite{ARNCAI.2} (see Lemma 2.5, p.1084)
\begin{equation}\label{AQ2}
	 {\bf x}_{k+2} = \Tr( {\bf x}_{k} S_{k}) {\bf x}_{k+1} - \det( {\bf x}_{k} S_{k}) {\bf x}_{k-1},
\end{equation}
\begin{equation}\label{AQ3}
 	{\bf x}_{k}J {\bf x}_{k+1} = \det( {\bf x}_{k})JS_{k} {\bf x}_{k-1},
 	\ \text{ where } \ 
 	J =
	\left(
	\begin{matrix}
	0      &  1
	\\
	-1      &  0
	\end{matrix}
	\right ).
\end{equation}
	From now on, we fix an integer $a>0$
	and an element $\xi$ from the set $\mathcal{E}_a$ of extremal
	real numbers attached to the matrix
	$M=
	\left (
	\begin{matrix}
	a    & 1
	\\
	-1   & 0
	\end{matrix}
	\right )
	$.
	In this case we have
\[
	S_k =
	\left (
	\begin{matrix}
	a    & (-1)^k
	\\
	-(-1)^k   & 0
	\end{matrix}
	\right )
	=
	A + (-1)^k J , \ \ \text{where} \
	A = \left (
	\begin{matrix}
	a    & 0
	\\
	0   & 0
	\end{matrix}
	\right ).
\]
	Put
\[
\begin{tabular}{ | c | }
	\hline
	$\epsilon_{k} = \det( {\bf x}_{k} )$
	\\
	\hline
\end{tabular}
\]
	For each $k\geq1$, we have $\epsilon_k \in \{-1,1\}$,
	since ${\bf x}_{k} \in \GL_2(\mathbb{Z})$.
	Moreover, since $\det(S_{k}) = 1$, we have the following identities
\begin{equation}\label{AQ6:0:0}
 \begin{aligned}
	\epsilon_{k+2} &= \det({\bf x}_{k+1} S_{k+1} {\bf x}_{k})
		= \det({\bf x}_{k+1})\det( {\bf x}_{k})
		= \epsilon_{k+1}\epsilon_{k},
	\\
	\epsilon_{k+3} &= \epsilon_{k+2}\epsilon_{k+1}
		= \epsilon_{k+1}^2\epsilon_{k}
		= \epsilon_{k},
	\\
	\epsilon_{k+3}\epsilon_{k} &= \epsilon_{k}^2 = 1.
 \end{aligned}
\end{equation}
	Since $\xi \in \mathcal{E}_a$,
	the identities (\ref{AQ2}) and (\ref{AQ3}) can
	be rewritten in the form
\begin{equation}\label{AQ6:0}
	{\bf x}_{k+2} = a x_{k,0}  {\bf x}_{k+1} - \epsilon_{k}  {\bf x}_{k-1},
\end{equation}
	and
\begin{equation}\label{AQ6:1}
 \begin{aligned}
	x_{k,0}x_{k+1,1} &= x_{k,1}x_{k+1,0} - \epsilon_k(-1)^{k} x_{k-1,0},
	\\
	x_{k,1}x_{k+1,2} &= x_{k,2}x_{k+1,1} - \epsilon_k( a x_{k-1,1} + (-1)^{k} x_{k-1,2} ),
	\\
	x_{k,0}x_{k+1,2} &= x_{k,1}x_{k+1,1} - \epsilon_k(-1)^{k} x_{k-1,1},
	\\
	x_{k,1}x_{k+1,1} &= x_{k,2}x_{k+1,0} - \epsilon_k( a x_{k-1,0} + (-1)^{k} x_{k-1,1} ).
 \end{aligned}
\end{equation}
	The following identity is the sum of two last identities in (\ref{AQ6:1}).
\begin{equation}\label{AQ6:2}
	x_{k,0}x_{k+1,2} = x_{k,2}x_{k+1,0} - \epsilon_k( a x_{k-1,0} + 2(-1)^{k} x_{k-1,1}),
\end{equation}
	These identities precise the formulas of Lemma 2.5, p.1084 of \cite{ARNCAI.2}.
	Using the formula
\[
	{\bf w} J  {\bf w} J = J  {\bf w} J  {\bf w} = -\det( {\bf w})I,
\]
	valid for any symmetric $2\times2$ matrix ${\bf w}$
	( see p.46 of \cite{ARNCAI.1}),  we find that
\[
	 {\bf x}_{k}J  {\bf x}_{k}J = -\det(  {\bf x}_{k})I
	 = -\epsilon_k I
\]
	and so
\[
	J  {\bf x}_{k}J = -\epsilon_k  {\bf x}_{k}{}^{-1}.
\]
	Since $J^{-1} = - J$ it follows that
	$  {\bf x}_{k}J = \epsilon_k J   {\bf x}_{k}{}^{-1}$.
	Multiplying the last formula by
	$  {\bf x}_{k+2}$ on the right and applying (\ref{AQ1:1}),
	we deduce that
\[
	{\bf x}_{k}J  {\bf x}_{k+2}
	= \epsilon_k J  {\bf x}_{k}{}^{-1} {\bf x}_{k+2}
	= \epsilon_k J  {\bf x}_{k}^{-1} {\bf x}_{k+1} S_{k+1}  {\bf x}_{k}
	= \epsilon_k J  {\bf x}_{k}^{-1} {\bf x}_{k} S_k  {\bf x}_{k+1},
\]
	and so
\begin{align}\label{A:I:3}
  	{\bf x}_{k}J  {\bf x}_{k+2} = \epsilon_k JS_{k}  {\bf x}_{k+1}.
\end{align}
	This is the same identity as in (\ref{AQ3})
	with subscripts $k+1$ and $k-1$ replaced by
	$k+2$ and $k+1$. So, similarly as in (\ref{AQ6:1}),
	the identity (\ref{A:I:3}) can be rewritten in the form
\begin{equation}\label{A:I:5}
  \begin{aligned}
	x_{k,0}x_{k+2,1} &= x_{k,1}x_{k+2,0} - \epsilon_k(-1)^{k} x_{k+1,0},
	\\
	x_{k,0}x_{k+2,2} &= x_{k,1}x_{k+2,1} - \epsilon_k(-1)^{k} x_{k+1,1},
	\\
	x_{k,1}x_{k+2,1} &= x_{k,2}x_{k+2,0} - \epsilon_k( a x_{k+1,0} + (-1)^{k} x_{k+1,1} ),
	\\
	x_{k,1}x_{k+2,2} &= x_{k,2}x_{k+2,1} - \epsilon_k( a x_{k+1,1} + (-1)^{k} x_{k+1,2} ).
  \end{aligned}
\end{equation}
	The following identity is the sum of
	the second and the third identities in (\ref{A:I:5}).
\begin{equation}\label{A:I:5:1}
	x_{k,0}x_{k+2,2} = x_{k,2}x_{k+2,0} - \epsilon_k( a x_{k+1,0} + 2(-1)^{k} x_{k+1,1}).
\end{equation}

	We now derive an identity involving ${\bf x}_{k} J {\bf x}_{k+4}$.
	We have
\begin{align*}
  {\bf x}_{k}J  {\bf x}_{k+4}
	& = {\bf x}_{k} J {\bf x}_{k+2} S_{k+2} {\bf x}_{k+3} &
	\\
	& = \epsilon_k J S_{k}  {\bf x}_{k+1} S_{k+2} {\bf x}_{k+3} & _{\leftarrow \ \text{by} \ (\ref{A:I:3})}
	\\
	& = \epsilon_k J S_{k}  {\bf x}_{k+1} \big ( A +  (-1)^k J  \big ) {\bf x}_{k+3} &
	\\
	& = \epsilon_k J S_{k} \big ( {\bf x}_{k+1} A {\bf x}_{k+3} +  (-1)^k {\bf x}_{k+1} J  {\bf x}_{k+3} \big )
	\\
	& = \epsilon_k J S_{k} \big ( {\bf x}_{k+1} A {\bf x}_{k+3} +  (-1)^k \epsilon_{k+1} J S_{k+1} {\bf x}_{k+2} \big )  & _{\leftarrow \ \text{by} \ (\ref{A:I:3})}
	\\
	& = \epsilon_k \big ( J S_{k}  {\bf x}_{k+1} A {\bf x}_{k+3} +  (-1)^k \epsilon_{k+1} J S_{k} J S_{k+1} {\bf x}_{k+2} \big )
	\\
	& = \epsilon_k \big ( J S_{k}  {\bf x}_{k+1} A {\bf x}_{k+3} - (-1)^k \epsilon_{k+1}  {\bf x}_{k+2} \big )  & _{\leftarrow \ \text{since} \ J S_{k} J S_{k+1} = - I \ },
\end{align*}
	which gives a new identity
\[
  	{\bf x}_{k}J  {\bf x}_{k+4}
 	= \epsilon_k \big ( J S_{k}  {\bf x}_{k+1} A {\bf x}_{k+3} - (-1)^k \epsilon_{k+1}  {\bf x}_{k+2} \big ).
\]
	Let us write this identity in an explicit form
\begin{align*}
	&\left (
	\begin{matrix}
		-x_{k,1}x_{k+4,0} + x_{k,0}x_{k+4,1}      &  -x_{k,1}x_{k+4,1} + x_{k,0}x_{k+4,2}
		\\
		-x_{k,2}x_{k+4,0} + x_{k,1}x_{k+2,1}      &  -x_{k,2}x_{k+4,1} + x_{k,1}x_{k+4,2}
	\end{matrix}
	\right )
	\\
	&
	 \quad \quad
	 =- \epsilon_k a
	\left (
	\begin{matrix}
	(-1)^k x_{k+1,0} x_{k+3,0}       &  (-1)^k x_{k+1,0} x_{k+3,1}
	\vspace{1mm}\\
	( a x_{k+1,0} + (-1)^k x_{k+1,1} ) x_{k+3,0}       &  ( a x_{k+1,0} + (-1)^k x_{k+1,1} ) x_{k+3,1}
	\end{matrix}
	\right )
	\\
	& \quad \quad \quad
	- (-1)^k \epsilon_{k+2}  {\bf x}_{k+2} .
\end{align*}
	Taking the corresponding elements in the positions
	$(1,2)$ and $(2,2)$, and using (\ref{AQ6:0:0}),
	this gives the identities
\begin{equation}\label{A:I:8}
	x_{k,0}x_{k+4,2} =  x_{k,1}x_{k+4,1} - \epsilon_k (-1)^k ( a x_{k+1,0}x_{k+3,1} + \epsilon_{k+1} x_{k+2,1}),
\end{equation}
\begin{equation}\label{A:I:9}
	x_{k,1}x_{k+4,2} =  x_{k,2}x_{k+4,1} - \epsilon_k \big ( a ( a x_{k+1,0} + (-1)^k x_{k+1,1})x_{k+3,1} + (-1)^k \epsilon_{k+1} x_{k+2,2} \big ).
\end{equation}

	Now, we introduce some notation and prove an auxiliary lemma.
	For each $k \geq 1$ we put
\begin{equation}\nonumber
	a_{k} =
	\left |
	\begin{matrix}
	x_{k,0}    & x_{k,1}
	\\
	x_{k+1,0}   & x_{k+1,1}
	\end{matrix}
	\right |
	, \
	b_{k} = -
	\left |
	\begin{matrix}
	x_{k,0}    & x_{k,2}
	\\
	x_{k+1,0}   & x_{k+1,2}
	\end{matrix}
	\right |
	, \
	c_{k} =
	\left |
	\begin{matrix}
	x_{k,1}    & x_{k,2}
	\\
	x_{k+1,1}   & x_{k+1,2}
	\end{matrix}
	\right |.
\end{equation}
	Using the formulas (\ref{AQ6:0}) - (\ref{AQ6:2}), we find that
\begin{equation}\label{AQ6-000}
	\left \{
	\begin{aligned}
		a_k &=  \epsilon_k(-1)^{k+1} x_{k-1,0},
		\\
		b_k &=  \epsilon_k( a x_{k-1,0} + 2(-1)^{k} x_{k-1,1}),
		\\
		c_k &=  \epsilon_k( -a x_{k-1,1} - (-1)^{k} x_{k-1,2}).
	\end{aligned}
	\right.{}
\end{equation}
	Using this, we find that
\begin{align*}
	\det( {\bf x}_{k-1},  {\bf x}_{k}, {\bf x}_{k+1})
	&=
	a_k x_{k-1,2} + b_k x_{k-1,1} + c_k x_{k-1,0}
	\\
	&=\epsilon_k(-1)^{k+1}(
	x_{k-1,0}x_{k-1,2}-2x_{k-1,1}^2 + x_{k-1,2}x_{k-1,0}
	)
	\\
	& =
	2\epsilon_{k+1}(-1)^{k+1}.
\end{align*}
\vspace{3mm}
\begin{lemma} \label{ALT2}
Let $\xi \in \mathcal{E}_a$ with $a \in \mathbb{Z}_{>0}$. For any $k \geq 1$,
\\
$(i)$ the $\gcd$ of $a_{k}$ and $b_{k}x_{k+1,2} \ + \ c_{k}x_{k+1,1}$ divides $2$,
\\
$(ii)$ the $\gcd$ of $a_{k}$ and $b_{k}$ divides $2$.
\end{lemma}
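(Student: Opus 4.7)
The plan is to reduce both parts to divisibility by $2$ of certain combinations of the components of $\mathbf{x}_{k-1}$. The key preliminary is the observation that $\gcd(x_{k-1,0},x_{k-1,1})=1$: any common divisor would have to divide $\det({\bf x}_{k-1})=x_{k-1,0}x_{k-1,2}-x_{k-1,1}^{2}=\epsilon_{k-1}=\pm 1$. With this in hand I would invoke the formulas (\ref{AQ6-000}), which give $a_{k}=\epsilon_{k}(-1)^{k+1}x_{k-1,0}$ together with explicit expressions for $b_{k},c_{k}$ in $x_{k-1,0},x_{k-1,1},x_{k-1,2}$; note in particular that $\gcd(a_{k},\,\cdot\,)=\gcd(x_{k-1,0},\,\cdot\,)$ throughout.

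For part (ii) the computation is immediate:
\[
\gcd(a_{k},b_{k})=\gcd\bigl(x_{k-1,0},\,ax_{k-1,0}+2(-1)^{k}x_{k-1,1}\bigr)=\gcd(x_{k-1,0},\,2x_{k-1,1}).
\]
Any divisor $d$ of this last gcd satisfies $\gcd(d,x_{k-1,1})=1$ by the preliminary, hence $d\mid 2$.

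For part (i) the strategy is to prove the identity
\[
x_{k,0}\bigl(b_{k}x_{k+1,2}+c_{k}x_{k+1,1}\bigr)=-a_{k}\bigl(x_{k,1}x_{k+1,2}-\epsilon_{k}(-1)^{k}x_{k-1,2}\bigr)+2\epsilon_{k}\epsilon_{k+1},
\]
after which any common divisor of $a_{k}$ and $b_{k}x_{k+1,2}+c_{k}x_{k+1,1}$ must divide $2\epsilon_{k}\epsilon_{k+1}=\pm 2$. To establish the identity, I would use (\ref{AQ6:1}) to rewrite $x_{k,0}x_{k+1,2}=x_{k,1}x_{k+1,1}-\epsilon_{k}(-1)^{k}x_{k-1,1}$ and $x_{k,0}x_{k+1,1}=x_{k,1}x_{k+1,0}-\epsilon_{k}(-1)^{k}x_{k-1,0}$, which regroups the left hand side as
\[
x_{k,1}\bigl(b_{k}x_{k+1,1}+c_{k}x_{k+1,0}\bigr)-\epsilon_{k}(-1)^{k}\bigl(b_{k}x_{k-1,1}+c_{k}x_{k-1,0}\bigr).
\]
The two parenthesised sums are scalar products of ${\bf x}_{k}\wedge{\bf x}_{k+1}=(c_{k},b_{k},a_{k})$ with $\mathbf{x}_{k+1}$ and $\mathbf{x}_{k-1}$: the first equals $-a_{k}x_{k+1,2}$ (cross product paired with one of its defining rows), and the second equals $\det({\bf x}_{k-1},{\bf x}_{k},{\bf x}_{k+1})-a_{k}x_{k-1,2}=2\epsilon_{k+1}(-1)^{k+1}-a_{k}x_{k-1,2}$, using the determinant computation carried out immediately before the lemma. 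Substituting both evaluations and noting $(-1)^{2k+1}=-1$ collapses everything to the stated identity.

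The main obstacle is simply spotting the right multiplicative trick, namely multiplying the expression by $x_{k,0}$ first; this is what unlocks the substitutions from (\ref{AQ6:1}) and turns the problem into two cross-product pairings, isolating the constant $\pm 2$ on the right side. Once the identity is in hand, both parts of the lemma are immediate.
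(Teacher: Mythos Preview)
Your proof is correct. Part (ii) is handled exactly as in the paper. For part (i), however, you take a genuinely different route.

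The paper does not multiply by $x_{k,0}$ or invoke the recurrence (\ref{AQ6:1}). Instead it argues more structurally: from $\det(\mathbf{x}_{k+1})=\pm 1$ one gets $\gcd(b_k x_{k+1,1}+c_k x_{k+1,0},\,b_k x_{k+1,2}+c_k x_{k+1,1})=\gcd(b_k,c_k)$; the orthogonality relation $a_k x_{k+1,2}+b_k x_{k+1,1}+c_k x_{k+1,0}=0$ then shows $a_k\mid b_k x_{k+1,1}+c_k x_{k+1,0}$, whence any common divisor of $a_k$ and $b_k x_{k+1,2}+c_k x_{k+1,1}$ divides $\gcd(a_k,b_k,c_k)$; finally the determinant identity $a_k x_{k-1,2}+b_k x_{k-1,1}+c_k x_{k-1,0}=\pm 2$ forces $\gcd(a_k,b_k,c_k)\mid 2$. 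Your approach instead manufactures an explicit B\'ezout-type identity $x_{k,0}(b_k x_{k+1,2}+c_k x_{k+1,1})+a_k(\cdots)=\pm 2$, which is more computational but yields the conclusion in one stroke without passing through $\gcd(a_k,b_k,c_k)$. Both arguments ultimately rest on the same two cross-product pairings (with $\mathbf{x}_{k+1}$ and with $\mathbf{x}_{k-1}$); the paper's version is slightly slicker in that it avoids the extra recurrence identities, while yours has the minor advantage of producing a concrete relation that could be reused elsewhere.
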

\begin{proof}
	Since $\det( {\bf x}_{k+1}) =
	\left |
	\begin{matrix}
		x_{k+1,0}    & x_{k+1,1}
		\\
		x_{k+1,1}   & x_{k+1,2}
	\end{matrix}
	\right | = \pm 1$, we have
\begin{equation}\label{AL1}
	\gcd(b_{k}x_{k+1,1} \ + \ c_{k}x_{k+1,0}, \ b_{k}x_{k+1,2} \ + \ c_{k}x_{k+1,1}) = \gcd(b_k, c_k).
\end{equation}
	It follows from $\det( {\bf x}_{k+1},  {\bf x}_{k},  {\bf x}_{k+1}) = 0$ that
\[
	a_{k} x_{k+1,2} + b_{k} x_{k+1,1} + c_{k}x_{k+1,0} = 0.
\]
	We notice that for $k \gg 1$, we have $x_{k,0} \neq 0 $
	by (\ref{int2}) and the fact that the sequence  $(X_k)_{k \geq 1}$ is unbounded. By (\ref{AQ6-000}), this implies
	that $a_k \neq 0$. This gives
\[
	\ a_{k} \ | \ b_{k}x_{k+1,1} \ + \ c_{k}x_{k+1,0}.
\]
 	From this relation and (\ref{AL1}) we deduce that
\begin{equation}\label{AL2}
	\gcd(a_k,b_{k}x_{k+1,2} \ + \ c_{k}x_{k+1,1}) \ | \ \gcd(a_k, b_k, c_k). 
\end{equation}
	Furthermore, by definition of the determinant we have $a_k x_{k-1,2}+b_k x_{k-1,1}+c_k x_{k-1,0} = \det( {\bf x}_{k-1},  {\bf x}_{k},  {\bf x}_{k+1}) = 2\epsilon_{k+1}(-1)^{k+1}$ and therefore
\begin{equation}\label{AL3}
	\gcd(a_k, b_k, c_k) \ | \ 2.
\end{equation}
	The assertion (i) follows by combining  (\ref{AL2}) and (\ref{AL3}).
	The assertion (ii) follows from the formulas for $a_k$ and $b_k$ given by (\ref{AQ6-000})
	and the fact that
\[
	\gcd(x_{k-1,0}, x_{k-1,1}) = 1.
\]
\end{proof}

\section{Approximation to extremal real numbers by algebraic numbers
		of degree at most $4$}

	Here we show that $(3+\sqrt{5})/2$ is the optimal exponent of approximation
	to transcendental real numbers by algebraic numbers of degree at most
	$4$ with bounded denominator and trace.

	Let the notation be as in \S \ref{C3:S1}.
	This means that we fixed a choice of a positive integer $a$, an extremal real
	number $\xi \in \mathcal{E}_a$,
	and corresponding sequences $(X_k)_{k \geq 1}$ in $\mathbb{Z}_{>0}$
	and $({\bf x}_k)_{k \geq 1}$ in $\mathbb{Z}^3$
	as in (\ref{int2}). For any integer $n \geq 0$, we denote by
	$\mathbb{Z}[T]_{\leq n}$ the set of polynomials of $\mathbb{Z}[T]$
	of degree at most $n$, and for any real number $\beta$,
	we denote by $\{ \beta \}$ the distance from $\beta$ to its
	closest integer. In the computations below, we will often use
	the fact that for any $\beta, \beta' \in \mathbb{R}$, we have
\begin{equation}\label{beta-0}
	\big |
		\{\beta\} - \{\beta'\}
	\big |
		\leq
		\min
		\big(
			\{\beta + \beta'\},
			\{\beta - \beta'\}
		\big)
\end{equation}
	The main result of this section is the following statement which
	implies Theorem \ref{introduction:T:3} of the introduction.

\begin{theorem}\label{ALT0}
There exists a constant $c > 0$ such that
for any $R \in \mathbb{Z}[T]$ of degree $3$ or $4$ and any $P \in \mathbb{Z}[T]_{\leq 2}$ with $P \neq 0$,  we have
 \begin{equation}\label{AL0}
 	| R(\xi) + P(\xi) |  \geq  c H(R)^{- 2 \gamma^9} H(P) ^{- \gamma },
 \end{equation}
 and moreover
 \begin{equation}\label{ALT0_01}
 	| R(\xi) |  \geq  c  H(R)^{ - 1 - \gamma^5}.
 \end{equation}
\end{theorem}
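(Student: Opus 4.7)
The plan is to argue by contradiction, leveraging the sequence of minimal points $({\bf x}_k)_{k \geq 1}$ in $\mathbb{Z}^3$ attached to $\xi \in \mathcal{E}_a$, together with the algebraic identities of \S\ref{C3:S1}. To each ${\bf x}_k$ I attach the primitive quadratic polynomial $P_k(T) := x_{k,0}T^2 - x_{k,1}T + x_{k,2} \in \mathbb{Z}[T]$, which by (\ref{int2}) satisfies $H(P_k) \sim X_k$ and $|P_k(\xi)| \ll X_k^{-1}$, with $X_{k+1} \sim X_k^{\gamma}$. The three-term recurrence (\ref{AQ6:0}) together with the product identities (\ref{AQ6:1})--(\ref{AQ6:2}), (\ref{A:I:5})--(\ref{A:I:5:1}), and (\ref{A:I:8})--(\ref{A:I:9}) will connect consecutive $P_k$ and control their values at $\xi$.

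To prove (\ref{AL0}), set $F = R + P$, so $F \in \mathbb{Z}[T]$ has degree $3$ or $4$ with leading coefficients coming from $R$. The main input is the resultant $\mathrm{Res}(F, P_k)$, which is a non-zero integer whenever $F$ and $P_k$ are coprime (true for all $k$ sufficiently large once $F$ is fixed, since $P_k$ is primitive and its sequence is unbounded). Hence $|\mathrm{Res}(F, P_k)| \geq 1$, and a Sylvester-type inequality in the spirit of \cite{ARNCAI.3}, Lemma 2, gives
\[
1 \leq |\mathrm{Res}(F, P_k)| \ll H(F)^{2} H(P_k)^{\deg F}\left(\frac{|F(\xi)|}{H(F)} + \frac{|P_k(\xi)|}{H(P_k)}\right).
\]
Plugging in $|P_k(\xi)| \ll X_k^{-1}$ and $H(P_k) \sim X_k$ and choosing the index $k$ so that $X_k$ balances the two terms turns this into a lower bound of the correct shape $|F(\xi)| \gg H(F)^{-1} X_k^{-\deg F} \cdots$, which, once $X_k$ is expressed in terms of $H(R)$ and $H(P)$, is the desired estimate.

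The delicate point, and the main obstacle, is recovering the precise exponents $2\gamma^9$ in $H(R)$ and $\gamma$ in $H(P)$. A single application of the resultant is not enough, because $R$ pins down the top one or two coefficients of $F$, so that $F$ is not well-approximated by any single $P_k$ in leading degrees. My plan is therefore to iterate: use (\ref{AQ6:0}) to replace $P_{k+2}$ by a combination of $P_{k+1}$ and $P_{k-1}$, then use (\ref{A:I:3}) together with the explicit identities (\ref{A:I:8})--(\ref{A:I:9}) to construct auxiliary linear combinations of $P_{k-1},\ldots,P_{k+4}$ that encode how the degree-$3$ and degree-$4$ part of $F$ interacts with $\xi$. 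Each such iteration multiplies the height scale by a power of $\gamma$, and tracking the exponents through the chain gives precisely $\gamma^9$ in the height factor of $H(R)$; the leading factor $2$ comes from the fact that, for $\deg R = 4$, two independent coefficients must be eliminated. Coprimality at each stage of the iteration is controlled by Lemma \ref{ALT2}, which bounds $\gcd(a_k, b_k)$ and $\gcd(a_k, b_k x_{k+1,2} + c_k x_{k+1,1})$ by $2$.

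Inequality (\ref{ALT0_01}) is obtained as a limiting case: dropping the requirement $P \neq 0$ removes one layer of the iteration, so the same resultant template applied to $F = R$ (degree $3$ or $4$), combined with only the identities (\ref{AQ6:0}) and (\ref{A:I:5:1}), gives the shorter exponent $1 + \gamma^5$. Both inequalities will be proved in parallel, the degree $4$ case requiring the extra identity (\ref{A:I:8}) and one more step of iteration compared to the degree $3$ case.
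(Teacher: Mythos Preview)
Your proposal has a concrete error and a structural gap.

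First, the polynomial $P_k(T)=x_{k,0}T^2-x_{k,1}T+x_{k,2}$ is \emph{not} small at $\xi$: since $x_{k,1}=x_{k,0}\xi+O(X_k^{-1})$ and $x_{k,2}=x_{k,0}\xi^2+O(X_k^{-1})$, one gets $P_k(\xi)=x_{k,0}\xi^2+O(X_k^{-1})\sim X_k$, not $O(X_k^{-1})$. The polynomial with a small value at $\xi$ is either $x_{k,0}T^2-2x_{k,1}T+x_{k,2}$ or, more usefully, the wedge polynomial $Q_k(T)=a_kT^2+b_kT+c_k$ of \S\ref{C3:S1}, which satisfies $H(Q_k)\sim X_k^{1/\gamma}$ and $|Q_k(\xi)|\sim X_k^{-\gamma^2}$ (Proposition~8.1 of \cite{ARNCAI.1}). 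Your resultant template therefore collapses at the first step.

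More importantly, even with the correct $Q_k$, the resultant route does not separate $H(R)$ from $H(P)$: in $\mathrm{Res}(R+P,Q_k)$ the heights of $R$ and $P$ are entangled, and your ``iteration'' is not specified well enough to see how the exponents $2\gamma^9$ and $\gamma$ would emerge independently. The paper's proof achieves this separation by a different mechanism entirely. The key observation is that for $P\in\mathbb{Z}[T]_{\leq 2}$ the number $x_{l,0}P(\xi)$ is within $O(H(P)X_l^{-1})$ of an integer, since $x_{l,0}\xi^n$ is close to $x_{l,n}$ for $n=0,1,2$. Hence
\[
\{x_{l,0}R(\xi)\}\leq |x_{l,0}(R(\xi)+P(\xi))|+O(H(P)X_l^{-1})\leq X_l|R(\xi)+P(\xi)|+O(H(P)X_l^{-1}),
\]
and the whole problem reduces to bounding $\{x_{l,0}R(\xi)\}$ from below \emph{independently of $P$}. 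This is done in two stages: Proposition~\ref{AQT1} shows $\{x_{k,0}R(\xi)\}$ is nearly $6$-periodic in $k$ (using the identities (\ref{AQ6:0})--(\ref{A:I:9})), and Proposition~\ref{ALT1} gives a rough lower bound $\{x_{k,0}R(\xi)\}\gg X_k^{-2/\gamma^2}$ for a single $k$ by constructing an auxiliary polynomial $a_k^2R(T)+(\text{quadratic})$ divisible by $Q_k$ and invoking Lemma~\ref{ALT2} to force a certain integer $N_k\neq 0$. Combining periodicity with the single-index bound (Corollary~\ref{ALC2}) and then optimizing over $l$ and $k$ produces exactly the exponents $2\gamma^9$ and $\gamma$; the second inequality (\ref{ALT0_01}) falls out by taking $l=k$ directly in Proposition~\ref{ALT1}.

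So the algebraic identities you cite are indeed the engine, but they feed a fractional-part argument rather than a resultant argument, and Lemma~\ref{ALT2} is used to certify $N_k\neq 0$ in that construction, not to control coprimality in a resultant chain.
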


	Before going into the proof, we mention the following corollary
	which provides a measure of
	approximation to the elements of $\mathcal{E}_a$ by algebraic
	numbers of degree $\leq 4$.
\begin{corollary}\label{ALT0C}
	There exists a constant $c = c(\xi) > 0$ with the following properties.
	For any algebraic number $\alpha$
	of degree at most 4 we have
\begin{equation}\label{AL0C1_1}
 	| \xi - \alpha | \geq c H(\alpha)^{-5\gamma^2}.
\end{equation}
	Moreover, if $\deg(\alpha) \leq 3$ and the denominator of $\alpha$ is
	bounded above by some real
	number $B > 0$, then we have
\begin{equation}\label{AL0C1_2_1}
 	| \xi - \alpha | \geq c B^{- 6 \gamma^9} H(\alpha)^{-\gamma^2}.
\end{equation}
	If $\deg(\alpha) = 4$, the denominator of $\alpha$ and
	the absolute value of its trace
	are bounded above by $B$, then we have
\begin{equation}\label{AL0C1_2}
 	| \xi - \alpha | \geq c B^{- 10 \gamma^9} H(\alpha)^{-\gamma^2}.
\end{equation}
\end{corollary}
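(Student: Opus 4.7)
The plan is to convert the polynomial-value bounds in Theorem~\ref{ALT0} into approximation bounds by relating $|F(\xi)|$ to $|\xi-\alpha|$ via a Taylor estimate, where $F\in\mathbb{Z}[T]$ denotes the primitive minimal polynomial of $\alpha$. I would first dispose of the case $|\xi-\alpha|\geq 1$, where the claimed inequalities hold trivially after choosing $c$ small enough. In the complementary range $|\xi-\alpha|\leq 1$, the algebraic number $\alpha$ lies in the fixed interval $[\xi-1,\xi+1]$, so the mean value theorem yields
\[
|F(\xi)|=|F(\xi)-F(\alpha)|\leq |\xi-\alpha|\sup_{t\in[\xi-1,\xi+1]}|F'(t)|\ll H(\alpha)\,|\xi-\alpha|,
\]
with implied constant depending only on $\xi$ and the degree bound $4$, since $\deg F=\deg\alpha\leq 4$ and $H(F)=H(\alpha)$. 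It therefore suffices to produce a lower bound for $|F(\xi)|$.

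For (\ref{AL0C1_1}), when $\deg\alpha\in\{3,4\}$ I would apply the inequality (\ref{ALT0_01}) of Theorem~\ref{ALT0} directly with $R=F$, obtaining $|F(\xi)|\geq c\,H(\alpha)^{-1-\gamma^{5}}$. Combining with the Taylor bound and the identity $2+\gamma^{5}=5\gamma+5=5\gamma^{2}$ produces $|\xi-\alpha|\gg H(\alpha)^{-5\gamma^{2}}$. For $\deg\alpha\leq 2$ the corollary to Theorem~\ref{at1} provides the stronger estimate $|\xi-\alpha|\geq c\,H(\alpha)^{-2\gamma^{2}}$, which already implies the weaker claim since $2\gamma^{2}<5\gamma^{2}$.

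For (\ref{AL0C1_2_1}) and (\ref{AL0C1_2}) I would split the minimal polynomial $F=a_0T^{n}+a_1T^{n-1}+\cdots+a_n$ of degree $n\in\{3,4\}$ as $F=R+P$ with $P\in\mathbb{Z}[T]_{\leq 2}$ and $R$ of degree $n$ carrying only the leading coefficients controlled by $B$. When $n=3$ and the denominator satisfies $|a_0|\leq B$, take $R=a_0T^3$, so $H(R)\leq B$. When $n=4$ with $|a_0|\leq B$ and $|\Tr(\alpha)|=|a_1/a_0|\leq B$, one has $|a_1|\leq B^{2}$, so the choice $R=a_0T^4+a_1T^3$ gives $H(R)\leq B^{2}$. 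In both cases $P\neq 0$, since an irrational $\alpha$ cannot be a root of $R$ alone. Theorem~\ref{ALT0} applied through (\ref{AL0}), together with $H(P)\leq H(\alpha)$ and the Taylor estimate, yields lower bounds of the shape $B^{-c_1\gamma^{9}}H(\alpha)^{-\gamma^{2}}$ that are in fact stronger than the stated exponents $6\gamma^{9}$ and $10\gamma^{9}$, so the corollary follows. The exponents in the statement appear to be chosen loosely so as to absorb constants uniformly over the degree-$\leq 2$ subcases of (\ref{AL0C1_2_1}), which are again dispatched by Theorem~\ref{at1}.

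The main obstacle is essentially bookkeeping: one must carry out the decomposition $F=R+P$ in each degree, verify the non-vanishing $P\neq 0$, and combine the constants so that a single $c=c(\xi)>0$ works uniformly in $\alpha$ and $B$. The only genuine subtlety is the Taylor estimate, where one has to confirm that the implied constant in $|F(\xi)|\ll H(\alpha)\,|\xi-\alpha|$ is uniform over all $\alpha$ of degree at most $4$; this follows from the elementary estimate $|F'(t)|\leq nH(F)(1+|t|)^{n-1}$ on the fixed interval $[\xi-1,\xi+1]$.
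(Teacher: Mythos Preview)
Your overall strategy coincides with the paper's: pass from $|\xi-\alpha|$ to $|F(\xi)|$ via the mean-value estimate, then feed $F$ (or a splitting $F=R+P$) into Theorem~\ref{ALT0}. The paper handles the low-degree cases by multiplying the minimal polynomial by a power of $T$ to reach degree~$3$, rather than invoking the corollary to Theorem~\ref{at1} separately, but either route works.

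There is, however, one genuine slip in your bookkeeping. You write ``the denominator satisfies $|a_0|\leq B$'' and similarly assume $|a_0|\leq B$ in the degree-$4$ case, but the leading coefficient $a_0$ of the primitive minimal polynomial is \emph{not} the denominator $d=\den(\alpha)$: one only has $d\mid a_0$ and $a_0\mid d^{\,n}$. Hence from $d\leq B$ you obtain $|a_0|\leq B^{3}$ when $n=3$ and $|a_0|\leq B^{4}$ when $n=4$, whence $|a_1|\leq |a_0|\cdot|\Tr(\alpha)|\leq B^{5}$. Plugging these correct bounds into your decomposition gives $H(R)\leq B^{3}$ in the cubic case and $H(R)\leq B^{5}$ in the quartic case, and then (\ref{AL0}) produces exactly the exponents $6\gamma^{9}=2\gamma^{9}\cdot 3$ and $10\gamma^{9}=2\gamma^{9}\cdot 5$ appearing in the statement. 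So those exponents are not ``chosen loosely''; they are forced by the correct relation between $a_0$ and $\den(\alpha)$. With this fix your argument is complete and matches the paper's proof.
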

\begin{proof}
	Let $\alpha$ be an algebraic number of degree at most 4.
	As in \cite{ARNCAI.1}, Proposition $9.1$, define $Q(T) \in \mathbb{Z}[T]_{\leq 4}$ to
	be its minimal polynomial or the product of it by some appropriate power of $T$,
	making $Q(T)$ of degree $3$, if it is not of degree $4$ originally.
	Since $ H(Q) = H(\alpha)$, the second part of Theorem \ref{ALT0} leads to
\[
	|\xi - \alpha| \gg H(Q)^{-1} | Q(\xi) |
	\gg H(Q)^{- 2 - \gamma^5} = H(\alpha)^{- 5\gamma^2}.
\]
	Now, suppose that $\deg(\alpha) \leq 3$ and that the denominator
	$\den(\alpha)$ of $\alpha$ is bounded above
	by some real number $B > 0$. Write $Q(T) = a_0 T^3 + a_1 T^2 + a_2 T + a_3$.
	We have $| a_0 | \leq B^3$ since $a_0$ divides $\den(\alpha)^3$. So $Q(T)$ can be written
	as a sum $Q = R + P$, where $R(T) = a_0 T^3\in \mathbb{Z}[T]$ has degree $3$ and height $H(R) \leq B^3$,
	and where $P(T) = a_1 T^2 + a_2 T + a_3 \in \mathbb{Z}[T]_{\leq 2}$ satisfies $H(P) \leq H(Q) = H(\alpha)$.
	Since $P \neq 0$, then the inequality (\ref{AL0}) of Theorem \ref{ALT0} gives
\[
	|\xi - \alpha| \gg H(\alpha)^{-1} | R(\xi) + P(\xi) |
	\gg H(\alpha)^{-1}  H(R)^{- 2 \gamma^9} H(P)^{- \gamma}
	\gg B^{- 6 \gamma^9} H(\alpha)^{- \gamma^2}.
\]
	Finally, suppose that $\deg(\alpha) = 4$ and that the denominator $\den(\alpha)$ of $\alpha$ and the
	absolute value of its trace $| \Tr(\alpha) |$ are bounded above by some real number $B > 0$.
	Write $Q(T) = a_0 T^4 + a_1 T^3 + a_2 T^2 + a_3 T + a_4$.
	We have $| a_0 | \leq B^4$ since $a_0$ divides $\den(\alpha)^4$ and $| a_1 | \leq B^5$ since
	$| \Tr(\alpha) | = | a_1 / a_0 | \leq B$. So $Q(T)$ can be written as a sum $Q = R + P$, where
	$R(T) = a_0 T^4 + a_1 T^3 \in \mathbb{Z}[T]$ has degree $4$ and height $H(R) \leq B^5$, and where
	$P(T) = a_2 T^2 + a_3 T + a_4 \in \mathbb{Z}[T]_{\leq 2}$ satisfies $H(P) \leq H(Q) = H(\alpha)$.
	Since $P \neq 0$, then the inequality (\ref{AL0}) of Theorem \ref{ALT0} gives
\[
	|\xi - \alpha| \gg H(\alpha)^{-1} | R(\xi) + P(\xi) |
	\gg H(\alpha)^{-1} H(R)^{- 2 \gamma^9} H(P)^{- \gamma}
	\gg B^{- 10 \gamma^9} H(\alpha)^{- \gamma^2}.
\]
\end{proof}

	Recall that $\{\beta\}$ denotes the distance from
	a real number $\beta$ to its closest integer.
	To prove the main estimate of Theorem \ref{ALT0} we need
	a lower bound for $\{ x_{k,0} R(\xi) \}$
	where $k$ is an arbitrary large positive integer.
	The next proposition implies that the sequence
	$\{ x_{k,0} R(\xi) \}$ tends to a limit as $k$
	tends to infinity in a congruence class modulo $3$
	or modulo $6$ if the polynomial $R$ is of degree of at most $3$
	or at most $4$, respectively.
\begin{proposition} \label{AQT1}
	There exists a constant $c > 0$ such that for any polynomial
	$R \in \mathbb{Z}[T]_{\leq 4}$ and any integer $k \geq 1$, we have
\begin{equation}\label{AQ7}
	| \ \{ x_{k+6,0}R(\xi) \} - \{ x_{k,0} R(\xi) \} \ | \leq c H(R) X^{-1}_{k}.
\end{equation}
	Moreover, if $\deg(R)\leq3$, we have
\begin{equation}\label{AQ7:1}
	| \ \{ x_{k+3,0}R(\xi) \} - \{ x_{k,0} R(\xi) \} \ | \leq c H(R) X^{-1}_{k}.
\end{equation}
\end{proposition}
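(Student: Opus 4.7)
The plan is to reduce to the two monomials $R(T) = T^3$ and $R(T) = T^4$, and then treat each using the recurrence (\ref{AQ6:0}) together with the exact integer identities from Section~3.1. The reduction is immediate: writing $R(T) = r_0 + r_1 T + r_2 T^2 + r_3 T^3 + r_4 T^4$, the bound $|x_{k,0}\xi^l - x_{k,l}| \ll X_k^{-1}$ for $l = 0, 1, 2$ yields
\[
	x_{k,0}R(\xi) = (r_0 x_{k,0} + r_1 x_{k,1} + r_2 x_{k,2}) + r_3\, x_{k,0}\xi^3 + r_4\, x_{k,0}\xi^4 + O(H(R)\, X_k^{-1}),
\]
where the parenthesized term is an integer that drops out of the fractional part. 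Hence it suffices to prove $\{x_{k+3,0}\xi^3 + \epsilon_{k+1}\, x_{k,0}\xi^3\} \ll X_k^{-1}$ and an analogous period-$6$ estimate $\{x_{k+6,0}\xi^4 \pm x_{k,0}\xi^4\} \ll X_k^{-1}$ for a suitable sign. The proposition then follows by linearity together with (\ref{beta-0}) and the fact $\epsilon_{k+3} = \epsilon_k$ from (\ref{AQ6:0:0}).

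For the degree-$3$ estimate, the recurrence (\ref{AQ6:0}) with $k$ replaced by $k+1$ gives $x_{k+3,0} = a\, x_{k+1,0}x_{k+2,0} - \epsilon_{k+1}\, x_{k,0}$, whence
\[
	x_{k+3,0}\xi^3 + \epsilon_{k+1}\, x_{k,0}\xi^3 = a\, x_{k+1,0}x_{k+2,0}\xi^3.
\]
Writing $x_{k+2,0}\xi^3 = \xi\,(x_{k+2,2} + O(X_{k+2}^{-1}))$ and using $X_{k+1}/X_{k+2} \sim X_k^{-1}$, the right-hand side becomes $a\, x_{k+1,0}x_{k+2,2}\xi + O(X_k^{-1})$. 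The exact identity (\ref{AQ6:2}) with $k \mapsto k+1$ expresses the integer product $x_{k+1,0}x_{k+2,2}$ in terms of $x_{k+1,2}x_{k+2,0}$, $x_{k,0}$, and $x_{k,1}$. Multiplying by $\xi$ and substituting the first-order approximations $x_{m,0}\xi \approx x_{m,1}$, $x_{m,0}\xi^2 \approx x_{m,2}$, and $x_{m,1}\xi \approx x_{m,2}$ then collapses everything to a specific integer up to an error $O(X_k^{-1})$, giving the claim.

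For the degree-$4$ estimate (period $6$), a similar but more elaborate argument is needed. One iterates (\ref{AQ6:0}) to express $x_{k+6,0}$ as an explicit polynomial in $x_{k,0}, x_{k+1,0}, \ldots, x_{k+5,0}$, multiplies by $\xi^4$, and approximates each resulting monomial using the identities (\ref{A:I:5:1}), (\ref{A:I:8}), and (\ref{A:I:9}) relating ${\bf x}_k$, ${\bf x}_{k+2}$, and ${\bf x}_{k+4}$. The period $6$ (rather than $3$) arises because the sign $(-1)^k$ in these identities has period $2$ while $\epsilon_k$ has period $3$ by (\ref{AQ6:0:0}); both signs return simultaneously to their original values only after $6$ steps. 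The main obstacle throughout is keeping errors at size $O(X_k^{-1})$. A naive substitution $x_{m,0}\xi \approx x_{m,1}$ inside a product such as $a\, x_{k+1,0}x_{k+2,0}\xi^3$ creates remainders of size $X_{k+2}/X_{k+1} \sim X_k^{\gamma-1}$, and potentially as large as $X_k$ for $\xi^4$, both far above the required $X_k^{-1}$. Only by chaining two or three of the exact integer identities of Section~3.1 in the correct order does the error telescope down to $O(X_k^{-1})$; arranging this telescoping---particularly in the degree-$4$ case---is the technical heart of the proof.
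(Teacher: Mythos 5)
Your reduction to the monomials $T^3$ and $T^4$ and your treatment of the degree-$3$ case are correct and essentially identical to the paper's argument: from $x_{k+3,0}+\epsilon_{k+1}x_{k,0}=a\,x_{k+1,0}x_{k+2,0}$ one passes to $a\,x_{k+1,0}x_{k+2,2}\,\xi+O(X_k^{-1})$ and then collapses this to an integer via (\ref{AQ6:2}) with $k$ replaced by $k+1$; this is exactly what the paper does, and your remark about substituting the approximations in the right order (so that the error is $O(X_{k+1}X_{k+2}^{-1})\sim O(X_k^{-1})$ rather than $O(X_{k+2}X_{k+1}^{-1})$) is the correct bookkeeping.

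The degree-$4$ half, however, is left as a sketch, and the guiding principle you offer for it is not the right one. The period $6$ does not come from the signs $(-1)^k$ and $\epsilon_k$ returning simultaneously to their initial values after $\mathrm{lcm}(2,3)=6$ steps: the degree-$3$ estimate already works at period $3$ even though $(-1)^k$ has period $2$, because those signs only affect which integer appears, not the size of the fractional part. The real obstruction is quantitative. Pushing the degree-$3$ computation one more power of $\xi$ gives
\[
	(x_{k+3,0}+\epsilon_{k+1}x_{k,0})\,\xi^4 \;=\; m_k \;+\; 2a\,\epsilon_{k+1}(-1)^k x_{k,2}\,\xi \;+\; O(X_k^{-1}),
\]
with $m_k\in\mathbb{Z}$, and the residual term $2a\,\epsilon_{k+1}(-1)^k x_{k,2}\,\xi$ has size $\sim X_k$ with an uncontrolled fractional part, so the period-$3$ combination simply fails for $\xi^4$. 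The paper's key move is to write $(x_{k+6,0}-x_{k,0})\xi^4$ as the difference of the two period-$3$ combinations at $k$ and at $k+3$ (using $\epsilon_{k+4}=\epsilon_{k+1}$); the two residual terms then combine into $-2a\,\epsilon_{k+4}(-1)^k(x_{k+3,2}+\epsilon_{k+1}x_{k,2})\,\xi=-2a\,\epsilon_{k+4}(-1)^k(x_{k+3,0}+\epsilon_{k+1}x_{k,0})\,\xi^3+O(X_k^{-1})$, which is an integer plus $O(X_k^{-1})$ by the case already settled. Your alternative plan --- expand $x_{k+6,0}$ fully through iterated use of (\ref{AQ6:0}) and control each monomial with (\ref{A:I:5:1}), (\ref{A:I:8}), (\ref{A:I:9}) --- is not what the paper does (those identities are only needed later, for Propositions \ref{P:DEG3} and \ref{P:DEG4}), and you give no indication of how the resulting terms would actually telescope; you yourself flag this as "the technical heart of the proof" and then omit it. As written, the estimate (\ref{AQ7}) for $\deg R=4$ is therefore not established.
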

\begin{proof}
	Suppose that $R \in \mathbb{Z}[T]_{\leq 4}$.
	Using (\ref{beta-0}) and the fact that $\epsilon_{k+1} \in \{-1,1\}$
	(see sec.~3.1),
	for each $k \in \mathbb{Z}_{>0}$, we find that
\begin{equation}\label{AQT1P0}
	\begin{aligned}
		| \ \{ x_{k+6,0} R(\xi) \} - \{ x_{k,0} R(\xi) \} \ |
		& \leq \{ (x_{k+6,0} - x_{k,0}) R(\xi) \}
		\\
		& \leq H(R) \sum_{n = 0}^{4} \{ (x_{k+6,0} - x_{k,0}) \xi^n \},
	\end{aligned}
\end{equation}
	and if $\deg(R)\leq3$, we get
\begin{equation}\label{AQT1P0:1}
	\begin{aligned}
		| \ \{ x_{k+3,0} R(\xi) \} - \{ x_{k,0} R(\xi) \} \ |
		& \leq \{ (x_{k+3,0} + \epsilon_{k+1}x_{k,0}) R(\xi) \}
		\\
		& \leq H(R) \sum_{n = 0}^{3} \{ (x_{k+3,0} + \epsilon_{k+1}x_{k,0}) \xi^n \}.
	\end{aligned}
\end{equation}
	To prove (\ref{AQ7}), it suffices to show that
	$\{ (x_{k+6,0} - x_{k,0})\xi^n \} = O(X_k^{-1})$ for $n = 0,1,...,4$.
	To prove (\ref{AQ7:1}), we use the fact that $\epsilon_{k+1} = \pm1$
	and so, it suffices to show that
	$\{ (x_{k+3,0} + \epsilon_{k+1}x_{k,0})\xi^n \}
	= O(X_k^{-1})$ for $n = 0,1,...,3$.
	If $n = 0,1,2$, this is clear since by (\ref{int2}) we have
\begin{equation}\label{AQT1P1}
	x_{k,0} \xi^n  = x_{k,n} + O(X_k^{-1}).
\end{equation}
	For $n \geq 3$, by applying successively (\ref{AQT1P1})
	and the identity (\ref{AQ6:0}) we find
\begin{equation}\label{AQT1P2}
\begin{aligned}
 	(x_{k+3,0} + \epsilon_{k+1} x_{k,0})\xi^n
	&= (x_{k+3,2} + \epsilon_{k+1} x_{k,2})\xi^{n-2} + O(X_k^{-1})
 	\\
 	& = a x_{k+1,0}x_{k+2,2}\xi^{n-2} + O(X_k^{-1}).
\end{aligned}
\end{equation}
	On the other hand, the identity (\ref{AQ6:2}) gives
\[
 	x_{k+1,0}x_{k+2,2} = x_{k+1,2}x_{k+2,0}
	- \epsilon_{k+1}( a x_{k,0} + 2(-1)^{k+1} x_{k,1}).
\]
	It follows from this and (\ref{AQT1P1}) that
\begin{equation}\label{AQT1P2-1}
 	x_{k+1,0}x_{k+2,2}\xi^{n-2}
	=  ( x_{k+1,2}x_{k+2,1} - \epsilon_{k+1}( a x_{k,1} + 2(-1)^{k+1} x_{k,2}))\xi^{n-3} + O(X^{-1}_{k}).
\end{equation}
	Combining (\ref{AQT1P2}) and (\ref{AQT1P2-1}) we deduce that
\begin{equation}\label{AQT1P2-2}
	(x_{k+3,0} + \epsilon_{k+1} x_{k,0})\xi^n =  n_k\xi^{n-3} + O(X^{-1}_{k}),
\end{equation}
	where $n_k$ is the integer $a ( x_{k+1,2}x_{k+2,1} - \epsilon_{k+1} a x_{k,1} + \epsilon_{k+1} 2 a (-1)^{k} x_{k,2})$.
	So, using (\ref{AQT1P0:1}), (\ref{AQT1P1}) and (\ref{AQT1P2-2}),
	we complete the proof of (\ref{AQ7:1}).

	Furthermore, for $n \geq 4$, we have
\begin{align*}
	n_k\xi^{n-3} &= a ( x_{k+1,2}x_{k+2,1} - \epsilon_{k+1} a x_{k,1})\xi^{n-3} + 2 a \epsilon_{k+1} (-1)^{k} x_{k,2} \xi^{n-3}
	\\
	&= m_k \xi^{n-4} + 2 a \epsilon_{k+1} (-1)^{k} x_{k,2} \xi^{n-3} + O(X^{-1}_{k}),
\end{align*}
	where $m_k$ is an integer. We conclude that
\begin{align*}
	(x_{k+3,0} + \epsilon_{k+1} x_{k,0})\xi^3 & =  n_k + O(X^{-1}_{k}),
	\\
	(x_{k+3,0} + \epsilon_{k+1} x_{k,0})\xi^4 & =  m_k + 2 a \epsilon_{k+1} (-1)^{k} x_{k,2} \xi + O(X^{-1}_{k}).
\end{align*}
	Since $\epsilon_{k+4} = \epsilon_{k+1}$, it follows from these two formulas that
\begin{align*}
	(x_{k+6,0} - x_{k,0})\xi^3 & = (x_{k+6,0} + \epsilon_{k+4} x_{k+3,0})\xi^3 - \epsilon_{k+4}(x_{k+3,0} + \epsilon_{k+1} x_{k,0}) \xi^3
	\\
	& = n_{k+3}-\epsilon_{k+4}n_{k} + O(X^{-1}_{k}),
	\\
	(x_{k+6,0} - x_{k,0})\xi^4 & = (x_{k+6,0} + \epsilon_{k+4} x_{k+3,0})\xi^4 - \epsilon_{k+4}(x_{k+3,0} + \epsilon_{k+1} x_{k,0}) \xi^4
	\\
	& = m_{k+3}-\epsilon_{k+4}m_{k} - 2 a \epsilon_{k+4} (-1)^{k} ( x_{k+3,0} + \epsilon_{k+1} x_{k,0})\xi^3  + O(X^{-1}_{k})
	\\
	& = m_{k+3}-\epsilon_{k+4}m_{k} - 2 a \epsilon_{k+4} (-1)^{k} n_{k}  + O(X^{-1}_{k}),
\end{align*}
	and therefore, we have
	$\{ (x_{k+6,0} - x_{k,0})\xi^n \} = O(X_k^{-1})$ for $n = 3,4$,
	which together with (\ref{AQT1P0}) and (\ref{AQT1P1}),
	we completes the proof of (\ref{AQ7})
\end{proof}
\begin{corollary} \label{AQQT1}
	Suppose that $R \in \mathbb{Z}[T]_{\leq 4}$.
	Then the sequence $\Big (\{ x_{k,0}R(\xi) \} \Big)_{k\geq1}$
	has at most $6$ accumulation points. More precisely, for
	each $l=0,1,\ldots,5$, $\{ x_{l+6i,0}R(\xi) \}$ tends to a limit
	$\eta_{l}(R)$ as $i$ tends to infinity.

	Moreover, if $\deg(R)\leq3$, then
	$\Big (\{ x_{k,0}R(\xi) \} \Big)_{k\geq1}$
	has at most $3$ accumulation points. More precisely, for
	each $l=0,1,2$, $\{ x_{l+3i,0}R(\xi) \}$ tends to a limit
	$\delta_{l}(R)$ as $i$ tends to infinity.
\end{corollary}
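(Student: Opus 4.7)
The plan is to deduce the corollary directly from the two estimates established in Proposition \ref{AQT1}, together with the rapid growth of the sequence $(X_k)_{k\geq 1}$.

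First, I would fix $R\in\mathbb{Z}[T]_{\leq 4}$ and $l\in\{0,1,\ldots,5\}$ and consider the sequence $a_i:=\{x_{l+6i,0}R(\xi)\}$ for $i\geq 0$. By estimate (\ref{AQ7}) of Proposition \ref{AQT1}, applied with $k=l+6i$, there is a constant $c>0$ independent of $i$ and $R$ such that
\[
|a_{i+1}-a_i| \leq c\,H(R)\,X_{l+6i}^{-1}.
\]
Next I would invoke the growth condition $X_{k+1}\sim X_k^{\gamma}$ from (\ref{int2}), which implies that for $k\gg 1$ we have $X_k\geq 2^{k}$ (or any exponential bound one likes), so in particular $\sum_{i\geq 0} X_{l+6i}^{-1}<\infty$. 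Therefore the telescoping series $\sum_{i\geq 0}(a_{i+1}-a_i)$ is absolutely convergent, and consequently $(a_i)_{i\geq 0}$ is a Cauchy sequence in $\mathbb{R}$. Since $\mathbb{R}$ is complete, $a_i$ converges to a real number, which I call $\eta_l(R)$. Doing this for each $l\in\{0,1,\ldots,5\}$ gives the first assertion, and shows that the set of accumulation points of $(\{x_{k,0}R(\xi)\})_{k\geq 1}$ is contained in $\{\eta_0(R),\eta_1(R),\ldots,\eta_5(R)\}$, so it has at most six elements.

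The case $\deg(R)\leq 3$ is completely analogous: I fix $l\in\{0,1,2\}$, set $b_i:=\{x_{l+3i,0}R(\xi)\}$, and apply estimate (\ref{AQ7:1}) of Proposition \ref{AQT1} instead, which gives $|b_{i+1}-b_i|\leq c\,H(R)\,X_{l+3i}^{-1}$. The same summability argument shows $(b_i)$ is Cauchy, hence converges to some $\delta_l(R)$, and so $(\{x_{k,0}R(\xi)\})_{k\geq 1}$ has at most three accumulation points.

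There is no real obstacle here; the corollary is essentially a packaging of Proposition \ref{AQT1} together with the exponential growth of $(X_k)$. The only minor point to be careful about is that $\{\cdot\}$ denotes distance to the nearest integer, not the fractional part, but this plays no role in the Cauchy argument since the estimates in Proposition \ref{AQT1} are stated directly in terms of $|\{x_{k+6,0}R(\xi)\}-\{x_{k,0}R(\xi)\}|$ (respectively with period $3$), so no issue of branch choice arises.
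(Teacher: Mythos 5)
Your proof is correct and is essentially identical to the paper's own argument: the paper likewise observes that since $(X_k)$ grows faster than any geometric sequence, the estimates (\ref{AQ7}) and (\ref{AQ7:1}) of Proposition \ref{AQT1} make $\bigl(\{x_{l+6i,0}R(\xi)\}\bigr)_i$ (resp. $\bigl(\{x_{l+3i,0}R(\xi)\}\bigr)_i$) a Cauchy sequence. No gap; your write-up just spells out the telescoping/summability step that the paper leaves implicit.
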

\begin{proof}
	Since $X_k$ tends to infinity faster than any geometric sequence,
	the inequality (\ref{AQ7}) of Proposition \ref{AQT1} implies that
	$\Big (\{ x_{l+6i,0}R(\xi) \} \Big)_{i\geq1}$ is a Cauchy
	sequence for each $l=0,1,\ldots,5$. Similarly, if $\deg(R)\leq3$,
	the inequality (\ref{AQ7:1}) implies that
	$\Big (\{ x_{l+3i,0}R(\xi) \} \Big)_{i\geq1}$ is a Cauchy
	sequence for each $l=0,1,2$.
\end{proof}
	The next proposition provides a rough lower bound for the numbers
	$\{ x_{k,0}R(\xi) \}$.
\begin{proposition} \label{ALT1}
	There exists a constant $c = c(\xi) > 0$
	such that for any $k \geq 1$ and any non-zero
	polynomial $R \in \mathbb{Z}[T]$ of degree $3$ or $4$ with
	$H(R) \leq c X_k^{{1}/{\gamma^3}}$ we have
\begin{equation}\label{AL1-0}
	\{ x_{k,0}R(\xi) \} \geq c X_{k}^{-{2}/{\gamma^2}}.
\end{equation}
\end{proposition}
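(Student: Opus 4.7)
The strategy is to expand $x_{k,0}R(\xi)$ modulo $1$ by using the approximations $x_{k,0}\xi = x_{k,1}+\alpha_k$ and $x_{k,0}\xi^2 = x_{k,2}+\beta_k$, where $|\alpha_k|,|\beta_k|\ll X_k^{-1}$. Writing $R(T)=r_0+r_1T+r_2T^2+r_3T^3+r_4T^4$ and substituting the expressions $\xi^3 = (x_{k,1}+\alpha_k)(x_{k,2}+\beta_k)/x_{k,0}^2$ and $\xi^4 = (x_{k,2}+\beta_k)^2/x_{k,0}^2$ into $x_{k,0}R(\xi)$, I obtain the decomposition
\[
x_{k,0}R(\xi) = N_0 + \frac{M_k}{x_{k,0}} + E_k + O\!\Bigl(\frac{H(R)}{X_k^3}\Bigr),
\]
where $N_0 = r_0x_{k,0}+r_1x_{k,1}+r_2x_{k,2}\in\mathbb{Z}$, $M_k = x_{k,2}(r_3x_{k,1}+r_4x_{k,2})\in\mathbb{Z}$, and $E_k$ is a bilinear form in $(\alpha_k,\beta_k)$ of size $O(H(R)/X_k)$. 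The identity $\det({\bf x}_k)=\pm 1$ yields $\gcd(x_{k,0},x_{k,1})=\gcd(x_{k,0},x_{k,2})=1$, so $x_{k,0}\mid M_k$ is equivalent to $x_{k,0}\mid r_3x_{k,1}+r_4x_{k,2}$.

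I then split into two cases. \textbf{Case A}: $x_{k,0}\nmid r_3x_{k,1}+r_4x_{k,2}$. Then $\{M_k/x_{k,0}\}\geq 1/x_{k,0}$, but the target exponent $2/\gamma^2=1-1/\gamma^3$ requires the stronger bound that the residue $r_3x_{k,1}+r_4x_{k,2}\bmod x_{k,0}$ has absolute value at least $\gg X_k^{1/\gamma^3}$. I plan to obtain this by working in the sublattice $\Lambda_k=\{(s_1,s_2)\in\mathbb{Z}^2:s_1x_{k,1}+s_2x_{k,2}\equiv 0\pmod{x_{k,0}}\}$ of index $x_{k,0}$: any small residue $s\neq 0$ would, by Bezout, produce a short vector of $\Lambda_k$ incompatible with $(r_3,r_4)$ satisfying $\max(|r_3|,|r_4|)\leq H(R)\leq cX_k^{1/\gamma^3}$ when $c$ is small, using the divisibility relations of Lemma \ref{ALT2}.

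\textbf{Case B}: $x_{k,0}\mid r_3x_{k,1}+r_4x_{k,2}$. If $\deg R=3$ so $r_4=0$, coprimality of $x_{k,0}$ and $x_{k,1}$ forces $x_{k,0}\mid r_3$; together with $|r_3|\leq H(R)\leq cX_k^{1/\gamma^3}<x_{k,0}$ for small $c$ and large $X_k$, this forces $r_3=0$, contradicting $\deg R=3$. So only $\deg R=4$ remains: writing $r_3x_{k,1}+r_4x_{k,2}=mx_{k,0}$, the term $M_k/x_{k,0}=mx_{k,2}$ is an integer and $\{x_{k,0}R(\xi)\}$ is controlled by $E_k$ alone. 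A direct algebraic simplification of $E_k$ using the Case B relation yields
\[
E_k \equiv A\xi + B\xi^2 \pmod{1}, \qquad A = r_1x_{k,0}+r_3x_{k,2}, \qquad B = (r_2+2m)x_{k,0}-r_3x_{k,1},
\]
with $\max(|A|,|B|)\ll H(R)X_k\ll X_k^{2/\gamma}$.

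The main obstacle I foresee is this degree-$4$ subcase. Applying Theorem \ref{at1} directly to the quadratic $Q(T)=BT^2+AT-N^*$ gives only $\{x_{k,0}R(\xi)\}\gg H(Q)^{-\gamma^3}\gg X_k^{-2\gamma^2}$, which is far weaker than the target $X_k^{-2/\gamma^2}$. To close this gap I plan to exploit the rigid structure of $(A,B)$ coming from the Case B relation together with the refined coprimality statements of Lemma \ref{ALT2}: specifically, I aim to show that $(r_3,r_4)\in\Lambda_k\setminus\{0\}$ with $\max(|r_3|,|r_4|)\leq cX_k^{1/\gamma^3}$ forces $(A,B)$ to have a large common factor with $x_{k,0}$ (so that the reduced polynomial $Q^{\mathrm{red}}$ has height $\ll X_k^{1/\gamma^3}$), after which Theorem \ref{at1} applied to $Q^{\mathrm{red}}$ provides the required lower bound $X_k^{-\gamma^3/\gamma^3}=X_k^{-1}$, which combined with the error from the expansion yields $\{x_{k,0}R(\xi)\}\gg X_k^{-2/\gamma^2}$ for $c$ chosen small enough.
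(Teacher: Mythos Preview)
Your decomposition of $x_{k,0}R(\xi)$ is correct, but the proof breaks down in both cases, and the missing ingredient is the same in each.

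\textbf{Case A.} You need $\{M_k/x_{k,0}\}\gg X_k^{-2/\gamma^2}$, i.e.\ the least residue of $M_k=x_{k,2}(r_3x_{k,1}+r_4x_{k,2})$ modulo $x_{k,0}$ has absolute value $\gg X_k^{1/\gamma^3}$. But since $M_k/x_{k,0}=r_3\xi^3+r_4\xi^4+O(H(R)X_k^{-1})$, this is \emph{equivalent} (up to the error you already isolated) to $\{r_3\xi^3+r_4\xi^4\}\gg X_k^{-2/\gamma^2}$, which is the degree-$3$/$4$ statement you are trying to prove with $r_0=r_1=r_2=0$: the reduction is circular. Moreover, you control the wrong residue: knowing $r_3x_{k,1}+r_4x_{k,2}\bmod x_{k,0}$ is large says nothing about $M_k\bmod x_{k,0}$, because multiplication by the unit $x_{k,2}$ permutes residue classes without preserving size. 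Lemma~\ref{ALT2} concerns the integers $a_k,b_k,c_k$ of size $\sim X_{k-1}$; it gives no handle on residues modulo $x_{k,0}$.

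\textbf{Case B} (degree $4$). Your algebra for $E_k\equiv A\xi+B\xi^2\pmod 1$ is fine, but even granting your reduction $H(Q^{\mathrm{red}})\ll X_k^{1/\gamma^3}$, Theorem~\ref{at1} yields only $\{E_k\}\gg X_k^{-1}$. Since $2/\gamma^2<1$, the target $X_k^{-2/\gamma^2}$ is \emph{larger} than $X_k^{-1}$, so this is strictly too weak; no choice of $c$ rescues it.

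The idea you are missing is to bring in a \emph{second} approximation vector. The paper does not work modulo $x_{k,0}$ at all: it uses the auxiliary quadratic $Q_k(T)=a_kT^2+b_kT+c_k$ (with $H(Q_k)\sim X_k^{1/\gamma}$ and $|Q_k(\xi)|\sim X_k^{-\gamma^2}$) to write
\[
P_k(T)=\bigl(pa_kT^2+(qa_k-pb_k)T\bigr)Q_k(T)=a_k^2R(T)+B_kT^2+C_kT+D_k,
\]
and then evaluates at $\xi$ against ${\bf x}_{k+1}$ to produce the integer $N_k=a_k^2x_{k+1,R}+B_kx_{k+1,2}+C_kx_{k+1,1}+D_kx_{k+1,0}$. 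It is precisely Lemma~\ref{ALT2} (applied to $a_k,b_k$ and $b_kx_{k+1,2}+c_kx_{k+1,1}$) that forces $N_k\neq0$ when $H(R)\le cX_k^{1/\gamma^2}$, and then $|N_k|\ge1$ rearranges directly to $\{x_{k+1,0}R(\xi)\}\gg X_k^{-2/\gamma}$. The interplay between ${\bf x}_{k-1}$ (hidden in $a_k,b_k,c_k$), ${\bf x}_k$ (in $Q_k$), and ${\bf x}_{k+1}$ (in $N_k$) is what makes the argument work; an expansion based solely on ${\bf x}_k$ cannot see it.
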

\begin{proof}
	Let $R(T) = pT^4 + qT^3 + rT^2 + s T + t$ be a
	polynomial of $\mathbb{Z}[T]$ of degree $3$ or $4$.
	For our purposes, we construct a sequence of
	polynomials $(P_k)_{k \geq 1}$ in $\mathbb{Z}[T]$
	of the same degree by putting
\begin{equation} \label{AL9}
	P_k(T) = \big ( p a_k T^2 + ( q a_k - p b_k) T \big ) Q_k(T) =
		a_k^2R(T) + B_k T^2 + C_k T + D_k,
\end{equation}
	where $a_k$, $b_k$, $c_k$ are the integers defined in \S 2,
	$Q_k(T) = a_k T^2 + b_k T + c_k$ and
\[
    \begin{aligned}
		B_k & = p a_k c_k + (q a_k - p b_k)b_k - r a_k^2,
		\\
		C_k & = (q a_k - p b_k) c_k - s a_k^2,
		\\
		D_k & = - t a_k^2.
    \end{aligned}
\]
	By the virtue of the estimates $H(Q_k) \sim X_k^{1/{\gamma}}$
	and $|Q_k(\xi)| \sim X_k^{- \gamma^2}$
	(see Proposition 8.1 of \cite{ARNCAI.1}), we have
\begin{equation} \label{AL10}
\begin{gathered}
	| B_k |, | C_k |, | D_k |, H(P_k) \ll H(R) H(Q_k)^2 \ll H(R) X_k^{{2}/{\gamma}},
	\\
	| P_k(\xi) | \ll H(R) H(Q_k) |Q_k(\xi)| \ll H(R) X_k^{-2}.
\end{gathered}
\end{equation}
Consider the integer
\[
	N_k = a_k^2 x_{k+1,R} + B_k x_{k+1,2} + C_k x_{k+1,1} + D_k x_{k+1,0},
\]
where $x_{k+1,R}$ denotes the closest integer to $x_{k+1,0} R(\xi)$. From (\ref{AL9}), we get
\[
	N_k = a_k^2 (x_{k+1,R} - x_{k+1,0} R(\xi) ) + B_k (x_{k+1,2}
	    - x_{k+1,0}\xi^2) + C_k (x_{k+1,1} - x_{k+1,0}\xi) + x_{k+1,0}P_k(\xi).
\]
By (\ref{AL10}) it follows that there exists a constant $c_1 > 0$, such that for all $k\geq 1$
\begin{equation}\label{AL11}
	| N_k | \leq c_1 \Big ( X_k^{{2}/{\gamma}} \big ( \{x_{k+1,0}R(\xi) \} + H(R) X_{k+1}^{-1} \big ) + H(R) X_{k+1} X_k^{-2} \Big ).
\end{equation}
	We now provide a condition on $R$ that ensures $ N_k \neq 0$.
	If $\deg(R) = 4$ then $p \neq 0$ and we find
\[
	N_k \equiv - p b_k ( b_k x_{k+1,2} + c_k x_{k+1,1}) \mod a_k.
\]
	If $\deg(R) = 3$ we have $p = 0$ and $q \neq 0$, and then
\[
	N_k \equiv q a_k ( b_k x_{k+1,2} + c_k x_{k+1,1}) \mod a_k^2.
\]
	If $N_k = 0$ then it follows from Lemma \ref{ALT2}
	that $a_k$ divides $4p$ or $4q$. Hence we have $\big| a_k \big| \ll H(R)$
	which implies
\[
	X_{k}^{1/\gamma} \ll H(R).
\]
	Whence we deduce that if $H(R) \leq c_2 X_{k}^{1/\gamma}$ for an appropriate constant $c_2 > 0$ then $| N_k | \geq 1$.

	If we furthermore assume
\begin{equation}\label{AL13}
	H(R) (X_k^{{2}/{\gamma}} X_{k+1}^{-1} + X_{k+1} X_k^{-2}  ) \leq \cfrac{1}{2 c_1},
\end{equation}
	then the inequality (\ref{AL11}) together with $| N_k | \geq 1$ implies
\begin{equation}\label{AL13-1}
	\{ x_{k+1,0}R(\xi) \} \geq \cfrac{1}{2 c_1} X_k^{-{2}/{\gamma}}.
\end{equation}
	Since $X_k^{{2}/{\gamma}} X_{k+1}^{-1} \sim X_{k+1} X_k^{-2} \sim X_k^{- 1/{\gamma^2}}$ the condition (\ref{AL13})
	is satisfied if $H(R) \leq c_3 X_k^{{1}/{\gamma^2}}$ for an appropriate constant $c_3 > 0$. Assuming $c_3 \leq c_2$,
	we conclude that (\ref{AL13-1}) holds whenever $H(R) \leq c_3 X_k^{{1}/{\gamma^2}}$. The conclusion follows.
\end{proof}

	By combining the above proposition with the preceding one,
	we obtain a better bound for $\{ x_{l,0}R(\xi) \}$ when $l$
	is a large integer.
\begin{corollary} \label{ALC2}
	There exists a constant $c = c(\xi) > 0$
	such that for any
	$l, k \in \mathbb{Z}_{>0}$ with $l \equiv k \mod 6$ and $l \geq k \geq 1$,
	and for any non-zero polynomial $R \in \mathbb{Z}[T]$ of degree $3$ or $4$
	with $H(R) \leq c X_{k}^{{1}/{\gamma^3}}$ we have
\begin{equation}\label{ALC2_1}
 	\{ x_{l,0}R(\xi) \} \geq c X_{k}^{- {2}/{\gamma^2}}.
\end{equation}
\end{corollary}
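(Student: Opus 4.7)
The plan is to combine Proposition \ref{ALT1}, which gives a lower bound on $\{x_{k,0}R(\xi)\}$ at the base index $k$, with the $6$-step stability estimate of Proposition \ref{AQT1}, which lets us transport this lower bound to any index $l$ congruent to $k$ modulo $6$. The hypothesis $H(R) \leq c X_k^{1/\gamma^3}$ is designed precisely so that both the lower bound from Proposition \ref{ALT1} and the cumulative error from Proposition \ref{AQT1} fit together.

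First I would write $l = k + 6m$ for some integer $m \geq 0$ and telescope
\[
\{x_{l,0}R(\xi)\} - \{x_{k,0}R(\xi)\} = \sum_{j=0}^{m-1}\bigl(\{x_{k+6(j+1),0}R(\xi)\} - \{x_{k+6j,0}R(\xi)\}\bigr).
\]
Each summand is bounded by $c_1 H(R) X_{k+6j}^{-1}$ by Proposition \ref{AQT1}. Since $X_{i+1} \sim X_i^{\gamma}$, the sequence $(X_{k+6j}^{-1})_{j\geq 0}$ decreases faster than any geometric sequence with a fixed ratio, and the geometric-type sum is dominated by its first term, so
\[
\sum_{j=0}^{m-1} X_{k+6j}^{-1} \ll X_k^{-1},
\]
with an implied constant independent of $l$ and $k$. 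Hence $|\{x_{l,0}R(\xi)\} - \{x_{k,0}R(\xi)\}| \leq c_2 H(R) X_k^{-1}$.

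Next I would invoke Proposition \ref{ALT1} at the base index $k$, which under the hypothesis $H(R) \leq c X_k^{1/\gamma^3}$ (with $c$ small enough to satisfy its requirements, since $1/\gamma^3 < 1/\gamma^2$ so a fortiori its hypothesis holds) yields $\{x_{k,0}R(\xi)\} \geq c_3 X_k^{-2/\gamma^2}$. The triangle inequality then gives
\[
\{x_{l,0}R(\xi)\} \geq c_3 X_k^{-2/\gamma^2} - c_2 H(R) X_k^{-1}.
\]
Using the identity $1 - 2/\gamma^2 = (\gamma^2 - 2)/\gamma^2 = (\gamma-1)/\gamma^2 = 1/\gamma^3$, the hypothesis $H(R) \leq c X_k^{1/\gamma^3}$ forces the error term $c_2 H(R) X_k^{-1}$ to be bounded by a multiple of $X_k^{-2/\gamma^2}$, and choosing $c$ small enough makes it at most half of $c_3 X_k^{-2/\gamma^2}$. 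We conclude $\{x_{l,0}R(\xi)\} \geq (c_3/2) X_k^{-2/\gamma^2}$, which gives the claimed inequality upon renaming the constant.

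No step presents a real obstacle here: the mild technical point is verifying the exponent identity $1 - 2/\gamma^2 = 1/\gamma^3$ (which is the reason the hypothesis on $H(R)$ in the corollary matches exactly that of Proposition \ref{ALT1}), and making sure that the implied constant in the telescoping estimate depends neither on $m$ nor on $l$, which follows from the super-geometric growth of $(X_k)_{k\geq 1}$.
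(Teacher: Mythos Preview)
Your proposal is correct and follows essentially the same argument as the paper: telescope along the residue class modulo $6$, use Proposition~\ref{AQT1} together with the super-geometric growth of $(X_k)$ to bound the total drift by $\ll H(R)X_k^{-1}$, apply Proposition~\ref{ALT1} at the base index $k$, and use the identity $1 - 2/\gamma^2 = 1/\gamma^3$ to absorb the error. The only remark is that your parenthetical ``since $1/\gamma^3 < 1/\gamma^2$ so a fortiori its hypothesis holds'' is unnecessary and slightly off --- Proposition~\ref{ALT1} is stated with exactly the exponent $1/\gamma^3$, so the hypotheses match directly and only the constant needs adjusting.
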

\begin{proof}
	Let $k$ and $l$ be positive integers with $l \equiv k \mod 6$ and $l \geq k$.
	Since the sequence $( X_k )_{k \geq 1}$ grows at least geometrically, then by Proposition \ref{AQT1}
	there exists a constant $c_1 = c_1(\xi)  \geq 1$ such that
\[
	| \ \{ x_{l,0}R(\xi) \} - \{ x_{k,0} R(\xi) \} \ | \leq c_1 H(R) X^{-1}_{k},
\]
	for any polynomial $R \in \mathbb{Z}[T]$ of degree at most $4$. By Proposition \ref{ALT1} there
	exists a constant $c_2 = c_2(\xi) > 0$ such that $\{ x_{k,0}R(\xi) \} \geq c_2 X_{k}^{- 2/{\gamma^2}}$
	if $R \in \mathbb{Z}[T]$ has degree $3$ or $4$ and $H(R) \leq c_2 X_k^{1/{\gamma^3}}$.
	Suppose that $R \in \mathbb{Z}[T]$ has degree $3$ or $4$ and that $H(R) \leq \cfrac{c_2}{2 c_1} X_k^{{1}/{\gamma^3}}$.
	Then by combining these estimates we find (using $1/\gamma^3 - 1 = -2/\gamma^2$)
\[
	\{ x_{l,0}R(\xi) \}
		\geq \{ x_{k,0}R(\xi) \} - c_1 H(R) X_k^{-1}
		\geq \frac{c_2}{2} X_{k}^{- {2}/{\gamma^2}}
		\geq \frac{c_2}{2 c_1} X_{k}^{- {2}/{\gamma^2}}.
\]
\end{proof}

	In particular, the above corollary shows that the real numbers
	$\delta_l(R)$ and $\eta_l(R)$, defined in Corollary \ref{AQQT1},
	are all non-zero, for any
	polynomial $R \in \mathbb{Z}[T]$ of degree $3$ or $4$.
	Now we can proceed with the proof of the main Theorem \ref{ALT0}.
\\
\\
\begin{proof}[Proof of Theorem \ref{ALT0}]

	Let $R(T)$ and $P(T)$ be as in the statement of the theorem.
	Consider the following identity
\[
 	x_{l,0}R(\xi) =  x_{l,0} \big ( R(\xi) + P(\xi) \big )  - x_{l,0} P(\xi).
\]
	Since $\{x_{l,0}\xi^2\} \ll X_l^{-1}$, $\{x_{l,0}\xi\} \ll X_l^{-1}$,
	there exists a constant $c_1 > 0$ such that for all $l \geq 1$ we have
 \begin{equation}\label{ALT0_1}
 	c_1 \{ x_{l,0}R(\xi) \} \leq  X_l | R(\xi) + P(\xi) | + H(P) X_l^{-1}.
 \end{equation}
	In order to obtain a lower bound for $| R(\xi) + P(\xi) |$ we need
	a lower bound for $\{ x_{l,0}R(\xi) \}$ and an upper bound for $H(P)$.
	Denote by $c_2$ the constant $c$ of Corollary \ref{ALC2}, and let $k$ be the smallest integer such
	that $H(R) \leq c_2 X_k^{1/{\gamma^3}}$. It follows by Corollary \ref{ALC2} that
	$\{ x_{l,0}R(\xi) \} \geq c_2 X_{k}^{- {2}/{\gamma^2}}$ if $l \equiv k \mod 6$ and $l \geq k$.
	Since every integer $l \geq k$ is congruent modulo $6$ to some integer in $[k,k+5]$,
	we deduce that for all $l \geq k$, we have
\begin{equation}\label{ALT0_3}
  \{ x_{l,0}R(\xi) \} \geq c_2 X_{k + 5}^{- {2}/{\gamma^2}}.
\end{equation}
	Choose $l$ to be the smallest integer with $l \geq k$ such that
\begin{equation}\label{ALT0_4}
	H(P) \leq \frac{1}{2} c_1 c_2 X_l X_{k + 5}^{- 2/{\gamma^2}}.
\end{equation}
	It follows from (\ref{ALT0_1}) and (\ref{ALT0_4}) that
\[
 	X_l | R(\xi) + P(\xi) | \geq \frac{1}{2} c_1 c_2 X_{k + 5}^{- 2/{\gamma^2}}.
\]
	The choice of $k$ and $l$ implies $H(R) \gg X_k^{1/{\gamma^4}}$ and
	$H(P) \gg X_l^{1/\gamma} X_{k + 5}^{- 2/{\gamma^2}}$.
	So, we get
\[
	X_l  \ll H(P)^{\gamma} X_{k + 5}^{2/{\gamma}}
	\ \text{ and } \
	X_{k + 5} \ll X_k^{\gamma^5} \ll H(R)^{\gamma^9},
\]
	and these estimates lead to
\[
 	| R(\xi) + P(\xi) | 	\gg  X_l^{-1} X_{k + 5}^{- 2/{\gamma^2}}
				\gg  H(P)^{- \gamma} X_{k + 5}^{- 2}
				\gg  H(P)^{- \gamma } H(R)^{- 2 \gamma^9}.
\]

	In the case where $P(T) = 0$ the inequality (\ref{ALT0_1}) becomes
\begin{equation}\label{ALT0_1_1}
 	| R(\xi) | \geq c_1 X_l^{-1} \{ x_{l,0}R(\xi) \}, \quad \text{for any} \quad l \geq 1.
\end{equation}
	By Proposition \ref{ALT1} there exists a constant $c_3 = c_3(\xi) > 0$ such that
	$\{ x_{l,0}R(\xi) \} \geq c_3 X_l^{- {2}/{\gamma^2}}$ if $H(R) \leq c_3 X_l^{1/{\gamma^3}}$.
	In this case we define $l$ to be the smallest positive integer such that  $H(R) \leq c_3 X_l^{1/{\gamma^3}}$.
	By the choice of $l$ we have $H(R) \gg X_l^{1/{\gamma^4}}$ and so (\ref{ALT0_1_1}) implies
\[
 	| R(\xi) |
		\gg  X_l^{-1 - {2}/{\gamma^2}}
		\gg  H(R)^{ - \gamma^4 - 2 \gamma^2} =  H(R)^{ - 1 - \gamma^5}.
\]

\end{proof}

\section{Accumulation points}
\subsection{Proof of Theorem \ref{ALT0} revisited}
	Let $\xi$ be an extremal real number and
	let $({\bf x}_{k})_{\geq1}$ be the sequence of points in
	$\mathbb{Z}^3$ attached to $\xi$ as in \S \ref{C3:S1}.
	For any real number $\eta$, we define
\[
	\theta_{\xi}(\eta) = {\lim\inf}_{k\rightarrow\infty}\{x_{k,0}\eta\}.
\]
	With this notation it follows from Corollary \ref{ALC2}
	that for any fixed choice of a positive integer $a$,
	an extremal real number $\xi \in \mathcal{E}_a$
	and a non-zero polynomial
	$R \in \mathbb{Z}[T]$ of degree $3$ or $4$,
	we have $\theta_{\xi}(R(\xi))>0$.
	Then this and the inequality (\ref{ALT0_1}) in the
	proof of Theorem \ref{ALT0} imply that
\[
	| R(\xi) + P(\xi) | \gg  H(P)^{- \gamma },
\]
	for any non-zero $P \in \mathbb{Z}[T]$ of degree $\leq 2$,
	which in turn implies that
\[
	|\xi - \alpha| \gg H(\alpha)^{- \gamma^2}
\]
	for any root $\alpha$ of a polynomial of the form $R(T) + P(T)$
	with $P \in \mathbb{Z}[T]_{\leq 2}$, where
	the implied constants depend only on $R$ and $\xi$.
	The next theorem implements this argument in a
	more general context.
\begin{theorem} \label{accum:T1}
	Suppose that $\theta_{\xi}(\eta) \neq 0$.
	Then, for any non-zero polynomial
	$P(T) \in \mathbb{Z}[T]_{\leq 2}$, we have
\begin{equation}\label{accum:0}
 	| P(\xi) + \eta | \gg H(P)^{-\gamma},
\end{equation}
	where the implied constant depends only on $\xi$ and $\eta$.
\end{theorem}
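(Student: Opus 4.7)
The plan is to adapt the proof strategy of Theorem \ref{ALT0} (specifically the estimate (\ref{ALT0_1})), replacing $R(\xi)$ by the abstract number $\eta$ and using the positivity hypothesis $\theta_{\xi}(\eta) \neq 0$ as a black-box substitute for Corollary \ref{ALC2}.

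First I would establish the fundamental inequality: for every $l \geq 1$,
\begin{equation*}
\{x_{l,0}\eta\} \leq X_l |P(\xi) + \eta| + O(H(P) X_l^{-1}),
\end{equation*}
with implied constant depending only on $\xi$. This follows by writing $x_{l,0}\eta = x_{l,0}(P(\xi)+\eta) - x_{l,0}P(\xi)$, expanding $P(T) = pT^2 + qT + r$ so that $x_{l,0}P(\xi) = p\,x_{l,2} + q\,x_{l,1} + r\,x_{l,0} + O(H(P)X_l^{-1})$ thanks to the extremality estimates $\max\{|x_{l,0}\xi - x_{l,1}|, |x_{l,0}\xi^2 - x_{l,2}|\} \ll X_l^{-1}$ in (\ref{int2}). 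This shows that $\{x_{l,0}P(\xi)\} \ll H(P)X_l^{-1}$, and the claimed bound follows.

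Next, the hypothesis $\theta_{\xi}(\eta) = \liminf_{k\to\infty}\{x_{k,0}\eta\} > 0$ furnishes a constant $c_2 > 0$ and an index $k_0$ such that $\{x_{l,0}\eta\} \geq c_2$ for every $l \geq k_0$. Let $C$ denote the implied constant in the displayed inequality above. I would then choose $l$ to be the smallest integer with $l \geq k_0$ such that $H(P) \leq (c_2/(2C)) X_l$. Substituting into the fundamental inequality gives
\begin{equation*}
c_2 \leq X_l |P(\xi)+\eta| + \tfrac{c_2}{2},
\end{equation*}
hence $|P(\xi) + \eta| \geq c_2/(2X_l)$.

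It remains to bound $X_l$ from above in terms of $H(P)$. By minimality of $l$, either $l = k_0$ (which handles only finitely many small $P$ and is absorbed in the implied constant), or $H(P) > (c_2/(2C)) X_{l-1}$, so that $X_{l-1} \ll H(P)$. Using the growth $X_l \sim X_{l-1}^{\gamma}$ from (\ref{int2}), this yields $X_l \ll H(P)^{\gamma}$ and therefore $|P(\xi) + \eta| \gg H(P)^{-\gamma}$, as required. The only care needed is handling the finitely many polynomials $P$ with $H(P)$ too small to force $l > k_0$; these contribute at most a lower bound by a positive constant, since $P(\xi) + \eta$ is a continuous function of bounded coefficients and $\eta$ itself must be irrational (otherwise $\{x_{k,0}\eta\}$ would vanish along a subsequence and $\theta_\xi(\eta)$ could not be positive unless all $x_{k,0}\eta$ avoid $\mathbb{Z}$, which is automatic for $\eta$ irrational and handled by absorbing into the constant). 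There is no substantive obstacle beyond this bookkeeping; the real content is entirely captured by the hypothesis.
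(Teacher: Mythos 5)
Your proof is correct and follows essentially the same route as the paper's: the same fundamental inequality $c_1\{x_{k,0}\eta\} \leq X_k|P(\xi)+\eta| + H(P)X_k^{-1}$ derived from the extremality estimates, the same choice of the minimal index $k$ with $H(P) \ll X_k$, and the same use of $X_{k-1} \sim X_k^{1/\gamma}$ to convert $X_k^{-1}$ into $H(P)^{-\gamma}$. Your extra remarks on the finitely many small-height polynomials are harmless bookkeeping (the cleanest justification that each such $P(\xi)+\eta$ is non-zero is that $P(\xi)+\eta=0$ would force $\{x_{k,0}\eta\}\ll X_k^{-1}\to 0$, contradicting $\theta_\xi(\eta)\neq 0$), and the paper simply assumes $H(P)$ large at the corresponding step.
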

\begin{proof}
	Fix a polynomial $P\in \mathbb{Z}[T]_{\leq 2}$.
	For each $k\geq1$, we have (same as in the proof of Theorem \ref{ALT0})
\[
 	c_1 \{ x_{k,0}\eta \} \leq  X_k | P(\xi) + \eta | + H(P) X_k^{-1},
\]
	for some  $c_1 = c_1(\xi)>0$.
	Since $\theta_{\xi}(\eta) \neq 0$, there exists a constant
	$c_2 = c_2(\xi,\eta)>0$ and some $k_0\geq1$, such that
\begin{equation}\label{accum:1}
 	c_2 \leq  X_k | P(\xi) + \eta | + H(P) X_k^{-1},
\end{equation}
	for each $k\geq k_0$.
	Let $k$ be the smallest index such that
\[
	H(P) \leq \frac{c_2}{2}X_k.
\]
	Assuming that the height $H(P)$ is sufficiently large,
	we have $k\geq k_0 + 1$ and
\[
	H(P) > \frac{c_2}{2}X_{k-1}.
\]
	Using this and the fact that $X_{i-1} \sim X_{i}^{1/\gamma}$,
	it follows from (\ref{accum:1}) that
\[
 	| P(\xi) + \eta |
	\geq
	\frac{c_2}{2}X_k^{-1}
	\gg
	H(P)^{-\gamma},
\]
	where the implied constant depends only on $\xi$ and $\eta$.
\end{proof}
%
%
\subsection{Properties of the accumulation points}
	Fix any $R \in \mathbb{Z}[T]$ of degree $3$ or $4$.
	As in Section 3.2 we fix a choice of a positive integer $a$,
	an extremal real number $\xi \in \mathcal{E}_a$ and
	corresponding sequences $(X_k)_{k \geq 1}$
	and $({\bf x}_k)_{k \geq 1}$ satisfying (\ref{int2}).

	Recall that the proof of the fact $\theta_{\xi}(R(\xi))>0$
	uses two arguments.
	Firstly, Corollary \ref{AQQT1} show that the sequence
		$\Big(\{ x_{k,0} R(\xi)\}\Big)_{k\geq1}$ has at most
		$6$ accumulation points $(\eta_l(R))_{0\leq l\leq5}$,
		reducing to at most $3$ accumulation points
		$(\delta_l(R))_{0\leq l\leq2}$ if $\deg(R)=3$.
	Secondly, Corollary \ref{ALC2}
		implies that $\eta_l(R)>0$ for each $l=0,\ldots,5$.

	Here, we give a new proof of the fact $\theta_{\xi}(R(\xi))>0$
	by showing that
	$\delta_l(R) \notin \bar{\mathbb{Q}}$ for each $l=0,1,2$
	if $\deg(R)=3$ and
	$\eta_l(R) \notin \mathbb{Q}$ for each $l=0,\ldots,5$
	if $3\leq\deg(R)\leq4$.
\begin{proposition} \label{P:DEG3}
	Suppose that $R(T) \in \mathbb{Z}[T]$ with $\deg(R) = 3$
	and let $l \in \{0,1,2\}$.

	(i) For any index $k \geq 1$ with $ k \equiv l+1 \mod 3 $
	there exists an integer $y \neq 0$
	with $\gcd(y,x_{k,0})\sim1$, such that
\begin{equation}\label{P:DEG3:1}
	\Big | \delta_l(R) - \frac{y}{x_{k,0}} \Big | \ll  X_k^{- \gamma^2}.
\end{equation}
	In particular, $y/{x_{k,0}}$ is a convergent of $\delta_l(R)$
	with denominator $\sim x_{k,0}$,
	for all $k$ sufficiently large.

	(ii) For any index $k \geq 1$ with $ k \equiv l+2 \mod 3 $
	there exists an integer $z \neq 0$ with $\gcd(z,x_{k,0})\sim1$,
	such that
\begin{equation}\label{P:DEG3:2}
	\Big | \delta_l(R) - \frac{z}{x_{k,0}} \Big | \ll  X_k^{- \gamma^2 - 1}.
\end{equation}
	In particular, $z/{x_{k,0}}$ is a convergent of $\delta_l(R)$
	with denominator $\sim x_{k,0}$,
	for all $k$ sufficiently large.

	(iii) Conversely, there exists a constant $c = c(\xi,R,l)>0$
	such that, for each convergent of
	$\delta_l(R)$, with sufficiently large denominator $q$, there
	exists an integer $k\geq1$ with
	$k \nequiv l \mod 3$ and $ c x_{k,0} \leq q \leq x_{k,0}$.

	(iv) $\delta_l(R) \notin \bar{\mathbb{Q}}$.
\end{proposition}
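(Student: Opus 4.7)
The plan is to build the rational approximations in (i) and (ii) directly from the estimates in the proof of Proposition \ref{AQT1}, combined with the recurrence $x_{k+2,0}=ax_{k,0}x_{k+1,0}-\epsilon_k x_{k-1,0}$ of (\ref{AQ6:0}). First I would iterate the underlying identity
\[
(x_{k+3,0}+\epsilon_{k+1}x_{k,0})R(\xi) = N_k + O(H(R)X_k^{-1})
\]
(extracted from (\ref{AQT1P0:1})--(\ref{AQT1P2-2}) in the proof of Proposition \ref{AQT1}) along the arithmetic progression $\equiv l \pmod 3$; telescoping the geometrically decaying errors and tracking the signs of the signed fractional parts $\langle x_{l+3i,0}R(\xi)\rangle$ produces an integer $A_k$ with $x_{k,0}R(\xi) = A_k \pm \delta_l(R) + O(X_k^{-1})$ for $k \equiv l \pmod 3$, identifying $\delta_l(R)$ as that signed limit. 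For $k \equiv l+1 \pmod 3$, both $k-1$ and $k+2$ belong to the class $\equiv l \pmod 3$; substituting the resulting approximations into
\[
x_{k+2,0}R(\xi) + \epsilon_k x_{k-1,0}R(\xi) = a x_{k,0} x_{k+1,0} R(\xi)
\]
expresses the right-hand side modulo $\mathbb{Z}$ as $c\,\delta_l(R) + O(X_k^{-\gamma^2})$ with $c \in \{0,\pm 2\}$, and a sign analysis shows that generically $c=\pm 2$; invoking Lemma \ref{ALT2} to control the $\gcd$ between $a x_{k+1,0}$ and $x_{k,0}$ then yields the integer $y$ of (i). For (ii), I would run the analogous argument with the identity (\ref{A:I:5}) (or the recurrence applied one step later), exploiting that $k+1 \equiv l \pmod 3$ when $k \equiv l+2 \pmod 3$; the sharper exponent $\gamma^3 = \gamma^2 + 1$ arises because only the index $k+1$ contributes a correction of size $X_{k+1}^{-1} = X_k^{-\gamma}$, one order better than in (i).

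For (iii), I would use Legendre's theorem. Since $\gamma^2 > 2$, parts (i) and (ii) show that each $y_k/x_{k,0}$ (resp.\ $z_k/x_{k,0}$) with $k \nequiv l \pmod 3$ and $k$ large satisfies $|\delta_l(R) - y_k/x_{k,0}| < 1/(2x_{k,0}^2)$, and hence is a convergent in the continued fraction expansion of $\delta_l(R)$. The denominators $x_{k,0} \sim X_k$ grow geometrically at rate $\gamma$, so the convergents thus obtained are cofinal in the sequence of all convergents; if a convergent $p/q$ with $q$ large were not of this form, then comparing $q$ to the largest $x_{k,0} \leq q$ and using the uniqueness of continued fraction convergents together with the controlled growth ratios $x_{k+1,0}/x_{k,0}$ would force $c\,x_{k,0} \leq q \leq x_{k,0}$ for the corresponding $k \nequiv l \pmod 3$.

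For (iv), part (ii) furnishes infinitely many approximations $z_k/x_{k,0}$ with $|\delta_l(R) - z_k/x_{k,0}| \ll x_{k,0}^{-\gamma^3}$ and $\gamma^3 > 2$. Rationality is ruled out directly: if $\delta_l(R) = p/q$ in lowest terms, the coprimality $\gcd(z_k, x_{k,0}) = O(1)$ combined with $z_k/x_{k,0} = p/q$ forces $x_{k,0}$ to divide $O(q)$, contradicting $x_{k,0} \to \infty$; otherwise $|\delta_l(R) - z_k/x_{k,0}| \geq 1/(q x_{k,0})$, incompatible with the bound $x_{k,0}^{-\gamma^3}$. Roth's theorem then excludes algebraic irrationality, so $\delta_l(R) \notin \bar{\mathbb{Q}}$. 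The main obstacle will be the bookkeeping in the derivation of (i) and (ii) — in particular, verifying that the coefficient of $\delta_l(R)$ in the key identity is indeed $\pm 2$ rather than $0$ (which requires pinning down the limiting signs of $\langle x_{l+3i,0} R(\xi) \rangle$ in terms of the periodic sign pattern of $\epsilon_k$), and then carefully reducing by $a x_{k+1,0}$ or the analogous factor while maintaining the coprimality $\gcd(y, x_{k,0}) \sim 1$.
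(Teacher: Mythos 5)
Your plan for parts (i) and (ii), which are the heart of the proposition, does not work as stated. Feeding the approximations $x_{m,0}R(\xi)\approx(\text{integer})\pm\delta_l(R)$ (for $m\equiv l\pmod 3$) into the scalar recurrence $x_{k+2,0}+\epsilon_k x_{k-1,0}=a\,x_{k,0}x_{k+1,0}$ only tells you that $a\,x_{k,0}x_{k+1,0}R(\xi)$ is congruent modulo $\mathbb{Z}$ to $c\,\delta_l(R)+O(X_{k-1}^{-1})$ with $c\in\{0,\pm 2\}$. That is not a rational approximation to $\delta_l(R)$ with denominator $x_{k,0}$. To turn it into one you would need to pin down $a\,x_{k,0}x_{k+1,0}R(\xi)$ modulo $\mathbb{Z}$ by some independent means, and expanding
\[
a\,x_{k,0}x_{k+1,0}R(\xi)=a\,x_{k,0}\bigl(B_{k+1}\pm\delta_{l'}(R)+O(X_{k+1}^{-1})\bigr),\qquad k+1\equiv l'\pmod 3,
\]
shows that its fractional part hinges on $\{a\,x_{k,0}\,\delta_{l'}(R)\}$, i.e.\ on knowing $\delta_{l'}(R)$ modulo $1/(a x_{k,0})$ — which is precisely the kind of statement the proposition is meant to establish (for a different residue class). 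The argument is circular. The decisive input is not the scalar recurrence but the \emph{bilinear} identity coming from the matrix algebra: from the second line of~(\ref{A:I:5}) one gets
\[
x_{k,0}x_{k+2,2}\,\xi = A_k + O(X_{k+1}^{-1}),\qquad A_k=x_{k,1}x_{k+2,2}-\epsilon_k(-1)^k x_{k+1,2},
\]
i.e.\ the product $x_{k,0}x_{k+2,2}\xi$ is close to an \emph{explicit integer}. Writing $R(T)=gT^3+Q(T)$ with $\deg Q\le 2$, one has $x_{k+2,0}R(\xi)=g\,x_{k+2,2}\xi+(\text{integer})+O(X_{k+2}^{-1})$, and dividing the bilinear identity by $x_{k,0}$ gives $x_{k+2,2}\xi\approx A_k/x_{k,0}$; putting these together yields $\{x_{k+2,0}R(\xi)\}\approx\{gA_k/x_{k,0}\}$, which is the desired approximation with denominator $x_{k,0}$. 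Part (ii) runs parallel, using $x_{k,0}x_{k+4,2}\xi=E_k+O(X_{k+2}^{-1})$ from~(\ref{A:I:8}); the extra saving of $X_k^{-1}$ in the exponent comes from dividing the error $O(X_{k+2}^{-1})$ by $x_{k,0}$, i.e.\ $O(X_{k+2}^{-1}/x_{k,0})=O(X_k^{-\gamma^2-1})$, not from ``only $k+1$ contributing a correction.''

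Your appeal to Lemma~\ref{ALT2} is also premature: to get $\gcd(x_{k,0},A_k)\mid 2$ one first has to recognize $x_{k,0}=\pm a_{k+1}$ (via~(\ref{AQ6-000})) and prove the nontrivial algebraic identity $A_k=\epsilon_{k+1}(-1)^{k+1}\bigl(b_{k+1}x_{k+2,2}+c_{k+1}x_{k+2,1}\bigr)$ — only then does Lemma~\ref{ALT2}(i) apply. Your sketches of (iii) and (iv) via Legendre's theorem and a rationality/Roth dichotomy are reasonable in outline, but they stand or fall with (i) and (ii), which as currently framed have a genuine gap.
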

\begin{proof}
	Write $R(T)$ in the form
\[
	R(T) = g T^3 + Q(T),
\]
	where $\deg(Q) \leq 2$.
	For the proof of Part (i), we use the second
	identity in (\ref{A:I:5}) that gives
\begin{equation}\label{P:DEG3:3}
  \begin{aligned}
	x_{k,0}x_{k+2,2}\xi
	& = x_{k,1}x_{k+2,1}\xi	- \epsilon_k (-1)^{k} x_{k+1,1} \xi
	\\
	&= A_k + \bigo(X_{k+1}^{-1}),
  \end{aligned}
\end{equation}
	where $A_k = x_{k,1}x_{k+2,2} - \epsilon_k (-1)^{k} x_{k+1,2}$.
	Since $x_{k+2,0}Q(\xi) = (\text{integer}) + \bigo(X_{k+2}^{-1})$, we get
\begin{equation}\label{P:DEG3:3:0}
 \begin{aligned}
	x_{k+2,0}R(\xi)
		&=  g x_{k+2,0}\xi^3 + (\text{integer}) + \bigo(X_{k+2}^{-1})
		\\
		&=  g x_{k+2,2}\xi + (\text{integer}) + \bigo(X_{k+2}^{-1}).
 \end{aligned}
\end{equation}
	Also, by (\ref{P:DEG3:3}), we have
	$ x_{k+2,2}\xi
	= {A_k}/{x_{k,0}} + \bigo(X_{k+2}^{-1})$, which together with
	(\ref{P:DEG3:3:0}), implies that
\[
	x_{k+2,0}R(\xi)
	= \frac {g A_k}{x_{k,0}} + (\text{integer}) + \bigo(X_{k+2}^{-1}).
\]
	By this, we obtain
\begin{equation}\label{P:DEG3:4}
	\{ x_{k+2,0}R(\xi) \}
	= \Big\{ \frac {g A_k}{x_{k,0}}\Big\} + \bigo(X_{k+2}^{-1}).
\end{equation}
	Since $k+2 \equiv l \mod 3$, by the inequality (\ref{AQ7:1})
	and by Corollary \ref{AQQT1}, we have
\begin{equation}\label{P:DEG3:5}
	\delta_l(R) =
	 \{ x_{k+2,0}R(\xi) \} + \bigo (X_{k+2}^{-1}),
\end{equation}
	Let $B_k$ denotes the closest integer to ${g A_k}/{x_{k,0}}$.
	By (\ref{P:DEG3:5}) and (\ref{P:DEG3:4}), we deduce
\[
	\delta_l(R)
	= \left|\cfrac { g A_k - B_k x_{k,0}  }{x_{k,0}} \right|
	+  \bigo(X_{k}^{- \gamma^2}).
\]
	Let $y$ be the numerator of the fraction on the right.
	To complete the proof of Part (i), it remains only to
	show that  $\gcd (y,x_{k,0})$ is bounded above and non-zero.
	Since $\gcd (y,x_{k,0}) = \gcd (x_{k,0}, g A_k) $
	divides $ g \gcd (x_{k,0}, A_k)$ it suffices to show
	that $\gcd (x_{k,0}, A_k) \ | \ 2$.
	For this, recall from (\ref{AQ6-000}) that
\begin{equation}\label{P:DEG3:6}
	a_{k+1} = x_{k+1,0}x_{k+2,1} - x_{k+1,1}x_{k+2,0}  = \epsilon_{k+1}(-1)^{k} x_{k,0}.
\end{equation}
	Furthermore, we will show below that
\begin{equation}\label{P:DEG3:7}
	A_k = \epsilon_{k+1}(-1)^{k+1}( b_{k+1}x_{k+2,2} + c_{k+1}x_{k+2,1} ).
\end{equation}
	If we accept this result, then by
	Lemma \ref{ALT2} it follows from (\ref{P:DEG3:6})
	and (\ref{P:DEG3:7}) that
\[
	\gcd( x_{k,0}, A_k)
	=
	\gcd( a_{k+1}, b_{k+1}x_{k+2,2}+c_{k+1}x_{k+2,1})
	\ | \ 2.
\]
	Now, it remains to prove (\ref{P:DEG3:7}).
	First we consider $b_{k+1}x_{k+2,2} + c_{k+1}x_{k+2,1}$
	and replace $b_{k+1}$ and $c_{k+1}$ by
	their expressions given in (\ref{AQ6-000})
\begin{equation}\label{P:DEG3:8}
 \begin{aligned}
	& b_{k+1}x_{k+2,2} + c_{k+1}x_{k+2,1}
	\\
	& \qquad = x_{k+2,2} ( \epsilon_{k+1}(a x_{k,0} + 2(-1)^{k+1}x_{k,1}) ) + x_{k+2,1}(-\epsilon_{k+1}(a x_{k,1} + (-1)^{k+1} x_{k,2}))
	\\
	& \qquad = \epsilon_{k+1}  a x_{k,0} x_{k+2,2}  - \epsilon_{k+1} \big ( (-1)^{k}x_{k,1} x_{k+2,2}
	\\
	& \qquad \qquad \qquad \qquad \qquad \ + a x_{k,1} x_{k+2,1} + (-1)^{k} ( x_{k,1} x_{k+2,2} - x_{k,2} x_{k+2,1} ) \big )
 \end{aligned}
\end{equation}
	Now, let us compute
	$a x_{k,1} x_{k+2,1} + (-1)^{k} ( x_{k,1} x_{k+2,2} - x_{k,2} x_{k+2,1} )$
	separately
\begin{equation}\label{P:DEG3:9}
  \begin{aligned}
	a x_{k,1} x_{k+2,1} & + (-1)^{k} ( x_{k,1} x_{k+2,2} - x_{k,2} x_{k+2,1} ) &
	\\
	& = a x_{k,1} x_{k+2,1} + (-1)^{k} ( - \epsilon_k( a x_{k+1,1} + (-1)^{k} x_{k+1,2} ))  & _{\text{by} \ (\ref{A:I:5})_4}
	\\
	&= a (x_{k,1} x_{k+2,1} - (-1)^{k} \epsilon_k  x_{k+1,1}) - \epsilon_k x_{k+1,2} &
	\\
	&= a x_{k,0} x_{k+2,2} - \epsilon_k x_{k+1,2}  & _{\text{by} \ (\ref{A:I:5})_2}
  \end{aligned}
\end{equation}
	Finally, it follows from (\ref{P:DEG3:8}) and (\ref{P:DEG3:9}) that
\begin{align*}
	b_{k+1}x_{k+2,2} + c_{k+1}x_{k+2,1}
	& = \epsilon_{k+1}  a x_{k,0} x_{k+2,2}
	- \epsilon_{k+1} \big ( (-1)^{k}x_{k,1} x_{k+2,2} + a x_{k,0} x_{k+2,2} - \epsilon_k x_{k+1,2} \big )
	\\
	& = \epsilon_{k+1} (-1)^{k+1} \big ( x_{k,1} x_{k+2,2} - \epsilon_k (-1)^{k} x_{k+1,2}  \big )
	\\
	& = \epsilon_{k+1} (-1)^{k+1} A_k,
\end{align*}
	and this completes the proof of Part (i).

	For the proof of Part (ii) we multiply the identity
	(\ref{A:I:8}) by $\xi$ and obtain
\begin{equation}\label{P:DEG3:10}
	x_{k,0}x_{k+4,2}\xi = E_k + O(X_{k+2}^{-1}),
\end{equation}
	where $E_k = x_{k,1}x_{k+4,2} - \epsilon_k (-1)^k ( a x_{k+1,0}x_{k+3,2}
	+ \epsilon_{k+1} x_{k+2,2})$.
	Using the fact that $x_{k+4,0}R(\xi) = g x_{k+4,2}\xi
	+ (\text{integer}) + O(X_{k+4}^{-1})$,
	we deduce from (\ref{P:DEG3:10}) that
\begin{equation}\label{P:DEG3:11}
	\{ x_{k+4,0}R(\xi) \} =
	 \Big | \frac{g E_k - F_k x_{k,0}}{x_{k,0}} \Big |
	 + \bigo(  X_{k}^{- \gamma^2 - 1}),
\end{equation}
	where $F_k$ is the closest integer to $g E_k/x_{k,0}$.
	Since $k+4 \equiv l \mod 3$, by the inequality (\ref{AQ7})
	and by Corollary \ref{AQQT1}, we have
\[
	\delta_l(R)
	= \{ x_{k+4,0}R(\xi) \} + \bigo(X_{k+4}^{-1}).
\]
	From this and (\ref{P:DEG3:11}),
	for any $l = 0, 1 ,2$ and $k \geq 1$
	with $k+4 \equiv l \mod 3$, we get
\begin{equation}\label{P:DEG3:12}
	\delta_l(R)
	= \left | \frac{g E_k - F_k x_{k,0}}{x_{k,0}} \right |
	+  \bigo(X_{k}^{- \gamma^2 - 1}).
\end{equation}
	Let $z$ be the numerator of the fraction on the right.
	We claim that
\[
	\gcd(x_{k,0},z) = \gcd(x_{k,0},g E_k) \ | \ 2 g.
\]
	If we accept this claim, then (\ref{P:DEG3:12})
	shows that, for each $k$ sufficiently large,
	$z/x_{k,0}$ is a convergent of $\delta_l(R)$
	with denominator $\sim x_{k,0}$.
	To prove this claim, we first note that
\begin{align*}
	E_k
	& = x_{k,1}(a x_{k+2,0} x_{k+3,2}
		- \epsilon_{k+2}x_{k+1,2}) - \epsilon_k (-1)^k ( a x_{k+1,0}x_{k+3,2}
		+ \epsilon_{k+1} x_{k+2,2})
	& _{\text{by} \ (\ref{AQ6:0})}
	\\
	& = a x_{k+3,2} (x_{k,1}x_{k+2,0} - \epsilon_k (-1)^k x_{k+1,0})  -  \epsilon_{k+2}(x_{k,1}x_{k+1,2}  + (-1)^k x_{k+2,2})
	\\
	& = a x_{k+3,2} x_{k+2,1} x_{k,0}  -  \epsilon_{k+2}(x_{k,1}x_{k+1,2}  + (-1)^k x_{k+2,2}).
	& _{\text{by} \ (\ref{A:I:5})_1}
\end{align*}
	From this we deduce that
\begin{equation}\label{P:DEG3:13}
	E_k  \equiv  -  \epsilon_{k+2}T_k \mod x_{k,0},
\end{equation}
	where $T_k = x_{k,1}x_{k+1,2}  + (-1)^k x_{k+2,2}$. Now we consider
\begin{equation}\label{P:DEG3:14}
	T_k x_{k,1} = x_{k,1}^2 x_{k+1,2}  + (-1)^k x_{k+2,2}x_{k,1}.
\end{equation}
	Since $A_k = x_{k,1}x_{k+2,2} - \epsilon_k (-1)^{k} x_{k+1,2}$, then
	$x_{k,1}x_{k+2,2} = A_k + \epsilon_k (-1)^{k} x_{k+1,2}$.
	Also, from $\det({\bf x}_k) = \epsilon_k$ we have $x_{k,1}^2 = x_{k,0}x_{k,2} - \epsilon_k$.
	From these two equalities and (\ref{P:DEG3:14}) we obtain
\[
 \begin{aligned}
	T_k x_{k,1}
	& = (x_{k,0}x_{k,2} - \epsilon_k) x_{k+1,2}  + (-1)^k (A_k + \epsilon_k (-1)^{k} x_{k+1,2})
	\\
	& = x_{k,0}x_{k,2}x_{k+1,2} - \epsilon_k x_{k+1,2}  + (-1)^k A_k + \epsilon_k x_{k+1,2}
	\\
	& = x_{k,0}x_{k,2}x_{k+1,2}  + (-1)^k A_k,
 \end{aligned}
\]
	and therefore
\[
	T_k x_{k,1} \equiv (-1)^k A_k \mod x_{k,0}.
\]
It follows that
\[
	\gcd(x_{k,0},T_k x_{k,1}) =  \gcd(x_{k,0},A_k),
\]
	and since $\gcd(x_{k,0},x_{k,1}) = 1$, we deduce
\[
	\gcd(x_{k,0},T_k) =  \gcd(x_{k,0},A_k).
\]
	In Part (i) we have shown that $\gcd(x_{k,0},A_k)$ divides $2$,
	which implies
	that $\gcd(x_{k,0},T_k)$ also divides $2$, and from (\ref{P:DEG3:13})
	we deduce that
	$\gcd(x_{k,0},E_k) \ | \ 2$. Since $\gcd(x_{k,0},g E_k)$ divides
	$g \gcd(x_{k,0},E_k)$, we finally obtain that
	$\gcd(x_{k,0},g E_k) \ | \ 2g$, which proves the claim.

	For the proof of Part (iii) we use properties of continued fractions.
	We know that for any $\alpha \in \mathbb{R} \backslash \mathbb{Q}$
	with convergents $(p_m/q_m)_{m \geq 1}$, we have
\begin{equation}\label{P:DEG3:15}
	\big | \alpha - p_m/q_m \big | \sim (q_m q_{m+1})^{-1}.
\end{equation}
	Let $p/q$ be a convergent of $\delta_l(R)$ with a sufficiently 
	large denominator $q$.
	Then there exists an integer $k$ such that  $x_{k,0} < q \leq x_{k+1,0} $,
	and we have three cases.

	If $k \equiv l \mod 3$, then $k-1 \equiv l+2 \mod 3$ and there exist consecutive
	convergents $p_s/q_s$ and $p_{s+1}/q_{s+1}$ of $\delta_l(R)$ with $p_s/q_s = z/x_{k-1,0}$
	for some integer $z$, which by (\ref{P:DEG3:2}) satisfies
\[
	\big | \delta_l(R) - z/x_{k-1,0} \big | \ll  X_{k-1}^{- \gamma^2 - 1}.
\]
	By this and (\ref{P:DEG3:15}), we have
\[
	(q_s q_{s+1})^{-1} \ll \big | \delta_l(R) - z/x_{k-1,0} \big | \ll X_{k-1}^{- \gamma^2 - 1}.
\]
	This inequality and the fact that $q_s \leq x_{k-1,0}$
	imply $ x_{k+1,0} \ll q_{s+1} $.
	Since $q_s$ and $q_{s+1}$ are denominators of consecutive convergents and
	$q_s < q$, we also have $ q_{s+1} \leq q $. Whence the inequalities $ x_{k+1,0} \ll q_{s+1} $
	and $q \leq x_{k+1,0}$ imply $ x_{k+1,0} \ll q \leq x_{k+1,0} $.

	If $k \equiv l+1 \mod 3$, there exist consecutive convergents
	$p_t/q_t$ and $p_{t+1}/q_{t+1}$ of $\delta_l(R)$, with $p_t/q_t = y/x_{k,0}$
	for some integer $y$ satisfying (\ref{P:DEG3:1}). It follows from (\ref{P:DEG3:1})
	and (\ref{P:DEG3:15}) that
\[
	(q_t q_{t+1})^{-1} \ll \big | \delta_l(R) - y/x_{k,0} \big | \ll X_{k}^{-\gamma^2}.
\]
	This inequality and the fact that $q_t \leq x_{k,0}$
	imply $ x_{k+1,0} \ll q_{t+1} $.
	Since $q_t$ and $q_{t+1}$ are denominators of consecutive convergents and
	$q_t < q$, we also have $ q_{t+1} \leq q $. Whence the inequalities $ x_{k+1,0} \ll q_{t+1} $
	and $q \leq x_{k+1,0}$ imply $ x_{k+1,0} \ll q \leq x_{k+1,0} $.

	If $k \equiv l+2 \mod 3$, there exist consecutive convergents
	$p_u/q_u$ and $p_{u+1}/q_{u+1}$ of $\delta_l(R)$,
	with $p_u/q_u = z/x_{k,0}$ for some integer $z$ satisfying (\ref{P:DEG3:2}).
	It follows from (\ref{P:DEG3:2}) and (\ref{P:DEG3:15}) that
\[
	(q_u q_{u+1} )^{-1}
	\ll \big | \delta_l(R) - z/x_{k,0} \big | \ll X_{k}^{-\gamma^2-1}.
\]
	This inequality and the fact that $q_u \leq x_{k,0}$ imply $ x_{k+2,0} \ll q_{u+1} $.
	Since $q_u$ and $q_{u+1}$ are denominators of consecutive convergents and $q_u < q$,
	we deduce that $ x_{k+2,0} \ll q_{u+1} \leq q \leq x_{k+1,0} $, which is impossible
	for $k$ and $q$ sufficiently large.
	From this we conclude that there is no convergent with a denominator $q$
	satisfying the inequality $x_{k,0} < q \leq x_{k+1,0} $ for such $k$.

	Part (iv) is a consequence of Roth's Theorem together
	with Parts (i) and  (ii).
\end{proof}
\begin{remark}\label{accum:rem:1}
	Part (ii) of Proposition \ref{P:DEG3} implies that
	$w_1^*\big(\delta_l(R)\big) \geq \gamma^2$
	for $l = 0,1,2$,
	where $w^*$ is Koksma's classical 
	exponent of approximation introduced in \S \ref{C1:S4:SS2}.
\end{remark}
	Now, we prove that for any non-zero
	$R(T) \in \mathbb{Z}[T]$ of degree $4$
	the accumulations points
	$\big(\eta_l(R)\big)_{0\leq l\leq 5}$
	are irrational and so, they are non-zero.
\begin{proposition} \label{P:DEG4}
	Suppose that $R(T) \in \mathbb{Z}[T]$ with $\deg(R) = 4$
	and let $l \in \{0,\ldots,5\}$.

	(i) For any sufficiently large index
	$k \geq 1$ with $ k \equiv l+4 \mod 6 $
	there exists an integer $y \neq 0$ such that
\begin{equation}\label{P:DEG4:1}
	\Big | \eta_l(R) - \frac{y}{x_{k,0}x_{k-1,0}} \Big | \ll  X_k^{- \gamma^2}.
\end{equation}

	(ii) For any sufficiently large index
	$k \geq 1$ with $ k \equiv l+2 \mod 6 $
	there exists an integer $z \neq 0$ such that
\begin{equation}\label{P:DEG4:2}
	\Big | \eta_l(R) - \frac{z}{x_{k,0}^2} \Big | \ll  X_k^{- \gamma^2-1}.
\end{equation}

	(iii)  $\eta_l(R) \notin {\mathbb{Q}}$.
\end{proposition}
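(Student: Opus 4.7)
The plan is to adapt the proof of Proposition \ref{P:DEG3} to the quartic case. Writing $R(T) = gT^4 + S(T)$ with $g \neq 0$ integer and $\deg S \leq 3$, the contribution from $S$ is handled by Proposition \ref{P:DEG3}, and the new work concerns only the coefficient of $T^4$. In both Parts (i) and (ii), I pick an integer $m \equiv l \mod 6$ close to $k$ (specifically $m = k+2$ for Part (i) and $m = k+4$ for Part (ii)), use Proposition \ref{AQT1} to write $\eta_l(R) = \{x_{m,0}R(\xi)\} + O(X_m^{-1})$, reduce $gx_{m,0}\xi^4 = gx_{m,2}\xi^2 + O(X_m^{-1})$, and express $gx_{m,2}\xi^2$ modulo $1$ as a rational with denominator $x_{k,0}x_{k-1,0}$ or $x_{k,0}^2$.

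For Part (i), I first invoke the intermediate identity $x_{k,0}x_{k+2,2}\xi = A_k + O(X_{k+1}^{-1})$ established in the proof of Proposition \ref{P:DEG3}(i), with $A_k = x_{k,1}x_{k+2,2} - \epsilon_k(-1)^k x_{k+1,2}$, yielding $gx_{k+2,2}\xi^2 = gA_k\xi/x_{k,0} + O(X_k^{-\gamma^2})$. To further reduce $A_k\xi$ modulo $x_{k-1,0}^{-1}\mathbb{Z}$, I apply the fourth identity in (\ref{A:I:5}) to rewrite $x_{k,1}x_{k+2,2}\xi$ as an integer plus the residual $-\epsilon_k(-1)^k x_{k+1,2}\xi$ up to $O(X_k^{-\gamma})$, and then the second identity in (\ref{A:I:5}) shifted to index $k-1$ to rewrite $x_{k-1,0}x_{k+1,2}\xi$ as an integer $B_k$ up to $O(X_k^{-1})$. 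Combining yields $x_{k-1,0}A_k\xi = I_k + O(X_k^{-1})$ for an explicit integer $I_k$, hence $gx_{k+2,2}\xi^2 \equiv gI_k/(x_{k,0}x_{k-1,0}) \mod 1$ with error $O(X_k^{-\gamma^2})$. The $S$-contribution is added via Proposition \ref{P:DEG3}(i), which applies since $k \equiv l+1 \mod 3$, after rescaling from denominator $x_{k,0}$ to $x_{k,0}x_{k-1,0}$. Nonvanishing of the resulting integer $y$ follows from the lower bound $\eta_l(R) > 0$, itself a consequence of Corollary \ref{ALC2}.

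For Part (ii), I start from $x_{k,0}x_{k+4,2}\xi = E_k + O(X_k^{-\gamma^2})$, which appears in the proof of Proposition \ref{P:DEG3}(ii), and multiply by another factor of $\xi$. Reducing each of the three terms in $E_k\xi$ via identity (\ref{A:I:9}) for $x_{k,1}x_{k+4,2}\xi$, the second identity in (\ref{A:I:5}) at index $k+1$ for $x_{k+1,0}x_{k+3,2}\xi$, and $x_{k,0}x_{k+2,2}\xi = A_k + O(X_k^{-\gamma^2})$ for the residual $x_{k+2,2}\xi$, two residual $x_{k+2,2}\xi$-terms combine via $A_k/x_{k,0}$, and the net result is $x_{k,0}E_k\xi = I_k' + O(X_k^{-\gamma})$ for an explicit integer $I_k'$. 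Dividing by $x_{k,0}^2$ gives $gx_{k+4,2}\xi^2 \equiv gI_k'/x_{k,0}^2 \mod 1$ with error $O(X_k^{-\gamma^2-1})$, and the $S$-contribution is incorporated using Proposition \ref{P:DEG3}(ii), applicable since $k \equiv l+2 \mod 3$, after rescaling from $x_{k,0}$ to $x_{k,0}^2$. For Part (iii), if $\eta_l(R) = p/q \in \mathbb{Q}$, then since the approximations of Parts (i) and (ii) have exponent $\gamma$ and $(\gamma+2)/2$ respectively in their denominators (both exceeding $1$), the standard irrationality criterion forces $y/(x_{k,0}x_{k-1,0}) = p/q$ and $z/x_{k',0}^2 = p/q$ for all large $k, k'$. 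These equalities require $q$ to divide $x_{k,0}x_{k-1,0}$ and $x_{k',0}^2$ infinitely often; using the recurrence (\ref{AQ6:0}) together with the primitivity of consecutive $x_{j,0}$-terms then yields a contradiction.

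The main obstacle is the careful tracking of error terms in Part (ii): each reduction of $E_k\xi$ by a naive substitution such as $\xi \approx x_{k,1}/x_{k,0}$ would produce errors of order $E_k X_k^{-2} \sim X_k^{\gamma^4 - 1}$, dramatically larger than the target $O(X_k^{-\gamma^2-1})$. One must instead use the three identities mentioned above in just the right combination so that the dominant residuals of the form $x_{k+2,2}\xi$ cancel correctly, leaving only an error of size $O(X_k^{-\gamma^2})$ prior to the final division by $x_{k,0}$. Achieving this cancellation while maintaining the precise integer form of $I_k'$ is the most delicate part of the argument.
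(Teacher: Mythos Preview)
Your computations for Parts (i) and (ii) follow the same route as the paper: reduce $x_{m,0}\xi^4$ to $x_{m,2}\xi^2$, then use the identities (\ref{A:I:5}), (\ref{A:I:8}), (\ref{A:I:9}) together with the intermediate integers $A_k$, $E_k$ from the proof of Proposition~\ref{P:DEG3} to produce the rational approximations with the stated denominators and error terms. Handling the cubic part $S(T)$ by quoting Proposition~\ref{P:DEG3} rather than redoing it explicitly is a harmless variation.

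The substantive difference is in how you show $y\neq 0$ and $z\neq 0$. The paper does this by a direct divisibility computation: in Part~(ii), writing the numerator as $fN_k + g x_{k,0}E_k - L_k x_{k,0}^2$ with $N_k = x_{k,0}G_k - 2(-1)^k\epsilon_{k+2}A_k$, one has
\[
\gcd(x_{k,0},\,fN_k)\ \big|\ f\gcd(x_{k,0},2A_k)\ \big|\ 4f,
\]
using $\gcd(x_{k,0},A_k)\mid 2$ from the proof of Proposition~\ref{P:DEG3}(i). This is precisely what drives Part~(iii): since $\gcd(z,x_{k,0})\mid 4f$, the fraction $z/x_{k,0}^2$ in lowest terms has denominator $\gg x_{k,0}^2$, and as $k\to\infty$ along $k\equiv l+2\pmod 6$ this forces $\eta_l(R)\notin\mathbb{Q}$.

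By appealing to Corollary~\ref{ALC2} for nonvanishing, you lose exactly this $\gcd$ information, and your proposed substitute for Part~(iii) does not close the gap. From $\eta_l(R)=p/q$ you correctly deduce $q\mid x_{k,0}x_{k-1,0}$ and $q\mid x_{k',0}^2$ along the two residue classes, but the claimed ``primitivity of consecutive $x_{j,0}$-terms'' is neither proved in the paper nor evidently true: nothing in the setup forces $\gcd(x_{j,0},x_{j+1,0})=1$ (the relations $\det(\mathbf{x}_j)=\pm1$ only give $\gcd(x_{j,0},x_{j,1})=1$), and the recurrence (\ref{AQ6:0}) by itself does not rule out a prime dividing $x_{j,0}$ along an arithmetic progression of indices. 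So as written, Part~(iii) is incomplete. The cleanest fix is to recover the paper's $\gcd$ bound in Part~(ii); once you have $\gcd(z,x_{k,0})$ bounded independently of $k$, irrationality follows from Part~(ii) alone. (Note also that the explicit purpose of this subsection is to give a proof of $\eta_l(R)\neq 0$ \emph{independent} of Corollary~\ref{ALC2}, which is another reason to prefer the direct divisibility argument.)
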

\begin{proof}
	We can write $R(T)$ in the form
\[
	 R(T) = f T^4 + g T^3 + Q(T),
\]
	where $\deg(Q) \leq 2$. For the proof of Part (i)
	we use (\ref{P:DEG3:3}), which gives
\begin{align*}
	x_{k,0}x_{k+2,2}\xi^2 & = x_{k,1}x_{k+2,2}\xi
	- \epsilon_k (-1)^{k} x_{k+1,2} \xi + \bigo(X_{k+1}^{-1})
	\\
	& = \big ( x_{k,2}x_{k+2,1} - \epsilon_k (a x_{k+1,1}
	+ (-1)^k x_{k+1,2} ) \big ) \xi
	\\
	& \quad\quad\quad\quad\quad\quad
	- \epsilon_k (-1)^{k} x_{k+1,2} \xi + \bigo(X_{k+1}^{-1})
	\ _{\text{by} \ (\ref{A:I:5})_4}
	\\
	& = ( x_{k,2}x_{k+2,2} - \epsilon_k a x_{k+1,2} )
	- 2 \epsilon_k (-1)^{k} x_{k+1,2} \xi + \bigo(X_{k+1}^{-1})
\end{align*}
	Multiplying both sides of the above equality by $x_{k-1,0}$ we obtain
\[
	x_{k-1,0}x_{k,0}x_{k+2,2}\xi^2 =
	x_{k-1,0} ( x_{k,2}x_{k+2,2} - \epsilon_k a x_{k+1,2} )
	- 2 \epsilon_k (-1)^{k} x_{k-1,0}x_{k+1,2} \xi + \bigo(X_k^{-1}).
\]
	By (\ref{P:DEG3:3}) we have $x_{k-1,0} x_{k+1,2} \xi = A_{k-1}
	+ \bigo(X_{k}^{-1})$, and hence
\[
	x_{k-1,0}x_{k,0}x_{k+2,2}\xi^2 = C_k + \bigo(X_{k}^{-1}),
\]
	where $C_k = x_{k-1,0} ( x_{k,2}x_{k+2,2} - \epsilon_k a x_{k+1,2} ) - 2 \epsilon_k (-1)^{k} A_{k-1}$.
	By this and (\ref{P:DEG3:3}), we get
\begin{equation}\label{P:DEG4:3}
	f x_{k-1,0}x_{k,0}x_{k+2,2}\xi^2 + g x_{k-1,0}x_{k,0}x_{k+2,2}\xi = f C_k + g x_{k-1,0} A_k + O(X_{k}^{-1}).
\end{equation}
	From this and $x_{k+2,0}Q(\xi) = (\text{integer})
	+ \bigo(X_{k+2}^{-1})$, we have
\[
	x_{k+2,0}R(\xi) = f x_{k+2,2}\xi^2
	+ g x_{k+2,2}\xi + (\text{integer}) + \bigo(X_{k+2}^{-1}),
\]
	which leads to
\begin{equation}\label{P:DEG4:4}
	\{ x_{k+2,0}R(\xi) \}
	= \{ f x_{k+2,2}\xi^2 + g x_{k+2,2}\xi \} + \bigo(X_{k+2}^{-1}).
\end{equation}
	By (\ref{P:DEG4:4}) and (\ref{P:DEG4:3}), we get
\begin{equation}\label{P:DEG4:5}
	\{ x_{k+2,0}R(\xi) \} =
	 \left | \frac {f C_k  + g x_{k-1,0} A_k
	 - D_k x_{k-1,0}x_{k,0}}{x_{k-1,0}x_{k,0}} \right | + \bigo(X_{k+2}^{-1}),
\end{equation}
	where $D_k$ denotes the closest integer
	to $({f C_k  + g x_{k-1,0} A_k})/({x_{k-1,0}x_{k,0}})$.
	By the inequality (\ref{AQ7})
	and by Corollary \ref{AQQT1}, for any $l = 0,\ldots,5$ and $k \geq 1$
	with $k+2 \equiv l \mod 6$, we have
\[
	\eta_l(R) = \{ x_{k+2,0}R(\xi) \} + \bigo(X_{k+2}^{-1}),
\]
	and by (\ref{P:DEG4:5}) we deduce
\[
	\eta_l(R) =
	 \left | \frac {f C_k  + g x_{k-1,0} A_k
	 - D_k x_{k-1,0}x_{k,0}}{x_{k-1,0}x_{k,0}} \right |
	 + \bigo(X_{k}^{- \gamma^2}).
\]
	We notice that if $x_{k-1,0}$ divides $f C_k$ then it divides $f A_{k-1}$
	and, since $\gcd (x_{k-1,0}, A_{k-1})$ divides $2$, we conclude that
	$x_{k-1,0}$ divides $2f$.
	This is impossible if $k$ is sufficiently large and thus for
	large $k$ the numerator in the fraction above is non-zero.

	For the proof of Part (ii) we multiply the identity (\ref{A:I:8}) by $\xi^2$ and obtain
\begin{equation}\label{P:DEG4:6}
	x_{k,0}x_{k+4,2}\xi^2 = x_{k,1}x_{k+4,2}\xi - \epsilon_k (-1)^k ( a x_{k+1,0}x_{k+3,2}\xi + \epsilon_{k+1} x_{k+2,2}\xi) + O(X_{k+2}^{-1}).
\end{equation}
	Replacing $k$ with $k+1$ in the second identity of (\ref{A:I:5}) and multiplying it by $\xi$, we have
\[
	x_{k+1,0}x_{k+3,2} \xi = x_{k+1,1}x_{k+3,2} + \epsilon_{k+1} (-1)^{k} x_{k+2,2} + O(X_{k+2}^{-1}).
\]
	Applying this and (\ref{A:I:9}) to (\ref{P:DEG4:6}) we get
\[
  \begin{aligned}
  x_{k,0}x_{k+4,2}\xi^2
	& = x_{k,2}x_{k+4,2} - \epsilon_k a ( a x_{k+1,0}x_{k+3,2} + 2(-1)^k x_{k+1,1}x_{k+3,2} + \epsilon_{k+1} x_{k+2,2})
	\\
	& - 2 (-1)^k \epsilon_{k+2} x_{k+2,2}\xi + O(X_{k+2}^{-1}).
  \end{aligned}
\]
	Multiplying this by $x_{k,0}$ and applying (\ref{P:DEG3:3}),
	in order to replace
	$x_{k,0}x_{k+2,2}\xi$, we obtain
\[
	x_{k,0}^2 x_{k+4,2}\xi^2 = x_{k,0}G_k - 2 (-1)^k \epsilon_{k+2} A_k + O(X_{k+1}^{-1}),
\]
	where $G_k = x_{k,2}x_{k+4,2} - \epsilon_k a ( a x_{k+1,0}x_{k+3,2} + 2(-1)^k x_{k+1,1}x_{k+3,2}
	+ \epsilon_{k+1} x_{k+2,2})$ and $A_k$ is defined as in (\ref{P:DEG3:7}).
	By this and (\ref{P:DEG3:10}), we have
\begin{equation}\label{PERT1Q11}
	f x_{k,0}^2 x_{k+4,2} \xi^2 + g x_{k,0}^2 x_{k+4,2} \xi = f N_k + g x_{k,0} E_k  + O(X_{k+1}^{-1}),
\end{equation}
	where $N_k = x_{k,0}G_k - 2 (-1)^k \epsilon_{k+2} A_k$.
	Since $x_{k+4,0} Q(\xi) = (\text{integer}) + O(X_{k+4}^{-1})$, we have
\[
	x_{k+4,0}R(\xi) = f x_{k+4,2} \xi^2 + g x_{k+4,2} \xi + (\text{integer}) + O(X_{k+4}^{-1}),
\]
	which leads to
\[
	\{ x_{k+4,0}R(\xi) \}
	= \{ f x_{k+4,2} \xi^2 + g x_{k+4,2} \xi \}
	+ \bigo(X_{k+4}^{-1}).
\]
	By this and (\ref{PERT1Q11}) we deduce
\begin{equation}\label{PERT1Q12}
	\{ x_{k+4,0}R(\xi) \}
	= \frac {| f N_k + g x_{k,0} E_k - L_k x_{k,0}^2|}{x_{k,0}^2}
	+ \bigo(X_{k}^{-\gamma^2-1}),
\end{equation}
	where $L_k$ is the closest integer to $(f N_k + g x_{k,0} E_k)/x_{k,0}^2$.
	Also, by the inequality (\ref{AQ7})
	and by Corollary \ref{AQQT1},
	it follows that for any $l = 0,\ldots,5$ and $k \geq 1$
	with $k+4 \equiv l \mod 6$ we have
\[
	 \eta_l(R) = \{ x_{k+4,0}R(\xi) \} + \bigo(X_{k+4}^{-1}),
\]
	whence from (\ref{PERT1Q12}) we obtain
\[
	\eta_l(R)
	= \frac {| f N_k + g x_{k,0} E_k - L_k x_{k,0}^2 |}{x_{k,0}^2}
	+  \bigo(X_{k}^{-\gamma^2-1}).
\]
	Finally, since
	$\gcd(x_{k,0},f N_k) \ \big | \ f \gcd(x_{k,0},N_k)\ \big | \ f \gcd(x_{k,0},2 A_k)\ \big | \ 2 f \gcd(x_{k,0},A_k)$,
	and since $\gcd(x_{k,0},A_k) \ \big | \ 2 $, we deduce that the numerator in the
	fraction above is not zero for $k \gg 1$.

	Part (iii) follows from Parts (i) and  (ii).
\end{proof}

\cleardoublepage


\newpage
\end{document}